\documentclass[a4paper, 12pt]{article}
 
\usepackage[top=2cm, bottom=2cm, left=2cm, right=2cm]{geometry}
\usepackage{graphicx, array, amsmath, amsthm, mathrsfs, framed, stmaryrd, marvosym, bbm, amssymb, mathtools, verbatim, tikz}
\usepackage[all]{xy}
\setlength\parindent{10pt}
\tikzset{node distance=2cm, auto}
\usepackage{enumerate}
\usepackage{color}
\usepackage{youngtab}
\allowdisplaybreaks

\setlength{\textwidth}{490pt}
\input xy
\xyoption{matrix}\xyoption{arrow}
\def\edge{\ar@{-}}
\def\dedge{\ar@{.}}

\long\def\ignore#1{#1}


\newcommand{\st}{{\rm st}}


\newtheorem{theorem}{Theorem}[section]

\newtheorem{proposition}[theorem]{Proposition}

\newtheorem{lemma}[theorem]{Lemma}
\newtheorem{corollary}[theorem]{Corollary}

\theoremstyle{definition}
\newtheorem{definition}[theorem]{Definition}
\newtheorem{example}[theorem]{Example}
\newtheorem{remark}[theorem]{Remark}

\newtheorem{conjecture}[theorem]{Conjecture}
\newtheorem{notation}[theorem]{Notation}
\newtheorem{convention}[theorem]{Convention}


\def\f{\mathbb{F}}

\def\k{{\mathbb K}}


\def\row{{\rm row}}
\def\col{{\rm col}}
\def\spec{{\rm Spec}}
\def\hspec{\ch\!-\!\spec} 
\def\cdiag{{\rm CD}}


\def\cf{{\mathcal F}}
\def\ci{{\mathcal I}}
\def\ch{{\mathcal H}}

\def\cl{{\mathcal L}}
\def\cm{{\mathcal M}}
\def\cn{{\mathcal N}}
\def\co{{\mathcal O}}
\def\cp{{\mathcal P}}
\def\cq{{\mathcal Q}}
\def\cs{{\mathcal S}}



\def\mc{{\mathbb C}}
\def\mn{{\mathbb N}}

\def\mr{{\mathbb R}}
\def\mz{{\mathbb Z}}

\newcommand{\R}{\mathbb{R}}
\newcommand{\N}{\mathbb{N}}

\newcommand{\Z}{\mathbb{Z}}

\newcommand{\ang}[1]{\langle #1 \rangle}

\newcommand{\llb}{\llbracket}
\newcommand{\rrb}{\rrbracket}

\newcommand*{\TakeFourierOrnament}[1]{{%
\fontencoding{U}\fontfamily{futs}\selectfont\char#1}}
\newcommand*{\danger}{\TakeFourierOrnament{66}}
\newcommand{\intint}[1]{\llb #1 \rrb}
\newcommand{\piecewise}[5]{ #1 = \begin{cases} #2 & #3 \\ #4 & #5 \end{cases} }
\newcommand{\piecewiseseven}[7]{ #1 = \begin{cases} #2 & #3 \\ #4 & #5 \\ #6 & #7 \end{cases} }



\newcommand{\q}{\medskip\par\noindent}





\def\oq{\mathcal{O}_{q}}



\def\oq{{\cal O}_q}

\def\oqmmnf{\co_q(M_{m,n}(\f))}
\def\oqmmnk{\co_q(M_{m,n}(\k))}
\def\oqmmnr{\co_q(M_{m,n}(R))}

\def\oqmm13{\co_q(M_{1,3})}
\def\oqm23{\co_q(M_{2,3})}

 \def\oqmnk{{\co_q(M_n(\k))}}
 \def\oqmnr{{\co_q(M_n(R))}}

\def\oqglnk{\co_q(GL_n(\k))}

\def\ylk{\co_q(Y_\lambda(\k))}
\def\ylr{\co_q(Y_\lambda(R))} 
\def\ylf{\co_q(Y_\lambda(\f))}
\def\yldashf{\co_q(Y_{\lambda'}(\f))}
\def\yldashr{\co_q(Y_{\lambda'}(R))}


\def\oqgmnr{\oq(G_{mn}(R))}
\def\oqgmnf{\oq(G_{mn}(\f))}
\def\oqgmnk{\oq(G_{mn}(\k))}

\def\oqgmnkone{\oq(G_{mn}(K_1))} 
\def\oqgmnktwo{\oq(G_{mn}(K_2))}
\def\oqgmnki{\oq(G_{mn}(K_i))}


\def\sgr{S(\gamma)_R}
\def\sgf{S(\gamma)_{\f}}
\def\sgro{S^0(\gamma)_R}
\def\sgfo{S^0(\gamma)_{\f}}

\def\qdot{(-q)^{\bullet}}

\def\ogmnr{{G_{mn}(\mr)}}
\def\ogmntnn{{G_{mn}^{{\rm tnn}}(\mr)}}




\def\widebar{\overline}
\def\pcl{\Pi_{\lambda}^{C}}
\def\pcldash{\Pi_{\lambda'}^{C'}}
\def\Pcl{P_{\lambda}^{C}}

\def\idealpclr{\langle\Pi_{\lambda}^{C}\rangle_R}

\def\idealPclr{\langle\Pcl\rangle_R}
\def\idealpclrdash{\langle\Pi_{\lambda'}^{C'}\rangle_R}
\def\idealpclfdash{\langle\Pi_{\lambda'}^{C'}\rangle_{\f}}
\def\inoti{I\backslash\{i\}}
\def\inotiwithj{I\backslash\{i\}\sqcup\{j\}}
\def\jwithi{J\sqcup\{i\}} 
\def\setn{\{1,\dots,n\}}

\def\goesto{\longrightarrow}
\def\mapsto{\leadsto} 
\newcommand{\ideal}[1]{\langle#1\rangle}

\def\gqtwofour{\mathcal{O}_q(G_{24}(\k))}

\def\postc{{\rm Post}(C)}
\def\neck{{\rm Neck}}

\def\ker{\rm ker}
\def\goesto{\longrightarrow}

\def\Spec{{\rm Spec}}

\def\lessi{<_i}
\def\leqqi{\leq_i}
\def\geqqi{\geq_i}

\def\lessone{<_1}
\def\leqqone{\leq_1}

\def\lesstwo{<_2}
\def\leqqtwo{\leq_2}



\newcommand{\la}{\lambda}

\newcommand{\id}{\text{id}}

\newcommand{\bs}{\setminus}
\newcommand{\be}{\begin{equation}}
\newcommand{\ee}{\end{equation}}

\newcommand{\K}{\mathbb{K}}



\newcommand{\ol}{\overline}
\newcommand{\ul}{\underline}

\newcommand{\Le}{\mathbin{\rotatebox[origin=c]{180}{$\Gamma$}}}

\newcommand{\lex}{\tx{lex}}

\newcommand{\qmatr}{\co_q(M_{m,n}(R))}

\newcommand{\ladder}{\co_q(M_{m,n,\gamma}(\f))}
\newcommand{\ladderk}{\co_q(M_{m,n,\gamma}(\k))}
\newcommand{\partitionk}{\co_{q^{-1}}(Y_{\lambda}(\k))}
\newcommand{\partition}{\co_{q^{-1}}(Y_{\lambda}(\f))}
\newcommand{\partitionr}{\co_{q^{-1}}(Y_{\lambda}(R))}

\newcommand{\qimf}{\co_{q^{-1}}(M_{m,n-m}(\f))}

\newcommand{\oqmmnminusmk}{\co_{q^{-1}}(M_{m,n-m}(\mathbb{K}))}

\DeclareMathOperator*{\Dhom}{Dhom}
\newcommand{\wt}{\widetilde}
 \def\mat{{M_{m,n}({\mz})}}
 \def\matnonneg{{M_{m,n}({\mz_{\geq 0})}}}
 \newcommand{\vect}{\boldsymbol}
 \def\lm{{\rm LM}}
  \def\lexp{{\rm LE}}


\newcommand{\tx}{\text}
\newcommand{\ds}{\displaystyle}
\newcommand{\ts}{\textstyle}



\usepackage{verbatim}

\def\oqgmnkone{\oq(G_{mn}(K_1))} 
\def\oqgmnktwo{\oq(G_{mn}(K_2))}
\def\oqgmnki{\oq(G_{mn}(K_i))}

\newcommand{\gc}{ [ \hspace{-0.65mm} [}
\newcommand{\dc}{]  \hspace{-0.65mm} ]}


\usepackage{version}

\includeversion{full} 

\title{Total positivity is a quantum phenomenon: the grassmannian case} 
\author{S Launois\thanks{\,The research of the first named author was supported by EPSRC grant EP/N034449/1.},~ T H Lenagan\thanks{\,The research of the second named author was partially supported by a Leverhulme Trust Emeritus Fellowship.} ~and B M Nolan}
\date{}

\begin{document} 
\maketitle
\begin{abstract} The main aim of this paper is to establish a deep link between the totally nonnegative grassmannian and the quantum grassmannian. More precisely, under the assumption that the deformation parameter $q$ is transcendental, we show that ``quantum positroids'' are completely prime ideals in the quantum grassmannian $\oqgmnf$. As a consequence, we obtain that torus-invariant prime ideals 
in the quantum grassmannian are generated by polynormal sequences of quantum Pl\"ucker coordinates and give a combinatorial description of these generating sets. We also give a topological description of the poset of torus-invariant prime ideals in  $\oqgmnf$, and prove a version of the orbit method for torus-invariant objects. Finally, we construct separating Ore sets for all torus-invariant primes in  $\oqgmnf$. The latter is the first step in the Brown-Goodearl strategy to establish the orbit method for (quantum) grassmannians.  
\end{abstract}

\noindent
\textbf{Mathematics Subject Classification (2010).} 16T20, 05C10, 05E10, 14M15, 20G42.\\

\noindent
\textbf{Keywords.} Quantum grassmannian; Totally nonnegative grassmannian; Torus-invariant prime ideals; positroids; quantum minors; Cauchon-Le diagrams; Partition subalgebras.



\section{Introduction} 

The quantum grassmannian $\oqgmnf$ is a noncommutative algebra that is a deformation of the homogeneous coordinate ring of the grassmannian variety of $m$-dimensional subspaces in $n$-dimensional space. As such, it is generated as an $\f$-algebra by the so-called quantum Pl\"ucker coordinates. One important goal in the study of the quantum grassmannian is to understand the structure of the prime spectrum. This study is aided by the presence of a natural action of a torus group $\ch:=(\f^*)^n$ on the quantum grassmannian for which the quantum Pl\"ucker coordinates are eigenvectors. The Goodearl-Letzter stratification theory, see \cite{bg-book}, suggests that one should first understand the prime ideals that are invariant under the action of $\ch$ before going on to a more detailed study of the whole prime spectrum. \\

In earlier work \cite{llr-grass}, the first two authors and Rigal have shown that the $\ch$-prime ideals of $\oqgmnf$ are parameterised by Cauchon diagrams on Young diagrams that fit into an $m\times (n-m)$ rectangular array. These diagrams first appear in work by Cauchon \cite{cauchon2}  in the study of the prime spectrum of quantum matrices. Remarkably, the same  diagrams also appear in the ground-breaking work of Postnikov \cite{post} on the positroid cell stratification of the totally nonnegative grassmannian, under the name of Le-diagrams. These diagrams will play a key role in this paper, and we propose to call them Cauchon-Le diagrams in recognition of their two independent appearances in the work of Cauchon and Postnikov. \\

The positroid cell stratification of the totally nonnegative grassmannian has been intensively studied over the last dozen or so years following Postnikov's paper, see, for example, \cite{arw,kls,lam,oh,tal}. Besides its intrinsic beauty, there are also
applications in the study of partial differential equations \cite{kw}, scattering amplitudes \cite{arkani}, and juggling \cite{kls}. \\

 On seeing Postnikov's paper, the first two authors observed that the Le diagrams that Postnikov introduced to parameterise the positroid cells were the same as the diagrams introduced by Cauchon to study the $\ch$-prime spectrum of quantum matrices. This lead to several papers investigating this connection, culminating in the present work. In the first paper in the series, \cite{llr-grass}  it was shown that the positroid cells of the totally nonnegative grassmannian were in natural bijection with the $\ch$-prime spectrum of the quantum grassmannian via what we are now calling Cauchon-Le diagrams. We also conjectured at the time that this bijection would be a homeomorphism between the partially ordered sets provided by the positroid cells under closure and the $\ch$-prime spectrum under inclusion. Further,  we conjectured that the $\ch$-primes would be generated by the the quantum Pl\"ucker coordinates that they contain, and that the containment of a quantum Pl\"ucker coordinate in an $\ch$-prime ideal could be read off from the Postnikov graph. All of these conjectures have now been answered in the present work. However, at the time, we did not have the tools in the quantum setting to verify these conjectures. One setting where we were able to make progress was for quantum matrices, which occur as the ``big cell'' in the quantum grassmannian. The first author had already shown that, in the case of a transcendental deformation parameter $q$, each $\ch$-prime ideal was generated by the quantum minors that it contained, \cite{launois-generation}, verifying a conjecture of Goodearl and the second author, \cite{gl-winding}. Yakimov also produced a proof of this result, \cite{yak-invariant-primes}, and Casteels replaced the transcendental restriction by the natural condition that $q$ be a non root of unity, \cite{cas2}. Casteels' use of (noncommutative) Gr\"obner basis techniques was crucial to our present work. 

In two papers with Goodearl, the first two authors were able to show that  the membership problem for quantum minors in $\ch$-prime ideals in quantum matrices exactly corresponded to the corresponding problem for minors belonging to a positroid cell, \cite{gl1, gl2}.

The main aim of this article is to prove that $\ch$-primes in quantum grassmannians are generated (as left or right ideals) by quantum Pl\"ucker coordinates and to describe explicitly which quantum Pl\"ucker coordinates belong to a given $\ch$-prime. This aim is achieved under the assumption that the deformation parameter is transcendental. \\

\begin{theorem}\label{theorem-main-intro}
 Let $\f$ be a field, and let $q$ be a nonzero element of $\f$. Assume that $q$ is transcendental over the prime field of $\f$. 
 
Then every $\ch$-prime of $\oqgmnf$ is generated by an explicit polynormal sequence of quantum Pl\"ucker coordinates.  
\end{theorem}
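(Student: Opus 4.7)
The plan is to define, for each Cauchon-Le diagram $C$ on a partition $\lambda \subseteq m \times (n-m)$, an explicit \emph{quantum positroid} $\pcl$ --- a set of quantum Pl\"ucker coordinates naturally totally ordered so as to form a polynormal sequence in $\oqgmnf$ --- and then to establish three facts: (i) $\idealpclf$ is a completely prime, $\ch$-invariant ideal of $\oqgmnf$; (ii) the map $(\lambda,C) \mapsto \idealpclf$ is injective into $\ch\text{-}\Spec(\oqgmnf)$; (iii) this map is surjective. Since the $\ch$-primes of $\oqgmnf$ are already known from \cite{llr-grass} to be parametrised by Cauchon-Le diagrams, step (iii) will follow from (i) and (ii) by comparing two finite sets of the same cardinality.

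For the construction, I would read off from $C$ exactly those Pl\"ucker indices that must vanish on the associated positroid cell of the totally nonnegative grassmannian, collect the corresponding quantum Pl\"ucker coordinates into $\pcl$, and order them by a lexicographic rule compatible with Cauchon's deleting-derivations procedure for the $\ch$-prime spectrum. The $\ch$-invariance of $\idealpclf$ is immediate, because each quantum Pl\"ucker coordinate is an $\ch$-eigenvector. Polynormality of the chosen sequence is then an inductive combinatorial check using the quantum Pl\"ucker relations: at each step the next generator is shown to commute, up to a scalar, with all earlier generators modulo the ideal they generate.

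The crux is complete primeness of $\idealpclf$, and this is where the transcendence of $q$ intervenes. My approach would be to dehomogenise: inverting a suitable consecutive quantum Pl\"ucker coordinate $[a,a+1,\dots,a+m-1]$ identifies a localisation of $\oqgmnf$ with a localisation of the quantum matrix algebra $\qimf$, in such a way that quantum Pl\"ucker coordinates correspond to maximal quantum minors. Under this identification the quantum positroid $\pcl$ is sent to a set of quantum minors which, by the results of the first named author, Yakimov, and Casteels, generates a completely prime $\ch$-invariant ideal of quantum matrices whenever $q$ is transcendental. To transfer complete primeness back from the localised setting to the unlocalised ideal $\idealpclf$ in $\oqgmnf$, I would factor through the partition subalgebra $\ylf$, showing that $\oqgmnf / \idealpclf$ is obtained by localisation from a quotient of $\ylf$, and that this quotient of $\ylf$ admits a direct identification with an appropriate quantum matrix quotient.

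The main technical obstacle is to show that the consecutive quantum Pl\"ucker coordinate chosen for dehomogenisation is both outside $\pcl$ and a non-zero-divisor in $\oqgmnf / \idealpclf$. The first assertion is a combinatorial lemma: for every Cauchon-Le diagram $C$ on $\lambda$, a suitable consecutive index set $\{a,\dots,a+m-1\}$ can be located whose Pl\"ucker coordinate is not forced to vanish. The non-zero-divisor property is more delicate and is where I expect the heaviest lifting: I would establish it by constructing a PBW / Gr\"obner basis of $\oqgmnf / \idealpclf$ compatible with the polynormal sequence of step (i), following Casteels' noncommutative Gr\"obner basis methods for quantum matrices. Once these two ingredients are in place, the transcendence of $q$ carries the generation and primeness results from quantum matrices up to the quantum grassmannian, and the cardinality argument identifies the family $\{\idealpclf\}$ with the full $\ch$-spectrum.
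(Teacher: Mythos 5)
There is a genuine gap at exactly the point you yourself flag as ``the heaviest lifting'': the non-zero-divisor (torsion) property of the dehomogenising coordinate modulo $\idealpclf$. You propose to obtain it from a PBW/Gr\"obner basis of $\oqgmnf/\idealpclf$ compatible with your polynormal sequence, but no such Gr\"obner-basis machinery exists for the quantum grassmannian: Casteels' bases live in $\oqmmnf$, which is an iterated Ore extension with a PBW basis in the generators $x_{ij}$, whereas $\oqgmnf$ only has the standard-monomial basis coming from its QGASL structure, and $\pcl$ is \emph{not} a poset ideal for the standard order, so the straightening law gives no control over which standard monomials lie in $\idealpclf$. Establishing such a basis is essentially equivalent to the theorem, not a tool for proving it. A telling symptom is that your argument never uses transcendence of $q$ in an essential way (Casteels' quantum-matrix results already hold for $q$ not a root of unity), so if it worked it would prove more than the paper does. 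The paper resolves the torsion problem quite differently: it works over $R=\k[q^{\pm 1}]$, proves that the relevant families of (pseudo) quantum minors generate completely prime ideals of $\oqmmnr$ and of partition subalgebras over $R$ (Sections 6--8, pushing Casteels' Gr\"obner bases down to $R$ and doing a box-by-box deleting-derivations induction), and then applies the Goodearl--Lenagan transfer principle \cite[Proposition 2.1]{good-len-q-transc}: exactness of $A_R\to B_R\to 0$ after reduction modulo $q-1$ is checked using the Knutson--Lam--Speyer/Oh results that classical positroid varieties are irreducible with ideals linearly generated by the Pl\"ucker coordinates outside the positroid \cite{kls-proj,kls,oh}. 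This $q\to 1$ specialisation is precisely where transcendence enters, and it initially forces $\k$ algebraically closed, an assumption the paper then removes via strong $\ch$-rationality and base-field extension. Your proposal contains no substitute for this classical input.

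Two further points would also need repair. First, dehomogenisation at a \emph{consecutive} coordinate $[a,\dots,a+m-1]$ is not available for an arbitrary $\ch$-prime: the paper dehomogenises at the specific $\gamma$ attached to $P$ (minimal outside $P$), which need not be an interval, and the resulting localisation is identified not with quantum matrices but with a partition subalgebra of $\qimf$, where the images of Pl\"ucker coordinates are \emph{pseudo} quantum minors; extending the Launois--Yakimov--Casteels generation results from $\oqmmnf$ to these partition subalgebras is the content of Sections 2--8 and cannot be reduced to ``a direct identification with an appropriate quantum matrix quotient''. Second, your polynormality argument is only sketched; the paper obtains it not from a deleting-derivations order but from the QGASL structures of $\oqgmnf$ with respect to all $n$ cyclic orders \cite{lruss}, together with the Grassmann-necklace description $\Pi_P=\bigcup_i\{[J]\mid J\not\geq_i I_i\}$ and \cite[Proposition 1.2.2]{lr2}, concatenating the resulting polynormal lists.
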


Once our main result is established, we discuss various applications. In particular, we discuss the link between positroid cells in totally nonegative grassmannians and $\ch$-primes in quantum grassmannians. This allows us to describe the poset of $\ch$-primes in quantum grassmannians, and prove that, in the spirit of the orbit method, the set of $\ch$-primes is homeomorphic to the set of torus-orbits of symplectic leaves in the corresponding grassmannian variety. Finally, we use our main result to construct separating Ore sets for all $\ch$-primes in the quantum grassmannian. These sets are central in the Brown-Goodearl strategy \cite{bg} to establish an homeomorphism between the primitive spectrum of  the quantum grassmannian and the set of symplectic leaves in the corresponding grassmannian variety.\\

Before we explain the strategy of the proof of the main result, we need to introduce necessary notation. 

Let $\f$ be a field, and let $q$ a nonzero element of $\f$. We assume that $q$ is not a root of unity. Let $m \leq n$ be positive integers and let $\oqmmnf$ denote the quantum deformation of the affine coordinate ring on $m\times n$ matrices. The quantum deformation of the homogeneous coordinate ring of the grassmannian, denoted $\oqgmnf$, is defined as the subalgebra of $\oqmmnf$ generated by the maximal quantum minors of the generic matrix of $\oqmmnf$. To simplify, these algebras will be referred to as the algebra of quantum matrices and the quantum grassmannian, respectively. Moreover, the maximal minors of $\oqmmnf$ are called quantum Pl\"ucker coordinates. \\

The starting point of this work is the study of the prime spectrum of the quantum grassmannian. This algebra is naturally endowed with the action of a torus $\ch$, and it is well known that the stratification theory as developed by Goodearl and Letzter applies to this algebra, that is the prime spectrum of the quantum grassmannian admits a partition into finitely many $\ch$-strata, each stratum being indexed by an $\ch$-prime ideal (equivalently, a prime ideal invariant under the action of $\ch$ in the cases that we will consider). \\

The finiteness of the set $\ch$-$\spec \oqgmnf$ of $\ch$-primes in $\oqgmnf$ was established in \cite{llr-grass}, where it was proved that there is a natural one-to-one correspondence between $\ch$-$\spec \oqgmnf$ and Cauchon-Le diagrams defined on Young diagrams fitting into a rectangular $m\times (n-m)$ Young diagram. This bijection will feature heavily in the present work, and so we briefly recall its construction. \\

We denote by $\Pi$ the set of quantum Pl\"ucker coordinates in $\oqgmnf$. Each quantum Pl\"ucker coordinate $\gamma$ is an $m\times m$ quantum minor of $\oqgmnf$, and so $\gamma$ is characterised by the $m$ columns chosen to form this quantum minor. Thus $\Pi$ is often identified with the set of all $m$-element subsets of the set $\gc 1,n \dc :=\{1, 2, \dots , n\}$. This allows one to define a natural partial order on $\Pi$ by
$$\gamma=[\gamma_1 < \dots < \gamma_m] \leq \gamma'=[\gamma'_1 < \dots < \gamma'_m] \mbox{ if and only if } \gamma_i \leq \gamma'_i \mbox{ for all } i \in \gc 1 , m \dc .$$ 
As an example, the partial order for $\oq(G_{3,6}(\f))$ is illustrated in Figure~\ref{figure-3x6-ordering}.

\begin{figure}[ht]
\ignore{
$$\xymatrixrowsep{2.4pc}\xymatrixcolsep{3.2pc}\def\objectstyle{\scriptstyle}
\xymatrix@!0{
 &&  [456 ]\edge[d]\\
 &&   [356 ] \edge[dl] \edge[dr]\\
 &   [346 ] \edge[dl] \edge[dr] &
 &  [256 ] \edge[dl] \edge[dr]\\
  [345 ] \edge[dr] &&   [246 ] \edge[dl] \edge[dr] \edge[d] 
&&   [156 ] 
  \edge[dl]\\
 &   [245 ] \edge[d] \edge[dr] 
 &   [236 ] \edge[dl]|\hole \edge[dr]|\hole &  [146 ]
\edge[dl] \edge[d]\\
 &  [235 ] \edge[dl] \edge[dr] 
 &  [145 ] \edge[d] &  [136 ] \edge[dl] \edge[dr]\\
  [234 ] \edge[dr] && [135 ] \edge[dl] \edge[dr] 
&& [126 ]\edge[dl]\\
 & [134 ] \edge[dr] && [125 ] \edge[dl]\\
 && [124 ] \edge[d]\\
 && [123 ]
}$$
\caption{The standard order for $\oq(G_{3,6}(\f))$.}
\label{figure-3x6-ordering}
}
\end{figure}
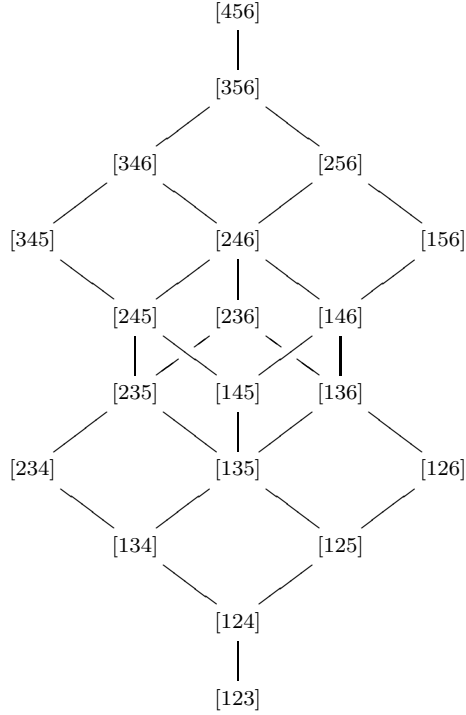

Fix a quantum Pl\"ucker coordinate $\gamma=[\gamma_1 < \dots < \gamma_m]$ of $\oqgmnf$. The poset ideal associated to $\gamma$, defined by $\Pi_{\gamma}:= \{ \alpha \in \Pi: \alpha \not\geq \gamma\}$, generates a completely prime ideal of $\oqgmnf$ \cite{lr3}. The corresponding factor algebra $S(\gamma)$ is the so-called {\em quantum Schubert variety} associated to $\gamma$ \cite{lr3}. The coset $\overline{\gamma}$ becomes a nonzero normal element of the noetherian domain $S(\gamma)$. Thus one can form the noncommutative localised algebra: $S(\gamma) [\overline{\gamma}^{-1}] $. Computation in this algebra is facilitated by the fact that it is close to being a noncommutative polynomial algebra or quantum nilpotent algebra. More precisely, the {\em noncommutative dehomogenisation} theory developed in \cite{klr} shows that 
\begin{eqnarray}\label{iso-intro}
\Phi_{\lambda}:S(\gamma) [\overline{\gamma}^{-1}] \xrightarrow{\cong}  \partition[Z^{\pm 1};\sigma],
\end{eqnarray}
where the so-called {\em partition subalgebra} $\partition$ is the subalgebra of $\qimf$ generated by the canonical generators of $\qimf$ that sit in the Young tableau $Y_{\lambda}$ associated to $\gamma$. In particular, when $\gamma =[1,\dots , m]$, the partition subalgebra coincides with $\qimf$. The Young tableau $Y_{\lambda}$ can easily be constructed from $\gamma$ as Figure~\ref{figure-135-tableau} below illustrates for $\gamma =[135]$: here, $\gamma_1,\dots, \gamma_m$ correspond to the vertical steps on a path of length $n$ with each step being either horizontal towards the left or 
vertically downwards.  \\

\begin{figure}
\begin{tikzpicture}[xscale=1, yscale=1]

\draw[color=white] (0,0) rectangle (0,0); 
\draw[color=gray] (7,2) rectangle (8,3);            
\draw[color=gray] (8,2) rectangle (9,3);            
\draw[color=gray] (9,2) rectangle (10,3);            

\draw[color=gray] (7,1) rectangle (8,2);               
\draw[color=gray] (8,1) rectangle (9,2);               

\draw[color=gray] (7,0) rectangle (8,1);            

\node[scale=1] at (10.2,2.5) {$\color{red}1$};
\node[scale=1] at (9.6,1.8) {$2$};
\node[scale=1] at (9.2,1.5) {$\color{red}3$};
\node[scale=1] at (8.6,0.8) {$4$};
\node[scale=1] at (8.2,0.5) {$\color{red}5$};
\node[scale=1] at (7.5,-0.3) {$6$};
\end{tikzpicture}
\caption{Young tableau associated to quantum Pl\"ucker coordinate $\gamma =[135]$}
\label{figure-135-tableau}
\end{figure}
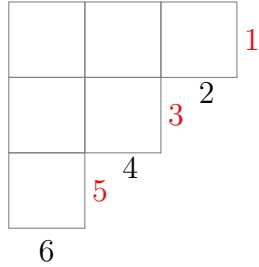

In \cite{llr-grass}, it was shown that each $\ch$-prime (other than the irrelevant ideal) in $\oqgmnf$ survives in exactly one localisation $S(\gamma) [\overline{\gamma}^{-1}]$. This leads, with some slightly abusive notation, to a partition of $\ch$-$\spec\oqgmnf$:
       $$\ch-\spec\oqgmnf =\ang{\Pi} \bigsqcup \bigsqcup_{\gamma \in \Pi } \ch-\spec\partition$$

As each partition subalgebra is a quantum nilpotent algebra (quantum nilpotent algebras also appear in the literature under the name CGL extension) \cite{llr-ufd}, Cauchon's theory of deleting derivations \cite{c1} applies to these algebras, and was used to prove that $ \ch-\spec\partition$ is in bijection with the set of Cauchon-Le diagrams that sit in the Young tableau $Y_{\lambda}$. \\

Given the above discussion, there is a clear strategy to prove that $\ch$-primes are generated by quantum Pl\"ucker coordinates. In view of the isomorphisms \eqref{iso-intro}, we should prove that the $\ch$-primes of 
$\ch-\spec\partition$
are generated by the images of the quantum Pl\"ucker coordinates under $\Phi_{\lambda}$. So one of the first tasks is to identify these images in $\partition$. This leads to the concept of {\em pseudo quantum minor} which is a generalisation of the notion of quantum minor. In the case where  $\gamma =[1,\dots , m]$, the partition subalgebra coincides with $\qimf$, and the pseudo quantum minors are exactly the quantum minors of $\qimf$. However, in general, there are more pseudo quantum minors than quantum minors. Care will be needed when dealing with pseudo quantum minors as not all of the usual formulae for quantum minors such as quantum Laplace expansions are true for pseudo quantum minors. \\

In Section \ref{Gens_for_H_primes_in_part_subalgs}, we prove that $\ch$-primes in partition subalgebras are generated as right (respectively, left) ideals by pseudo quantum minors. In order to do this, we start from the case of quantum matrices $\qimf$ and do a decreasing induction on the number of boxes in the Young tableau $Y_{\lambda}$. This part makes heavy use of Cauchon's theory of deleting derivations. \\

As a consequence, transferring through the dehomogenisation isomorphism \eqref{iso-intro}, we obtain that  $\ch$-primes in $S(\gamma) [\overline{\gamma}^{-1}]$ are generated by quantum Pl\"ucker coordinates. \\

Of course, this is not enough to conclude that  $\ch$-primes in $S(\gamma) $ are generated by quantum Pl\"ucker coordinates: we have to deal with a very difficult torsion problem! \\

In order to tackle this problem, we will make use of a result of Goodearl and the second author \cite[Proposition 2.1]{good-len-q-transc}, which will allow us to pass information from the classical setting (that is, when $q=1$) to the generic setting (that is, when $q$ is transcendental) in specific circumstances that we will detail below. In particular, to use this result, we need to have information about prime ideals in the quantum grassmannian not only over a field, but also over a commutative Laurent polynomial ring $\k[q^{\pm 1}]$, as well as information about which Pl\"ucker coordinates generate prime ideals in the homogeneous coordinate ring of the classical grassmannian. \\

Working over $\k[q^{\pm 1}]$ brings extra technicalities. For instance, we cannot use the full strength of the $\ch$-action and talk about $\ch$-primes. \\

We proceed as follows. For each Cauchon-Le diagram $C$ in $Y_{\lambda}$, we give a graph-theoretic way to recognise whether a quantum Pl\"ucker coordinate belongs to the $\ch$-prime $P_C$ of $\oqgmnf$ associated to $C$. More precisely, we make use of the Postnikov graph associated to a Cauchon-Le diagram as defined by Postnikov in \cite{post}, see also \cite{cas1,cas2}. In each white box of the Cauchon-Le diagram $C$, we put a vertex and then draw a $\Gamma$-shaped hook at each of these vertices. Labelling the Cauchon-Le diagram as in Figure~\ref{figure-135-tableau}, we obtain a planar network whose sources are indexed by $\gamma_1 , \dots , \gamma_m$ and sinks by $\gc 1,n \dc \setminus \gamma$. Figure~\ref{figure-intro-post} illustrates this construction. 

\begin{figure}
\begin{tikzpicture}[xscale=1, yscale=1]

\draw[color=white] (0,0) rectangle (0,0);   

\draw[color=gray] (7,2) rectangle (8,3);            
\draw[color=gray] (8,2) rectangle (9,3);            
\draw[color=gray] (9,2) rectangle (10,3);            

\draw[color=gray] (7,1) rectangle (8,2);               
\draw[color=gray] (8,1) rectangle (9,2);               

\draw[color=gray] (7,0) rectangle (8,1);            


\draw[fill=gray] (7,2) rectangle (8,3);               




\node[scale=1] at (10.2,2.5) {$\color{red}1$};
\node[scale=1] at (9.6,1.8) {$2$};
\node[scale=1] at (9.2,1.5) {$\color{red}3$};
\node[scale=1] at (8.6,0.8) {$4$};
\node[scale=1] at (8.2,0.5) {$\color{red}5$};
\node[scale=1] at (7.5,-0.3) {$6$};

\node at (8.5, 2.5) {$\bullet$}; 
\node at (9.5, 2.5) {$\bullet$}; 
\node at (7.5, 1.5) {$\bullet$}; 
\node at (8.5, 1.5) {$\bullet$}; 
\node at (7.5, 0.5) {$\bullet$}; 

\draw [<-, thick, black] (9.6,2.5)--(10.1,2.5); 
\draw [<-, thick, black] (8.6,2.5)--(9.4,2.5);

\draw [<-, thick, black] (8.6,1.5)--(9.1,1.5);
\draw [<-, thick, black] (7.6,1.5)--(8.4,1.5);

\draw [<-, thick, black] (7.6,0.5)--(8.1,0.5);

\draw [->, thick, black] (8.5,2.4)--(8.5,1.6);  
\draw [->, thick, black] (9.5,2.4)--(9.5,1.9);  
\draw [->, thick, black] (7.5,1.4)--(7.5,0.6);  
\draw [->, thick, black] (8.5,1.4)--(8.5,0.9);  
\draw [->, thick, black] (7.5,0.4)--(7.5,-0.1);  

\end{tikzpicture}
\caption{Example of Postnikov graph}
\label{figure-intro-post}
\end{figure}
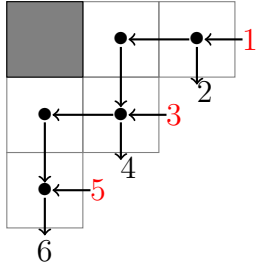

We are now ready to state the main result of Section \ref{qgrass}.

\begin{theorem}\label{theorem-main-pluckers-2-intro}
Assume that $q \in \f^*$ is not a root of unity. Let $P\neq \ang{\Pi}$ be the $\ch$-prime ideal of $\oqgmnf$ associated to the Cauchon-Le diagram $C$ in $Y_{\lambda}$.  Set $\{a_1<\cdots<a_{n-m}\}=\llb 1,n \rrb\bs\gamma$. Let $\alpha\in \Pi$ be such that $\alpha>\gamma$.  Write 
 $\alpha=[(\gamma\bs \{\gamma_{i_1},\ldots,\gamma_{i_t}\})\sqcup\{a_{j_1},\ldots,a_{j_t}\}]$ where 
$1\leq i_1<\cdots < i_t\leq m$ and $1\leq j_1<\cdots <j_t\leq n-m$ with 
$a_{j_l}>\gamma_{i_l}$ for all $l\in \llb 1,t\rrb$.

 Then the quantum Pl\"ucker coordinate $\alpha$ belongs to $P$ if and only if there does not exist a vertex-disjoint path system from sources indexed by $\gamma_{i_1},\ldots,\gamma_{i_t}$ to sinks associated to $a_{j_1},\ldots,a_{j_t}$ in the Postnikov graph of $C$.
\end{theorem}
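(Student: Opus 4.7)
The plan is to transfer the problem, via the noncommutative dehomogenisation isomorphism \eqref{iso-intro}, from the quantum grassmannian to the partition subalgebra $\partition$, where the results of Section \ref{Gens_for_H_primes_in_part_subalgs} are directly applicable, and then to interpret pseudo quantum minor membership combinatorially by using Cauchon's deleting derivations algorithm together with the Lindström--Gessel--Viennot-type path counting formulae of Casteels. Since $\alpha>\gamma$ and $P\neq\langle\Pi\rangle$, the $\ch$-prime $P$ survives in the localisation $S(\gamma)[\bar\gamma^{-1}]$ (this is precisely the unique surviving-localisation statement for $\ch$-primes of $\oqgmnf$ recalled in the introduction), and hence $\alpha\in P$ if and only if $\bar\alpha$ lies in the corresponding prime of $S(\gamma)[\bar\gamma^{-1}]$.

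First, I would use the explicit description of $\Phi_\lambda$ together with the quantum Laplace-type expansion of a Pl\"ucker coordinate of the form $[(\gamma\setminus\{\gamma_{i_1},\ldots,\gamma_{i_t}\})\sqcup\{a_{j_1},\ldots,a_{j_t}\}]$ to show that, up to a nonzero scalar and a power of the normal element $Z$, $\Phi_\lambda(\bar\alpha)$ coincides with a specific pseudo quantum minor $[I|J]$ of $\partition$. Here the row set $I$ is dictated by the positions $i_1<\cdots<i_t$ within $\gamma$, while the column set $J$ is dictated by the positions of $a_{j_1},\ldots,a_{j_t}$ within the complement $\llb 1,n\rrb\setminus\gamma$, read off from the shape of the Young tableau $Y_\lambda$.

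Next, I would invoke the main result of Section \ref{Gens_for_H_primes_in_part_subalgs}, which guarantees that $\ch$-primes of $\partition$ are generated as one-sided ideals by the pseudo quantum minors they contain and provides an explicit criterion, in terms of the Cauchon-Le diagram $C$, for deciding which pseudo quantum minors lie in a given $\ch$-prime. This reduces the original problem to the purely combinatorial question of whether the pseudo quantum minor $[I|J]$ belongs to the $\ch$-prime of $\partition$ associated to $C$. To translate this into the Postnikov graph condition, I would use that Cauchon's deleting derivations embed $\partition$ into a quantum affine space and that, following Casteels \cite{cas1,cas2}, the image of a (pseudo) quantum minor admits an expansion as a quantum-weighted sum over vertex-disjoint path systems in the associated planar network. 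Specialising to the Cauchon-Le diagram $C$ amounts to setting the variables attached to black boxes to zero, thereby restricting the sum to path systems that avoid the black boxes, i.e., to vertex-disjoint path systems in the Postnikov graph of $C$. Because the surviving quantum weights are nonzero monomials in the remaining variables that, by the quantum Lindström lemma, do not cancel, the pseudo quantum minor $[I|J]$ vanishes modulo the $\ch$-prime if and only if this set of path systems is empty.

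The main obstacle will be the combinatorial bookkeeping on both sides. In particular, one has to match carefully the row and column indices of the pseudo quantum minor $[I|J]$ arising as $\Phi_\lambda(\bar\alpha)$ with the sources labelled by $\gamma_{i_1},\ldots,\gamma_{i_t}$ and the sinks labelled by $a_{j_1},\ldots,a_{j_t}$ along the boundary of $Y_\lambda$, so that the Postnikov graph for the Grassmannian $\ch$-prime is shown to coincide with the graph coming from the deleting derivations picture for $\partition$. A second, closely related, technicality is to extend Casteels' path-counting formula from ordinary quantum minors of $\qimf$ to pseudo quantum minors in an arbitrary partition subalgebra; this should follow by combining the embedding $\partition\hookrightarrow\qimf$ with the restriction of Casteels' formula to paths supported on $Y_\lambda$, but it requires checking that no cancellations occur among the additional terms lying outside the partition shape.
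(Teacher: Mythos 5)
Your proposal is correct and follows essentially the same route as the paper: restrict to the stratum of $\gamma$, transfer through the dehomogenisation isomorphism, identify $\Psi(\bar{\alpha}\bar{\gamma}^{-1})$ up to a power of $-q$ with the pseudo quantum minor $[i_1\cdots i_t\,|\,n-m+1-j_t\cdots n-m+1-j_1]$ (Theorem \ref{where it goes}, proved via the generalised quantum Pl\"ucker relations and the quantum Muir law), and then apply the graph-theoretic vanishing criterion for pseudo quantum minors. The only adjustment is that this criterion is the membership result, Theorem \ref{which pseudo minors} (path matrix via deleting derivations, with Lemma \ref{same perm lemma} handling the non-identity permutations forced by non-rectangular shapes), rather than the generation theorem of Section \ref{Gens_for_H_primes_in_part_subalgs}, and the paper establishes it directly on the Young-diagram Postnikov graph instead of by restriction from $\qimf$ — exactly resolving the technicalities you flag.
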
 

For instance, in the Cauchon-Le diagram $C$ of Figure \ref{figure-intro-post}, there is a vertex disjoint set of paths from $\{{\color{red}1},{\color{red}3}\}$ to $\{2,4\}$ and so Theorem \ref{theorem-main-pluckers-2-intro} shows that the quantum Pl\"ucker coordinate $[245]$ is not in the $\ch$-prime associated to $C$. 

On the other hand, there is no vertex disjoint set of paths from  $\{{\color{red}1},{\color{red}3}\}$  to $\{4,6\}$ so Theorem \ref{theorem-main-pluckers-2-intro} shows that the quantum Pl\"ucker coordinate $[456]$ is in the $\ch$-prime associated to $C$. \\

In view of Theorem \ref{theorem-main-pluckers-2-intro} we introduce the following sets of quantum Pl\"ucker coordinates: for each Cauchon-Le diagram $C$ in $Y_{\lambda}$, denote by $P^C_{\lambda}$ the set of quantum Pl\"ucker coordinates such that there does not exist a vertex-disjoint path system from sources indexed by $\gamma_{i_1},\ldots,\gamma_{i_t}$ to sinks associated to $a_{j_1},\ldots,a_{j_t}$ in the Postnikov graph of $C$. Theorem \ref{theorem-main-pluckers-2-intro} shows that $P^C_{\lambda}$ is exactly the set of quantum Pl\"ucker coordinates that belong to the $\ch$-prime associated to $C$. Of course, we would like to prove that the ideal generated by $P^C_{\lambda}$ in $\oqgmnf$ is (completely) prime. \\

We will prove this in a rather indirect way. Let $\k$ be a field, set $R:=\k[q^{\pm 1}]$ and let $\f$ be the field of fractions of $R$. All the algebras we have defined so far can be defined over $R$, and the dehomogenisation isomorphisms \eqref{iso-intro}  restrict to isomorphisms of $R$-algebras. We use a subscript $R$ to indicate that we are working with the $R$-algebra. \\

In order to prove that $P_C^{\lambda}$ generates a completely prime ideal of  $S(\gamma)_R [\overline{\gamma}^{-1}]$, we transfer this problem into a question about certain families of pseudo quantum minors generating completely prime ideals in $\partitionr$.\\

Using the deleting derivations algorithm in a way similar to that which we used when proving that 
 $\ch$-primes in $\partition$ are generated by pseudo quantum minors, the problem is reduced to proving that certain families of quantum minors generate completely prime ideals in quantum matrices over $R$. The fact that these families of quantum minors generate prime ideals in quantum matrices over a field was established by Casteels, \cite{cas2}, and we make use of the fact that they form a Gr\"obner basis to push down the result over $R$ in Section \ref{Primes in oqmmnr from Cauchon-Le diagrams}. As a consequence, we prove that the right ideal generated by $P^C_{\lambda}$ is a completely prime ideal of $S(\gamma)_R [\overline{\gamma}^{-1}]$.  \\

Set $B_R:= (\idealPclr/\langle\Pi_\gamma\rangle_R)[\overline{\gamma}^{-1}]
\cap\oqgmnr/\langle\Pi_\gamma\rangle_R$, and note that $B_R$ is completely 
prime. Set $A_R:= \idealPclr/\langle\Pi_\gamma\rangle_R$. Clearly, $A_R\subseteq B_R$, and this inclusion gives  a complex of right $R$-modules:
$$A_R \rightarrow B_R \rightarrow 0.$$
At this stage, we are almost ready to apply \cite[Proposition 2.1]{good-len-q-transc}. It only remains to prove that the complex 
$$A_R/(q-1)A_R \rightarrow B_R/(q-1)B_R\rightarrow 0$$
is exact. In order to achieve this, we use the complete primeness of $B_R$ together with results of Knutson-Lam-Speyer \cite{kls-proj,kls} that show that the sets $P^C_{\lambda}$ generate  prime ideals of the homogeneous coordinate ring of the (classical) grassmannian over an algebraically closed field (they are the vanishing ideals of the so-called {\em positroid varieties}). 

As a consequence, we obtain that $A_{\f}=B_{\f}$, so that $\ang{P_\lambda^C}_{\f}$ is completely prime in $\oqgmnf$ under the assumption that $q$ is transcendental and $\k$ is algebraically closed. The later is necessary at this stage as the results of Knutson-Lam-Speyer are only available under the assumption that $\k$ is algebraically closed. However, using ideas from \cite{gll2}, we remove this assumption to obtain Theorem \ref{theorem-main-intro}. \\

In the final section of this paper, we investigate consequences of our main result. First, we confirm that the set of quantum Pl\"ucker coordinates that belong to an $\ch$-prime of $\oqgmnf$ are precisely the complements of positroids (in the totally nonnegative grassmannian). This is a grassmannian analogue of the main result of \cite{gll1,gll2}. \\

Next, we turn our attention to the poset structure of $\hspec (\oqgmnf)$. The parallel with the totally nonnegative world still features heavily here. Indeed, we use the notion of Grassmann necklace introduced by Postnikov, \cite{post}, in the  study of positroids to give a criterion for one $\ch$-prime to be contained in another $\ch$-prime. This allows us to describe the poset structure of $\hspec (\oqgmnf)$, and in particular to prove that it is isomorphic to the poset of torus-orbits of symplectic leaves in the grassmannian. This partly answers a conjecture of Yakimov \cite{yak-qflag}. This result was certainly expected and is in the spirit of the orbit method. \\

In order to prove that the primitive spectra of quantum algebras are homeomorphic to the space of symplectic leaves of their semi-classical limit, Brown and Goodearl \cite{bg} have developed a strategy based on the $\ch$-stratification and the so-called notion of separating Ore sets. Our final main result gives an explicit construction of separating Ore sets for all $\ch$-primes in $\oqgmnf$. \\

The results of this paper have potential applications in theoretical physics, via a recently established connection between scattering amplitudes and the totally nonnegative grassmannian, see, for example, \cite{arkani}, and the very recent paper by Movshev and Schwarz, \cite{ms}, which takes this connection onwards to the quantum grassmannian as a result of our work.\\

The paper is organised as follows. In Section 2, we develop the notion of pseudo quantum minor in a partition subalgebra. Section 3 is dedicated to general results about deleting derivations. In Section 4, we give a graph-theoretic interpretation of pseudo quantum minors. These sections all feed into Section 5 where we give a graph-theoretic characterisation of those pseudo quantum minors that belong to an $\ch$-prime in a partition subalgebra (over a field). In Section 6, we prove that certain families of quantum minors generate completely prime ideals in $\oqmmnr$. We prove that $\ch$-primes in a partition subalgebra over a field are generated as right ideals by pseudo quantum minors in Section 7. In Section 8, we prove that certain families of pseudo quantum minors generate completely prime ideals in $\partitionr$. Sections 9 and 10 are dedicated to the quantum grassmannian. We prove Theorem \ref{theorem-main-pluckers-2-intro} in Section 9 and Theorem \ref{theorem-main-intro} in Section 10. Finally, Section 11 is dedicated to applications of our main results. \\

\noindent {\bf Acknowledgment:} SL and THL would like to thank MFO (Oberwolfach), CIRM (Luminy) and ICMS (Edinburgh), where parts of this work were developed over the last few years. The authors thank Thomas Lam for his explanations about positroid varieties (see discussion after Proposition 10.8).


\section{Quantum matrices and partition subalgbras thereof}


\subsection{Basic definitions and $q$-Laplace expansions} 
\label{partition subalgebra}

Let $\k$ be a field, and let $q$ a nonzero element of $\k$. 
The algebra of $m\times n$ quantum matrices over $\k$, denoted by $\oqmmnk$, is 
the algebra generated over $\k$ by 
$mn$ indeterminates 
$x_{ij}$, with $1 \le i \le m$ and $1 \le j \le n$,  which commute with the elements of 
$\k$ and are subject to the relations:
\[
\begin{array}{ll}  
x_{ij}x_{il}=qx_{il}x_{ij},&\mbox{ for }1\le i \le m,\mbox{ and }1\le j<l\le
n\: ;\\ 
x_{ij}x_{kj}=qx_{kj}x_{ij}, & \mbox{ for }1\le i<k \le m, \mbox{ and }
1\le j \le n \: ; \\ 
x_{ij}x_{kl}=x_{kl}x_{ij}, & \mbox{ for } 1\le k<i \le m,
\mbox{ and } 1\le j<l \le n \: ; \\
x_{ij}x_{kl}-x_{kl}x_{ij}=(q-q^{-1})x_{il}x_{kj}, & \mbox{ for } 1\le i<k \le
m, \mbox{ and } 1\le j<l \le n.
\end{array}
\]
Informally, we will refer to the final relation above as the {\em nasty relation}, as it is responsible for most of the difficulties in computations in $\oqmmnk$. 

When $q\in \k^*$ is not a root of unity, one may write $\oqmmnk$ as a Quantum Nilpotent Algebra (QNA for short) or Cauchon-Goodearl-Letzter extension in the sense of \cite[Definition 3.1]{llr-grass}, by adjoining the generators $x_{ij}$ in lexicographic order:
\[
\oqmmnk=\k[x_{11}][x_{12};\sigma_{12},\delta_{12}]\cdots [x_{mn};\sigma_{mn},\delta_{mn}],
\]
(here, the $\sigma_{ij}$ are automorphisms and the $\delta_{ij}$ are left $\sigma_{ij}$-derivations of the appropriate subalgebras.) The exact definition (of QNA) will be given later in Section \ref{section-cgl} .

The algebraic torus $\ch=(\k^\times)^{m+n}$ acts by automorphisms on 
$\oqmmnk$ as follows:
\[
(\alpha_1,\ldots,\alpha_m,\beta_1,\ldots,\beta_n)\cdot x_{ij}=\alpha_i\beta_j x_{ij}.
\]
 By \cite[Theorem II.2.7]{bg}, the action of $\ch$ on $\oqmmnk$ is rational in the sense of \cite[Definition II.2.6]{bg}. All actions of tori which appear in this paper can be checked to be rational using \cite[Theorem II.2.7]{bg} along with elementary arguments.

Let $R$ be a commutative noetherian domain and let $q$ be an invertible element of $R$. Let $\f$ be the field of fractions of $R$. 
(Often, $q$ will be restricted to be a non-root of unity or, even more restrictedly, a transcendental element over some base field contained in $R$. We will make the precise requirements clear whenever we need to restrict $R$ and $q$.)
The ring, $\oqmmnr$,  of quantum matrices over $R$ 
is the subring of $\oqmmnf$ generated over $R$ by the 
$x_{ij}$. 

The ring  $\oqmmnr$ is a noetherian domain, as it is an iterated Ore extension over the ring 
noetherian ring $R$.

If a ring  $A$ is generated over $R$ by elements $x_{ij}$ which satisfy 
all of the above relations except possibly the nasty relation (and maybe has 
other relations as well) then we will say 
that $A$ is a {\em pre-quantum matrix algebra}.


\begin{definition}\label{definition-partition-subalgebra} 
Let $\lambda = \{\lambda_1\geq \lambda_2\geq\dots\geq\lambda_m\}$ be a partition with associated Young diagram $Y_\lambda$. 
In $\oqmmnr$, with $n\geq \lambda_1$, look at the subring  
$\ylr$ generated 
over $R$ by those 
$x_{ij}$ that fit  into the Young diagram for $\lambda$. We call this subalgebra the {\em partition 
subalgebra of $\oqmmnr$
 associated with the partition $\lambda$}.
\end{definition} 

When $q\in \k^*$ is not a root of unity, the partition subalgebra  $\ylk$ can be presented as a QNA with the variables $x_{ij}$ added in lexicographic order, and with the torus $\ch$ acting via restriction from the action on $\oqmmnk$. Other orderings of the variables are permissible while maintaining the QNA condition, and some will be used later in the paper. 
A consequence of the QNA condition is that important tools such as Cauchon's deleting derivations procedure and the Goodearl-Letzter stratification theory are available. 
(This is the origin of the CGL terminology, see \cite{llr-ufd}.) 
These ideas will be introduced in detail later. At the moment, we merely note that it is immediate that 
partition subalgebras of $\oqmmnr$ are noetherian domains.

We can augment the generating elements in a partition subalgebra to 
obtain a pre-quantum matrix algebra of size $m\times n$ by setting 
$x_{ij}:= 0$ whenever $(i,j)$ lies outside the Young diagram $Y_\lambda$.


\begin{example}\label{def-partition-subalgebra}
 Let $\lambda =\{4,3,1\}$. Then the partition subalgebra 
associated with $\lambda$ is generated by the following variables:

\begin{center}

\begin{tikzpicture}[xscale=1, yscale=1]




\draw[color=gray] (0,2) rectangle (1,3);            
\draw[color=gray] (1,2) rectangle (2,3);            
\draw[color=gray] (2,2) rectangle (3,3);            
\draw[color=gray] (3,2) rectangle (4,3);            

\draw[color=gray] (0,1) rectangle (2,2);               
\draw[color=gray] (1,1) rectangle (1,2);               
\draw[color=gray] (2,1) rectangle (3,2);               

\draw[color=gray] (0,0) rectangle (1,1);            




\node at (0.5, 2.5) {\Large{$x_{11}$}}; 
\node at (1.5, 2.5) {\Large{$x_{12}$}}; 
\node at (2.5, 2.5) {\Large{$x_{13}$}}; 
\node at (3.5, 2.5) {\Large{$x_{14}$}}; 

\node at (0.5, 1.5) {\Large{$x_{21}$}}; 
\node at (1.5, 1.5) {\Large{$x_{22}$}}; 
\node at (2.5, 1.5) {\Large{$x_{23}$}}; 

\node at (0.5, 0.5) {\Large{$x_{31}$}}; 

\end{tikzpicture}
\end{center}
In the corresponding $3\times 4$ pre-quantum matrix algebra, 
$x_{24}=x_{32}=x_{33}=x_{34}=0$. 
\end{example}

Later, we also need the following algebras associated to a Young diagram $Y$. 
The {\em quantum affine space corresponding to the Young diagram $Y$} 
is the algebra generated over $R$ or $\k$ (as appropriate) by variables $t_{ij}$, one for each box $(i,j)$ 
of the Young diagram, subject to the following commutation relations: 
$t_{ij}t_{il} = qt_{il}t_{ij}$, for each $j<l$, and $t_{ij}t_{kj}=qt_{kj}t_{ij}$ for each 
$i<k$, while all other pairs  $t_{ij}, t_{kl}$ commute. The {\em quantum 
torus corresponding to $Y$} is the localisation of the quantum affine space corresponding to  $Y$ obtained by inverting each of the $t_{ij}$ (this is 
possible, as each $t_{ij}$ is a normal element).

\q
An {\it index pair} is a pair $(I,J)$ such
that $I \subseteq \{1,\dots,m\}$ and $J \subseteq \{1,\dots,n\}$ are subsets
with the same cardinality. Hence, an index pair is given by an integer $t$
such that $1 \le t \le \min\{m,n\}$ and ordered sets 
$I=\{i_1 < \dots < i_t\} \subseteq \{1,\dots,m\}$
and $J=\{j_1 < \dots < j_t\} \subseteq \{1,\dots,n\}$. 
In a pre-quantum matrix algebra arising from a partition subalgebra, to any such index pair
we associate the {\em pseudo quantum minor} 
\[ 
[I|J] = \sum_{\sigma\in S_t}
(-q)^{\ell(\sigma)} x_{i_1j_{\sigma(1)}} \cdots x_{i_tj_{\sigma(t)}} . 
\] 
with the convention that $x_{ij}=0$ whenever $x_{ij}$ falls outside the 
 partition. 
 
For example, in Example~\ref{def-partition-subalgebra}, 
$[12|12]=x_{11}x_{22} -qx_{12}x_{21}$, which is the usual quantum minor, 
whereas $[12|34]= -qx_{14}x_{23}$. \\


\begin{remark} 
Care is needed when manipulating pseudo quantum minors. With the 
 $x_{ij}=0$ convention used above, the variables do not form a quantum matrix. 
 For example, the nasty relation fails:  in Example~\ref{def-partition-subalgebra}
 above, as $x_{24}=0$ we see that if the nasty 
 relation were to hold then 
 $0= x_{13}x_{24} -x_{24}x_{13} = (q-q^{-1})x_{14}x_{23}$, which is a contradicion.

So, we are not free to automatically take over known results such as quantum 
Laplace expansions for quantum minors to pseudo quantum minors. 
To see this, notice that if we consider the partition subalgebra of 
$2\times 2$ quantum matrices corresponding to 
$\lambda =\{2,1\}$ and so generated by $x_{11},x_{12}, x_{21}$ and 
$x_{22}=0$, then, reasoning as above we see that the 
pseudo quantum minor $[12|12]= x_{11}0-qx_{12}x_{21} \neq 
q^{-1}x_{21}x_{12} = x_{22}x_{11} -q^{-1}x_{21}x_{12}
$. Thus, we don't have the usual quantum 
Laplace expansion on the first row, with 
the first row occurring on the right. However, there are two quantum Laplace 
row expansions which do follow easily from the definition, and these are 
noted in the next lemma. 
\end{remark}


\begin{lemma}[quantum Laplace expansion with rows]\label{row Laplace}
Let $I=\{i_1<\ldots<i_l\}\subseteq \llb 1, m\rrb$ and $J=\{j_1<\ldots<j_l\}\subseteq \llb 1,n\rrb$.
The following quantum Laplace expansions hold for pseudo quantum minors over
a pre-quantum matrix algebra arising from a partition subalgebra:
\begin{enumerate}[(1)]
\item $[I\ |\ J]=\ds{\sum_{p=1}^l}(-q)^{p-1}x_{i_1,j_p}[i_2\cdots i_l\ |\ j_1\cdots \widehat{j_p}\cdots j_l]$;
\item $[I\ |\ J]=\ds{\sum_{p=1}^l}(-q)^{l-p}[i_1\cdots i_{l-1}\ |\ j_1\cdots\widehat{j_p}\cdots j_l]x_{i_l,j_p}$.
\end{enumerate}
\begin{proof}
We treat part (1) only; part (2) is similar. Let us assume for ease of notation that $I=J=\llb 1,l\rrb$ (the proof for general $I$ and $J$ is the same but the notation is more unwieldy). We have 
\begin{align*}
[1\cdots l \mid 1\cdots l] &= \sum_{\sigma\in S_l}(-q)^{\ell(\sigma)}x_{1,\sigma(1)}\cdots x_{l,\sigma(l)} \\
                           &= \sum_{p=1}^l \sum_{\tiny
                         \begin{array}{c}
                         \sigma\in S_l \\ \sigma(1)=p
                         \end{array}
                         } (-q)^{\ell(\sigma)}x_{1,p}x_{2,\sigma(2)}\cdots x_{l,\sigma(l)} \\
                         &= \sum_{p=1}^l x_{1,p} \sum_{\tiny
                         \begin{array}{c}
                         \sigma\in S_l \\ \sigma(1)=p
                         \end{array}
                         } (-q)^{\ell(\sigma)}x_{2,\sigma(2)}\cdots x_{l,\sigma(l)}                          
\end{align*}
Set $\{a_1<\cdots<a_{l-1}\}:=\{2<\cdots<l\}$ and $\{j^p_1<\cdots<j^p_{l-1}\}:=\{1<\cdots<\widehat{p}<\cdots<l\}$, so that 
\begin{align*}
[1\cdots l \mid 1\cdots l] &= \sum_{p=1}^l x_{1,p} \sum_{\rho\in S_{l-1}}(-q)^{\ell(\rho)+p-1}x_{a_1,j_{\rho(1)}^p}\cdots x_{a_{l-1},j_{\rho(l-1)}^p} \\
                           &= \sum_{p=1}^l (-q)^{p-1} x_{1,p} \sum_{\rho\in S_{l-1}}(-q)^{\ell(\rho)}x_{a_1,j_{\rho(1)}^p}\cdots x_{a_{l-1},j_{\rho(l-1)}^p} \\
                           &= \sum_{p=1}^l (-q)^{p-1} x_{1,p} [a_1\cdots a_{l-1}\mid j_1^p\cdots j_{l-1}^p] \\
                           &= \sum_{p=1}^l (-q)^{p-1} x_{1,p} [2 \cdots l \mid 1\cdots \widehat{p} \cdots l] 
\end{align*}
\end{proof}
\end{lemma}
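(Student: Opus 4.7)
The plan is to prove both identities directly from the definition of the pseudo quantum minor, which expresses $[I\mid J]$ as the sum $\sum_{\sigma \in S_l} (-q)^{\ell(\sigma)} x_{i_1, j_{\sigma(1)}} \cdots x_{i_l, j_{\sigma(l)}}$. Crucially, neither identity requires any of the quantum matrix relations or Laplace identities for genuine quantum minors, which as the preceding remark stresses do \emph{not} hold for pseudo quantum minors in general. The arguments are purely combinatorial manipulations of the indexing set $S_l$, so the convention $x_{ij}=0$ outside the partition poses no problem: any term that vanishes on one side vanishes by the same convention on the other.

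For (1), partition $S_l$ according to the value $p = \sigma(1)$. For each fixed $p \in \llb 1,l\rrb$, the assignment $\sigma \mapsto \rho$, where $\rho$ is the induced bijection $\{2,\ldots,l\} \to \{1,\ldots,\widehat{p},\ldots,l\}$, is a bijection onto $S_{l-1}$ (after relabelling source and target order-preservingly). The standard length identity $\ell(\sigma) = (p-1) + \ell(\rho)$ holds because placing $p$ in position $1$ forces exactly $p-1$ inversions with the values $1,\ldots,p-1$ that must then appear further to the right. Since $x_{i_1, j_p}$ is the leftmost factor in every relevant product, it can be pulled out in front without any commutation, and the remaining inner sum assembles into $[i_2\cdots i_l \mid j_1\cdots\widehat{j_p}\cdots j_l]$, yielding the claimed formula.

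Part (2) is entirely symmetric: partition $S_l$ by the value $p = \sigma(l)$, use $\ell(\sigma) = \ell(\rho) + (l - p)$, and factor $x_{i_l, j_p}$ out on the right, since it is the rightmost factor in every term. The only thing to watch is the sign/length bookkeeping, but there is no conceptual obstacle precisely because no noncommutative rearrangement is ever invoked. The lemma is genuinely a formal consequence of the definition, which is why it survives the pathologies of the pseudo quantum minor setting that the preceding remark warns against.
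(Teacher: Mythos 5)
Your proposal is correct and follows essentially the same route as the paper: partition $S_l$ by the value $\sigma(1)=p$ (resp.\ $\sigma(l)=p$), use the length identity $\ell(\sigma)=\ell(\rho)+(p-1)$ (resp.\ $\ell(\rho)+(l-p)$), and pull the factor $x_{i_1,j_p}$ (resp.\ $x_{i_l,j_p}$) out of the leftmost (resp.\ rightmost) position without invoking any commutation relations, so the $x_{ij}=0$ convention causes no difficulty. Nothing further is needed.
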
 


We shall also need quantum Laplace expansions on columns, and the next 
result is preparation for such results. 

\begin{lemma}\label{flipped expression for pseudo q minors}
If $I=\{i_1<\ldots< i_l\}\subseteq \llb 1, m\rrb$ and $J=\{j_1<\ldots<j_l\}\subseteq \llb 1,n\rrb$. The following expression holds for pseudo quantum minors over
a pre-quantum matrix algebra arising from a partition subalgebra:
\[
[I \ |\ J]=\sum_{\sigma\in S_l}(-q)^{\ell(\sigma)}x_{i_{\sigma(1)},j_1}x_{i_{\sigma(2)},j_2}\cdots x_{i_{\sigma(l)},j_l}.
\]
\begin{proof}
Let us assume for ease of notation that $I=J=\llb 1,l\rrb$ (the proof for general $I$ and $J$ is the same but the notation is more unwieldy).
Let us set $\{1\cdots l\ |\ 1\cdots l\}=\sum_{\sigma\in S_l}(-q)^{\ell(\sigma)}x_{\sigma(1),1}\cdots x_{\sigma(l),l}$. Our claim is that $\{1\cdots l\ |\ 1\cdots l\}=[1\cdots l\ |\ 1\cdots l]$. This claim clearly holds if $l=1$ and we proceed by induction on $l$. 
We have 
\begin{align*}
[1\cdots l\ |\ 1\cdots l]&=\sum_{p=1}^l(-q)^{p-1}x_{1,p}[2\cdots l\ |\ 1\cdots \widehat{p}\cdots l]\ \ \ \tx{(Lemma \ref{row Laplace}(1))} \\
                         &=\sum_{p=1}^l(-q)^{p-1}x_{1,p}\{2\cdots l\ |\ 1\cdots \widehat{p}\cdots l\}.\ \ \ \tx{(induction hypothesis)} \\
\end{align*} 
Set $\{a_1<\cdots<a_{l-1}\}:=\{2<\cdots<l\}$ and $\{j^p_1<\cdots<j^p_{l-1}\}:=\{1<\cdots<\widehat{p}<\cdots<l\}$, so that 
\begin{align*}
[1\cdots l\ |\ 1\cdots l]&=\sum_{p=1}^l(-q)^{p-1}x_{1,p}\left( 
\sum_{\rho\in S_{l-1}}(-q)^{\ell(\rho)}x_{a_{\rho(1)},j_1^p}\cdots x_{a_{\rho(l-1)},j^p_{l-1}}
\right) \\
&=\sum_{p=1}^l\sum_{\rho\in S_{l-1}}(-q)^{p-1+\ell(\rho)}x_{1,p}x_{a_{\rho(1)},j_1^p}\cdots x_{a_{\rho(l-1)},j^p_{l-1}}.
\end{align*}
For all $j<p$ and all $s=1,\ldots,l-1$, the relations of the pre-quantum matrix algebra arising from a partition subalgebra show that $x_{1,p}$ commutes with $x_{a_{\rho(s)},j}$ and hence 
\begin{align*}
[1\cdots l\ |\ 1\cdots l]&=\sum_{p=1}^l\sum_{\rho\in S_{l-1}}(-q)^{p-1+\ell(\rho)}x_{a_{\rho(1)},1}\cdots x_{a_{\rho(p-1)},p-1}x_{1,p}x_{a_{\rho(p)},p+1}  \cdots x_{a_{\rho(l-1)},l} \\
                         &=\sum_{p=1}^t\sum_{\tiny
                         \begin{array}{c}
                         \sigma\in S_l \\ \sigma(p)=1
                         \end{array}
                         }
                         (-q)^{\ell(\sigma)}
                         x_{\sigma(1),1}\cdots x_{\sigma(l),l} \\
                         &=\sum_{\sigma\in S_l}(-q)^{\ell(\sigma)}x_{\sigma(1),1}\cdots x_{\sigma(l),l} \\
                         &=\{1\cdots l\ |\ 1\cdots l\},
\end{align*}
as required.
\end{proof}
\end{lemma}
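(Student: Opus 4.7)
My plan is to prove this by induction on $l$, reducing to a row Laplace expansion (which is available by Lemma \ref{row Laplace}) and then using only the commutation relations of the pre-quantum matrix algebra that do \emph{not} involve the nasty relation. The base case $l=1$ is immediate from the definition, since $S_1$ is trivial and both expressions give $x_{i_1,j_1}$.

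For the inductive step, I would start by applying the row Laplace expansion along the first row to write
\[
[I \mid J] = \sum_{p=1}^{l} (-q)^{p-1} x_{i_1, j_p}\bigl[ i_2\cdots i_l \;\bigm|\; j_1 \cdots \widehat{j_p} \cdots j_l \bigr],
\]
and then rewrite each smaller pseudo quantum minor in the flipped (column-indexed) form by the induction hypothesis. This produces a double sum indexed by $p\in\{1,\dots,l\}$ and $\rho\in S_{l-1}$, in which $x_{i_1,j_p}$ sits on the far left, followed by a product of the form $x_{i_{a_{\rho(1)}}, j_1}\cdots x_{i_{a_{\rho(l-1)}}, j_{l-1}^{(p)}}$, where the $a_s$ enumerate $\{2,\dots,l\}$ and the $j_s^{(p)}$ enumerate $\{j_1,\dots,j_l\}\setminus\{j_p\}$ in increasing order.

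The key combinatorial step is then to move $x_{i_1,j_p}$ into its proper slot between the factors corresponding to columns $j_{p-1}$ and $j_{p+1}$. For each $s<p$, the factor $x_{i_{a_{\rho(s)}}, j_s^{(p)}}$ has row index strictly greater than $i_1$ and column index $j_s^{(p)}=j_s<j_p$, so by the third commutation relation of the pre-quantum matrix algebra (which holds in any partition subalgebra without invoking the nasty relation) it commutes with $x_{i_1,j_p}$. After sliding $x_{i_1,j_p}$ past exactly $p-1$ such commuting factors, I would then reindex by identifying each pair $(p,\rho)$ with the unique $\sigma\in S_l$ satisfying $\sigma(p)=1$ and $\sigma(s)=a_{\rho(s)}$ for $s<p$, $\sigma(s)=a_{\rho(s-1)}$ for $s>p$. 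This bijection satisfies $\ell(\sigma)=\ell(\rho)+(p-1)$, which exactly accounts for the sign $(-q)^{p-1}$ picked up in the row Laplace expansion and produces the required single sum $\sum_{\sigma\in S_l}(-q)^{\ell(\sigma)} x_{i_{\sigma(1)},j_1}\cdots x_{i_{\sigma(l)},j_l}$.

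The main obstacle I anticipate is bookkeeping rather than substance: one must be careful that the commutations used are \emph{only} the ones that survive in a partition subalgebra (never the nasty relation), and that the length computation $\ell(\sigma)=\ell(\rho)+(p-1)$ is justified (it is the number of inversions created by inserting the value $1$ at position $p$ into a permutation of $\{2,\dots,l\}$). Because every $x_{ij}$ outside $Y_\lambda$ is set to zero and the identity is linear in such factors, the argument goes through uniformly whether or not some of the $x_{i_s,j_t}$ happen to vanish; no quantum Laplace expansion beyond Lemma \ref{row Laplace} is needed.
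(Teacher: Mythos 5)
Your proposal is correct and follows essentially the same route as the paper: induction on $l$, a row Laplace expansion along the first row (Lemma \ref{row Laplace}(1)), application of the induction hypothesis to rewrite the smaller pseudo quantum minors in column-indexed form, sliding $x_{i_1,j_p}$ past the first $p-1$ factors using only the harmless commutation relation (strictly SW entries commute), and reindexing the pairs $(p,\rho)$ by the $\sigma\in S_l$ with $\sigma(p)=1$ via $\ell(\sigma)=\ell(\rho)+p-1$. No gaps; this matches the paper's argument.
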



\begin{corollary}[quantum Laplace expansion with columns]
\label{column Laplace}
Suppose that $\{i_1<\cdots <i_l\}\subseteq \llb 1,m\rrb$ and that $\{j_1<\cdots <j_l\}\subset \llb 1,n\rrb$. The following quantum Laplace expansions hold for pseudo quantum minors over
a pre-quantum matrix algebra arising from a partition subalgebra:
\begin{enumerate}[(1)]
\item $[i_1\cdots i_l\ |\ j_1\cdots j_l]=\ds{\sum_{p=1}^l}(-q)^{p-1}x_{i_p,j_1}[i_1\cdots \widehat{i_p} \cdots i_l\ |\ j_2\cdots j_l]$;
\item $[i_1\cdots i_l\ |\ j_1\cdots j_l]=\ds{\sum_{p=1}^{l}}(-q)^{l-p} [i_1\cdots \widehat{i_p} \cdots i_l\ |\ j_1\cdots j_{l-1}]x_{i_p,j_l}$.
\end{enumerate}

\begin{proof} 
\begin{full} 
Follows easily from Lemma \ref{flipped expression for pseudo q minors}, with the proof as in 
Lemma~\ref{row Laplace}.
\end{full} 
\end{proof}

\end{corollary}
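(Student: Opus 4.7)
My plan is to prove the corollary by mirroring the proof of Lemma \ref{row Laplace}, but using Lemma \ref{flipped expression for pseudo q minors} as the starting point rather than the definition of the pseudo quantum minor. The idea is that Lemma \ref{flipped expression for pseudo q minors} rewrites $[I\mid J]$ in a ``column-first'' form, which then makes a column expansion essentially formal in the same way that a row expansion is formal from the ``row-first'' definition.

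More concretely, to establish part (1), I would apply Lemma \ref{flipped expression for pseudo q minors} to write
\[
[i_1\cdots i_l \mid j_1\cdots j_l] = \sum_{\sigma \in S_l} (-q)^{\ell(\sigma)} x_{i_{\sigma(1)},j_1} x_{i_{\sigma(2)},j_2}\cdots x_{i_{\sigma(l)},j_l},
\]
and then partition the sum according to the value of $\sigma(1)$. Fixing $\sigma(1)=p$, the factor $x_{i_p,j_1}$ can be pulled out on the left (it stands in the leftmost column slot and there is nothing to commute past), and the remaining data is a bijection from $\{2,\ldots,l\}$ to $\{1,\ldots,\widehat{p},\ldots,l\}$. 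Relabelling this as a permutation $\rho \in S_{l-1}$, one checks that $\ell(\sigma)=(p-1)+\ell(\rho)$, since the inversions of $\sigma$ involving the first slot are precisely the $p-1$ values in $\{1,\ldots,\widehat{p},\ldots,l\}$ that are smaller than $p$. Substituting back and reapplying Lemma \ref{flipped expression for pseudo q minors} to the inner sum (with index sets $\{i_1,\ldots,\widehat{i_p},\ldots,i_l\}$ and $\{j_2,\ldots,j_l\}$) produces precisely
\[
\sum_{p=1}^l (-q)^{p-1} x_{i_p,j_1}[i_1\cdots \widehat{i_p}\cdots i_l \mid j_2\cdots j_l].
\]

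Part (2) is entirely analogous: one partitions by $\sigma(l)$ instead of $\sigma(1)$. The factor $x_{i_{\sigma(l)},j_l}$ can be pulled out on the right (again, it already sits in the rightmost column slot), and the length contribution from the last position becomes $l-p$ (counting $\sigma(k)>p$ for $k<l$, of which there are $l-p$). The inner sum is then recognised as $[i_1\cdots \widehat{i_p}\cdots i_l \mid j_1\cdots j_{l-1}]$ via Lemma \ref{flipped expression for pseudo q minors} once more, yielding the desired identity.

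I do not anticipate a serious obstacle here: the only delicate point is checking the sign, which reduces to counting inversions involving a single fixed position, and the only ``commutation'' step (pulling $x_{i_p,j_1}$ or $x_{i_{\sigma(l)},j_l}$ past the other factors) is trivial in Lemma \ref{flipped expression for pseudo q minors}'s expression because that factor is already at the appropriate end. In particular, unlike in the row-expansion proof, we do not need to invoke the commutation relations of the pre-quantum matrix algebra at all; the column Laplace expansions fall out directly from the flipped formula by reindexing.
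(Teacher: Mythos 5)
Your proposal is correct and is essentially the paper's own proof: the paper derives the column expansions from Lemma \ref{flipped expression for pseudo q minors} by repeating the partition-by-$\sigma(1)$ (resp.\ $\sigma(l)$) argument of Lemma \ref{row Laplace}, which is exactly what you carry out, with the sign count $\ell(\sigma)=(p-1)+\ell(\rho)$ (resp.\ $(l-p)+\ell(\rho)$) handled correctly.
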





\section{Quantum Nilpotent Algebras and their $\ch$-primes}\label{section-cgl}

In this section, we review the two main tools needed to study the prime spectrum of a quantum nilpotent algebra: the $\ch$-stratification of Goodearl-Letzter and Cauchon's Deleting Derivations Algorithm. 

\subsection{Quantum Nilpotent Algebras}

Throughout this section, $N$ denotes a positive integer and
 $A$ is an iterated Ore extension; that is,
\begin{equation}
A\ = \ \k[x_1][x_2;\sigma_2,\delta_2]\cdots[x_N;\sigma_N,\delta_N],
\end{equation}
 where $\sigma_j$ is an automorphism of the $\k$-algebra $A_{j-1}:=\k[x_1][x_2;\sigma_2,\delta_2]\dots[x_{j-1};\sigma_{j-1},\delta_{j-1}]$
 and  $\delta_j$ is a  $\k$-linear $\sigma_j$-derivation of
 $A_{j-1}$ for all $j\in \gc 2 ,N \dc$. In other words, $A$ is a skew polynomial ring whose multiplication is defined by:
 $$x_j a = \sigma_j(a) x_j + \delta_j(a)$$
 for all $j\in \gc 2 ,N \dc$ and $a \in A_{j-1}$. Thus $A$ is a noetherian domain.  Henceforth, we
 assume that $A$ is a quantum nilpotent algebra (a.k.a. CGL extension), as in the following definition. 

\begin{definition}[\cite{llr-ufd}]
\label{defCGL}
The iterated Ore extension $A$ is said to be a \emph{quantum nilpotent algebra (QNA for short)} if it is equipped with a rational action of a  torus $\ch=(\k^*)^l$ by $\k$-automorphisms satisfying the following conditions:
\begin{enumerate}
\label{hypofond}
\item[(i)]  The elements $x_1, \ldots, x_N$ are $\ch$-eigenvectors.
\item[(ii)] For every $j \in \gc 2,N \dc$, $\delta_j$ is a locally nilpotent $\sigma_j$-derivation of $A_{j-1}$. 
\item[(iii)] For every $j \in \gc 1,N \dc$, there exists $h_j \in \ch$ such that $(h_j\cdot)|_{A_{j-1}} = \sigma_j$ and
$h_j \cdot x_j = q_j x_j$ for some $q_j \in \k^*$ which is not a root of unity. 
\end{enumerate}
(We have omitted the condition $\sigma_j \circ \delta_j = q_j \delta_j \sigma_j$ from the original definition, as it follows from the other conditions; see, e.g., \cite[Eq. (3.1); comments, p.694]{gy16}.) From (i) and (iii), there exist scalars $\la_{j,i} \in \k^*$ such that $\sigma_j(x_i) = \la_{j,i} x_i$ for all $i < j$ in $\gc1,N \dc$.
\end{definition}

If, in addition, the subgroup of $\k^*$ generated by the $\lambda_{ji}$ is
torsion-free then we will say that $A$ is a {\em torsion-free QNA}.

For a discussion of rational actions of tori, see \cite[Chapter II.2]{bg}.

It is easy to check that all of these conditions are satisfied for partition
subalgebras with the variables ordered lexicographically.

\begin{proposition} \label{prop-partition-cgl}
Partition subalgebras of quantum matrix algebras are torsion-free QNAs when the parameter $q$ is not a root of unity.
\end{proposition}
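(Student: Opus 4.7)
The plan is to order the generators of $\ylk$ lexicographically, show that each step of the iterated Ore extension makes sense inside the partition subalgebra, and then verify conditions (i)--(iii) of Definition~\ref{defCGL} together with the torsion-free hypothesis, essentially by restricting what is already known for $\oqmmnk$. The first task is to check that the commutation and ``nasty'' relations inherited from $\oqmmnk$ keep us inside $\ylk$. Concretely, when we adjoin $x_{ij}$ to the subalgebra $A_{(i,j)-1}$ generated by the earlier $x_{kl}$, the relations of $\oqmmnk$ force
\[
x_{ij}x_{kl}=\sigma_{ij}(x_{kl})x_{ij}+\delta_{ij}(x_{kl}),
\]
where $\sigma_{ij}$ multiplies $x_{kl}$ by $q$ or $1$ depending on the position of $(k,l)$ relative to $(i,j)$, and $\delta_{ij}(x_{kl})=(q-q^{-1})x_{il}x_{kj}$ when $k<i$ and $l<j$, and vanishes otherwise. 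The crucial closure point is that if $(i,j),(k,l)\in Y_\lambda$ with $k<i$ and $l<j$, then $(i,l),(k,j)\in Y_\lambda$: indeed $\lambda_i\ge j>l$ gives $(i,l)\in Y_\lambda$, and $\lambda_k\ge \lambda_i\ge j$ (since $\lambda$ is weakly decreasing) gives $(k,j)\in Y_\lambda$. Hence $\delta_{ij}$ maps $A_{(i,j)-1}$ into itself, so $\ylk$ is genuinely an iterated Ore extension.

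Next I would restrict the rational $\ch=(\k^\times)^{m+n}$-action of $\oqmmnk$ to $\ylk$; since each $x_{ij}$ is an eigenvector under the ambient action, condition (i) is immediate. For condition (iii), I would exhibit $h_{ij}\in\ch$ explicitly: take $\alpha_k=1$ for $k<i$, $\alpha_i=q$, and leave $\alpha_{k'}$ free for $k'>i$; take $\beta_l=1$ for $l\ne j$ and $\beta_j=q$. A quick case check shows that $h_{ij}$ acts on every earlier generator $x_{kl}$ by the required scalar $\lambda_{(i,j),(k,l)}\in\{1,q\}$, while $h_{ij}\cdot x_{ij}=q^2 x_{ij}$; as $q$ is not a root of unity, neither is $q^2$.

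For condition (ii) I would show $\delta_{ij}$ is locally nilpotent. The first observation is $\delta_{ij}^{2}(x_{kl})=0$ for every generator: if $\delta_{ij}(x_{kl})\ne 0$ then $\delta_{ij}(x_{kl})$ is a scalar times $x_{il}x_{kj}$, and both of these factors are killed by $\delta_{ij}$ (the first because its row index equals $i$, the second because its column index equals $j$, placing them outside the nasty case). Combined with the skew-Leibniz rule $\delta_{ij}(ab)=\sigma_{ij}(a)\delta_{ij}(b)+\delta_{ij}(a)b$ and the fact that $\sigma_{ij}$ is diagonal on the generators, an induction on the total generator-length of a monomial shows that $\delta_{ij}^{n+1}$ annihilates every word of length $n$, yielding local nilpotence. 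This step, together with the Young-diagram closure, is the only one that is not completely automatic from the quantum matrix case, and it is the main (though mild) obstacle.

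Finally, for torsion-freeness, the computation above shows that every scalar $\lambda_{(i,j),(k,l)}$ appearing in $\sigma_{ij}(x_{kl})=\lambda_{(i,j),(k,l)}x_{kl}$ lies in $\{1,q\}$, so the subgroup of $\k^\times$ they generate is contained in $\langle q\rangle$, which is torsion-free precisely because $q$ is not a root of unity. This completes the verification that $\ylk$ is a torsion-free QNA.
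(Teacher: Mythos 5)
Your verification is correct and takes essentially the route the paper intends but leaves as ``easy to check'': present $\ylk$ with the lexicographically ordered generators, use the Young-diagram closure property ($k<i$, $l<j$ and $(i,j),(k,l)\in Y_\lambda$ force $(i,l),(k,j)\in Y_\lambda$) so that $\sigma_{ij},\delta_{ij}$ restrict, verify local nilpotence and condition (iii) by hand, and restrict the rational torus action from $\oqmmnk$ — exactly the presentation the paper records explicitly in \eqref{auto}--\eqref{der}. The only blemish is a convention slip: with the paper's relations the automorphism scales earlier generators by $q^{-1}$ (not $q$), so $\delta_{ij}(x_{kl})=(q^{-1}-q)\,x_{kj}x_{il}$ and your $h_{ij}$ should take $\alpha_i=\beta_j=q^{-1}$; this affects none of your checks, since the closure, nilpotence and torsion-freeness arguments are unchanged and $\langle q^{-1}\rangle=\langle q\rangle$ is torsion-free.
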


A two-sided ideal $I$ of $A$ is said to be {\em $\ch$-invariant} if $h\cdot I=I$ for all
$h \in \ch$.  An {\em $\ch$-prime ideal} of $A$ is a proper $\ch$-invariant ideal
$J$ of $A$ such that if  $J$ contains the product of two
$\ch$-invariant ideals of $A$ then $J$ contains at least one of them. We denote
by $\ch$-$\spec(A)$ the set of all $\ch$-prime ideals of $A$. Observe
that if $P$ is a prime ideal of $A$ then
\begin{equation}
(P:\ch)\ := \ \bigcap_{h\in \ch} h\cdot P
\end{equation}
 is an $\ch$-prime ideal of
$A$. Indeed, let $J$ be an $\ch$-prime ideal of $A$. We denote
by $\spec_J (A)$ the {\em $\ch$-stratum} associated  to $J$; that is,  
\begin{equation}
\spec_J (A)=\{ P \in \spec(A) \mbox{ $\mid$ } (P:\ch)=J \}.
\end{equation}
Then the $\ch$-strata of $\spec(A)$ form a partition of $\spec(A)$
\cite[Chapter II.2]{bg}; that
is,
\begin{equation}
\label{eq:Hstratification}
 \spec(A)= \bigsqcup_{J \in \ch\mbox{-}\spec(A)}\spec_J(A).
 \end{equation}
This partition is the so-called {\em $\ch$-stratification} of $\spec(A)$.

It follows from work of Goodearl and Letzter \cite{gl2} that every $\ch$-prime ideal of $A$ is completely prime, so $\ch$-$\spec(A)$ coincides with the set of $\ch$-invariant completely prime ideals of $A$. Moreover there are at most $2^N$ $\ch$-prime ideals in $A$. As a consequence,  the prime spectrum of $A$ into a finite number of parts, the $\ch$-strata. 

For each $\ch$-prime ideal $J$ of $A$, the space $\spec_J(A)$ is homeomorphic to ${\rm Spec}(\k[z_1^{\pm 1},\ldots ,z_d^{\pm 1}])$ for some $d$ which depends on $J$ \cite[Theorems II.2.13 and II.6.4]{bg}, and the primitive ideals of $A$ are precisely the prime ideals that are maximal in their $\ch$-strata \cite[Theorem II.8.4]{bg}.


\subsection{Cauchon's deleting derivations algorithm}
\label{canonicalembedding}
\vskip 2mm

We keep the notation of the previous section. In particular,  $A= \k[x_1][x_2;\sigma_2,\delta_2]\cdots[x_N;\sigma_N,\delta_N]$ still denotes a QNA with associated torus $\ch$.

As we have seen in the previous section, the $\ch$-prime ideals of a QNA $A$ are key in studying the whole prime 
spectrum. Cauchon's deleting derivations algorithm \cite[Section 3.2]{c1}, which we summarise below, provides a powerful way of studying the $\ch$-prime ideals of $A$.

The deleting derivations algorithm constructs, for each $j\in \{ N+1, N, \dots, 2 \} $, a family 
$\{x_1^{(j)},\dots,x_N^{(j)}\}$ of elements of the division ring of fractions ${\rm Fract}(A)$ of $A$ defined as follows:

\begin{enumerate}
\item When $j=N+1$, we set $(x_1^{(N+1)},\dots,x_N^{(N+1)})=(x_1,\dots,x_N)$.
\item Assume that $j<N+1$ and that the $x_i^{(j+1)}$ ($i \in \gc 1,N \dc$) are already constructed. 
Then it follows from \cite[Th\'eor\`eme 3.2.1]{c1} that $x_j^{(j+1)} \neq 0$ and that, for each $i\in \gc 1,N \dc$, we have
$$x_i^{(j)}=\left\{ 
\begin{array}{ll}
x_i^{(j+1)} & \quad \mbox{ if }i \geq j \\
\displaystyle{\sum_{k=0}^{+\infty } \frac{(1-q_j)^{-k}}{[k]!_{q_j}} }
\delta_j^k \circ \sigma_j^{-k} (x_i^{(j+1)}) (x_j^{(j+1)})^{-k} & \quad
\mbox{ if }i < j,
\end{array} \right. $$
\end{enumerate}
where $[k]!_{q_j}=[0]_{q_j} \times \dots \times [k]_{q_j}$ with 
$[0]_{q_j}=1$ and $[i]_{q_j}=1+q_j+\dots+q_j^{i-1}$ when $i \geq 1$.

For all $j \in \gc 2,N+1 \dc$, we denote by $A^{(j)}$ the subalgebra of ${\rm Fract}(A)$ generated by the $x_i^{(j)}$; that is,
$$A^{(j)}:= \f\langle x_1^{(j)},\dots,x_N^{(j)} \rangle .$$

The following results were proved by Cauchon \cite[Th\'eor\`eme 3.2.1 and Lemme 4.2.1]{c1}. 

For $j \in \gc 2,N+1 \dc$, we have

\begin{enumerate}
\item $A^{(j)}$ is isomorphic to an iterated Ore extension of the form $$\k[y_1]\dots[y_{j-1};\sigma_{j-1},\delta_{j-1}][y_j;\tau_j]\cdots[y_N;\tau_N]$$ 
by an isomorphism that sends $x_i^{(j)}$ to $y_i$ ($1 \leq i \leq N$), where 
$\tau_j,\dots,\tau_N$ denote the $\k $-linear automorphisms  such that 
$\tau_{\ell}(y_i)=\lambda_{\ell,i} y_i$ ($1 \leq i \leq \ell$).
\item Assume that $j \neq N+1$ and set $S_j:=\{(x_j^{(j+1)})^n ~|~  n\in \mathbb{N} \}=
\{(x_j^{(j)})^n  ~|~  n\in \mathbb{N} \}$.
\\This is a multiplicative system of regular elements of $A^{(j)}$ and $A^{(j+1)}$, that satisfies the Ore condition
in $A^{(j)}$ and $A^{(j+1)}$. Moreover we have 
$$A^{(j)}S_j^{-1}=A^{(j+1)}S_j^{-1}.$$
\end{enumerate}

It follows from these results that $A^{(j)}$ is a noetherian domain, for all $j\in \gc 2,N+1 \dc$.

As in \cite{c1}, we use the following notation.

\begin{notation}
We set $\overline{A}:=A^{(2)}$ and $t_i:=x_i^{(2)}$ for all $i\in \gc 1,N \dc$.
\end{notation}

It follows from \cite[Proposition 3.2.1]{c1} that  $\overline{A}$ is a quantum affine space in the indeterminates $t_1,\dots,t_N$: it is for this reason that Cauchon used the expression ``effacement des d\'erivations". More precisely, let $\Lambda=\left( \mu_{i,j} \right) \in M_N(\k ^*)$ be the
multiplicatively antisymmetric matrix whose entries are defined as follows. 
$$\mu_{j,i}=\left\{ \begin{array}{ll}
\lambda_{j,i} & \mbox{ if }i<j \\
1 & \mbox{ if } i=j \\
\lambda_{i,j}^{-1} & \mbox{ if }i> j,
\end{array}\right.$$
where the  $\lambda_{j,i}$ with $i<j$ come from the QNA structure of $A$. Then we have \begin{equation}\overline{A}=\k_\Lambda [t_1,\dots,t_N]=\mathcal{O}_{\Lambda}(\k ^N).
,  \end{equation}
where $\k_\Lambda [t_1,\dots,t_N]=\mathcal{O}_{\Lambda}(\k ^N)$ denotes the $\k$-algebra generated by $t_1, \dots , t_n$ with relations $t_it_j = \mu_{i,j}t_jt_i$ for all $i,j$. 
 
\subsection{Canonical embedding}

The deleting derivations algorithm was used by Cauchon in order to relate the prime spectrum of a QNA $A$ to the prime spectrum of the associated quantum affine space $\overline{A}$. More precisely, Cauchon  has used this algorithm to construct embeddings 
\begin{equation}
\varphi_j:{\rm Spec}(A^{(j+1)}) \longrightarrow {\rm Spec}(A^{(j)}) \qquad {\rm for ~} j \in \gc 2,N \dc.
\end{equation} 
Recall from \cite[Section 4.3]{c1} that these embeddings are defined as follows.

Let $P \in  {\rm Spec}(A^{(j+1)})$. Then 
$$ 
\varphi_j (P) = \left\{\begin{array}{ll} 
PS_j^{-1} \cap A^{(j)} & \mbox{ if } x_j^{(j+1)} \notin P \\
g_j^{-1} \left( P/\langle x_j^{(j+1)}\rangle \right) & \mbox{ if } x_j^{(j+1)} \in P \\
\end{array}\right.
$$
where $g_j$ denotes the surjective homomorphism 
$$
g_j:A^{(j)}\rightarrow A^{(j+1)}/\langle x_j^{(j+1)}\rangle
$$ 
defined by 
$$
g_j(x_i^{(j)}):=x_i^{(j+1)} + \langle x_j^{(j+1)}\rangle.
$$
 (For more details see \cite[Lemme 4.3.2]{c1}.)  It was proved by Cauchon \cite[Proposition 4.3.1]{c1} that $\varphi_j$ induces an increasing homeomorphism from the topological space $$\{P \in  {\rm Spec}(A^{(j+1)}) \mid x_j^{(j+1)} \notin P \}$$ onto $$\{Q \in  {\rm Spec}(A^{(j)}) \mid x_j^{(j)} \notin Q \}$$ whose inverse is also an increasing homeomorphism; also, $\varphi_j$ induces an increasing homeomorphism from $$\{P \in  {\rm Spec}(A^{(j+1)}) \mid x_j^{(j+1)} \in P \}$$ onto its image by $\varphi_j$ whose inverse similarly is an increasing homeomorphism. Note however that, in general, $\varphi_j$ is not an homeomorphism from ${\rm Spec}(A^{(j+1)})$ onto its image.

Composing these embeddings, we get an embedding \begin{equation} 
\varphi:=\varphi_2 \circ \dots \circ \varphi_N : 
{\rm Spec}(A) \longrightarrow {\rm Spec}(\overline{A}), \end{equation} which is called the  \emph{canonical embedding} from ${\rm Spec}(A)$ into 
${\rm Spec}(\overline{A})$. 

The canonical embedding $\varphi$ is $\ch$-equivariant so that $\varphi (\hspec(A)) \subseteq \hspec(\overline{A})$. Interestingly, the set $\hspec(\overline{A})$ has been described by Cauchon. More precisely, for any subset $C$ of $\{1,\dots,N\}$, let $K_C$ denote the $\ch$-prime ideal of 
 $\widebar{A}$  generated by the $t_i$ with $i\in C$, that is
 \[
K_C= \langle t_i\mid i\in C\rangle .
\]
Then Cauchon proved (see \cite[Proposition 5.5.1]{c1}): 
$$\hspec(\overline{A}) = \{K_C ~|~ C \subseteq \{1,\dots,N\} \},$$
so that 
$$\varphi (\hspec(A)) \subseteq \{K_C ~|~ C \subseteq \{1,\dots,N\} \}.$$

\subsection{Cauchon diagrams}
\label{section-Cauchon diagrams for a QNA}

 We are now turning our attention to describing the set $\varphi (\hspec(A))$. 
 
A subset $C\subseteq\{1,\dots,N\}$ is said to be a {\em Cauchon diagram} for $A$ if 
\[
K_C= \langle t_i\mid i\in C\rangle \in \varphi(\hspec(A)).
\]
In this case, $J_C$ denotes the (unique) $\ch$-prime ideal of $A$ such that $\varphi(J_C)=K_C$. 

Cauchon proved that 
$$\hspec(A) = \{ J_C ~|~ C \mbox{ is a Cauchon diagram}\}. $$ 

If $J $ is an $\ch$-prime ideal of $A$, we denote by $\cdiag(J)$ its associated Cauchon diagrams; that is, $\cdiag(J)$ is the set $C$ such that 
$\varphi(J)=K_C=\ideal{t_i\mid i\in C}$. Note that the set of Cauchon diagrams of $A$ depends on the QNA structure of $A$ (and not just on $A$).  

A useful way to represent a Cauchon diagram $C$ is as follows. 
Draw $N$ boxes in a row, and  colour the $i$-th box black if and only 
$i\in C$; the remaining boxes are coloured white. For example, if $N=5$ and $C=\{1,2,5\}$, then we draw the diagram:

\begin{center}

\begin{tikzpicture}[xscale=1, yscale=1]


\draw[color=gray] (0,2) rectangle (1,3);            
\draw[color=gray] (1,2) rectangle (2,3);            
\draw[color=gray] (2,2) rectangle (3,3);            
\draw[color=gray] (3,2) rectangle (4,3);            
\draw[color=gray] (4,2) rectangle (5,3);            


\draw[fill=gray] (0,2) rectangle (1,3);               
\draw[fill=gray] (1,2) rectangle (2,3);               
\draw[fill=gray] (4,2) rectangle (5,3);               


\end{tikzpicture}
\end{center}

The Cauchon diagrams that arise for partition subalgebras $\ylf$ have been studied in \cite{llr-grass}.  In the next sections, we will explain the results of \cite{llr-grass} which require modifying the visual presentation of Cauchon diagrams to take advantage of the Young diagrams that are intrinsic to the presentation of partitions subalgebras.

\subsection{Some relationships between Cauchon diagrams}


Let $A:=\k[x_1][x_2;\sigma_2,\delta_2]\dots[x_N;\sigma_N,\delta_N]$  be a QNA, and let $\varphi=\varphi_2\circ\dots\circ\varphi_N: \hspec(A)\goesto\hspec(\widebar{A})$ be the canonical embedding.

Suppose that $J$ is an $\ch$-prime ideal that does not contain $x_N$. In this case, observe that $\cdiag(J)\subseteq\{1,\dots,N-1\}$. 

After the first step in the deleting derivations algorithm, we have
\[
 A^{(N)} =\k[x_1^{(N)}][x_2^{(N)};\sigma_2,\delta_2]\dots
[x_N^{(N)};\sigma_N]
\]

Set $y_i:=x_i^{(N)}$ and write $B:= \k[y_1]\dots[y_{N-1};\sigma_{N-1},\delta_{N-1}]$. 
Then $B$ is a quantum nilpotent algebra and 
$A\cong B[x_{N};\sigma_{N},\delta_{N}]$, while 
$A^{(N)}=B[y_{N};\sigma_{N}]$. (Note that $y_{N}=x_{N}$.)\\

Let $I:=\varphi_{N}(J)\cap B$ and note that $I\in\hspec(B)$. 


\begin{lemma} \label{lemma-equal-cauchon-diagrams} 

With the notation above, let $J$ be an $\ch$-prime ideal that does not contain $x_N$. Then
\[
\cdiag(I)=\cdiag(J)
\]
\end{lemma}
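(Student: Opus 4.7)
The plan is to show that the canonical embeddings for $A$ and $B$ are compatible on the part of the spectrum that does not contain $y_N$. First, because the formulas defining the deleting derivations algorithm for the variables $x_1,\dots,x_{N-1}$ involve only $\sigma_j,\delta_j$ and $x_j^{(j+1)}$ for $j\le N-1$, a straightforward downward induction on $j$ gives
\[
A^{(j)}=B^{(j)}[y_N;\sigma_N]\qquad (2\le j\le N),
\]
under the identifications $y_i=x_i^{(N)}$ for $i<N$ and $y_N=x_N^{(j)}=x_N$. In particular $\overline{A}=\overline{B}[t_N;\sigma_N]$ as quantum affine spaces, and throughout the procedure $y_N$ remains a normal $\ch$-eigenvector whose eigenvalue $q_N$ is not a root of unity.

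Next I would check that the property ``$y_N\notin{\cdot}$'' is preserved under each $\varphi_j$ with $j<N$. Both defining cases are routine: in the localisation case, $y_N\in\varphi_j(Q)$ iff $y_Nx_j^k\in Q$ for some $k\ge 0$, which forces $y_N\in Q$ since $Q$ is prime and $x_j^{(j+1)}\notin Q$; in the quotient case, $\langle x_j^{(j+1)}\rangle\subseteq Q$ and $g_j(y_N)=y_N+\langle x_j^{(j+1)}\rangle$ because $y_N=x_N^{(j)}$ and $x_N^{(j+1)}=x_N$. Starting from the hypothesis $x_N\notin J$ and applying this preservation inductively gives $y_N\notin\varphi(J)$, and hence $\cdiag(J)\subseteq\{1,\dots,N-1\}$.

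The heart of the argument is the following commutation claim, which I would prove by downward induction on $j$: for every $\ch$-prime $Q$ of $A^{(j+1)}$ with $y_N\notin Q$,
\[
\varphi_j(Q)\cap B^{(j)}=\varphi_j^B(Q\cap B^{(j+1)}),
\]
where $\varphi_j^B$ denotes the corresponding step of the deleting derivations algorithm for $B$. Since $y_N$ is a normal $\ch$-eigenvector and $Q$ is $\ch$-invariant, writing elements of $A^{(j+1)}=B^{(j+1)}[y_N;\sigma_N]$ in normal form yields the direct-sum decomposition $Q=\bigoplus_{k\ge 0}(Q\cap B^{(j+1)})\,y_N^k$. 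In the localisation case this decomposition gives $QS_j^{-1}\cap B^{(j+1)}S_j^{-1}=(Q\cap B^{(j+1)})S_j^{-1}$; intersecting further with $B^{(j)}\subseteq B^{(j+1)}S_j^{-1}$ yields the claim. In the quotient case the same decomposition shows $\langle x_j^{(j+1)}\rangle_{A^{(j+1)}}\cap B^{(j+1)}=\langle x_j^{(j+1)}\rangle_{B^{(j+1)}}$, so $g_j$ restricts on $B^{(j)}$ to the analogous surjection for $B$, and the preimages of $Q/\langle x_j^{(j+1)}\rangle$ on the two sides agree.

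Iterating this commutation from $j=N-1$ down to $j=2$, with initial data $I=\varphi_N(J)\cap B=\varphi_N(J)\cap B^{(N)}$, produces $\varphi(J)\cap\overline{B}=\varphi^B(I)$. Writing $C:=\cdiag(J)\subseteq\{1,\dots,N-1\}$, the ideal $\varphi(J)=\langle t_i\mid i\in C\rangle_{\overline{A}}$ meets $\overline{B}$ in $\langle t_i\mid i\in C\rangle_{\overline{B}}$, so $\varphi^B(I)$ is this latter monomial ideal and therefore $\cdiag(I)=C=\cdiag(J)$. The main technical obstacle will be establishing the direct-sum decomposition of $Q$ along powers of $y_N$; once that structural input is in place (using the normality of $y_N$ and that $q_N$ is not a root of unity), the two cases of $\varphi_j$ fall into line straightforwardly.
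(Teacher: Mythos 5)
Your proposal is correct and follows essentially the same route as the paper: a decreasing induction through the deleting-derivations steps, splitting into the localisation and quotient cases of $\varphi_j$, with the key structural input being the decomposition of the relevant $\ch$-prime along powers of $y_N$ (which the paper obtains from \cite[Corollary 2.4]{llr-ufd} at the base step and then propagates as the induction hypothesis $J^{(k)}=\bigoplus_i I^{(k)}y_N^i$, rather than re-deriving it at each level as you do). The differences are organisational, not mathematical.
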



\begin{proof} Note that 
\[
\varphi_N(J)= \bigoplus_{i \in \mn}\, Iy_{N}^i
\]
by \cite[Corollary 2.4]{llr-ufd}.

Note that $B$ is a QNA and so we can apply the deleting-derivations algorithm to $B$. It is easy to check that $A^{(k)} = B^{(k)} [y_N; \sigma_N]$ (with a slightly abusive notation regarding the automorphism $\sigma_N$). 

Next we prove by decreasing induction that for all $k \in \{2,\dots, N\}$, we have
$$\varphi_k \circ \dots \circ \varphi_N(J)= \bigoplus_{i \in \mn}\, \psi_k \circ \dots \circ \psi_{N-1}(I)y_{N}^i,$$
where $\psi = \psi_2 \circ \dots \circ \psi_{N-1}$ is the canonical embedding of $B$. 

To simplify the notation, we set $J^{(k)}:=\varphi_k \circ \dots \circ \varphi_N(J)$ and  $I^{(k)}:=\psi_k \circ \dots \circ \psi_{N-1}(I)$. Note that $J^{(k)}$ is an $\ch$-prime of $A^{(k)}$ that does not contain $y_N$ and $I^{(k)}$ is an $\ch$-prime of $B^{(k)}$. 

The case $k=N$ has already been proved, so we assume that 
$$\varphi_{k+1} \circ \dots \circ \varphi_N(J)= \bigoplus_{i \in \mn}\, \psi_{k+1} \circ \dots \circ \psi_{N-1}(I)y_{N}^i$$
for some $k < N$, and set $t_k:= y_k^{(k)} = y_k^{(k+1)}$. To proceed with the induction step, we distinguish between two cases. 

First, if $t_k \notin J^{(k+1)}$, then $J^{(k)} = J^{(k+1)} [t_k^{-1}] \cap A^{(k)}$. Moreover, as $t_k \notin J^{(k+1)}$, we get that 
$t_k \notin I^{(k+1)} = J^{(k+1)} \cap B^{(k+1)}$. Hence $I^{(k)} = I^{(k+1)} [t_k^{-1}] \cap B^{(k)}$, so that we have:
\begin{eqnarray*}
J^{(k)} & = & J^{(k+1)} [t_k^{-1}] \cap A^{(k)} \\
& = & (\bigoplus_{i \in \mn}\, I^{(k+1)}y_{N}^i ) [t_k^{-1}] \cap A^{(k)} \\
& = & (\bigoplus_{i \in \mn}\, I^{(k+1)}[t_k^{-1}]  y_{N}^i ) \cap A^{(k)}  \mbox{ (since $t_k$ and $y_N$ $q$-commute)}\\
& = & (\bigoplus_{i \in \mn}\, I^{(k+1)}[t_k^{-1}]  y_{N}^i ) \cap (\bigoplus_{i \in \mn}\, B^{(k)}  y_{N}^i ) \\
& = & \bigoplus_{i \in \mn}\, (I^{(k+1)}[t_k^{-1}] \cap B^{(k)} ) y_{N}^i  \\
& = & \bigoplus_{i \in \mn}\, I^{(k)}  y_{N}^i \\
\end{eqnarray*}
as desired. 

Next, if $t_k \in J^{(k+1)}$, then $J^{(k)} = g_{A,k}^{-1}(\frac{J^{(k+1)}}{\langle t_k \rangle})$, where $ g_{A,k}: A^{(k)} \rightarrow \frac{A^{(k+1)}}{\langle t_k \rangle}$ is the surjective homomorphism defined by $g_{A,k}(y_i^{(k)})= y_i^{(k+1)} + \langle t_k \rangle$. 

As $t_k \in J^{(k+1)}$, we get that $t_k \in I^{(k+1)} = J^{(k+1)} \cap B^{(k+1)}$. This implies that  $I^{(k)} = g_{B,k}^{-1}(\frac{I^{(k+1)}}{\langle t_k \rangle_B})$, where $ g_{B,k}: B^{(k)} \rightarrow \frac{B^{(k+1)}}{\langle t_k \rangle_B}$ is the surjective homomorphism defined by $g_{B,k}(y_i^{(k)})= y_i^{(k+1)} + \langle t_k \rangle_B$. (Note that, to avoid any confusion, we have denoted by $\langle t_k \rangle$ the ideal of $A^{(k+1)}$ generated by $t_k$, and by $\langle t_k \rangle_B$ the ideal of $B^{(k+1)}$ generated by $t_k$.)

Observe that $\frac{A^{(k+1)}}{\langle t_k \rangle }= \bigoplus_{i \in \mn}\, \frac{B^{(k+1)}}{\langle t_k \rangle_B} \overline{y_N}^i$, where $\overline{y_N} = y_N + \langle t_k \rangle$. Hence, we can see $g_{B,k}$ as the restriction of $g_{A,k}$ to $B^{(k)}$.

Let $u \in J^{(k)}$. We can write $u = \bigoplus_{i \in \mn}\, u_i y_N^i$ with $u_i \in B^{(k)}$. We need to show that $u_i \in I^{(k)}$ for all $i$. Observe that: 
$$\bigoplus_{i \in \mn}\, g_{B,k}(u_i) \overline{y_N}^i = g(u) \in \frac{J^{(k+1)}}{\langle t_k \rangle} = \bigoplus_{i \in \mn}\, \frac{I^{(k+1)}}{\langle t_k \rangle_B} \overline{y_N}^i .$$
Hence, $g_{B,k}(u_i) \in  \frac{I^{(k+1)}}{\langle t_k \rangle_B}$ for all $i$, so that $u_i \in I^{(k)}$ for all $i$, as required. 

To summarise, we have proved that 
$$J^{(k)}= \bigoplus_{i \in \mn}\, I^{(k)} y_{N}^i$$
for all $k\in \{2, \dots, N\}$. 

The case $k=2$ allows us to conclude that $\cdiag(I)=\cdiag(J)$, since $J^{(2)} = \langle y_i^{(2)} ~|~ i \in \cdiag(J)\rangle$ and $I^{(2)} = \langle y_i^{(2)} ~|~ i \in \cdiag(I)\rangle$.

\end{proof} ~\\


\section{Cauchon-Le diagrams and Postnikov graphs}\label{Cauchon-Le diagrams and Postnikov graphs}

Before we show that each $\ch$-prime ideal in a partition subalgebra is generated by 
the pseudo quantum minors that it contains, we shall identify these pseudo quantum minors. 
There are  strong pointers to the result that we 
should expect. In \cite{cas1,cas2}, Casteels has shown that the membership problem 
for an $\ch$-prime ideal $P$ in $\oqmmnk$ is decided by the 
non-existence or existence of certain families of vertex 
disjoint paths in the Postnikov 
graph of the Cauchon diagram belonging to $P$. Exactly the same result has 
been obtained by Oh, \cite{oh}, for the membership problem for Pl\"ucker 
coordinates in positroid cells of the totally nonnegative grassmannian. In 
\cite{gll1,gll2} it is shown that the membership problem 
for cells in the space of totally nonnegative matrices and the membership 
problem for invariant prime ideals in quantum matrices have exactly the same 
solution; so this is what we conjectured happens in the grassmannian setting. 
The results in this section  achieve this hoped for result for partition 
subalgebras. This then gives the desired result for the quantum Schubert 
cells of the quantum grassmannian, and, later, we will pull these results 
back to the quantum grassmannian. 

In this section, $q\in \k$ is just assumed to be nonzero.

\subsection{Cauchon-Le diagrams and Postnikov graphs} 
\label{subsection:CauchonLeDiag-PostGraph}

We consider Young diagrams that fit into an $m\times n$ array. Each such 
Young diagram corresponds to a partition 
$\lambda = (\lambda_1, \dots, \lambda_m)$ with $\lambda_1\leq n$. For example, the Young diagram in Example~\ref{def-partition-subalgebra}
corresponds to the partition $\lambda=(4,3,1)$ and fits inside a 
$3 \times 4$ array (or, an $m\times n$ array for any $m\geq 3, n\geq 4$). We number the rows 
of Young diagrams from  top to bottom, and the columns from left to right, as 
in the usual matrix notation. 

A {\em Cauchon-Le diagram} $C$ on the Young diagram $Y$ is a colouring of the squares 
of $Y$ with black or white, with the restriction that  
if a square is coloured black then either each square to its left and in the same row is also black, or, each square above and in the same column is black. The set of black squares in $C$ will be denoted by $B_C$ and the set of white squares of $C$ by $W_C$. Here is an example of a Cauchon-Le diagram $C$ on the Young diagram $Y$ associated with the partition $\lambda=\{4,3,1\}$:

\begin{center}

\begin{tikzpicture}[xscale=1, yscale=1]


\draw[color=gray] (0,2) rectangle (1,3);            
\draw[color=gray] (1,2) rectangle (2,3);            
\draw[color=gray] (2,2) rectangle (3,3);            
\draw[color=gray] (3,2) rectangle (4,3);            

\draw[color=gray] (0,1) rectangle (2,2);               
\draw[color=gray] (1,1) rectangle (1,2);               
\draw[color=gray] (2,1) rectangle (3,2);               

\draw[color=gray] (0,0) rectangle (1,1);            


\draw[fill=gray] (2,2) rectangle (3,3);               
\draw[fill=gray] (1,1) rectangle (2,2);               
\draw[fill=gray] (0,1) rectangle (1,2);               


\end{tikzpicture}
\end{center}
We see that $B_C=\{(1,3),(2,1), (2,2)\}$ and $W_C=\{(1,1), (1,2), (1,4), (2,3), (3,1)\}$.

If $A$ is the partition subalgebra associated with the partition $\lambda$ and $q$ is not a root of unity then the $\ch$-invariant prime ideals of $A$ are in bijection with 
Cauchon-Le diagrams on $Y_\lambda$, the Young diagram corresponding 
to $\lambda$, by \cite[Theorem 3.5]{llr-grass}. We will come back to this in more details in Section \ref{section: Cauchon-Le diagrams in partition subalgebras}.

\begin{definition} \label{torus-rels}
{\rm The {\em quantum torus associated with the Cauchon-Le diagram $C$ on $Y_\lambda$} is the $\k$-algebra 
generated by elements $t_{ij}^{\pm1}$ with $(i,j)\in W_C$ and subject to the relations 
$t_{ij}t_{ik} =qt_{ik}t_{ij}$ for $j<k$ and  $t_{ij}t_{lj} = qt_{lj}t_{ij}$ for $i<l$ while all other pairs of generators commute. An easy way to remember these relations is as follows: let $a, b$ be squares in $W_C$ 
with $a<_{{\rm lex}} b$. Then $t_a$ and $t_b$ commute unless $a$ and $b$ are in the same row or 
column, in which case $t_at_b=qt_bt_a$. 
}\end{definition}

In the case that all boxes of $C$ are white, this quantum torus is (isomorphic to) the localisation obtained by inverting the generators of the $\k$-algebra $\widebar{A}=A^{(2)}=\k\langle t_{ij} \mid (i,j)\in C\rangle$ that occurs at the end of the deleting derivation process on $A:=\ylk$. In general, there is a natural surjective homomorphism from $\widebar{A}$ onto $\k\langle t_{ij}\mid(i,j)\in W_C \rangle$ given by $t_{ij}\goesto t_{ij}$ for $(i,j)\in W_C$ and $t_{ij}\goesto 0$ for $(i,j)$ in $B_C$, with kernel $K_C = \langle t_{ij} \mid (i,j) \in B_C\rangle$.

\begin{definition} \label{definition-postnikov-graph} 
The {\em Postnikov graph}, $\postc$, of a Cauchon-Le diagram $C$ on the Young 
diagram $Y_\lambda$ is a directed planar graph with weighted edges, constructed from $C$, where the weights come from the 
quantum torus  corresponding to $C$. The Postnikov 
graph is constructed in the following way. If $(i,j)$ is a white square in 
$C$ then let $(i,j^-)$ be the first white square to the 
left of $(i,j)$ (if it exists) and let $(i,j^+)$ be the first white square to the 
right of $(i,j)$ (if it exists). Similarly, $(i^+,j)$ is the first white square 
below $(i,j)$ (if one exists).

Firstly, place a vertex in each white square of the Cauchon-Le diagram. These vertices are called {\em internal}. 
Then, place vertices $r_i$ immediately 
to the right of the last box in row $i$, and also vertices $c_j$ immediately 
below the last box in column $j$. Vertices $r_i$ and $c_j$ are called {\em boundary vertices}. For each row index $i$, put a directed 
edge from $r_i$ to the rightmost white square in row $i$ (if it exists), 
say square $(i,k)$. Weight 
this edge with the weight $t_{ik}$. For each column index $j$, put a directed 
edge from the bottom-most white square in column $j$ (if it exists), 
to the vertex $c_j$. Weight this edge with weight $1$. For each white square 
$(i,j)$, put a directed edge from $(i,j)$ to $(i,j^-)$ (if it exists) and give this edge 
the weight $t_{i,j}^{-1}t_{i,j^-}$. Finally, 
for each white square 
$(i,j)$, put a directed edge from $(i,j)$ to $(i^+,j)$ (if it exists) and give this edge 
the weight $1$. 
\end{definition} 
Here is an example of a Cauchon-Le diagram and its associated 
Postnikov graph. Note that by convention the weight of weight 1 edges are omitted, and so we only record the weight of horizontal edges when drawing Postnikov graphs.  \\

\begin{center}
 \begin{minipage}{30ex}
  \begin{center}
   \begin{tikzpicture}[xscale=1.5, yscale=1.5]

\draw[color=gray] (0,3) rectangle (1,4);            
\draw[color=gray] (1,3) rectangle (2,4);            
\draw[color=gray] (2,3) rectangle (3,4);            
\draw[color=gray] (3,3) rectangle (4,4);            
\draw[color=gray] (0,2) rectangle (2,3);               
\draw[color=gray] (1,2) rectangle (1,3);               
\draw[color=gray] (2,2) rectangle (3,3);               
\draw[color=gray] (0,1) rectangle (2,2);               
\draw[color=gray] (1,1) rectangle (1,2);               
\draw[color=gray] (2,1) rectangle (3,2);               
\draw[color=gray] (0,0) rectangle (1,1);            
\draw[color=gray] (1,0) rectangle (2,1);            

\draw[fill=gray] (1,3) rectangle (2,4);               
\draw[fill=gray] (3,3) rectangle (4,4);               
\draw[fill=gray] (0,2) rectangle (1,3);               
\draw[fill=gray] (1,2) rectangle (2,3);               

\node[color=blue]  at (0.5,-0.3) {$\hspace{0.32cm}$};

   \end{tikzpicture}
  \end{center}
 \end{minipage}
~~~~~~~~~~~~~~
 \begin{minipage}{35ex}
  \begin{center}
   \begin{tikzpicture}[xscale=1.5, yscale=1.5]

\draw[color=gray, dotted] (0,3) rectangle (1,4);            
\draw[color=gray, dotted] (1,3) rectangle (2,4);            
\draw[color=gray, dotted] (2,3) rectangle (3,4);            
\draw[color=gray, dotted] (3,3) rectangle (4,4);            
\draw[color=gray, dotted] (0,2) rectangle (2,3);               
\draw[color=gray, dotted] (1,2) rectangle (1,3);               
\draw[color=gray, dotted] (2,2) rectangle (3,3);               
\draw[color=gray, dotted] (0,1) rectangle (2,2);               
\draw[color=gray, dotted] (1,1) rectangle (1,2);               
\draw[color=gray, dotted] (2,1) rectangle (3,2);               
\draw[color=gray, dotted] (0,0) rectangle (1,1);            
\draw[color=gray, dotted] (1,0) rectangle (2,1);            

\node at (0.5, 3.5) {$\bullet$}; 
\node at (2.5, 3.5) {$\bullet$}; 
\node at (2.5, 2.5) {$\bullet$}; 
\node at (0.5, 1.5) {$\bullet$}; 
\node at (1.5, 1.5) {$\bullet$}; 
\node at (2.5, 1.5) {$\bullet$}; 
\node at (0.5, 0.5) {$\bullet$}; 
\node at (1.5, 0.5) {$\bullet$}; 

\node[color=red]  at (4.35,3.5) {$\bullet\, r_1$};
\node[color=red]  at (3.35,2.5) {$\bullet\, r_2$};
\node[color=red]  at (3.35,1.5) {$\bullet\, r_3$};
\node[color=red]  at (2.35,0.5) {$\bullet\, r_4$};

\node[color=blue]  at (0.5,-0.2) {$\hspace{0.32cm}\bullet c_1$};
\node[color=blue]  at (1.5,-0.2) {$\hspace{0.32cm}\bullet c_2$};
\node[color=blue]  at (2.5,0.8) {$\hspace{0.32cm}\bullet c_3$};
\node[color=blue]  at (3.5,2.8) {$\hspace{0.32cm}\bullet c_4$};

\draw [<-, thick, black] (0.6,3.5)--(2.4,3.5);
\draw [<-, thick, black] (2.6,3.5)--(4.1,3.5);
\draw [<-, thick, black] (2.6,2.5)--(3.1,2.5);
\draw [<-, thick, black] (0.6,1.5)--(1.4,1.5);
\draw [<-, thick, black] (1.6,1.5)--(2.4,1.5);
\draw [<-, thick, black] (2.6,1.5)--(3.1,1.5);
\draw [<-, thick, black] (0.6,0.5)--(1.4,0.5);
\draw [<-, thick, black] (1.6,0.5)--(2.1,0.5);

\draw [->, thick, black] (0.5,3.4)--(0.5,1.6);
\draw [->, thick, black] (0.5,1.4)--(0.5,0.6);
\draw [->, thick, black] (0.5,0.4)--(0.5,-0.1);
\draw [->, thick, black] (1.5,1.4)--(1.5,0.6);
\draw [->, thick, black] (1.5,0.4)--(1.5,-0.1);
\draw [->, thick, black] (2.5,3.4)--(2.5,2.6);
\draw [->, thick, black] (2.5,2.4)--(2.5,1.6);
\draw [->, thick, black] (2.5,1.4)--(2.5,0.9);

\node [above] at (1.5,3.5) {$t_{13}^{-1}t_{11}$};
\node [above] at (3.5,3.5) {$t_{13}$};
\node [above] at (2.8,2.5) {$t_{23}$};
\node [above] at (1,1.5) {$t_{32}^{-1}t_{31}$};
\node [above] at (2,1.5) {$t_{33}^{-1}t_{32}$};
\node [above] at (2.8,1.5) {$t_{33}$};
\node [above] at (1,0.5) {$t_{42}^{-1}t_{41}$};
\node [above] at (1.8,0.5) {$t_{42}$};

   \end{tikzpicture}
  \end{center}
 \end{minipage} \\~\\
\end{center}
It is clear from the construction of the Postnikov graph that it is represented in the plane. We shall 
always assume that Postnikov graphs are embedded in the plane in this way, and this allows us to 
use terms such as horizontal, vertical, left, right, North, South, East, West, etc. in reference to vertices and edges of a 
Postnikov graph.

\begin{notation}\label{col notations}{\rm 
\begin{itemize}
\item For us, ``path'' and ``edge'' shall always mean ``directed path'' and ``directed edge'' respectively.
\item Let $v$ and $v'$ be vertices of $\postc$. There is clearly at most one edge from $v$ to $v'$ and if it exists, we denote it by $(v,v')$. 
\item We denote the weight of an edge $e$ of $\postc$ by $w(e)$.
\item Suppose that $(i,j),(i,j')\in W_C$ and that there is an edge $e=((i,j),(i,j'))$ in $\postc$ (notice that this forces $j>j'$). Then we set $\row(e)=\{i\}$, $\col_1(e)=\{j\}$, $\col_2(e)=\{j'\}$. 
\item Let $e$ be an edge of $\postc$ from $r_i$ to $(i,j)\in W_C$. Then we define $\col_1(e)=\emptyset$ and $\col_2(e)=\{j\}$. 
\item For any edge $e$ in $\postc$, we define $\col(e)=\col_1(e)\cup\col_2(e)$.
\item If $v_0,v_1,\ldots,v_k$ are vertices of $\postc$ such that the edges $(v_0,v_1),(v_1,v_2),\ldots,(v_{k-1},v_k)$ exist, then we write $(v_0,v_1,\ldots,v_k)$ for the path from $v_0$ to $v_k$ given by the concatenation of the edges $(v_0,v_1),(v_1,v_2),\ldots,(v_{k-1},v_k)$. 
\item For vertices $v$ and $v'$ of $\postc$, we write $P: v\Longrightarrow v'$ to mean that $P$ is a path from $v$ to $v'$.  
\end{itemize} 
}
\end{notation}
\begin{definition} 
By the \emph{weight} of a path $(v_0,v_1,\ldots,v_k)$ in $\postc$, we mean the ordered product 
\[
w(v_0,v_1)w(v_1,v_2)\cdots w(v_{k-1},v_k)
\]
of the weights of its edges. We denote the weight of a path $P$ in $\postc$ by $w(P)$.
\end{definition}

\begin{definition}
An edge or path in $\postc$ is called \emph{internal} if its beginning and end vertices belong to $W_C$. An edge (resp. path) that is not internal is called a {\em boundary} edge (resp. path). 
\end{definition}

\begin{remark}
Since vertical edges in $\postc$ have weight $1$, only horizontal edges contribute to the weight of any path in $\postc$. We shall often use this fact without explicit mention. 
\end{remark}

\begin{example}\label{partition-ex''}
Let $c=d=4$, let $\lambda=(4,3,3,1)$.

In the figure below, we have shown a Cauchon-Le diagram $C$ on $Y_\lambda$ with its 
Postnikov graph $\postc$ superimposed onto $C$:

\begin{center}
   \begin{tikzpicture}[xscale=1.5, yscale=1.5]
\draw[color=gray] (0,3) rectangle (1,4);            
\draw[color=gray] (1,3) rectangle (2,4);            
\draw[color=gray] (2,3) rectangle (3,4);            
\draw[color=gray] (3,3) rectangle (4,4);            
\draw[color=gray] (0,2) rectangle (2,3);               
\draw[color=gray] (1,2) rectangle (1,3);               
\draw[color=gray] (2,2) rectangle (3,3);               
\draw[color=gray] (0,1) rectangle (2,2);               
\draw[color=gray] (1,1) rectangle (1,2);               
\draw[color=gray] (2,1) rectangle (3,2);               
\draw[color=gray] (0,0) rectangle (1,1);            

\draw[fill=lightgray] (1,3) rectangle (2,4);               
\draw[fill=lightgray] (0,2) rectangle (1,3);               
\draw[fill=lightgray] (2,3) rectangle (3,4);               

\node at (0.5, 3.5) {$\bullet$}; 
\node at (3.5, 3.5) {$\bullet$}; 
\node at (1.5, 2.5) {$\bullet$}; 
\node at (2.5, 2.5) {$\bullet$}; 
\node at (0.5, 1.5) {$\bullet$}; 
\node at (1.5, 1.5) {$\bullet$}; 
\node at (2.5, 1.5) {$\bullet$}; 
\node at (0.5, 0.5) {$\bullet$}; 

\node[color=red]  at (4.35,3.5) {$\bullet\, r_1$};
\node[color=red]  at (3.35,2.5) {$\bullet\, r_2$};
\node[color=red]  at (3.35,1.5) {$\bullet\, r_3$};
\node[color=red]  at (1.35,0.5) {$\bullet\, r_4$};

\node[color=blue]  at (0.5,-0.2) {$\hspace{0.32cm}\bullet c_1$};
\node[color=blue]  at (1.5,0.8) {$\hspace{0.32cm}\bullet c_2$};
\node[color=blue]  at (2.5,0.8) {$\hspace{0.32cm}\bullet c_3$};
\node[color=blue]  at (3.5,2.8) {$\hspace{0.32cm}\bullet c_4$};

\draw [<-, thick, black] (0.6,3.5)--(3.4,3.5);
\draw [<-, thick, black] (3.6,3.5)--(4.1,3.5);
\draw [<-, thick, black] (2.6,2.5)--(3.1,2.5);
\draw [<-, thick, black] (1.6,2.5)--(2.4,2.5);
\draw [<-, thick, black] (0.6,1.5)--(1.4,1.5);
\draw [<-, thick, black] (1.6,1.5)--(2.4,1.5);
\draw [<-, thick, black] (2.6,1.5)--(3.1,1.5);
\draw [<-, thick, black] (0.6,0.5)--(1.1,0.5);

\draw [->, thick, black] (0.5,3.4)--(0.5,1.6);
\draw [->, thick, black] (0.5,1.4)--(0.5,0.6);
\draw [->, thick, black] (0.5,0.4)--(0.5,-0.1);
\draw [->, thick, black] (1.5,2.4)--(1.5,1.6);
\draw [->, thick, black] (1.5,1.4)--(1.5,0.9);
\draw [->, thick, black] (2.5,2.4)--(2.5,1.6);
\draw [->, thick, black] (2.5,1.4)--(2.5,0.9);
\draw [->, thick, black] (3.5,3.4)--(3.5,2.9);

\node [above] at (3.8,3.5) {$t_{1,4}$};
\node [above] at (2,3.5) {$t_{1,4}^{-1}t_{1,1}$};
\node [above] at (2,2.5) {$t_{2,3}^{-1}t_{2,2}$};
\node [above] at (2.8,2.5) {$t_{2,3}$};
\node [above] at (1,1.5) {$t_{3,2}^{-1}t_{3,1}$};
\node [above] at (2,1.5) {$t_{3,3}^{-1}t_{3,2}$};
\node [above] at (2.8,1.5) {$t_{3,3}$};
\node [above] at (0.8,0.5) {$t_{4,1}$};

   \end{tikzpicture}
  \end{center}
\end{example}

We shall often superimpose Postnikov graphs onto their Cauchon-Le diagrams in this way.

\begin{proposition}[cf. Proposition 3.3 of \cite{cas1}]\label{basic facts}
The Postnikov graph $\postc$ has the following properties:
\begin{enumerate}[(1)]
\item $\postc$ is acyclic; that is, $\postc$ has no directed cycles.

\item The embedding of the Postnikov graph $\postc$ in the plane 
described above is a planar embedding; 
that is, all edge crossings occur at vertices.

\item An internal horizontal path $P:(i,j_2)\implies (i,j_1)$ has weight $t_{i,j_2}^{-1}t_{i,j_1}$.

\item A path $r_i\implies (i,j)$ beginning at a row vertex and consisting solely of horizontal edges has weight $t_{i,j}$. 

\end{enumerate}
\end{proposition}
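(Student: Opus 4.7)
The proposition breaks into four claims of quite different flavour, so I would handle them separately, in the order (1), (2), (3), (4), with (2) being the main point that really uses the Cauchon--Le condition.

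For (1), I would observe directly from Definition~\ref{definition-postnikov-graph} that every horizontal edge strictly decreases the column coordinate while preserving the row, and every vertical edge strictly increases the row while preserving the column (the boundary vertices $r_i$, $c_j$ are placed on the geometric frontier in a compatible way, so the same monotonicity applies). Consequently along any directed path the row index is weakly increasing, the column index is weakly decreasing, and at least one of these is strict across each edge. This forbids any directed cycle.

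For (2), I would argue that within a single row the horizontal internal edges form a chain between consecutive white squares, so two such edges never overlap; similarly for vertical edges in a column; the boundary edges cause no extra problem since $r_i$ and $c_j$ are on the exterior. The only delicate crossing to rule out is between a horizontal edge in row $i$ going from $(i,j_1)$ to $(i,j_2)$ with $j_2<j<j_1$ and a vertical edge in column $j$ that passes geometrically through $(i,j)$. Such a configuration would require $(i,j)$ to be black (since it is strictly between two consecutive white squares in row $i$), and both some $(i,j_2)$ with $j_2<j$ in row $i$ and some $(i'',j)$ with $i''<i$ in column $j$ to be white. This contradicts the Cauchon--Le condition at $(i,j)$, which forces either the entire row-to-the-left or the entire column-above of $(i,j)$ to be black. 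This is where the Cauchon--Le hypothesis really earns its keep, and I expect it to be the main (though not deep) obstacle.

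For (3), let $(i,k_0)=(i,j_2),(i,k_1),\ldots,(i,k_s)=(i,j_1)$ be the successive vertices of the internal horizontal path $P$; by Definition~\ref{definition-postnikov-graph} each edge $((i,k_l),(i,k_{l+1}))$ has weight $t_{i,k_l}^{-1}t_{i,k_{l+1}}$. Then the ordered product
\[
w(P)=\bigl(t_{i,k_0}^{-1}t_{i,k_1}\bigr)\bigl(t_{i,k_1}^{-1}t_{i,k_2}\bigr)\cdots\bigl(t_{i,k_{s-1}}^{-1}t_{i,k_s}\bigr)
\]
telescopes, because $t_{i,k_l}$ and $t_{i,k_l}^{-1}$ are adjacent and mutually inverse, leaving $t_{i,j_2}^{-1}t_{i,j_1}$. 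For (4), I would peel off the first edge from $r_i$, which by construction ends at the rightmost white square $(i,k_0)$ of row~$i$ and has weight $t_{i,k_0}$; the remaining portion of the path is an internal horizontal path $(i,k_0)\Longrightarrow(i,j)$, whose weight is $t_{i,k_0}^{-1}t_{i,j}$ by (3). Multiplying gives $t_{i,k_0}\cdot t_{i,k_0}^{-1}t_{i,j}=t_{i,j}$, as required.
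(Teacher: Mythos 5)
Your proposal is correct and follows essentially the same route as the paper: monotonicity of edge directions for (1), the Cauchon--Le contradiction at a putative black crossing square for (2), and the telescoping product (which the paper phrases as an induction on the number of edges) for (3), with (4) obtained by peeling off the initial boundary edge. The only cosmetic difference is that the paper runs the same crossing argument uniformly for all horizontal edges rather than singling out boundary edges, but the substance is identical.
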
 

\begin{proof}
\begin{full} 
\begin{enumerate}[(1)]
\item Because all edges are directed leftwards or downwards, the graph $\postc$ cannot have a directed cycle. 
\item If two edges cross, then one edge must be vertical and the other horizontal. Let a vertical edge $e_1=((i_1,j),(i_2,j))$ cross a horizontal edge $e_2=((i,j_2),(i,j_1))$ at a black square $(i,j)$ of the Cauchon-Le diagram $C$. The black square $(i,j)$ has the white square $(i_1,j)$ above it and the white square $(i,j_1)$ to its left, contradicting the definition of a Cauchon-Le diagram. It follows that the square $(i,j)$ must be white and that the edges $e_1$ and $e_2$ cross at the vertex $(i,j)$.
\item If the path $P$ consists of a single edge, then the result follows from the definition of the Postnikov graph $\postc$. Suppose that the path $P$ consists of $n>1$ edges and that the desired result holds for all internal
 horizontal paths in $\postc$ consisting of fewer than $n$ edges. Let $(i,k)$ be an internal vertex of $P$. When $P'$ and $P''$ are the horizontal paths given by $P':(i,j_2)\Longrightarrow (i,k)$ and $P'': (i,k)\Longrightarrow (i,j_1)$, we have $P=P'P''$. Now the inductive hypothesis gives $w(P)=w(P')w(P'')=t_{i,j_2}^{-1}t_{i,k}t_{i,k}^{-1}t_{i,j_1}=t_{i,j_2}^{-1}t_{i,j_1}$.
\item This follows from part (3) and the definition of the Postnikov graph.
\end{enumerate}
\end{full}
\end{proof}


\subsection{Commutation relations between weights of paths}

In this subsection, we develop commutation relations between edges and certain paths in 
Postnikov graphs. Casteels, \cite[Lemmas 3.4, 3.5, 3.6]{cas1}, obtains similar results for the case that the Young diagram is a rectangle. The proofs for our results are similar, but we record them for the convenience of the reader. 

We refer the reader to Notation \ref{col notations} for the meanings of $\col_1(a),\col_2(a),\col(a)$ for internal and boundary edges $a$.

\begin{lemma}[cf. Lemma 3.4 of \cite{cas1}]\label{how edges commute}
Let $e$ and $f$ be distinct horizontal edges in $\postc$ such that $\row(f)\leq \row(e)$.
\begin{enumerate}[(1)]
\item If $\col(e)\cap \col(f)=\emptyset$, then $w(f)w(e)=w(e)w(f)$.
\item Suppose that $|\col(e)\cap \col(f)|=1$.
\begin{enumerate}[(i)]
\item If $\col_1(e)=\col_1(f)$ or $\col_2(e)=\col_2(f)$, then $w(f)w(e)=qw(e)w(f)$; 
\item if $\col_1(e)=\col_2(f)$ or $\col_2(e)=\col_1(f)$ and $\row(e)\neq \row(f)$, then $w(f)w(e)=q^{-1}w(e)w(f)$;
\item if $\col_2(f)=\col_1(e)$ and $\row(f)=\row(e)$, then $w(f)w(e)=q^{-1}w(e)w(f)$.
\end{enumerate}
\item If $|\col(e)\cap \col(f)|=2$, then $w(f)w(e)=q^2w(e)w(f)$.
\end{enumerate}
\begin{proof}
\begin{full} 
The proof consists of an investigation of all possible relative positions of $e$ and $f$. 

Notice that if $d$ is any internal horizontal edge in the graph $\postc$ and $col_2(d)<j<\col_1(d)$, then the square $(\row(d),j)$ is a black square in $C$ which has the white square $(\row(d),\col_2(d))$ to its left, so that for all $i\leq \row(d)$, the square $(i,j)$ is black. This observation will be used several times below and is the reason why certain cases cannot occur. For instance, the following configuration cannot occur, because the square immediately to the right of vertex $b$ fails the Cauchon-Le condition.

\begin{center}
\begin{tikzpicture}

\fill[color=lightgray] (2,1) rectangle (3,2);
\fill[color=lightgray] (2,0) rectangle (3,1);
\fill[color=lightgray] (1,0) rectangle (2,1);

\draw[color=gray] (0,1) rectangle (1,2); 
\draw[color=gray] (1,1) rectangle (2,2); 
\draw[color=gray] (2,1) rectangle (3,2); 
\draw[color=gray] (3,1) rectangle (4,2); 
\draw[color=gray] (0,0) rectangle (1,1); 
\draw[color=gray] (1,0) rectangle (2,1); 
\draw[color=gray] (2,0) rectangle (3,1); 
\draw[color=gray] (3,0) rectangle (4,1); 

\node at (1.5,1.5) (w2) {$\bullet$}; 
\node at (3.5,1.5) (v2) {$\bullet$}; 
\node at (0.5,0.5) (b) {$\bullet$}; 
\node at (3.5,0.5) (a) {$\bullet$}; 

\node [black,above] at (w2) {$v$};
\node [black,above] at (v2) {$u$};
\node [black,above] at (a) {$a$};
\node [black,above] at (b) {$b$};

\draw [<-,thick,black] (1.6,1.5) -- (3.4,1.5);
\draw [<-,thick,black] (0.6,0.5) -- (3.4,0.5);

\node [black, above] at (2.5,1.5) {$f$};
\node [black, above] at (2.1,0.5) {$e$};
\end{tikzpicture}
\end{center}

Let $a,b,u,v$ be the vertices of $\postc$ such that $e=(a,b)$ and $f=(u,v)$. \\

{\bf Case 0:} $\col(e)\cap \col(f)=\emptyset$. \\

$\bullet$ If $\row(e)\neq \row(f)$, then the result follows immediately from the relations in Definition~\ref{torus-rels} because $t_a$ and $t_b$ commute with $t_u$ and $t_v$. 

$\bullet$ Suppose that $\row(e)=\row(f)$ and notice that we may assume without loss of generality that $u$ and $v$ lie west of $a$ and $b$. We distinguish between two cases.

$\bullet$$\bullet$ Assume first that $e$ is internal. The following diagram illustrates the situation:
\begin{center}
\begin{tikzpicture}[xscale=1,yscale=1]

\draw[color=gray] (0,0) rectangle (1,1); 
\draw[color=gray] (1,0) rectangle (2,1); 
\draw[color=gray] (2,0) rectangle (3,1); 
\draw[color=gray] (3,0) rectangle (4,1); 
\draw[color=gray] (4,0) rectangle (5,1); 
\draw[color=gray] (5,0) rectangle (6,1); 

\node at (1.5,0.5) (w) {$\bullet$}; 
\node at (2.5,0.5) (v) {$\bullet$}; 
\node at (3.5,0.5) (b) {$\bullet$}; 
\node at (4.5,0.5) (a) {$\bullet$}; 
\node [black,above] at (w) {$v$};
\node [black,above] at (v) {$u$};
\node [black,above] at (a) {$a$};
\node [black,above] at (b) {$b$};
\draw [<-,thick,black] (1.6,0.5) -- (2.4,0.5);
\draw [<-,thick,black] (3.6,0.5) -- (4.4,0.5);
\node [black, above] at (2.1,0.5) {$f$};
\node [black, above] at (4.1,0.5) {$e$};
\end{tikzpicture}
\end{center}
The relations in Definition~\ref{torus-rels} now give $t_ut_b=qt_bt_u$, $t_ut_a=qt_at_u$, $t_vt_b=qt_bt_v$, and $t_vt_a=qt_at_v$. Hence
\[
w(e)w(f)=t_a^{-1}t_bt_u^{-1}t_v=qq^{-1}t_a^{-1}t_u^{-1}t_vt_b=qq^{-1}qq^{-1}t_u^{-1}t_vt_a^{-1}t_b=w(f)w(e).
\]

$\bullet$$\bullet$ Next assume that $e$ is boundary. The following diagram illustrates the situation:
\begin{center}
\begin{tikzpicture}[xscale=1,yscale=1]
\draw[fill=lightgray] (4,0) rectangle (5,1);               
\draw[fill=lightgray] (5,0) rectangle (6,1);               

\draw[color=gray] (0,0) rectangle (1,1); 
\draw[color=gray] (1,0) rectangle (2,1); 
\draw[color=gray] (2,0) rectangle (3,1); 
\draw[color=gray] (3,0) rectangle (4,1); 
\draw[color=gray] (4,0) rectangle (5,1); 
\draw[color=gray] (5,0) rectangle (6,1); 

\node at (1.5,0.5) (w) {$\bullet$}; 
\node at (2.5,0.5) (v) {$\bullet$}; 
\node at (3.5,0.5) (b) {$\bullet$}; 
\node at (6.5,0.5) (a) {$\bullet$}; 
\node [black,above] at (w) {$v$};
\node [black,above] at (v) {$u$};
\node [black,above] at (a) {$a$};
\node [black,above] at (b) {$b$};
\draw [<-,thick,black] (1.6,0.5) -- (2.4,0.5);
\draw [<-,thick,black] (3.6,0.5) -- (6.4,0.5);
\node [black, above] at (2.1,0.5) {$f$};
\node [black, above] at (5.1,0.5) {$e$};

\end{tikzpicture}
\end{center}
The relations in Definition~\ref{torus-rels} now give $t_ut_b=qt_bt_u$, and $t_vt_b=qt_bt_v$. Hence
\[
w(e)w(f)=t_bt_u^{-1}t_v=qq^{-1}t_u^{-1}t_vt_b=w(f)w(e).
\]

{\bf Case 1:} $|\col(e)\cap \col(f)|=1$. \\
We distinguish between 8 cases. 
\begin{enumerate}[(i)]
\item If we are in the situation of the diagram below ($\col_2(e)=\col_2(f)$ and $u,a$ internal),
\begin{center}
\begin{tikzpicture}
\draw[color=gray] (0,1) rectangle (1,2); 
\draw[color=gray] (1,1) rectangle (2,2); 
\draw[color=gray] (2,1) rectangle (3,2); 
\draw[color=gray] (3,1) rectangle (4,2); 
\draw[color=gray] (0,0) rectangle (1,1); 
\draw[color=gray] (1,0) rectangle (2,1); 
\draw[color=gray] (2,0) rectangle (3,1); 
\draw[color=gray] (3,0) rectangle (4,1); 

\fill[color=lightgray] (2,1) rectangle (3,2);

\node at (1.5,1.5) (w2) {$\bullet$}; 
\node at (3.5,1.5) (v2) {$\bullet$}; 
\node at (1.5,0.5) (b) {$\bullet$}; 
\node at (2.5,0.5) (a) {$\bullet$}; 

\node [black,above] at (w2) {$v$};
\node [black,above] at (v2) {$u$};
\node [black,above] at (a) {$a$};
\node [black,above] at (b) {$b$};

\draw [<-,thick,black] (1.6,1.5) -- (3.4,1.5);
\draw [<-,thick,black] (1.6,0.5) -- (2.4,0.5);

\node [black, above] at (2.5,1.5) {$f$};
\node [black, above] at (2.1,0.5) {$e$};
\end{tikzpicture}
\end{center}
then the relations in Definition~\ref{torus-rels} now give $t_at_u=t_ut_a$, $t_at_v=t_vt_a$, $t_bt_u=t_ut_b$, and $t_vt_b=qt_bt_v$. Hence
\[
w(e)w(f)=t_a^{-1}t_bt_u^{-1}t_v=q^{-1}t_u^{-1}t_vt_a^{-1}t_b=q^{-1}w(f)w(e),
\]
as expected in the case where $\col_2(e)=\col_2(f)$. 

\item If we are in the situation of the diagram below ($\col_2(e)=\col_2(f)$, $u$ internal and $a$ boundary),
\begin{center}
\begin{tikzpicture}
\draw[color=gray] (0,1) rectangle (1,2); 
\draw[color=gray] (1,1) rectangle (2,2); 
\draw[color=gray] (2,1) rectangle (3,2); 
\draw[color=gray] (3,1) rectangle (4,2); 
\draw[color=gray] (0,0) rectangle (1,1); 
\draw[color=gray] (1,0) rectangle (2,1); 

\fill[color=lightgray] (2,1) rectangle (3,2);

\node at (1.5,1.5) (w2) {$\bullet$}; 
\node at (3.5,1.5) (v2) {$\bullet$}; 
\node at (1.5,0.5) (b) {$\bullet$}; 
\node at (2.5,0.5) (a) {$\bullet$}; 

\node [black,above] at (w2) {$v$};
\node [black,above] at (v2) {$u$};
\node [black,above] at (a) {$a$};
\node [black,above] at (b) {$b$};

\draw [<-,thick,black] (1.6,1.5) -- (3.4,1.5);
\draw [<-,thick,black] (1.6,0.5) -- (2.4,0.5);

\node [black, above] at (2.5,1.5) {$f$};
\node [black, above] at (2.1,0.5) {$e$};
\end{tikzpicture}
\end{center}
then the relations in Definition~\ref{torus-rels} now give $t_bt_u=t_ut_b$, and $t_vt_b=qt_bt_v$. Hence
\[
w(e)w(f)=t_bt_u^{-1}t_v=q^{-1}t_u^{-1}t_vt_b=q^{-1}w(f)w(e),
\]
as expected in the case where $\col_2(e)=\col_2(f)$. 

\item If we are in the situation of the diagram below ($\col_2(e)=\col_2(f)$, $a$ internal and $u$ boundary),
\begin{center}
\begin{tikzpicture}

\fill[color=lightgray] (2,1) rectangle (3,2);
\fill[color=lightgray] (3,1) rectangle (4,2);

\draw[color=gray] (0,1) rectangle (1,2); 
\draw[color=gray] (1,1) rectangle (2,2); 
\draw[color=gray] (2,1) rectangle (3,2); 
\draw[color=gray] (3,1) rectangle (4,2); 
\draw[color=gray] (0,0) rectangle (1,1); 
\draw[color=gray] (1,0) rectangle (2,1); 
\draw[color=gray] (2,0) rectangle (3,1); 

\node at (1.5,1.5) (w2) {$\bullet$}; 
\node at (4.5,1.5) (v2) {$\bullet$}; 
\node at (1.5,0.5) (b) {$\bullet$}; 
\node at (2.5,0.5) (a) {$\bullet$}; 

\node [black,above] at (w2) {$v$};
\node [black,above] at (v2) {$u$};
\node [black,above] at (a) {$a$};
\node [black,above] at (b) {$b$};

\draw [<-,thick,black] (1.6,1.5) -- (4.4,1.5);
\draw [<-,thick,black] (1.6,0.5) -- (2.4,0.5);

\node [black, above] at (3.5,1.5) {$f$};
\node [black, above] at (2.1,0.5) {$e$};
\end{tikzpicture}
\end{center}
then the relations in Definition~\ref{torus-rels} now give $t_at_v=t_vt_a$, and $t_vt_b=qt_bt_v$. Hence
\[
w(e)w(f)=t_a^{-1}t_bt_v=q^{-1}t_vt_a^{-1}t_b=q^{-1}w(f)w(e),
\]
as expected in the case where $\col_2(e)=\col_2(f)$.

\item If we are in the situation of the diagram below ($\col_2(e)=\col_2(f)$, $a$ and $u$ boundary),
\begin{center}
\begin{tikzpicture}

\fill[color=lightgray] (2,1) rectangle (3,2);
\fill[color=lightgray] (3,1) rectangle (4,2);

\draw[color=gray] (0,1) rectangle (1,2); 
\draw[color=gray] (1,1) rectangle (2,2); 
\draw[color=gray] (2,1) rectangle (3,2); 
\draw[color=gray] (3,1) rectangle (4,2); 
\draw[color=gray] (0,0) rectangle (1,1); 
\draw[color=gray] (1,0) rectangle (2,1); 

\node at (1.5,1.5) (w2) {$\bullet$}; 
\node at (4.5,1.5) (v2) {$\bullet$}; 
\node at (1.5,0.5) (b) {$\bullet$}; 
\node at (2.5,0.5) (a) {$\bullet$}; 

\node [black,above] at (w2) {$v$};
\node [black,above] at (v2) {$u$};
\node [black,above] at (a) {$a$};
\node [black,above] at (b) {$b$};

\draw [<-,thick,black] (1.6,1.5) -- (4.4,1.5);
\draw [<-,thick,black] (1.6,0.5) -- (2.4,0.5);

\node [black, above] at (3.5,1.5) {$f$};
\node [black, above] at (2.1,0.5) {$e$};
\end{tikzpicture}
\end{center}
then the relations in Definition~\ref{torus-rels} now give $t_bt_u=t_ut_b$, and $t_vt_b=qt_bt_v$. Hence
\[
w(e)w(f)=t_bt_u^{-1}t_v=q^{-1}t_u^{-1}t_vt_b=q^{-1}w(f)w(e),
\]
as expected in the case where $\col_2(e)=\col_2(f)$. 

\item If we are in the situation of the diagram below ($\col_1(e)=\col_1(f)$, $a$ and $u$ internal),
\begin{center}
\begin{tikzpicture}

\fill[color=lightgray] (2,1) rectangle (3,2);
\fill[color=lightgray] (3,1) rectangle (4,2);
\fill[color=lightgray] (3,0) rectangle (4,1);

\draw[color=gray] (0,1) rectangle (1,2); 
\draw[color=gray] (1,1) rectangle (2,2); 
\draw[color=gray] (2,1) rectangle (3,2); 
\draw[color=gray] (3,1) rectangle (4,2); 
\draw[color=gray] (4,1) rectangle (5,2); 
\draw[color=gray] (0,0) rectangle (1,1); 
\draw[color=gray] (1,0) rectangle (2,1); 
\draw[color=gray] (2,0) rectangle (3,1); 
\draw[color=gray] (3,0) rectangle (4,1); 
\draw[color=gray] (4,0) rectangle (5,1); 

\node at (1.5,1.5) (w2) {$\bullet$}; 
\node at (4.5,1.5) (v2) {$\bullet$}; 
\node at (2.5,0.5) (b) {$\bullet$}; 
\node at (4.5,0.5) (a) {$\bullet$}; 

\node [black,above] at (w2) {$v$};
\node [black,above] at (v2) {$u$};
\node [black,above] at (a) {$a$};
\node [black,above] at (b) {$b$};

\draw [<-,thick,black] (1.6,1.5) -- (4.4,1.5);
\draw [<-,thick,black] (2.6,0.5) -- (4.4,0.5);

\node [black, above] at (3.5,1.5) {$f$};
\node [black, above] at (3.5,0.5) {$e$};
\end{tikzpicture}
\end{center}
then the relations in Definition~\ref{torus-rels} now give $t_bt_u=t_ut_b$, $t_vt_b=t_bt_v$, $t_at_v=t_vt_a$ and $t_ut_a=qt_at_u$. Hence
\[
w(e)w(f)=t_a^{-1}t_bt_u^{-1}t_v=q^{-1}t_u^{-1}t_vt_a^{-1}t_b=q^{-1}w(f)w(e),
\]
as expected in the case where $\col_1(e)=\col_1(f)$.

\item Suppose that $\row(e)\neq \row(f)$ and that $\col_1(f)=\col_2(e)$ (the case where $\row(e)\neq \row(f)$ and $\col_2(f)=\col_1(e)$ is similar). The following diagram illustrates the situation when $a$ is an internal vertex:
\begin{center}
\begin{tikzpicture}
\draw[color=gray] (0,1) rectangle (1,2); 
\draw[color=gray] (1,1) rectangle (2,2); 
\draw[color=gray] (2,1) rectangle (3,2); 
\draw[color=gray] (3,1) rectangle (4,2); 
\draw[color=gray] (0,0) rectangle (1,1); 
\draw[color=gray] (1,0) rectangle (2,1); 
\draw[color=gray] (2,0) rectangle (3,1); 
\draw[color=gray] (3,0) rectangle (4,1); 

\node at (0.5,1.5) (w1) {$\bullet$}; 
\node at (1.5,1.5) (v1) {$\bullet$}; 
\node at (1.5,0.5) (b) {$\bullet$}; 
\node at (2.5,0.5) (a) {$\bullet$}; 

\node [black,above] at (w1) {$v$};
\node [black,above] at (v1) {$u$};
\node [black,above] at (a) {$a$};
\node [black,above] at (b) {$b$};

\draw [<-,thick,black] (0.6,1.5) -- (1.4,1.5);
\draw [<-,thick,black] (1.6,0.5) -- (2.4,0.5);

\node [black, above] at (1.1,1.5) {$f$};
\node [black, above] at (2.1,0.5) {$e$};
\end{tikzpicture}
\end{center}
The relations \eqref{torus-rels} now give $t_at_u=t_ut_a$, $t_at_v=t_vt_a$, $t_bt_v=t_vt_b$, and $t_ut_b=qt_bt_u$. It follows that 
\[
w(e)w(f)=t_a^{-1}t_bt_u^{-1}t_v=qt_u^{-1}t_vt_a^{-1}t_b=qw(f)w(e), 
\]
as desired.

\item Suppose that $\row(e)\neq \row(f)$ and $\col_2(f)=\col_1(e)$. The following diagram illustrates the situation when $u$ is an internal vertex:
\begin{center}
\begin{tikzpicture}
\fill[color=lightgray] (1,0) rectangle (2,1);
\draw[color=gray] (0,1) rectangle (1,2); 
\draw[color=gray] (1,1) rectangle (2,2); 
\draw[color=gray] (2,1) rectangle (3,2); 
\draw[color=gray] (3,1) rectangle (4,2); 
\draw[color=gray] (0,0) rectangle (1,1); 
\draw[color=gray] (1,0) rectangle (2,1); 
\draw[color=gray] (2,0) rectangle (3,1); 
\draw[color=gray] (3,0) rectangle (4,1); 

\node at (2.5,1.5) (w1) {$\bullet$}; 
\node at (3.5,1.5) (v1) {$\bullet$}; 
\node at (0.5,0.5) (b) {$\bullet$}; 
\node at (2.5,0.5) (a) {$\bullet$}; 

\node [black,above] at (w1) {$v$};
\node [black,above] at (v1) {$u$};
\node [black,above] at (a) {$a$};
\node [black,above] at (b) {$b$};

\draw [<-,thick,black] (2.6,1.5) -- (3.4,1.5);
\draw [<-,thick,black] (0.6,0.5) -- (2.4,0.5);

\node [black, above] at (3.1,1.5) {$f$};
\node [black, above] at (1.5,0.5) {$e$};
\end{tikzpicture}
\end{center}
The relations \eqref{torus-rels} now give $t_at_u=t_ut_a$, $t_at_v=q^{-1}t_vt_a$, $t_bt_v=t_vt_b$, and $t_ut_b=t_bt_u$. It follows that 
\[
w(e)w(f)=t_a^{-1}t_bt_u^{-1}t_v=qt_u^{-1}t_vt_a^{-1}t_b=qw(f)w(e).
\]

\item Suppose that $\row(e)=\row(f)$ and that $\col_2(e)=\col_1(f)$. Then the end vertex of $e$ is the starting vertex of $f$, that is $b=u$. The following diagram illustrates the situation when $a$ is internal:
\begin{center}
\begin{tikzpicture}
\draw[color=gray] (0,0) rectangle (1,1); 
\draw[color=gray] (1,0) rectangle (2,1); 
\draw[color=gray] (2,0) rectangle (3,1); 

\node at (0.5,0.5) (w) {$\bullet$}; 
\node at (1.5,0.5) (b) {$\bullet$}; 
\node at (2.5,0.5) (a) {$\bullet$}; 

\draw [<-,thick,black] (1.6,0.5) -- (2.4,0.5);
\draw [<-,thick,black] (0.6,0.5) -- (1.4,0.5);

\node [black,below] at (w) {$v$};
\node [black,below] at (b) {$u$};
\node [black,above] at (b) {$b$};
\node [black,above] at (a) {$a$};

\node [black, above] at (2.1,0.5) {$e$};
\node [black, below] at (1.1,0.5) {$f$};
\end{tikzpicture}
\end{center}

If $a$ is internal, then the relations in Definition~\ref{torus-rels} give $t_vt_b=qt_bt_v$, $t_vt_a=qt_at_v$, and $t_bt_a=qt_at_b$. Hence 
\[
w(f)w(e)=t_b^{-1}t_vt_a^{-1}t_b=q^2t_vt_a^{-1}=qt_a^{-1}t_v=qt_a^{-1}t_bt_b^{-1}t_v=qw(e)w(f).
\]
If $a$ is boundary, then the relations in Definition~\ref{torus-rels} give $t_vt_b=qt_bt_v$. Hence 
\[
w(f)w(e)=t_b^{-1}t_vt_b=qt_bt_b^{-1}t_v=qw(e)w(f).
\]
This ends Case 1.
\end{enumerate}

{\bf Case 2:} $|\col(e)\cap \col(f)|=2$. The following diagram illustrates the situation:
\begin{center}
\begin{tikzpicture}
\draw[color=gray] (0,1) rectangle (1,2); 
\draw[color=gray] (1,1) rectangle (2,2); 

\draw[color=gray] (0,0) rectangle (1,1); 
\draw[color=gray] (1,0) rectangle (2,1); 

\node at (0.5,1.5) (w) {$\bullet$}; 
\node at (1.5,1.5) (v) {$\bullet$}; 
\node at (0.5,0.5) (b) {$\bullet$}; 
\node at (1.5,0.5) (a) {$\bullet$}; 
\draw [<-,thick,black] (0.6,1.5) -- (1.4,1.5);
\draw [<-,thick,black] (0.6,0.5) -- (1.4,0.5);
\node [black,above] at (w) {$v$};
\node [black,above] at (v) {$u$};
\node [black,above] at (a) {$a$};
\node [black,above] at (b) {$b$};
\node [black, above] at (1.1,1.5) {$f$};
\node [black, above] at (1.1,0.5) {$e$};
\end{tikzpicture}
\end{center}
The relations in Definition~\ref{torus-rels} show that $t_at_v=t_vt_a$, $t_bt_u=t_ut_b$, $t_vt_b=qt_bt_v$, and $t_ut_a=qt_at_u$. It follows that 
\[
w(e)w(f)=t_a^{-1}t_bt_u^{-1}t_v=q^{-1}t_a^{-1}t_u^{-1}t_vt_b=q^{-2}t_u^{-1}t_vt_a^{-1}t_b=q^{-2}w(f)w(e).
\]

\end{full} 
\end{proof}
\end{lemma}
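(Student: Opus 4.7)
The plan is an exhaustive case analysis based on the relative positions of the two edges. Writing $e = (a,b)$ and $f = (u,v)$, recall that an internal edge $((i,j_2),(i,j_1))$ with $j_1 < j_2$ carries weight $t_{i,j_2}^{-1} t_{i,j_1}$, while a boundary edge $(r_i,(i,j))$ carries weight $t_{i,j}$. Thus $w(e)w(f)$ involves at most four generators of the quantum torus, and reordering them produces at most four powers of $q$ via Definition~\ref{torus-rels}: a factor $q$ is picked up exactly when the pair being swapped lies in a common row or column.

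Before starting, I would record a Cauchon-Le obstruction that is used throughout: if $d$ is an internal horizontal edge and $\col_2(d) < j < \col_1(d)$, then $(\row(d),j)$ is black in $C$, and hence so is every square strictly above it in column $j$ (since otherwise the Cauchon-Le rule at $(\row(d),j)$ would fail). This rules out the most dangerous would-be configuration, namely a long horizontal edge passing above a white internal vertex of another edge, which would otherwise corrupt the expected commutation pattern.

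Then I would proceed according to $|\col(e) \cap \col(f)| \in \{0,1,2\}$. In the disjoint case, if $\row(e)\neq\row(f)$ then all four endpoints commute pairwise; if $\row(e) = \row(f)$, the four endpoints lie in a single row and the $q$-factors cancel, treated separately depending on whether $e$ is internal or boundary. When exactly one column is shared there are several sub-configurations, distinguished by whether the shared column is $\col_1(e)\cap\col_1(f)$, $\col_2(e)\cap\col_2(f)$, or a mix, and by whether $\row(e) = \row(f)$; these fall cleanly into parts (i)--(iii) of the statement, with the sign of the surviving $q$-power dictated by which columns coincide, and with the degenerate vertex identification $b = u$ handled by subcase (iii). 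Finally, when both columns are shared we must have $\row(f) < \row(e)$ with the two edges stacked directly above one another, and exactly two of the four $q$-relations survive, giving the factor $q^2$.

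The main obstacle is combinatorial bookkeeping rather than conceptual depth. For every case one must enumerate the internal/boundary possibilities for each endpoint and, for each, check which of the six pairs among $\{a,b,u,v\}$ share a row or column. The Cauchon-Le obstruction recorded at the start is what makes this tractable: without it, several a~priori configurations would contain hidden white squares that would change which pairs commute and hence alter the surviving $q$-power.
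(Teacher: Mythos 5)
Your strategy is the same as the paper's: an exhaustive analysis of the relative positions of $e$ and $f$, powered by the relations of Definition~\ref{torus-rels} together with the Cauchon-Le observation that an internal horizontal edge $d$ forces every square in a column strictly between $\col_2(d)$ and $\col_1(d)$, in row $\row(d)$ or above, to be black; this is exactly how the paper organises Cases 0, 1 and 2, and your treatment of the disjoint-column case, of the generic one-shared-column subcases, and of the two-shared-column case (factor $q^{2}$) matches the paper's computations.

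There is, however, one concrete error, and it sits at precisely the point where this lemma corrects Casteels. You propose to handle ``the degenerate vertex identification $b=u$'' under subcase (iii); but (iii) is the configuration $\col_2(f)=\col_1(e)$ with $\row(f)=\row(e)$, i.e.\ $v=a$ ($f$ ends where $e$ starts), and there the factor is $q^{-1}$. The other same-row shared-vertex configuration, $b=u$ (that is, $\col_2(e)=\col_1(f)$, so $e$ ends where $f$ starts), behaves differently: with $w(f)=t_b^{-1}t_v$ and $w(e)=t_a^{-1}t_b$ (or $t_b$ if $e$ is boundary), one gets $w(f)w(e)=q\,w(e)w(f)$, not $q^{-1}$; the paper computes this separately as its last subcase of Case 1, and the Remark following the lemma exists exactly to flag this distinction. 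So folding $b=u$ into (iii) would record the wrong power of $q$ in that configuration (and would propagate an error into Lemma~\ref{how the head an tail commute lemma}, where these same-row shared-vertex cases are invoked). Apart from this misassignment, your plan reproduces the paper's proof.
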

\begin{remark}
The reader may notice that part (2) of Lemma \ref{how edges commute} differs from part 2 of \cite[Lemma 3.4]{cas1}. This is to clear up a slight ambiguity in part 2(ii) of \cite[Lemma 3.4]{cas1}, namely that in the case where $\row(e)=\row(f)$, part 2(ii) of \cite[Lemma 3.4]{cas1} only holds if $e$ begins where $f$ ends. 
\end{remark}

\begin{lemma}[cf. Lemma 3.5 of \cite{cas1}]\label{how the head an tail commute lemma}
Let $K: v_0\implies v$ and $L: v\implies v_t$ be paths in $\postc$.
\begin{enumerate}[(1)]
\item If either $K$ or $L$ contains only vertical edges, then $w(K)w(L)=w(L)w(K)$.
\item If $K$ contains a horizontal edge and $L$ contains a horizontal edge, then $w(K)w(L)=q^{-1}w(L)w(K)$.
\end{enumerate}
\begin{proof}
\begin{full} 
\begin{enumerate}[(1)]
\item This follows immediately from the fact that vertical edges in $\postc$ have weight $1$.
\item Since all vertical edges have weight one, only the horizontal edges of $K$ and $L$ contribute to their weights. Let $v_1$ be the rightmost vertex in $K$ in the same row as $v$. (Note that $v_1$ can be equal to $v$ as illustrated by the dotted alternative in the following diagram.) We denote by $K'$ the subpath of $K$ that starts at $v_0$ and ends at $v_1$, and by $K''$ the subpath of $K$ that starts at $v_1$ and ends at $v_t$. Clearly, we have $w(K)=w(K')w(K'')$.

Similarly, let $v_2$ be the leftmost vertex in $L$ in the same row as $v$. (Note that $v_2$ can be equal to $v$ as illustrated by the dashed alternative in the following diagram.) We denote by $L''$ the subpath of $L$ that starts at $v$ and ends at $v_2$, and by $L'$ the subpath of $L$ that starts at $v_2$ and ends at $v_t$. Clearly, we have $w(L)=w(L'')w(L')$.

\begin{center}
 \begin{minipage}{45ex}
   \begin{tikzpicture}[xscale=1.5, yscale=1.5]
   
\draw[fill=lightgray] (-1,1) rectangle (5,2);   
\draw[fill=lightgray] (3,2) rectangle (4,3);   

\draw[color=gray] (0,2) rectangle (1,3);               
\draw[color=gray] (1,2) rectangle (2,3);               
\draw[color=gray] (2,2) rectangle (3,3);               
\draw[color=gray] (3,2) rectangle (4,3);               
\draw[color=gray] (4,2) rectangle (5,3);               
\draw[color=gray] (5,2) rectangle (6,3);               
\draw[color=gray] (0,1) rectangle (1,2);               
\draw[color=gray] (1,1) rectangle (2,2);               
\draw[color=gray] (2,1) rectangle (3,2);               
\draw[color=gray] (3,1) rectangle (4,2);               
\draw[color=gray] (4,1) rectangle (5,2);               
\draw[color=gray] (5,1) rectangle (6,2);               
\draw[color=gray] (0,0) rectangle (1,1);               
\draw[color=gray] (1,0) rectangle (2,1);               
\draw[color=gray] (2,0) rectangle (3,1);               
\draw[color=gray] (3,0) rectangle (4,1);               
\draw[color=gray] (4,0) rectangle (5,1);               
\draw[color=gray] (0,-1) rectangle (1,0);               
\draw[color=gray] (1,-1) rectangle (2,0);               
\draw[color=gray] (2,-1) rectangle (3,0);               
\draw[color=gray] (3,-1) rectangle (4,0);               
\draw[color=gray] (4,-1) rectangle (5,0);               
\draw[color=gray] (-1,-1) rectangle (0,0);               
\draw[color=gray] (-1,0) rectangle (0,1);               
\draw[color=gray] (-1,1) rectangle (0,2);               
\draw[color=gray] (-1,2) rectangle (0,3);               

\node at (5.5,2.5) {$\bullet$};
\node at (2.5,2.5) {$\bullet$};
\node at (4.5,2.5) {$\bullet$}; 
\node at (4.5,0.5) {$\bullet$};
\node at (3.5,0.5) {$\bullet$};
\node at (2.5,0.5) {$\bullet$}; 
\node at (1.5,0.5) {$\bullet$}; 
\node at (0.5,0.5) {$\bullet$}; 
\node at (0.5,-0.5) {$\bullet$}; 
\node at (-0.5,-0.5) {$\bullet$}; 
\node at (1.5,-0.5) {$\bullet$}; 
\node at (2.5,-0.5) {$\bullet$}; 
\draw [->, thick, black] (5.4,2.5)--(4.6,2.5);
\draw [->, thick, black] (4.4,0.5)--(3.6,0.5);
\draw [->, thick, black] (3.4,0.5)--(2.6,0.5);
\draw [->, thick, black] (2.4,0.5)--(1.6,0.5);
\draw [->, thick, black] (1.4,0.5)--(0.6,0.5);
\draw [->, thick, black] (0.4,-0.5)--(-0.4,-0.5);

\draw [->, dotted, black] (4.4,2.5)--(2.6,2.5);
\draw [->, dotted, black] (2.5,2.4)--(2.5,0.6);

\draw [->, dashed, black] (2.5,0.4)--(2.5,-0.4);
\draw [->, dashed, black] (2.4,-0.5)--(1.6,-0.5);
\draw [->, dashed, black] (1.4,-0.5)--(0.6,-0.5);

\draw [->, thick, black] (4.5,2.4)--(4.5,0.6);
\draw [->, thick, black] (0.5,0.4)--(0.5,-0.4);

\node at (5.5,2.7) {$v_0$}; 
\node at (2.7,0.7) {$v$};
\node at (0.5,0.7) {$v_2$}; 
\node at (-0.5,-0.3) {$v_t$}; 
\node at (4.7,0.7) {$v_1$}; 
\node at (1.5,-0.7) {$w$}; 
\node at (2.5,-0.7) {$u$}; 
\node at (4.5,2.7) {$a$}; 
\node at (2.5,2.7) {$b$}; 

\node at (3.5,2.7) {$k$}; 
\node at (2.1,-0.3) {$l$};

\node at (4.8,1.5) {$K'$}; 
\node at (3.8,0.3) {$K''$}; 
\node at (3.8,1.3) {$K$}; 
\node at (1.3,0.7) {$L''$}; 
\node at (1.3,-0.2) {$L$}; 
\node at (0.3,-0.2) {$L'$}; 
  \end{tikzpicture}

 \end{minipage}

\end{center} 

We distinguish between two cases.

$\bullet$ Case 1: $K'' \neq \emptyset$ and $L'' \neq \emptyset$. \\
By Lemma \ref{how edges commute}(1), the weights of all edges in $K'$ commute with the weights of all edges in $L$ and the weights of all edges in $L'$ commute with the weights of all edges in $K$. By Lemma \ref{how edges commute}(2)(ii) or Lemma \ref{how edges commute}(2)(iii), we have $w(K'')w(L'')=q^{-1}w(L'')w(K'')$. Now 
\begin{align*}
w(K)w(L)&=w(K')w(K'')w(L'')w(L') \\
        &=q^{-1}w(K')w(L'')w(K'')w(L') \\
        &=q^{-1}w(L'')w(L')w(K')w(K'') \\
        &=q^{-1}w(L)w(K). \\
\end{align*}

$\bullet$ Case 2: $K'' = \emptyset$ or $L'' = \emptyset$. \\
In this case, we denote by $k:a \rightarrow b$ the last horizontal edge in $K$ and by $l:u\rightarrow w$ be the first horizontal edge in $L$. We denote by $K\bs\{k\}$ the subpath of $K$ starting at $v_0$ and ending at $a$, and by $L\bs\{l\}$ the subpath of $L$ starting at $w$ and ending at $v_t$. By Lemma \ref{how edges commute}(1), the weights of all edges in $K\bs\{k\}$ commute with the weights of all edges in $L$ and the weights of all edges in $L\bs\{l\}$ commute with the weights of all edges in $K$. By Lemma \ref{how edges commute}(2)(ii), we have $w(k)w(l)=q^{-1}w(l)w(k)$. Now 
\begin{align*}
w(K)w(L)&=w(K\bs\{k\})w(k)w(l)w(L\bs\{l\}) \\
        &=q^{-1}w(K\bs\{k\})w(l)w(k)w(L\bs\{l\}) \\
        &=q^{-1}w(l)w(L\bs\{l\})w(K\bs\{k\})w(k) \\
        &=q^{-1}w(L)w(K). \\
\end{align*}

\end{enumerate}
\end{full} 
\end{proof}
\end{lemma}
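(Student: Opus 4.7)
The plan is as follows. Part (1) is essentially free: since every vertical edge carries weight $1$, a path consisting entirely of vertical edges has weight $1$, which commutes with everything. So I focus on part (2).

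For part (2), the strategy is to isolate the unique pair of edges of $K$ and $L$ that fails to commute by a factor, and show that every other interaction between an edge of $K$ and an edge of $L$ is trivial. Let $v_1$ be the rightmost vertex of $K$ lying in the same row as $v$, and let $v_2$ be the leftmost vertex of $L$ lying in the same row as $v$. Decompose
$$K = K' \cdot K'', \qquad L = L'' \cdot L',$$
where $K'$ goes from $v_0$ to $v_1$, $K''$ goes from $v_1$ to $v$, $L''$ goes from $v$ to $v_2$, and $L'$ goes from $v_2$ to $v_t$. By construction, once $K'$ leaves the row of $v$ (if ever) it stays strictly above it, and once $L'$ leaves the row of $v$ it stays strictly below it. The subpaths $K''$ and $L''$ are each purely horizontal (or empty).

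The next step is to check, edge-by-edge, that any horizontal edge $e$ of $K'$ and any horizontal edge $f$ of $L$ satisfy $\col(e)\cap \col(f)=\emptyset$ or lie in different rows with no shared column, so that Lemma \ref{how edges commute}(1) gives $w(e)w(f)=w(f)w(e)$; symmetrically for $K$ and $L'$. Thus, as multiplicative factors, $w(K')$ commutes with $w(L)$ and $w(L')$ commutes with $w(K)$. This reduces the identity to showing that $w(K'')w(L'')=q^{-1}w(L'')w(K'')$, after which
$$w(K)w(L)=w(K')w(K'')w(L'')w(L')=q^{-1}w(K')w(L'')w(K'')w(L')=q^{-1}w(L)w(K).$$

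It remains to establish the $q^{-1}$ relation between $w(K'')$ and $w(L'')$. If both $K''$ and $L''$ are non-empty, they share the vertex $v$, and by Proposition \ref{basic facts}(3) one has $w(K'')=t_{v_1}^{-1}t_v$ and $w(L'')=t_v^{-1}t_{v_2}$; the needed relation is then exactly Lemma \ref{how edges commute}(2)(iii) applied to the final edge of $K''$ ending at $v$ and the first edge of $L''$ starting at $v$ (combined with trivial commutation of the remaining horizontal edges of $K''$ with those of $L''$ by Lemma \ref{how edges commute}). If $K''$ or $L''$ is empty, then $v$ is itself a corner: we instead pick the last horizontal edge $k$ of $K$ and the first horizontal edge $l$ of $L$ (which exist by hypothesis), note that $k$ and $l$ are in different rows and satisfy $\col_2(k)=\col_1(l)$ with $\row(k)\neq\row(l)$, and apply Lemma \ref{how edges commute}(2)(ii) to obtain $w(k)w(l)=q^{-1}w(l)w(k)$. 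The remaining edges of $K\setminus\{k\}$ commute with $l$ and with $w(L\setminus\{l\})$ by Lemma \ref{how edges commute}(1), and similarly for $L\setminus\{l\}$ with $k$.

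The main obstacle is the second case above: one has to be careful that when $K''$ or $L''$ is empty, the geometry near $v$ still forces $\col_2(k)$ and $\col_1(l)$ to coincide, and that Lemma \ref{how edges commute}(2)(ii) rather than (2)(iii) is the applicable sub-case (the two scalars differ, and a wrong choice would give the wrong power of $q$). Beyond this case-analysis at the joining vertex, the argument is a straightforward bookkeeping reduction to Lemma \ref{how edges commute}.
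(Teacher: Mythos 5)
Your proof is correct and follows essentially the same route as the paper: the same decomposition $K=K'K''$ and $L=L''L'$ at the row of $v$, the same case split according to whether $K''$ or $L''$ is empty, and the same appeals to Lemma \ref{how edges commute}(1), (2)(ii) and (2)(iii), with the empty case handled, exactly as in the paper, via the last horizontal edge of $K$ and the first horizontal edge of $L$. Two cosmetic remarks: your blanket claim that $w(K')$ commutes with $w(L)$ fails when $K''=\emptyset$ (the last horizontal edge of $K$ then shares the column of $v$ with the first horizontal edge of $L$), but your separate treatment of that case makes this harmless; and parts (2)(ii) and (2)(iii) of Lemma \ref{how edges commute} give the same scalar $q^{-1}$, so the worry in your closing paragraph about choosing between them is moot.
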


\begin{lemma}[cf. Lemma 3.6 of \cite{cas1}]\label{how paths that share only initial vertex commute lemma}
Let $K: v\implies c_i$ and $L: v\implies c_j$ be two paths in $\postc$ which share their initial vertex and no other vertex. Let $K$ be the path that starts with a horizontal edge and let $L$ be the path that starts with a vertical edge.
\begin{enumerate}[(1)]
\item If $L$ consists only of vertical edges, then $w(K)w(L)=w(L)w(K)$. 
\item If $L$ has a horizontal edge then $w(K)w(L)=qw(L)w(K)$.
\end{enumerate}

\begin{center}
 \begin{minipage}{45ex}
   \begin{tikzpicture}[xscale=1, yscale=1]
\draw[color=gray] (-0.5,3.5) rectangle (0.5,4.5);              
\draw[color=gray] (0.5,3.5) rectangle (1.5,4.5);               
\draw[color=gray] (1.5,3.5) rectangle (2.5,4.5);               
\draw[color=gray] (2.5,3.5) rectangle (3.5,4.5);               
\draw[color=gray] (3.5,3.5) rectangle (4.5,4.5);               
\draw[color=gray] (4.5,3.5) rectangle (5.5,4.5);               

\draw[color=gray] (-0.5,2.5) rectangle (0.5,3.5);              
\draw[color=gray] (0.5,2.5) rectangle (1.5,3.5);               
\draw[color=gray] (1.5,2.5) rectangle (2.5,3.5);               
\draw[color=gray] (2.5,2.5) rectangle (3.5,3.5);               
\draw[color=gray] (3.5,2.5) rectangle (4.5,3.5);               
\draw[color=gray] (4.5,2.5) rectangle (5.5,3.5);               
 
\draw[color=gray] (-0.5,1.5) rectangle (0.5,2.5);              
\draw[color=gray] (0.5,1.5) rectangle (1.5,2.5);               
\draw[color=gray] (1.5,1.5) rectangle (2.5,2.5);               
\draw[color=gray] (2.5,1.5) rectangle (3.5,2.5);               
\draw[color=gray] (3.5,1.5) rectangle (4.5,2.5);               
\draw[color=gray] (4.5,1.5) rectangle (5.5,2.5);               

\draw[color=gray] (-0.5,0.5) rectangle (0.5,1.5);              
\draw[color=gray] (0.5,0.5) rectangle (1.5,1.5);               
\draw[color=gray] (1.5,0.5) rectangle (2.5,1.5);               
\draw[color=gray] (2.5,0.5) rectangle (3.5,1.5);               
\draw[color=gray] (3.5,0.5) rectangle (4.5,1.5);               

\draw[fill=lightgray] (1.5,3.5) rectangle (2.5,4.5);               
\draw[fill=lightgray] (1.5,2.5) rectangle (2.5,3.5);               

\draw[color=gray] (-0.5,-0.5) rectangle (0.5,0.5);              

\node at (5,4) {$\bullet$};
\node at (4,4) {$\bullet$};
\node at (3,4) {$\bullet$};
\node at (3,3) {$\bullet$};
\node at (1,3) {$\bullet$};
\node at (1,2) {$\bullet$};
\node at (1,1) {$\bullet$};
\node at (0,1) {$\bullet$};
\node at (2,1) {$\bullet$};
\node at (3,1) {$\bullet$};
\node at (4,1) {$\bullet$};
\node at (0,0) {$\bullet$};
\node at (5,3) {$\bullet$};
\node at (5,2) {$\bullet$};
\node at (4,2) {$\bullet$};

\node at (5,4.2) {$v$};
\node [color=blue] at (0.23,-1.1) {$\bullet \ c_i$};
\node [color=blue] at (2.23,-0.1) {$\bullet \ c_j$};
\node at (3.4,4.3) {$K$};
\node at (3.3,1.3) {$L$};
\draw[->, thick,black] (4.9,4)--(4.1,4);
\draw[->, thick,black] (3.9,4)--(3.1,4);
\draw[->, thick,black] (3,3.9)--(3,3.1);
\draw[->, thick,black] (2.9,3)--(1.1,3);
\draw[->, thick,black] (1,2.9)--(1,2.1);
\draw[->, thick,black] (1,1.9)--(1,1.1);
\draw[->, thick,black] (0.9,1)--(0.1,1);
\draw[->, thick,black] (0,0.9)--(0,0.1);
\draw[->, thick,black] (0,-0.1)--(0,-0.9);
\draw[->, thick,black] (5,3.9)--(5,3.1);
\draw[->, thick,black] (5,2.9)--(5,2.1);
\draw[->, thick,black] (4.9,2)--(4.1,2);
\draw[->, thick,black] (4,1.9)--(4,1.1);
\draw[->, thick,black] (3.9,1)--(3.1,1);
\draw[->, thick,black] (2.9,1)--(2.1,1);
\draw[->, thick,black] (2,0.9)--(2,0.1);
  \end{tikzpicture}
  
   \footnotesize
    \emph{Postnikov diagram illustrating Lemma 5.12 (2)}

 \end{minipage}

\end{center} 
\begin{proof}
\begin{full} 
\begin{enumerate}[(1)]
\item This follows immediately from the fact that vertical edges have weight $1$.
\item Suppose that $L$ has a horizontal edge. Because of the Cauchon-Le condition, no vertex of $K$ lies (with respect to column coordinates) between the vertices of a horizontal edge of $L$ (see the beginning of the proof of Lemma \ref{how edges commute} for more details).

\textbf{Claim:} If $e$ is any horizontal edge of $L$ except the first horizontal edge of $L$, then $w(e)w(K)=w(K)w(e)$.

There are four possibilities for $e$:
\\ $\bullet$ {\bf Case (i)}: No vertex in $K$ shares a column coordinate with either vertex of $e$. In this case, Lemma \ref{how edges commute}(1) then shows that the weights of the horizontal edges of $K$ commute with $w(e)$, so that $w(e)$ commutes with $w(K)$.
\\ $\bullet$ {\bf Case (ii)}: There are two distinct horizontal edges $f'$ and $f''$ of $K$ such that $|\col(e)\cap \col(f')|=|\col(e)\cap \col(f'')|=1$, $\col_2(f')=\col_1(f'')=\col_2(e)$, and $\col(e)\cap \col(f)=\emptyset$ for all other edges $f$ of $K$.

\begin{center}
\begin{minipage}{45ex}
 \begin{tikzpicture}
\draw[color=gray] (0,2) rectangle (1,3); 
\draw[color=gray] (1,2) rectangle (2,3); 
\draw[color=gray] (2,2) rectangle (3,3); 
\draw[color=gray] (3,2) rectangle (4,3); 
\draw[color=gray] (0,1) rectangle (1,2); 
\draw[color=gray] (1,1) rectangle (2,2); 
\draw[color=gray] (2,1) rectangle (3,2); 
\draw[color=gray] (3,1) rectangle (4,2); 
\draw[color=gray] (0,0) rectangle (1,1); 
\draw[color=gray] (1,0) rectangle (2,1); 
\draw[color=gray] (2,0) rectangle (3,1); 
\draw[color=gray] (3,0) rectangle (4,1); 

\draw[fill=lightgray] (2,2) rectangle (3,3); 

\node at (3.5,2.5) {$\bullet$};
\node at (1.5,2.5) {$\bullet$};
\node at (1.5,1.5) {$\bullet$};
\node at (0.5,1.5) {$\bullet$};
\node at (1.5,0.5) {$\bullet$};
\node at (2.5,0.5) {$\bullet$};

\draw[->, thick,black] (3.4,2.5)--(1.6,2.5);
\draw[->, thick,black] (1.4,1.5)--(0.6,1.5);
\draw[->, thick,black] (2.4,0.5)--(1.6,0.5);
\draw[->, thick,black] (1.5,2.4)--(1.5,1.6);


\node at (2.5,2.75) {$f'$};
\node at (1.2,1.75) {$f''$};
\node at (2.2,0.75) {$e$};
 \end{tikzpicture}
 
 \footnotesize
  \emph{Postnikov diagram illustrating Case (ii)}
 \end{minipage}
\end{center}

 In this case, Lemma \ref{how edges commute}(2) shows that $w(f')w(f'')w(e)=qq^{-1}w(e)w(f')w(f'')=w(e)w(f')w(f'')$. Now with Lemma \ref{how edges commute}(1), we can conclude that $w(e)w(K)=w(K)w(e)$.
\\  $\bullet$ {\bf Case (iii)}: There are two distinct horizontal edges $f'$ and $f''$ of $K$ such that $|\col(e)\cap \col(f')|=|\col(e)\cap \col(f'')|=1$, $\col_2(f')=\col_1(f'')=\col_1(e)$, and $\col(e)\cap \col(f)=\emptyset$ for all other edges of $K$. 

\begin{center}
\begin{minipage}{45ex}
 \begin{tikzpicture}
 \draw[fill=lightgray] (1,1) rectangle (2,3);
\draw[color=gray] (0,2) rectangle (1,3); 
\draw[color=gray] (1,2) rectangle (2,3); 
\draw[color=gray] (2,2) rectangle (3,3); 
\draw[color=gray] (3,2) rectangle (4,3); 
\draw[color=gray] (0,1) rectangle (1,2); 
\draw[color=gray] (1,1) rectangle (2,2); 
\draw[color=gray] (2,1) rectangle (3,2); 
\draw[color=gray] (3,1) rectangle (4,2); 
\draw[color=gray] (0,0) rectangle (1,1); 
\draw[color=gray] (1,0) rectangle (2,1); 
\draw[color=gray] (2,0) rectangle (3,1); 
\draw[color=gray] (3,0) rectangle (4,1); 

\node at (3.5,2.5) {$\bullet$};
\node at (2.5,2.5) {$\bullet$};
\node at (2.5,1.5) {$\bullet$};
\node at (0.5,1.5) {$\bullet$};
\node at (2.5,0.5) {$\bullet$};
\node at (1.5,0.5) {$\bullet$};

\draw[->, thick,black] (3.4,2.5)--(2.6,2.5);
\draw[->, thick,black] (2.4,1.5)--(0.6,1.5);
\draw[->, thick,black] (2.4,0.5)--(1.6,0.5);
\draw[->, thick,black] (2.5,2.4)--(2.5,1.6);


\node at (2.8,2.75) {$f'$};
\node at (2.2,1.75) {$f''$};
\node at (2.2,0.75) {$e$};
 \end{tikzpicture}
 
 \footnotesize
  \emph{Postnikov diagram illustrating Case (iii)}
 \end{minipage}
\end{center}

This case is similar to Case (ii).
\\ $\bullet$ {\bf Case (iv)}: There are edges $f',f'',f''$ of $K$ such that $|\col(e)\cap \col(f'')|=2$, $\col_2(f')=\col_1(e)$, and $\col_1(f''')=\col_2(e)$. 
\begin{center}
\begin{minipage}{45ex}
 \begin{tikzpicture}
\draw[color=gray] (0,3) rectangle (1,4); 
\draw[color=gray] (1,3) rectangle (2,4); 
\draw[color=gray] (2,3) rectangle (3,4); 
\draw[color=gray] (3,3) rectangle (4,4); 
 
\draw[color=gray] (0,2) rectangle (1,3); 
\draw[color=gray] (1,2) rectangle (2,3); 
\draw[color=gray] (2,2) rectangle (3,3); 
\draw[color=gray] (3,2) rectangle (4,3); 
\draw[color=gray] (0,1) rectangle (1,2); 
\draw[color=gray] (1,1) rectangle (2,2); 
\draw[color=gray] (2,1) rectangle (3,2); 
\draw[color=gray] (3,1) rectangle (4,2); 
\draw[color=gray] (0,0) rectangle (1,1); 
\draw[color=gray] (1,0) rectangle (2,1); 
\draw[color=gray] (2,0) rectangle (3,1); 
\draw[color=gray] (3,0) rectangle (4,1); 

\node at (3.5,3.5) {$\bullet$};
\node at (2.5,3.5) {$\bullet$};
\node at (2.5,2.5) {$\bullet$};
\node at (1.5,2.5) {$\bullet$};
\node at (1.5,1.5) {$\bullet$};
\node at (0.5,1.5) {$\bullet$};
\node at (2.5,0.5) {$\bullet$};
\node at (1.5,0.5) {$\bullet$};

\draw[->, thick,black] (3.4,3.5)--(2.6,3.5);
\draw[->, thick,black] (2.4,2.5)--(1.6,2.5);
\draw[->, thick,black] (2.5,3.4)--(2.5,2.6);
\draw[->, thick,black] (2.4,0.5)--(1.6,0.5);
\draw[->, thick,black] (1.5,2.4)--(1.5,1.6);
\draw[->, thick,black] (1.4,1.5)--(0.6,1.5);


\node at (2.8,3.75) {$f'$};
\node at (2.2,2.75) {$f''$};
\node at (1.25,1.75) {$f'''$};
\node at (2.2,0.75) {$e$};
 \end{tikzpicture}
 
 \footnotesize
  \emph{Postnikov diagram illustrating Case (iv)}
 \end{minipage}
\end{center}

By Lemma \ref{how edges commute} parts (2) and (3), we have 
\[
w(f')w(f'')w(f''')w(e)=q^{-1}q^2q^{-1}w(e)w(f')w(f'')w(f''')=w(e)w(f')w(f'')w(f''')
\]
and now with Lemma \ref{how edges commute}(1), we can conclude that $w(e)w(K)=w(K)w(e)$. This establishes the claim that if $e$ is any horizontal edge of $L$ except the first horizontal edge of $L$, then $w(e)w(K)=w(K)w(e)$.

Let us turn now to the first horizontal edge $e_1$ of $L$. 

\textbf{Claim:} $w(K)w(e_1)=qw(e_1)w(K)$. 

Let us denote by $f_1$ the first horizontal edge of $K$. There are two cases to consider:

(i)  Let us suppose that $\col_2(f_1)<\col_2(e_1)$. Then Lemma \ref{how edges commute}(2) gives $w(f_1)w(e_1)=qw(e_1)w(f_1)$ and Lemma \ref{how edges commute}(1) allows us to conclude that $w(K)w(e_1)=qw(e_1)w(K)$. 

(ii) Let us suppose that $\col_2(f_1)=\col_2(e_1)$. Then the second horizontal edge $f_2$ of $K$ satisfies $\col_1(f_2)=\col_2(f_1)=\col_2(e_1)$. Then by Lemma \ref{how edges commute} parts (2) and (3), we have $w(f_1)w(f_2)w(e_1)=qw(e_1)w(f_1)w(f_2)$ and now with Lemma \ref{how edges commute}(1), we can conclude that $w(K)w(e_1)=qw(e_1)w(K)$. This completes the proof of the claim. (Note that the case $\col_2(f_1)>\col_2(e_1)$ cannot happen because Postnikov graphs are built from Cauchon-Le diagrams.)

The result now follows from the two claims which we have proven, namely that if $e$ is any horizontal edge of $L$ except the first horizontal edge of $L$, then $w(K)w(e)=w(e)w(K)$ and that if $e_1$ is the first horizontal edge of $L$, then $w(K)w(e_1)=qw(e_1)w(K)$.
\end{enumerate}
\end{full} 
\end{proof}
\end{lemma}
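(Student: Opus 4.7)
The plan is to reduce to commutation relations between individual horizontal edges, using Lemma \ref{how edges commute}. Part (1) is immediate: since every vertical edge of $\postc$ has weight $1$, the path $L$ has $w(L)=1$ and so trivially commutes with $w(K)$.

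For part (2), I would show two separate claims:
\textbf{(a)} every horizontal edge $e$ of $L$ other than the first one satisfies $w(K)w(e)=w(e)w(K)$; and
\textbf{(b)} the first horizontal edge $e_1$ of $L$ satisfies $w(K)w(e_1)=qw(e_1)w(K)$.
Combined with the fact that vertical edges contribute weight $1$, these give $w(K)w(L)=qw(L)w(K)$, as required.

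For (a), the crucial geometric input is the Cauchon-Le condition: if $e$ is horizontal and all black squares strictly between $\col_2(e)$ and $\col_1(e)$ in $\row(e)$ force the entire column above them to be black (see the remark at the beginning of the proof of Lemma \ref{how edges commute}), then no vertex of $K$ can sit strictly between the two endpoints of $e$ column-wise. Consequently, any horizontal edge $f$ of $K$ falls into one of four configurations relative to $e$: (i) $\col(f)\cap\col(e)=\emptyset$; (ii) a pair of horizontal edges $f',f''$ of $K$ meet at a common column equal to $\col_2(e)$; (iii) a pair of horizontal edges $f',f''$ of $K$ meet at a common column equal to $\col_1(e)$; (iv) a single horizontal edge $f$ of $K$ satisfies $|\col(f)\cap\col(e)|=2$, together with two flanking edges $f',f'''$ meeting it at $\col_1(e)$ and $\col_2(e)$. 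In each case, the various $q$-scalars coming from Lemma \ref{how edges commute} parts (2) and (3) combine to $1$ (for instance in case (ii), $q\cdot q^{-1}=1$; in case (iv), $q^{-1}\cdot q^2\cdot q^{-1}=1$), and the remaining horizontal edges of $K$ contribute no $q$-factor by Lemma \ref{how edges commute}(1). Hence $w(e)$ commutes with the product of the weights of all horizontal edges of $K$, i.e.\ with $w(K)$.

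For (b), let $f_1$ denote the first horizontal edge of $K$. Because $K$ starts at $v$ with a horizontal edge and $L$ starts at $v$ with a vertical edge, the Cauchon-Le condition rules out $\col_2(f_1)>\col_2(e_1)$. Either $\col_2(f_1)<\col_2(e_1)$, in which case Lemma \ref{how edges commute}(2)(ii) gives $w(f_1)w(e_1)=qw(e_1)w(f_1)$ while all other horizontal edges of $K$ commute with $w(e_1)$ by (a)-type analysis; or $\col_2(f_1)=\col_2(e_1)$, in which case the second horizontal edge $f_2$ of $K$ has $\col_1(f_2)=\col_2(f_1)=\col_2(e_1)$, and Lemma \ref{how edges commute} parts (2)(i) and (2)(iii) combine to give $w(f_1)w(f_2)w(e_1)=qw(e_1)w(f_1)w(f_2)$, while all remaining horizontal edges of $K$ commute with $w(e_1)$. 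Either way one obtains $w(K)w(e_1)=qw(e_1)w(K)$, proving (b).

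The main obstacle, as in Lemma \ref{how edges commute} and Lemma \ref{how the head an tail commute lemma}, is the case analysis dictated by the Cauchon-Le condition; but because the two paths share only their initial vertex and start in different directions, the relative positions of the horizontal edges of $K$ and $L$ are tightly constrained, and the argument parallels Casteels' treatment of the rectangular case \cite[Lemma 3.6]{cas1}.
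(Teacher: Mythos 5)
Your proposal is correct and follows essentially the same route as the paper's proof: part (1) from vertical edges having weight $1$, and part (2) via the same two claims (non-first horizontal edges of $L$ commute with $w(K)$, the first one $q$-commutes), with the identical four-case analysis driven by the Cauchon-Le condition and the scalar bookkeeping from Lemma \ref{how edges commute}. No gaps to report.
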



\subsection{Path systems}

\begin{definition}
Suppose that $I=\{i_1<\dots<i_t\}\subseteq \intint{1,m}$ and $J=\{j_1<\dots<j_t\}\subseteq \intint{1,n}$. An $R_{(I,J)}$-\emph{path system} in $\postc$ is a collection $\cp=(P_1,\ldots,P_t)$ of paths in $\postc$ starting respectively at the row vertices $r_{i_1},\ldots,r_{i_t}$ and ending respectively at the column vertices $c_{j_{\sigma_\cp(1)}},\ldots,c_{j_{\sigma_\cp(t)}}$ for some permutation $\sigma_\cp\in S_t$, called the {\em permutation of the path system $\cp$}.   
The path system $\cp$ is called \emph{vertex-disjoint} if no two of its paths share a vertex. 
The {\em weight of the path system $\cp=(P_1,\ldots,P_t)$} is defined simply as the ordered product $w(P_1)\cdots w(P_t)$ of the weights of the paths $P_1,\ldots,P_t$.
\end{definition}

\begin{example}\label{part ex'''}
{\rm 
Let $c=d=4$ and let $\lambda=(4,3,3,1)$. Below are the Cauchon-Le diagram on $Y_\lambda$ from Example \ref{partition-ex''}, with a vertex-disjoint $R_{(\{1,4\},\{1,4\})}$-path system marked in solid lines and a non vertex-disjoint $R_{(\{2,3\},\{2,3\})}$-path system marked in dotted and dashed lines; each path system has permutation $(1\ 2)\in S_2$. 
\begin{center}

   \begin{tikzpicture}[xscale=1.5, yscale=1.5]

\draw[color=gray] (0,3) rectangle (1,4);            
\draw[color=gray] (1,3) rectangle (2,4);            
\draw[color=gray] (2,3) rectangle (3,4);            
\draw[color=gray] (3,3) rectangle (4,4);            
\draw[color=gray] (0,2) rectangle (2,3);               
\draw[color=gray] (1,2) rectangle (1,3);               
\draw[color=gray] (2,2) rectangle (3,3);               
\draw[color=gray] (0,1) rectangle (2,2);               
\draw[color=gray] (1,1) rectangle (1,2);               
\draw[color=gray] (2,1) rectangle (3,2);               
\draw[color=gray] (0,0) rectangle (1,1);            

\draw[fill=lightgray] (1,3) rectangle (2,4);               
\draw[fill=lightgray] (0,2) rectangle (1,3);               
\draw[fill=lightgray] (2,3) rectangle (3,4);               

\node at (3.5, 3.5) {$\bullet$}; 
\node at (2.5, 2.5) {$\bullet$}; 
\node at (1.5, 1.5) {$\bullet$}; 
\node at (2.5, 1.5) {$\bullet$}; 
\node at (0.5, 0.5) {$\bullet$}; 

\node[color=red]  at (4.35,3.5) {$\bullet\, r_1$};
\node[color=red]  at (3.35,2.5) {$\bullet\, r_2$};
\node[color=red]  at (3.35,1.5) {$\bullet\, r_3$};
\node[color=red]  at (1.35,0.5) {$\bullet\, r_4$};

\node[color=blue]  at (0.5,-0.2) {$\hspace{0.32cm}\bullet c_1$};
\node[color=blue]  at (1.5,0.8) {$\hspace{0.32cm}\bullet c_2$};
\node[color=blue]  at (2.5,0.8) {$\hspace{0.32cm}\bullet c_3$};
\node[color=blue]  at (3.5,2.8) {$\hspace{0.32cm}\bullet c_4$};

\draw [<-,  thick,black] (3.6,3.5)--(4.1,3.5);
\draw [->, thick, dotted, black] (3.1,2.5)--(2.6,2.5);
\draw [<-, thick, dashed, black] (1.6,1.5)--(2.4,1.5);
\draw [<-, thick, dashed, black] (2.6,1.5)--(3.1,1.5);
\draw [<-, thick, black] (0.6,0.5)--(1.1,0.5);

\draw [->, thick, black] (0.5,0.4)--(0.5,-0.1);
\draw [->, thick, dashed, black] (1.5,1.4)--(1.5,0.9);
\draw [->, thick, dotted, black] (2.5,2.4)--(2.5,1.6);
\draw [->, thick, dotted, black](2.5,1.4)--(2.5,0.9);
\draw [->, thick, black] (3.5,3.4)--(3.5,2.9);

   \end{tikzpicture}
  \end{center}
}\end{example}

In the case that a Young diagram is rectangular, the permutation of a vertex disjoint 
path system must be the identity permutation. This needs not be the case for nonrectangular 
Young diagrams, and the following lemma deals with this problem. 


\begin{lemma}\label{same perm lemma}
Suppose that $I=\{i_1<\dots<i_t\}\subseteq \intint{1,c}$ and $J=\{j_1<\dots<j_t\}\subseteq \intint{1,d}$. Then all vertex-disjoint $R_{(I,J)}$-path systems in $\postc$ have the same permutation.
\end{lemma}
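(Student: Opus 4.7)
The plan is to deduce the result from the planarity of $\postc$ established in Proposition~\ref{basic facts}(2), together with the uniqueness of non-crossing matchings on the boundary of a disk. Concretely, I would embed the entire Postnikov graph (boundary vertices included) into a topological closed disk whose boundary is the staircase outline of $Y_\lambda$ together with the short stubs that attach each $r_i$ and each $c_j$ to the cell they label. Traversing this boundary clockwise from the top-left corner, one encounters the row vertices $r_1,r_2,\ldots,r_m$ in order of increasing row index along the staircase to the right, interleaved between these with the column vertices along the horizontal drops of the staircase (in decreasing column order), and finally $c_1$ on the bottom edge. Thus the set $\{r_{i_1},\ldots,r_{i_t}\}\cup\{c_{j_1},\ldots,c_{j_t}\}$ inherits a canonical cyclic order on this boundary circle that depends only on $I$, $J$, and $\lambda$.

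The next step is to observe that any vertex-disjoint $R_{(I,J)}$-path system $\mathcal{P}=(P_1,\ldots,P_t)$ in $\postc$ gives rise to a non-crossing matching on this boundary circle between $\{r_{i_1},\ldots,r_{i_t}\}$ and $\{c_{j_1},\ldots,c_{j_t}\}$. Indeed, by Proposition~\ref{basic facts}(1), $\postc$ is acyclic, so each path $P_k$ is simple; by Proposition~\ref{basic facts}(2), crossings of distinct edges in the planar embedding occur only at vertices (white-square vertices of $C$), so after an arbitrarily small transverse perturbation inside each white square, the $P_k$ can be realized as pairwise disjoint simple arcs in the closed disk whose endpoints lie on the boundary. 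A pairwise disjoint collection of simple Jordan arcs in a disk with endpoints on the boundary circle defines a non-crossing perfect matching of its endpoint set.

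Finally, I would invoke the classical fact that a set of $2t$ distinct points on a circle, partitioned into two labeled sets of size $t$, admits \emph{at most one} perfect matching between the two sets that is non-crossing. (This is a short symmetric-difference argument: if two distinct non-crossing matchings existed, their symmetric difference would contain a 4-cycle, and one of the four relevant chord pairs inside this cycle would have to cross for at least one of the two matchings, contradiction.) Applying this to the sources $\{r_{i_1},\ldots,r_{i_t}\}$ and sinks $\{c_{j_1},\ldots,c_{j_t}\}$ in the cyclic order determined above, we conclude that all vertex-disjoint $R_{(I,J)}$-path systems in $\postc$ induce the same non-crossing matching, i.e., have the same permutation $\sigma_\mathcal{P}$.

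The main obstacle is the rigorous setup of the planar/topological argument in the second step: one has to explain carefully why ``vertex-disjoint in $\postc$'' upgrades to ``realizable by pairwise disjoint Jordan arcs in the ambient disk''. Once Proposition~\ref{basic facts}(1)(2) is unpacked in the way described, the remainder of the argument is purely combinatorial and follows at once from the uniqueness of non-crossing matchings.
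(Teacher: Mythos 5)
Your first two steps are fine: by Proposition~\ref{basic facts}(2) together with vertex-disjointness, two edges from distinct paths of the system can only meet at a common vertex, so a vertex-disjoint $R_{(I,J)}$-path system really is a family of pairwise disjoint arcs in the disk, and it does induce a non-crossing matching of the chosen boundary points. The gap is the final step: the ``classical fact'' you invoke is false. A set of $2t$ points on a circle split into $t$ labelled sources and $t$ labelled sinks admits a \emph{unique} non-crossing matching only when the sources and sinks do not interleave (e.g.\ all sources on one boundary arc, all sinks on the complementary arc, which is what happens for a rectangular $Y_\lambda$). If they interleave, several non-crossing matchings can exist: with four points in cyclic order $a_1,b_1,a_2,b_2$, both $\{a_1b_1,\,a_2b_2\}$ and $\{a_1b_2,\,a_2b_1\}$ are non-crossing, and your symmetric-difference argument breaks down precisely here (the symmetric difference is a $4$-cycle, but neither matching contains a crossing pair of chords). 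This interleaved situation is exactly what occurs along the staircase boundary of a non-rectangular Young diagram, which is the only case where Lemma~\ref{same perm lemma} has content: e.g.\ for $\lambda=(2,1)$ and $I=J=\{1,2\}$ the boundary order is $r_1,c_2,r_2,c_1$, and both the identity and the transposition give non-crossing chord diagrams, so disjointness of the arcs alone cannot pin down the permutation.

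What rules out the spurious matching is not planarity by itself but the directed, ``left/down'' structure of $\postc$, i.e.\ reachability: in the example above $r_2$ can only reach $c_1$, forcing the transposition. The paper's proof supplies exactly this ingredient: it argues by induction on $t$, choosing $s$ maximal with $(i_s,j_t)\in Y_\lambda$, and then uses planarity plus the relative positions of $r_{i_s}$, $c_{j_u}$ ($u<t$) and any path $r_{i_l}\Longrightarrow c_{j_t}$ with $l<s$ to show that the path starting at $r_{i_s}$ \emph{must} end at $c_{j_t}$; deleting this source--sink pair and invoking the inductive hypothesis finishes the proof. To repair your argument you would need to replace the false uniqueness claim by an argument of this kind (or some other use of the orientation of the edges), at which point you are essentially reproducing the paper's proof rather than shortcutting it.
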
 


\begin{proof} 
  \begin{full} 
The proof is by induction on $t$. The case $t=1$ is obvious; so, suppose that $t>1$ and that the result holds for vertex disjoint path systems of smaller size 
than $t$. 

 Choose $s$ as large as possible such that the Young diagram $Y_\lambda$ has a square in the $(i_s,j_t)$ position. Let $\cs$ denote any vertex-disjoint $R_{(I,J)}$-path system. We claim that the path $S_s$ in ${\cal S}$ starting at $r_{i_s}$ must finish at $c_{j_t}$. 

Suppose, for a contradiction, that the path $S_s$ in $\cs$ starting at $r_{i_s}$ does not end at $c_{j_t}$. 
 Let $l$ be such that the path $S_l$ in $\cs$ starting at $r_{i_l}$ ends at $c_{j_t}$, forcing $l<s$ because of the maximality of $s$. Suppose that $S_s$ ends at $c_{j_u}$, and note that $u<t$. As $\postc$ is planar, the paths 
$S_s: r_{i_s}\implies c_{j_u}$ and $S_l: r_{i_l}\implies c_{j_t}$ must cross, as
$r_{i_s}$ is to the right of the path $S_l$ and $c_{j_u}$ is to the left of $S_l$; this crossing must occur at a vertex by Proposition \ref{basic facts}(2). This gives the desired contradiction and proves the claim that the path $S_s$ in ${\cal S}$ starting at $r_{i_s}$ must finish at $c_{j_t}$.  

\begin{center}

\begin{tikzpicture}[xscale=1, yscale=1]
\node[scale=1.5, color=red] at (4.7,4.4) {$\bullet\, r_{i_l}$};
\node[scale=1.5, color=red] at (4.2,3.4) {$\bullet\, r_{i_s}$};

\node[color=blue, scale=1.5] at (0.65,0.1) {$\hspace{0.32cm}\bullet c_{j_u}$};
\node[color=blue, scale=1.5] at (1.6,0.6) {$\hspace{0.32cm}\bullet c_{j_t}$};

\draw[black,thick] (4.0,4.3) -- (4.0,4.7);
\draw[black,thick] (3.5,3.3) -- (3.5,3.7);
\draw[black,thick] (1.3,1.0) -- (1.7,1.0);
\draw[black,thick] (0.3,0.5) -- (0.7,0.5);

\draw[->,very thick, black] (4.4,4.5) to [out=180,in=90] (1.5,0.8);
\draw[->,very thick, black] (3.7,3.5) to [out=180,in=90] (0.5,0.3);

\draw[black,thick,dotted] (3.5,3.7) -- (4.0,4.3);
\draw[black,thick,dotted] (1.7,1.0) -- (3.5,3.3);
\draw[black,thick,dotted] (0.7,0.5) -- (1.3,1.0);

\node [scale=1.5, black, above] at (0.7,2.0) {$S_s$};
\node [scale=1.5, black, above] at (2.6,4) {$S_l$};

\node [scale=1.5, black] at (2.1,3.05) {$\bullet$};

\end{tikzpicture}
\end{center}

Consider any two vertex-disjoint $R_{(I,J)}$-path systems $\cp=(P_1,\ldots,P_t)$ and $\mathcal{Q}=(Q_1,\ldots,Q_t)$ in $\postc$. The paths $P_s$ and $Q_s$ which start at $r_{i_s}$ must finish at $c_{j_t}$. Now $\cp\bs\{P_s\}$ and $\mathcal{Q}\bs\{Q_s\}$ are two vertex-disjoint $R_{(I\bs\{i_s\},J\bs\{j_t\})}$-path systems and hence must have the same permutation by the induction hypothesis. The result follows immediately.
  \end{full} 
\end{proof}


\begin{lemma}[cf. Lemma 4.2 of \cite{cas1}]\label{adjacent paths share vertex lemma}
Suppose that $I=\{i_1<\dots<i_t\}\subseteq \intint{1,m}$ and $J=\{j_1<\dots<j_t\}\subseteq \intint{1,n}$. If $\cp=(P_1,\ldots,P_t)$ is a non-vertex-disjoint $R_{(I,J)}$-path system in $\postc$, then there exists $s\in \intint{1,t-1}$ such that $P_s$ and $P_{s+1}$ share a vertex. 
\end{lemma}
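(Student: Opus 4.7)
The plan is to use a minimum-counterexample argument driven by a Jordan-curve style planarity argument resting on Proposition~\ref{basic facts}(2). Assuming $\cp$ is not vertex-disjoint, choose a pair $a<b$ such that $P_a$ and $P_b$ share a vertex with $b-a$ as small as possible; the goal is to show $b-a=1$. Assume for contradiction that $b-a\geq 2$; then I aim to show that $P_{a+1}$ itself shares a vertex with $P_a$ or with $P_b$, contradicting the choice of $(a,b)$.

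Pick $v$ to be the first vertex on $P_a$, going from $r_{i_a}$ towards the sink, that also lies on $P_b$. Since the sources $r_{i_a}, r_{i_b}$ are distinct and the sinks $c_{j_{\sigma_\cp(a)}}, c_{j_{\sigma_\cp(b)}}$ are distinct, $v$ is neither a source (which has no incoming edges) nor a sink (which has no outgoing edges); it is an internal vertex lying in a white square of $C$. Let $\alpha$ be the subpath of $P_a$ from $r_{i_a}$ to $v$; by the choice of $v$, $\alpha$ meets $P_b$ only at $v$, so in particular $\alpha\cap\beta=\{v\}$, where $\beta$ is the subpath of $P_b$ from $r_{i_b}$ to $v$. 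Let $\gamma$ be the portion of the right boundary staircase of $Y_\lambda$ running from $r_{i_a}$ down to $r_{i_b}$, which passes through $r_{i_k}$ for every $a\leq k\leq b$, and in particular through $r_{i_{a+1}}$. Since the row vertices $r_{i_k}$ have no incoming edges, $\alpha$ and $\beta$ meet $\gamma$ only at their sources, and so $\Gamma:=\alpha\cdot\beta^{-1}\cdot\gamma^{-1}$ is a simple closed curve in the plane. By the Jordan curve theorem it bounds a region $D$.

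The heart of the argument is then to trace $P_{a+1}$ relative to $D$. Its source $r_{i_{a+1}}$ lies on $\gamma\subseteq\partial D$, and its first outgoing edge runs leftward into the interior of $Y_\lambda$, which, as $D$ is the side of $\gamma$ facing the interior of $Y_\lambda$, lies in $D$. On the other hand the endpoint $c_{j_{\sigma_\cp(a+1)}}$ lies on the bottom boundary of $Y_\lambda$; since $v$ is internal, $\alpha$ and $\beta$ never touch the bottom boundary, so $\overline D$ is separated from the bottom edge and $c_{j_{\sigma_\cp(a+1)}}\notin\overline D$. Hence $P_{a+1}$ must leave $D$ through $\partial D=\alpha\cup\beta\cup\gamma$, and by Proposition~\ref{basic facts}(2) this exit happens at a vertex of $\postc$. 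The exit cannot lie on $\gamma$, since the only vertices of $\postc$ on $\gamma$ are the row vertices, and none of them has an incoming edge, so $P_{a+1}$ cannot meet $\gamma$ after its own source. Therefore $P_{a+1}$ shares a vertex with $\alpha\subseteq P_a$ or with $\beta\subseteq P_b$; in either case a pair of indices closer than $(a,b)$ shares a vertex, contradicting minimality.

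The principal technical obstacle is the Jordan-curve bookkeeping, in particular confirming that the first edge of $P_{a+1}$ enters the bounded region $D$ and that $\overline D$ is separated from the bottom boundary of $Y_\lambda$. Both points hinge on the geometric fact that the common vertex $v$ is internal, which is where the distinctness of sources and sinks across $\cp$ is used decisively; the rest is straightforward combinatorial manipulation on the index $b-a$.
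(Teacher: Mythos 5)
Your strategy is the same as the paper's: fix a pair $a<b$ with $b-a$ minimal such that $P_a$ and $P_b$ share a vertex, truncate both paths at the first common vertex $v$, and argue by planarity (Proposition~\ref{basic facts}(2)) that an intermediate path is trapped and must meet one of the truncated paths at a vertex. The paper disposes of that last step in one sentence; your Jordan-curve bookkeeping is an attempt to justify it, and in doing so it surfaces exactly where the argument breaks once $Y_\lambda$ is not a rectangle. Your claims that ``the only vertices of $\postc$ on $\gamma$ are the row vertices'' and that the sink $c_{j_{\sigma_\cp(a+1)}}$ lies outside $\overline{D}$ are false in general: whenever some column $j$ has its lowest box in a row strictly between $i_a$ and $i_b$ (equivalently $\lambda_{i_b}<j\le\lambda_{i_a}$), the column vertex $c_j$ lies on the staircase segment $\gamma$ joining $r_{i_a}$ to $r_{i_b}$, and the middle path may terminate at such a $c_j$ while staying in $\overline{D}$, never meeting $\alpha$ or $\beta$. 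So the exit argument does not go through; the paper's own proof rests on the same unjustified assertion (that $P_\ell$ must intersect $P_a'$ or $P_b'$), so this is a shared defect rather than a misreading on your part, but it is a genuine gap.

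Indeed, in the stated generality the step really does fail, not just its justification. Take $\lambda=(4,4,2,2)$ with $C$ all white, $I=\{1,2,3\}$, $J=\{1,2,4\}$, and the paths $P_1\colon r_1,(1,4),(1,3),(1,2),(2,2),(3,2),(4,2),c_2$; $P_2\colon r_2,(2,4),c_4$; $P_3\colon r_3,(3,2),(3,1),(4,1),c_1$. This is an $R_{(I,J)}$-path system (with $\sigma_\cp$ a $3$-cycle); it is not vertex-disjoint, since $P_1$ and $P_3$ share $(3,2)$, yet $P_1\cap P_2=\emptyset$ and $P_2\cap P_3=\emptyset$: the middle path escapes to the sink $c_4$, which sits on the staircase between $r_1$ and $r_3$, precisely the possibility your (and the paper's) argument excludes. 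The planarity argument is sound exactly when no column vertex lies on $\gamma$, in particular in the rectangular setting of Casteels' Lemma 4.2 that is being generalised here; in the general partition case the statement has to be restricted, or the cancellation in Theorem~\ref{computing q det with paths thm} reorganised around a canonical intersecting pair that need not be adjacent, before your approach (or the paper's) can be completed.
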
 


\begin{proof}
  \begin{full} 
Let $d=\min \{|a-b|\ |\ a\neq b \tx{ and }P_a\tx{ and }P_b \tx{ share a vertex}\}$ and suppose that $d>1$. Let $a<b$ be such that $|a-b|=d$ and $P_a$ shares a vertex with $P_b$. Let $x$ be the first vertex which is common to $P_a$ and $P_b$ and consider the subpaths $P_a': r_{i_a}\implies x$ of $P_a$ and $P_b': r_{i_b}\implies x$ of $P_b$.   

Since $d>1$, there exists $\ell\in \intint{1,t}$ such that $a<\ell<b$. The path $P_\ell\in \cp$ which starts at $r_{i_\ell}$ must intersect either $P_a'$ or $P_b'$ and this intersection must be at a vertex of $\postc$ by Proposition \ref{basic facts}(2), contradicting the minimality of $d$.

\begin{center}
\begin{tikzpicture}[xscale=1.5, yscale=1.5]
\node[scale=1.5, color=red] at (4.7,4.4) {$\bullet\, r_{i_a}$};
\node[scale=1.5, color=red] at (4.2,3.4) {$\bullet\, r_{i_b}$};
\node[scale=1.5, color=red] at (4.5,3.9) {$\bullet\, r_{i_l}$};

\draw[black,thick] (4.0,4.3) -- (4.0,4.7);
\draw[black,thick] (3.75,3.8) -- (3.75,4.2);
\draw[black,thick] (3.5,3.3) -- (3.5,3.7);

\draw[->, very thick, black] (4.325,4.5) to [out=180,in=90] (0.45,1.45);
\draw[->, very thick, black] (3.8,3.5) to [out=180,in=90] (0.45,1.45);

\draw[black,thick,dotted] (4.0,4.3) -- (3.75,4.2);
\draw[black,thick,dotted] (3.75,3.8) -- (3.5,3.7);

\node [scale=1.5, black, above] at (0.6125,1.125) {$\bullet\, x$};

\node [scale=1.5, black, above] at (1.7,4) {$P_a'$};
\node [scale=1.5, black, above] at (2,2) {$P_b'$};
\end{tikzpicture}
\end{center}
 
  \end{full} 
\end{proof}


\subsection{Path matrices and their pseudo quantum minors}
\label{subsection:pathmatrix}

\begin{definition}
Let $\lambda=(\lambda_1,\ldots,\lambda_c)$ be  a partition with $d= \lambda_1\geq\cdots\geq \lambda_c\geq 1$, where $c\leq m$ and $d\leq n$. 
Define the \emph{path matrix} $M_C=(M_C[i,j])$ of $C$ to be the $c\times d$ matrix with entries from the quantum torus associated with $C$ such that for each $(i,j)\in \llb 1,c\rrb\times\llb 1,d\rrb$, the entry $M_C[i,j]$ is the sum of the weights of all paths from $r_i$ to $c_j$ in $\postc$. For $I=\{i_1<\ldots<i_t\}\subseteq \llb 1,c \rrb$ and $J=\{j_1<\ldots<j_t\}\subseteq \llb 1,d\rrb$, we define the 
pseudo quantum minor $[I\ |\ J]$ of $M_C$ as follows 
\[
 [I\ |\ J]:=\sum_{\sigma\in S_t}(-q)^{\ell(\sigma)}M_C[i_1,j_{\sigma(1)}]\cdots M_C[i_t,j_{\sigma(t)}].
\]
\end{definition}

In the case of a rectangular Young diagram, \cite[Theorem 4.4]{cas1} shows that the quantum minor $[I\ |\ J]$ of $M_C$ is equal to the sum of the weights of all vertex-disjoint $R_{(I,J)}$-path systems in $\postc$. In the nonrectangular case, we have to take account of the permutation associated with the vertex disjoint path systems (cf. Lemma~\ref{same perm lemma}), to obtain the next theorem. 
 

\begin{theorem}[cf. Theorem 4.4 of \cite{cas1}]\label{computing q det with paths thm}
Let $I\subseteq \llb 1,c\rrb$ and $J\subseteq \llb 1,d\rrb$ have the same cardinality and let $\sigma_{(I,J)}$ be the permutation of all vertex-disjoint $R_{(I,J)}$-path systems (see Lemma \ref{same perm lemma}). Then the pseudo quantum minor $[I\ |\ J]$ of $M_C$ is given by 
\begin{equation}\label{sum of vd paths}
[I\ |\ J]=(-q)^{\ell(\sigma_{(I,J)})}\sum_{\cp}w(\cp),
\end{equation}
where $\cp$ runs over all vertex-disjoint $R_{(I,J)}$-path systems in $\postc$. In particular, if there are no vertex-disjoint $R_{(I,J)}$-path systems in $\postc$, then $[I\ |\ J]=0$. 
\end{theorem}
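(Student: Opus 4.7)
The plan is to follow the quantum Lindström--Gessel--Viennot strategy used by Casteels in \cite[Theorem 4.4]{cas1} for the rectangular case, with care taken so that the nontrivial permutation $\sigma_{(I,J)}$ coming from Lemma~\ref{same perm lemma} is handled correctly. First I would unfold the definition:
\[
[I\ |\ J]=\sum_{\sigma\in S_t}(-q)^{\ell(\sigma)}M_C[i_1,j_{\sigma(1)}]\cdots M_C[i_t,j_{\sigma(t)}],
\]
and since each $M_C[i_k,j_l]$ is by definition a sum of weights of paths $r_{i_k}\Rightarrow c_{j_l}$ in $\postc$, distributing produces
\[
[I\ |\ J] \;=\; \sum_{\cp}(-q)^{\ell(\sigma_\cp)}w(\cp),
\]
where the sum runs over \emph{all} $R_{(I,J)}$-path systems $\cp$ (vertex-disjoint or not) and $\sigma_\cp$ is the associated permutation.

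Next I would isolate the vertex-disjoint contribution. By Lemma~\ref{same perm lemma}, every vertex-disjoint $R_{(I,J)}$-path system has the same permutation $\sigma_{(I,J)}$, so the vertex-disjoint portion of the sum equals $(-q)^{\ell(\sigma_{(I,J)})}\sum_{\cp \text{ v.d.}}w(\cp)$, which is precisely the right-hand side of \eqref{sum of vd paths}. The remaining task is therefore to show that the contributions of non-vertex-disjoint path systems cancel in pairs.

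To achieve this I would construct a sign-reversing involution $\iota$ on the set of non-vertex-disjoint $R_{(I,J)}$-path systems. Given such a $\cp=(P_1,\dots,P_t)$, Lemma~\ref{adjacent paths share vertex lemma} guarantees an index $s$ such that $P_s$ and $P_{s+1}$ share a vertex; choose $s$ minimal with this property and then let $v$ be the first vertex (along $P_s$) that lies on $P_{s+1}$. Write $P_s=K_sL_s$ and $P_{s+1}=K_{s+1}L_{s+1}$ where $K_s,K_{s+1}$ are the initial segments ending at $v$ and $L_s,L_{s+1}$ are the terminal segments starting at $v$. Define $\iota(\cp)$ to be the path system obtained by replacing $P_s,P_{s+1}$ with $K_sL_{s+1}$ and $K_{s+1}L_s$; by the minimality of $s$ and the choice of $v$, $\iota$ is a well-defined involution on non-vertex-disjoint path systems, and it changes $\sigma_\cp$ by a single transposition, giving a sign flip in $(-q)^{\ell(\sigma_{\cp})}$ up to a factor of $q^{\pm1}$. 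I would then verify that $w(\iota(\cp))$ differs from $w(\cp)$ by exactly the inverse of that $q$-factor, so that $(-q)^{\ell(\sigma_{\iota(\cp)})}w(\iota(\cp))=-(-q)^{\ell(\sigma_{\cp})}w(\cp)$.

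The hard part is this last weight computation: all other paths in $\cp$ are untouched by $\iota$ but appear in a different order in the product, and the two modified paths must be compared at the crossing $v$. This is precisely the role of Lemmas~\ref{how the head an tail commute lemma} and \ref{how paths that share only initial vertex commute lemma}. Concretely, using Lemma~\ref{how paths that share only initial vertex commute lemma} at the shared vertex $v$ one relates $w(L_s)w(L_{s+1})$ to $w(L_{s+1})w(L_s)$ up to a factor of $q$, while Lemma~\ref{how the head an tail commute lemma} governs how the initial segments $K_s,K_{s+1}$ commute past $L_{s+1},L_s$; a careful bookkeeping of these $q$-factors (splitting into cases according to whether the relevant segments contain horizontal edges) yields exactly the $q^{\pm1}$ needed to kill the change of sign in $(-q)^{\ell(\sigma_\cp)}$. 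Once the cancellation is established, only the vertex-disjoint systems survive, completing the proof.
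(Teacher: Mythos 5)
Your proposal follows the same route as the paper: expand $[I\ |\ J]$ via the path matrix into a sum over all $R_{(I,J)}$-path systems, use Lemma~\ref{same perm lemma} to pull out the common permutation of the vertex-disjoint systems, and cancel the non-vertex-disjoint systems by a sign-reversing involution that swaps the tails of an adjacent intersecting pair (whose existence comes from Lemma~\ref{adjacent paths share vertex lemma}), with the $q$-bookkeeping done through Lemmas~\ref{how the head an tail commute lemma} and~\ref{how paths that share only initial vertex commute lemma}. The one substantive place where you deviate is the choice of the splitting vertex, and that is exactly where your argument breaks down.

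You cut $P_s$ and $P_{s+1}$ at the \emph{first} vertex $v$ they share, whereas the paper cuts at the \emph{last} shared vertex $x$. This is not a cosmetic difference. When $x$ is the last common vertex, the two terminal segments issuing from $x$ share no further vertex and must leave $x$ in different directions, one horizontally and one vertically; moreover, under the normalisation $\sigma_\cp(s)<\sigma_\cp(s+1)$, the terminal segment of $P_s$ is forced to begin with a horizontal edge. These facts are precisely the hypotheses of Lemma~\ref{how paths that share only initial vertex commute lemma}, and they are what produce the single factor of $q$ in the identity $w(P_s)w(P_{s+1})=q\,w(\iota(P_s))w(\iota(P_{s+1}))$ that makes the terms cancel against the sign change $\ell(\sigma_{\iota(\cp)})=\ell(\sigma_\cp)\pm 1$. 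If instead you split at the first shared vertex $v$, the two tails $L_s$ and $L_{s+1}$ may share further vertices — indeed they may leave $v$ along the very same edge and coincide for a stretch — so neither tail is distinguished as ``horizontal-start'' versus ``vertical-start'' and Lemma~\ref{how paths that share only initial vertex commute lemma} simply does not apply to them; the claimed $q^{\pm1}$ bookkeeping is therefore unsupported (and your swap at $v$ is in general a different pairing from the swap at $x$, so you cannot import the paper's computation after the fact). The fix is to take $x$ to be the last shared vertex, note the two consequences above, and then run the two-case analysis (the other tail contains a horizontal edge, or consists solely of vertical edges) exactly as in the paper's proof; with that change the rest of your outline goes through.
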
 


\begin{proof}
  \begin{full} 
For ease of notation, let us take $I=J=\{1,\ldots,t\}$ (the proof for general $I$ and $J$ is the same but the notation is more unwieldy). 

By the definition of the path matrix, we have 
\begin{align*}
[I\ |\ J]&=\sum_{\sigma\in S_t}(-q)^{\ell(\sigma)}M_C[1,\sigma(1)]\cdots M_C[t,\sigma(t)] \\
         &=\sum_{\sigma\in S_t}(-q)^{\ell(\sigma)}\left( \sum_{P_1: r_1\implies c_{\sigma(1)}}w(P_1) \right)\left( \sum_{P_2: r_2\implies c_{\sigma(2)}}w(P_2) \right)\cdots \left( \sum_{P_t: r_t\implies c_{\sigma(t)}}w(P_t) \right) \\
         &=\sum_{\cp} (-q)^{\ell(\sigma_\cp)}w(\cp), \\
\end{align*} 
where, in the final sum, $\cp$ runs over all $R_{(I,J)}$-path systems. 

We claim that
\begin{equation}\label{cancellation}
\sum_{\cp\in \cn}(-q)^{\ell(\sigma_\cp)}w(\cp)=0.
\end{equation}
where $\cn$ is the set of non-vertex-disjoint $(R_I,C_J)$-path systems. 

To show that \eqref{cancellation} holds, we construct a fixed-point-free involution $\pi: \cn\to \cn$ which satisfies
\begin{equation}\label{pi inv minus}
(-q)^{\ell(\sigma_\cp)}w(\cp)=-(-q)^{\ell(\sigma_{\pi(\cp)})}w(\pi(\cp))
\end{equation}
for every $\cp\in \cn$. 

Let $\cp=(P_1,\ldots,P_t)\in \cn$ and let $i$ be minimal such that $P_i$ and $P_{i+1}$ share a vertex (this $i$ exists by Lemma \ref{adjacent paths share vertex lemma}). Let $x$ be the last vertex shared by $P_i$ and $P_{i+1}$ and let $K_1: r_i\implies x$ and $L_1: x\implies c_{\sigma_\cp(i)}$ be subpaths of $P_i$ so that $P_i=K_1L_1$; define $K_2$ and $L_2$ from $P_{i+1}$ similarly. For any $j\in \llb 1,t\rrb$, set
\[
\piecewiseseven{\pi(P_j):}{K_1L_2}{j=i}{K_2L_1}{j=i+1}{P_j}{\tx{otherwise}}
\]
(see Example \ref{action of pi} below for an example of the action of $\pi$).
Define $\pi(\cp)$ to be the $R_{(I,J)}$-path system $(\pi(P_1),\ldots,\pi(P_t))$. 
This gives us a map $\pi: \cn\to \cn$ which is clearly an involution and which clearly has no fixed points. In order to prove \eqref{pi inv minus}, we may assume without loss of generality that $\sigma_\cp(i)<\sigma_\cp(i+1)$, so that $\sigma_{\pi(\cp)}=\sigma_\cp(i\ i+1)$ satisfies $\ell(\sigma_{\pi(\cp)})=\ell(\sigma_\cp)+1$. Notice that because $x$ is the last vertex shared by $P_i$ and $P_{i+1}$, the assumption $\sigma_\cp(i)<\sigma_\cp(i+1)$ forces $L_1$ to start with a horizontal edge.

We claim that $w(P_i)w(P_{i+1})=qw(\pi(P_i))w(\pi(P_{i+1}))$. There are two cases to consider:
\begin{enumerate}[(i)]
\item Suppose that $L_2$ has a horizontal edge. Then 
\begin{align*}
w(P_i)w(P_{i+1})&=w(K_1)w(L_1)w(K_2)w(L_2) \\
                &=qw(K_1)w(K_2)w(L_1)w(L_2)\ \ \ \ \ (\tx{Lemma \ref{how the head an tail commute lemma}}) \\
                &=q^2w(K_1)w(K_2)w(L_2)w(L_1)\ \ \ \ \ (\tx{Lemma \ref{how paths that share only initial vertex commute lemma}}) \\
                &=q^2q^{-1}w(K_1)w(L_2)w(K_2)w(L_1)\ \ \ \ \ (\tx{Lemma \ref{how the head an tail commute lemma}}) \\
                &=qw(\pi(P_i))w(\pi(P_{i+1})). \\
\end{align*}
\item Suppose that $L_2$ consists of vertical edges. Then $w(L_2)=1$ and we have 
\begin{align*}
w(P_i)w(P_{i+1})&=w(K_1)w(L_1)w(K_2)w(L_2) \\
                &=w(K_1)w(L_2)w(L_1)w(K_2) \\
                &=qw(K_1)w(L_2)w(K_2)w(L_1)\ \ \tx{(Lemma \ref{how the head an tail commute lemma})} \\
                &=qw(\pi(P_i))w(\pi(P_{i+1})).
\end{align*}
\end{enumerate}
Hence we have 
\begin{align*}
w(\cp)&= \left(\prod_{j=1}^{i-1}w(P_j)\right)w(P_i)w(P_{i+1})\left(\prod_{j=i+2}^{t}w(P_j)\right) \\
         &=\left(\prod_{j=1}^{i-1}w(\pi(P_j))\right)qw(\pi(P_i))w(\pi(P_{i+1}))\left(\prod_{j=i+2}^{t}w(\pi(P_j))\right) \\
         &=qw(\pi(\cp)). \\
\end{align*}
Now 
\begin{align*}
(-q)^{\ell(\sigma_\cp)}w(\cp)&=(-q)^{\ell(\sigma_\cp)}qw(\pi(\cp)) \\
                                &=-(-q)^{\ell(\sigma_\cp)+1}w(\pi(\cp)) \\
                                &=-(-q)^{\ell(\sigma_{\pi(\cp)})}w(\pi(\cp)), \\
\end{align*}
proving that $\pi: \cn\to\cn$ satisfies \eqref{pi inv minus}; the claim \eqref{cancellation} follows immediately. 
Moreover, the claim \eqref{cancellation} immediately gives 
\[
[I\ |\ J]=\sum_\cp(-q)^{\ell(\sigma_\cp)}w(\cp),
\]
where $\cp$ runs over all vertex-disjoint $R_{(I,J)}$-path systems in $\postc$. Lemma \ref{same perm lemma} shows that $\sigma_\cp=\sigma_{(I,J)}$ for all such $\cp$, giving the result.
  \end{full} 
\end{proof}



\begin{example}\label{action of pi}
Below left is an example of a non-vertex-disjoint $R_{(\{1,2\},\{1,3\})}$-path system $\cp=(P_1,P_2)$ on the Postnikov graph of a Cauchon-Le diagram. Below right is the non-vertex-disjoint $R_{(\{1,2\},\{1,3\})}$-path system $\pi(\cp)=(\pi(P_1),\pi(P_2))$.

\begin{center}
 \begin{minipage}{0.4675\textwidth}
  \begin{center}
   \begin{tikzpicture}[xscale=0.9,yscale=0.9]

\draw[color=gray] (0,4) rectangle (1,5); 
\draw[color=gray] (1,4) rectangle (2,5); 
\draw[color=gray] (2,4) rectangle (3,5); 
\draw[color=gray] (3,4) rectangle (4,5); 
\draw[color=gray] (4,4) rectangle (5,5); 
\draw[color=gray] (5,4) rectangle (6,5); 

\draw[color=gray] (0,3) rectangle (1,4);            
\draw[color=gray] (1,3) rectangle (2,4);            
\draw[color=gray] (2,3) rectangle (3,4);            
\draw[color=gray] (3,3) rectangle (4,4);            
\draw[color=gray] (4,3) rectangle (5,4);            
\draw[color=gray] (5,3) rectangle (6,4);            

\draw[color=gray] (0,2) rectangle (1,3);               
\draw[color=gray] (1,2) rectangle (2,3);               
\draw[color=gray] (2,2) rectangle (3,3);               
\draw[color=gray] (3,2) rectangle (4,3);               
\draw[color=gray] (4,2) rectangle (5,3);               

\draw[color=gray] (0,1) rectangle (1,2);               
\draw[color=gray] (1,1) rectangle (2,2);               
\draw[color=gray] (2,1) rectangle (3,2);               
\draw[color=gray] (3,1) rectangle (4,2);               
\draw[color=gray] (4,1) rectangle (5,2);               

\draw[color=gray] (0,0) rectangle (1,1);            
\draw[color=gray] (1,0) rectangle (2,1);            
\draw[color=gray] (2,0) rectangle (3,1);            
\draw[color=gray] (3,0) rectangle (4,1);            
\draw[color=gray] (4,0) rectangle (5,1);            

\draw[fill=lightgray] (4,4) rectangle (5,5); 
\draw[fill=lightgray] (5,4) rectangle (6,5); 
\draw[fill=lightgray] (2,4) rectangle (3,5); 
\draw[fill=lightgray] (2,3) rectangle (3,4);            
\draw[fill=lightgray] (2,2) rectangle (3,3);               
\draw[fill=lightgray] (5,3) rectangle (6,4); %

\node at (1.5, 2.5) {$\bullet$}; 
\node at (4.5, 3.5) {$\bullet$}; 
\node at (4.5, 2.5) {$\bullet$}; 
\node at (3.5, 2.5) {$\bullet$}; 
\node at (3.5, 3.5) {$\bullet$}; 
\node at (3.5, 1.5) {$\bullet$}; 
\node at (3.5, 0.5) {$\bullet$}; 
\node at (3.5, 4.5) {$\bullet$}; 
\node at (2.5, 0.5) {$\bullet$}; 
\node at (0.5, 1.5) {$\bullet$}; 
\node at (1.5, 1.5) {$\bullet$}; 
\node at (0.5, 0.5) {$\bullet$}; 

\node[color=red]  at (6.5,4.5) {$\bullet\, r_1$};
\node[color=red]  at (6.5,3.5) {$\bullet\, r_2$};

\node[color=blue]  at (0.5,-0.4) {$\hspace{0.4cm}\bullet c_1$};
\node[color=blue]  at (2.5,-0.4) {$\hspace{0.4cm}\bullet c_3$};

\draw [<-, thick, black] (0.6,1.5)--(1.4,1.5);
\draw [->, thick, black] (3.4,2.5)--(1.6,2.5);
\draw [->, thick, black] (0.5,1.4)--(0.5,0.6);
\draw [->, thick, black] (0.5,0.4)--(0.5,-0.3);
\draw [->, thick, black] (1.5,2.4)--(1.5,1.6);
\draw [->, thick, black] (3.5,4.4)--(3.5,3.6);
\draw [->, thick, black] (3.5,3.4)--(3.5,2.6);
\draw [->, thick, black] (6.2,4.5)--(3.6,4.5);

\draw [->, thick, dashed, black] (6.2,3.5)--(4.6,3.5);
\draw [->, thick, dashed, black] (4.5,3.4)--(4.5,2.6);
\draw [->, thick, dashed, black] (4.4,2.5)--(3.6,2.5);
\draw [->, thick, dashed, black] (3.5,2.4)--(3.5,1.6);
\draw [->, thick, dashed, black] (3.5,1.4)--(3.5,0.6);
\draw [->, thick, dashed, black] (3.4,0.5)--(2.6,0.5);
\draw [->, thick, dashed, black] (2.5,0.4)--(2.5,-0.3);

\node at (3.25,2.75) {$x$};
\node at (5.5,4.75) {$P_1$};
\node at (5.5,3.75) {$P_2$};

\node[color=blue]  at (0,1) {$\hspace{1cm}$};

   \end{tikzpicture}
   
    \footnotesize
    \emph{$P_1$ marked with straight lines. $P_2$ marked with dashed lines.}
  \end{center}
 \end{minipage}\qquad
 \begin{minipage}{0.4675\textwidth}
  \begin{center}
   \begin{tikzpicture}[xscale=0.9,yscale=0.9]


\draw[color=gray] (0,4) rectangle (1,5); 
\draw[color=gray] (1,4) rectangle (2,5); 
\draw[color=gray] (2,4) rectangle (3,5); 
\draw[color=gray] (3,4) rectangle (4,5); 
\draw[color=gray] (4,4) rectangle (5,5); 
\draw[color=gray] (5,4) rectangle (6,5); 

\draw[color=gray] (0,3) rectangle (1,4);            
\draw[color=gray] (1,3) rectangle (2,4);            
\draw[color=gray] (2,3) rectangle (3,4);            
\draw[color=gray] (3,3) rectangle (4,4);            
\draw[color=gray] (4,3) rectangle (5,4);            
\draw[color=gray] (5,3) rectangle (6,4);            

\draw[color=gray] (0,2) rectangle (1,3);               
\draw[color=gray] (1,2) rectangle (2,3);               
\draw[color=gray] (2,2) rectangle (3,3);               
\draw[color=gray] (3,2) rectangle (4,3);               
\draw[color=gray] (4,2) rectangle (5,3);               

\draw[color=gray] (0,1) rectangle (1,2);               
\draw[color=gray] (1,1) rectangle (2,2);               
\draw[color=gray] (2,1) rectangle (3,2);               
\draw[color=gray] (3,1) rectangle (4,2);               
\draw[color=gray] (4,1) rectangle (5,2);               

\draw[color=gray] (0,0) rectangle (1,1);            
\draw[color=gray] (1,0) rectangle (2,1);            
\draw[color=gray] (2,0) rectangle (3,1);            
\draw[color=gray] (3,0) rectangle (4,1);            
\draw[color=gray] (4,0) rectangle (5,1);            

\draw[fill=lightgray] (4,4) rectangle (5,5); 
\draw[fill=lightgray] (5,4) rectangle (6,5); 
\draw[fill=lightgray] (2,4) rectangle (3,5); 
\draw[fill=lightgray] (2,3) rectangle (3,4);            
\draw[fill=lightgray] (2,2) rectangle (3,3);               
\draw[fill=lightgray] (5,3) rectangle (6,4); %

\node at (1.5, 2.5) {$\bullet$}; 
\node at (4.5, 3.5) {$\bullet$}; 
\node at (4.5, 2.5) {$\bullet$}; 
\node at (3.5, 2.5) {$\bullet$}; 
\node at (3.5, 3.5) {$\bullet$}; 
\node at (3.5, 1.5) {$\bullet$}; 
\node at (3.5, 0.5) {$\bullet$}; 
\node at (3.5, 4.5) {$\bullet$}; 
\node at (2.5, 0.5) {$\bullet$}; 
\node at (0.5, 1.5) {$\bullet$}; 
\node at (1.5, 1.5) {$\bullet$}; 
\node at (0.5, 0.5) {$\bullet$}; 

\node[color=red]  at (6.5,4.5) {$\bullet\, r_1$};
\node[color=red]  at (6.5,3.5) {$\bullet\, r_2$};

\node[color=blue]  at (0.5,-0.4) {$\hspace{0.4cm}\bullet c_1$};
\node[color=blue]  at (2.5,-0.4) {$\hspace{0.4cm}\bullet c_3$};

\draw [->, thick, black] (3.5,4.4)--(3.5,3.6);
\draw [->, thick, black] (3.5,3.4)--(3.5,2.6);
\draw [->, thick, black] (6.2,4.5)--(3.6,4.5);

\draw [->, thick, black] (3.5,2.4)--(3.5,1.6);
\draw [->, thick,  black] (3.5,1.4)--(3.5,0.6);
\draw [->, thick,  black] (3.4,0.5)--(2.6,0.5);
\draw [->, thick,  black] (2.5,0.4)--(2.5,-0.3);

\draw [->, thick, dashed, black] (6.2,3.5)--(4.6,3.5);
\draw [->, thick, dashed, black] (4.5,3.4)--(4.5,2.6);
\draw [->, thick, dashed, black] (4.4,2.5)--(3.6,2.5);
\draw [->, thick, dashed, black] (3.4,2.5)--(1.6,2.5);
\draw [->, thick, dashed, black] (1.5,2.4)--(1.5,1.6);
\draw [<-, thick, dashed, black] (0.6,1.5)--(1.4,1.5);%
\draw [->, thick, dashed, black] (0.5,1.4)--(0.5,0.6);
\draw [->, thick, dashed, black] (0.5,0.4)--(0.5,-0.3);

\node at (3.25,2.75) {$x$};
\node at (5.5,4.75) {$\pi(P_1)$};
\node at (5.5,3.75) {$\pi(P_2)$};

   \end{tikzpicture}
   
    \footnotesize
    \emph{$\pi(P_1)$ marked with straight lines. $\pi(P_2)$ marked with dashed lines.}
  \end{center}
 \end{minipage} \\~\\
\end{center}
\end{example}



\begin{example}
Let $c=d=4$, let $\lambda=(4,3,3,1)$, and let $C$ be the Cauchon-Le diagram on $Y_\lambda$ from 
Example~\ref{partition-ex''}. Below are $C$ and $\postc$:
  \begin{center}
   \begin{tikzpicture}[xscale=1.5, yscale=1.5]

\draw[color=gray] (0,3) rectangle (1,4);            
\draw[color=gray] (1,3) rectangle (2,4);            
\draw[color=gray] (2,3) rectangle (3,4);            
\draw[color=gray] (3,3) rectangle (4,4);            
\draw[color=gray] (0,2) rectangle (2,3);               
\draw[color=gray] (1,2) rectangle (1,3);               
\draw[color=gray] (2,2) rectangle (3,3);               
\draw[color=gray] (0,1) rectangle (2,2);               
\draw[color=gray] (1,1) rectangle (1,2);               
\draw[color=gray] (2,1) rectangle (3,2);               
\draw[color=gray] (0,0) rectangle (1,1);            

\draw[fill=lightgray] (1,3) rectangle (2,4);               
\draw[fill=lightgray] (0,2) rectangle (1,3);               
\draw[fill=lightgray] (2,3) rectangle (3,4);               

\node at (0.5, 3.5) {$\bullet$}; 
\node at (3.5, 3.5) {$\bullet$}; 
\node at (1.5, 2.5) {$\bullet$}; 
\node at (2.5, 2.5) {$\bullet$}; 
\node at (0.5, 1.5) {$\bullet$}; 
\node at (1.5, 1.5) {$\bullet$}; 
\node at (2.5, 1.5) {$\bullet$}; 
\node at (0.5, 0.5) {$\bullet$}; 

\node[color=red]  at (4.35,3.5) {$\bullet\, r_1$};
\node[color=red]  at (3.35,2.5) {$\bullet\, r_2$};
\node[color=red]  at (3.35,1.5) {$\bullet\, r_3$};
\node[color=red]  at (1.35,0.5) {$\bullet\, r_4$};

\node[color=blue]  at (0.5,-0.2) {$\hspace{0.32cm}\bullet c_1$};
\node[color=blue]  at (1.5,0.8) {$\hspace{0.32cm}\bullet c_2$};
\node[color=blue]  at (2.5,0.8) {$\hspace{0.32cm}\bullet c_3$};
\node[color=blue]  at (3.5,2.8) {$\hspace{0.32cm}\bullet c_4$};

\draw [<-, thick, black] (0.6,3.5)--(3.4,3.5);
\draw [<-, thick, dashed, black] (3.6,3.5)--(4.1,3.5);
\draw [<-, thick, black] (2.6,2.5)--(3.1,2.5);
\draw [<-, thick, black] (1.6,2.5)--(2.4,2.5);
\draw [<-, thick, black] (0.6,1.5)--(1.4,1.5);
\draw [<-, thick, black] (1.6,1.5)--(2.4,1.5);
\draw [<-, thick, black] (2.6,1.5)--(3.1,1.5);
\draw [<-, thick, dashed, black] (0.6,0.5)--(1.1,0.5);

\draw [->, thick, black] (0.5,3.4)--(0.5,1.6);
\draw [->, thick, black] (0.5,1.4)--(0.5,0.6);
\draw [->, thick, dashed, black] (0.5,0.4)--(0.5,-0.1);
\draw [->, thick, black] (1.5,2.4)--(1.5,1.6);
\draw [->, thick, black] (1.5,1.4)--(1.5,0.9);
\draw [->, thick, black] (2.5,2.4)--(2.5,1.6);
\draw [->, thick, black] (2.5,1.4)--(2.5,0.9);
\draw [->, thick, dashed, black] (3.5,3.4)--(3.5,2.9);

\node [above] at (3.8,3.5) {$t_{1,4}$};
\node [above] at (2,3.5) {$t_{1,4}^{-1}t_{1,1}$};
\node [above] at (2,2.5) {$t_{2,3}^{-1}t_{2,2}$};
\node [above] at (2.8,2.5) {$t_{2,3}$};
\node [above] at (1,1.5) {$t_{3,2}^{-1}t_{3,1}$};
\node [above] at (2,1.5) {$t_{3,3}^{-1}t_{3,2}$};
\node [above] at (2.8,1.5) {$t_{3,3}$};
\node [above] at (0.8,0.5) {$t_{4,1}$};

   \end{tikzpicture}
  \end{center}

The only vertex-disjoint $R_{(\{1,4\},\{1,4\})}$-path system in $\postc$ is that which is marked with dashed  lines above; this path system has weight $t_{1,4}t_{4,1}$ and has permutation $(1,2)\in S_2$, whose length is $1$. Theorem~\ref{computing q det with paths thm} predicts that the 
pseudo quantum minor $[14\ |\ 14]$ of $M_C$ is $-qt_{1,4}t_{4,1}$.
Computing the pseudo quantum minor $[14\ |14]$ of $M_C$ directly, we indeed get
\[
[14\ |\ 14]=M_C[1,1]M_C[4,4]-qM_C[4,1]M_C[1,4]=0-qt_{4,1}t_{1,4}=-qt_{1,4}t_{4,1}.
\] 

There are no vertex-disjoint $R_{(\{2,3\},\{1,2\})}$-path systems in $\postc$, so that Theorem \ref{computing q det with paths thm} predicts that the quantum minor $[23\ |\ 12]$ of $M_C$ is zero. Computing the quantum minor $[23\ |\ 12]$ of $M_C$ directly, we indeed get
\begin{align*}
[23\ |\ 12]&=M_C[2,1]M_C[3,2]-qM_C[3,1]M_C[2,2] \\
           &=(t_{2,2}t_{3,2}^{-1}t_{3,1}+t_{2,3}t_{3,3}^{-1}t_{3,1})(t_{3,2})-q(t_{3,1})(t_{2,3}t_{3,3}^{-1}t_{3,2}+t_{2,2}) \\
           &=t_{2,2}t_{3,2}^{-1}t_{3,1}t_{3,2}+t_{2,3}t_{3,3}^{-1}t_{3,1}t_{3,2}-qt_{3,1}t_{2,3}t_{3,3}^{-1}t_{3,2}-qt_{3,1}t_{2,2} \\
           &= qt_{2,2}t_{3,1} +qt_{3,1}t_{2,3}t_{3,3}^{-1}t_{3,2}-qt_{3,1}t_{2,3}t_{3,3}^{-1}t_{3,2}-qt_{3,1}t_{2,2} \\
           &=0.
\end{align*}
\end{example}

\subsection{Vanishing of pseudo quantum minors of a path matrix}

\begin{definition}[cf. Definition 3.1.7 of \cite{cas2}]
{\rm Let $v\in W_C$ be a vertex of a path $P: r_i\implies c_j$ in $\postc$. Let $e$ be the edge of $P$ which ends at $v$ and let $f$ be the edge of $P$ which begins at $v$. Then we say that $v$ is a \emph{$\Gamma$-turn} of $P$ (or that $P$ has a \emph{$\Gamma$-turn} at $v$) if $e$ is horizontal and $f$ is vertical and that $v$ is a \emph{$\Le$-turn} of $P$ (or that $P$ has a $\Le$-turn at $v$) if $e$ is vertical and $f$ is horizontal.}
\end{definition}


\begin{proposition}[cf. Proposition 3.1.8 of \cite{cas2}]\label{only need the turns proposition}
Let $\cp: r_i\implies c_j$ be a path in $\postc$. If $v_1,v_2,\ldots,v_s$ is the sequence of all $\Gamma$-turns and $\Le$-turns in $\cp$, then $v_a$ is a $\Gamma$-turn for odd values of $a$ and a $\Le$-turn for even values of $a$, $s$ is odd, and 
\[
w(\cp)=t_{v_1}t_{v_2}^{-1}t_{v_3}\cdots t_{v_{s-1}}^{-1}t_{v_s}.
\]
\end{proposition}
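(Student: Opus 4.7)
The plan is to decompose the path $\mathcal{P}$ into maximal monotone runs of edges and read off the combinatorial and weight information directly from this decomposition.

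First I would set up the structure. By the definition of $\postc$ (Definition 4.3), the unique edge leaving the row vertex $r_i$ is horizontal, and the unique edge entering the column vertex $c_j$ is vertical. Hence $\mathcal{P}$ begins with a horizontal edge and ends with a vertical edge. Write $\mathcal{P}$ as the concatenation $\mathcal{P} = H_1 V_1 H_2 V_2 \cdots H_k V_k$, where each $H_i$ is a maximal horizontal sub-path and each $V_i$ is a maximal vertical sub-path (both nonempty by the observation above). The $\Gamma$-turns of $\mathcal{P}$ are precisely the vertices where some $H_i$ ends and the subsequent $V_i$ begins, while the $\Le$-turns are precisely the vertices where some $V_i$ ends and the subsequent $H_{i+1}$ begins. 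Reading these in order along $\mathcal{P}$ gives the sequence $v_1, v_2, \ldots, v_s$ with $s = 2k-1$ (odd), and with $v_a$ a $\Gamma$-turn when $a$ is odd and a $\Le$-turn when $a$ is even. This proves the first two assertions.

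For the weight formula, I would use Proposition 4.4. Since every vertical edge of $\postc$ has weight $1$, each $V_i$ contributes trivially, and $w(\mathcal{P}) = w(H_1) w(H_2) \cdots w(H_k)$. The sub-path $H_1$ is a boundary horizontal path of the form $r_i \Longrightarrow v_1$, so Proposition 4.4(4) gives $w(H_1) = t_{v_1}$. For $i \geq 2$, the sub-path $H_i$ is an internal horizontal path starting at $v_{2i-2}$ (a $\Le$-turn) and ending at $v_{2i-1}$ (a $\Gamma$-turn), and these two vertices lie in the same row. By Proposition 4.4(3) we therefore have $w(H_i) = t_{v_{2i-2}}^{-1} t_{v_{2i-1}}$. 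Multiplying gives
\[
w(\mathcal{P}) = t_{v_1} \cdot t_{v_2}^{-1} t_{v_3} \cdot t_{v_4}^{-1} t_{v_5} \cdots t_{v_{s-1}}^{-1} t_{v_s},
\]
as claimed.

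There is no real obstacle here: once the alternation of directions of the monotone runs is identified with the alternation of $\Gamma$- and $\Le$-turns, the weight formula is a direct consequence of the already-established formulas of Proposition 4.4 together with the fact that vertical edges have weight $1$. The only mild care needed is to observe that each $H_i$ and each $V_i$ is nonempty, which follows from the fact that $\mathcal{P}$ starts horizontally and ends vertically, so that the turns genuinely partition $\mathcal{P}$ into the stated alternating pieces.
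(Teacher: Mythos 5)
Your proposal is correct and follows essentially the same route as the paper: the paper also splits $\cp$ at the turning vertices into alternating horizontal and vertical subpaths (your $H_1,V_1,\ldots,H_k,V_k$ are exactly its $P_1,\ldots,P_{s+1}$), discards the vertical pieces since they have weight $1$, and applies parts (3) and (4) of the basic-facts proposition to get $t_{v_1}$ for the initial boundary run and $t_{v_{2i-2}}^{-1}t_{v_{2i-1}}$ for each internal horizontal run. No gaps.
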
 

\begin{proof}
  \begin{full} 
It is clear that $v_a$ is a $\Gamma$-turn for $a$ odd and a $\Le$-turn for $a$ even. Since $\cp$ ends with a vertical edge, $s$ must be odd. Consider the subpaths:
\[ P_1: r_i\implies v_1,\ P_2: v_1\implies v_2,\ \ldots,\ P_s: v_{s-1}\implies v_s,\ P_{s+1}: v_s\implies c_j 
\]
of $\cp$. For $a\in \llb 1,s+1\rrb$ even (that is for $a=2,4,\ldots,s-1,s+1$), the path $P_a$ consists solely of vertical edges and hence $w(P_a)=1$. It follows that

\begin{align*}
w(\cp)&=w(P_1)w(P_2)\cdots w(P_s)w(P_{s+1}) \\
    &=w(P_1)w(P_3)\cdots w(P_{s-2})w(P_s) \\
    &=t_{v_1}w(P_3)\cdots w(P_{s-2})w(P_s) \ \ \ \tx{(by Proposition \ref{basic facts}(4))}.
\end{align*}
However, $P_3,\ldots P_{s-2},P_s$ are internal horizontal paths in $\postc$ and their respective weights are 
$t_{v_2}^{-1}t_{v_3},\ldots,t_{v_{s-3}}^{-1}t_{v_{s-2}},t_{v_{s-1}}^{-1}t_{v_s}$ by Proposition \ref{basic facts}(3). The result follows.
  \end{full} 
\end{proof}



\begin{notation}
Let $M$ be any $c\times d$ matrix with entries from $\Z$. Then $\ul{t}^M$ denotes the element $\prod_{(i,j)\in W_C}t_{i,j}^{M[i,j]}$, where the factors appear in lexicographical order.
\end{notation}



\begin{theorem}[cf. Theorem 4.1.9 of \cite{cas2}]\label{main vanishing pseudo minors theorem}
Let $I\subseteq \llb 1,c\rrb$ and $J\subseteq \llb 1,d\rrb$ have the same cardinality. Then the pseudo quantum minor $[I\ |\ J]$ of $M_C$ is zero if and only if there does not exist a vertex-disjoint $R_{(I,J)}$-path system in $\postc$.
\end{theorem}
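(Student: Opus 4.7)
My plan is to derive the theorem from Theorem \ref{computing q det with paths thm} together with an injectivity argument for the map that sends each vertex-disjoint path system to its exponent vector in the quantum torus. The direction ``no vertex-disjoint $R_{(I,J)}$-path system in $\postc$ $\Rightarrow [I\mid J]=0$'' is immediate, since the sum \eqref{sum of vd paths} is then empty. For the converse I will argue the contrapositive: assuming that at least one vertex-disjoint $R_{(I,J)}$-path system exists, I will show $[I\mid J]\neq 0$. By Theorem \ref{computing q det with paths thm} this reduces to showing that $\sum_{\cp}w(\cp)\neq 0$ in the quantum torus associated to $C$.

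To each vertex-disjoint path system $\cp$ I will assign its exponent vector $M_\cp \in \mz^{W_C}$, defined so that $w(\cp)=q^{k_\cp}\,\ul{t}^{M_\cp}$ for some integer $k_\cp$, and then show that the assignment $\cp \mapsto M_\cp$ is injective. By Proposition \ref{only need the turns proposition}, each path in $\cp$ contributes $+1$ to the exponent of $t_v$ at each of its $\Gamma$-turns $v$ and $-1$ at each of its $\Le$-turns, and the vertex-disjoint hypothesis ensures that these contributions cannot overlap between distinct paths of $\cp$. Hence $M_\cp$ takes values in $\{-1,0,+1\}$ and records precisely the set $\mathcal{T}_\Gamma(\cp)$ of $\Gamma$-turns and the set $\mathcal{T}_\Le(\cp)$ of $\Le$-turns of $\cp$, so injectivity reduces to reconstructing $\cp$ from these two sets.

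My reconstruction recovers the horizontal segments of $\cp$ row by row and the vertical segments column by column. In each row $i$, I will consider the turns of $\cp$ lying in row $i$ together with the boundary vertex $r_i$ when $i\in I$; read from right to left, these markers should alternate between right endpoints (of type $\Le$, or $r_i$) and left endpoints (of type $\Gamma$), and consecutive pairs will determine the horizontal segments of $\cp$ in row $i$. An analogous top-to-bottom alternation $\Gamma,\Le,\Gamma,\ldots$ in each column $j$, possibly terminated by the boundary vertex $c_j$, will determine the vertical segments, and concatenating these yields $\cp$ uniquely. The main obstacle I anticipate is establishing this alternation rigorously in the nonrectangular setting of partition subalgebras: I need to rule out patterns such as two consecutive $\Gamma$-turns in a row (which would require two horizontal segments to collide at a common vertex, contradicting vertex-disjointness) and to handle the boundary behaviour at $r_i$ and $c_j$ carefully. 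Once the reconstruction is in place, distinct vertex-disjoint path systems contribute distinct basis monomials $\ul{t}^{M_\cp}$ each with a nonzero $q$-power coefficient to $\sum_{\cp}w(\cp)$, so this sum is nonzero in the quantum torus and $[I\mid J]\neq 0$.
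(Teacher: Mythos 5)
Your proposal is correct and follows essentially the same route as the paper: both directions reduce to Theorem \ref{computing q det with paths thm} together with injectivity of the assignment $\cp\mapsto M_\cp$ recording the $\Gamma$- and $\Le$-turns, and the conclusion is drawn because the lex-ordered monomials in the $t_{i,j}^{\pm 1}$ form a basis of the quantum torus. The only divergence is in how injectivity is established: the paper takes two vertex-disjoint systems with $M_\cp=M_\cq$ and matches their turning vertices path by path, whereas you reconstruct the system from its turn data via the row/column alternation of turns (with the boundary markers $r_i$ and $c_j$); both arguments hinge on the same vertex-disjointness observation, and your alternation lemma does go through, so this is just a slightly heavier implementation of the same step.
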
 


\begin{proof}
  \begin{full} 
For ease of notation, let us take $I=J=\{1,\ldots,t\}$ (the proof for general $I$ and $J$ is the same but notationally more unwieldy).

By Proposition \ref{only need the turns proposition}, the weight of any vertex-disjoint $R_{(I,J)}$-path system $\cp$ is equal to $q^\alpha\ul{t}^{M_\cp}$ for some integer $\alpha$, where the $c\times d$ matrix $M_\cp=(M_\cp[i,j])$ is defined as follows:
\[
\piecewiseseven{M_\cp[i,j]}{1}{\tx{if there is a path in }\cp\tx{ with a }\Gamma\tx{-turn at }(i,j);}{-1}{\tx{if there is a path in }\cp\tx{ with a }\Le\tx{-turn at }(i,j);}{0}{\tx{otherwise}.}
\]
Let $\cp=(P_1,\ldots,P_t)$ and $\cq=(Q_1,\ldots,Q_t)$ be vertex-disjoint $R_{(I,J)}$-path systems satisfying $M_\cp=M_\cq$.
Fix any $i\in \llb 1,t\rrb$ and let $(i,\ell)$ be the first vertex where $P_i$ turns and $(i,\ell')$
be the first vertex where $Q_i$ turns. Suppose that $\ell'>\ell$, so that $P_i$ goes horizontally straight through $(i,\ell')$ and in particular, $(i,\ell')$ is a vertex 
of $P_i$ but neither a $\Gamma$-turn nor a $\Le$-turn of $P_i$. However, since $(i,\ell')$ is a $\Gamma$-turn of $Q_i$ and $M_\cp=M_\cq$, there must be a path $P\neq P_i$ in $\cp$ which 
has a $\Gamma$-turn at $(i,\ell')$, which is a contradiction since $\cp$ is a vertex-disjoint 
path system. Hence $\ell'\ngtr \ell$. A similar argument shows that $\ell\ngtr \ell'$, so that $\ell=\ell'$; that is, the first turning vertices of $P_i$ and $Q_i$ coincide. A similar argument can be 
applied to the remaining turning vertices (if any) of $P_i$ and $Q_i$ to show that $P_i$ and $Q_i$ have the same turning vertices and hence $P_i=Q_i$. 
Since $i\in \llb 1,t\rrb$ was chosen arbitrarily, we conclude that $\cp=\cq$.  

We have shown that if $\cp=(P_1,\ldots,P_t)$ and $\cq=(Q_1,\ldots,Q_t)$ are distinct vertex-disjoint $R_{(I,J)}$-path systems, then $M_\cp\neq M_\cq$ and hence $M_\cp[i,j]\neq M_{\cq}[i,j]$ for some $(i,j)\in W_C$.

It follows easily that if there exists at least one vertex-disjoint $R_{(I,J)}$-path system in $\postc$ 
then $[I\ |\ J]$ is a nontrivial linear combination of pairwise distinct lex-ordered monomials in the $t_{i,j}^{\pm 1}$ where $(i,j)\in W_C$ and hence $[I\ |J]\neq 0$, because 
the lex-ordered monomials in the $t_{i,j}^{\pm 1}$  with $(i,j)\in W_C$ form a basis for the quantum torus associated with $C$. Theorem \ref{computing q det with paths thm} gives the converse. 
\end{full} 
\end{proof}


\section{$\ch$-primes in partition subalgebras: membership}

The aim of this section is to give a characterisation of those pseudo quantum minors that belong to an $\ch$-prime in a partition subalgebra. To achieve this aim, we will use the parametisation of $\ch$-primes by Cauchon-Le diagrams through the deleting derivations algorithm for partition subalgebras. 

Throughout this section we assume that $q\in \k^*$ is not a root of unity.

\subsection{Partition subalgebras as QNAs}

Let $\lambda = \{\lambda_1\geq \lambda_2\geq\dots\geq\lambda_m\}$ be a partition with associated Young diagram $Y_\lambda$ and $n\geq \lambda_1$. Recall that $\ylk$ is the $\k$-subalgebra of $\oqmmnk$ generated by those 
$x_{ij}$ that fit  into the Young diagram for $\lambda$. The partition subalgebra  $\ylk$ can be presented as a QNA with the variables $x_{ij}$ added in lexicographic order, and with the torus $\ch$ acting via restriction from the action on $\oqmmnk$. More precisely, adding the generators $x_{i,j}$ ($(i,j)\in Y_\lambda$) in lexicographical order, we may write the partition subalgebra $\ylk$ as an iterated Ore extension 
\begin{equation}\label{partlambda as Ore ext}
\ylk=\K[x_{1,1}]\cdots [x_{i,j};\sigma_{i,j},\delta_{i,j}]\cdots [x_{m,\lambda_m};\sigma_{m,\lambda_m},\delta_{m,\lambda_m}],
\end{equation}
where for each $(a,b)\in Y_\lambda$, the automorphism $\sigma_{a,b}$ and the left $\sigma_{a,b}$-derivation $\delta_{a,b}$ are defined such that for each $(i,j)\in Y_\lambda$ satisfying $(i,j)<_{\tx{lex}}(a,b)$, we have 
\begin{equation}\label{auto}
\piecewise{\sigma_{a,b}(x_{i,j})}{q^{-1}x_{i,j}}{\tx{if }i=a\tx{ or }j=b}{x_{i,j}}{\tx{otherwise}}
\end{equation}
and 
\begin{equation}\label{der}
\piecewise{\delta_{a,b}(x_{i,j})}{(q^{-1}-q)x_{i,b}x_{a,j}}{i<a\tx{ and }j<b}{0}{\tx{otherwise.}}
\end{equation}
It is easy to check that $\ylk$ is a QNA for this presentation as an iterated Ore extension (and torus $\ch = (\k ^*)^{m+n}$ acting via restriction from the action on $\oqmmnk$). As a consequence, Cauchon's deleting derivations theory applies to the QNA $\ylk$. We make explicit the results of Section \ref{canonicalembedding} in the context of partition subalgebras in the next sections. 

\subsection{Deleting derivations algorithm for partition subalgebras} 

Set $E_\lambda=(Y_\lambda\bs\{(1,1)\})\sqcup \{(m,\lambda_m+1)\}$. For $(a,b)\in Y_\lambda$, let $(a,b)^+$ be the smallest (with respect to the lexicographical order) element of $E_\lambda$ satisfying $(a,b)^+>_\lex(a,b)$. Clearly $(1,1)^+$ and $(m,\lambda_m+1)$ are respectively the smallest and largest elements of $E_\lambda$ with respect to the lexicographical order. Moreover $E_\lambda=\{(a,b)^+~|~ (a,b)\in Y_\lambda\}$. For $(a,b)\in E_\lambda$, let $(a,b)^-$ be the largest (with respect to the lexicographical order) element of $Y_\lambda$ satisfying $(a,b)^-<_\lex(a,b)$.

With this notation, the deleting derivations algorithm constructs, for each $(a,b) \in E_\lambda$, a family $\{x_{1,1}^{(a,b)}, \dots , x_{m,\lambda_m}^{(a,b)} \}$ of elements of ${\rm Frac} (\ylk )$ defined as follows: 
\begin{enumerate}
\item $x_{i,j}^{(m,\lambda_m+1)}:=x_{i,j}$ for all $(i,j)\in Y_\lambda$;
\item Assume that $(a,b) \neq (m,\lambda_m+1)$. Then $x_{a,b}^{(a,b)^+} \neq 0$ and 
\begin{equation*}
\piecewise{x_{i,j}^{(a,b)}:}{\displaystyle{\sum_{n=0}^{+\infty}\frac{(1-q_{a,b})^{-n}}{[n]!_{q_{a,b}}}\delta_{a,b}^n\circ\sigma_{a,b}^{-n}(x_{i,j}^{(a,b)^+})(x_{a,b}^{(a,b)^+})^{-n}  }}{\tx{if }(i,j)<_{\tx{lex}} (a,b);}{x_{i,j}^{(a,b)^+}}{\tx{if }(i,j)\geq_{\tx{lex}} (a,b),}
\end{equation*}
that is 
\begin{equation}\label{mhm2}
\piecewise{x_{i,j}^{(a,b)}:}{\displaystyle{ x_{i,j}^{(a,b)^+}-x_{i,b}^{(a,b)^+}(x_{a,b}^{(a,b)^+})^{-1}x_{a,j}^{(a,b)^+}}}{\tx{if }i<a \tx{ and } j<b;}{x_{i,j}^{(a,b)^+}}{\tx{otherwise}.}
\end{equation}
\end{enumerate}
As usual, for all $(a,b)\in E_\lambda$, $\ylk ^{(a,b)}$ denotes the subalgebra of ${\rm Frac} ( \ylk ) $ generated by the $x_{i,j}^{(a,b)}$ with $(i,j) \in Y_{\lambda}$. 
Moreover, we set $t_{i,j}:=x_{i,j}^{(1,2)}$. Recall that the subalgebra $\overline{\ylk}=\ylk^{(1,2)}$ of ${\rm Frac}(\ylk) $ generated by the $t_{i,j}$ with $(i,j) \in Y_{\lambda}$ is a quantum affine space in the $t_{i,j}$, and $t_{i,j}$ and $t_{k,l}$ commute unless $(i,j)$ and $(k,l)$ are in the same row or column (that is, unless $(k,l)$ is West or North of $(i,j)$), in which case $t_{i,j}t_{k,l}=q^{-1}t_{k,l}t_{i,j}$.

\subsection{Canonical embedding and $\ch$-primes}

By \cite[Section 4.3]{c1} (see also Section \ref{canonicalembedding}), for each $(a,b)\in E_\lambda\bs\{(c,\lambda_c+1)\}$, there is an injection 
\[
\varphi_{a,b} : \Spec(\ylk^{(a,b)^+})\to \Spec(\ylk^{(a,b)}).
\] 
We shall not describe again the construction of this injection but we shall recall some of its useful properties.

Let $(a,b)\in E_\lambda\bs\{(m,\lambda_m+1)\}$ and let $Q$ be a prime ideal of $\ylk^{(a,b)^+}$. 
The Lemmas \cite[Lemme 5.3.1 and Lemme 5.3.2]{c1} give isomorphisms
\begin{equation}\label{isos of hom images}
{\rm Frac} ( \ylk ^{(a,b)^+}/Q)\xrightarrow{\cong}{\rm Frac} ( \ylk ^{(a,b)}/\varphi_{a,b}(Q)).
\end{equation}

Fix a prime ideal $P$ of $\ylk$. For each $(a,b)\in E_\lambda$, set $P^{(a,b)}=\varphi_{a,b}\circ\cdots\circ \varphi_{m,\lambda_m}(P)\in \Spec (\ylk^{(a,b)})$ (which gives $P^{(m,\lambda_m+1)}=P$) and for each $(i,j)\in Y_\lambda$, let $\chi_{i,j}^{(a,b)}$ be the canonical image of $x_{i,j}^{(a,b)}$ in $\ylk^{(a,b)}/P^{(a,b)}$. 
Let us denote by $G$ the total ring of fractions of $\ylk/P$ (which is a division ring since all prime ideals of $\ylk$ are completely prime) and by varying $(a,b)$ over $E_\lambda\bs\{(m,\lambda_m+1)\}$ and $Q$ over $P^{(1,1)^{+}},\ldots,P^{(m,\lambda_m+1)}$ in the isomorphism \eqref{isos of hom images}, let us identify the total ring of fractions of each noetherian domain $\ylk^{(a,b)}/P^{(a,b)}$ ($(a,b)\in E_\lambda$) with $G$.

Some immediate consequences of this setup (noted in \cite[Proposition 5.4.1]{c1}) are that for each $(a,b)\in E_\lambda$,
\begin{itemize}
\item $\ylk^{(a,b)}/P^{(a,b)}$ is the subalgebra of $G$ generated by $(\chi_{i,j}^{(a,b)})_{(i,j)\in Y_\lambda}$;
\item there is a morphism of algebras $f_{a,b}: \ylk^{(a,b)}\to G$ which sends each $x_{i,j}^{(a,b)}$ ($(i,j)\in Y_\lambda$) to $\chi_{i,j}^{(a,b)}$;
\item the kernel of $f_{a,b}$ is $P^{(a,b)}$ and its image is $\ylk^{(a,b)}/P^{(a,b)}$. 
\end{itemize}

Suppose that $(a,b)\in E_\lambda\bs\{(m,\lambda_m+1)\}$.  By \cite[Proposition 5.4.2]{c1}, we may construct the generators $\chi_{i,j}^{(a,b)}$ ($(i,j)\in Y_\lambda$) of the algebra $\ylk^{(a,b)}/P^{(a,b)}$ from the generators $\chi_{i,j}^{(a,b)^+}$ ($(i,j)\in Y_\lambda$) of the algebra $\ylk^{(a,b)^+}/P^{(a,b)^+}$ as follows: 
\begin{lemma}\label{pre rest alg}
Suppose that $(a,b)\in E_\lambda\bs\{(m,\lambda_m+1)\}$. Then for all $(i,j)\in Y_\lambda$, we have
\[
\piecewise{\chi_{i,j}^{(a,b)^+}}{\chi_{i,j}^{(a,b)} +\chi_{i,b}^{(a,b)}(\chi_{a,b}^{(1,1)^+})^{-1}\chi_{a,j}^{(1,1)^+}}{\tx{if }i<a,\ j<b,\tx{ and }\chi_{a,b}^{(1,1)^+}\neq 0;}{\chi_{i,j}^{(a,b)}}{\tx{otherwise.}}
\]
\end{lemma}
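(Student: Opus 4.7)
The plan is to derive Lemma \ref{pre rest alg} directly from the deleting-derivations formula \eqref{mhm2}, by treating the three cases that emerge from the statement.

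The ``otherwise'' clause is the easy one: when $i\geq a$ or $j\geq b$, formula \eqref{mhm2} gives the literal equality $x_{i,j}^{(a,b)}=x_{i,j}^{(a,b)^+}$ in ${\rm Frac}(\ylk)$, and reducing modulo the appropriate primes and invoking the identifications \eqref{isos of hom images} into $G$ at once produces $\chi_{i,j}^{(a,b)^+}=\chi_{i,j}^{(a,b)}$. The same argument yields $\chi_{i,b}^{(a,b)}=\chi_{i,b}^{(a,b)^+}$ for $i<a$, since step $(a,b)$ cannot alter an entry in column $b$; this fact will be reused in the next case.

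For the non-trivial case $i<a$, $j<b$ with $\chi_{a,b}^{(1,1)^+}\neq 0$, the main preliminary step is to reconcile the $(1,1)^+$ superscripts that appear in the statement with the natural $(a,b)^+$ produced by \eqref{mhm2}. Formula \eqref{mhm2} alters $x_{a,b}^{(a',b')}$ only when $a<a'$, and likewise $x_{a,j}^{(a',b')}$ only when $a<a'$ and $j<b'$. Both conditions fail at every step $(a',b')\leq_{\lex}(a,b)$ of the algorithm, so the entries $x_{a,b}$ and $x_{a,j}$ are frozen from step $(a,b)$ onwards; in particular $x_{a,b}^{(1,1)^+}=x_{a,b}^{(a,b)}=x_{a,b}^{(a,b)^+}$ and $x_{a,j}^{(1,1)^+}=x_{a,j}^{(a,b)}=x_{a,j}^{(a,b)^+}$. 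With these identifications in hand, the hypothesis that $\chi_{a,b}^{(a,b)^+}$ is non-zero in the division ring $G$ lets us invert it there, and pushing the rearranged identity $x_{i,j}^{(a,b)^+}=x_{i,j}^{(a,b)}+x_{i,b}^{(a,b)^+}(x_{a,b}^{(a,b)^+})^{-1}x_{a,j}^{(a,b)^+}$ through the quotient map and \eqref{isos of hom images} into $G$ then yields the claimed formula.

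The third case is $i<a$, $j<b$ with $\chi_{a,b}^{(1,1)^+}=0$, equivalently $x_{a,b}^{(a,b)^+}\in P^{(a,b)^+}$. Here I would appeal to Cauchon's second description of $\varphi_{a,b}$: one has $P^{(a,b)}=g_{a,b}^{-1}(P^{(a,b)^+}/\langle x_{a,b}^{(a,b)^+}\rangle)$, where the surjection $g_{a,b}\colon\ylk^{(a,b)}\twoheadrightarrow\ylk^{(a,b)^+}/\langle x_{a,b}^{(a,b)^+}\rangle$ sends $x_{i,j}^{(a,b)}$ to $x_{i,j}^{(a,b)^+}+\langle x_{a,b}^{(a,b)^+}\rangle$. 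This descends to an isomorphism $\ylk^{(a,b)}/P^{(a,b)}\xrightarrow{\sim}\ylk^{(a,b)^+}/P^{(a,b)^+}$ compatible with the two embeddings into $G$ provided by \eqref{isos of hom images}, and it sends $\chi_{i,j}^{(a,b)}$ to $\chi_{i,j}^{(a,b)^+}$, closing the case. No serious obstacle is anticipated; the only place where care is required is the bookkeeping in the middle case, i.e.\ the verification that the $(1,1)^+$ superscripts may indeed be replaced by $(a,b)^+$, which is why I would pin that equality down first.
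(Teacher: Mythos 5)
Your proposal is correct, but note that the paper itself gives no argument for this lemma: it is stated as a direct citation of Cauchon's Proposition 5.4.2 in \cite{c1}, with the identifications \eqref{isos of hom images} supplied by his Lemmes 5.3.1 and 5.3.2. What you have written is essentially a reconstruction of that cited proof, specialised to the partition subalgebra $\ylk$. Your two substantive cases track exactly the dichotomy built into the construction of $\varphi_{a,b}$: when $x_{a,b}^{(a,b)^+}\notin P^{(a,b)^+}$ one works in the common localisation of $\ylk^{(a,b)}$ and $\ylk^{(a,b)^+}$ at the powers of $x_{a,b}^{(a,b)^+}$, where both $f_{a,b}$ and $f_{(a,b)^+}$ are restrictions of one map to $G$ and the rearranged form of \eqref{mhm2} can simply be reduced; when $x_{a,b}^{(a,b)^+}\in P^{(a,b)^+}$ the identification is the one induced by $g_{a,b}$, which sends every $\chi_{i,j}^{(a,b)}$ to $\chi_{i,j}^{(a,b)^+}$. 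So the content of your argument matches the content of the result the paper invokes; the paper simply buys brevity by citing, while you gain a self-contained proof at the cost of repeating Cauchon's case analysis.

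One presentational caveat, which is not a genuine gap but should be made explicit if you write this up. Your opening claim that the ``otherwise'' clause follows ``at once'', and your replacement of the superscripts $(1,1)^+$ by $(a,b)^+$ for the frozen entries $x_{a,b}$ and $x_{a,j}$, both rest on knowing that an entry left untouched by the algorithm has the \emph{same image in $G$} at every step $(a',b')\leq_{\lex}(a,b)$. Equality of the elements $x_{a,b}^{(a',b')}$ inside ${\rm Frac}(\ylk)$ does not by itself give this, because the maps $f_{a',b'}$ have different domains and different kernels; the transfer to the $\chi$-level is precisely the ``otherwise'' clause at each intermediate step, and it is proved by the same two-case analysis (common localisation versus the map $g_{a',b'}$) that you carry out at step $(a,b)$. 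Since both cases appear in your proposal, the fix is purely one of ordering: establish the ``otherwise'' clause at every step first, deduce the frozen identifications $\chi_{a,b}^{(1,1)^+}=\chi_{a,b}^{(a,b)^+}$ and $\chi_{a,j}^{(1,1)^+}=\chi_{a,j}^{(a,b)^+}$ (and the equivalence $\chi_{a,b}^{(1,1)^+}\neq 0\iff x_{a,b}^{(a,b)^+}\notin P^{(a,b)^+}$), and only then run the localisation computation for the main clause.
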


\subsection{Cauchon-Le diagrams in $\ylk$}
\label{section: Cauchon-Le diagrams in partition subalgebras}
Let us now assume that $P$ is not just a prime ideal but an $\ch$-prime ideal of $\ylk$. 

The \emph{canonical embedding} $\varphi : \Spec(\ylk)\to \Spec(\ylk^{(1,1)^+})$ is defined by $\varphi=\varphi_{(1,1)^+}\circ\cdots\circ\varphi_{m,\lambda_m}$ (see \cite{c1} or Section \ref{canonicalembedding}). By the results of Cauchon described in Section \ref{canonicalembedding} the action of $\ch$ on $\ylk$ induces an action of $\ch$ on the quantum affine space $\overline{\ylk}=\ylk^{(1,1)^+}$ such that $\varphi$ sends $P$ to an $\ch$-prime ideal $\varphi(P)$ ($=P^{(1,1)^+}$) of $\overline{\ylk}$ and $\varphi(P)$ is generated by $\{t_{i,j}\ |\ (i,j)\in B\}$ for some subset $B$ of $Y_\lambda$. In the terminology of Section \ref{canonicalembedding}, $B$ is a Cauchon diagram for the QNA $\ylk$. 

As mentioned at the end of Section \ref{canonicalembedding}, we modify the visual presentation of Cauchon diagrams as defined in Section \ref{canonicalembedding} to take advantage of the Young diagram that is intrinsic to the partition subalgebra $\ylk$. More precisely, we colour the squares of the Young diagram $Y_\lambda$ in the following way: for $(i,j)\in Y_\lambda$, if $(i,j)\in B$, then assign colour black to the square of $Y_\lambda$ in the $(i,j)$-position and if $(i,j)\notin B$, then assign colour white to the square of $Y_\lambda$ in the $(i,j)$-position; call the resulting diagram $C$. By \cite[Theorem 3.5]{llr-grass}, the diagram $C$ is a Cauchon-Le diagram (see Subsection \ref{subsection:CauchonLeDiag-PostGraph}) and all Cauchon-Le diagrams on $Y_\lambda$ arise from $\ch$-prime ideals of $\ylk$ in this way, giving us a one-to-one correspondence 
\begin{equation}\label{H-primes of partlambda}
\ch-\Spec \ylk \longleftrightarrow \tx{ Cauchon-Le diagrams on the Young diagram }Y_\lambda.
\end{equation}
Recall that $W_C $ denotes the set of white boxes of the Cauchon-Le diagram $C $.

As we have identified the division ring $G={\rm Frac} ( \ylk /P)$ with the total ring of fractions of each noetherian domain $\ylk^{(a,b)}/P^{(a,b)}$ ($(a,b)\in E_\lambda$), we have in particular identified $G$ with the total ring of fractions of the quantum affine space  
\begin{equation}\label{heres B}
\ylk^{(1,1)^+}/\varphi(P).
\end{equation}
For $(i,j)\in Y_\lambda$, let $\overline{t}_{i,j}$ denote the canonical image of $t_{i,j}$ in the algebra \eqref{heres B}, so that $\overline{t}_{i,j}=\chi_{i,j}^{(1,1)^+}$ and we may realise
$G$ as the total ring of fractions of the uniparameter quantum torus $\mathcal{T}_{C}$ which is generated by $\{\overline{t}_{i,j}^{\pm 1}\ |\ (i,j)\in W_C\}$ with relations 
\begin{equation}\label{torus rels}
\begin{array}{ll}
\overline{t}_{i,j}\overline{t}_{i,l}=q\overline{t}_{i,l}\overline{t}_{i,j} & \tx{ if } (i,j),(i,l)\in W_C \tx{ and }  j<l; \\
\overline{t}_{i,j}\overline{t}_{k,j}=q\overline{t}_{k,j}\overline{t}_{i,j} & \tx{ if } (i,j),(k,j)\in W_C \tx{ and } i<k; \\
\overline{t}_{i,j}\overline{t}_{k,l}=\overline{t}_{k,l}\overline{t}_{i,j}  &  \tx{ if } (i,j),(k,l)\in W_C,\ k\neq i,\tx{ and } j\neq l; \\
\overline{t}_{i,j}\overline{t}_{i,j}^{-1}=1 & \tx{ if } (i,j)\in W_C. \\
\end{array}
\end{equation}

The following result allows us to express the $\chi_{i,j}^{(a,b)^+}$ in terms of the $\chi_{k,l}^{(a,b)}$.

\begin{corollary}\label{realstartup}
For all $(a,b)\in E_\lambda\bs\{(m,\lambda_m+1)\}$ and all $(i,j)\in Y_\lambda$, we have 
\[
\piecewise{\chi_{i,j}^{(a,b)^+}}{\chi_{i,j}^{(a,b)} +\chi_{i,b}^{(a,b)}\overline{t}_{a,b}^{-1}\overline{t}_{a,j}}{\tx{if }i<a,\ j<b,\tx{ and }(a,b)\in W_\mathcal{C};}{\chi_{i,j}^{(a,b)}}{\tx{otherwise.}}
\]
\begin{proof}
Since $\chi_{a,b}^{(1,1)^+}=\overline{t}_{a,b}$ is nonzero if and only if $(a,b)\in W_C$ and since $\chi_{a,j}^{(1,1)^+}=\overline{t}_{a,j}$, this result is an immediate consequence of Lemma \ref{pre rest alg}.
\end{proof}
\end{corollary}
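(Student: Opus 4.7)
The plan is to derive this essentially as a translation of Lemma~\ref{pre rest alg} into the notation of the quantum torus $\mathcal{T}_C$, using only two identifications. First, I would recall that by construction $x_{i,j}^{(1,1)^+} = t_{i,j}$ for all $(i,j)\in Y_\lambda$, so passing to the quotient by $P^{(1,1)^+} = \varphi(P)$ gives $\chi_{i,j}^{(1,1)^+} = \overline{t}_{i,j}$. In particular $\chi_{a,b}^{(1,1)^+} = \overline{t}_{a,b}$ and $\chi_{a,j}^{(1,1)^+} = \overline{t}_{a,j}$.

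Next I would use the description of $\varphi(P)$ as the $\ch$-prime of the quantum affine space $\overline{\ylk}$ generated by $\{t_{k,l} : (k,l)\in B\}$, where $B = Y_\lambda \setminus W_C$ is the set of black boxes of the Cauchon--Le diagram $C$. Since the $\overline{t}_{k,l}$ with $(k,l)\in W_C$ form part of a set of generators of the quantum torus $\mathcal{T}_C$, the element $\overline{t}_{a,b}$ is nonzero if and only if $(a,b)\in W_C$; equivalently, $\chi_{a,b}^{(1,1)^+} \neq 0$ if and only if $(a,b)\in W_C$.

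With these two identifications in hand, the corollary follows directly by substituting into Lemma~\ref{pre rest alg}: when $i<a$, $j<b$, and $(a,b)\in W_C$, the first case of that lemma applies and its right hand side $\chi_{i,j}^{(a,b)} + \chi_{i,b}^{(a,b)}(\chi_{a,b}^{(1,1)^+})^{-1}\chi_{a,j}^{(1,1)^+}$ becomes $\chi_{i,j}^{(a,b)} + \chi_{i,b}^{(a,b)}\overline{t}_{a,b}^{-1}\overline{t}_{a,j}$; in every other situation (namely $i\geq a$, or $j\geq b$, or $(a,b)\notin W_C$), the second case of Lemma~\ref{pre rest alg} applies and yields $\chi_{i,j}^{(a,b)^+} = \chi_{i,j}^{(a,b)}$. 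There is no genuine obstacle here since the work has already been done in Lemma~\ref{pre rest alg}; the only point that needs care is checking that the equivalence ``$\chi_{a,b}^{(1,1)^+}\neq 0 \Leftrightarrow (a,b)\in W_C$'' is valid, which is exactly what the description of $\varphi(P)$ via the Cauchon diagram $B$ provides.
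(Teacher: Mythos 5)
Your proposal is correct and follows essentially the same route as the paper: identify $\chi_{i,j}^{(1,1)^+}=\overline{t}_{i,j}$, note that $\overline{t}_{a,b}\neq 0$ exactly when $(a,b)\in W_C$ (since $\varphi(P)$ is generated by the $t_{k,l}$ with $(k,l)$ black), and substitute into Lemma~\ref{pre rest alg}. The only difference is that you spell out the identifications which the paper's one-line proof leaves implicit.
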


\subsection{Pseudo quantum minors belonging to an $\ch$-primes in $\ylk$}
\label{section-cauchon-le}

Let $P$ be an $\ch$-prime ideal of the partition subalgebra $\ylk$ corresponding to a Cauchon-Le diagram $C$ on the Young diagram $Y_\lambda$ and let $I\subseteq\llb 1,m\rrb$ and $J\subseteq \llb 1,n\rrb$ have the same cardinality. As in Section \ref{subsection:pathmatrix}, we denote by $M_C$ the path matrix associated to $C$.


\begin{theorem}[cf. Lemma 5.4 of \cite{cas1}]\label{path matrix is meaningful}
For each $(i,j)\in Y_\lambda$, $M_C[i,j]$ is the canonical image in $\ylk / P$ of $x_{i,j}$, namely $M_C[i,j]=\chi_{i,j}^{(m,\lambda_m+1)}$. For each $(i,j)\in \llb 1,m\rrb \times \llb 1,n\rrb\bs Y_\lambda$, $M_C[i,j]$ is zero.
\end{theorem}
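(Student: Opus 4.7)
The plan is to prove by induction on $(a,b)\in E_\lambda$ in lexicographic order the stronger statement
\[
\chi_{i,j}^{(a,b)} \;=\; \sum_{P} w(P) \qquad \text{for every } (i,j)\in Y_\lambda,
\]
where $P$ ranges over the paths $r_i\Rightarrow c_j$ in $\postc$ whose $\Le$-turns all lie at positions strictly lex-less than $(a,b)$. Specialising to $(a,b)=(m,\lambda_m+1)$ will recover the first claim, since every square of $Y_\lambda$ is lex-less than $(m,\lambda_m+1)$ and so every path from $r_i$ to $c_j$ is counted.

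The base case $(a,b)=(1,1)^+$ is essentially immediate: a $\Le$-turn of a path lies strictly below the preceding $\Gamma$-turn, hence has row $\geq 2$, so the only admissible path is the ``straight'' path $r_i\to(i,j)\to c_j$, which exists with weight $t_{i,j}$ precisely when $(i,j)\in W_C$. This matches $\chi_{i,j}^{(1,1)^+}=\overline{t}_{i,j}$ in the quantum affine space quotient $\ylk^{(1,1)^+}/\varphi(P)$, which equals $\overline{t}_{i,j}$ on white boxes and $0$ on black boxes.

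For the inductive step from $(a,b)$ to $(a,b)^+$ the plan is to invoke Corollary \ref{realstartup}. In its trivial branch — when $(a,b)\notin W_C$, or $i\geq a$, or $j\geq b$ — no admissible path can acquire a $\Le$-turn at $(a,b)$ (respectively because $(a,b)$ is not a vertex; because any $\Le$-turn has row $>i\geq a$; or because the strict column-decrease of $\Le$-turns along a path ending at $c_j$ forbids a $\Le$-turn at column $b\leq j$), so the set of admissible paths does not change. In the non-trivial branch ($(a,b)\in W_C$, $i<a$, $j<b$), the newly admissible paths are exactly those whose last $\Le$-turn is at $(a,b)$: the strict increase of $\Le$-turn rows forces such a turn to be the final $\Le$-turn, so the last $\Gamma$-turn must be $(a,j)$. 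Each such $P_j$ then splits as an initial segment $r_i\Rightarrow(a,b)$ ending in a vertical edge, followed by the internal horizontal segment $(a,b)\Rightarrow(a,j)$ of weight $\overline{t}_{a,b}^{-1}\overline{t}_{a,j}$ by Proposition \ref{basic facts}(3), followed by the vertical descent $(a,j)\to c_j$ of weight $1$. Replacing the last two segments by the vertical descent $(a,b)\to c_b$ produces a path $P_b\colon r_i\Rightarrow c_b$ whose $\Le$-turns all have row $<a$. For a path ending at $c_b$ this condition is equivalent to all $\Le$-turns being lex-less than $(a,b)$, because any $\Le$-turn of such a path lying in row $a$ would have column strictly greater than $b$. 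The map $P_j\mapsto P_b$ is thus a weight-preserving bijection up to the factor $\overline{t}_{a,b}^{-1}\overline{t}_{a,j}$, and the induction hypothesis applied to $\chi_{i,b}^{(a,b)}$ gives the sum of the weights of the new paths as $\chi_{i,b}^{(a,b)}\overline{t}_{a,b}^{-1}\overline{t}_{a,j}$, exactly the extra term supplied by Corollary \ref{realstartup}.

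The second assertion is a direct observation: for $(i,j)\in\llb 1,m\rrb\times\llb 1,n\rrb\bs Y_\lambda$ with $i\leq m$ and $j\leq n$ one has $j>\lambda_i$, and since horizontal edges in $\postc$ point leftward while the initial edge out of $r_i$ lands in a column $\leq\lambda_i$, no path from $r_i$ can reach column $j$, so $M_C[i,j]=0$. The hard part will be pinning down the correct inductive invariant so that the lone extra term of Corollary \ref{realstartup} corresponds to a single identifiable family of newly admissible paths; once this family is recognised as ``paths with last $\Le$-turn at $(a,b)$'', the weight accounting falls out of Proposition \ref{basic facts} and the $\Gamma/\Le$-turn decomposition of Proposition \ref{only need the turns proposition}.
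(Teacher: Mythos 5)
Your proposal is correct and follows essentially the same route as the paper's proof: the same inductive invariant (the sum of weights of paths from $r_i$ to $c_j$ whose $\Le$-turns are lex-bounded by the current box equals $\chi_{i,j}^{(a,b)}$), driven by Corollary \ref{realstartup}, with the newly admissible paths identified as exactly those whose last $\Le$-turn is at $(a,b)$ and decomposed via the horizontal segment to $(a,j)$ and the vertical descent to $c_b$, just as in the paper. The only cosmetic differences are your strict-inequality indexing over $E_\lambda$ versus the paper's non-strict indexing over $Y_\lambda$, and that the case $(a,j)\in B_C$ (treated separately in the paper) is absorbed implicitly in your argument, since then the family of new paths is empty and $\overline{t}_{a,j}=0$.
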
 


\begin{proof}
  \begin{full} 
It is obvious that $M_C[i,j]$ is zero for each $(i,j)\in \llb 1,m\rrb \times \llb 1,n \rrb\bs Y_\lambda$; 
so for the rest of this proof all boxes shall be in $Y_\lambda$.

For any $(a,b),(i,j)\in Y_\lambda$, let us define $M_C^{(a,b)}[i,j]$ to be the sum of the weights of all paths $P: r_i\implies c_j$ in $\postc$ which have no $\Le$-turn after $(a,b)$ with respect to the lexicographical order (that is, whose $\Le$-turns $v$ all satisfy $v\leq_\lex (a,b)$). 
It will suffice to show that for any $(a,b),(i,j)\in Y_\lambda$, we have 
\begin{equation}\label{we aim for}
M_C^{(a,b)}[i,j]=\chi_{i,j}^{(a,b)^+};
\end{equation}
setting $(a,b)=(m,\lambda_m)$ in \eqref{we aim for} gives the result.
We prove the claim \eqref{we aim for} by induction on $(a,b)$. If $(i,j)\in B_C$, then there is no path $P: r_i\implies c_j$ in $\postc$ which has no $\Le$-turn after $(1,1)$ and we have $M^{(1,1)}[i,j]=0=\overline{t}_{i,j}=\chi_{i,j}^{(1,1)^+}$. If $(i,j)\in W_C$, then the only path in $\postc$ 
from $r_i$ to $c_j$ which has no $\Le$-turn after $(1,1)$ is the path which runs horizontally from $r_i$ to $(i,j)$ and then vertically from $(i,j)$ to $c_j$; this path has weight $\overline{t}_{i,j}=\chi_{i,j}^{(1,1)^+}$ by Proposition \ref{basic facts}(4), so that $M^{(1,1)}[i,j]=\chi_{i,j}^{(1,1)^+}$. 

Let $(a,b)\in E_\lambda\bs\{(m,\lambda_m+1)\}$ be such that 
\begin{equation}\label{the hyp of the ind}
M_C^{(a,b)^-}[i,j]=\chi_{i,j}^{(a,b)}
\end{equation}
for all $(i,j)\in Y_\lambda$. For any $(i,j)\in Y_\lambda$, let us define $F_{i,j}$ to be the set of all paths in $\postc$ from $r_i$ to $c_j$ which have a $\Le$-turn at $(a,b)$ and no later $\Le$-turn; it will suffice to show that for each $(i,j)\in Y_\lambda$, $\chi_{i,j}^{(a,b)^+}$ is obtained from $\chi_{i,j}^{(a,b)}$ by adding $\sum_{P\in F_{i,j}}w(P)$. 

We may assume that $i<a$, $j<b$, and $(a,b)\in W_C$ (since otherwise $F_{i,j}$ is empty and Corollary \ref{realstartup} gives $\chi_{i,j}^{(a,b)^+}=\chi_{i,j}^{(a,b)}$). By Corollary \ref{realstartup}, we have $\chi_{i,j}^{(a,b)^+}=\chi_{i,j}^{(a,b)} +\chi_{i,b}^{(a,b)}\overline{t}_{a,b}^{-1}\overline{t}_{a,j}$, so that it will suffice to show that 
\begin{equation}\label{sts}
\sum_{P\in F_{i,j}}w(P)=\chi_{i,b}^{(a,b)}\overline{t}_{a,b}^{-1}\overline{t}_{a,j}.
\end{equation}
There are two cases to consider:

\begin{enumerate}[(a)]
\item Suppose that $(a,j)\in B_C$. Then $F_{i,j}$ is empty and $\overline{t}_{a,j}=0$; \eqref{sts} follows immediately.

\item Suppose that $(a,j)\in W_C$. 

Let $F_i$ be the set of all paths in $\postc$ from $r_i$ to $c_b$ which have no $\Le$-turn after $(a,b)^-=(a, b-1)$, so that $\sum_{Q\in F_i}w(Q)=M_C^{(a,b)^-}[i,b]$ and hence $\sum_{Q\in F_i}w(Q)=\chi_{i,b}^{(a,b)}$ by the induction hypothesis \eqref{the hyp of the ind}. Let the path $K_j: (a,b)\implies c_j$ be given by concatenating the horizontal path $(a,b)\implies (a,j)$ with the vertical path $(a,j)\implies c_j$. Proposition \ref{basic facts}(3) gives $w(K_j)=\overline{t}_{a,b}^{-1}\overline{t}_{a,j}$. Let $L_b$ be the vertical path from $(a,b)$ to $c_b$. For any path $P\in F_{i,j}$, the subpath $P': r_i\implies (a,b)$ of $P$ is such that $P=P'K_j$, $P'L_b\in F_i$, and $w(P'L_b)=w(P')$. Notice that each path in $F_i$ has the form $P'L_b$ for a unique path $P\in F_{i,j}$.

We have
\begin{align*}
\sum_{P\in F_{i,j}} w(P)&=\sum_{P\in F_{i,j}} w(P')w(K_j) \\
                    &=\left(\sum_{P\in F_{i,j}} w(P'L_b)\right)w(K_j) \\
                    &=\left(\sum_{Q\in F_i}w(Q)\right)w(K_j) \\                    
                    &=\chi_{i,b}^{(a,b)}w(K_j) \\
                    &=\chi_{i,b}^{(a,b)}\overline{t}_{a,b}^{-1}\overline{t}_{a,j},
\end{align*}
establishing \eqref{sts}.
\end{enumerate}
The proof is complete.
  \end{full} 
\end{proof}


As an immediate corollary of Theorems \ref{main vanishing pseudo minors theorem} and \ref{path matrix is meaningful}, we get the main result of this section, which is a generalisation of the main result of \cite{cas1}:


\begin{theorem}[cf. Theorem 5.6 of \cite{cas1}]\label{which pseudo minors}
Assume that $q\in \k^*$ is not a root of unity. Let $P$ be an $\ch$-prime ideal of the partition subalgebra $\ylk$ corresponding to a Cauchon-Le diagram $C$ on the Young diagram $Y_\lambda$ and let $I\subseteq\llb 1,m\rrb$ and $J\subseteq \llb 1,n\rrb$ have the same cardinality. Then the pseudo quantum minor $[I\ |\ J]$ of $\ylk$ belongs to $P$ if and only if there exists no vertex-disjoint $R_{(I,J)}$-path system in the Postnikov graph 
$\postc$ of $C$. 
\end{theorem}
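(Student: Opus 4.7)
The plan is to assemble the statement directly from the two major results already in place: Theorem \ref{path matrix is meaningful} and Theorem \ref{main vanishing pseudo minors theorem}. The observation is that the path matrix $M_C$ is precisely the matrix whose $(i,j)$ entry is the canonical image of $x_{i,j}$ in the localisation of $\ylk/P$ (so, in $\ylk/P$ itself for entries inside $Y_\lambda$, and zero outside). Consequently, the pseudo quantum minor $[I\mid J]$ of $\ylk$ projects, modulo $P$, onto the pseudo quantum minor of $M_C$ indexed by $(I,J)$.

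Concretely, I would proceed as follows. First, apply Theorem \ref{path matrix is meaningful} to identify the canonical image $\overline{x}_{i,j}$ of each generator $x_{i,j}$ in $\ylk/P$ with $M_C[i,j]$, extended by zero to any index pair $(i,j)\notin Y_\lambda$ (matching the convention $x_{i,j}=0$ outside $Y_\lambda$ used to define pseudo quantum minors in a pre-quantum matrix algebra). Second, since the definition
\[
[I\mid J] \;=\; \sum_{\sigma\in S_t}(-q)^{\ell(\sigma)}x_{i_1,j_{\sigma(1)}}\cdots x_{i_t,j_{\sigma(t)}}
\]
is polynomial in the generators and the canonical surjection $\ylk\twoheadrightarrow \ylk/P$ (composed with the embedding of $\ylk/P$ into the total ring of fractions identified, via the isomorphisms \eqref{isos of hom images}, with the fraction ring of the quantum torus $\mathcal{T}_C$) is a ring homomorphism, we conclude that the image of $[I\mid J]$ under this map is exactly the pseudo quantum minor $[I\mid J]$ of the path matrix $M_C$ as defined in Section~\ref{subsection:pathmatrix}.

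Third, note that $P$ is completely prime (being an $\ch$-prime in a QNA with $q$ not a root of unity, by the Goodearl--Letzter result quoted in Section~\ref{section-cgl}), so $\ylk/P$ embeds in its total ring of fractions, and hence $[I\mid J]\in P$ if and only if its image in $\mathcal{T}_C$ is zero. Fourth and finally, invoke Theorem \ref{main vanishing pseudo minors theorem}: the pseudo quantum minor $[I\mid J]$ of $M_C$ vanishes in $\mathcal{T}_C$ if and only if there exists no vertex-disjoint $R_{(I,J)}$-path system in $\postc$. Combining these equivalences yields the theorem.

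There is no real obstacle here: the statement is a clean corollary and all the technical work has been done in the earlier sections, where the commutation relations among path weights (Lemmas \ref{how edges commute}, \ref{how the head an tail commute lemma}, \ref{how paths that share only initial vertex commute lemma}), the Lindström-type cancellation (Theorem \ref{computing q det with paths thm}), the $\Gamma$/$\Le$-turn bookkeeping (Proposition \ref{only need the turns proposition} and Theorem \ref{main vanishing pseudo minors theorem}), and the identification of $M_C$ with the image of the generators (Theorem \ref{path matrix is meaningful}) have all been established.
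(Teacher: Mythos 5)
Your proposal is correct and follows exactly the paper's route: the paper also obtains this theorem as an immediate corollary of Theorem \ref{path matrix is meaningful} (identifying $M_C[i,j]$ with the image of $x_{i,j}$ in $\ylk/P$, extended by zero outside $Y_\lambda$) together with Theorem \ref{main vanishing pseudo minors theorem}. The details you supply (the quotient map being a ring homomorphism applied to the polynomial expression defining $[I\mid J]$, and complete primeness of $P$ ensuring the setup in the quantum torus makes sense) are precisely the routine steps the paper leaves implicit.
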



\section{Primes in $\oqmmnr$ from Cauchon-Le diagrams}\label{Primes in oqmmnr from Cauchon-Le diagrams}

In Section \ref{qgrass}, we will use Theorem \ref{which pseudo minors} together with noncommutative dehomogenisation to identify the quantum Pl\"ucker coordinates that are contained in an $\ch$-prime $P$ of $\oqgmnk$. However, we would like to show that these quantum Pl\"ucker coordinates generate $P$ as an ideal. This is a known result in the classical (commutative) case and our strategy is to pull back the result from the classical case by using the technique developed in \cite{good-len-q-transc}. In order to do this, we need to work with $q$ transcendental over a base field $\k$ and with $R:=\k[q^{\pm1}]$ as the base ring so that we can ``set $q=1$''. 

With this in mind, we need to re-develop some of the theory of the earlier sections for partition subalgebras of $\oqmmnr$. In this section, we focus on the case of $\oqmmnr$.

Let $\f=\k(q)$ be the field of fractions of $R$. In this section, we start by considering certain prime ideals in $\oqmmnr$. 
We need to be careful here, as $\ch =(\f^*)^{m+n}$ does not act on 
$\oqmmnr$; so it does not make sense to refer to $\ch$-prime ideals of $\oqmmnr$. 
Instead, we proceed as follows. Let $\Pi$ denote the set of all quantum minors of 
$\oqmmnr$. Let $C$ be a Cauchon-Le diagram on an $m\times n$ array. Let $\Pi^C$ 
be the set of quantum minors $[I\mid J]$ such that there are no 
vertex disjoint $R_{(I,J)}$-path systems in $\postc$, and let $\ideal{\Pi^C}_R$ denote 
the ideal of $\oqmmnr$ generated by $\Pi^C$. The aim is to show that 
$\ideal{\Pi^C}_R$ is a completely prime ideal of $\oqmmnr$. When the base ring is a field, Casteels, 
\cite{cas1}, has shown that $\Pi^C$ is a generating set for  $P$, the 
$\ch$-prime ideal corresponding to the Cauchon-Le diagram $C$. In fact, in \cite{cas2}, he goes further and shows that $\Pi^C$ is a Gr\"obner basis for $P$ either as a left ideal or as a right ideal. As this is a crucial result for us, we state his theorem below. 

\begin{notation}
If If $M=(M_{i,j}) \in \matnonneg$, then we set: 
$$\vect{x}^M=x_{1,1}^{M_{1,1}} x_{1,2}^{M_{1,2}} \cdots x_{m,n}^{M_{m,n}},$$
where the indices are in the lexicographic order, from the smallest to the largest when one goes from left to right. Such a monomial is called a {\em lexicographic monomial}. 
\end{notation}
Note that lexicographic monomials form a $R$-basis (resp. $\f$-basis) of $\oqmmnr$ (resp. $\oqmmnf$). This basis is referred to as the {\em PBW basis} of $\oqmmnr$ (resp. $\oqmmnf$), and the expression of an element $a \in \oqmmnr$ (resp. $a \in \oqmmnf$) in this basis is called the {\em lexicographic expression} of $a$.

For details about Gr\"obner bases, see either  \cite{cas2}, or \cite{bgv}. 

In this section, by a symbol $q^{\bullet}$, we mean some power of $q$ with exponent in $\mz$.

\subsection{Matrix lexicographic order and lexicographic expression}

In order to discuss Gr\"obner bases, one first has to choose an ordering on the generators of an algebra. We give the generators $x_{ij}$ of $\oqmmnr$, or $\oqmmnf$, the matrix lexicographic order as defined below. 
~\\

\begin{definition} 
\label{revlex} 
{\rm The \emph{matrix lexicographic order} $\prec$ on $m\times n$ 
matrices with integer coefficients is defined as follows. If $M\neq N\in \mat$, let $(k,\ell)$ be the least coordinate in which $M$ and $N$ differ. Then we set \begin{align*}M\prec N \Leftrightarrow (M)_{k,\ell}<(N)_{k,\ell}.\end{align*}
If $M\prec N$ are both in $\matnonneg$, then the matrix 
lexicographic order induces a total order (that we also call matrix lexicographic) on the lexicographic monomials of $\oqmmnr$ and $\oqmmnf$ by setting 
\begin{align*}\vect{x}^M\prec \vect{x}^{N} \Leftrightarrow M\prec N.\end{align*}
}\end{definition}


On the generators, under the matrix lexicographic order, we have $$x_{i,j}\prec x_{k,\ell} \Leftrightarrow (i,j)>(k,\ell);$$
so that, for example, in quantum $2\times 2$ matrices 
$x_{22}\prec x_{21}\prec x_{12}\prec x_{11}$.\\

For a given element $a$, the {\em leading monomial}, $\lm(a)$ is the monomial 
$\vect{x}^{N} $ that is the greatest monomial in the $\prec$ order
that occurs with a nonzero coefficient. 
The {\em leading exponent}, $\lexp(a)$ is $N$ where 
$\lm(a)=\vect{x}^{N} $. If $\vect{x}^W$ is any other monomial that occurs with a nonzero coefficient in the expression of $a$ in the PBW basis then we know that $W\prec N$. 
For example 
the quantum minor $[12|12]=x_{11}x_{22}- qx_{12}x_{21}$ has leading monomial $x_{11}x_{22}$.  More generally, let $I=\{i_1 < \dots < i_t\} \in \{1, \dots , m\}$ and $J=\{j_1 < \dots < j_t\} \in \{1, \dots , n\}$. Then Casteels prove \cite[Lemma 4.2.8]{cas2}  that the leading monomial of the quantum minor $[I \mid J]$ of $\oqmmnf$ is given by:
$$\lm ([I  \mid J]) = x_{i_1,j_1} \cdot x_{i_2,j_2} \cdots x_{i_t,j_t}.$$
In particular, observe that 
$$[I \mid J]=\lm ([I  \mid J]) + \mbox{smaller terms},$$
that is the coefficients of the leading monomial of a quantum minor is always equal to $1$.

If $i<k$ and $j<\ell$ then 
$$x_{k,\ell}x_{i,j}=x_{i,j}x_{k,\ell}-(q-q^{-1})x_{i,\ell}x_{k,j}.$$ On the other hand, we also have $$x_{i,\ell}x_{k,j}\prec x_{i,j}x_{k,\ell}.$$ 

By repeated application of this fact and the other relations for $\oqmnr$, we obtain the following, which is a special case of the more general \cite[Chapter 2, Proposition 2.4]{bgv}, although there the authors are working over a field.

\begin{proposition} \label{straightening}
For $M,N\in \matnonneg$, the lexicographic expression of 
$\vect{x}^M\vect{x}^N$
 in $\oqmnr$ is 
 \begin{equation} \label{straightening law}
 \vect{x}^M\vect{x}^N=q^\bullet \vect{x}^{M+N} + \sum_{L\in\matnonneg} \alpha_L\vect{x}^{L}, 
 \end{equation}
where $\alpha_L\in\mz[q^{\pm 1}]$ and for every $\alpha_L\neq 0$, one has $L\prec M+N$. 
\end{proposition}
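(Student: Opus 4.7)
The plan is to prove a strengthened version of the proposition that applies to arbitrary words in the generators (not merely to products of two lex-ordered monomials) and deduce the statement as a special case. Given any word $w = y_1 y_2 \cdots y_r$ with each $y_s = x_{i_s,j_s}$, define its multi-set matrix $M(w) \in \matnonneg$ by $M(w)_{i,j} = \#\{s : (i_s,j_s) = (i,j)\}$ and its inversion count $\mathrm{inv}(w) = \#\{(p,q) : p < q \text{ and } y_p \prec y_q\}$. The strengthened claim is that every such $w$ admits an expression
\[
w = q^{\bullet} \vect{x}^{M(w)} + \sum_{L \prec M(w)} \alpha_L \vect{x}^L, \qquad \alpha_L \in \mz[q^{\pm 1}].
\]
Applying this to the concatenated word $\vect{x}^M \vect{x}^N$, whose multi-set matrix is $M + N$, recovers Proposition \ref{straightening}.

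I would prove the strengthened claim by a double induction: primary on $M(w)$ (well-ordered by the matrix lex order $\prec$) and secondary on $\mathrm{inv}(w)$. The base case $\mathrm{inv}(w) = 0$ is immediate, since then $w$ is lex-ordered and equals $\vect{x}^{M(w)}$. Otherwise there is an adjacent inversion at some position $s$ with $y_s = x_{ab}$, $y_{s+1} = x_{cd}$, and $(a,b) > (c,d)$ in the standard lex order on pairs. Three ``easy'' sub-cases arise, namely $a = c$ with $b > d$, $a > c$ with $b = d$, and $a > c$ with $b < d$. In each of these, the relevant defining relation of $\oqmnr$ is monomial: $x_{ab} x_{cd} = q^{\pm 1} x_{cd} x_{ab}$ or $x_{ab} x_{cd} = x_{cd} x_{ab}$, with no correction term. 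Thus $w = q^\bullet w'$, where $w'$ is the word obtained by swapping $y_s$ and $y_{s+1}$, and a routine check (only the pair $(s,s{+}1)$ changes status) shows $M(w') = M(w)$ and $\mathrm{inv}(w') = \mathrm{inv}(w) - 1$. The secondary inductive hypothesis applied to $w'$ finishes these cases.

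The substantive sub-case is $a > c$ and $b > d$, where the nasty relation gives
\[
w = w' - (q - q^{-1})\, w''
\]
with $w'$ the swapped word (so $M(w') = M(w)$, $\mathrm{inv}(w') = \mathrm{inv}(w) - 1$) and $w''$ obtained from $w$ by replacing the two-letter chunk $x_{ab} x_{cd}$ with $x_{cb} x_{ad}$. The crucial observation is that $M(w'') \prec M(w)$: the two matrices coincide outside the four positions $(a,b), (a,d), (c,b), (c,d)$, which in lex order satisfy $(c,d) < (c,b) < (a,d) < (a,b)$, and on these positions $M(w'') - M(w)$ takes the values $-1, +1, +1, -1$ respectively. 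The least position of disagreement is therefore $(c,d)$, where the difference is $-1$, so $M(w'') \prec M(w)$. The primary inductive hypothesis now applies to $w''$, giving a leading term $q^\bullet \vect{x}^{M(w'')}$ with $M(w'') \prec M(w)$, while the secondary hypothesis applied to $w'$ produces leading term $q^\bullet \vect{x}^{M(w)}$; combining these and noting that $-(q - q^{-1}) \in \mz[q^{\pm 1}]$ yields the required expansion. The only delicate step is the combinatorial verification for the nasty case; everything else is mechanical bookkeeping, and the coefficient ring $\mz[q^{\pm 1}]$ is preserved automatically because the relations only introduce factors from $\{q^{\pm 1}, 1, q - q^{-1}\}$.
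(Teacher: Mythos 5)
Your argument is correct. The key points all check out: an adjacent transposition in the three monomial cases preserves the content matrix and drops the inversion count by exactly one; in the nasty case the correction word $w''$ replaces $x_{ab}x_{cd}$ by $x_{cb}x_{ad}$, and since the four affected positions satisfy $(c,d)<(c,b)<(a,d)<(a,b)$ with the entry at $(c,d)$ decreasing, indeed $M(w'')\prec M(w)$; the matrix lex order on $\matnonneg$ is a well-order (a finite lexicographic product of copies of $\mathbb{N}$), so the double induction on $(M(w),\mathrm{inv}(w))$ is well-founded; and the leading term comes only from the swapped word $w'$, so its coefficient stays a pure power of $q$ while all other coefficients remain in $\mz[q^{\pm 1}]$.

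Your route is genuinely different from the paper's, though it rests on the same pivotal observation. The paper first reduces to the case where $M$ is an elementary matrix $E_{a,b}$ (by induction on $M$ in the order $\prec$) and then inducts on $N$ in the order $\prec$, peeling off the first generator of $N$ and pushing $x_{a,b}$ past it, with the nasty relation handled exactly by noting that the correction term $x_{i,b}x_{a,j}\vect{x}^{N'}$ has strictly $\prec$-smaller leading matrix. You instead prove a stronger statement for arbitrary words in the generators, by a termination argument for the straightening rewriting system: primary induction on the content matrix, secondary on the inversion number, resolving one adjacent inversion at a time. What your version buys is generality and reusability (it straightens any word, not just a product of two lex monomials, and is essentially a diamond-lemma-style basis argument); what the paper's version buys is a shorter, more targeted induction phrased directly in terms of the two matrices $M$ and $N$, which feeds immediately into Corollary~\ref{strcor}. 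Either way the heart of the matter is the same: the nasty relation's correction strictly decreases the content in the order $\prec$, so it can never disturb the leading monomial $\vect{x}^{M+N}$ or its coefficient $q^\bullet$.
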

\begin{proof} 
For this proof, note the following easy properties of $\prec$ for any $A,B,C\in M_{m,p}(\mathbb{Z}_{\geq 0})$.
\begin{itemize}
\item $A\prec A+B$;
\item $A\prec B \implies A+C\prec B+C$. 
\end{itemize}

One can show easily by induction on the matrix lexicographic order of $M$ that it is enough to prove \eqref{straightening law} when $M$ is elementary, that is, when $M=E_{a,b}$, where $E_{a,b} \in \matnonneg$ has all its entries equal to zero, except its $(a,b)$-entry which is equal to $1$. Indeed \eqref{straightening law} clearly holds when $M=\mathbf{0}$. Suppose that the first nonzero entry of $M$ is $M_{i,j}$ and set $M'=M-E_{i,j}\prec M$. Then 
\begin{align*}
\vect{x}^M\vect{x}^N&=x_{i,j}\vect{x}^{M'}\vect{x}^N \\
                    &=x_{i,j}\left( q^\bullet \vect{x}^{M'+N} +\sum_{L\prec M'+N}\alpha_L\vect{x}^L\right),
\end{align*}
where the latter equality holds by the induction hypothesis;
the assumption that \eqref{straightening law} holds when $M$ is elementary now allows us to conclude.

So let us assume from now on that $M$ is elementary; let $M=E_{a,b}$. We proceed by induction on the matrix lex order of $N$ (the result clearly holds when $N=\mathbf{0}$). Let $N_{i,j}$ be the first nonzero entry of $N$ and set $N'=N-E_{i,j}\prec N$. We may assume that $(i,j)<(a,b)$ as otherwise the result is trivial. There are two cases to consider:

\textbf{Case 1:} $i=a$ or ($i<a$ and $b\geq j$).  
\begin{align*}
\vect{x}^M\vect{x}^N&=x_{a,b}x_{i,j}\vect{x}^{N'} \\
                    &=q^\bullet x_{i,j}(x_{a,b}\vect{x}^{N'}) \\
                    &=q^\bullet x_{i,j}\left( q^\bullet \vect{x}^{N'+E_{a,b}}+\sum_{L\prec N'+E_{a,b}}\alpha_L\vect{x}^L \right)\ \ \ \text{(by the induction hypothesis)} \\
\end{align*}
Using the induction hypothesis again, we may settle \textbf{Case 1}.

\textbf{Case 2:} $i<a$ and $j<b$. 
\begin{align*}
\vect{x}^M\vect{x}^N&=x_{a,b}x_{i,j}\vect{x}^{N'} \\
                    &=x_{i,j}x_{a,b}\vect{x}^{N'}+(q^{-1}-q)x_{i,b}x_{a,j}\vect{x}^{N'} \\
                    &=x_{i,j}\left( q^\bullet \vect{x}^{N'+E_{a,b}}+\sum_{L\prec N'+E_{a,b}}\alpha_L\vect{x}^L \right)+(q^{-1}-q)x_{i,b}\left( \vect{x}^{N'+E_{a,j}}+
\sum_{L\prec N'+E_{a,j}}\alpha_L\vect{x}^L \right)\  \\
& (\text{by the induction hypothesis}) \\
\end{align*}
Using the induction hypothesis again, we may settle \textbf{Case 2}.
\end{proof}


\begin{definition} \label{dividedef}
{\rm Let $M, N\in \matnonneg$. We say that $\vect{x}^M$ \emph{occurs in} 
$\vect{x}^N$ if $(M)_{i,j}\leq (N)_{i,j}$ for all $(i,j)\in \llb 1,m\rrb \times \llb 1,n\rrb$. 
}\end{definition}

As a consequence of Proposition \ref{straightening}, we obtain the below result. It is similar to \cite[Corollary 4.2.4]{cas2}, but the extra information on the coefficients will be crucial in the next section. 

\begin{corollary}  \label{strcor}
Let $M,N\in\matnonneg$. If $\vect{x}^M$ occurs in $\vect{x}^N$, then there exist matrices $L\prec N$, and  
$\alpha_L\in\mz[q^{\pm 1}]$  such that
 \begin{align*} \vect{x}^N & = q^\bullet\vect{x}^{N-M}\vect{x}^{M} + \sum_L \alpha_L \vect{x}^{L}.
\end{align*}
with $L\prec N$ for each $L$ such that $\alpha_L\neq 0$.
\end{corollary}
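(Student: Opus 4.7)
The statement is essentially an immediate rearrangement of Proposition \ref{straightening}. My plan is as follows.

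Since $\vect{x}^M$ occurs in $\vect{x}^N$, Definition \ref{dividedef} gives $(M)_{i,j} \leq (N)_{i,j}$ for all $(i,j)$, so the matrix $N - M$ lies in $\matnonneg$ and the monomial $\vect{x}^{N-M}$ is defined. First I would apply Proposition \ref{straightening} to the product $\vect{x}^{N-M} \cdot \vect{x}^M$ (taking $(N-M, M)$ in the role of $(M, N)$ from that proposition). This yields an expression
\[
\vect{x}^{N-M}\vect{x}^M \ = \ q^{\bullet}\vect{x}^{(N-M)+M} + \sum_{L} \beta_L \vect{x}^L \ = \ q^{\bullet} \vect{x}^{N} + \sum_{L} \beta_L \vect{x}^L,
\]
where $\beta_L \in \mz[q^{\pm 1}]$ and $L \prec N$ whenever $\beta_L \neq 0$.

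The second step is to solve for $\vect{x}^N$. Since the leading power $q^{\bullet}$ is invertible in $R = \mz[q^{\pm 1}]$ (in fact its inverse is again a power of $q$), I may multiply through by $q^{-\bullet}$ and transpose the sum to the other side, obtaining
\[
\vect{x}^N \ = \ q^{\bullet}\vect{x}^{N-M}\vect{x}^M \ + \ \sum_{L} \alpha_L \vect{x}^L,
\]
where $\alpha_L = -q^{-\bullet}\beta_L \in \mz[q^{\pm 1}]$ and the indexing matrices $L$ still satisfy $L \prec N$ whenever $\alpha_L \neq 0$. This is exactly the form claimed in the corollary. There is no real obstacle here: the entire content of the proof is that Proposition \ref{straightening} already computes the lexicographic expression of $\vect{x}^{N-M}\vect{x}^M$ with leading term a scalar multiple of $\vect{x}^N$, and inverting that scalar is harmless over $R$.
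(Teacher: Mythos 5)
Your proof is correct and is exactly the intended argument: the paper states the corollary as an immediate consequence of Proposition~\ref{straightening}, obtained by applying that proposition to $\vect{x}^{N-M}\vect{x}^{M}$ (noting $N-M\in\matnonneg$ since $\vect{x}^M$ occurs in $\vect{x}^N$) and then solving for $\vect{x}^N$, using that $q^\bullet$ is a unit in $\mz[q^{\pm 1}]$.
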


\subsection{Gr\"obner bases and completely prime ideals of $\oqmmnr$}

Let $J$ be a right (resp. left) ideal of $\oqmmnf$. Recall (see for instance \cite[Definition 4.2.7]{cas2})  that $G=\{g_1,\dots,g_t\} \in J$ is a {\em Gr\"obner basis} for $J$ if for all $a \in J$, there exists $g_i \in G$ such that $\lm(g_j)$ occurs in $\lm(a)$. While it is not a requirement of the definition, as in the commutative setting, it is easy to show that $G$ is indeed a basis of the right (resp. left) ideal $J$.

We can now state Casteels' result.

\begin{theorem}{\rm \cite[Theorem 4.4.1]{cas2}}
Let $C$ be a Cauchon-Le diagram on an $m\times n$ array and let $\Pi^C$ 
be the set of quantum minors $[I\mid J]$ such that there are no 
vertex disjoint $R_{(I,J)}$-path systems in $\postc$. Then $\ideal{\Pi^C}_{\f}$ 
is a completely prime ideal of $\oqmmnf$ and  $\Pi^C$ is a Gr\"obner basis for 
$\ideal{\Pi^C}_{\f}$ considered either as a left ideal or as a right ideal. 
\end{theorem}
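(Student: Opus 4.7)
The plan is to establish $\langle \Pi^C\rangle_{\f} = P_C$, where $P_C$ is the $\ch$-prime ideal of $\oqmmnf$ attached to $C$ by the bijection \eqref{H-primes of partlambda} applied to the rectangular Young diagram (for which $\ylk = \oqmmnk$). Complete primeness of $\langle\Pi^C\rangle_{\f}$ will then follow from the Goodearl--Letzter fact recalled in Section~\ref{section-cgl} that every $\ch$-prime of a QNA with $q$ not a root of unity is completely prime, and the Gr\"obner basis property will fall out of the same argument.

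By Theorem~\ref{which pseudo minors} applied to the rectangular $m\times n$ Young diagram (where pseudo quantum minors coincide with ordinary quantum minors), $\Pi^C$ is precisely the set of quantum minors of $\oqmmnf$ lying in $P_C$, so $\langle \Pi^C\rangle_{\f} \subseteq P_C$ is immediate. The core of the proof is the stronger statement that for every nonzero $a \in P_C$ there exists $[I|J]\in \Pi^C$ whose leading monomial occurs in $\lm(a)$ in the sense of Definition~\ref{dividedef}. Granting this, a standard noetherian reduction based on Corollary~\ref{strcor}---subtracting an appropriate scalar multiple of a lex-reducer times $[I|J]$ and iterating---shows simultaneously that $a \in \langle\Pi^C\rangle_{\f}$ and that $\Pi^C$ is a Gr\"obner basis for $\langle \Pi^C\rangle_{\f}$ as a right ideal; the left ideal case is obtained symmetrically, using the column-based Laplace expansions of Corollary~\ref{column Laplace} in place of the row-based ones from Lemma~\ref{row Laplace}.

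To prove the divisibility statement, I would pass through the canonical embedding $\varphi: \Spec(\oqmmnf) \to \Spec(\widebar{\oqmmnf})$ and exploit the path matrix description provided by Theorem~\ref{path matrix is meaningful}: in $\oqmmnf/P_C$, the class of $x_{i,j}$ is the path-matrix entry $M_C[i,j]$ in the quantum torus $\mathcal{T}_C$. Given $a \in P_C$ with $\lm(a) = \vect{x}^M$, Proposition~\ref{straightening} lets us track the leading behaviour of the image of $a$ in $\mathcal{T}_C$, modulo lex-smaller contributions, as a $q^\bullet$-scalar times an ordered product of path-matrix entries determined by $M$. Since this image vanishes, the task is to extract from the support of $M$ an index pair $(I,J) = (\{i_1<\cdots<i_t\},\{j_1<\cdots<j_t\})$ such that (a) the monomial $x_{i_1,j_1}\cdots x_{i_t,j_t} = \lm([I|J])$ occurs in $\vect{x}^M$, and (b) the Postnikov graph $\postc$ admits no vertex-disjoint $R_{(I,J)}$-path system, so $[I|J] \in \Pi^C$ by the defining condition and by Theorem~\ref{computing q det with paths thm}.

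The main obstacle is this combinatorial extraction: one must show that the algebraic cancellation forcing the product of path-matrix entries to vanish always carries a combinatorial certificate of the above form. I would use the planarity and acyclicity of $\postc$ (Proposition~\ref{basic facts}), together with the permutation-consistency of vertex-disjoint path systems (Lemma~\ref{same perm lemma}), in a quantum Lindstr\"om--Gessel--Viennot-style argument, locating a minimal ``blocking'' subconfiguration in $M$ by a greedy scan. The weight-commutation lemmas \ref{how edges commute}, \ref{how the head an tail commute lemma} and \ref{how paths that share only initial vertex commute lemma} ensure that the powers of $q$ arising from straightening match up consistently, so that the vanishing observed inside $\mathcal{T}_C$ really does come from the vanishing of a quantum minor in $\Pi^C$ rather than from some exotic cancellation internal to the quantum torus. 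Once this combinatorial heart is in place, the remaining steps are routine.
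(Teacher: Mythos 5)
The statement you are proving is not actually proved in the paper: the paper's ``proof'' is the single line that this is the case $t=mn$ of Casteels' Theorem 4.4.1 in \cite{cas2}, so your proposal amounts to reproving Casteels' theorem with the toolkit of this paper. Your framework is sound as far as it goes: granted that every nonzero $a\in P_C$ admits some $[I\mid J]\in\Pi^C$ with $\lm([I\mid J])$ occurring in $\lm(a)$, the reduction via Corollary~\ref{strcor} does yield both $\ideal{\Pi^C}_{\f}=P_C$ and the Gr\"obner property, and complete primeness then follows since $\ch$-primes of a QNA are completely prime. But that divisibility statement \emph{is} the theorem, and your treatment of it is a gesture rather than a proof. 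The claim that ``the algebraic cancellation forcing the product of path-matrix entries to vanish always carries a combinatorial certificate'' is precisely what must be established, and ``locating a minimal blocking subconfiguration in $M$ by a greedy scan'' is not an argument: you neither define this configuration nor show it produces a pair $(I,J)$ with no vertex-disjoint $R_{(I,J)}$-path system and with $x_{i_1,j_1}\cdots x_{i_t,j_t}$ occurring in $\vect{x}^M$.

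There is moreover a concrete obstruction to the route you sketch. The image in the quantum torus $\mathcal{T}_C$ of a PBW monomial $\vect{x}^M$ is a product of path-matrix entries, each a sum of many Laurent monomials, and there is no monomial order on $\mathcal{T}_C$ compatible with the matrix-lexicographic order on $\oqmmnf$ under which the contribution of $\lm(a)$ is guaranteed to survive: the vanishing of the image of $a$ can involve cancellation between Laurent monomials arising from \emph{different} PBW monomials of $a$, not only from $\lm(a)$, so ``tracking the leading behaviour modulo lex-smaller contributions'' is not available. The distinct-monomial argument of Theorem~\ref{main vanishing pseudo minors theorem} works for a single quantum minor because distinct vertex-disjoint path systems have distinct exponent matrices; it does not transfer to an arbitrary element of $P_C$, and the weight-commutation lemmas only control $q$-powers within one path-system product, not these cross-term cancellations. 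Coping with exactly this difficulty is why Casteels' proof is long: he proceeds by induction along the deleting-derivations/restoration steps (whence ``the case $t=mn$''), carrying the Gr\"obner property through each step, rather than by the one-shot Lindstr\"om--Gessel--Viennot extraction you propose. As written, the heart of your proof is missing.
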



\begin{proof} 
This is the case $t=mn$ in \cite[Theorem 4.4.1]{cas2}. 
\end{proof} 


We will use this result to show that $\ideal{\Pi^C}_R$ is a completely prime ideal of $\oqmmnr$.


\begin{theorem}
Let $C$ be an $m\times n$ Cauchon-Le diagram  
and let $\Pi^C$ 
be the set of quantum minors $[I\mid J]$ such that there are no 
vertex disjoint $R_{(I,J)}$-path systems in $\postc$. Let $I^C_R$ denote the left ideal of 
$\oqmmnr$ generated by 
$\Pi^C$. Then 

\[
I^C_R= \ideal{\Pi^C}_{\f}\cap \oqmmnr.
\]
\end{theorem}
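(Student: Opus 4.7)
The plan is to prove the nontrivial inclusion $\langle \Pi^C\rangle_{\f}\cap \oqmmnr \subseteq I^C_R$ by a division-algorithm argument, exploiting the fact that every quantum minor is \emph{monic} (has leading coefficient $1$) together with the fact that $q$ is already a unit in $R$.

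First, the easy inclusion $I^C_R\subseteq \langle \Pi^C\rangle_{\f}\cap \oqmmnr$ is immediate from the definitions. For the reverse inclusion, let $a\in \langle \Pi^C\rangle_{\f}\cap\oqmmnr$. I would proceed by transfinite induction on $\lm(a)$ with respect to $\prec$; this is a well-ordering on lexicographic monomials (standard: lex on $\mz_{\geq 0}^{mn}$ is a well-order by induction on the number of variables). If $a=0$ there is nothing to prove. Otherwise write $\alpha\vect{x}^M$ for the leading term of $a$, with $\alpha\in R$. Since $a\in \langle \Pi^C\rangle_{\f}$ and $\Pi^C$ is a Gr\"obner basis of $\langle \Pi^C\rangle_{\f}$ as a \emph{left} ideal of $\oqmmnf$ by Casteels' theorem, there exists $[I\mid J]\in \Pi^C$ whose leading monomial $\vect{x}^N$ occurs in $\vect{x}^M$.

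The key computation is that the reduction step does not leave $\oqmmnr$. By Corollary \ref{strcor}, $\vect{x}^{M-N}\vect{x}^N=q^e\vect{x}^M+(\text{strictly smaller terms with coefficients in }R)$ for some integer $e$. Using the fact that $[I\mid J]=\vect{x}^N+(\text{terms with smaller leading monomials})$ together with Proposition \ref{straightening} (which guarantees that multiplying $\vect{x}^{M-N}$ by any $\vect{x}^{N'}\prec \vect{x}^N$ yields terms $\prec\vect{x}^M$ with coefficients in $R$), we obtain
\[
\vect{x}^{M-N}[I\mid J]=q^e\vect{x}^M+\sum_{L\prec M}\beta_L\vect{x}^L,\qquad \beta_L\in R.
\]
Since $q$ is invertible in $R$, the element $\alpha q^{-e}\in R$ is well defined, and we may form
\[
a':=a-\alpha q^{-e}\,\vect{x}^{M-N}[I\mid J]\in \oqmmnr\cap\langle \Pi^C\rangle_{\f},
\]
which satisfies $\lm(a')\prec \lm(a)$.

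By the induction hypothesis $a'\in I^C_R$, hence $a=a'+\alpha q^{-e}\vect{x}^{M-N}[I\mid J]\in I^C_R$, completing the proof. The only subtle point — and the main thing to watch — is that the division step keeps the coefficients in $R$: this is guaranteed precisely because the leading coefficient of $[I\mid J]$ is $1$ and the cross terms $\vect{x}^{M-N}\vect{x}^{N'}$ produced by Proposition \ref{straightening} only contribute monomials strictly smaller than $\vect{x}^M$ with coefficients already in $R$. No step of the recursion introduces denominators in $q$, which is exactly what is needed to transfer Casteels' Gr\"obner basis result from $\oqmmnf$ down to $\oqmmnr$.
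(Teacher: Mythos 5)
Your proposal is correct and is essentially the paper's own argument: both rely on Casteels' Gröbner basis theorem over $\f$ together with Corollary \ref{strcor}/Proposition \ref{straightening} to reduce the leading term of $a$ by a left multiple of a minor in $\Pi^C$ without leaving $\oqmmnr$ (the minors being monic with coefficients in $\mz[q^{\pm 1}]$ and $q$ a unit in $R$), and then conclude by well-founded descent on $\lm(a)$ — the paper phrases this as a minimal-counterexample argument rather than induction, which is the same thing.
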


\begin{proof} 

It is obvious that 
$ I^C_R\subseteq  \ideal{\Pi^C}_{\f}\cap \oqmmnr $; 
so we need to show that 
$  \ideal{\Pi^C}_{\f}\cap \oqmmnr 
 \subseteq  I^C_R.$\\
 
 Suppose that $ \ideal{\Pi^C}_{\f}\cap\oqmmnr
 \not\subseteq  I^C_R.$ We will derive a contradiction from this 
 assumption. \\

 Set ${\mathcal S}:=  \ideal{\Pi^C}_{\f}\cap\oqmmnr
 \backslash  I^C_R$, a nonempty set by assumption. Choose 
 $a\in{\mathcal S}$ with smallest possible leading monomial $\lm(a)$. \\

We know that $\ideal{\Pi^C}_{\f}$ is generated by $\Pi^C$ as a left ideal of 
$\oqmmnf$; so we may write 
$$
a =  \sum_{\ell = 1}^{t}\, b_{I_\ell}\vect{x}^{I_\ell}
=
\sum_{\ell=1}^{s} c_l\Delta_\ell
$$
with $I_{\ell} \in \matnonneg$, $0\neq b_{I_\ell}\in \k[q^{\pm1}]$ 
and  $0\neq c_\ell\in\oqmmnf$ for all $\ell \in \{1, \dots , t\}$, and with $\Delta_\ell\in\Pi^C$ for all $\ell\in \{ 1, \dots , s\}$. Note that 
each $\Delta_\ell\in I^C_R$ and let us emphasise that 
$b_{I_\ell}\in \k[q^{\pm1}]$ as this will be crucial 
later in the proof. 

As $\Pi^C$ is a Gr\"obner basis for $\ideal{\Pi^C}_{\f}$ as a left ideal of $\oqmmnf$, 
the previous equality implies that there exists $\ell\in\{1,\dots,t\}$ such that 
$\lm(\Delta_\ell)$ occurs in $\lm(a)$. Without loss of generality, assume that $\ell=1$ and that $\lm(a)=\vect{x}^{I_1}$. Suppose that 
$ \lm(\Delta_1)=\vect{x}^M$. Write  $\Delta_1=\vect{x}^M+u$ and note that 
$\lm(u)\prec \vect{x}^M$.\\

By Corollary~\ref{strcor}, we can write 
\begin{eqnarray*}
\vect{x}^{I_1} &=&q^\bullet \vect{x}^{I_1-M}\vect{x}^M + \sum_{L\prec I_1}\,
\alpha_L\vect{x}^L\\
&=&q^\bullet\vect{x}^{I_1-M}\Delta_1 - q^\bullet\vect{x}^{I_1-M}u + 
\sum_{L\prec I_1}\,
\alpha_L\vect{x}^L
\end{eqnarray*} 
We use this expression for $\vect{x}^{I_1}$ in the PBW expression for $a$ to obtain
\begin{eqnarray*}
a&=& b_{I_1}\vect{x}^{I_1} +  \sum_{\ell = 2}^{t}\, b_{I_\ell}\vect{x}^{I_\ell}\\
&=&b_{I_1}q^\bullet\vect{x}^{I_1-M}\Delta_1 -b_{I_1}q^\bullet\vect{x}^{I_1-M}u
+\sum_{L\prec I_1}\,
\alpha_L\vect{x}^L\\
\end{eqnarray*}

If $-b_{I_1}q^\bullet\vect{x}^{I_1-M}u
+\sum_{L\prec I_1}\,
\alpha_L\vect{x}^L=0$ then 
 $a=b_{I_1}q^\bullet\vect{x}^{I_1-M}\Delta_1$ is in  $I^C_R$ 
(here, we are using our previous observation that $b_{I_1}\in \k[q^{\pm1}]$), contradicting our assumption that $a\in{\mathcal S}$. \\

Hence, we may assume that 
\begin{equation}
\label{eqa'}
0\neq -b_{I_1}q^\bullet\vect{x}^{I_1-M}u
+\sum_{L\prec I_1}\,
\alpha_L\vect{x}^L = a-b_{I_1}q^\bullet\vect{x}^{I_1-M}\Delta_1.
\end{equation}

So $ a':= a-b_{I_1}q^\bullet\vect{x}^{I_1-M}\Delta_1 $ is a nonzero element of $\mathcal{S}$ (since $a\in \mathcal{S}$ and $b_{I_1}q^\bullet\vect{x}^{I_1-M}\Delta_1 \in I^C_R$).

On the other hand, note that 
$$\lm(-b_{I_1}q^\bullet\vect{x}^{I_1-M}u)= \lm(\vect{x}^{I_1-M})\lm(u)
\prec  \lm(\vect{x}^{I_1-M})\lm(\vect{x}^M) =\lm(\vect{x}^{I_1}).$$
Having noticed this, inspection of the left hand side of Equation (\ref{eqa'}) above reveals that 
\[
\lm(a')=\lm(-b_{I_1}q^\bullet\vect{x}^{I_1-M}u
+\sum_{L\prec I_1}\,
\alpha_L\vect{x}^L)\prec \lm(a),
\]
 a contradiction to the choice of $a$ having least leading monomial 
 among members of 
 ${\mathcal S}$. Hence, $I^C_R= \ideal{\Pi^C}_{\f}\cap \oqmmnr$, as required.
\end{proof}

An immediate corollary of this result is that the ideal $\ideal{\Pi^C}_R$ is a 
completely prime ideal of $\oqmmnr$.

\begin{corollary}  \label{corollary-cp-over-R}
The ideal $\ideal{\Pi^C}_R$ is a completely prime ideal of $\oqmmnr$ 
that  is generated by 
$\Pi^C$ as a left ideal and as a right ideal. 
\end{corollary}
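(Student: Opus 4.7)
The plan is to reduce everything to the preceding theorem together with a symmetric version of it for right ideals. The theorem immediately before the corollary establishes that the left ideal $I^C_R$ generated by $\Pi^C$ satisfies
\[
I^C_R \;=\; \langle \Pi^C\rangle_{\f}\cap \oqmmnr.
\]
The key observation is that Casteels' result (\cite[Theorem 4.4.1]{cas2}) asserts that $\Pi^C$ is a Gröbner basis for $\langle \Pi^C\rangle_{\f}$ viewed either as a left \emph{or} as a right ideal of $\oqmmnf$. So the proof of the preceding theorem goes through verbatim, with leading monomials computed on the right rather than on the left, to yield the analogous identity
\[
J^C_R \;=\; \langle \Pi^C\rangle_{\f}\cap \oqmmnr,
\]
where $J^C_R$ denotes the right ideal of $\oqmmnr$ generated by $\Pi^C$. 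I would simply note that Proposition \ref{straightening} and Corollary \ref{strcor} are left-right symmetric (one can prove the mirror version by the same induction on matrix lexicographic order), so the Gröbner reduction argument carries over.

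From these two identities, $I^C_R=J^C_R$, and since both contain $\Pi^C$ and the left/right ideals they generate, their common value is also the two-sided ideal $\langle \Pi^C\rangle_R$. In particular,
\[
\langle \Pi^C\rangle_R \;=\; \langle \Pi^C\rangle_{\f}\cap \oqmmnr,
\]
so $\Pi^C$ generates this ideal as a left ideal and as a right ideal (and hence trivially as a two-sided ideal).

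Finally, complete primeness is automatic from the contraction formula: given $a,b\in \oqmmnr$ with $ab\in \langle \Pi^C\rangle_R = \langle \Pi^C\rangle_{\f}\cap \oqmmnr$, the containment $ab\in \langle \Pi^C\rangle_{\f}$ forces one of $a,b$ to lie in $\langle \Pi^C\rangle_{\f}$ (by Casteels' theorem, which says $\langle \Pi^C\rangle_{\f}$ is completely prime in $\oqmmnf$), and since $a,b\in \oqmmnr$, the corresponding factor lies in $\langle \Pi^C\rangle_{\f}\cap \oqmmnr=\langle \Pi^C\rangle_R$.

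There is no real obstacle here; the only step that requires care is verifying that the Gröbner basis argument in the preceding theorem indeed goes through on the right. This is essentially a bookkeeping exercise, since the straightening law in Proposition \ref{straightening} and its corollary hold in both orderings, and the matrix lexicographic order behaves symmetrically under multiplication from either side.
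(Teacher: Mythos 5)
Your proof is correct and follows essentially the same route as the paper: both rest on the contraction identity $\langle \Pi^C\rangle_{\f}\cap \oqmmnr$ coming from the preceding theorem (and its right-ideal analogue, available because Casteels' Gr\"obner basis statement is two-sided), with complete primeness inherited from $\langle \Pi^C\rangle_{\f}$. The only cosmetic difference is that the paper gets two-sidedness directly from the fact that $I^C_R$ is the contraction of a two-sided ideal, whereas you deduce it from the equality of the left and right ideals generated by $\Pi^C$; both are immediate.
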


\begin{proof} 
As $I^C_R= \ideal{\Pi^C}_{\f}\cap \oqmmnr$, by the previous result, we see that 
$I^C_R$ is a two-sided ideal and is also completely prime, because 
$\ideal{\Pi^C}_{\f}$ is a completely prime ideal of $\oqmmnf$. Now, $\Pi^C\subseteq I^C_R$ 
and this forces 
$\ideal{\Pi^C}_R\subseteq  I^C_R\subseteq \ideal{\Pi^C}_R$. Hence, 
$I^C_R= \ideal{\Pi^C}_R$; and so 
$ \ideal{\Pi^C}_R$ is a completely prime ideal that is generated by $\Pi^C$ as a left ideal of $\oqmmnr$. 
In the previous theorem, we could have chosen to use right ideals rather than 
left ideals; so  $ \ideal{\Pi^C}_R$ is also generated by $\Pi^C$ 
as a right ideal of $\oqmmnr$. 

\end{proof}



\section{$\ch$-primes in partition subalgebras: generation}\label{Gens_for_H_primes_in_part_subalgs}

Let $\f$ be a field containing a non root of unity $q$ and let $R$ be any subring of $\f$ containing $q$ and $q^{-1}$. 

Let $\lambda = \{\lambda_1\geq \lambda_2\geq\dots\geq\lambda_m\}$ ($\lambda_1\leq n$) be a partition with associated Young diagram $Y_\lambda$ and consider the partition subalgebras $\ylr$ and $\ylf$ of $\qmatr$ and $\oqmmnf$ respectively. The partition subalgebra $\ylf$ can be presented as a QNA extension in many ways. In fact, it is straightforward to check that a QNA presentation is associated with any ordering of the variables $x_{ij}$ which satisfies the following property: suppose that $i<j$ and $k<l$, then $x_{jl}$ should be later in the ordering than 
 $x_{ik}, x_{il}$ and $x_{jk}$. A specific such ordering would be the lexicographic ordering; however, for some partition subalgebras, we also need another ordering which we now describe. 

Suppose that $\ylf$ is a proper partition subalgebra of $\oqmmnf$ and notice that this forces the partition $\lambda=(\lambda_1,\ldots,\lambda_m)$ to satisfy $\lambda_m<n$. 
Let us choose the QNA presentation of $\ylf$ with the variables $x_{ij}$ ordered lexicographically.

Choose $u$ to be minimal such that $\lambda_u<n$, set $v=\lambda_u+1$, and consider the partition $\lambda':=(\lambda_1,\ldots,\lambda_{u-1},v,\lambda_{u+1},\ldots,\lambda_m)$. Set $z=x_{uv}\in \yldashf$.
The following picture illustrates the idea (for $m=n=5$).
\begin{center}
\begin{tikzpicture}[xscale=1, yscale=1]




\draw[color=gray] (0,4) rectangle (1,5);            
\draw[color=gray] (1,4) rectangle (2,5);            
\draw[color=gray] (2,4) rectangle (3,5);            
\draw[color=gray] (3,4) rectangle (4,5);            
\draw[color=gray] (4,4) rectangle (5,5);            

\draw[color=gray] (0,3) rectangle (1,4);               
\draw[color=gray] (1,3) rectangle (2,4);               
\draw[color=gray] (2,3) rectangle (3,4);               
\draw[color=gray] (3,3) rectangle (4,4);               
\draw[color=gray] (4,3) rectangle (5,4);               

\draw[color=gray] (0,2) rectangle (1,3);            
\draw[color=gray] (1,2) rectangle (2,3);            
\draw[color=gray] (2,2) rectangle (3,3);            
\draw[color=gray] (3,2) rectangle (4,3);               
\draw[color=gray] (4,2) rectangle (5,3);               

\draw[color=gray] (0,1) rectangle (1,2);            
\draw[color=gray] (1,1) rectangle (2,2);            
\draw[color=gray] (2,1) rectangle (3,2);            

\draw[color=gray] (0,0) rectangle (1,1);            
\draw[color=gray] (1,0) rectangle (2,1);            

\draw[thick,dotted]  (3.02,1.01) -- (4,1.01);
\draw[thick,dotted]  (4,2) -- (4,1);

\node at (3.5, 1.5) {\Large{$z$}}; 

\end{tikzpicture}
\end{center}

We obtain a QNA presentation for $\yldashf$ by using the variables for 
$\ylf$ in the (lexicographical) order already chosen, together with  $z=x_{uv}$ as the final variable in the ordering. More precisely:
\begin{lemma}\label{Cauchon Ext}
There is an automorphism $\sigma$ of $\ylf$ and a left $\sigma$-derivation $\delta$ of $\ylf$ such that 
\[
\piecewise{\sigma(x_{a,b})}{q^{-1}x_{a,b}}{\tx{if }a=u\tx{ or }b=v;}{x_{a,b}}{\tx{otherwise,}}
\]
\[
\piecewise{\delta(x_{a,b})}{(q^{-1}-q)x_{a,v}x_{u,b}}{\tx{if }a<u\tx{ and }b<v;}{0}{\tx{otherwise,}}
\]
and there is a QNA presentation for $\yldashf$ given by
\begin{equation}\label{obv}
\yldashf=\ylf[z; \sigma, \delta].
\end{equation}
\end{lemma}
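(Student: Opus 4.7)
The plan is to verify the commutation relations $zx_{a,b}=\sigma(x_{a,b})z+\delta(x_{a,b})$ on generators by direct case analysis, then extend $\sigma$ to an automorphism of $\ylf$ by recognising it as a restriction of the torus action, extend $\delta$ to a left $\sigma$-derivation via the standard inner-derivation trick, and finally compare PBW bases to identify $\ylf[z;\sigma,\delta]$ with $\yldashf$.

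First I would fix $(a,b)\in Y_{\lambda}$ and, observing that $(u,v)\notin Y_{\lambda}$ because $v=\lambda_u+1>\lambda_u$, distribute into cases based on the relative position of $(a,b)$ and $(u,v)$. If $a=u$ then necessarily $b<v$ and the first defining relation of $\oqmmnf$ gives $zx_{u,b}=q^{-1}x_{u,b}z$. If $b=v$ then necessarily $a<u$ (because the minimality of $u$ forces $\lambda_a=n\geq v$) and the second defining relation yields $zx_{a,v}=q^{-1}x_{a,v}z$. If $a<u$ and $b<v$ the ``nasty relation'' yields $zx_{a,b}=x_{a,b}z+(q^{-1}-q)x_{a,v}x_{u,b}$, and note that both $x_{a,v}$ (since $\lambda_a=n\geq v$) and $x_{u,b}$ (since $b<v=\lambda_u+1$) lie in $\ylf$, so the right-hand side is a genuine element of $\ylf$. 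The remaining two cases ($a>u,\,b<v$ and $a<u,\,b>v$) are handled by the third defining relation and yield $zx_{a,b}=x_{a,b}z$; the case $a>u,\,b>v$ is impossible since $\lambda_a\leq\lambda_u<v$. In each situation the answer matches the formulas in the statement.

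Next I would extend $\sigma$ to an automorphism of $\ylf$. Consider the torus element $h\in\ch=(\f^{*})^{m+n}$ with $\alpha_u=\beta_v=q^{-1}$ and all other coordinates equal to $1$. Since $(u,v)\notin Y_{\lambda}$, for every $(a,b)\in Y_{\lambda}$ at most one of the conditions $a=u$, $b=v$ holds, so $h\cdot x_{a,b}$ equals $q^{-1}x_{a,b}$ precisely when $a=u$ or $b=v$, and $x_{a,b}$ otherwise. Thus the restriction of the action of $h$ to $\ylf$ is an $\f$-algebra automorphism agreeing with $\sigma$ on generators, giving the desired $\sigma\in\mathrm{Aut}(\ylf)$.

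For $\delta$, I would define it through the inner-derivation formula $\delta(y):=zy-\sigma(y)z$ inside $\yldashf$. This is immediately an $\f$-linear left $\sigma$-derivation of $\ylf$ into $\yldashf$; the issue is showing $\delta(\ylf)\subseteq\ylf$. Because $\delta$ satisfies $\delta(yy')=\sigma(y)\delta(y')+\delta(y)y'$, it suffices to verify this on the generators $x_{a,b}$, and Step~1 already does exactly this: $\delta(x_{a,b})$ is either $0$ or $(q^{-1}-q)x_{a,v}x_{u,b}$, both of which lie in $\ylf$. Thus $\delta$ restricts to a left $\sigma$-derivation of $\ylf$ whose values on generators are as stated.

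Finally, the commutation relations provide a surjective $\f$-algebra homomorphism $\Phi:\ylf[z;\sigma,\delta]\twoheadrightarrow\yldashf$ sending the indeterminate $z$ to $x_{u,v}$. To check injectivity, I would compare PBW bases: $\yldashf$, viewed as the lexicographically ordered iterated Ore extension \eqref{partlambda as Ore ext}, has the PBW basis $\prod_{(i,j)\in Y_{\lambda'}}x_{i,j}^{n_{i,j}}$; grouping the factor $x_{u,v}^{n_{u,v}}=z^{n_{u,v}}$ at the end shows that monomials of the form $w\,z^{k}$, where $w$ runs over the standard PBW basis of $\ylf$ and $k\geq 0$, form a basis of $\yldashf$ as well. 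These are precisely the images under $\Phi$ of the standard PBW basis of the Ore extension $\ylf[z;\sigma,\delta]$, so $\Phi$ is an isomorphism. The main technical obstacle is verifying that $\delta$ lands in $\ylf$; once the case analysis is completed this reduces to the observation that the squares $(a,v)$ and $(u,b)$ genuinely sit inside $Y_{\lambda}$ whenever $a<u$ and $b<v$, which follows from the minimality of $u$.
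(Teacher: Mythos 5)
Your proposal is correct in substance and in fact supplies a proof that the paper omits: the lemma is stated there without proof, the authors implicitly appealing to their earlier remark that any ordering of the variables in which $x_{j,l}$ comes after $x_{i,k}$, $x_{i,l}$ and $x_{j,k}$ (for $i<j$, $k<l$) yields a QNA presentation; placing $z=x_{u,v}$ last is admissible because no box of $Y_{\lambda'}$ other than $(u,v)$ itself lies weakly south-east of $(u,v)$. Your more hands-on route works: the case analysis of $zx_{a,b}$ is exhaustive and uses the correct relations (only note that the reason $b=v$ forces $a<u$ is that $(a,v)\in Y_{\lambda}$ requires $\lambda_a\geq v>\lambda_u$, which fails for $a\geq u$; the minimality of $u$ is what you need, as you indeed use it, to see that $x_{a,v}\in\ylf$ in the nasty case); defining $\delta(y)=zy-\sigma(y)z$ inside $\yldashf$ neatly avoids any well-definedness issue; and realising $\sigma$ as the restriction of the torus element with $\alpha_u=\beta_v=q^{-1}$ is exactly right, since $(u,v)\notin Y_\lambda$ guarantees no generator picks up $q^{-2}$. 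In the final step, rewriting the lex PBW monomials of $\yldashf$ with $x_{u,v}^{k}$ at the end uses that $z$ commutes on the nose with every $x_{i,j}$ with $i>u$; this is precisely your case $a>u$, $b<v$, so cite it there, after which $\{wz^{k}\}$ is literally the lex PBW basis of $\yldashf$ and injectivity of $\Phi$ follows as you claim.

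What is missing is the verification that the resulting presentation is a \emph{QNA} presentation, which is part of the statement: you establish $\yldashf=\ylf[z;\sigma,\delta]$ as an Ore extension but do not check conditions (ii) and (iii) of Definition~\ref{defCGL} for the new last variable. Both are quick. For (ii), $\delta$ annihilates every generator of the form $x_{u,b}$ or $x_{a,v}$, so $\delta^{2}$ kills all generators of $\ylf$; combined with $\sigma\delta=q^{-2}\delta\sigma$ (checked on generators, hence everywhere, since both sides are left $\sigma$-derivations) the twisted Leibniz rule gives local nilpotency of $\delta$. For (iii), your torus element $h$ restricts to $\sigma$ on $\ylf$ and satisfies $h\cdot z=q^{-2}z$ with $q^{-2}$ not a root of unity; the conditions for the earlier variables are inherited from the lexicographic QNA presentation of $\ylf$. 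Adding these few lines completes the proof of the lemma as stated.
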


We shall later rely on the following proposition, which essentially says that this nonstandard QNA expression for $\yldashf$ does not affect its Cauchon diagrams (Cauchon diagrams in the sense of Section \ref{section-Cauchon diagrams for a QNA}). First we introduce some new notation. Let $G$ be any division ring and let $\{y_{i,j}\}_{(i,j)\in Y}$ be a collection of elements of $G$ indexed by the boxes of a Young diagram $Y$. For any $(a,b)\in Y$, define the collection $\{y_{i,j}^{\ang{a,b}}\}_{(i,j)\in Y}$ of elements of $G$ by:
\begin{equation}\label{formal}
\piecewise{y_{i,j}^{\ang{a,b}}}{y_{i,j}-y_{i,b}y_{a,b}^{-1}y_{a,j}}{i<a,\ j<b, y_{a,b}\neq 0}{y_{i,j}}{\tx{otherwise.}}
\end{equation}
Set $Y:=Y_{\lambda'}$ (and assume for ease of notation that this Young diagram is not just a column, so that $(1,1)^+=(1,2)$), $G:=\yldashf / J$ for some $\ch$-prime ideal $J$, and $y_{i,j}:=x_{i,j}+J\in G$. Then by \cite[Proposition 5.4.2]{c1}, 
\begin{itemize}
\item with respect to the usual QNA structure of $\yldashf$, we have $x^{(1,2)}_{i,j}+J^{(1,2)}=\left({{{{({y_{i,j}^{\ang{m,\lambda_m}}})}^\cdot}^\cdot}^\cdot}\right)^{\ang{1,2}}$ for all $(i,j)\in Y$, where $\ang{m,\lambda_m}\to \ang{1,2}$ proceeds backwards along the boxes of the Young diagram $Y=Y_{\lambda'}$;
\item with respect to the alternative QNA structure of $\yldashf$ (see \eqref{obv}), we have $x_{i,j}^{(1,2)}+J^{(1,2)}=
{\left({{{{(y_{i,j}^{\ang{u,v}})}^{\ang{m,\lambda_m}}}^\cdot}^\cdot}^\cdot\right)}^{\ang{1,2}}$ for all $(i,j)\in Y$, where $\ang{m,\lambda_m}\to \ang{1,2}$ 
proceeds backwards along the boxes of the Young diagram $Y_\lambda$. 
\end{itemize}
So if we can show that 
$\left({{{{({y_{i,j}^{\ang{m,\lambda_m}}})}^\cdot}^\cdot}^\cdot}\right)^{\ang{1,2}}
=
{\left({{{{(y_{i,j}^{\ang{u,v}})}^{\ang{m,\lambda_m}}}^\cdot}^\cdot}^\cdot\right)}^{\ang{1,2}}$ for all $(i,j)\in Y$, 
then for both QNA expressions, the Cauchon diagrams for $J$ (in the sense of Section \ref{section-Cauchon diagrams for a QNA}; that is, those $(i,j)$ such that $x_{i,j}^{(1,2)}\in J^{(1,2)}$) must coincide; this is the purpose of the following lemma.  
\begin{lemma}\label{formal dda in general division rings}
Let $G$ be any division ring and let $\{y_{i,j}\}_{(i,j)\in Y}$ be a collection of elements of $G$ indexed by the boxes of a Young diagram $Y$. 

If $(a',b')$ is strictly south-west of $(a,b)$ in $Y$ (that is, if $a'>a$ and $b'<b$), then $\{y_{i,j}^{\ang{a,b}}\}_{(i,j)\in Y}$ and $\{y_{i,j}^{\ang{a',b'}}\}_{(i,j)\in Y}$ -- which are themselves collection of elements of $G$ indexed by the boxes of the Young diagram $Y$ -- satisfy
\[
\left({y_{i,j}^{\ang{a,b}}}\right)^{\ang{a',b'}}=
\left({y_{i,j}^{\ang{a',b'}}}\right)^{\ang{a,b}}\ \tx{ for all }(i,j)\in Y. 
\]
\begin{proof}
Notice that $y_{a,b}^{\ang{a',b'}}=y_{a,b}$ and $y_{a',b'}^{\ang{a,b}}=y_{a',b'}$. The result is clear if $i\geq a$, $j\geq b'$, $y_{a,b}=y_{a,b}^{\ang{a',b'}}= 0$, or $y_{a',b'}=y_{a',b'}^{\ang{a,b}} = 0$, so let us assume that $i<a$, $j<b'$, $y_{a,b}=y_{a,b}^{\ang{a',b'}}\neq 0$, and $y_{a',b'}=y_{a',b'}^{\ang{a,b}}\neq 0$.
We have  
\begin{align*}
\left( y_{i,j}^{\ang{a',b'}} \right)^{\ang{a,b}} 
&= y^{\ang{a',b'}}_{i,j}-y_{i,b}^{\ang{a',b'}}\left(y_{a,b}^{\ang{a',b'}}\right)^{-1}y_{a,j}^{\ang{a',b'}} \\
&=  y_{i,j}-y_{i,b'}y_{a',b'}^{-1}y_{a',j}-y_{i,b}y_{a,b}^{-1}\left(y_{a,j}-y_{a,b'}y_{a',b'}^{-1}y_{a',j}\right)
\end{align*}
and 
\begin{align*}
\left( y_{i,j}^{\ang{a,b}} \right)^{\ang{a',b'}} 
&= y^{\ang{a,b}}_{i,j}-y_{i,b'}^{\ang{a,b}}\left(y_{a',b'}^{\ang{a,b}}\right)^{-1}y_{a',j}^{\ang{a,b}} \\
&= y_{i,j}-y_{i,b}y_{a,b}^{-1}y_{a,j}-\left(  
y_{i,b'}-y_{i,b}y_{a,b}^{-1}y_{a,b'}
\right)y_{a',b'}^{-1}y_{a',j}.
\end{align*}
It is trivial to check that the above expressions for $\left( y_{i,j}^{\ang{a',b'}} \right)^{\ang{a,b}}$ and $\left( y_{i,j}^{\ang{a,b}} \right)^{\ang{a',b'}}$ coincide. 
\end{proof}
\end{lemma}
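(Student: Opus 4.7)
My plan is to reduce the claim to a single direct computation by first eliminating all the degenerate cases, and then expanding both sides using the definition \eqref{formal}.

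The first observation is that the two ``pivots'' $y_{a,b}$ and $y_{a',b'}$ are each unaffected by the other operation: applying $\ang{a',b'}$ to $y_{a,b}$ requires $a<a'$ \emph{and} $b<b'$, but $b>b'$ by hypothesis, so $y_{a,b}^{\ang{a',b'}}=y_{a,b}$; symmetrically, applying $\ang{a,b}$ to $y_{a',b'}$ requires $a'<a$, which fails, so $y_{a',b'}^{\ang{a,b}}=y_{a',b'}$. This immediately disposes of the cases $y_{a,b}=0$ and $y_{a',b'}=0$: if, say, $y_{a,b}=0$, then the outer operation $\ang{a,b}$ is trivial on both sides, and the inner operation $\ang{a',b'}$ is then applied to the same family, giving equality. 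Similarly, the outer operation is also trivial on any index $(i,j)$ with $i\geq a'$ or $j\geq b$, and by south-west/north-east symmetry analogous vanishings dispatch the cases $i\geq a$ or $j\geq b'$.

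This reduces the problem to the genuinely nontrivial case $i<a$, $j<b'$, $y_{a,b}\neq 0$, $y_{a',b'}\neq 0$. Here the row indices used on the right-hand side of \eqref{formal} are always among $\{i,a,a'\}$ and the column indices among $\{j,b',b\}$. I would now tabulate how each of $y_{i,b}$, $y_{a,j}$, $y_{i,b'}$, $y_{a',j}$ transforms: $y_{i,b}^{\ang{a',b'}}=y_{i,b}$ (since $b>b'$), $y_{a',j}^{\ang{a,b}}=y_{a',j}$ (since $a'>a$), while the remaining two pick up genuine correction terms $y_{a,j}^{\ang{a',b'}}=y_{a,j}-y_{a,b'}y_{a',b'}^{-1}y_{a',j}$ and $y_{i,b'}^{\ang{a,b}}=y_{i,b'}-y_{i,b}y_{a,b}^{-1}y_{a,b'}$.

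Substituting these into the two composite expressions yields, in both cases, the same three-term correction of $y_{i,j}$: one subtracted piece $y_{i,b}y_{a,b}^{-1}y_{a,j}$ (the $\ang{a,b}$-correction), one subtracted piece $y_{i,b'}y_{a',b'}^{-1}y_{a',j}$ (the $\ang{a',b'}$-correction), and one common mixed term $y_{i,b}y_{a,b}^{-1}y_{a,b'}y_{a',b'}^{-1}y_{a',j}$ which appears with a plus sign from whichever of the two correction terms happens to be on the outside. So the two sides coincide term by term.

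The main obstacle is purely bookkeeping: making sure that none of the subsidiary applications of \eqref{formal} accidentally activate a correction (in particular $y_{i,b}^{\ang{a',b'}}$ and $y_{a',j}^{\ang{a,b}}$), since otherwise the computation would produce extra asymmetric terms. Once one checks these monotonicities carefully using $a<a'$ and $b'<b$, the matching of terms is automatic and requires no use of any relations in $G$ beyond the division-ring axioms.
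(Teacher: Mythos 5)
Your proposal is correct and takes essentially the same route as the paper's proof: eliminate the degenerate cases, note that the pivots $y_{a,b}$, $y_{a',b'}$ (and the entries $y_{i,b}$, $y_{a',j}$) are unchanged by the other operation while $y_{a,j}^{\ang{a',b'}}$ and $y_{i,b'}^{\ang{a,b}}$ pick up the genuine corrections, and then expand both composites and match the terms, including the common mixed term $y_{i,b}y_{a,b}^{-1}y_{a,b'}y_{a',b'}^{-1}y_{a',j}$. Your dispatch of the boundary cases $i\geq a$ or $j\geq b'$ is only sketched (``by symmetry''), but this is at the same level of detail as the paper's ``the result is clear if\dots'' and the remaining checks are immediate.
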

\begin{remark}
In adapting the above lemma to $\yldashf$, one should think of $(a,b)$ as $(u,v)$ and $(a',b')$ as any box occurring after $(u,v)$ in $Y_{\lambda'}$. 
\end{remark}
The following proposition is an immediate consequence of Lemma \ref{formal dda in general division rings} and the discussion preceding it.
\begin{proposition}\label{same Cauchon sets}
Let $J$ be any $\ch$-prime ideal of $\yldashf$. Then the Cauchon diagram for $J$ is the same whether one uses the standard lexicographical QNA presentation for $\yldashf$ or the QNA presentation for $\yldashf$ given in \eqref{obv}, where $z=x_{u,v}$ appears at the end.  As a consequence, the set of Cauchon diagrams in both cases coincide with the set of Cauchon-Le diagrams on $Y_{\lambda'}$. 
\end{proposition}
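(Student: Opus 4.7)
The plan is to reduce the proposition to a bookkeeping exercise using the formal identity of Lemma~\ref{formal dda in general division rings}. As noted in the discussion preceding the proposition, for either QNA presentation the image $x_{i,j}^{(1,2)}+J^{(1,2)}$ in the division ring $G=\mathrm{Frac}(\yldashf/J)$ can be written as an iterated formal expression built from the elements $y_{i,j}:=x_{i,j}+J$, applying the $\langle\cdot\rangle$-operation \eqref{formal} backwards along the boxes of $Y_{\lambda'}$. In the standard lexicographic presentation of $\yldashf$, the operations $\langle a,b\rangle$ are applied in reverse lex order on all of $Y_{\lambda'}$; in the alternative presentation \eqref{obv}, the operation $\langle u,v\rangle$ is applied first (since $z=x_{u,v}$ is the last variable adjoined), and then the remaining $\langle a',b'\rangle$ are applied in reverse lex order on $Y_\lambda$.

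So the entire proposition reduces to showing that these two iterated expressions agree on the nose, for every $(i,j)\in Y_{\lambda'}$. The difference between the two orderings is simply the position of $\langle u,v\rangle$: in the standard ordering it is applied at its lex position (i.e.\ after all $\langle a',b'\rangle$ with $(a',b')>_{\mathrm{lex}}(u,v)$), while in the alternative ordering it is applied first. The boxes $(a',b')\in Y_{\lambda'}$ with $(a',b')>_{\mathrm{lex}}(u,v)$ are precisely those boxes of $Y_\lambda$ with $a'>u$ (any box in row $u$ strictly east of $(u,v)$ lies outside $Y_{\lambda'}$ by the choice of $u,v$). For such a box we have $a'>u$ and $b'\leq \lambda_{a'}\leq \lambda_u=v-1<v$, so $(a',b')$ lies strictly south-west of $(u,v)$ in the sense of Lemma~\ref{formal dda in general division rings}.

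Therefore we may apply Lemma~\ref{formal dda in general division rings} repeatedly to swap the $\langle u,v\rangle$-operation past each $\langle a',b'\rangle$ with $a'>u$, one at a time. After finitely many such swaps, the iterated expression obtained from the alternative presentation is transformed into the iterated expression obtained from the standard lexicographic presentation. Hence the two expressions yield the same element $x_{i,j}^{(1,2)}+J^{(1,2)}$ of $G$ for every $(i,j)\in Y_{\lambda'}$, and in particular $x_{i,j}^{(1,2)}\in J^{(1,2)}$ under the standard QNA structure if and only if the analogous generator lies in the ideal $J^{(1,2)}$ under the alternative QNA structure. Equivalently, the Cauchon diagram attached to $J$ (as a subset of $Y_{\lambda'}$) is the same for both QNA presentations.

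The final statement is then immediate: by \cite[Theorem~3.5]{llr-grass} the set of Cauchon diagrams arising from $\ch$-primes of $\yldashf$ via the standard lexicographic QNA structure is precisely the set of Cauchon-Le diagrams on $Y_{\lambda'}$, and by the previous paragraph the same is true of the alternative QNA structure. The only thing requiring care is to verify that the $\ch$-action, which we used to invoke the deleting-derivations formalism and to appeal to \cite[Proposition~5.4.2]{c1}, is indeed compatible with the alternative presentation \eqref{obv}; this is straightforward since the torus acts diagonally on the generators, so the $x_{ij}$ (including $z=x_{uv}$) are $\ch$-eigenvectors regardless of the ordering, and the hypotheses of Definition~\ref{defCGL} hold just as in the lex case. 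I expect this last compatibility check and the careful tracking of indices in the iteration to be the only genuinely technical points; the conceptual heart of the argument is the repeated application of Lemma~\ref{formal dda in general division rings}.
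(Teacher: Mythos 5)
Your proposal is correct and follows essentially the same route as the paper: the paper declares the proposition an immediate consequence of Lemma~\ref{formal dda in general division rings} together with the preceding discussion, and your argument simply makes explicit the repeated swapping of the $\ang{u,v}$-operation past the operations attached to the boxes $(a',b')$ with $a'>u$ (which, as you correctly check, are exactly the lex-larger boxes of $Y_{\lambda'}$ and are strictly south-west of $(u,v)$), followed by the appeal to \cite[Theorem 3.5]{llr-grass} for the final statement. The compatibility of the alternative presentation with the $\ch$-action that you flag is already settled by Lemma~\ref{Cauchon Ext}, so no further work is needed there.
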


Both the automorphism $\sigma$ and the left $\sigma$-derivation $\delta$ of $\ylf$ clearly restrict to $\ylr$; we shall abuse notation slightly and refer to these restrictions also as $\sigma$ and $\delta$. The equality \eqref{obv} restricts to give
\[
\yldashr=\ylr[z; \sigma, \delta].
\]
In the rest of this section, in order to save space in displays, we will set 
\[
A_\lambda:= \ylr, A_{\lambda'}:= \yldashr,\ B_\lambda:= \ylf, \tx{ and } B_{\lambda'}:= 
\yldashf. 
\]
In particular, 
\[
B_{\lambda'}=B_\lambda[z; \sigma, \delta] \ \tx{ and } \ A_{\lambda'}=A_\lambda[z; \sigma, \delta].
\]
\begin{remark}
Notice that if we take $R=\f$, then $A_\lambda=B_\lambda$ and $A_{\lambda'}=B_{\lambda'}$. 
\end{remark}
By \cite[Lemme 2.1]{c1}, the powers of $z$ form a left and a right Ore set in $B_{\lambda'}$; inspection of the proof shows that although not all of the assumptions of \cite[Lemme 2.1]{c1} hold for $A_{\lambda'}$, the proof is valid for showing that the 
powers of $z$ form a left and a right Ore set in $A_{\lambda'}$. As an immediate consequence of \cite[Proposition 2.2, Proposition 2.3(2)]{c1}, there is an embedding 
\[
\theta:B_\lambda \hookrightarrow B_{\lambda'}[z^{-1}]
\]
such that for all $(i,j)\in Y_\lambda$, 
\begin{equation}\label{theta formula}
\piecewise{\theta(x_{i,j})}{x_{i,j}-x_{i,v}z^{-1}x_{u,j}}{\tx{if }i<u\tx{ and }j<v;}{x_{i,j}}{\tx{otherwise.}}
\end{equation}

\begin{notation}
For all $(i,j)\in Y_\lambda$, set $y_{i,j}:= \theta(x_{i,j})$. 
\end{notation}
From now on, we will identify $B_\lambda$ with its image under $\theta$ in $B_{\lambda'}[z^{-1}]$. By \cite[Propositions 2.3, 2.4]{c1}, we have
\begin{equation}\label{identifying over F}
B_\lambda[z^{\pm1}; \sigma] = B_{\lambda'}[z^{-1}].
\end{equation}
The embedding $\theta:B_\lambda \hookrightarrow B_{\lambda'}[z^{-1}]$ clearly restricts to an embedding $\theta: A_\lambda\hookrightarrow A_{\lambda'}[z^{-1}]$; from now on, we will identify $A_\lambda$ with its image under $\theta$ in $A_{\lambda'}[z^{-1}]$. The equality \eqref{identifying over F} restricts to 
\begin{equation}\label{identifying over R}
A_\lambda[z^{\pm1}; \sigma] = A_{\lambda'}[z^{-1}].
\end{equation}

From now on, we shall work in $A_{\lambda'}[z^{-1}]$ and $B_{\lambda'}[z^{-1}]$. This means in particular that \begin{itemize}
\item $x_{i,j}$ (with $(i,j)\in Y_{\lambda'}$) shall always denote a standard generator of $A_{\lambda'}$ and $B_{\lambda'}$;
\item $A_\lambda$ and $B_\lambda$ denote respectively the subalgebras of $A_{\lambda'}[z^{-1}]$ and $B_{\lambda'}[z^{-1}]$ generated by $\{y_{i,j}\ |\ (i,j)\in Y_\lambda\}$.
\end{itemize}

The following technical lemma will be useful. 

\begin{lemma}\label{restricting plussing}
Let $J,J'$ be ideals of $B_\lambda,B_{\lambda'}$ respectively and consider the ideals 
$I=J\cap A_\lambda,I'=J'\cap A_{\lambda'}$ of $A_\lambda,A_{\lambda'}$ respectively. 
\begin{enumerate}[(i)]
\item If $z\notin J'$, then we have $J'[z^{-1}]\cap A_{\lambda'}[z^{-1}]=I'[z^{-1}]$.
\item If $z\notin J'$ and $\bigoplus_{i\in \Z}Jz^i = J'[z^{-1}]$, then  
$\bigoplus_{i\in \Z}Iz^i = I'[z^{-1}]$.
\end{enumerate}
\begin{proof}
\begin{enumerate}[(i)]
\item We claim that if $p\in J'$, $m\geq 0$ are such that there exist $s\in A_{\lambda'}$, $n\geq 0$ with $pz^{-m}=sz^{-n}$, then either $p$ or $s$ belongs to $I'$. Indeed
\begin{itemize}
\item If $m=n$, then obviously $p=s\in J'\cap A_{\lambda'}=I'$. 
\item If $n>m$, then $s=pz^{n-m}$, which belongs to $J'$ because $p$ does; hence $s\in J'\cap A_{\lambda'}=I'$. 
\item If $m>n$, then $p=sz^{m-n}$, which belongs to $A_{\lambda'}$ because $s$ and $z$ do; hence $p\in J'\cap A_{\lambda'}=I'$. 
\end{itemize}
From this claim the required result follows easily.
\item  By part (i), we have 
\begin{equation}\label{intersecting}
\bigoplus_{i\in \Z}Jz^i\cap A_\lambda[z^{\pm 1};\sigma] 
= 
J'[z^{-1}]\cap A_{\lambda'}[z^{-1}] = I'[z^{-1}].
\end{equation}
By the independence of the powers of $z$ over $B_\lambda\supseteq A_\lambda$, we get 
\begin{equation}\label{first equality required}
\bigoplus_{i\in \Z}Jz^i\cap A_\lambda[z^{\pm 1};\sigma]
=
\bigoplus_{i\in \Z} (J\cap A_\lambda)z^i=\bigoplus_{i\in \Z}Iz^i.
\end{equation}
Now \eqref{intersecting} and \eqref{first equality required} give $\bigoplus_{i\in \Z}Iz^i = I'[z^{-1}]$, as required.
\end{enumerate}

\end{proof}
\end{lemma}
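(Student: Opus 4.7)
The plan is to prove part (i) by a direct Ore-set manipulation, and then to deduce part (ii) by intersecting with the $\Z$-grading coming from the skew Laurent presentation $A_{\lambda'}[z^{-1}] = A_\lambda[z^{\pm 1}; \sigma]$ recorded in \eqref{identifying over R}.

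For part (i), the inclusion $I'[z^{-1}] \subseteq J'[z^{-1}] \cap A_{\lambda'}[z^{-1}]$ is immediate, since $I' \subseteq J'$ and $I' \subseteq A_{\lambda'}$. For the reverse inclusion, I would take any $a \in J'[z^{-1}] \cap A_{\lambda'}[z^{-1}]$ and, using that the powers of $z$ form a right Ore set in both $B_{\lambda'}$ and $A_{\lambda'}$, write $a = p z^{-m}$ with $p \in J'$, $m \geq 0$, and simultaneously $a = s z^{-n}$ with $s \in A_{\lambda'}$, $n \geq 0$. Equating these expressions in $B_{\lambda'}[z^{-1}]$ and multiplying on the right by a suitable power of $z$ yields either $s = p z^{n-m}$ (when $n \geq m$), forcing $s \in J' \cap A_{\lambda'} = I'$, or $p = s z^{m-n}$ (when $m > n$), which, since $z \in A_{\lambda'}$, forces $p \in J' \cap A_{\lambda'} = I'$. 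In both cases $a \in I'[z^{-1}]$, giving the desired equality.

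For part (ii), my plan is to feed part (i) into the $\Z$-grading on the skew Laurent ring. Part (i) yields
\[
J'[z^{-1}] \cap A_{\lambda'}[z^{-1}] = I'[z^{-1}],
\]
and substituting the hypothesis $\bigoplus_{i \in \Z} J z^i = J'[z^{-1}]$ together with the identification $A_{\lambda'}[z^{-1}] = A_\lambda[z^{\pm 1}; \sigma]$ of \eqref{identifying over R} produces
\[
\Bigl(\bigoplus_{i \in \Z} J z^i\Bigr) \cap A_\lambda[z^{\pm 1}; \sigma] = I'[z^{-1}].
\]
Since $B_\lambda[z^{\pm 1}; \sigma]$ is a skew Laurent extension, every one of its elements has a unique expansion $\sum_i b_i z^i$ with $b_i \in B_\lambda$. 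Consequently an element $\sum_i b_i z^i$ with coefficients $b_i \in J$ lies in $A_\lambda[z^{\pm 1}; \sigma]$ if and only if each $b_i$ lies in $A_\lambda$, i.e., in $J \cap A_\lambda = I$. This identifies the left-hand side with $\bigoplus_{i \in \Z} I z^i$, completing the argument.

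The main technical subtlety I anticipate is verifying that the Ore-localization manipulations in part (i) respect the subring $A_{\lambda'} \subseteq B_{\lambda'}$; this works cleanly because $z = x_{u,v}$ already lies in $A_{\lambda'}$, so right-multiplication by $z^{\pm k}$ never takes one out of $A_{\lambda'}$. Once part (i) is in place, part (ii) reduces to the uniqueness of the skew Laurent expansion and is essentially a one-line consequence. Notably the hypothesis $z \notin J'$ is not actually invoked in part (i); it is presumably included to make the statement of part (ii) nondegenerate.
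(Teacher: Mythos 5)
Your proposal is correct and follows essentially the same route as the paper: part (i) is the same Ore-type manipulation (writing an element of the intersection both as $pz^{-m}$ with $p\in J'$ and as $sz^{-n}$ with $s\in A_{\lambda'}$, then comparing exponents to land in $I'=J'\cap A_{\lambda'}$), and part (ii) is the same combination of part (i), the hypothesis $\bigoplus_i Jz^i=J'[z^{-1}]$, and the independence of the powers of $z$ over $B_\lambda\supseteq A_\lambda$. Your side remark that $z\notin J'$ is not actually used in part (i) is accurate; the paper's argument does not invoke it there either.
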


\begin{proposition}\label{comb}
Fix any ideal $I$ of $A_\lambda$ and let $I'$ be the ideal $\bigoplus_{i\in \Z}Iz^i\cap A_{\lambda'}$ of $A_{\lambda'}$. Every pseudo quantum minor of $A_{\lambda'}$ belonging to $I'$ can be expressed as an $R[z^{\pm 1}]$-linear combination (with coefficients on the right) of pseudo quantum minors of $A_\lambda$ which belong to $I$. \\

\end{proposition}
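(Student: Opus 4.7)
The plan is to exploit the identification $A_{\lambda'}[z^{-1}] = A_\lambda[z^{\pm 1}; \sigma]$ established in \eqref{identifying over R}. Under this identification, every element of $A_{\lambda'}[z^{-1}]$ admits a unique finite expansion $\sum_{i \in \Z} a_i z^i$ with $a_i \in A_\lambda$, and membership of $M$ in $I' = \bigoplus_{i\in\Z} Iz^i \cap A_{\lambda'}$ is equivalent to $a_i \in I$ for every $i$. Accordingly, it suffices to show that whenever $M = [I \mid J]$ is a pseudo quantum minor of $A_{\lambda'}$, each coefficient $a_i$ in its expansion is a scalar multiple, by a unit of $R$, of a pseudo quantum minor of $A_\lambda$ (which I will write $[\cdots]_y$ for emphasis). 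If this is granted, then for $M \in I'$ each such pseudo quantum minor of $A_\lambda$ must automatically lie in $I$, and $M$ can be rewritten as an $R[z^{\pm 1}]$-linear combination of those pseudo quantum minors of $A_\lambda$ with coefficients $\alpha_{\bullet} z^i$ placed on the right.

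The main computational step is to prove an explicit expansion formula: for every pair of index sets $I,J$ of the same size,
\[
[I\mid J] \;=\; \alpha_0 [I\mid J]_y \;+\; \alpha_+ [I\setminus\{u\}\mid J\setminus\{v\}]_y \cdot z \;+\; \alpha_- [I\cup\{u\}\mid J\cup\{v\}]_y \cdot z^{-1},
\]
where the pseudo quantum minors of $A_\lambda$ are formed in the $y$'s with the convention $y_{i,j}=0$ for $(i,j)\notin Y_\lambda$, and where $\alpha_0,\alpha_\pm\in R$ are appropriate units (or zero) according to whether $u\in I$ and $v\in J$ hold. I would split into four cases. When $u\in I$ and $v\in J$, a column Laplace expansion on column $v$ (Corollary \ref{column Laplace}), combined with induction on $|I|$ and the observations $x_{u,v}=z$ and $x_{i,v}=0$ for $i>u$, isolates a single nonzero term with the expected $z$-factor. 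In the two mixed cases ($u\in I,v\notin J$ or $u\notin I,v\in J$), substituting the defining relation $x_{i,j}=y_{i,j}+y_{i,v}z^{-1}y_{u,j}$ (valid only for $i<u, j<v$) into $[I\mid J]$ and expanding shows that the correction terms pair off and cancel via the $q$-commutation relations of the $y$'s lying in the same row $u$ (or column $v$), leaving $[I\mid J]_y$. In the final case $u\notin I,v\notin J$, the same substitution produces correction terms which, after commuting each $z^{-1}$ to the right using the very simple action of $\sigma$ on the $y$'s described by Lemma \ref{Cauchon Ext}, collect into a single copy of $[I\cup\{u\}\mid J\cup\{v\}]_y\cdot z^{-1}$, identified by comparison with the row-$u$ Laplace expansion of this pseudo quantum minor (Lemma \ref{row Laplace}).

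The hardest part of the plan is this final case, which is essentially a quantum analogue of the classical Schur complement identity $\det M = \det D\cdot\det(A-BD^{-1}C)$ applied with pivot $z=x_{u,v}$. Verifying it requires careful tracking of the $q$-factors produced when commuting $z^{-1}$ past each $y_{u,j}$ and each $y_{i,v}$, and matching the resulting double sum term-for-term with the antisymmetric expansion of $[I\cup\{u\}\mid J\cup\{v\}]_y$. Once the expansion formula is established, the proposition follows at once: the uniqueness of $\sum a_i z^i$ in the Ore extension $A_\lambda[z^{\pm 1};\sigma]$ forces each of the at most three nonzero coefficients $a_i$ to coincide with the corresponding $\alpha_\bullet$-multiple of a pseudo quantum minor of $A_\lambda$, and the definition of $I'$ then places each of these pseudo quantum minors into $I$, as required.
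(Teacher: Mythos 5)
Your overall strategy coincides with the paper's: both reduce Proposition \ref{comb} to showing that each pseudo quantum minor $[I\mid J]$ of $A_{\lambda'}$ can be written as $\mu_{-1}\delta_{-1}z^{-1}+\mu_{0}\delta_{0}+\mu_{1}\delta_{1}z$ with $\delta_{\bullet}$ pseudo quantum minors of $A_\lambda$ and $\mu_{\bullet}$ powers of $-q$ (or zero), split into the same four cases according to whether $u\in I$ and $v\in J$, and then conclude from the independence of the powers of $z$ over $A_\lambda$ together with $I'\subseteq\bigoplus_{i}Iz^{i}$. Your three-term expansion formula is exactly what the paper establishes: $[I\mid J]_{A_{\lambda'}}=[I\mid J]_{A_{\lambda}}$ in the two mixed cases, $[I\mid J]_{A_{\lambda'}}=(-q)^{\bullet}\,[I\bs\{u\}\mid J\bs\{v\}]_{A_{\lambda}}\,z$ when $u\in I$ and $v\in J$, and $[I\mid J]_{A_{\lambda'}}=[I\mid J]_{A_{\lambda}}-(-q)^{\bullet}\,[I\sqcup\{u\}\mid J\sqcup\{v\}]_{A_{\lambda}}\,z^{-1}$ when $u\notin I$ and $v\notin J$.

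Where you diverge is in how these four identities are to be proved, and that is where your plan has a genuine gap. You propose (i) in the case $u\in I$, $v\in J$, a Laplace expansion ``on column $v$'', and (ii) in the remaining cases, substituting $x_{i,j}=y_{i,j}+y_{i,v}z^{-1}y_{u,j}$ into the defining sum and cancelling the correction terms by a quantum Schur-complement computation. Neither tool is available as stated in the pseudo-quantum-minor setting: Lemma \ref{row Laplace} and Corollary \ref{column Laplace} only give expansions along the \emph{first or last} row or column, and $v$ is in general an interior column of $J$; moreover, the remark following the definition of pseudo quantum minors warns precisely that general quantum-matrix identities (interior expansions, repeated-row antisymmetry---the kind of cancellation your substitution argument relies on) do not transfer, because with the convention $x_{i,j}=0$ outside $Y_{\lambda'}$ the generators do not form a quantum matrix. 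The paper's proof is built to avoid exactly this: it inducts on the number of indices of $I$ exceeding $u$ plus the number of indices of $J$ exceeding $v$, peeling off the last row or last column using only the two valid expansions (with coefficients on the right), so that the base cases involve genuine quantum minors, for which the four identities are quoted from Cauchon (Lemmes 4.1.2, 4.2.1, 4.2.2 of \cite{cauchon2}) and from \cite[Proposition 3.1.4.3]{launois-thesis}. So your reduction via the unique $z$-expansion in $A_\lambda[z^{\pm 1};\sigma]$ is sound, but the computational core is asserted rather than proved, and the route you sketch would require re-deriving for pseudo quantum minors precisely the identities the paper is careful not to assume; it might be salvageable (the needed row-$u$/column-$v$ $q$-commutations do hold in the partition subalgebra), but as written it does not constitute a proof.
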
 

\begin{proof}
\begin{full} 
Let us set $x_{ij}:=0$ for all $(i,j)\notin Y_{\lambda'}$ and $y_{ij}:=0$ for all $(i,j)\notin Y_\lambda$. Recall that $z=x_{uv}$ and note the following facts which will be used often in this proof:
\begin{itemize}
\item $x_{ij}=y_{ij}$ when $i\geq u$ or $j\geq v$ (except when $(i,j)=(u,v)$)
\item $y_{ij}=0$ when $i\geq u$ and $j\geq v$.
\end{itemize}

Let us use subscripts to indicate the partition subalgebra in which we are taking pseudo quantum minors. Consider a pseudo quantum minor $[i_1\cdots i_l\ |\ j_1\cdots j_l]_{A_{\lambda'}}$ of $A_{\lambda'}$ which belongs to $I'$. We wish to express $[i_1\cdots i_l\ |\ j_1\cdots j_l]_{A_{\lambda'}}$ as an $R[z^{\pm 1}]$-linear combination (with coefficients on the right) of pseudo quantum minors of $A_\lambda$ which belong to $I$. Because the powers of $z$ are independent over $A_\lambda$ and $I'\subseteq \bigoplus_{i\in \Z}Iz^i$, it will be enough to show that there exist pseudo quantum minors $\delta_{-1},\delta_0,\delta_1$ of $A_\lambda$ and $\mu_{-1},\mu_0,\mu_1\in R^\ast\cup \{0\}$ such that
\[
[i_1\cdots i_l\ |\ j_1\cdots j_l]_{A_{\lambda'}}=\mu_{-1}\delta_{-1}z^{-1}+\mu_0\delta_0 +\mu_1\delta_1z.
\]
If $u<i_1$ or $v<j_1$, then $x_{\bullet\bullet}=y_{\bullet\bullet}$ for each entry in the two pseudo quantum minors; so 
\[
[i_1\cdots i_l\ |\ j_1\cdots j_l]_{A_{\lambda'}}=[i_1\cdots i_l\ |\ j_1\cdots j_l]_{A_{\lambda}}.
\] 
Thus we may assume that $u\geq i_1$ and $v\geq j_1$; from here we break the proof down into four distinct and exhaustive cases.\\


\noindent \emph{\textbf{Case 1: $u\in \{i_1,\ldots,i_l\}$ and $v\notin \{j_1,\ldots,j_l\}$.}}
\\ In this case, we claim that 
\begin{equation}\label{one in one out}
[i_1\cdots i_l\ |\ j_1\cdots j_l]_{A_{\lambda'}}=[i_1\cdots i_l\ |\ j_1\cdots j_l]_{A_{\lambda}}.\\
\end{equation} 
We prove this claim by induction on $(\# i's>u)+(\# j's>v)$. Assume that $(\# i's>u)+(\# j's>v)=0$; that is, assume that $u=i_l$ and $v>j_l$. Then the pseudo quantum minors are actually genuine quantum minors and \eqref{one in one out} follows from 
\cite[Lemme 4.2.1]{cauchon2}.
Assume that $(\# i's>u)+(\# j's>v)>0$. The inductive step differs slightly depending on whether or not $i_l=u$:

Assume that $i_l\neq u$. Then $i_l>u$ and we have 
\begin{align*}
[i_1\cdots i_l\ |\ j_1\cdots j_l]_{A_{\lambda'}} 
&=^1\sum_{p=1}^l(-q)^{l-p}[i_1\cdots i_{l-1}\ |\ \widehat{j_p}]_{A_{\lambda'}}x_{i_lj_p} \\
&=^2\sum_{p=1}^l(-q)^{l-p}[i_1\cdots i_{l-1}\ |\ \widehat{j_p}]_{A_{\lambda}}y_{i_lj_p} \\
&=^3[i_1\cdots i_l\ |\ j_1\cdots j_l]_{A_{\lambda}},
\end{align*}
where in ($=^1$) and ($=^3$), we are using $q$-Laplace expansion on the last row with entries on the right (see Lemma \ref{row Laplace} (2)), and ($=^2$) follows from the inductive hypothesis.

On the other hand, assume that $i_l=u$. Then $j_l>v$ and, noting that $x_{uj_l}=y_{uj_l}=0$, we have
\begin{align*}
[i_1\cdots i_l\ |\ j_1\cdots j_l]_{A_{\lambda'}} 
&=^1\sum_{p=1}^{l-1}(-q)^{l-p}[\widehat{i_p}\ |\ j_1\dots j_{l-1}]_{A_{\lambda'}}x_{i_pj_l} \\
&=^2\sum_{p=1}^{l-1}(-q)^{l-p}[\widehat{i_p}\ |\ j_1\dots j_{l-1}]_{A_{\lambda}}y_{i_pj_l}  \\
&=^3[i_1\cdots i_l\ |\ j_1\cdots j_l]_{A_{\lambda}},
\end{align*}
where in ($=^1$) and ($=^3$), we are using $q$-Laplace expansion on the last column with entries on the right (see Corollary \ref{column Laplace} (2)), and ($=^2$) follows from the inductive hypothesis.

This finishes Case 1. \\


\noindent \emph{\textbf{Case 2: $u\notin \{i_1,\ldots,i_l\}$ and $v\in \{j_1,\ldots,j_l\}$.}} 
\\ In this case, we claim that 
\begin{equation}\label{one out one in}
[i_1\cdots i_l\ |\ j_1\cdots j_l]_{A_{\lambda'}}=[i_1\cdots i_l\ |\ j_1\cdots j_l]_{A_{\lambda}}.
\end{equation} 

We prove this claim by induction on $(\# i's>u)+(\# j's>v)$. Assume that $(\# i's>u)+(\# j's>v)=0$; that is, assume that $i_l<u$ and $j_l=v$. Then the pseudo quantum minors are actually genuine quantum minors and \eqref{one out one in} follows from 
\cite[Lemme 4.2.2]{cauchon2}. Assume that $(\# i's>u)+(\# j's>v)>0$. The inductive step differs slightly depending on whether or not 
$j_l=v$.

Assume that $j_l\neq v$. Then $j_l>v$ and we have 
\begin{align*}
[i_1\cdots i_l\ |\ j_1\cdots j_l]_{A_{\lambda'}} 
&=^1\sum_{p=1}^{l}(-q)^{l-p}[\widehat{i_p}\ |\ j_1\dots j_{l-1}]_{A_{\lambda'}}x_{i_pj_l} \\
&=^2\sum_{p=1}^{l}(-q)^{l-p}[\widehat{i_p}\ |\ j_1\dots j_{l-1}]_{A_{\lambda}}y_{i_pj_l}  \\
&=^3[i_1\cdots i_l\ |\ j_1\cdots j_l]_{A_{\lambda}},
\end{align*}
where in ($=^1$) and ($=^3$), we are using $q$-Laplace expansion on the last column with entries on the right (see Corollary \ref{column Laplace} (2)), and ($=^2$) follows from the inductive hypothesis.

On the other hand, assume that $j_l=v$. Then $i_l>u$ and, noting that $x_{i_lv}=y_{i_lv}=0$, we have
\begin{align*}
[i_1\cdots i_l\ |\ j_1\cdots j_l]_{A_{\lambda'}} 
&=^1\sum_{p=1}^{l-1}(-q)^{l-p}[i_1\cdots i_{l-1}\ |\ \widehat{j_p}]_{A_{\lambda'}}x_{i_lj_p} \\
&=^2\sum_{p=1}^{l-1}(-q)^{l-p}[i_1\cdots i_{l-1}\ |\ \widehat{j_p}]_{A_{\lambda}}y_{i_lj_p} \\
&=^3[i_1\cdots i_l\ |\ j_1\cdots j_l]_{A_{\lambda}},
\end{align*}
where in ($=^1$) and ($=^3$), we are using $q$-Laplace expansion on the last row with entries on the right (see Lemma \ref{row Laplace} (2)), and ($=^2$) follows from the inductive hypothesis.

This finishes Case 2.\\



\noindent \emph{\textbf{Case 3: $u\in \{i_1,\ldots,i_l\}$ and $v\in \{j_1,\ldots,j_l\}$.}}
\\ Where $v=j_k$ and $u=i_t$, we claim that
\begin{equation}\label{both in}
[i_1\cdots i_l\ |\ j_1\cdots j_l]_{A_{\lambda'}}=(-q)^{2l-k-t}[\{i_1,\ldots,i_l\}\bs\{u\}\ |\ \{j_1,\ldots,j_l\}\bs\{v\}]_{A_{\lambda}}z.
\end{equation}
We prove this claim by induction on $(\# i's>u)+(\# j's>v)$. Assume that $(\# i's>u)+(\# j's>v)=0$; that is, assume that $u=i_l$ and $v=j_l$. Then the pseudo quantum minors are actually genuine quantum minors and \eqref{one in one out} follows from 
\cite[Lemme 4.1.2]{cauchon2}. 
Assume that $(\# i's>u)+(\# j's>v)>0$. 
The inductive step differs slightly depending on whether or not $i_l=u$:

Assume that $i_l\neq u$. Then $i_l>u$ and, noting that $x_{i_lj_p}=y_{i_lj_p}=0$ for $p\geq k$, we have
\begin{align*}
[i_1\cdots i_l\ |\ j_1\cdots j_l]_{A_{\lambda'}}
&=^0
\sum_{p=1}^{k-1}(-q)^{l-p}[\widehat{i_l}\ |\ \widehat{j_p}]_{A_{\lambda'}}x_{i_lj_p}\\
&=^1
\sum_{p=1}^{k-1}(-q)^{l-p}[\widehat{i_l}\ |\ \widehat{j_p}]_{A_{\lambda'}}y_{i_lj_p} \\
&=^2
\sum_{p=1}^{k-1}(-q)^{l-p}(-q)^{(l-1)-t+l-k}[\widehat{u}\,\widehat{ i_l}\ |\ \widehat{j_p}\widehat{v}]_{A_{\lambda}}zy_{i_lj_p} \\
&=^3
(-q)^{2l-k-t}\left(\sum_{p=1}^{k-1}(-q)^{l-1-p}[\{i_1,\ldots,i_{l-1}\}\bs\{u\}\ |\ \widehat{j_p}\widehat{v}]_{A_{\lambda}}y_{i_lj_p}\right)z \\
&=^4
(-q)^{2l-k-t}[\{i_1,\ldots,i_l\}\bs\{u\}\ |\ \{j_1,\ldots,j_l\}\bs\{v\}]_{A_{\lambda}}z, \\
\end{align*}
where in ($=^0$) and ($=^4$), we are using $q$-Laplace expansion on the last row with entries on the right (see Lemma \ref{row Laplace} (2)),  ($=^1$) follows from relevant $x_{\bullet\bullet}$ and $y_{\bullet\bullet}$ being equal, ($=^2$) follows from the inductive hypothesis, and ($=^3$) follows from the commutation between $z$ with relevant $y_{\bullet\bullet}$.

On the other hand, if $i_l=u$, then $j_l>v$ and so, noting that $x_{uj_l}=y_{uj_l}=0$, we have
\begin{align*}
[i_1\cdots i_l\ |\ j_1\cdots j_l]_{A_{\lambda'}}
&=^0\sum_{p=1}^{l-1}(-q)^{l-p}[\widehat{i_p}\ |\ j_1\cdots j_{l-1}]_{A_{\lambda'}}x_{i_pj_l} \\
&=^1\sum_{p=1}^{l-1}(-q)^{l-p}[\widehat{i_p}\ |\ j_1\cdots j_{l-1}]_{A_{\lambda'}}y_{i_pj_l} \\
&=^2\sum_{p=1}^{l-1}(-q)^{l-p}(-q)^{(l-1)-k}[\widehat{i_p}\ \widehat{u}\ |\ \{j_1,\ldots,j_{l-1}\}\bs\{v\}]_{A_{\lambda}}zy_{i_pj_l} \\
&=^3(-q)^{l-k}\sum_{p=1}^{l-1}(-q)^{l-1-p}[\widehat{i_p}\ \widehat{u}\ |\ \{j_1,\ldots,j_{l-1}\}\bs\{v\}]_{A_{\lambda}}y_{i_pj_l}z \\
&=^4 (-q)^{l-k}[\{i_1,\ldots,i_l\}\bs\{u\}\ |\ \{j_1,\ldots,j_l\}\bs\{v\}]_{A_{\lambda}}z,
\end{align*}
where in ($=^0$) and ($=^4$), we are using $q$-Laplace expansion on the last column with entries on the right (see Corollary \ref{column Laplace} (2)),  ($=^1$) follows from relevant $x_{\bullet\bullet}$ and $y_{\bullet\bullet}$ being equal, ($=^2$) follows from the inductive hypothesis, and ($=^3$) follows from the commutation between $z$ with relevant $y_{\bullet\bullet}$.

This finishes Case 3.\\



\noindent \emph{\textbf{Case 4: $u\notin \{i_1,\ldots,i_l\}$ and $v\notin \{j_1,\ldots,j_l\}$.}}
\\ Where $k$ is maximal such that $j_k<v$ and $t$ is maximal such that $i_t<u$, we claim that 
\begin{equation}\label{neither in}
[i_1\cdots i_l\ |\ j_1\cdots j_l]_{A_{\lambda'}}=[i_1\cdots i_l\ |\ j_1\cdots j_l]_{A_{\lambda}}-(-q)^{k+t-2l}[\{i_1,\ldots,i_l\}\sqcup\{u\}\ | \{j_1,\ldots,j_l\}\sqcup \{v\}]_{A_{\lambda}}z^{-1}.
\end{equation}
We prove this claim by induction on $(\# i's>u)+(\# j's>v)$. Assume that $(\# i's>u)+(\# j's>v)=0$; that is, assume that $u>i_l$ and $v>j_l$. Then 
\cite[Proposition 3.1.4.3]{launois-thesis} gives 
\begin{equation}\label{stephane thesis}
[i_1\cdots i_l\ |\ j_1\cdots j_l]_{A_{\lambda'}}=[i_1\cdots i_l\ |\ j_1\cdots j_l]_{A_\lambda}-\sum_{p=1}^l(-q)^{(l+1)-p}[i_1\cdots \widehat{i_p}\cdots i_l\ u \ |\ j_1\cdots j_l]_{A_\lambda}y_{i_pv}z^{-1}
\end{equation}
(all pseudo quantum minors appearing in \eqref{stephane thesis} are genuine quantum minors).
However it follows from q-Laplace expansion with the last column on the right (Lemma \ref{column Laplace}) and from the fact that $y_{uv}=0$ that 
$$\sum_{p=1}^l(-q)^{(l+1)-p}[i_1\cdots \widehat{i_p}\cdots i_l\ u \ |\ j_1\cdots j_l]_{A_\lambda}y_{i_pv}=[i_1\cdots i_l\ u\ |\ j_1\cdots j_l\ v]_{A_\lambda},$$ so that \eqref{neither in} holds. 

Assume that $(\# i's>u)+(\# j's>v)>0$. The inductive step differs slightly depending on whether or not $i_l>u$.

Assume that $i_l>u$. Then, noting that $x_{i_lj_p}=y_{i_lj_p}=0$ for $p>k$, we have 
\begin{align*}
&{ \ \ \ \ } [i_1\cdots i_l\ |\ j_1\cdots j_l]_{A_{\lambda'}} \\
&=^0\sum_{p=1}^k(-q)^{l-p}[i_1\cdots i_{l-1}\ |\ \widehat{j_p}]_{A_{\lambda'}}x_{i_lj_p} \\
&=^1\sum_{p=1}^k(-q)^{l-p}[i_1\cdots i_{l-1}\ |\ \widehat{j_p}]_{A_{\lambda'}}y_{i_lj_p} \\
&=^2\sum_{p=1}^k(-q)^{l-p}\left( [i_1\cdots i_{l-1}\ |\ \widehat{j_p}]_{A_{\lambda}}-(-q)^{t-(l-1)+k-l}[\{i_1,\ldots, i_{l-1}\}\sqcup\{u\}\ |\ \widehat{j_p}\sqcup\{v\}]_{A_{\lambda}}z^{-1} \right)y_{i_lj_p} \\
&=^3 \sum_{p=1}^k(-q)^{l-p}[i_1\cdots i_{l-1}\ |\ \widehat{j_p}]_{A_{\lambda}}Y_{i_lj_p}-(-q)^{k+t-2l}\sum_{p=1}^k(-q)^{l+1-p}[\{i_1,\ldots, i_{l-1}\}\sqcup\{u\}\ |\ \widehat{j_p}\sqcup\{v\}]_{A_{\lambda}}y_{i_lj_p}z^{-1} \\
&=^4[i_1\cdots i_l\ |\ j_1\cdots j_l]_{A_{\lambda}}-(-q)^{k+t-2l}[\{i_1,\ldots,i_l\}\sqcup\{u\}\ | \{j_1,\ldots,j_l\}\sqcup \{v\}]_{A_{\lambda}}z^{-1}.
\end{align*}
where in ($=^0$) and ($=^4$), we are using $q$-Laplace expansion on the last row with entries on the right (see Lemma \ref{row Laplace} (2)),  ($=^1$) follows from relevant $x_{\bullet\bullet}$ and $y_{\bullet\bullet}$ being equal, ($=^2$) follows from the inductive hypothesis, and ($=^3$) follows from the commutation between $z^{-1}$ with relevant $y_{\bullet\bullet}$.

On the other hand, assume that $i_l\ngtr u$. Then $i_l<u$, $j_l>v$, and 
we have
\begin{align*}
&{ \ \ \ \ } [i_1\cdots i_l\ |\ j_1\cdots j_l]_{A_{\lambda'}} \\
&=^0\sum_{p=1}^l(-q)^{l-p}[\widehat{i_p}\ |\ j_1\cdots j_{l-1}]_{A_{\lambda'}}x_{i_pj_l} \\
&=^1\sum_{p=1}^l(-q)^{l-p}[\widehat{i_p}\ |\ j_1\cdots j_{l-1}]_{A_{\lambda'}}y_{i_pj_l} \\
&=^2\sum_{p=1}^l(-q)^{l-p}\left( [\widehat{i_p}\ |\ j_1\cdots j_{l-1}]_{A_{\lambda}}-(-q)^{k-(l-1)}[\widehat{i_p}\sqcup\{u\}\ |\ \{j_1,\ldots,j_{l-1}\}\sqcup\{v\}]_{A_{\lambda}}z^{-1} \right)y_{i_pj_l} \\
&=^3\sum_{p=1}^l(-q)^{l-p} [\widehat{i_p}\ |\ j_1\cdots j_{l-1}]_{A_{\lambda}}Y_{i_pj_l}-(-q)^{k-l}\sum_{p=1}^l(-q)^{l+1-p}[\widehat{i_p}\sqcup\{u\}\ |\ \{j_1,\ldots,j_{l-1}\}\sqcup\{v\}]_{A_{\lambda}}y_{i_pj_l}z^{-1} \\
&=^4[i_1\cdots i_l\ |\ j_1\cdots j_l]_{A_{\lambda}}-(-q)^{k-l}[\{i_1,\ldots,i_l\}\sqcup\{u\}\ | \{j_1,\ldots,j_l\}\sqcup \{v\}]_{A_{\lambda}}z^{-1},
\end{align*}
where in ($=^0$) and ($=^4$), we are using $q$-Laplace expansion on the last column with entries on the right (see Corollary \ref{column Laplace} (2)),  ($=^1$) follows from relevant $x_{\bullet\bullet}$ and $y_{\bullet\bullet}$ being equal, ($=^2$) follows from the inductive hypothesis, and ($=^3$) follows from the commutation between $z^{-1}$ with relevant $y_{\bullet\bullet}$.

This finishes Case 4 and completes the proof.

\end{full} 
\end{proof}

\begin{remark}
Though we did not say it explicitly, in the proof of Proposition \ref{comb} we showed that that every pseudo quantum minor in $A_{\lambda'}$ can be expressed as an $R[z^{\pm 1}]$-linear combination (with coefficients on the right) of pseudo quantum minors in $A_\lambda$. We can also get this result simply by setting $I=A_\lambda$ in Proposition \ref{comb}.
\end{remark}

\begin{corollary}\label{gen by q minors R case}
Let $P'$ be a completely prime ideal of $A_{\lambda'}$ that does not contain $z$ and that is generated as a right ideal by pseudo quantum minors of $A_{\lambda'}$. Assume there is an ideal $P$ of $A_\lambda$ satisfying $\bigoplus_{i\in \Z} Pz^i = P'[z^{-1}]$. Then $P$ is completely prime and is generated as a right ideal by pseudo quantum minors of $A_\lambda$.
\begin{proof}
Clearly $P=P'[z^{-1}]\cap A_\lambda$, from which it follows easily that $P$ is completely prime.

Where $\delta_1',\ldots,\delta_n'$ are all the pseudo quantum minors of $A_{\lambda'}$ belonging to $P'$, by assumption we have $P'=\delta_1'A_{\lambda'}+\cdots +\delta_n'A_{\lambda'}$. Since $P\subset P'[z^{-1}]$, we have 
\[
P\subset \delta_1'A_{\lambda'}[z^{-1}]+\cdots +\delta_n'A_{\lambda'}[z^{-1}]=
\delta_1'A_{\lambda}[z^{\pm 1}; \sigma]+\cdots +\delta_n'A_{\lambda}[z^{\pm 1}; \sigma].
\] 
Let $\delta_1,\ldots,\delta_m$ be the pseudo quantum minors of $A_\lambda$ belonging to $P$. Notice that $P'=\bigoplus_{i\in \Z}Pz^i \cap A_{\lambda'}$; this allows us to invoke Proposition \ref{comb} to give
\begin{equation}\label{the desired comb}
P\subset \delta_1A_\lambda[z^{\pm 1};\sigma]+\cdots + \delta_mA_\lambda[z^{\pm 1};\sigma].
\end{equation}
From the independence of the powers of $z$ over $A_\lambda$, we conclude that
\[
P=\delta_1A_\lambda+\cdots + \delta_mA_\lambda.
\]
\end{proof}
\end{corollary}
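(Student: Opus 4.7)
The plan is to reduce the generation statement to Proposition \ref{comb} applied with $I = P$. First I would observe that the hypothesis $\bigoplus_{i \in \Z} Pz^i = P'[z^{-1}]$ together with the fact that $z \notin P'$ forces $P = P'[z^{-1}] \cap A_\lambda$; complete primeness of $P$ then follows immediately from complete primeness of $P'$, since $P'[z^{-1}]$ is completely prime in $A_{\lambda'}[z^{-1}] = A_\lambda[z^{\pm 1};\sigma]$ and $A_\lambda$ embeds in this localisation.

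Next I would use the assumed right-ideal generating set of $P'$: writing $\delta_1',\ldots,\delta_n'$ for the pseudo quantum minors of $A_{\lambda'}$ belonging to $P'$, the inclusion $P \subset P'[z^{-1}]$ gives
\[
P \subset \delta_1'A_{\lambda'}[z^{-1}] + \cdots + \delta_n'A_{\lambda'}[z^{-1}] = \delta_1' A_\lambda[z^{\pm 1};\sigma] + \cdots + \delta_n' A_\lambda[z^{\pm 1};\sigma],
\]
using the identification $A_{\lambda'}[z^{-1}] = A_\lambda[z^{\pm 1};\sigma]$ from \eqref{identifying over R}.

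The key step is now to invoke Proposition \ref{comb}. Since $P' = \bigoplus_{i \in \Z} Pz^i \cap A_{\lambda'}$, each $\delta_i'$ is a pseudo quantum minor of $A_{\lambda'}$ lying in the ideal $\bigoplus_{i\in \Z} Pz^i \cap A_{\lambda'}$, so the proposition (with $I = P$, $I' = P'$) expresses every $\delta_i'$ as an $R[z^{\pm 1}]$-linear combination, with coefficients on the right, of pseudo quantum minors $\delta_1,\ldots,\delta_m$ of $A_\lambda$ belonging to $P$. Substituting these expressions yields
\[
P \subset \delta_1 A_\lambda[z^{\pm 1};\sigma] + \cdots + \delta_m A_\lambda[z^{\pm 1};\sigma].
\]
Finally, I would exploit the independence of the powers of $z$ over $A_\lambda$ in $A_\lambda[z^{\pm 1};\sigma]$: any element of $P \subset A_\lambda$ has a unique such expansion with $z^0$-component in $\delta_1 A_\lambda + \cdots + \delta_m A_\lambda$ and all other components zero, which forces $P = \delta_1 A_\lambda + \cdots + \delta_m A_\lambda$.

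The only nontrivial step is verifying the hypotheses of Proposition \ref{comb} correctly, in particular confirming that the right-hand ideal $I' = P'$ equals $\bigoplus_{i\in \Z} Pz^i \cap A_{\lambda'}$; this is precisely what the given assumption $\bigoplus_{i\in \Z} Pz^i = P'[z^{-1}]$ delivers, via Lemma \ref{restricting plussing}(i). The rest is a clean formal manipulation.
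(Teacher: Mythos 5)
Your proposal is correct and follows essentially the same route as the paper's proof: identify $P=P'[z^{-1}]\cap A_\lambda$ for complete primeness, use $P'=\bigoplus_{i\in\Z}Pz^i\cap A_{\lambda'}$ to invoke Proposition \ref{comb} with $I=P$, and conclude by the independence of the powers of $z$ over $A_\lambda$. The only cosmetic difference is that you make explicit the substitution of the expressions for the $\delta_i'$ and the appeal to Lemma \ref{restricting plussing}(i), which the paper leaves implicit.
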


\begin{proposition}\label{H-prime inductive step}
If the $\ch$-prime ideals of $B_{\lambda'}$ are generated as right ideals by pseudo quantum minors of $B_{\lambda'}$, then the same statement is true for $B_\lambda$. 
\end{proposition}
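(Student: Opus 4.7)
The plan is to reduce the statement for $B_\lambda$ to the (assumed) statement for $B_{\lambda'}$ by extending each $\ch$-prime of $B_\lambda$ across the Ore extension $B_{\lambda'} = B_\lambda[z;\sigma,\delta]$, then contracting back via Corollary \ref{gen by q minors R case}. Given an $\ch$-prime $P$ of $B_\lambda$, I would first form $\tilde P := \bigoplus_{i\in\Z} Pz^i$, which is an ideal of $B_\lambda[z^{\pm 1};\sigma] = B_{\lambda'}[z^{-1}]$. Because every $\ch$-prime is completely prime by \cite{gl2}, and the standard Ore-extension argument (cf.\ \cite[Corollary 2.4]{llr-ufd}) shows $P[z;\sigma]$ is completely prime in $B_\lambda[z;\sigma]$, $\tilde P$ is a completely prime $\ch$-invariant ideal of $B_{\lambda'}[z^{-1}]$. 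Setting $P' := \tilde P \cap B_{\lambda'}$ then gives a completely prime $\ch$-invariant (hence $\ch$-prime) ideal of $B_{\lambda'}$ that does not contain $z$.

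Next I would verify the key equality $P'[z^{-1}] = \tilde P$. The inclusion $P'[z^{-1}] \subseteq \tilde P$ is immediate. For the reverse, any $\xi \in \tilde P$ is a finite $\k$-linear combination of terms $pz^i$ ($p\in P$, $i\in\Z$), so there exists $N\geq 0$ with $\xi z^N \in \bigoplus_{i\geq 0}Pz^i \subseteq B_{\lambda'}$; since $\tilde P$ is an ideal, $\xi z^N \in \tilde P \cap B_{\lambda'} = P'$, hence $\xi = (\xi z^N)z^{-N} \in P'[z^{-1}]$. This places us squarely in the setting of Corollary \ref{gen by q minors R case} (specialised to $R=\f$, so that $A_\lambda = B_\lambda$ and $A_{\lambda'} = B_{\lambda'}$): we have a completely prime ideal $P'$ of $B_{\lambda'}$ not containing $z$, an ideal $P$ of $B_\lambda$ with $\bigoplus_{i\in\Z}Pz^i = P'[z^{-1}]$, and, by the hypothesis of the proposition, $P'$ is generated as a right ideal by pseudo quantum minors of $B_{\lambda'}$.

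Applying Corollary \ref{gen by q minors R case} then yields immediately that $P$ is generated as a right ideal by pseudo quantum minors of $B_\lambda$, which is the conclusion. The whole argument leans on the rewriting machinery of Proposition \ref{comb} (which sits behind Corollary \ref{gen by q minors R case}) to convert a presentation of $P'$ by pseudo quantum minors of $B_{\lambda'}$ into one for $P$ by pseudo quantum minors of $B_\lambda$; the $q$-Laplace identities of Section \ref{partition subalgebra} are what make this rewriting possible despite the failure of the "nasty relation" in pre-quantum matrix algebras.

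The only real obstacle is a bookkeeping one: confirming that the $\ch$-prime $P'$ constructed above is genuinely $\ch$-invariant and does not contain $z$. The non-containment of $z$ is automatic since $z$ is a unit in $B_{\lambda'}[z^{-1}]$, and the $\ch$-invariance follows because $z$ is an $\ch$-eigenvector (so the $\ch$-action on $B_{\lambda'}$ extends to $B_{\lambda'}[z^{-1}]$) and $\tilde P = P[z^{\pm 1};\sigma]$ is $\ch$-invariant as $P$ is. Alternatively, one can recognise the entire construction $P \rightsquigarrow P'$ as the inverse of the Cauchon embedding $\varphi_N$ attached to the QNA presentation $B_{\lambda'} = B_\lambda[z;\sigma,\delta]$, so that Lemma \ref{lemma-equal-cauchon-diagrams} identifies $P'$ as the unique $\ch$-prime of $B_{\lambda'}$ with the same Cauchon-Le diagram as $P$ (the validity of using this alternative QNA structure at the level of Cauchon diagrams being guaranteed by Proposition \ref{same Cauchon sets}).
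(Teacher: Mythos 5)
Your proposal is correct and follows essentially the same route as the paper: extend the $\ch$-prime $P$ of $B_\lambda$ to $\bigoplus_{i\in\Z}Pz^i$ in $B_\lambda[z^{\pm 1};\sigma]=B_{\lambda'}[z^{-1}]$, contract to $P'=\bigoplus_{i\in\Z}Pz^i\cap B_{\lambda'}$, note $z\notin P'$ and $\bigoplus_{i\in\Z}Pz^i=P'[z^{-1}]$, and conclude via Corollary \ref{gen by q minors R case} with $R=\f$. The only cosmetic difference is that the paper obtains the $\ch$-primeness of $\bigoplus_{i\in\Z}Pz^i$ (and hence of $P'$) in one stroke from \cite[Theorem 2.3]{llr-ufd}, whereas you verify complete primeness, $\ch$-invariance, and the equality $P'[z^{-1}]=\bigoplus_{i\in\Z}Pz^i$ by hand, which is fine.
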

\begin{proof}
\begin{full} 
Let $J$ be an $\ch$-prime ideal of $B_\lambda$ and 
let $J'=\bigoplus_{i\in \Z}Jz^i\cap B_{\lambda'}$; clearly $z\notin J'$. By \cite[Theorem 2.3]{llr-ufd}, $\bigoplus_{i\in \Z}Jz^i$ is an $\ch$-prime ideal of $B_\lambda[z^{\pm 1};\sigma]=B_{\lambda'}[z^{-1}]$; hence $J'$ is an $\ch$-prime ideal of $B_{\lambda'}$ and 
\begin{equation}\label{plussing up}
\bigoplus_{i\in \Z}Jz^i = J'[z^{-1}]. 
\end{equation}
Since $J'$ is completely prime (in fact all prime ideals of $B_{\lambda'}$ are completely prime) and does not contain $z$, we may conclude via Corollary \ref{gen by q minors R case} (with $R=\f$).
\end{full} 
\end{proof}

We are now ready to state the main result of this section. 
\begin{theorem}\label{gen by pseudos}
Assume that $q\in \f^*$ is not a root of unity. The $\ch$-prime ideals in a partition subalgebra of quantum matrices are generated as right ideals by pseudo quantum minors. 
\end{theorem}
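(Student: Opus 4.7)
The plan is to prove the theorem by decreasing induction on $m n - |Y_\lambda|$, where $|Y_\lambda|$ denotes the number of boxes in $Y_\lambda$. In other words, we start with the full $m\times n$ rectangular Young diagram (where $Y_\lambda$ has the maximum possible number of boxes) and remove inner corner boxes one at a time until we reach the desired $Y_\lambda$. This is the natural induction since Proposition \ref{H-prime inductive step} propagates the generation statement from $B_{\lambda'}$ to $B_\lambda$, where $\lambda'$ is obtained from $\lambda$ by adjoining the single box $(u,v) = (u, \lambda_u+1)$.

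For the base case, $Y_\lambda$ is the full $m \times n$ rectangle, so $B_\lambda = \oqmmnf$, and the pseudo quantum minors coincide with the genuine quantum minors. In this case, every $\ch$-prime $P$ of $\oqmmnf$ corresponds via \eqref{H-primes of partlambda} to a Cauchon-Le diagram $C$ on the rectangle, and by Theorem \ref{which pseudo minors} the quantum minors belonging to $P$ are precisely the set $\Pi^C$ from Section \ref{Primes in oqmmnr from Cauchon-Le diagrams}. By Casteels' theorem \cite[Theorem 4.4.1]{cas2}, $\Pi^C$ is a Gr\"obner basis of the $\ch$-prime $\langle \Pi^C\rangle_{\f} = P$ considered either as a right ideal or as a left ideal; in particular $P$ is generated as a right ideal by the quantum minors it contains.

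For the inductive step, fix $\lambda$ with $Y_\lambda$ strictly contained in the $m\times n$ rectangle and assume the theorem is known for all partitions $\mu$ (fitting in the same rectangle) with $|Y_\mu| > |Y_\lambda|$. Let $u$ be minimal such that $\lambda_u < n$, set $v = \lambda_u + 1$, and form $\lambda' = (\lambda_1,\ldots,\lambda_{u-1},v,\lambda_{u+1},\ldots,\lambda_m)$. Minimality of $u$ forces $\lambda_{u-1} = n \geq v$ (when $u > 1$), so $\lambda'$ is a genuine partition whose Young diagram has exactly one more box than $Y_\lambda$. The inductive hypothesis applies to $B_{\lambda'} = \yldashf$, and Proposition \ref{H-prime inductive step} then transfers the generation-by-pseudo-quantum-minors property from $B_{\lambda'}$ down to $B_\lambda = \ylf$, completing the induction.

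The heavy lifting of the argument is of course already packaged in Proposition \ref{H-prime inductive step} (and ultimately Proposition \ref{comb}): the subtle point is that a pseudo quantum minor in $B_{\lambda'}$, when pushed through the embedding $\theta$ into $B_\lambda[z^{\pm 1};\sigma] = B_{\lambda'}[z^{-1}]$, must be shown to decompose as an $R[z^{\pm 1}]$-linear combination of pseudo quantum minors of $B_\lambda$, which is why the notion of pseudo quantum minor (rather than quantum minor) is the correct framework and why care is required with the modified $q$-Laplace expansions established earlier. Given Proposition \ref{H-prime inductive step}, however, the theorem itself is essentially an immediate induction.
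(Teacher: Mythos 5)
Your proposal is correct and follows essentially the same route as the paper: the base case (the full $m\times n$ rectangle, i.e.\ $\oqmmnf$) is handled by Casteels' results, and the inductive step, on the number of boxes missing from the rectangle, is exactly an application of Proposition~\ref{H-prime inductive step} with $\lambda'$ obtained from $\lambda$ by adding the box $(u,\lambda_u+1)$. The paper's proof is just a terser version of the same argument.
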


\begin{proof} 
\begin{full} 
The $\ch$-primes in $\oqmmnf$ are generated as right ideals by quantum minors, see 
\cite{cas1, cas2}. For a partition subalgebra of $\oqmmnf$, induction on the number of missing boxes, plus 
Proposition~\ref{H-prime inductive step} gives the result. 
\end{full} 
\end{proof}

\section{Primes in $\ylr$ from Cauchon-Le diagrams}\label{Primes in ylr from Cauchon-Le diagrams}

Let us continue with the notation and conventions of Section \ref{Gens_for_H_primes_in_part_subalgs} and let us add the following conventions: 
\begin{itemize}
\item There is a subfield $\k$ of $\f$ over which $q$ is transcendental. 
\item $R$ is the Laurent polynomial algebra $\k[q^{\pm 1}]$. 
\item $\f=Frac(R)=\k(q)$.
\end{itemize}
Let $C$ be a Cauchon-Le diagram on $Y_\lambda$ and let $C'$ be the Cauchon-Le diagram on $Y_{\lambda'}$ that has the 
same black boxes as $C$ has on $Y_\lambda$

\begin{notation}{\rm  
Let $\Pi_{\lambda}$ denote the set of pseudo quantum minors in $\ylr$ (or, equivalently, in $\ylf$) and let $\pcl$ denote the set of pseudo quantum minors $[I\mid J]$ for which there are no vertex disjoint families of $R_{(I,J)}$-paths in the Postnikov graph of $C$. Define $\Pi_{\lambda'}$ and $\pcldash$ similarly.
}\end{notation}

Let $\idealpclr$ be the ideal in $\ylr$ generated by 
$\pcl$. We aim to show that $\idealpclr$ is a completely prime ideal 
in $\ylr$ that is generated as a right ideal by $\pcl$. The proof will be by induction. The partition $Y_\lambda$ sits 
in an $m\times n$ array, and the induction will be on the difference between 
$mn$ and the number of boxes in $Y_\lambda$. The base case, 
where this difference is equal to zero, is that of $\oqmmnr$, and this result has 
been established in Section \ref{Primes in oqmmnr from Cauchon-Le diagrams} (see Corollary \ref{corollary-cp-over-R}). Thus, we need only deal with the inductive step. Before embarking on the proof, we need some notation and 
preparatory 
results. 
\\

Let $\idealpclrdash$ be the ideal in $\yldashr$ generated by $\pcldash$. Over the next few results,  under the assumption that $J'_R:=\idealpclrdash$ is completely prime and generated as a right ideal by $\pcldash$, we will  show that $\idealpclr$ is also completely prime and is generated as a right ideal by $\pcl$. 
\\

Recall that, as in the previous section, we set $A_\lambda:= \ylr$, $A_{\lambda'}:= \yldashr$, $B_\lambda:= \ylf$ and $B_{\lambda'}:= \yldashf$. Moreover, $z$ still denotes the new variable corresponding to the extra box in $Y_{\lambda'}$.
\\

Now, let $J'_{\f} := \idealpclfdash$ be the ideal of $\yldashf$ generated by $\pcldash$. Notice that, by Theorems  \ref{which pseudo minors} and \ref{gen by pseudos}, $J'_{\f}$ is the $\ch$-invariant (completely) prime ideal of $\yldashf$ with Cauchon-Le diagram $C'$. Since the box in $C'$ corresponding to $z$ is white, \cite[Proposition 5.4.2]{c1} (which is rewritten for our current context as Lemma 4.3.3 in \cite{nolan}) guarantees that $z\notin J'_{\f}$. Set $J^0_{\f}:= J_{\f}'[z^{-1}]\cap \ylf$ and notice that, by \cite[Theorem 2.3]{llr-ufd}, $J^0_{\f}$ is an $\ch$-prime ideal of $\ylf$ satisfying 
\begin{equation} \label{observation-a}
\bigoplus_{i\in \Z}J^0_{\f} z^i=J'_{\f}[z^{-1}].
\end{equation}  
By Lemma~\ref{lemma-equal-cauchon-diagrams}, the black boxes in the Cauchon-Le diagram of $J^0_{\f}$ in $Y_\lambda$ are the same as the 
black boxes in the Cauchon-Le diagram $C'$ of $J'_{\f}$ in $Y_{\lambda'}$. (Note that, here, we are really talking about the Cauchon diagram of  $J'_{\f}$ with respect to the alternative QNA presentation of $\yldashf$ where $z$ appears at the end. However, Lemma \ref{same Cauchon sets} shows that this Cauchon diagram coincides with the Cauchon-Le diagram of $J'_{\f}$ for the standard QNA presentation of $\yldashf$.) Hence, $J^0_{\f}$ is the $\ch$-prime ideal of $\ylf$ with Cauchon-Le diagram $C$; it follows by Theorem~\ref{which pseudo minors} that $J^0_{\f}\cap\Pi_\lambda = \pcl$.
\\

Assume that $J'_R= \idealpclrdash$ is completely prime and, noting that $z\notin J'_R$ since $z\notin J'_{\f}\supset J'_R$, set $J^0_R:= J'_R[z^{-1}] \cap A_\lambda$. Then $J^0_R$ is a completely prime  ideal of $A_\lambda$.

\begin{lemma} \label{lemma-pqm-in-j-zero-r}
With the notation and assumptions above, 
\[
J^0_R\cap\Pi_\lambda = \pcl.
\]
\end{lemma}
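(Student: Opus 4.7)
The inclusion $J^0_R \cap \Pi_\lambda \subseteq P^C_\lambda$ is immediate from $J'_R \subseteq J'_\f$: this gives $J^0_R = J'_R[z^{-1}] \cap A_\lambda \subseteq J'_\f[z^{-1}] \cap B_\lambda = J^0_\f$, and intersecting with $\Pi_\lambda \subseteq A_\lambda$ and using the identity $J^0_\f \cap \Pi_\lambda = P^C_\lambda$ (established in the paragraph immediately preceding the lemma) yields the inclusion.

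For the reverse inclusion, I fix $\delta = [I'\mid J']_{A_\lambda} \in P^C_\lambda$ and aim to show $\delta \in J^0_R$. Since $\delta \in A_\lambda$ by construction, the task reduces to $\delta \in J'_R[z^{-1}]$. The plan is to apply Proposition \ref{comb}, which relates pseudo quantum minors in $A_\lambda$ and $A_{\lambda'}$, in the four cases determined by whether $u \in I'$ and $v \in J'$. When at most one of these membership conditions holds (three cases, corresponding to Cases 1, 2, or the inversion of Case 3 of Proposition \ref{comb}), the identity delivers $\delta$ or $\delta z$ as $\pm q^\bullet\mu$ for a single pseudo quantum minor $\mu\in\Pi_{\lambda'}$. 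From $\delta\in J^0_\f$ one deduces $\mu\in J'_\f$ by applying Lemma \ref{restricting plussing}(i) to $J'_\f$ (which yields $J'_\f[z^{-1}]\cap B_{\lambda'}=J'_\f$, since $z\notin J'_\f$); combined with $\mu\in\Pi_{\lambda'}$ and Theorem \ref{which pseudo minors} applied to $\yldashf$, this places $\mu$ in $J'_\f\cap\Pi_{\lambda'}=P^{C'}_{\lambda'}\subseteq J'_R$, giving $\delta\in J'_R[z^{-1}]$ as desired.

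The main obstacle is the remaining case $u \in I',\ v \in J'$, in which Proposition \ref{comb} does not give a single-minor expression for $\delta$. The plan here is to perform a Laplace expansion of $\delta$ along row $u$ or column $v$ (Lemma \ref{row Laplace} or Corollary \ref{column Laplace}), exploiting the vanishings $y_{u,j}=0$ for $j\geq v$ and $y_{i,v}=0$ for $i\geq u$ (which follow from $\lambda_u=v-1$) to write $\delta$ as an $A_{\lambda'}$-linear combination of strictly smaller pseudo quantum minors of $A_\lambda$. Each summand falls into one of the easier three cases, where Proposition \ref{comb} Cases 1 or 2 identify it with the corresponding pseudo quantum minor of $A_{\lambda'}$. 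The delicate step will be verifying that the entire combination lies in the right ideal $J'_R$ generated by $P^{C'}_{\lambda'}$; individual summands need not lie in $P^{C'}_{\lambda'}$, so one must either rewrite the expansion using the commutation relations in $A_{\lambda'}$ so that the leading factors become elements of $P^{C'}_{\lambda'}$, or, more cleanly, invoke the inductive hypothesis in the stronger form $J'_R=J'_\f\cap A_{\lambda'}$ (the analogue of Corollary \ref{corollary-cp-over-R}) which, together with Lemma \ref{restricting plussing}(i), collapses the whole reverse inclusion to the single identity $J^0_R=J^0_\f\cap A_\lambda$.
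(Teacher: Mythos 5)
Your reduction to $\pcl\subseteq J^0_R$ and your three ``easy'' cases do work, but the main route has a genuine gap at exactly the case you flag. When $u\in I'$ and $v\in J'$ there is no single-minor identity: Case 3 of Proposition \ref{comb} relates $[I'\mid J']_{A_{\lambda'}}$ to the \emph{smaller} minor $[I'\setminus\{u\}\mid J'\setminus\{v\}]_{A_\lambda}\,z$, not to $\delta$, and such minors are neither zero nor excluded from $\pcl$ (e.g.\ $[12\mid12]=-qx_{12}x_{21}$ for $\lambda=(2,1)$, which lies in $\pcl$ for a suitable $C$). The proposed repair by Laplace expansion does not close: the expansion writes $\delta$ as a sum of smaller $A_\lambda$-minors times entries, and even after converting each summand to an $A_{\lambda'}$-minor via Cases 1--2, those summand minors need not belong to $\pcldash$, so you have no way to conclude membership in the right ideal $J'_R=\pcldash\cdot A_{\lambda'}$ over $R$ -- and that membership over $R$, as opposed to over $\f$, is the entire content of the lemma. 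A smaller slip: Lemma \ref{restricting plussing}(i) gives $J'_{\f}[z^{-1}]\cap A_{\lambda'}[z^{-1}]=(J'_{\f}\cap A_{\lambda'})[z^{-1}]$; the identity you actually use, $J'_{\f}[z^{-1}]\cap B_{\lambda'}=J'_{\f}$, is true but follows from complete primeness of $J'_{\f}$ together with $z\notin J'_{\f}$, not from that lemma.

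Your closing ``more cleanly'' remark is the right proof, and it is essentially the paper's: no identification of $\delta$ with an $A_{\lambda'}$-minor is needed at all. The paper argues elementwise: for $\delta\in\pcl\subseteq J^0_{\f}=J'_{\f}[z^{-1}]\cap B_\lambda$ choose $i\ge 0$ with $\delta z^i\in J'_{\f}=J'_R[(R^*)^{-1}]$, clear the scalar denominator to get $\delta z^i w\in J'_R$ with $0\neq w\in R$, and use the standing assumption that $J'_R$ is completely prime (together with properness of $J'_{\f}$, which rules out $w\in J'_R$) to conclude $\delta z^i\in J'_R$, hence $\delta\in J'_R[z^{-1}]\cap A_\lambda=J^0_R$. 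Your ideal-level variant $J'_R=J'_{\f}\cap A_{\lambda'}$ is the same contraction argument packaged globally; note it is not literally the inductive hypothesis (which is complete primeness plus right generation by $\pcldash$), but it follows from it in one line, and from it the chain through Lemma \ref{restricting plussing}(i) does yield $J^0_R=J^0_{\f}\cap A_\lambda$ and hence the lemma -- this is precisely the computation the paper carries out in Corollary \ref{corollary-main}. So discard the case analysis via Proposition \ref{comb} and promote the fallback to the proof.
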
 
\begin{proof} 
It follows from the definitions that $J^0_R\subseteq J^0_{\f}$. Recalling that Theorem~\ref{which pseudo minors} gives $J^0_{\f}\cap\Pi_\lambda = \pcl$, we have $J^0_R\cap\Pi_\lambda \subseteq J^0_{\f}\cap\Pi_\lambda=\Pi_\lambda^C$. 
Thus, all we need to show is that $\pcl\subseteq J^0_R$. 
Suppose that $\delta\in\pcl$. Then 
\[
\delta\in J^0_{\f}=J'_{\f}[z^{-1}] \cap B_\lambda.
\]
Hence, there exists a nonnegative integer $i$ such that $\delta z^i\in J'_{\f} = J_R'[(R^*)^{-1}]$, where $R^*$ denotes the nonzero elements of $R$. 
Thus, there exists $0\neq w\in R$ such that $\delta z^iw\in J_R'$. As $J_R'$ is a completely prime ideal, either $w\in J_R'$,  or $\delta z^i\in J_R'$. If the first possibility occurs then $w\in J_{\f}'$, which is impossible as $w$ is a unit in $B_{\lambda'}$. Hence, $\delta\in J_R'[z^{-1}]\cap A_\lambda =J^0_R$, as required.
 \end{proof} 

\begin{corollary}\label{corollary-main}
Suppose that that $J'_R:=\idealpclrdash$ is completely prime and generated as a right ideal by $\pcldash$. Then $\idealpclr$
is also completely prime and is generated as a right ideal by $\pcl$. 
\end{corollary}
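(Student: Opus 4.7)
The plan is to realize $\idealpclr$ as the ideal $J^0_R := J'_R[z^{-1}]\cap A_\lambda$ and then apply Corollary \ref{gen by q minors R case} with $P' = J'_R$ and $P = J^0_R$. All three hypotheses of that corollary concerning $P'$ are either given or immediate ($J'_R$ is completely prime by assumption, is generated as a right ideal by pseudo quantum minors of $A_{\lambda'}$ by assumption, and does not contain $z$ since $z\notin J'_{\f}\supseteq J'_R$). The substantive step is therefore to establish the bridging identity
\begin{equation*}
\bigoplus_{i\in \Z} J^0_R\, z^i \;=\; J'_R[z^{-1}].
\end{equation*}

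To obtain this identity, I would apply Lemma \ref{restricting plussing}(ii) to $J := J^0_{\f}$ and $J' := J'_{\f}$, which are ideals of $B_\lambda$ and $B_{\lambda'}$. The hypotheses of that lemma hold: $z\notin J'_{\f}$, and $\bigoplus_{i\in\Z} J^0_{\f}\,z^i = J'_{\f}[z^{-1}]$ is exactly equation \eqref{observation-a}. The conclusion is $\bigoplus_{i\in\Z} I z^i = I'[z^{-1}]$ where $I = J^0_{\f}\cap A_\lambda$ and $I' = J'_{\f}\cap A_{\lambda'}$. It then remains to identify $I$ with $J^0_R$ and $I'$ with $J'_R$. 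For $I' = J'_R$: the inclusion $J'_R\subseteq I'$ is clear, and for the reverse, any $a\in J'_{\f}\cap A_{\lambda'}$ can be scaled by some nonzero $w\in R$ to give $aw\in J'_R$, whereupon complete primeness of $J'_R$ together with $w$ being a unit in $B_{\lambda'}$ (so $w\notin J'_{\f}\supseteq J'_R$) forces $a\in J'_R$. For $I = J^0_R$: Lemma \ref{restricting plussing}(i), applied to $J'_{\f}$ and the already-established $I' = J'_R$, gives $J'_{\f}[z^{-1}]\cap A_{\lambda'}[z^{-1}] = J'_R[z^{-1}]$; intersecting with $A_\lambda$ yields $J'_{\f}[z^{-1}]\cap A_\lambda = J^0_R$, while on the other hand $I = J^0_{\f}\cap A_\lambda = (J'_{\f}[z^{-1}]\cap B_\lambda)\cap A_\lambda = J'_{\f}[z^{-1}]\cap A_\lambda$.

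With the bridging identity in hand, Corollary \ref{gen by q minors R case} applies and delivers that $J^0_R$ is completely prime and generated as a right ideal by pseudo quantum minors of $A_\lambda$. By Lemma \ref{lemma-pqm-in-j-zero-r}, the pseudo quantum minors lying in $J^0_R$ are precisely those of $\pcl$, so $J^0_R = \idealpclr$. Thus $\idealpclr$ is completely prime and is generated as a right ideal by $\pcl$, as required.

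The main technical obstacle in this argument is transferring the graded decomposition from the $B$-level (over the field $\f$) down to the $A$-level (over the ring $R$); without \emph{a priori} knowing that $J'_R$ is $\Z$-graded for the grading induced by $A_\lambda[z^{\pm1};\sigma] = A_{\lambda'}[z^{-1}]$, one cannot simply run the argument inside $A_{\lambda'}[z^{-1}]$. Fortunately this is exactly what Lemma \ref{restricting plussing} handles, and the only work left is the two set-theoretic identifications $I = J^0_R$ and $I' = J'_R$, both of which rest on complete primeness (of $J'_R$) and the fact that nonzero elements of $R$ are units in the $B$-setting.
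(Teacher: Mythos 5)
Your proposal is correct and follows essentially the same route as the paper: the paper likewise identifies $J'_R=J'_{\f}\cap A_{\lambda'}$ using complete primeness and the fact that nonzero elements of $R$ are units in $B_{\lambda'}$, derives $J^0_R=J^0_{\f}\cap A_\lambda$ via the same chain of intersections based on Lemma~\ref{restricting plussing}(i), obtains the bridging identity $\bigoplus_{i\in\Z}J^0_Rz^i=J'_R[z^{-1}]$ from Lemma~\ref{restricting plussing}(ii) together with \eqref{observation-a}, and then concludes with Corollary~\ref{gen by q minors R case} and Lemma~\ref{lemma-pqm-in-j-zero-r}. The only difference is cosmetic: you invoke Lemma~\ref{restricting plussing}(ii) abstractly first and identify $I$, $I'$ afterwards, whereas the paper performs the identifications before applying the lemma.
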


\begin{proof}
Recall that $J'_{\f} = J'_R[(R^*)^{-1}]$ and that we have assumed that $J_R'$ is completely prime, so that $J'_R=J'_{\f}\cap A_{\lambda'}$.  It follows that $J_R^0=J_{\f}^0\cap A_\lambda$. Indeed
\begin{align*}
J_R^0 &= J_R'[z^{-1}]\cap A_\lambda \\
      &= (J_{\f}'\cap A_{\lambda'})[z^{-1}]\cap A_\lambda \\
      &= J_{\f}'[z^{-1}]\cap A_{\lambda'}[z^{-1}]\cap A_\lambda \ \ \ \ \tx{(by Lemma \ref{restricting plussing}(i))} \\
      &= J_{\f}'[z^{-1}]\cap A_\lambda \\
      &= (J_{\f}'[z^{-1}]\cap B_\lambda)\cap A_\lambda \\
      &=J_{\f}^0\cap A_\lambda.
\end{align*}

Hence, by Lemma \ref{restricting plussing}(ii) and \eqref{observation-a}, we get 
\begin{equation}\label{required identity}
\bigoplus_{i\in \Z}J^0_R z^i=J'_R[z^{-1}].
\end{equation}
Now Corollary~\ref{gen by q minors R case} shows that $J^0_R$ is completely prime and is generated as a right ideal by the pseudo quantum minors which it contains. Thus we deduce from Lemma~\ref{lemma-pqm-in-j-zero-r} that $J^0_R$ is completely prime and is generated as a right ideal by $\pcl$; that is, $J^0_R=\idealpclr$ is completely prime and is generated as a right ideal by $\pcl$, as required. 
\end{proof}  


We now have all the necessary ingredients to prove the main theorem of this 
section. 

\begin{theorem} \label{theorem-ideal-is-prime}
Let $\k$ be a field and let $q$ be an element that is transcendental over $\k$. 
Set $R:=\k[q^{\pm1}]$. Fix a partition $\lambda$ and let $C$ be a Cauchon-Le 
diagram on the Young diagram $Y_\lambda$. Let $\pcl$ be the set of pseudo 
quantum minors $[I\mid J]$ for which there are no vertex disjoint families 
of $R_{(I,J)}$-paths in the Postnikov graph $\postc$ of $C$, and let 
$P:=\idealpclr$ be the ideal of $\ylr$ generated by $\pcl$. 

Then $P$ is a completely prime ideal of $\ylr$ and $P$ is generated as a right ideal by $\pcl$. 
\end{theorem}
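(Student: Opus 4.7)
The plan is to prove the theorem by induction on the number of missing boxes $N := mn - |Y_\lambda|$ of the Young diagram $Y_\lambda$ relative to the ambient $m \times n$ rectangle. This reduces everything to the base case $\oqmmnr$ established in Section~\ref{Primes in oqmmnr from Cauchon-Le diagrams} together with the inductive transfer step already built into Corollary~\ref{corollary-main}.

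For the base case $N = 0$, the partition is $\lambda = (n,\ldots,n)$, so $\ylr = \oqmmnr$ and $\pcl$ coincides with the set $\Pi^C$ of Section~\ref{Primes in oqmmnr from Cauchon-Le diagrams}. Corollary~\ref{corollary-cp-over-R} then gives directly that $\idealpclr$ is a completely prime ideal of $\oqmmnr$ generated as a right ideal (and as a left ideal) by $\pcl$.

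For the inductive step, suppose $N > 0$ and that the theorem holds for every partition $\mu$ with fewer missing boxes. Following the construction in Section~\ref{Gens_for_H_primes_in_part_subalgs}, let $u$ be minimal with $\lambda_u < n$, set $v = \lambda_u + 1$, and put $\lambda' = (\lambda_1, \ldots, \lambda_{u-1}, v, \lambda_{u+1}, \ldots, \lambda_m)$; then $Y_{\lambda'}$ is obtained from $Y_\lambda$ by adjoining the single box $(u,v)$, so $\lambda'$ has $N-1$ missing boxes. Let $C'$ be the diagram on $Y_{\lambda'}$ whose black boxes are exactly those of $C$, with $(u,v)$ white. A short check confirms $C'$ is a Cauchon-Le diagram: if $(a,b)$ is black in $C$, then $(u,v)$ cannot lie in row $a$ to the left of $(a,b)$ (that would force $a=u$ and $b > v > \lambda_u$, contradicting $(a,b) \in Y_\lambda$), nor above $(a,b)$ in column $b$ (that would force $b = v$ and $a>u$, so $v \leq \lambda_a \leq \lambda_u < v$, a contradiction). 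By the inductive hypothesis applied to $(\lambda', C')$, the ideal $J'_R := \idealpclrdash$ is completely prime in $\yldashr$ and is generated as a right ideal by $\pcldash$. Corollary~\ref{corollary-main} then applies verbatim to yield that $\idealpclr$ is completely prime in $\ylr$ and generated as a right ideal by $\pcl$.

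The only content-free verification needed at the level of the induction is the compatibility of $C$ and $C'$, i.e.\ that $C'$ really is a Cauchon-Le diagram, as done above. All the genuine obstacles have been handled in earlier sections: the transition from $\lambda'$ down to $\lambda$ rests on the Ore-extension identification $A_{\lambda'}[z^{-1}] = A_\lambda[z^{\pm 1}; \sigma]$ coming from the deleting-derivations embedding $\theta$; on Proposition~\ref{comb}, which rewrites any pseudo quantum minor of $A_{\lambda'}$ as an $R[z^{\pm 1}]$-combination of pseudo quantum minors of $A_\lambda$; and on Lemma~\ref{lemma-pqm-in-j-zero-r} together with Corollary~\ref{gen by q minors R case}, which together identify the contraction $J'_R[z^{-1}]\cap A_\lambda$ as exactly $\idealpclr$ and guarantee that it is completely prime and right-generated by $\pcl$. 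With those ingredients already in place, the proof of the theorem is simply the two-line induction described above.
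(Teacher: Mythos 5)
Your proposal is correct and follows essentially the same route as the paper: induction on the number of missing boxes, with Corollary~\ref{corollary-cp-over-R} supplying the base case $\oqmmnr$ and Corollary~\ref{corollary-main} supplying the inductive step via the augmented partition $\lambda'$ and the diagram $C'$. Your explicit check that $C'$ is indeed a Cauchon-Le diagram is a small verification the paper leaves implicit, and it is carried out correctly.
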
 

\begin{proof} 
Let $m,n$ be minimal such that $Y_\lambda$ sits in an $m\times n$ array. 
The proof will be by induction on the number $t$ that is the difference between $mn$ and the number of boxes in $Y_\lambda$. \\

If $t=0$ then $Y_\lambda$ is itself an $m\times n$ array, and $\ylr = \oqmmnr$. 
In this case, the result is given by Corollary~\ref{corollary-cp-over-R}.\\

Now, assume that $t>0$ and that the result is true for values less than $t$. \\

Augment $\lambda$ to $\lambda'$ such that the Young diagram $Y_{\lambda'}$ of $\lambda'$ is obtained from $Y_\lambda$ by adding a box in the manner described at the beginning of Section \ref{Gens_for_H_primes_in_part_subalgs}.
Let $C'$ be the Cauchon-Le diagram on  $Y_{\lambda'}$ that has the same black boxes as $C$ has on $Y_\lambda$, 
and let $\idealpclrdash$ be the ideal in $\yldashr$ generated by $\pcldash$. Let $t'$ 
be the difference between $mn$ and the number of boxes in $Y_{\lambda'}$. 
Then $t'=t-1$ and so the result holds in $\yldashr$, by the inductive hypothesis; 
that is, $P':=\idealpclrdash$ is a completely prime ideal of $\yldashr$ that is generated as a 
right ideal of $\yldashr$ by $\pcldash$.\\

Corollary~\ref{corollary-main} shows that $\idealpclr$ is a completely prime ideal 
that is generated as a right ideal by $\pcl$, as required to prove the inductive step.

\end{proof} 

Of course, one can also show that the ideal $P$ from Theorem \ref{theorem-ideal-is-prime} is generated as a left ideal by $\pcl$.




\section{$\ch$-primes in the quantum grassmannian: membership}\label{qgrass}
Fix positive integers $m<n$ and let $q\in \k^*$. Assume that $q$ is not a root of unity. The  {\em quantised homogeneous coordinate ring of the $m\times n$ grassmannian over a field $\k$} (informally known as the \emph{$(m\times n)$ quantum grassmannian}), and denoted by $\oqgmnk$, is defined to be the subalgebra of $\oqmmnk$ generated by the maximal quantum minors of the matrix 
\begin{equation}\label{gen matrix}
\left(\begin{array}{ccc}
x_{1,1}&\cdots & x_{1,n} \\
\vdots &\ddots & \vdots \\
x_{m,1}&\cdots & x_{m,n} \\
\end{array}\right)
\end{equation}
of canonical generators of $\oqmmnk$. By \cite[Theorem 1.1]{klr}, the quantum grassmannian $\oqgmnk$ is a noetherian domain. 

An $m\times m$ quantum minor of the matrix \eqref{gen matrix} must involve each of the $m$ rows of \eqref{gen matrix}; so in order to specify a maximal quantum minor  one needs only specify $m$ of the $n$ columns. As such, the generators of $\oqgmnk$ are written as $[\gamma_1 \cdots \gamma_m]$ where $1\leq \gamma_1<\gamma_2<\cdots<\gamma_m\leq n$; that is, $[\gamma_1 \cdots \gamma_m]$  denotes the maximal quantum minor $[1\cdots m\ |\ \gamma_1\cdots\gamma_m]$ of $\oqmmnk$. 
Such sets $\gamma:=\{\gamma_1<\gamma_2<\cdots<\gamma_m\}$ are called {\em index sets}, and the maximal quantum minors  $[ \gamma_1\cdots\gamma_m]$ are called the  \emph{quantum Pl\"ucker coordinates} of $\oqgmnk$. The set of quantum Pl\"ucker coordinates of $\oqgmnf$ is denoted $\Pi_{m,n}$ or $\Pi$ when  $m$ and $n$ are understood. We shall often identify  $\Pi$ with the set of all $m$-element subsets of $\llb 1,n \rrb$ in the obvious way.

There is a natural partial order on $\Pi$ given by 
\begin{equation}\label{partial order on Pi}
[\gamma_1\cdots \gamma_m]\leq [\gamma_1'\cdots \gamma_m']\iff (\gamma_i\leq \gamma_i'\tx{ for all }i\in \llb 1,m \rrb).
\end{equation}
Next, there is an action of the torus $\ch:=(\k^*)^n$ on $\oqgmnk$ coming from the column action 
on quantum matrices. Thus, 
\begin{equation}\label{H action on qgrass}
(\alpha_1,\ldots,\alpha_n)\cdot [\gamma_1\cdots \gamma_m]:=\alpha_{\gamma_1}\cdots \alpha_{\gamma_m}[\gamma_1\cdots \gamma_m].
\end{equation}
By \cite[Corollary 2.1]{klr}, the algebra $\oqgmnk$ has a $\k$-basis consisting of products of quantum Pl\"ucker coordinates. Since quantum Pl\"ucker coordinates are clearly $\ch$-eigenvectors with rational eigenvalues, it follows easily that the action of $\ch$ on $\oqgmnk$ is rational.

The set of $\ch$-invariant prime ideals of $\oqgmnk$ has been studied in \cite{llr-grass}, building on earlier work in \cite{klr, lr2, lr3} and \cite{lruss}. Our aim in this chapter is to develop a graph-theoretical method for deciding whether or not a given quantum Pl\"ucker coordinate belongs to a given $\ch$-prime ideal of $\oqgmnk$, by using and refining the methods developed in these papers to reduce this problem to the problem of deciding when a given pseudo quantum minor in a partition subalgebra is in an $\ch$-prime ideal of the partition subalgebra, a problem that we have solved in Section~\ref{section-cauchon-le}. 

We start by summarising the known results that we need from these aforementioned papers. 


\subsection{Quantum graded algebras with a straightening law}\label{section-known}

The quantum grassmannian $\oqgmnk$, equipped with the partially ordered set 
$(\Pi, \leq)$ is a quantum graded algebra with a straightening law in the sense of the following 
definition, see \cite[Theorem 3.4.4]{lr2}. In particular, $\oqgmnk$ is $\mathbb{N}$-graded, with all quantum Pl\"ucker coordinates in degree one.

Let $A$ be an algebra and $\Pi$ a finite subset of 
elements of $A$ with a partial order $<_\st$. A  
{\em standard monomial} on $\Pi$ is an element
of $A$ which is either $1$ or of the form $\alpha_1\cdots\alpha_s$, 
for some $s\geq 1$, where $\alpha_1,\dots,\alpha_s \in \Pi$ and
$\alpha_1\le_\st\dots\le_\st\alpha_s$. 

\begin{definition} \label{recall-q-gr-asl}
{\rm Let $A$ be an ${\mathbb N}$-graded $\k$-algebra and $\Pi$ a finite subset 
of $A$ equipped with a partial order $<_\st$. 
We say that $A$ is a {\em quantum graded algebra with a straightening law} 
({\em QGASL} for short) on the poset $(\Pi,<_\st)$ 
if the following conditions are satisfied.\\
(1) The elements of $\Pi$ are homogeneous with positive degree.\\
(2) The elements of $\Pi$ generate $A$ as a $\k$-algebra.\\
(3) The set of standard monomials on $\Pi$ is a linearly independent set.\\
(4) If $\alpha,\beta\in\Pi$ are not comparable for $<_\st$, 
then $\alpha\beta$ 
is a linear combination of terms $\lambda$ or $\lambda\mu$, where 
$\lambda,\mu\in\Pi$, $\lambda\le_\st\mu$ and $\lambda<_\st\alpha,\beta$.\\
(5) For all $\alpha,\beta\in\Pi$, there exists $c_{\alpha\beta} \in \k^\ast$ 
such that $\alpha\beta-c_{\alpha\beta}\beta\alpha$ is a linear combination of 
terms $\lambda$ or $\lambda\mu$, where $\lambda,\mu\in\Pi$,
$\lambda\le_\st\mu$ and $\lambda<_\st\alpha,\beta$.
}
\end{definition}

By \cite[Proposition 1.1.4]{lr2}, if $A$ is a QGASL on the
partially ordered set $(\Pi,<_\st)$, then the set of standard monomials on $\Pi$ forms an
$\k$-basis of $A$. Hence, in the presence of a standard monomial basis, the
structure of a QGASL may be seen as providing more detailed 
information on the way standard monomials multiply and commute.\\

For any $\gamma\in \Pi$, set $\Pi_\gamma=\{\alpha\in \Pi\ |\ \alpha\ngeq \gamma\}$. 
By \cite[Theorem 5.1]{llr-grass}, for every $P\in \Spec(\oqgmnk)$ other than 
the irrelevant ideal $\ang{\Pi}$, there is a unique $\gamma\in \Pi$ such that $\gamma\notin P$ and $\Pi_\gamma\subseteq P$.
For any $\gamma\in \Pi$, let $\hspec_\gamma(\oqgmnk)$ denote the subspace of $\Spec(\oqgmnk)$ consisting of all those $\ch$-prime ideals $J$ such that $\gamma\notin J$ and $\Pi_\gamma\subseteq J$; we have 
\begin{equation}\label{splitting into cells}
\hspec(\oqgmnk)=\bigsqcup_{\gamma\in \Pi}\ts{\hspec_\gamma}(\oqgmnk)\sqcup \ang{\Pi}.
\end{equation}

In order to understand the whole of $\hspec(\oqgmnk)$ it is useful to study the individual 
subsets $\hspec_\gamma(\oqgmnk)$ of this partition. This is done via the notions of quantum 
Schubert varieties and quantum Schubert cells, as outlined below. 


\begin{convention}
For the rest of this chapter, let us fix some $\gamma=[\gamma_1 \cdots\gamma_m]\in \Pi$. \end{convention}

If $J\in\hspec_\gamma(\oqgmnk)$, then by the definition of $\hspec_\gamma(\oqgmnk)$, we know that $\gamma\notin J$ and that $\alpha\in J$ for all $\alpha\in \Pi$ such that $\alpha\ngeq \gamma$. What remains is to decide which other quantum Pl\"ucker coordinates belong to $J$; that is, given $\alpha\in \Pi$ such that $\alpha>\gamma$, we seek to decide whether or not $\alpha$ belongs to $J$. 
The key to achieving this goal is to exploit the correspondence (established in \cite{llr-grass}) between $\hspec_\gamma \oqgmnk$ and the $\ch$-spectrum of a certain partition subalgebra of $\co_{q^{-1}}(M_{m,n-m}(\k))$. We shall describe this correspondence below.


\subsection{Noncommutative dehomogenisation}\label{subsection-dehom}

The process of \emph{noncommutative dehomogenisation}, introduced in \cite[Section 3]{klr}, is the foundation for the construction in \cite{llr-grass} of a biincreasing one-to-one correspondence between $\hspec_\gamma (\oqgmnk)$ and $\hspec(\partitionk)$, where $\lambda$ is a partition associated to $\gamma$.

Let $R=\bigoplus_{i\in \N}R_i$ be an $\N$-graded $\f$-algebra, and let $x$ be a homogeneous normal regular element of degree one. Set $S:=R[x^{-1}]$. The algebra $S$ is $\mz$-graded with 
$S=\bigoplus_{l\in \Z}S_l$ where $S_l:=\sum_{t=0}^\infty R_{l+t}x^{-t}$. (In this sum, we take 
$R_i=0$ for $i<0$.)\\

\begin{definition}\label{dehom def}
{\rm 
Let $R=\bigoplus_{i\in \N}R_i$ be an $\N$-graded $\k$-algebra and let $x$ be a homogeneous regular normal element of $R$ of degree one. The \emph{noncommutative dehomogenisation} of $R$ at $x$, written $\Dhom(R,x)$, is the subalgebra $S_0=\sum_{t=0}^\infty R_tx^{-t}=\bigcup_{t=0}^\infty R_tx^{-t}$ of the $\Z$-graded algebra $R[x^{-1}]=S=\bigoplus_{l\in \Z}S_l$. 
}\end{definition}

Denote by $\sigma$ the conjugation automorphism of $S$ given by $\sigma(s)=xsx^{-1}$ for all $s\in S$. It is easy to check that $\sigma$ restricts to an automorphism of $\Dhom(R,x)=S_0$ (which we shall also denote by $\sigma$). By \cite[Lemma 3.1]{klr}, the inclusion $\Dhom(R,x)\hookrightarrow R[x^{-1}]$ extends to an isomorphism 
\[
\Dhom(R,x)[y^{\pm 1};\sigma]\xrightarrow{\cong} R[x^{-1}]
\]
which sends $y$ to $x$. 


\subsection{Quantum Schubert varieties and quantum Schubert cells}

The ideal $\ang{\Pi_\gamma}$ of $\oqgmnk$ is completely prime, by \cite[Corollary 3.1.7]{lr2}; and so the noetherian algebra $S(\gamma):=\oqgmnf/\ang{\Pi_\gamma}$ is a domain. It is well known that $\oqgmnk$ is an $\N$-graded $\k$-algebra with each quantum Pl\"ucker coordinate being homogeneous of degree $1$. As  the elements of $\Pi_\gamma$ are homogeneous, there is an induced $\N$-grading on $S(\gamma)$. By \cite[Remark 1.4]{llr-grass}, $\ol{\gamma}\in S(\gamma)$ is a homogeneous regular normal element of degree one, so that we may dehomogenise $S(\gamma)$ at $\ol{\gamma}$ (in fact this follows from a more general result, see \cite[Lemma 1.2.1]{lr2}).


\begin{definition}{\rm 
The algebra $S(\gamma):=\oqgmnk/\ang{\Pi_\gamma}$ is called the \emph{quantum Schubert variety} associated to $\gamma$.
The algebra $S^o(\gamma):=\Dhom(S(\gamma),\ol{\gamma})$ is called the \emph{quantum Schubert cell} associated to $\gamma$. 
}\end{definition}


\begin{remark}{\rm 
We shall later describe an isomorphism (established in \cite[Theorem 4.7]{llr-grass}), between the quantum Schubert cell $S^o(\gamma)$ and a partition subalgebra of $\co_{q^{-1}}(M_{m,n-m}(\k))$.
}\end{remark}


\begin{definition}{\rm 
The \emph{ladder} associated to $\gamma$ is denoted by $\cl_\gamma$ and defined by 
\[
\cl_\gamma=\{(i,j)\in \llb 1,m \rrb\times \llb 1,n \rrb\ |\ j>\gamma_{m+1-i}\tx{ and }j\neq \gamma_l\ \tx{ for all } l\in \llb 1,m \rrb \}.
\]
}\end{definition}
 
 A generating set for the quantum Schubert cell $S^o(\gamma)$ was described in \cite[Proposition 4.4]{llr-grass}: if, for $(i,j)\in \cl_\gamma$, one defines $m_{i,j}:=[\{\gamma_1,\ldots,\gamma_m\}\bs \{\gamma_{m+1-i}\}\sqcup \{j\}]$ (which clearly belongs to $\Pi\bs\Pi_\gamma$, so that $\ol{m_{i,j}}\in S(\gamma)$ is nonzero and homogeneous of degree $1$), then the quantum Schubert cell $S^o(\gamma)$ is generated by $\{\ol{m_{i,j}}\bar{\gamma}^{-1}\ |\ (i,j)\in \cl_\gamma\}$. Let us set $\widetilde{m_{i,j}}:=\ol{m_{i,j}}\bar{\gamma}^{-1}$ for all $(i,j)\in \cl_\gamma$.

Since $\ang{\Pi_\gamma}$ is clearly an $\ch$-invariant ideal of $\oqgmnk$, the action of $\ch$ on $\oqgmnk$ descends to $S(\gamma)$. Since $\ol{\gamma}$ is an $\ch$-eigenvector of $S(\gamma)$, the action of $\ch$ on $S(\gamma)$ extends to $S(\gamma)[\ol{\gamma}^{-1}]$. This action of $\ch$ on $S(\gamma)[\ol{\gamma}^{-1}]$ restricts to $S^o(\gamma)$; indeed for any $\widetilde{m_{i,j}}$ with $(i,j)\in \cl_\gamma$, and any $(\alpha_1,\ldots,\alpha_n)\in \ch$, an elementary calculation shows that
\begin{equation}\label{action of H on So}
(\alpha_1,\ldots,\alpha_n)\cdot \widetilde{m_{i,j}}=\alpha_{\gamma_{m+1-i}}^{-1}\alpha_j \widetilde{m_{i,j}}.
\end{equation}

Recall from the general theory of noncommutative dehomogenisation that when $\sigma$ is the restriction to $S^o(\gamma)$ of the automorphism of $S(\gamma)[\ol{\gamma}^{-1}]$ given by $s\mapsto\ol{\gamma}s\ol{\gamma}^{-1}$ for all $s\in S(\gamma)[\ol{\gamma}^{-1}]$, the inclusion $S^o(\gamma)\hookrightarrow S(\gamma)[\ol{\gamma}^{-1}]$ extends to an isomorphism
\begin{equation}\label{dehom}
S^o(\gamma)[y^{\pm 1};\sigma]\to (\oqgmnk/\ang{\Pi_\gamma})[\ol{\gamma}^{-1}]
\end{equation}
which sends $y$ to $\ol{\gamma}$. Notice here that by \cite[Lemma 3.1.4(v)]{lr2}, the automorphism $\sigma$ multiplies each $\widetilde{m_{i,j}}$ ($(i,j)\in \cl_\gamma$) by $q$. The action of $\ch$ on $(\oqgmnf/\ang{\Pi_\gamma})[\ol{\gamma}^{-1}]$ passes to $S^o(\gamma)[y^{\pm 1};\sigma]$ via the isomorphism \eqref{dehom} and this action of $\ch$ on $S^o(\gamma)[y^{\pm 1};\sigma]$ restricts to the action of $\ch$ on $S^o(\gamma)$ described in \eqref{action of H on So}. 
In particular, the isomorphism \eqref{dehom} is $\ch$-equivariant where $\ch$ acts on $S^o(\gamma)$ as in \eqref{action of H on So} and each $(\alpha_1,\ldots,\alpha_n)\in \ch$ acts on $y$ as follows 
\begin{equation}\label{H action on y}
(\alpha_1,\ldots,\alpha_n)\cdot y=\alpha_{\gamma_1}\cdots\alpha_{\gamma_m}y
\end{equation}
(cf. \eqref{H action on qgrass}).


\subsection{Quantum ladder matrix algebras}
It was shown in \cite{llr-grass} that the quantum Schubert cell $S^o(\gamma)$ can be identified with a well-behaved subalgebra of $\oqmmnk$, which can in turn be identified with a partition subalgebra of $\co_{q^{-1}}(M_{m,n-m}(\k))$. We describe these isomorphisms in detail in this section.


\begin{definition}{\rm 
The \emph{quantum ladder matrix algebra} associated to $\gamma$ is the subalgebra of $\oqmmnk$ generated by all those $x_{i,j}$ with $(i,j)\in \cl_\gamma$; it is denoted by $\ladderk$.
}\end{definition}


By \cite[Lemma 4.6]{llr-grass}, there is an isomorphism
\begin{align}\label{So to ladder}
\begin{split}
S^o(\gamma) &\xrightarrow{\cong} \ladderk \\
\widetilde{m_{i,j}}&\mapsto x_{i,j}.
\end{split}
\end{align}

One may obtain the generators of $\ladderk$ as follows. Consider the matrix 
\begin{equation}\label{the gens pre ladder}
\left(
\begin{array}{ccc}
x_{1,1}&\cdots&x_{1,n} \\
\vdots&\ddots&\vdots \\
x_{m,1}&\cdots&x_{m,n} \\
\end{array}
\right)
\end{equation}
of canonical generators of $\co_q(M_{m,n}(\k))$ and recall that $\gamma=[\gamma_1\cdots\gamma_m]$. For each $i\in \llb 1,m \rrb$, remove the $i^{\tx{th}}$-last entry of the $\gamma_i^{\tx{th}}$ column of \eqref{the gens pre ladder} (namely the entry $x_{m+1-i,\gamma_i}$) and replace it with a bullet. For each bullet, replace all matrix entries which are to its left and all matrix entries which are below it with stars. Then the quantum ladder matrix algebra $\co_q(M_{m,n,\gamma}(\k))$ is the subalgebra of $\co_q(M_{m,n}(\k))$ which is generated by the entries of the matrix \eqref{the gens pre ladder} which survive this process (that is, which are not replaced by a bullet or a star).


\begin{example}\label{ladder ex}
{\rm Let $\gamma$ be the maximal quantum minor $[1347]$ of $\co_q(G_{4,8}(\k))$ and consider the matrix 
\[
\left(
\begin{array}{cccccccc}
x_{1,1}&x_{1,2}&x_{1,3}&x_{1,4}&x_{1,5}&x_{1,6}&x_{1,7}&x_{1,8} \\
x_{2,1}&x_{2,2}&x_{2,3}&x_{2,4}&x_{2,5}&x_{2,6}&x_{2,7}&x_{2,8} \\
x_{3,1}&x_{3,2}&x_{3,3}&x_{3,4}&x_{3,5}&x_{3,6}&x_{3,7}&x_{3,8} \\
x_{4,1}&x_{4,2}&x_{4,3}&x_{4,4}&x_{4,5}&x_{4,6}&x_{4,7}&x_{4,8} \\
\end{array}
\right)
\]
of canonical generators of $\co_q(M_{4,8}(\k))$.
Applying the prescribed procedure, we are left with 
\begin{equation}\label{48 ladder generators}
\left(
\begin{array}{cccccccc}
\ast&\ast&\ast&\ast&\ast&\ast&\bullet&x_{1,8} \\
\ast&\ast&\ast&\bullet&x_{2,5}&x_{2,6}&\ast&x_{2,8} \\
\ast&\ast&\bullet&\ast&x_{3,5}&x_{3,6}&\ast&x_{3,8} \\
\bullet&x_{4,2}&\ast&\ast&x_{4,5}&x_{4,6}&\ast&x_{4,8} \\
\end{array}
\right)
\end{equation}
The quantum ladder matrix algebra $\co_q(M_{4,8,\gamma}(\k))$ is the subalgebra of $\co_q(M_{4,8}(\k))$ generated by those $x_{i,j}$ appearing in \eqref{48 ladder generators}. After rotating \eqref{48 ladder generators} through $180^\circ$ and deleting the columns containing bullets, notice that the generators of $\co_q(M_{4,8,\gamma}(\k))$ lie in the Young diagram below
\begin{equation}\label{1347}
\begin{tikzpicture}[xscale=0.8, yscale=0.8]
\draw[color=gray] (0,0) rectangle (1,1);
\draw[color=gray] (0,1) rectangle (1,2);
\draw[color=gray] (1,1) rectangle (2,2);
\draw[color=gray] (2,1) rectangle (3,2);
\draw[color=gray] (0,2) rectangle (1,3);
\draw[color=gray] (1,2) rectangle (2,3);
\draw[color=gray] (2,2) rectangle (3,3);
\draw[color=gray] (0,3) rectangle (1,4);
\draw[color=gray] (1,3) rectangle (2,4);
\draw[color=gray] (2,3) rectangle (3,4);
\draw[color=gray] (3,3) rectangle (4,4);
\end{tikzpicture}
\end{equation}
In fact it turns out that the quantum ladder matrix algebra $\co_q(M_{4,8,\gamma}(\k))$ is isomorphic to the partition subalgebra of $\co_{q^{-1}}(M_{4,4}(\k))$ corresponding to the partition whose Young diagram is \eqref{1347}. 
}\end{example}



\begin{notation}\label{associated partition}
{\rm Notice that  $\gamma_i-i=|\{a\in \llb 1,n \rrb\bs\gamma\ |\ a<\gamma_i\}|$ 
for each $i\in\llb 1,m \rrb$. It follows easily that if we define $\lambda_i=n-m-(\gamma_i-i)$ for each $i\in \llb 1,m \rrb$, then $(\lambda_1,\ldots,\lambda_m)$ is a partition with $n-m\geq \lambda_1\geq \lambda_2\geq\cdots\geq \lambda_m\geq 0$.
Let $c$ be as large as possible such that $\lambda_c\neq 0$ and denote by $\lambda$ the partition $(\lambda_1,\ldots,\lambda_c)$. Recall that $\partitionk$ denotes the partition subalgebra of $\co_{q^{-1}}(M_{m,n-m}(\k))$ associated to the partition $\lambda$. 
}\end{notation}

Note that the south and east borders of $Y_{\lambda}$ give rise to path of length $n$, from the north-east corner to the south-west corner of the $m\times (n-m)$ rectangle. Label each of edges of this path with the numbers $1$ through $n$ (starting from the north-east corner). Then the elements of $\gamma$ coincide with the vertical steps in this numbering. The following example illustrates this construction when $\gamma =[1347]$.
\begin{equation}\label{1347}
\begin{tikzpicture}[xscale=0.8, yscale=0.8]
\draw[color=gray] (0,0) rectangle (1,1);
\draw[color=gray] (0,1) rectangle (1,2);
\draw[color=gray] (1,1) rectangle (2,2);
\draw[color=gray] (2,1) rectangle (3,2);
\draw[color=gray] (0,2) rectangle (1,3);
\draw[color=gray] (1,2) rectangle (2,3);
\draw[color=gray] (2,2) rectangle (3,3);
\draw[color=gray] (0,3) rectangle (1,4);
\draw[color=gray] (1,3) rectangle (2,4);
\draw[color=gray] (2,3) rectangle (3,4);
\draw[color=gray] (3,3) rectangle (4,4);
\node at (4.25,3.55) {$1$};
\node at (3.6,2.75) {$2$};
\node at (3.25,2.55) {$3$};
\node at (3.25,1.55) {$4$};
\node at (2.6,0.75) {$5$};
\node at (1.6,0.75) {$6$};
\node at (1.25,0.55) {$7$};
\node at (0.6,-0.25) {$8$};
\end{tikzpicture}
\end{equation}


Let $\{a_1<\cdots<a_{n-m}\}=\llb 1,n \rrb\bs\gamma$ and notice that all elements of $\cl_\gamma$ take the form $(i,a_j)$ for some $i\in \llb 1,m \rrb$ and some $j\in \llb 1,n-m\rrb$.
The following result appears in the proof of \cite[Theorem 4.7]{llr-grass}. We write down the maps explicitly here as we shall need them later.


\begin{lemma}\label{ladder to partition}
There is an isomorphism 
\[
f: \co_q(M_{m,n,\gamma}(\k))\xrightarrow{\cong} \partitionk
\] 
such that
\begin{itemize}
\item $f(x_{i,a_j})=x_{m+1-i,n-m+1-j}$ for each $(i,a_j)\in \cl_\gamma$;
\item $f^{-1}(x_{i,j})=x_{m+1-i,a_{n-m+1-j}}$ for each $(i,j)\in Y_\lambda$.
\end{itemize}
\end{lemma}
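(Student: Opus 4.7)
The plan is to construct the map $f$ on generators, check that the defining relations of the quantum ladder matrix algebra are preserved (up to the change of parameter $q \leadsto q^{-1}$), and then verify that the inverse is well-defined by the same argument. The key conceptual point is that the prescription $(i,a_j)\mapsto (m+1-i,\, n-m+1-j)$ is a $180^\circ$ rotation of the diagram of generators, and such a rotation has exactly the effect of inverting the deformation parameter in the quantum matrix relations.

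First, I would verify the combinatorial statement that this rotation sends $\cl_\gamma$ bijectively onto $Y_\lambda$. By the description $\cl_\gamma=\{(i,j):j>\gamma_{m+1-i},\ j\notin\gamma\}$, the number of boxes in row $i$ of $\cl_\gamma$ equals $|\{a_k : a_k>\gamma_{m+1-i}\}| = n-\gamma_{m+1-i}-(i-1)$, since exactly $i-1$ elements of $\gamma$ lie above $\gamma_{m+1-i}$. On the other hand, using $\lambda_{m+1-i}=n-m-(\gamma_{m+1-i}-(m+1-i))$, the length of row $m+1-i$ of $Y_\lambda$ is $n-\gamma_{m+1-i}+1-i$. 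These agree, so the rotation is a bijection of shapes; moreover within each row the columns $a_j$ appear in increasing order and are sent to $n-m+1-j$ in decreasing order, matching the way $Y_\lambda$ is filled from left to right.

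Next, I would check the relations. Take two distinct generators $x_{i,a_j},x_{k,a_l}$ of $\co_q(M_{m,n,\gamma}(\k))$ and set $(a,b):=(m+1-i,n-m+1-j)$, $(c,d):=(m+1-k,n-m+1-l)$. The rotation reverses both orders, so: same-row pairs go to same-row pairs, same-column pairs go to same-column pairs, and the sign of $(i-k)(j-l)$ is preserved. The $q$-commutations within a row or column transform cleanly into the $q^{-1}$-commutations that hold in $\co_{q^{-1}}(M_{m,n-m}(\k))$ (since reversing the order within a row or column inverts the scalar $q \leadsto q^{-1}$). For disjoint row/column pairs with $(i-k)(j-l)<0$, the generators commute on both sides. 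The only delicate case is the nasty relation, where $i<k$, $j<l$ gives $a>c$, $b>d$: starting from
\[
x_{i,a_j}x_{k,a_l}-x_{k,a_l}x_{i,a_j}=(q-q^{-1})x_{i,a_l}x_{k,a_j}
\]
and applying $f$, one needs $x_{a,b}x_{c,d}-x_{c,d}x_{a,b}=(q-q^{-1})x_{a,d}x_{c,b}$ in the partition subalgebra. The $\co_{q^{-1}}$-nasty relation applied to $(c,d)<(a,b)$ gives $x_{a,b}x_{c,d}-x_{c,d}x_{a,b}=(q-q^{-1})x_{c,b}x_{a,d}$, while the anti-diagonal pair $(a,d),(c,b)$ commutes, so $x_{c,b}x_{a,d}=x_{a,d}x_{c,b}$ and the relations match.

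The main (mild) obstacle is just the careful bookkeeping of the nasty relation under the rotation, because the sign of the commutator flips twice (once from interchanging the two factors on the left, once from $q\leadsto q^{-1}$ on the right) and one must use the anti-diagonal commutation of the right-hand side to reconcile the two expressions. Once the relations are verified, the map $f$ is well-defined by the universal property of the quantum matrix presentation. The same computation applied in reverse shows that the formula $x_{i,j}\mapsto x_{m+1-i,a_{n-m+1-j}}$ respects the $\co_{q^{-1}}$-relations of $\partitionk$ and therefore extends to a homomorphism $\partitionk\to\co_q(M_{m,n,\gamma}(\k))$ that is a two-sided inverse of $f$ on generators, completing the proof that $f$ is an isomorphism.
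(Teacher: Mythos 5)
Your proposal is correct, and it rests on the same underlying observation as the paper's proof, namely that the map is a $180^\circ$ rotation of the generator diagram and such a rotation converts the $\co_q$-relations into $\co_{q^{-1}}$-relations. The execution differs, though: the paper does not verify any relations directly. It quotes the known rotation isomorphism $\co_q(M_n(\k))\cong\co_{q^{-1}}(M_n(\k))$ from Goodearl--Lenagan, derives from it an isomorphism $\delta:\co_q(M_{m,n}(\k))\cong\co_{q^{-1}}(M_{m,n}(\k))$, $x_{i,j}\mapsto x_{m+1-i,n+1-j}$, restricts $\delta$ to the ladder subalgebra, and then identifies $\delta(\ladderk)$ with $\partitionk$ by deleting the empty (bullet) columns and relabelling, using that the quantum matrix relations depend only on the relative order of row and column indices. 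Your route instead checks by hand that the rotation is a shape bijection $\cl_\gamma\to Y_\lambda$ (your row-length count is right) and that all four types of relations, including the nasty relation via the commuting anti-diagonal pair, are preserved with $q\leadsto q^{-1}$; this is a perfectly sound, self-contained computation, whereas the paper's argument is shorter because it only ever maps out of algebras whose presentation is known by definition.

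One point you should make explicit: your appeal to ``the universal property of the quantum matrix presentation'' (and the symmetric argument for the inverse) presupposes that $\ladderk$ and $\partitionk$ are \emph{presented} by the displayed generators and the restricted quantum matrix relations, whereas both are defined as subalgebras. This is true, and is the standard fact that such ladder/Young-diagram subalgebras are iterated Ore extensions in their generators (so they have PBW bases and the evident relations suffice); for the ladder side it uses precisely the closure property you implicitly invoke in the nasty-relation check, namely that if $(i,a_j),(k,a_l)\in\cl_\gamma$ with $i<k$, $a_j<a_l$, then $(i,a_l),(k,a_j)\in\cl_\gamma$. Stating this closure property and the resulting PBW/presentation claim (or, alternatively, defining your map as the restriction of the full rotation isomorphism $\delta$, as the paper does, which removes the need for a presentation of the source) would close the only loose end in your write-up; it is a gap of rigour rather than of substance.
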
 


\begin{proof} By the proof of \cite[Corollary 5.9]{gl-winding}, there is an isomorphism $\co_q(M_n(\k))\xrightarrow{\cong} \co_{q^{-1}}(M_n(\k))$ which sends each $x_{i,j}$ to $x_{n+1-i,n+1-j}$; this isomorphism can be thought of as rotating the matrix of canonical generators for $\co_q(M_n(\k))$ through $180^\circ$. 

There is an isomorphism 
$\delta: \co_q(M_{m,n}(\k))\xrightarrow{\cong}\co_{q^{-1}}(M_{m,n}(\k))$ 
with $\delta(x_{i,j})=x_{m+1-i,n+1-j}$
for each $(i,j)\in \llb 1,m \rrb\times\llb 1,n \rrb$, 
(this isomorphism can be thought of as rotating the matrix of canonical generators for $\co_q(M_{m,n}(\k))$ through $180^\circ$). This isomorphism is constructed by identifying
$\co_q(M_{m,n}(\k))$ with the subalgebra of $\co_q(M_n(\k))$ generated by the last $m$ rows of the matrix of canonical generators for $\co_q(M_n(\k))$, identifying $\co_{q^{-1}}(M_{m,n}(\k))$ with the subalgebra of $\co_{q^{-1}}(M_n(\k))$ generated by the first $m$ rows of the matrix of canonical generators for $\co_{q^{-1}}(M_n(\k))$, and applying the isomorphism described in the previous paragraph. 

There is an isomorphism $\delta(\ladderk)\xrightarrow{\cong} \partitionk$ which sends each $\delta(x_{i,a_j})=x_{m+1-i,n+1-a_j}$ ($(i,a_j)\in \cl_\gamma$) to $x_{m+1-i,n-m+1-j}$. Composing this isomorphism with $\delta$ (or rather the restriction of $\delta$ to $\ladder$) gives the desired isomorphism $f$.
\end{proof}


The isomorphism $f$ is simpler than the notation of Lemma \ref{ladder to partition} might make it seem. The following example should illuminate the idea.


\begin{example}
In the situation of Example \ref{ladder ex}, where $\gamma$ is the quantum Pl\"ucker coordinate $[1347]$ of $\co_q(G_{4,8}(\k))$, the generators of the quantum ladder matrix algebra $\co_q(M_{4,8,\gamma}(\k))$ are those appearing below
\[
\left(
\begin{array}{cccccccc}
\ast&\ast&\ast&\ast&\ast&\ast&\bullet&x_{1,8} \\
\ast&\ast&\ast&\bullet&x_{2,5}&x_{2,6}&\ast&x_{2,8} \\
\ast&\ast&\bullet&\ast&x_{3,5}&x_{3,6}&\ast&x_{3,8} \\
\bullet&x_{4,2}&\ast&\ast&x_{4,5}&x_{4,6}&\ast&x_{4,8} \\
\end{array}
\right).
\]
The action of the isomorphism $\delta (\co_q(M_{4,8}(\k)))\xrightarrow{\cong}\co_{q^{-1}}(M_{4,8}(\k))$ may be understood as rotating this picture through $180^\circ$:
\begin{equation}\label{sigh}
\left(
\begin{array}{cccccccc}
x_{1,1}&\ast& x_{1,3}&x_{1,4}&\ast&\ast&x_{1,7}&\bullet \\
x_{2,1}&\ast&x_{2,3}&x_{2,4}&\ast&\bullet&\ast&\ast \\
x_{3,1}&\ast&x_{3,3}&x_{3,4}&\bullet&\ast&\ast&\ast \\
x_{4,1}&\bullet&\ast&\ast&\ast&\ast&\ast&\ast
\end{array}
\right)
\end{equation}
Let $\lambda$ be the partition associated to $\gamma$ as in Notation \ref{associated partition}, whose Young diagram is 
\begin{center}
\begin{tikzpicture}[xscale=0.8, yscale=0.8]
\draw[color=gray] (0,0) rectangle (1,1);
\draw[color=gray] (0,1) rectangle (1,2);
\draw[color=gray] (1,1) rectangle (2,2);
\draw[color=gray] (2,1) rectangle (3,2);
\draw[color=gray] (0,2) rectangle (1,3);
\draw[color=gray] (1,2) rectangle (2,3);
\draw[color=gray] (2,2) rectangle (3,3);
\draw[color=gray] (0,3) rectangle (1,4);
\draw[color=gray] (1,3) rectangle (2,4);
\draw[color=gray] (2,3) rectangle (3,4);
\draw[color=gray] (3,3) rectangle (4,4);
\end{tikzpicture}
\end{center}
The subalgebra of $\co_{q^{-1}}(M_{4,8}(\k))$ generated by the $x_{i,j}$ appearing in \eqref{sigh} is clearly isomorphic to the partition subalgebra $\partitionk$ of $\co_{q^{-1}}(M_{4,4}(\k))$.
\end{example}


The following is a more explicit statement of \cite[Theorem 4.7]{llr-grass}.


\begin{theorem}\label{So to partition}
There is an isomorphism $\theta: S^o(\gamma)\xrightarrow{\cong} 
\partitionk$ such that
\begin{itemize}
\item $\theta(\wt{m_{i,a_j}})=x_{m+1-i,n-m+1-j}$ for each $(i,a_j)\in \cl_\gamma$;
\item $\theta^{-1}(x_{i,j})=\wt{m_{m+1-i,a_{n-m+1-j}}}$ for each $(i,j)\in Y_\lambda$.
\end{itemize}
\end{theorem}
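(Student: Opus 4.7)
The proof reduces to composing two isomorphisms that are already established earlier in the excerpt. First, I would recall the isomorphism \eqref{So to ladder}, namely $\alpha : S^o(\gamma) \xrightarrow{\cong} \ladderk$ given by $\widetilde{m_{i,j}} \mapsto x_{i,j}$ for each $(i,j)\in \cl_\gamma$. This sends the canonical generators of the quantum Schubert cell to the canonical generators of the quantum ladder matrix algebra. Second, I would invoke Lemma \ref{ladder to partition}, which supplies an isomorphism $f : \ladderk \xrightarrow{\cong} \partitionk$ satisfying $f(x_{i,a_j}) = x_{m+1-i,n-m+1-j}$ for each $(i,a_j)\in \cl_\gamma$, and whose inverse acts as $f^{-1}(x_{i,j}) = x_{m+1-i,a_{n-m+1-j}}$ for $(i,j) \in Y_\lambda$.

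The plan then is simply to set $\theta := f \circ \alpha$. Being a composition of two isomorphisms of $\k$-algebras, $\theta$ is itself an isomorphism $S^o(\gamma) \xrightarrow{\cong} \partitionk$. To verify the explicit formulas in the statement, I would track the images of generators: for $(i,a_j) \in \cl_\gamma$,
\[
\theta(\widetilde{m_{i,a_j}}) \;=\; f(\alpha(\widetilde{m_{i,a_j}})) \;=\; f(x_{i,a_j}) \;=\; x_{m+1-i,n-m+1-j},
\]
and dually, for $(i,j) \in Y_\lambda$,
\[
\theta^{-1}(x_{i,j}) \;=\; \alpha^{-1}(f^{-1}(x_{i,j})) \;=\; \alpha^{-1}(x_{m+1-i,a_{n-m+1-j}}) \;=\; \widetilde{m_{m+1-i,a_{n-m+1-j}}}.
\]

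Since the real content (namely, the two constituent isomorphisms and the combinatorial bijection between $\cl_\gamma$ and $Y_\lambda$ via $(i,a_j) \leftrightarrow (m+1-i, n-m+1-j)$) has already been proved, there is no substantive obstacle: the argument is a bookkeeping check that the two reindexings compose correctly. The only place one must be careful is the orientation convention, since $f$ was constructed in Lemma \ref{ladder to partition} by essentially rotating the generating matrix through $180^\circ$ and deleting the columns containing bullets; this is precisely what matches the index flip $(i,a_j) \mapsto (m+1-i, n-m+1-j)$ that appears in the statement, and it is what produces the switch from $q$ to $q^{-1}$ between $\oqmmnk$ and $\co_{q^{-1}}(M_{m,n-m}(\k))$.
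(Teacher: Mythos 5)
Your proposal is correct and matches the paper's own proof exactly: the paper also defines $\theta$ as the composition of the isomorphism $S^o(\gamma)\xrightarrow{\cong}\ladderk$ from \eqref{So to ladder} with the isomorphism $f$ of Lemma \ref{ladder to partition}, and the explicit formulas follow by tracking generators just as you do.
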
 


\begin{proof}
  \begin{full} 
When $g$ is the isomorphism $S^o(\gamma)\xrightarrow{\cong} \ladderk$ given in \eqref{So to ladder} and $f$ is the isomorphism $\ladderk\xrightarrow{\cong} \partitionk$ given in Lemma \ref{ladder to partition}, the desired isomorphism $\theta$ is given by $f\circ g$.
\end{full} 
\end{proof}


We may pass the action of $\ch$ on $S^o(\gamma)$ through $\theta$ to get an action of $\ch$ on $\partition$ described by 
\begin{equation}\label{induced action of H on partition}
(\alpha_1,\ldots,\alpha_n)\cdot x_{i,j}=\alpha_{\gamma_i}^{-1}\alpha_{a_{n-m+1-j}}x_{i,j}
\end{equation}
for all $(\alpha_1,\ldots,\alpha_n)\in \ch$ and all $(i,j)\in Y_\lambda$. With this action of $\ch$ on $\partition$, the isomorphism $\theta$ is $\ch$-equivariant. 

\vspace{2mm}


\noindent \danger\textbf{WARNING}\danger  \vspace{2mm}  Because it allows the isomorphism $\theta$ to be $\ch$-equivariant, the $\ch$-action on $\partitionk$ which we shall use is that given in \eqref{induced action of H on partition}; this is NOT the usual action of $\ch$ on $\partitionk$ (which is the restriction of the action of $\ch$ on $\co_{q^{-1}}(M_{m,n-m}(\k))$). 


\vspace{2mm}

In spite of the warning above, the following lemma shows that in fact we may use the term \emph{$\ch$-prime ideal of $\partition$} without ambiguity (cf. commentary in \cite{llr-grass} before Theorem 4.8).


\begin{lemma}\label{H pres}
The same subsets (and in particular the same prime ideals) of $\partition$ are invariant under $\ch$ whether one uses the action of $\ch$ described in \eqref{induced action of H on partition} or the restriction of the action of $\ch$ on on $\co_{q^{-1}}(M_{m,n-m}(\k))$. 
\end{lemma}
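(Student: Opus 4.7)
My plan is to compare the two actions via their weight space decompositions. Both actions of $\ch=(\k^*)^n$ on $\partition$ are rational (an elementary verification using that each generator $x_{i,j}$ is an $\ch$-eigenvector for both actions, together with \cite[Theorem II.2.7]{bg}), and any subset $S\subseteq \partition$ is $\ch$-invariant for a given rational action if and only if $S$ is a union of weight spaces for that action. Consequently, the lemma will follow from showing that the two actions induce the \emph{same} weight space decomposition on $\partition$.

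Using the $\f$-basis of $\partition$ given by lexicographically ordered monomials $\vect{x}^M=\prod x_{i,j}^{M_{i,j}}$ (indexed by $M\in M_{m,n-m}(\Z_{\geq 0})$ supported on $Y_\lambda$), each such monomial is an $\ch$-eigenvector for both actions. Hence the weight space decomposition for each action is completely determined by the equivalence relation ``$\vect{x}^M$ and $\vect{x}^N$ have the same eigenvalue''. I will therefore show that two such monomials have the same eigenvalue under the action of \eqref{induced action of H on partition} if and only if they have the same eigenvalue under the restriction action.

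For the restriction action of $\ch$ coming from the standard action on $\co_{q^{-1}}(M_{m,n-m}(\k))$, $\vect{x}^M$ and $\vect{x}^N$ carry the same weight precisely when $M$ and $N$ have identical row and column sums: $\sum_{j}M_{i,j}=\sum_{j}N_{i,j}$ for every $i$, and $\sum_{i}M_{i,j}=\sum_{i}N_{i,j}$ for every $j$. For the action of \eqref{induced action of H on partition}, the weight of $\vect{x}^M$ as an element of the character group $\Z^n$ of $\ch$ equals
\[
\chi(M)\ :=\ -\sum_{i=1}^{m}\Bigl(\sum_{j}M_{i,j}\Bigr) e_{\gamma_i}\ +\ \sum_{j=1}^{n-m}\Bigl(\sum_{i}M_{i,j}\Bigr) e_{a_{n-m+1-j}},
\]
where $e_1,\ldots,e_n$ is the standard basis of $\Z^n$. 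Since $\{\gamma_1,\ldots,\gamma_m\}$ and $\{a_1,\ldots,a_{n-m}\}$ partition $\llb 1,n\rrb$, the vectors $\{e_{\gamma_i}\}_{i}\sqcup\{e_{a_j}\}_{j}$ form a basis of $\Z^n$, so $\chi(M)=\chi(N)$ recovers exactly the row sums and column sums of $M$ from those of $N$.

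The main (and only real) obstacle is the bookkeeping above: verifying that the re-indexing $j\mapsto a_{n-m+1-j}$ in \eqref{induced action of H on partition} indeed gives independent coordinates on the $a$-part of $\Z^n$, so that no accidental weight coincidences occur under the induced action that are absent from the standard one. Once this is confirmed, the two equivalence relations on monomials coincide, the two weight space decompositions agree, and therefore the $\ch$-invariant subsets of $\partition$ coincide for the two actions. Specialising to two-sided ideals and prime ideals gives the claim of the lemma.
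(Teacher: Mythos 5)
There is a genuine gap. Your opening reduction---that a subset of $\partition$ is $\ch$-invariant for a rational action if and only if it is a union of weight spaces---is false for subsets that are not subspaces (an invariant subset need not contain $0$ nor be closed under all scalars; think of the orbit of a single point), and the correct statement, namely that a \emph{subspace} is invariant iff it is the sum of its intersections with the weight spaces, is not strong enough for the lemma as stated, which concerns arbitrary subsets. More seriously, coincidence of the two weight space decompositions does not by itself force coincidence of invariant subsets: on $\k^2$ let $\k^*$ act by $h\cdot(x,y)=(hx,h^{2}y)$ and by $h\#(x,y)=(hx,h^{3}y)$; in both cases the weight spaces are exactly the two coordinate lines (so the "same eigenvalue" equivalence relation on the eigenbasis is identical), yet the set $\{(c,c^{2})\ :\ c\in\k\}$ is invariant for the first action and not for the second. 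Hence your final inference "the two weight space decompositions agree, therefore the $\ch$-invariant subsets coincide" does not follow; what your argument actually yields is the statement for invariant \emph{subspaces}, hence for ideals and prime ideals, but not the full claim about all subsets.

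The missing ingredient is in fact already visible in your weight computation: since $\{e_{\gamma_i}\}_{i}\sqcup\{e_{a_j}\}_{j}$ is a basis of $\Z^n$, the assignment $e_i\mapsto -e_{\gamma_i}$ for $1\leq i\leq m$ and $e_{m+j}\mapsto e_{a_{n-m+1-j}}$ for $1\leq j\leq n-m$ is an automorphism of the character lattice, so the two actions differ by precomposition with an automorphism of the torus $\ch$. Equivalently, for every $\alpha\in\ch$ there exist $\alpha',\alpha''\in\ch$ with $\alpha\cdot x=\alpha'\# x$ and $\alpha\# x=\alpha''\cdot x$ for all $x\in\partition$; the two actions then have the same image in $\mathrm{Aut}(\partition)$ and therefore exactly the same invariant subsets, of any kind. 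This is precisely the paper's (shorter) proof: it writes down $\alpha'$ and $\alpha''$ explicitly, checks the two identities on the generators $x_{i,j}$, and concludes. Replacing your weight-space step by this observation closes the gap and essentially reduces your argument to the paper's.
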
 


\begin{proof}
  \begin{full}
 Let us use ``$\cdot$'' to denote the action of $\ch$ on $\partition$ transferred through $\theta$ from the action on $S^o(\gamma)$ (described in \eqref{induced action of H on partition}), let us use ``$\#$'' to denote the standard action of $\ch$ on $\partition$, and let us fix any $\alpha=(\alpha_1,\ldots,\alpha_n)\in \ch$.

Define $\alpha'=(\alpha'_1,\ldots,\alpha'_n),\ \alpha''=(\alpha''_1,\ldots,\alpha''_n)\in \ch$ by $\alpha'_{i}=\alpha_{\gamma_i}^{-1}$ for all $i\in \llb 1,m \rrb$, $\alpha'_{m+j}=\alpha_{a_{n-m+1-j}}$ for all $j\in \llb 1, n-m\rrb$, $\alpha''_{\gamma_i}=\alpha_i^{-1}$ for all $i\in \llb 1,m \rrb$ and $\alpha''_{a_{n-m+1-j}}=\alpha_{m+j}$ for all $j\in \llb 1, n-m\rrb$. 

One checks easily that if $(i,j)\in Y_\lambda$, then $\alpha\cdot x_{i,j}=\alpha' \# x_{i,j}$ and $\alpha \# x_{i,j} = \alpha''\cdot x_{i,j}$. Since these $x_{i,j}$ generate $\partitionk$, we have $\alpha\cdot x=\alpha' \# x$ and $\alpha \# x = \alpha''\cdot x$ for all $x\in \partitionk$.
The result follows.
  \end{full}
\end{proof}



\subsection{The correspondence between $\ch$-primes and Cauchon-Le diagrams}\label{sec-corresp-h-primes-cauchon-diags}
Recall that we have set $\{a_1<\cdots<a_{n-m}\}=\llb 1,n \rrb\bs\gamma$ and that all elements of $\cl_\gamma$ take the form $(i,a_j)$ for some $i\in \llb 1,m \rrb$ and some $j\in \llb 1, n-m\rrb$.

When $\sigma$ is the automorphism of $\partitionk$ which multiplies each $x_{i,j}$  by $q$, the $\ch$-equivariant isomorphism $\theta: S^o(\gamma)\xrightarrow{\cong} \partitionk$ (from Theorem \ref{So to partition}) and the $\ch$-equivariant dehomogenisation isomorphism 
\[
S^o(\gamma)[y^{\pm 1};\sigma]\xrightarrow{\cong} (\oqgmnk/\ang{\Pi_\gamma})[\ol{\gamma}^{-1}]
\]
given in \eqref{dehom} induce an $\ch$-equivariant isomorphism
\begin{align}\label{the godfather iso}
\begin{split}
\Phi: \partitionk [y^{\pm 1};\sigma]&\xrightarrow{\cong} (\oqgmnk/\ang{\Pi_\gamma})[\ol{\gamma}^{-1}] \\
x_{i,j}&\mapsto \wt{m_{m+1-i,a_{n-m+1-j}}} \hspace{7mm} ((i,j)\in Y_\lambda) \\
y&\mapsto \ol{\gamma}. \\
\end{split}
\end{align}
whose inverse we shall denote by $\Psi$.


\begin{remark}\label{ends up in part}
Recall that the dehomogenisation isomorphism 
\[
S^o(\gamma)[y^{\pm 1};\sigma]\xrightarrow{\cong} 
(\oqgmnk/\ang{\Pi_\gamma})[\ol{\gamma}^{-1}]
\]
 extends the inclusion $S^o(\gamma)\hookrightarrow (\oqgmnk/\ang{\Pi_\gamma})[\ol{\gamma}^{-1}]$, so that for any $x\in S^o(\gamma)$, we have $\Psi(x)=\theta(x)\in \partitionk$. 
\end{remark}


By \cite[Theorem 5.4]{llr-grass},
there is a bi-increasing bijection 
\begin{equation}\label{godmother corresp}
\xi: \hspec_\gamma( \oqgmnk)\xrightarrow{\cong}\hspec(\partitionk)
\end{equation}
such that 
\[
\xi(P)= \Psi(\ol{P}[\ol{\gamma}^{-1}])\cap \partitionk
\]
for any $P\in \hspec_\gamma \oqgmnk$ (with the convention that $\ol{P}:=P/\ang{\Pi_\gamma}$)
and 
$\xi^{-1}(Q)$ is the preimage in $\oqgmnk$ of 
\[
\Phi\left(\bigoplus_{i\in \Z}Qy^i\right)\cap \left(\oqgmnk/\ang{\Pi_\gamma}\right).
\]
for any $Q\in \hspec(\partitionk)$.
Recall the one-to-one correspondence \eqref{H-primes of partlambda} (first established in \cite[Theorem 3.5]{llr-grass}) between the $\ch$-prime ideals of $\partitionk$ and the Cauchon-Le diagrams on the Young diagram $Y_\lambda$. Composing this correspondence with $\xi$ gives the one-to-one correspondence
\begin{equation}\label{major corresp}
\ts{\ch-\Spec_\gamma \oqgmnk} \longleftrightarrow \tx{Cauchon-Le diagrams on }Y_\lambda
\end{equation}
which was established in \cite[Corollary 5.5]{llr-grass}: 
any $P\in \ch-\Spec_\gamma \oqgmnk$ corresponds to the Cauchon-Le diagram of 
the $\ch$-prime ideal $\xi(P)$ of $\partitionk$ and any Cauchon-Le diagram $C$ 
on the Young diagram $Y_\lambda$ corresponds to the image under $\xi^{-1}$ of 
the $\ch$-prime ideal of $\partitionk$ which has Cauchon-Le diagram $C$. 


\subsection{Quantum Pl\"ucker coordinates in $\ch$-primes}
\label{section-9-6}

Fix  $P\in \ch-\Spec_\gamma\oqgmnk$ and  denote by $C$ the Cauchon-Le diagram on $Y_\lambda$ which corresponds to $P$ under \eqref{major corresp}. 
We seek to identify those quantum Pl\"ucker coordinates that belong to $P$ by 
considering the Postnikov graph of $C$. Recall that by the definition of 
$\hspec_\gamma(\oqgmnk)$ (see the beginning of Section \ref{section-known}), if $\alpha\ngeq\gamma$ 
then $\alpha\in P$; so we need only consider $\alpha\geq\gamma$. 
Fix such an 
 $\alpha\in \Pi$. Notice that there exist 
 $1\leq i_1<\cdots < i_t\leq m$ and $1\leq j_1<\cdots <j_t\leq n-m$ such that $a_{j_l}>\gamma_{i_l}$ for all $l=1,\ldots,t$ and $\alpha=[(\gamma\bs \{\gamma_{i_1},\ldots,\gamma_{i_t}\})\sqcup\{a_{j_1},\ldots,a_{j_t}\}]$.

Notice that when $h_0=(\alpha_1,\ldots,\alpha_n)\in \ch$ is such that $\alpha_i=q^2$ if $i\notin \{\gamma_1,\ldots,\gamma_m\}$ and $\alpha_i=q$ otherwise, the isomorphism $\sigma$ of $\partitionk$ (which multiplies each $x_{i,j}$ ($(i,j)\in Y_\lambda$) by $q$) coincides with the action of $h_0$. Moreover $h_0\cdot y=q^my$ by \eqref{H action on y}. Hence the algebra $\partitionk[y^{\pm 1};\sigma]$, along with its $\ch$-action, satisfies \cite[Hypothesis 2.1]{llr-ufd}. We shall use this fact in the proof of the following 
proposition.


\begin{proposition}\label{precondition}
The condition that $\alpha$ belongs to $P$ is equivalent to the condition that $\Psi(\bar{\alpha}\bar{\gamma}^{-1})$ belongs to $\xi(P)$.
\end{proposition}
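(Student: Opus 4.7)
The forward direction is essentially immediate from the definition of $\xi$. For the converse, I will use that $\bar\gamma$ is a nonzerodivisor modulo $\bar P$ because $\bar P$ is prime and $\bar\gamma \notin \bar P$ (as $\gamma \notin P$ by definition of $\hspec_{\gamma}(\oqgmnk)$).

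First, I will observe that $\Psi(\bar\alpha\bar\gamma^{-1})$ does indeed lie in $\partitionk$, so that the statement ``$\Psi(\bar\alpha\bar\gamma^{-1}) \in \xi(P)$'' makes sense. Both $\bar\alpha$ and $\bar\gamma$ are homogeneous of degree $1$ in the $\mathbb{N}$-graded algebra $S(\gamma)$ (every quantum Pl\"ucker coordinate has degree one), so $\bar\alpha\bar\gamma^{-1}$ is an element of degree $0$ in the $\mathbb{Z}$-graded algebra $S(\gamma)[\bar\gamma^{-1}]$. It therefore lies in the zero-degree part $S^o(\gamma) = \Dhom(S(\gamma), \bar\gamma)$, and Remark~\ref{ends up in part} gives $\Psi(\bar\alpha\bar\gamma^{-1}) = \theta(\bar\alpha\bar\gamma^{-1}) \in \partitionk$.

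For the direction ($\Rightarrow$), if $\alpha \in P$ then $\bar\alpha \in \bar P$ and hence $\bar\alpha\bar\gamma^{-1} \in \bar P[\bar\gamma^{-1}]$. Applying $\Psi$ and intersecting with $\partitionk$ gives $\Psi(\bar\alpha\bar\gamma^{-1}) \in \Psi(\bar P[\bar\gamma^{-1}]) \cap \partitionk = \xi(P)$, as required.

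For the direction ($\Leftarrow$), suppose $\Psi(\bar\alpha\bar\gamma^{-1}) \in \xi(P) \subseteq \Psi(\bar P[\bar\gamma^{-1}])$. Applying the isomorphism $\Phi$ yields $\bar\alpha\bar\gamma^{-1} \in \bar P[\bar\gamma^{-1}]$, and therefore $\bar\alpha \in \bar P[\bar\gamma^{-1}] \cap S(\gamma)$. The main (very small) point is then to argue that $\bar P[\bar\gamma^{-1}] \cap S(\gamma) = \bar P$: if $x \in S(\gamma)$ satisfies $x = p\bar\gamma^{-n}$ for some $p \in \bar P$ and $n \geq 0$, then $x\bar\gamma^n = p \in \bar P$, and since $\bar P$ is a completely prime ideal with $\bar\gamma \notin \bar P$, it follows that $x \in \bar P$. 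Hence $\bar\alpha \in \bar P$, i.e.\ $\alpha \in P$. No serious obstacle arises; the whole argument is essentially a translation through the dehomogenisation isomorphism combined with the primeness/normality of $\bar\gamma$ modulo $\bar P$.
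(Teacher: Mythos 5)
Your proof is correct, but it takes a genuinely different route from the paper's. The paper first invokes \cite[Lemma 2.2]{llr-ufd} (this is why the element $h_0\in\ch$ implementing $\sigma$ is set up just before the proposition) to identify $\Psi\bigl((P/\ang{\Pi_\gamma})[\ol{\gamma}^{-1}]\bigr)$ with the graded ideal $\bigoplus_{i\in\Z}\xi(P)y^i$ of $\partitionk[y^{\pm1};\sigma]$, then passes to the induced isomorphism $\ol{\Psi}\colon (\oqgmnk/P)[\ol{\gamma}^{-1}]\xrightarrow{\cong}\partitionk[y^{\pm1};\sigma]/\bigoplus_i\xi(P)y^i$ and tests whether $\ol{\alpha}\ol{\gamma}^{-1}$ vanishes there, finally using $\bigl(\bigoplus_i\xi(P)y^i\bigr)\cap\partitionk=\xi(P)$. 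You instead work directly with the defining formula $\xi(P)=\Psi(\ol{P}[\ol{\gamma}^{-1}])\cap\partitionk$, so the forward implication is immediate (after the same degree-zero observation and Remark \ref{ends up in part} that the paper also uses), and the converse reduces to the contraction identity $\ol{P}[\ol{\gamma}^{-1}]\cap S(\gamma)=\ol{P}$, proved by clearing denominators and using that $\ol{P}$ is (completely) prime with $\ol{\gamma}\notin\ol{P}$. Your approach is more elementary — it avoids the $\ch$-equivariant skew-Laurent machinery entirely — whereas the paper's route produces in passing the identification $\Psi(\ol{P}[\ol{\gamma}^{-1}])=\bigoplus_i\xi(P)y^i$, which is reused later (e.g.\ in Lemma \ref{lemma-generation-up-to-localisation}). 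Two small points you should make explicit: (i) your reduction assumes every element of the two-sided extension $\ol{P}[\ol{\gamma}^{-1}]$ has the form $p\ol{\gamma}^{-n}$ with $p\in\ol{P}$; this holds because $\ol{\gamma}$ is normal in $S(\gamma)$ and $\ol{P}$ is stable under conjugation by $\ol{\gamma}$ (which follows either from the $\ch$-invariance of $P$, or from complete primeness of $\ol{P}$ together with $\ol{\gamma}\notin\ol{P}$), or alternatively one can clear denominators on both sides and apply complete primeness repeatedly; (ii) the complete primeness of the $\ch$-prime $P$ (hence of $\ol{P}$) should be cited — it is standard in this setting and is used freely elsewhere in the paper, so this is a citation rather than a gap.
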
 


\begin{proof}
  \begin{full} 
By \cite[Lemma 2.2]{llr-ufd}, we have $\bigoplus_{i\in \Z}\xi(P)y^i=\Psi((P/\ang{\Pi_\gamma})[\ol{\gamma}^{-1}])$, so that the isomorphism $\Psi$ induces an isomorphism
\[
\frac{(\oqgmnk/\ang{\Pi_\gamma})[\ol{\gamma}^{-1}]}{(P/\ang{\Pi_\gamma})[\ol{\gamma}^{-1}]}\xrightarrow{\cong}\frac{\partitionk[y^{\pm 1};\sigma]}{\bigoplus_{i\in \Z}\xi(P)y^i},
\]
which in turn induces an isomorphism 
\[
\ol{\Psi}: \frac{\oqgmnk}{P}[\ol{\gamma}^{-1}]\xrightarrow{\cong} \frac{\partitionk[y^{\pm 1};\sigma]}{\bigoplus_{i\in \Z}\xi(P)y^i}.
\]
Now, $\alpha\in P$ if and only if $\bar{\alpha}\bar{\gamma}^{-1}=0$ in $(\oqgmnk/P)[\bar{\gamma}^{-1}]$, and this  is true if and only if $\ol{\Psi}(\bar{\alpha}\bar{\gamma}^{-1})=0$. Since 
\[
\ol{\Psi}(\bar{\alpha}\bar{\gamma}^{-1})=\ol{\Psi(\bar{\alpha}\bar{\gamma}^{-1})}\in \partitionk[y^{\pm 1};\sigma]/\bigoplus_{i\in \Z}\xi(P)y^i,
\]
we find that 
$\alpha\in P$ if and only if $\Psi(\bar{\alpha}\bar{\gamma}^{-1})\in 
\bigoplus_{i\in \Z}\xi(P)y^i$. 

However the element 
$\bar{\alpha}\bar{\gamma}^{-1}$ of $(\oqgmnk/\ang{\Pi_\gamma})[\ol{\gamma}^{-1}]$ 
in fact belongs to $S^o(\gamma)$, so that 
$\Psi(\bar{\alpha}\bar{\gamma}^{-1})\in \partitionk$ by 
Remark \ref{ends up in part}; hence $\alpha\in P$ if and only if 
\[
\Psi(\bar{\alpha}\bar{\gamma}^{-1})\in \left(\bigoplus_{i\in \Z}\xi(P)y^i\right)\cap \partitionk=\xi(P).
\] 
  \end{full} 
\end{proof}


For the proof of the next theorem, we shall need two sets of relations, known to hold in quantum grassmannians, called the \emph{generalised quantum Pl\"ucker relations} and the \emph{quantum Muir Law of extensible minors}.\\

\begin{theorem} 
\label{theorem-quantum-plucker-relations}
~
\begin{enumerate}
\item \textbf{Generalised quantum Pl\"ucker relations} \cite[Theorem 2.1]{klr}: \\
Let $J_1,J_2,K\subseteq \llb 1,n \rrb$ be such that $|J_1|,|J_2|\leq m$ and $|K|=2m-|J_1|-|J_2|>m$. Then  
\begin{equation}\label{gen q plucker rels}
\sum_{K'\sqcup K''=K}(-q)^{\ell(J_1;K')+\ell(K';K'')+\ell(K'';J_2)}[J_1\sqcup K'][K''\sqcup J_2]=0,
\end{equation}
where for any two sets $I,J$ of integers, $\ell(I;J)$ denotes the cardinality of the set $\{(i,j)\in I\times J\ |\ i>j\}$.
\item \textbf{Quantum Muir Law} (adapted from \cite[Proposition 1.3]{lr2}): \\
Let $r$ be a positive integer. For $s\in \llb 1,r\rrb$, let $I_s,J_s$ be $m$-element subsets of $\llb 1,n \rrb$  and let $c_s\in \f$ be such that $\sum_{s=1}^r c_s[I_s][J_s]=0$ in $\oqgmnf$. Suppose that $D$ is a subset of $\llb 1,n \rrb$ such that $\left(\bigcup_{s=1}^r I_s\right)\cup \left(\bigcup_{s=1}^r J_s\right)$ does not intersect $D$. Then 
in $\co_q(G_{m+|D|,n}(\f))$, we have
\begin{equation}\label{Muir}
\sum_{s=1}^r c_s[I_s\sqcup D][J_s\sqcup D]=0.
\end{equation}
\end{enumerate}
\end{theorem}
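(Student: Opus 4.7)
The theorem recalls two classical families of identities that are cited from \cite{klr} and \cite{lr2} respectively, so my proposal is not to reprove them from scratch but to indicate how one would verify them in our present setting and pinpoint where the real work lies.

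For part (1), the plan is to exploit the description of a quantum Pl\"ucker coordinate $[\delta]$ as a maximal quantum minor of the generic matrix of $\oqmmnf$. Fix $J_1, J_2, K$ as in the statement and work inside $\oqmmnf$ rather than $\oqgmnf$. The strategy from \cite{klr} is to build an auxiliary $2m\times 2m$ quantum matrix with two copies of the columns indexed by $K$ and with the columns indexed by $J_1$ and $J_2$ placed on either side, and then expand a quantum determinant that, by a ``repeated columns'' argument, vanishes. The sum over $K'\sqcup K'' = K$ arises exactly as the terms of a Laplace-type expansion splitting $K$ between the two halves, and the exponents $\ell(J_1;K')+\ell(K';K'')+\ell(K'';J_2)$ record the number of $q$-transpositions needed to reorder the columns of each half into increasing order. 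First I would write down the repeated-column identity explicitly, then apply the $q$-Laplace expansion of Corollary \ref{column Laplace} (or its analogue for genuine quantum minors) to obtain the stated relation.

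For part (2), the quantum Muir Law, the plan is to reduce to a statement about quantum minors in $\oqmmnf$ and lift it to $\co_q(M_{m+|D|, n}(\f))$. Concretely, $[I_s \sqcup D]$ is (up to a sign of the form $(-q)^{\bullet}$ depending on the interleaving of $I_s$ and $D$) the column-$(I_s\sqcup D)$ maximal quantum minor of the generic $(m+|D|)\times n$ matrix. Since $D$ is disjoint from every $I_s$ and every $J_s$, the interleaving contribution $\ell(D;I_s) + \ell(I_s;D)$ is the same for all $s$ on each side, and similarly for $J_s$ versus $D$; therefore all the correcting $q$-powers can be factored out globally. What remains is a common left- and right-multiplication by a monomial in generators involving the columns in $D$, which annihilates the vanishing expression $\sum c_s[I_s][J_s] = 0$. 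Following the template of \cite[Proposition 1.3]{lr2}, the cleanest implementation is to quasi-commute everything past a fixed maximal quantum minor involving only the rows/columns attached to $D$, and then to invoke the original identity in $\oqgmnf$.

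The bookkeeping of $q$-powers is the main obstacle in both parts: in (1) one must verify that the Laplace expansion yields exactly the exponents $\ell(J_1;K')+\ell(K';K'')+\ell(K'';J_2)$ and no correction, and in (2) one must check that the $q$-commutation needed to move the $D$-block through the $I_s$- and $J_s$-blocks produces a factor independent of $s$. Both checks are mechanical once the disjointness hypotheses in the theorem are used systematically, and after doing the two-row / two-column base case and an induction on $|D|$ for the Muir Law, the full statements fall out.
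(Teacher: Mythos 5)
The paper does not prove this theorem at all: both identities are quoted from the literature, part (1) from \cite[Theorem 2.1]{klr} and part (2) adapted from \cite[Proposition 1.3]{lr2}, and they are then used as black boxes (e.g.\ in the proof of Theorem \ref{where it goes}). So the part of your proposal that defers to those references is exactly what the paper does, and nothing more is required here.

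Your reconstruction sketches, however, should not be passed off as routine. For part (1), the repeated-column-plus-$q$-Laplace idea is indeed the standard route, but the exponent verification you call ``mechanical'' is precisely the content of \cite[Theorem 2.1]{klr} and is not carried out. More seriously, the sketch of part (2) would fail as written. The conclusion lives in $\co_q(G_{m+|D|,n}(\f))$, a subalgebra of a quantum matrix algebra with $m+|D|$ rows, while the hypothesis lives in $\oqgmnf\subseteq\oqmmnf$; there is no common ambient algebra in which $[I_s\sqcup D]$ equals $[I_s]$ times a monomial in generators attached to $D$ (quantum Pl\"ucker coordinates do not factor this way), so ``multiplying the vanishing relation by a common monomial and quasi-commuting it past a $D$-block'' is not a meaningful operation. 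Note also that $[I_s\sqcup D]$ denotes the minor on the \emph{sorted} index set, so no $(-q)^{\bullet}$ correction is even attached to it; and the quantity you observe to be $s$-independent, $\ell(D;I_s)+\ell(I_s;D)=m|D|$, is not the relevant bookkeeping — in any re-sorting argument the exponent that actually appears is $\ell(D;I_s)$ (equivalently $\ell(I_s;D)$) alone, which genuinely depends on $s$. The whole point of Muir's law is that the extended relation holds with the \emph{same} coefficients $c_s$, with no $s$-dependent powers of $q$; that is the substance of the cited result (naturally obtained from a quantum-matrix version of Muir's law, extending the row set $\{1,\dots,m\}$ by $|D|$ new rows), and it cannot be recovered by the commutation bookkeeping you describe, so your proposed induction on $|D|$ would stall at its first step.
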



Before reading the following proof, the reader might want to revisit the construction, given in Notation \ref{associated partition}, of the partition $\lambda$ from the quantum Pl\"ucker coordinate $\gamma$.

\begin{theorem}\label{where it goes} Let $\alpha \in \Pi$ with $\alpha \geq \gamma$, and write $\alpha=[(\gamma\bs \{\gamma_{i_1},\ldots,\gamma_{i_t}\})\sqcup\{a_{j_1},\ldots,a_{j_t}\}]$ with $1\leq i_1<\cdots < i_t\leq m$, $1\leq j_1<\cdots <j_t\leq n-m$ and $a_{j_l}>\gamma_{i_l}$ for all $l=1,\ldots,t$.

The isomorphism 
\[
\Psi: (\oqgmnk/\ang{\Pi_\gamma})[\ol{\gamma}^{-1}]\xrightarrow{\cong} \partitionk[y^{\pm 1};\sigma]
\]
sends $\bar{\alpha}\bar{\gamma}^{-1}$ to 
\[
(-q)^{\ell(\{\gamma_{i_1},\ldots,\gamma_{i_t}\};\ \{a_{j_1},\ldots,a_{j_t}\})}[i_1\cdots i_t\ |\ n-m+1-j_t\cdots n-m+1-j_1].
\]
\end{theorem}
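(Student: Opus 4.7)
The plan is to proceed by induction on $t$, the number of columns in which $\alpha$ and $\gamma$ differ.

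For the base case $t=1$, we have $\alpha=[(\gamma\bs\{\gamma_{i_1}\})\sqcup\{a_{j_1}\}]$, which by the definition of $m_{i,j}$ coincides with the quantum Pl\"ucker coordinate $m_{m+1-i_1,a_{j_1}}$. Hence $\bar{\alpha}\bar{\gamma}^{-1}=\wt{m_{m+1-i_1,a_{j_1}}}\in S^o(\gamma)$, and Remark~\ref{ends up in part} together with the explicit formula for $\theta$ in Theorem~\ref{So to partition} yields $\Psi(\bar{\alpha}\bar{\gamma}^{-1})=\theta(\wt{m_{m+1-i_1,a_{j_1}}})=x_{i_1,n-m+1-j_1}=[i_1\mid n-m+1-j_1]$. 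Since $a_{j_1}>\gamma_{i_1}$, the inversion count $\ell(\{\gamma_{i_1}\};\{a_{j_1}\})$ is zero, matching the claimed formula.

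For the inductive step, first expand the pseudo quantum minor on its last (largest) column using Corollary~\ref{column Laplace}(2) (with $q$ replaced by $q^{-1}$, since $\partitionk$ is a subalgebra of $\co_{q^{-1}}(M_{m,n-m}(\k))$):
\[
[i_1\cdots i_t\mid n-m+1-j_t,\ldots,n-m+1-j_1]=\sum_{p=1}^{t}(-q^{-1})^{t-p}\bigl[\widehat{i_p}\,\bigm|\, n-m+1-j_t,\ldots,n-m+1-j_2\bigr]\cdot x_{i_p,n-m+1-j_1}.
\]
Each $[\widehat{i_p}\mid\cdot]$ is a pseudo quantum minor with $t-1$ rows corresponding to $\alpha^{(p)}=[(\gamma\bs\{\gamma_{i_1},\ldots,\widehat{\gamma_{i_p}},\ldots,\gamma_{i_t}\})\sqcup\{a_{j_2},\ldots,a_{j_t}\}]$ (which has $t-1$ swaps from $\gamma$), and by the inductive hypothesis corresponds under $\Psi$ to $\bar{\alpha^{(p)}}\bar{\gamma}^{-1}$ up to $(-q)^{L_p}$ where $L_p=\ell(\{\gamma_{i_k}\}_{k\neq p};\{a_{j_k}\}_{k\geq 2})$. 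The factor $x_{i_p,n-m+1-j_1}=\Psi(\wt{m_{m+1-i_p,a_{j_1}}})$ by the base case computation.

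Applying $\Phi=\Psi^{-1}$ to this Laplace expansion and rearranging thus reduces the theorem to proving the quantum Pl\"ucker identity
\[
\bar{\alpha}\,\bar{\gamma}\;=\;\sum_{p=1}^{t}(-q)^{?_p}\,\bar{\alpha^{(p)}}\,\bar{m_{m+1-i_p,a_{j_1}}}
\]
in $\oqgmnk$, where the exponents $?_p$ must exactly compensate, in combination with the normality commutations $\bar{\gamma}\,\bar{v}=q^{?}\bar{v}\,\bar{\gamma}$ for quantum Pl\"ucker coordinates $\bar v$, the inductive sign $(-q)^{L_p}$, and the Laplace sign $(-q^{-1})^{t-p}$, to yield precisely the global exponent $(-q)^{\ell(\{\gamma_{i_k}\};\{a_{j_k}\})}$. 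This Pl\"ucker identity is obtained by applying the generalized quantum Pl\"ucker relation of Theorem~\ref{theorem-quantum-plucker-relations}(1) in the ``reduced'' grassmannian $\co_q(G_{t,n}(\f))$ with the choice $J_1=\emptyset$, $J_2=\{a_{j_2},\ldots,a_{j_t}\}$, $K=\{\gamma_{i_1},\ldots,\gamma_{i_t},a_{j_1}\}$ (so $|K|=t+1>t=m'$ as required): the admissible $(K',K'')$ with $|K''|=1$ give exactly one term with $K''=\{a_{j_1}\}$ producing $[\gamma_{i_1}\cdots\gamma_{i_t}]\cdot[a_{j_1}\cdots a_{j_t}]=\tilde\gamma\tilde\alpha$, and $t$ terms with $K''=\{\gamma_{i_p}\}$ producing $\tilde{m_{m+1-i_p,a_{j_1}}}\cdot\tilde{\alpha^{(p)}}$. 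The quantum Muir law of extensible minors (Theorem~\ref{theorem-quantum-plucker-relations}(2)) then appends the common block $D=\gamma\bs\{\gamma_{i_1},\ldots,\gamma_{i_t}\}$ to lift this identity to $\oqgmnk$.

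The main obstacle will be the bookkeeping of $(-q)$-factors. One must confirm that the exponents arising from (i) the generalized Pl\"ucker relation (which involve $\ell(K';K'')$ counts between cardinalities of $\{\gamma_{i_k}\}$ and $\{a_{j_k}\}$), (ii) the normality commutations moving $\bar{\gamma}^{\pm 1}$ across quantum Pl\"ucker coordinates, (iii) the Laplace sign $(-q^{-1})^{t-p}$, and (iv) the inductive exponent $L_p$, together telescope into the claimed global exponent $\ell(\{\gamma_{i_k}\};\{a_{j_k}\})$. The combinatorial heart of this verification is the identity $\ell(\{\gamma_{i_k}\};\{a_{j_k}\})-L_p=\ell(\{\gamma_{i_k}\};\{a_{j_1}\})+\ell(\{\gamma_{i_p}\};\{a_{j_k}\}_{k\geq 2})$, which records that ``removing'' the $p$-th swap from the inversion count of $\alpha$ against $\gamma$ precisely accounts for the $a_{j_1}$-column contribution absorbed into $\bar{m_{m+1-i_p,a_{j_1}}}$ together with the $\gamma_{i_p}$-row contribution left in $\bar{\alpha^{(p)}}$.
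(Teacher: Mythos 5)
Your strategy is essentially the paper's: induction on $t$, the base case via the explicit formula for $\theta$ in Theorem~\ref{So to partition}, and an inductive step combining the generalised quantum Pl\"ucker relations, the quantum Muir law with $D=\gamma\bs\{\gamma_{i_1},\ldots,\gamma_{i_t}\}$, and the column Laplace expansion of Corollary~\ref{column Laplace}(2). But there is a concrete gap in your inductive step. With your choice $J_1=\emptyset$, $J_2=\{a_{j_2},\ldots,a_{j_t}\}$, $K=\{\gamma_{i_1},\ldots,\gamma_{i_t},a_{j_1}\}$, relation \eqref{gen q plucker rels} produces (after Muir) the terms $\ol{\gamma}\,\ol{\alpha}$ and $\ol{m}_{m+1-i_p,a_{j_1}}\,\ol{\alpha^{(p)}}$, with the $m$-factor on the \emph{left} — indeed this is how you write the terms in your third paragraph. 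Yet the identity you display, and the one you actually need, has $\ol{\alpha^{(p)}}$ on the left and $\ol{m}_{m+1-i_p,a_{j_1}}$ on the right, because after applying $\Psi$, the induction hypothesis and the base case, the resulting sum must be recognised as $\sum_p(-q^{-1})^{t-p}[\widehat{i_p}\mid\cdots]\,x_{i_p,n-m+1-j_1}$, i.e.\ Corollary~\ref{column Laplace}(2) with the entries on the \emph{right}. You cannot just swap the factors: $m_{m+1-i_p,a_{j_1}}$ and $\alpha^{(p)}$ are in general incomparable quantum Pl\"ucker coordinates (e.g.\ $[23]$ and $[14]$ for $\gamma=[12]$, $\alpha=[34]$), so reordering is governed by the straightening law and may introduce lower-order terms, not merely a power of $q$; and the alternative — a ``last column, entries on the left'' expansion for pseudo quantum minors — is exactly one of the Laplace identities the paper warns need not hold in partition subalgebras. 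The swap of $\ol{\gamma}$ past $\ol{\alpha}$ is harmless (it is normal modulo $\Pi_\gamma$ and $q$-commutes), but the $m$/$\alpha^{(p)}$ order is not. The repair is the transposed choice used in the paper: $J_1=\{a_{j_2},\ldots,a_{j_t}\}$, $J_2=\emptyset$, $K=\{a_{j_1}\}\sqcup\{\gamma_{i_1},\ldots,\gamma_{i_t}\}$, which directly yields $\ol{\alpha}\,\ol{\gamma}$ and $\ol{\alpha^{(l)}}\,\ol{m}_{m+1-i_l,a_{j_1}}$ in the correct order; one then slides $\ol{\gamma}^{-1}$ past $\ol{m}_{m+1-i_l,a_{j_1}}$ using $\gamma\, m_{m+1-i_l,a_{j_1}}=q\, m_{m+1-i_l,a_{j_1}}\gamma$.

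A second step you leave implicit must also be checked. Working modulo $\Pi_\gamma$, the terms with $a_{j_1}<\gamma_{i_p}$ vanish, since $[(\gamma\bs\gamma_{i_p})\sqcup a_{j_1}]\not\geq\gamma$; for the surviving truncated sum to be the \emph{full} Laplace expansion of the pseudo quantum minor, one needs the matching fact that for precisely those $p$ the box $(i_p,\,n-m+1-j_1)$ lies outside $Y_\lambda$ (because $\lambda_{i_p}=n-m+i_p-\gamma_{i_p}$), so that $x_{i_p,n-m+1-j_1}=0$ by the pseudo-minor convention. With these two corrections — and the $(-q)$-exponent bookkeeping you already flag — your argument becomes the paper's proof.
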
 


\begin{proof}
  \begin{full} 
Suppose that $t=1$. Then $\bar{\alpha}\bar{\gamma}^{-1}=[(\gamma\bs\{\gamma_{i_1}\})\sqcup\{a_{j_1}\}]=\ol{m}_{m+1-i_1,a_{j_1}}\bar{\gamma}^{-1}=\widetilde{m_{m+1-i_1,a_{j_1}}}$, which is sent by $\Psi$ to $x_{i_1,n-m+1-j_1}=[i_1\ |\ n-m+1-j_1]$. Since $\ell(\{\gamma_{i_1}\}; \{a_{j_1}\})=0$, the claim holds. 

We proceed by induction on $t$. (In order to keep the notation managable here, we denote a singleton set by its element; that is, we write a singleton set $\{z\}$ simply as $z$.)

Let us set $\widetilde{a}=\{a_{j_2},\ldots,a_{j_t}\}$ and $\widetilde{\gamma}=\{\gamma_{i_1},\ldots,\gamma_{i_t}\}$. Applying the generalised quantum Pl\"ucker relations \eqref{gen q plucker rels} with $J_1=\widetilde{a}$, $J_2=\emptyset$, $K=a_{j_1}\sqcup \widetilde{\gamma}$, and noticing that 
$\ell(\gamma_{i_l};(a_{j_1}\sqcup\widetilde{\gamma})\bs\gamma_{i_l})=\ell(\gamma_{i_l};a_{j_1})+l-1$ (for all $l=1,\ldots,t$) and $\ell(\wt{a};a_{j_1})=t-1$, we see that the following holds in $\co_q(G_{t,n}(\f))$:
\[
\sum_{l=1}^t(-q)^{\ell(\widetilde{a};\gamma_{i_l})+\ell(\gamma_{i_l};a_{j_1})+l-1}[\widetilde{a}\sqcup\gamma_{i_l}][(a_{j_1}\sqcup\widetilde{\gamma})\bs \gamma_{i_l}]
+(-q)^{t-1+\ell(a_{j_1};\wt{\gamma})}[a_{j_1}\cdots a_{j_t}][\widetilde{\gamma}]=0.
\]

Notice that no element of $\gamma\bs\widetilde{\gamma}$ appears in any of the quantum Pl\"ucker coordinates in the above display, so that the quantum version of Muir's Law \eqref{Muir} with $D=\gamma\bs\wt{\gamma}$ shows that in $\co_q(G_{t+|D|,n}(\f))=\oqgmnf$, we have
\[
\sum_{l=1}^t(-q)^{\ell(\widetilde{a};\gamma_{i_l})+\ell(\gamma_{i_l};a_{j_1})+l-1}[(\widetilde{a}\sqcup\gamma_{i_l})\sqcup (\gamma\bs\wt{\gamma})]\,[\overbrace{((a_{j_1}\sqcup\widetilde{\gamma})\bs \gamma_{i_l})\sqcup (\gamma\bs\wt{\gamma})}^{(\gamma\bs\gamma_{i_l})\sqcup a_{j_1}}]
\]
\[
+(-q)^{t-1+\ell(a_{j_1};\wt{\gamma})}\overbrace{[(\gamma\bs\wt{\gamma})\sqcup\{a_{j_1},\ldots, a_{j_t}\}]}^{\alpha}\,\overbrace{[\widetilde{\gamma}\sqcup (\gamma\bs\wt{\gamma})]}^{\gamma}=0;
\]
that is, 
\[
\sum_{l=1}^t(-q)^{\ell(\widetilde{a};\gamma_{i_l})+\ell(\gamma_{i_l};a_{j_1})+l-1}[(\widetilde{a}\sqcup\gamma_{i_l})\sqcup (\gamma\bs\wt{\gamma})]\,[(\gamma\bs\gamma_{i_l})\sqcup a_{j_1}]+(-q)^{t-1+\ell(a_{j_1};\wt{\gamma})}\alpha\gamma=0.
\]

Let $s$ be maximal such that $a_{j_1}>\gamma_{i_s}$, so that in $S(\gamma)=\oqgmnf/\ang{\Pi_\gamma}$, we have $\ol{[(\gamma\bs\gamma_{i_l})\sqcup a_{j_1}]}=0$ for $l>s$. Notice that if $l\leq s$, then $\ell(\wt{a};\gamma_{i_l})=t-1$ and
 $\ell(\gamma_{i_l};a_{j_1})=0$, while $\ell(a_{j_1};\wt{\gamma})=s$; so that we may conclude from the above display that the following holds in $S(\gamma)$:
\[
\bar{\alpha}\bar{\gamma}=-\sum_{l=1}^s(-q)^{l-1-s}\ol{[(\widetilde{a}\sqcup\gamma_{i_l})\sqcup (\gamma\bs\wt{\gamma})]}\,\ol{m}_{m+1-i_l,a_{j_1}}. 
\]
Now \cite[Lemma 3.1.4 (v)]{lr2} gives $\gamma m_{m+1-i_l,a_{j_1}}=q m_{m+1-i_l,a_{j_1}}\gamma$ for all $l=1,\ldots,s$, so that in $S^o(\gamma)$, we have
\[
\bar{\alpha}\bar{\gamma}^{-1}=
-q\sum_{l=1}^s(-q)^{l-1-s}\,\ol{[(\widetilde{a}\sqcup\gamma_{i_l})\sqcup (\gamma\bs\wt{\gamma})]}\,\ol{\gamma}^{-1}\,\ol{m}_{m+1-i_l,a_{j_1}}\ol{\gamma}^{-1}. 
\] 

Now if we write $[\widehat{i_l}\ |\ \widehat{n-m+1-j_1}]$ for $[i_1\cdots\widehat{i_l}\cdots i_t\ |\ n-m+1-j_t\cdots n-m+1-j_2]$ the induction hypothesis gives 
\[
\Psi(\bar{\alpha}\bar{\gamma}^{-1})=
\sum_{l=1}^s(-q)^{l-s}(-q)^{\ell(\{\gamma_{i_1},\ldots,\widehat{\gamma_{i_l}},\ldots,\gamma_{i_t}\},\wt{a})}\,[\widehat{i_l}\ |\ \widehat{n-m+1-j_1}]x_{i_l, n-m+1-j_1}.
\]
For $l\leq s$, we have 
\begin{align*}
{}&\ell(\{\gamma_{i_1},\ldots,\gamma_{i_t}\};\ \{a_{j_1},\ldots,a_{j_t}\})\\
&=\ell(\{\gamma_{i_1},\ldots,\widehat{\gamma_{i_l}},\ldots,\gamma_{i_t}\},\wt{a})+\ell(\gamma_{i_l},\{a_{j_1},\ldots,a_{j_t}\})+\ell(\{\gamma_{i_1},\ldots,\gamma_{i_t}\};a_{j_1}) \\
&=\ell(\{\gamma_{i_1},\ldots,\widehat{\gamma_{i_l}},\ldots,\gamma_{i_t}\},\wt{a})+0+t-s.
\end{align*}
Now
\[
\Psi(\bar{\alpha}\bar{\gamma}^{-1})=\sum_{l=1}^s(-q)^{\ell(\{\gamma_{i_1},\ldots,\gamma_{i_t}\};\ \{a_{j_1},\ldots,a_{j_t}\})+l-t}
\,[\widehat{i_l}\ |\ \widehat{n-m+1-j_1}]x_{i_l, n-m+1-j_1}.
\]
If $l>s$, then $a_{j_1}<\gamma_{i_l}$ and hence $|\{j\ |\ a_j<\gamma_{i_l}\}|\geq j_1$. Now since  $|\{j\ |\ a_j<\gamma_{i_l}\}|=\gamma_{i_l}-i_l$ (see Notation \ref{associated partition}), we have 
\begin{equation}\label{needed for outside Young diag}
\gamma_{i_l}-i_l\geq j_1\ \ \ \ \ \tx{for all }l>s. 
\end{equation}
If $l>s$, then \eqref{needed for outside Young diag} shows that $n-m+1-j_1>n-m+i_l-\gamma_{i_l}$ and hence $(i_l, n-m+1-j_1)\notin Y_\lambda$ since row $i_l$  
of the Young diagram $Y_\lambda$ has only $\lambda_{i_l}=n-m+i_l-\gamma_{i_l}$ squares (again see Notation \ref{associated partition}), so that our convention (see Section \ref{partition subalgebra}) says that $x_{i_l, n-m+1-j_1}=0$ in $\partitionk$. 
Hence we get the following expression for $\Psi(\bar{\alpha}\bar{\gamma}^{-1})$:
\[
\Psi(\bar{\alpha}\bar{\gamma}^{-1})=(-q)^{\ell(\{\gamma_{i_1},\ldots,\gamma_{i_t}\};\ \{a_{j_1},\ldots,a_{j_t}\})}\sum_{l=1}^t(-q^{-1})^{t-l}\,[\widehat{i_l}\ |\ \widehat{n-m+1-j_1}]x_{i_l, n-m+1-j_1}.
\]
Quantum Laplace expansion in $\partitionk$ with the last column on the right (see Corollary \ref{column Laplace}(2))\footnote{Care is needed with the parameters $q$ and $q^{-1}$ here because in the proof of Theorem \ref{where it goes}, we are working with a partition subalgebra of $\co_{q^{-1}}(M_{m,n-m}(\k))$, whereas Corollary \ref{column Laplace} is stated for partition subalgebras of $\oqmmnk$.} shows that, as required, we have
\[
\Psi(\bar{\alpha}\bar{\gamma}^{-1})=(-q)^{\ell(\{\gamma_{i_1},\ldots,\gamma_{i_t}\};\ \{a_{j_1},\ldots,a_{j_t}\})}[i_1\cdots i_t\ |\ n-m+1-j_t\cdots n-m+1-j_1].
\]
  \end{full} 
\end{proof}

For later use, we also compute the image of a pseudo quantum minors of $ \partitionk$ by $\Psi^{-1}$.

\begin{corollary}\label{cor-pluckers-match-minors}
Let $[i_1\cdots i_t\mid i_1'\cdots i_t']$ be a nonzero pseudo quantum minor in $ \partitionk$. Set  $j_1:=n-m+1-i'_t<\cdots < j_t:= n-m+1-i'_1$ and $\alpha := [\gamma \bs\{\gamma_{i_1},\ldots,\gamma_{i_t}\}\sqcup \{a_{j_1},\ldots,a_{j_t}\}]$.
 
Then $\alpha>\gamma$ and 
\[
\Psi^{-1}([i_1\cdots i_t\mid i_1'\cdots i_t'])= (-q)^{-\ell(\{\gamma_{i_1},\ldots,\gamma_{i_t}\};\ \{a_{j_1},\ldots,a_{j_t}\})} \bar{\alpha}\bar{\gamma}^{-1}.
\]
\begin{proof} We claim that 
\begin{equation}\label{equation-wanted}
\gamma_{i_l}<a_{j_l} \ \tx{ for all }l=1,\ldots,t 
\end{equation}
or equivalently that there is a permutation $\sigma\in S_t$ such that 
\begin{equation}\label{equation-equiv-wanted}
\gamma_{i_{\sigma(l)}}<a_{j_l} \ \tx{ for all }l=1,\ldots,t.
\end{equation}

Define $C_0$ to be the Cauchon-Le diagram on $Y_\lambda$ where all boxes are white (this corresponds to the $\ch$-prime ideal $\ang{0}$ of $\partitionk$). Since $[i_1\cdots i_t\mid i_1'\cdots i_t']$ is nonzero, Theorem \ref{which pseudo minors} (with $P=\ang{0}$) implies that there is a vertex-disjoint path system in $\mathrm{Post}(C_0)$ from $\{r_{i_1},\ldots,r_{i_t}\}$ to $\{c_{i_1'},\ldots,c_{i_t'}\}$. In the labeling of source and target vertices introduced just after Notation \ref{associated partition} (see \eqref{1347}), this is a vertex-disjoint path system from $\{\gamma_{i_1},\ldots,\gamma_{i_t}\}$ to $\{a_{j_1},\ldots,a_{j_t}\}$; it follows immediately that \eqref{equation-equiv-wanted} holds, that is \eqref{equation-wanted} holds, as required.
\end{proof}
\end{corollary}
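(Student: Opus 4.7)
My plan is to reduce the corollary to two ingredients already established: the characterisation of which pseudo quantum minors vanish (Theorem \ref{which pseudo minors}) applied to the smallest possible $\ch$-prime, and the explicit image formula of Theorem \ref{where it goes}. Observe first that $i_1'<\cdots<i_t'$ forces $j_1<\cdots<j_t$ by the substitution $j_s=n-m+1-i'_{t+1-s}$, so the decorated sets in the definition of $\alpha$ make sense, and the corollary will follow once I establish (a) $\gamma_{i_l}<a_{j_l}$ for each $l\in\llb 1,t\rrb$ (which is equivalent to $\alpha>\gamma$) and (b) the scalar identity.

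For (a), let $C_0$ denote the all-white Cauchon-Le diagram on $Y_\lambda$; the corresponding $\ch$-prime ideal of $\partitionk$ is $\langle 0\rangle$. By hypothesis $[i_1\cdots i_t\mid i_1'\cdots i_t']\notin \langle 0\rangle$, so Theorem \ref{which pseudo minors} guarantees the existence of a vertex-disjoint path system $\cp$ in $\mathrm{Post}(C_0)$ from the row vertices $r_{i_1},\ldots,r_{i_t}$ to the column vertices $c_{i_1'},\ldots,c_{i_t'}$. Under the boundary labelling from the displayed construction after Notation \ref{associated partition}, the row vertex $r_{i_l}$ sits at the $\gamma_{i_l}$-th boundary step and the column vertex $c_{j_l}$ (with $j_l=n-m+1-i'_{t+1-l}$) sits at the $a_{j_l}$-th boundary step, with all boundary labels increasing clockwise from the north-east corner. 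Because the paths in $\postc$ only move westwards or southwards, any path starting at a source labelled $s$ must terminate at a sink labelled strictly greater than $s$. Combined with Lemma \ref{same perm lemma} — which pins down the (common) permutation of any vertex-disjoint path system — this forces the pairing to match $r_{i_l}\leftrightarrow c_{j_l}$, yielding $\gamma_{i_l}<a_{j_l}$ for every $l$. Hence $\alpha>\gamma$.

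For (b), knowing that $\alpha>\gamma$ admits the decomposition in the statement, I apply Theorem \ref{where it goes} directly, which yields
\[
\Psi(\bar\alpha\bar\gamma^{-1})=(-q)^{\ell(\{\gamma_{i_1},\ldots,\gamma_{i_t}\};\{a_{j_1},\ldots,a_{j_t}\})}[i_1\cdots i_t\mid n-m+1-j_t\cdots n-m+1-j_1].
\]
The two column index sets on the right coincide by construction of the $j_s$, so the bracket equals $[i_1\cdots i_t\mid i_1'\cdots i_t']$. Multiplying both sides by the inverse scalar and applying $\Psi^{-1}$ gives exactly the claimed formula.

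The main obstacle is really just point (a): extracting the strict inequalities $\gamma_{i_l}<a_{j_l}$ from nonvanishing. This is where I need to be careful that the boundary-labelling convention of Section \ref{qgrass} lines up correctly with the source/sink labelling used in the Postnikov graph construction of Section \ref{Cauchon-Le diagrams and Postnikov graphs}, and that the uniqueness statement in Lemma \ref{same perm lemma} is enough to force the pairing to be the identity on the $j$-indices. Once that bookkeeping is in place, (b) is a purely formal consequence of Theorem \ref{where it goes}.
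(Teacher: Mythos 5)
Your overall strategy coincides with the paper's: take the all-white Cauchon-Le diagram $C_0$, use Theorem~\ref{which pseudo minors} with $P=\ang{0}$ to produce a vertex-disjoint path system, read off the boundary labels from the construction after Notation~\ref{associated partition}, and then deduce the displayed formula from Theorem~\ref{where it goes} once $\alpha>\gamma$ is known (the paper leaves this last step implicit, exactly as you do). However, your justification of the key inequalities has a genuine flaw: you invoke Lemma~\ref{same perm lemma} to ``force the pairing to match $r_{i_l}\leftrightarrow c_{j_l}$'', but that lemma only asserts that all vertex-disjoint $R_{(I,J)}$-path systems share the \emph{same} permutation; it says nothing about that permutation being the identity, and for non-rectangular Young diagrams it typically is not -- the paper points this out explicitly before Lemma~\ref{same perm lemma}, and Example~\ref{part ex'''} exhibits a vertex-disjoint path system whose permutation is the transposition $(1\ 2)$. (There is also a notational slip: the sink carrying label $a_{j_l}$ is $c_{i'_{t+1-l}}$, not $c_{j_l}$.)

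The good news is that you do not need the identity pairing at all. Your correct observation that every path runs from a source with boundary label $s$ to a sink with boundary label strictly greater than $s$ already gives a permutation $\sigma\in S_t$ with $\gamma_{i_{\sigma(l)}}<a_{j_l}$ for all $l$, i.e.\ \eqref{equation-equiv-wanted}. Since both sequences $\gamma_{i_1}<\cdots<\gamma_{i_t}$ and $a_{j_1}<\cdots<a_{j_t}$ are increasing, a one-line pigeonhole argument upgrades this to the termwise inequalities \eqref{equation-wanted}: for each $l$ there is some $m\leq l$ with $\sigma(m)\geq l$, whence $\gamma_{i_l}\leq \gamma_{i_{\sigma(m)}}<a_{j_m}\leq a_{j_l}$. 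This is exactly the equivalence the paper appeals to when it states the claim ``or equivalently''. Replacing your appeal to Lemma~\ref{same perm lemma} by this argument repairs the proof, after which part (b) goes through as you wrote it.
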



Recall the biincreasing bijection 
\[
\xi: \ts{\ch-\Spec_\gamma} \oqgmnk\xrightarrow{\cong}\ch-\Spec \partitionk
\] 
given in \eqref{godmother corresp}. As in immediate consequence of Proposition \ref{precondition} and Theorem \ref{where it goes}, we get

\begin{corollary}\label{Cor minor}
Let $\alpha \in \Pi$ with $\alpha > \gamma$, and write $\alpha=[(\gamma\bs \{\gamma_{i_1},\ldots,\gamma_{i_t}\})\sqcup\{a_{j_1},\ldots,a_{j_t}\}]$ with $1\leq i_1<\cdots < i_t\leq m$, $1\leq j_1<\cdots <j_t\leq n-m$ and $a_{j_l}>\gamma_{i_l}$ for all $l=1,\ldots,t$.

The condition that $\alpha$ belongs to $P$ is equivalent to the condition that the pseudo quantum minor 
\[
[i_1\cdots i_t\ |\ n-m+1-j_t\cdots n-m+1-j_1]=(-q)^{-\ell(\{\gamma_{i_1},\ldots,\gamma_{i_t}\};\ \{a_{j_1},\ldots,a_{j_t}\})}\Psi(\bar{\alpha}\bar{\gamma}^{-1})
\] 
of $\partitionk$ belongs to the $\ch$-prime ideal $\xi(P)$ of $\partitionk$. 
\end{corollary}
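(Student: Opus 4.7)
The plan is to deduce Corollary \ref{Cor minor} directly from the two results immediately preceding it, namely Proposition \ref{precondition} and Theorem \ref{where it goes}.

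First I would apply Proposition \ref{precondition}, which tells us that the containment $\alpha \in P$ is equivalent to $\Psi(\bar{\alpha}\bar{\gamma}^{-1}) \in \xi(P)$. This reduces the problem to identifying the image $\Psi(\bar{\alpha}\bar{\gamma}^{-1})$ in the partition subalgebra $\partitionk$.

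Next I would invoke Theorem \ref{where it goes}, which computes this image explicitly as
\[
\Psi(\bar{\alpha}\bar{\gamma}^{-1}) = (-q)^{\ell(\{\gamma_{i_1},\ldots,\gamma_{i_t}\};\,\{a_{j_1},\ldots,a_{j_t}\})} [i_1\cdots i_t \mid n-m+1-j_t \cdots n-m+1-j_1].
\]
Since $(-q)^{\ell(\cdots)}$ is a unit in $\k$ (in particular a nonzero scalar), and $\xi(P)$ is an ideal of $\partitionk$, membership of $\Psi(\bar{\alpha}\bar{\gamma}^{-1})$ in $\xi(P)$ is equivalent to membership of the pseudo quantum minor itself in $\xi(P)$. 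Combining with the first step yields the desired equivalence.

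There is no real obstacle here: the corollary is a formal consequence of two results already established in the section. The only mild subtlety is book-keeping with the scalar $(-q)^{-\ell(\cdots)}$ in the displayed formula of the statement, which just reflects the inversion of $(-q)^{\ell(\cdots)}$ from Theorem \ref{where it goes}. Thus the proof is a one-line combination, and I would present it as such.
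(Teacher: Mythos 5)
Your proof is correct and matches the paper's own treatment: the corollary is stated there as an immediate consequence of Proposition \ref{precondition} and Theorem \ref{where it goes}, exactly as you argue. The handling of the nonzero scalar $(-q)^{\pm\ell(\cdots)}$ is the only point of care, and you address it properly.
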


Recall from Notation \ref{associated partition} that we have set $\lambda_i=n-m-(\gamma_i-i)$ for each $i\in \llb 1,m \rrb$, chosen $c$ as large as possible such that $\lambda_c\neq 0$, and defined the partition $\lambda$ by $\lambda=(\lambda_1,\ldots,\lambda_c)$. When $d=\lambda_1$, if we can show that $\{i_1,\ldots,i_t\}\subseteq \llb 1,c \rrb$ and $\{n-m+1-j_t,\ldots, n-m+1-j_1\}\subseteq \llb 1,d\rrb$, then the question of whether or not the pseudo quantum minor $[i_1\cdots i_t\ |\ n-m+1-j_t\cdots n-m+1-j_1]$ of $\partition$ is in $\xi(P)$ can be settled by the graph-theoretic method of Theorem \ref{which pseudo minors}.


\begin{lemma}\label{it fits}
We have
\begin{enumerate}[(i)]
\item $\{i_1,\ldots,i_t\}\subseteq \llb 1,c \rrb$ and 
\item $\{n-m+1-j_t,\ldots, n-m+1-j_1\}\subseteq \llb 1,d\rrb$.
\end{enumerate}
\end{lemma}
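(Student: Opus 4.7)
The plan is to unpack the definitions in Notation~\ref{associated partition} and reduce each inclusion to an elementary counting inequality, using in an essential way the hypothesis $a_{j_l}>\gamma_{i_l}$ together with the fact that $\{a_1<\cdots<a_{n-m}\}=\llb 1,n\rrb\setminus\gamma$.

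For part (i), I would first rephrase the claim $i_t\leq c$ as $\lambda_{i_t}\geq 1$, which by $\lambda_i=n-m-(\gamma_i-i)$ amounts to the inequality $\gamma_{i_t}\leq n-m+i_t-1$. To obtain this, I would exploit the assumption $a_{j_t}>\gamma_{i_t}$: since $a_{j_t}$ is a member of $\llb 1,n\rrb\setminus\gamma$ strictly greater than $\gamma_{i_t}$, the interval $\llb \gamma_{i_t}+1,n\rrb$ cannot be wholly contained in $\gamma$. But the elements of $\gamma$ strictly greater than $\gamma_{i_t}$ are precisely $\gamma_{i_t+1},\ldots,\gamma_m$, so there are only $m-i_t$ of them. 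Comparing this with the $n-\gamma_{i_t}$ elements of $\llb \gamma_{i_t}+1,n\rrb$ gives $m-i_t<n-\gamma_{i_t}$, i.e.\ $\gamma_{i_t}\leq n-m+i_t-1$, as required. (It is immediate that $i_1\geq 1$, so the full inclusion follows.)

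For part (ii), I would split the containment into an upper and a lower bound on the indices. The lower bound $n-m+1-j_t\geq 1$ is immediate from $j_t\leq n-m$. For the upper bound, the inequality $n-m+1-j_1\leq d=\lambda_1=n-m+1-\gamma_1$ is equivalent to $j_1\geq\gamma_1$. Here I would use that $\gamma_1$ is the smallest element of $\gamma$, so the first $\gamma_1-1$ members of $\llb 1,n\rrb\setminus\gamma$ are exactly $1,2,\ldots,\gamma_1-1$, and every subsequent $a_j$ satisfies $a_j\geq\gamma_1+1>\gamma_1$. Since $a_{j_1}>\gamma_{i_1}\geq\gamma_1$, the index $j_1$ must exceed $\gamma_1-1$, i.e.\ $j_1\geq\gamma_1$, giving the desired bound.

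Neither step involves any substantive obstacle; both are pure bookkeeping once one has identified the right translation of ``box lies in $Y_\lambda$'' into an inequality between $\gamma_i$ and $n-m+i$. The only mild care required is to treat the boundary cases $\gamma_1=1$ (where $d=n-m$) and $i_t=m$ (where ``no $\gamma_{i_t+1}$ exists'' just makes the counting argument vacuously tighter), and to check that the hypothesis $a_{j_l}>\gamma_{i_l}$ is applied at the correct index, namely $l=t$ for~(i) and $l=1$ for~(ii).
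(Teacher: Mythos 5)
Your proof is correct and follows essentially the same route as the paper: for (i) you reduce to $\lambda_{i_t}>0$ and use $a_{j_t}>\gamma_{i_t}$ via the same counting of complement elements (the paper phrases it as a contradiction, you count directly), and for (ii) you reduce to $j_1\geq\gamma_1$ exactly as the paper does, using that $a_j=j$ for $j<\gamma_1$. No gaps.
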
 


\begin{proof}
  \begin{full} 
\begin{enumerate}[(i)]
\item Clearly it will suffice to show that $\lambda_{i_t}>0$. Recall from Notation \ref{associated partition} that 
\begin{align*}
\lambda_{i_t}&=n-m-(\gamma_{i_t}-i_t) \\
             &=n-m-|\{a\in \llb 1,n \rrb\bs\gamma\ |\ a<\gamma_{i_t}\}|. \\
\end{align*} 
Now if $\lambda_{i_t}=0$, then $|\{a\in \llb 1,n \rrb\bs\gamma\ |\ a<\gamma_{i_t}\}|=n-m$ so that every $a\in \llb 1,n \rrb\bs\gamma=\{a_1<\cdots<a_{n-m}\}$ satisfies $a<\gamma_{i_t}$; this is impossible since $a_{j_t}>\gamma_{i_t}$.
\item Clearly it will suffice to show that $\lambda_1\geq n-m+1-j_1$. Recall from Notation \ref{associated partition} that $\lambda_1=n-m-(\gamma_1-1)$, so that it will suffice to show that $j_1\geq \gamma_1$. Since $\alpha>\gamma$, $\gamma$ cannot be the largest element $[n-m+1\cdots n]$ of $\Pi$ with respect to the partial order on $\Pi$, so that $\gamma_1\in \llb 1, n-m\rrb$. Notice that $a_j=j$ for all $j<\gamma_1$ and $a_{\gamma_1}>\gamma_1$, so that $\inf\{j\in \llb 1, n-m\rrb\ |\ a_j>\gamma_1\}=\gamma_1$. Since $a_{j_1}>\gamma_{i_1}\geq \gamma_1$, we must have $j_1\geq \gamma_1$, as required.  
\end{enumerate}
  \end{full} 
\end{proof}


This brings us to the main result of this section, which tells us that $\alpha$ belongs to $P$ if and only if there exists no vertex-disjoint $R_{(I,J)}$-path system in the Postnikov graph of $C$, where $I:=\{i_1,\ldots,i_t\}$ and $J:=\{n-m+1-j_t,\ldots, n-m+1-j_1\}$. For the sake of completeness, we include our conventions in the statement of the theorem.



\begin{theorem}\label{theorem-main-pluckers}
Assume that $q\in \k^*$ is not a root of unity. Let $P\neq \ang{\Pi}$ be an $\ch$-prime ideal of $\oqgmnk$ and let $\gamma=[\gamma_1<\cdots<\gamma_m]$ be the unique quantum Pl\"ucker coordinate such that 
$P\in \hspec_\gamma(\oqgmnk)$ (see \cite[Theorem 5.1]{llr-grass}). Let $\lambda$ be the partition corresponding to $\gamma$ as in Notation \ref{associated partition} and let $Y_\lambda$ be the Young diagram of $\lambda$. Let $C$ be the Cauchon-Le diagram on $Y_\lambda$ corresponding to $P$ as in \eqref{major corresp}. Set $\{a_1<\cdots<a_{n-m}\}=\llb 1,n \rrb\bs\gamma$. Let $\alpha\in \Pi$ be such that $\alpha>\gamma$ and let $1\leq i_1<\cdots < i_t\leq m$, $1\leq j_1<\cdots <j_t\leq n-m$ be such that $\alpha=[(\gamma\bs \{\gamma_{i_1},\ldots,\gamma_{i_t}\})\sqcup\{a_{j_1},\ldots,a_{j_t}\}]$ and $a_{j_l}>\gamma_{i_l}$ for all $l\in \llb 1,t\rrb$. Then the quantum Pl\"ucker coordinate $\alpha$ belongs to $P$ if and only if there does not exist a vertex-disjoint $R_{(I,J)}$-path system in the Postnikov graph $\postc$ of $C$, where   $I:=\{i_1,\ldots,i_t\}$ and $J:=\{n-m+1-j_t,\ldots, n-m+1-j_1\}$.
\end{theorem}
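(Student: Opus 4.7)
The plan is to assemble the result as a direct combination of three earlier ingredients: the transfer of the membership question via noncommutative dehomogenisation (Corollary \ref{Cor minor}), the verification that the resulting indices genuinely lie inside the Young diagram $Y_\lambda$ (Lemma \ref{it fits}), and the graph-theoretic characterisation of pseudo quantum minors that belong to an $\ch$-prime in a partition subalgebra (Theorem \ref{which pseudo minors}).

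First, I would use the bi-increasing bijection $\xi \colon \ch\text{-}\Spec_\gamma(\oqgmnk) \xrightarrow{\cong} \ch\text{-}\Spec(\partitionk)$ of \eqref{godmother corresp} together with Theorem \ref{where it goes} to translate the problem. Concretely, Corollary \ref{Cor minor} tells us that $\alpha \in P$ if and only if the pseudo quantum minor
\[
[i_1\cdots i_t \mid n-m+1-j_t \cdots n-m+1-j_1]
\]
of $\partitionk$ belongs to the $\ch$-prime ideal $\xi(P)$. Under the correspondence \eqref{major corresp}, the Cauchon-Le diagram attached to $\xi(P)$ on $Y_\lambda$ is precisely $C$, so the two Postnikov graphs in the two statements (the one attached to $P$ on $Y_\lambda$ and the one attached to $\xi(P)$) coincide.

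Second, to apply Theorem \ref{which pseudo minors} to this pseudo quantum minor, one needs the row indices $I=\{i_1,\ldots,i_t\}$ and the column indices $J=\{n-m+1-j_t,\ldots,n-m+1-j_1\}$ actually to index boxes of $Y_\lambda$, i.e.\ $I\subseteq \llb 1,c\rrb$ and $J\subseteq \llb 1,d\rrb$ with $d=\lambda_1$. This is exactly what Lemma \ref{it fits} establishes, using the conditions $a_{j_l}>\gamma_{i_l}$ and $\alpha>\gamma$ (which force $\lambda_{i_t}>0$ and $j_1\geq \gamma_1$ respectively, via the description of $\lambda$ in Notation \ref{associated partition}).

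Finally, with $I,J$ living inside the diagram, Theorem \ref{which pseudo minors} applied to the $\ch$-prime $\xi(P)$ of $\partitionk$ gives that the above pseudo quantum minor lies in $\xi(P)$ if and only if there is no vertex-disjoint $R_{(I,J)}$-path system in $\postc$. Combining this with the equivalence from Corollary \ref{Cor minor} yields the theorem. There is no real obstacle here: all the hard work has been done previously, in Theorem \ref{which pseudo minors} (the graph-theoretic characterisation for partition subalgebras) and in Theorem \ref{where it goes} (the explicit image under $\Psi$ of $\bar\alpha\bar\gamma^{-1}$). The one mild technical point to double-check is that the implicit scalar $(-q)^{-\ell(\{\gamma_{i_1},\ldots,\gamma_{i_t}\};\{a_{j_1},\ldots,a_{j_t}\})}$ is a unit, so that the membership of $\Psi(\bar\alpha\bar\gamma^{-1})$ in $\xi(P)$ truly is equivalent to the membership of the pseudo quantum minor itself; this is immediate since $q\in \k^*$.
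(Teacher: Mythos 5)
Your proposal is correct and follows exactly the paper's own argument: it combines Corollary \ref{Cor minor}, Lemma \ref{it fits}, and Theorem \ref{which pseudo minors} in the same way, with the only (harmless) addition being the explicit remark that the power of $-q$ is a unit. Nothing further is needed.
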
 


\begin{proof}
  \begin{full} 
This follows immediately from Corollary \ref{Cor minor}, Lemma \ref{it fits}, and Theorem \ref{which pseudo minors}. 
  \end{full} 
\end{proof}

For example, let us consider the Cauchon-Le diagram below, and denote by $P$ the corresponding $\ch$-prime in $\oqgmnk$. Then, $P$ belongs to $\hspec_{[135]}(\oq(G_{36}(\k)))$

\begin{tikzpicture}[xscale=1, yscale=1]



\draw[color=gray] (0,2) rectangle (1,3);            
\draw[color=gray] (1,2) rectangle (2,3);            
\draw[color=gray] (2,2) rectangle (3,3);            

\draw[color=gray] (0,1) rectangle (2,2);               
\draw[color=gray] (1,1) rectangle (1,2);               

\draw[color=gray] (0,0) rectangle (1,1);            


\draw[fill=gray] (0,2) rectangle (1,3);               

\node[scale=1] at (3.2,2.5) {$\color{red}1$};
\node[scale=1] at (2.6,1.8) {$3$};
\node[scale=1] at (2.2,1.5) {$\color{red}2$};
\node[scale=1] at (1.6,0.8) {$2$};
\node[scale=1] at (1.2,0.5) {$\color{red}3$};
\node[scale=1] at (0.5,-0.3) {$1$};

\node at (1.5, 2.5) {$\bullet$}; 
\node at (2.5, 2.5) {$\bullet$}; 
\node at (0.5, 1.5) {$\bullet$}; 
\node at (1.5, 1.5) {$\bullet$}; 
\node at (0.5, 0.5) {$\bullet$}; 

\draw [<-, thick, black] (2.6,2.5)--(3.1,2.5); 
\draw [<-, thick, black] (1.6,2.5)--(2.4,2.5);

\draw [<-, thick, black] (1.6,1.5)--(2.1,1.5);
\draw [<-, thick, black] (0.6,1.5)--(1.4,1.5);

\draw [<-, thick, black] (0.6,0.5)--(1.1,0.5);

\draw [->, thick, black] (1.5,2.4)--(1.5,1.6);  
\draw [->, thick, black] (2.5,2.4)--(2.5,1.9);  
\draw [->, thick, black] (0.5,1.4)--(0.5,0.6);  
\draw [->, thick, black] (1.5,1.4)--(1.5,0.9);  
\draw [->, thick, black] (0.5,0.4)--(0.5,-0.1);  

\end{tikzpicture}

Moreover, there is a vertex disjoint set of paths from $\{{\color{red}1},{\color{red}2}\}$ to $\{2,3\}$ so $[245]$ is not in the prime $P$. On the other hand, there is no vertex disjoint set of paths from  $\{{\color{red}1},{\color{red}2}\}$  to $\{1,2\}$ so $[456]$ is in $P$.

As the reader has seen on the previous examples, the labeling of the sources and sinks is not helpful when one wants to decide whether a given quantum Pl\"ucker coordinate belongs to a specific $\ch$-prime (given by its Cauchon-Le diagram). It is more natural to label the sources of  $\postc$ by the $\gamma_i$ (that is, one replaces source $r_i$ by $\gamma_i$), and the sinks by the $a_j$ from the left (that is, sink $c_{n-m+1-j}$ is replaced  by $a_j$). Note that this labeling coincide with the path-numbering introduced after Notation \ref{associated partition} (see also \eqref{1347}). 

We denote by  $\rm{Post}'(C)$ the Postnikov graph $\postc$ with this alternative diagram. With this new convention, the example above becomes:

\begin{tikzpicture}[xscale=1, yscale=1]



\draw[color=gray] (0,2) rectangle (1,3);            
\draw[color=gray] (1,2) rectangle (2,3);            
\draw[color=gray] (2,2) rectangle (3,3);            

\draw[color=gray] (0,1) rectangle (2,2);               
\draw[color=gray] (1,1) rectangle (1,2);               

\draw[color=gray] (0,0) rectangle (1,1);            


\draw[fill=gray] (0,2) rectangle (1,3);               

\node[scale=1] at (3.2,2.5) {$\color{red}1$};
\node[scale=1] at (2.6,1.8) {$2$};
\node[scale=1] at (2.2,1.5) {$\color{red}3$};
\node[scale=1] at (1.6,0.8) {$4$};
\node[scale=1] at (1.2,0.5) {$\color{red}5$};
\node[scale=1] at (0.5,-0.3) {$6$};

\node at (1.5, 2.5) {$\bullet$}; 
\node at (2.5, 2.5) {$\bullet$}; 
\node at (0.5, 1.5) {$\bullet$}; 
\node at (1.5, 1.5) {$\bullet$}; 
\node at (0.5, 0.5) {$\bullet$}; 

\draw [<-, thick, black] (2.6,2.5)--(3.1,2.5); 
\draw [<-, thick, black] (1.6,2.5)--(2.4,2.5);

\draw [<-, thick, black] (1.6,1.5)--(2.1,1.5);
\draw [<-, thick, black] (0.6,1.5)--(1.4,1.5);

\draw [<-, thick, black] (0.6,0.5)--(1.1,0.5);

\draw [->, thick, black] (1.5,2.4)--(1.5,1.6);  
\draw [->, thick, black] (2.5,2.4)--(2.5,1.9);  
\draw [->, thick, black] (0.5,1.4)--(0.5,0.6);  
\draw [->, thick, black] (1.5,1.4)--(1.5,0.9);  
\draw [->, thick, black] (0.5,0.4)--(0.5,-0.1);  

\end{tikzpicture}

Moreover, there is a vertex disjoint set of paths from $\{{\color{red}1},{\color{red}3}\}$ to $\{2,4\}$ so $[245]=[\gamma \setminus\{{\color{red}1},{\color{red}3}\} \cup \{2,4\}]$ is not in the prime $P$. On the other hand, there is no vertex disjoint set of paths from  $\{{\color{red}1},{\color{red}3}\}$  to $\{4,6\}$ so $[456]=[\gamma \setminus\{{\color{red}1},{\color{red}3}\} \cup \{4,6\}]$ is in $P$.

We finish this section by restating Theorem \ref{theorem-main-pluckers} with the new labeling of the sources and sinks of $\postc$. 

\begin{theorem}\label{theorem-main-pluckers-2}
Assume that $q\in \k^*$ is not a root of unity.  Let $P\neq \ang{\Pi}$ be an $\ch$-prime ideal of $\oqgmnk$ and let $\gamma=[\gamma_1<\cdots<\gamma_m]$ be the unique quantum Pl\"ucker coordinate such that 
$P\in \hspec_\gamma(\oqgmnk)$ (see \cite[Theorem 5.1]{llr-grass}). Let $\lambda$ be the partition corresponding to $\gamma$ as in Notation \ref{associated partition} and let $Y_\lambda$ be the Young diagram of $\lambda$. Let $C$ be the Cauchon-Le diagram on $Y_\lambda$ corresponding to $P$ as in \eqref{major corresp}. Set $\{a_1<\cdots<a_{n-m}\}=\llb 1,n \rrb\bs\gamma$. Let $\alpha\in \Pi$ be such that $\alpha>\gamma$ and let $1\leq i_1<\cdots < i_t\leq m$, $1\leq j_1<\cdots <j_t\leq n-m$ be such that $\alpha=[(\gamma\bs \{\gamma_{i_1},\ldots,\gamma_{i_t}\})\sqcup\{a_{j_1},\ldots,a_{j_t}\}]$ and $a_{j_l}>\gamma_{i_l}$ for all $l\in \llb 1,t\rrb$. Then the quantum Pl\"ucker coordinate $\alpha$ belongs to $P$ if and only if there does not exist a vertex-disjoint $R_{(\{\gamma_{i_1},\ldots,\gamma_{i_t}\}, \{a_{j_1},\ldots,a_{j_t}\})}$-path system in the Postnikov graph $\rm{Post}'(C)$ of $C$.
\end{theorem}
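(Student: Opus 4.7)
The proof plan is essentially to assemble the pieces that have already been established and then unpack the relabeling convention. By Corollary \ref{Cor minor}, membership of $\alpha$ in the $\ch$-prime $P$ is equivalent to membership of the pseudo quantum minor
\[
\Delta := [i_1\cdots i_t \mid n-m+1-j_t\cdots n-m+1-j_1]
\]
in the corresponding $\ch$-prime $\xi(P)$ of $\partitionk$. Lemma \ref{it fits} confirms that the row indices $\{i_1,\ldots,i_t\}$ and column indices $\{n-m+1-j_t,\ldots,n-m+1-j_1\}$ actually lie inside the shape $\llb 1,c\rrb \times \llb 1,d\rrb$ of $Y_\lambda$, so that $\Delta$ is a genuine pseudo quantum minor of $\partitionk$. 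Since $\xi(P)$ corresponds under the bijection \eqref{H-primes of partlambda} to the Cauchon-Le diagram $C$, Theorem \ref{which pseudo minors} then says that $\Delta \in \xi(P)$ if and only if there is no vertex-disjoint $R_{(I,J)}$-path system in $\postc$, where $I=\{i_1,\ldots,i_t\}$ and $J=\{n-m+1-j_t,\ldots,n-m+1-j_1\}$. This is precisely Theorem \ref{theorem-main-pluckers}.

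To obtain Theorem \ref{theorem-main-pluckers-2} from this, the only remaining task is to justify the relabeling of the sources and sinks of $\postc$. The plan is to observe that the alternative labeling $\mathrm{Post}'(C)$ described just above Theorem \ref{theorem-main-pluckers-2} renames row vertex $r_i$ as $\gamma_i$ and column vertex $c_{n-m+1-j}$ as $a_j$; this is consistent with how the sides of the Young diagram $Y_\lambda$ were originally labelled by $1,\ldots,n$ after Notation \ref{associated partition} (see the figure \eqref{1347}). Under this dictionary, the source set $\{r_{i_1},\ldots,r_{i_t}\}$ becomes $\{\gamma_{i_1},\ldots,\gamma_{i_t}\}$, and the sink set $\{c_{n-m+1-j_t},\ldots,c_{n-m+1-j_1}\} = \{c_{n-m+1-j_l}\}_{l=1}^t$ becomes $\{a_{j_1},\ldots,a_{j_t}\}$. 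The existence of a vertex-disjoint $R_{(I,J)}$-path system in $\postc$ is therefore logically identical to the existence of a vertex-disjoint path system in $\mathrm{Post}'(C)$ from the sources labelled $\{\gamma_{i_1},\ldots,\gamma_{i_t}\}$ to the sinks labelled $\{a_{j_1},\ldots,a_{j_t}\}$, which yields the desired reformulation.

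There is no real obstacle here beyond bookkeeping: all the genuine content (membership in $\xi(P)$ being equivalent to non-existence of path systems, and the translation between quantum Pl\"ucker coordinates in $\oqgmnk$ and pseudo quantum minors in $\partitionk$ via the dehomogenisation isomorphism $\Phi$) has already been established in Theorem \ref{which pseudo minors} and Corollary \ref{Cor minor}. The one spot requiring a bit of care is to make sure the column-reversal $n-m+1-j_l$ in the pseudo quantum minor $\Delta$ matches the column-reversal implicit in the map $c_{n-m+1-j}\leadsto a_j$, so that both sides of the relabeled bijection index sources and sinks by the \emph{same} subsets of $\gamma$ and of $\llb 1,n\rrb\setminus\gamma$. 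Once this is checked, Theorem \ref{theorem-main-pluckers-2} follows immediately from Theorem \ref{theorem-main-pluckers}.
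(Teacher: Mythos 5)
Your proposal is correct and follows exactly the paper's route: the paper proves Theorem \ref{theorem-main-pluckers} by combining Corollary \ref{Cor minor}, Lemma \ref{it fits}, and Theorem \ref{which pseudo minors}, and then obtains Theorem \ref{theorem-main-pluckers-2} as an immediate restatement via the relabeling $r_i \leadsto \gamma_i$, $c_{n-m+1-j} \leadsto a_j$ of the sources and sinks, which is precisely the bookkeeping you carry out.
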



\section{$\ch$-primes in $\oqgmnk$: generation}

In this section, we prove the main result of this article. More precisely, we prove that $\ch$-primes of $\oqgmnf$ are generated by quantum Pl\"ucker coordinates when $q$ is transcendental (over the prime field of $\f$). 

In order to prove this result, we proceed as follows. First, we use noncommutative dehomogenisation to prove that the result is true up to localisation. Then we use results from \cite{good-len-q-transc} to prove the main result under the assumption that certain ideals of the homogeneous coordinate ring of the corresponding grassmannian are prime. Using results of \cite{kls-proj,oh} on positroid varieties, we conclude that the main result is true over an algebraically close field. Finally, we extend the result to arbitrary fields using ideas from \cite{gll2}.

\subsection{Noncommutative dehomogenisation over $R$} \label{section-dehomogenisation}

Throughout this section, let $R$ be a commutative noetherian domain containing a nonroot of unity $q \in R^{\ast}$. Let $\f$ denote the field of fractions of $R$, and let $\gamma \in \Pi$. We set $\{a_1<\cdots<a_{n-m}\}:=\llb 1,n \rrb\bs\gamma$.  \\

We need to check that the dehomogenisation isomorphism obtained in Section~\ref{qgrass} restricts to the quantum grassmannian over a commutative noetherian domain $R$ from the quantum grassmannian over the field of fractions $\f$ of $R$. \\

Recall that $\oqgmnr$ and $\oqgmnf$ are both $\mn$-graded, with all quantum Pl\"ucker coordinates in degree one. \\

Set $\sgr:=\oqgmnr/\langle\Pi_\gamma\rangle_R$ and 
$\sgf:=\oqgmnf/\langle\Pi_\gamma\rangle_\f$. (Here, we use a subscript $_{\f}$ in order to avoid confusion.) There is a natural map 
$\phi:\oqgmnr\goesto\sgf$ induced by the inclusion of $\oqgmnr$ into 
$\oqgmnf$ followed by the natural homomorphism onto the factor $\sgf$. 
Obviously, $\langle\Pi_\gamma\rangle_R\subseteq\ker(\phi)$ and we claim that 
$\ker(\phi)=\langle\Pi_\gamma\rangle_R$. Suppose that $a\in\ker(\phi)$. Note that
any standard monomial that involves an $\alpha\not\geq\gamma$ is certainly in 
$\langle\Pi_\gamma\rangle_R$; so, without loss of generality, assume that 
$a=\sum\,r_iS_i$ for some $r_i\in R$ and $S_i$ standard monomials that do not
involve such $\alpha$. Let $\widebar{b}$ denote the coset $b+\langle\Pi_\gamma\rangle_\f$. Then, $\sum\,\widebar{r_i}\widebar{S_i} = \phi(a)=0$. However, 
$\sgf$ is a graded quantum algebra with a straightening law, with the 
$\widebar{S_i}$ forming a basis over $\f$, by \cite[Corollary 1.2.6]{lr2}; and so 
each $\widebar{r_i}=0$. Hence, each $r_i\in\langle\Pi_\gamma\rangle_\f\cap R =0$. 
Thus, $a\in\langle\Pi_\gamma\rangle_R$, as required to show that 
$\ker(\phi)=\langle\Pi_\gamma\rangle_R$. 

It follows that $\sgr\subseteq\sgf$, via the inclusion 
$ a+\langle\Pi_\gamma\rangle_R\goesto a+\langle\Pi_\gamma\rangle_\f$. 
As a consequence, $\sgr$ is a domain and $\langle\Pi_\gamma\rangle_R$ 
is a completely prime ideal of $\oqgmnr$.
Also, $\sgr$ and $\sgf$ inherit an $\mn$-grading from the $\mn$-grading on 
$\oqgmnr$ and $\oqgmnf$, respectively; and 
the  inclusion we have just obtained 
preserves the $\mn$-grading on $\sgr$ and $\sgf$. Each of these two rings is 
$\mn$-graded and the inclusion respects the graded components. 
\\

It follows easily from \cite[Theorem 3.4.2]{lr2} that $\widebar{\gamma}$ 
commutes up to a power of $q$ with any $\widebar{\alpha}$ for any 
$\alpha>\gamma$. Thus, $\widebar{\gamma}$ is a regular normal element in 
both $\sgr$ and $\sgf$, and we may invert the powers of $\widebar{\gamma}$ to 
obtain  $\sgr[\widebar{\gamma}^{\,-1}]\subseteq\sgf[\widebar{\gamma}^{\,-1}]$. Each of these rings is 
$\mz$-graded and the inclusion respects the graded components. In particular, 
considering the zero components, we see that $\sgro\subseteq \sgfo$. (Note the use of subscripts compared to previous sections.  Again this is just to avoid confusion.)
\\

By Theorem \ref{So to partition}, there is an isomorphism $\theta: \sgfo\xrightarrow{\cong} 
\partition$ such that, with the notation of Theorem \ref{So to partition}, we have:
$$\theta(\wt{m_{i,a_j}})=x_{m+1-i,n-m+1-j}$$ for each $(i,a_j)\in \cl_\gamma$;
and 
$$\theta^{-1}(x_{i,j})=\wt{m_{m+1-i,a_{n-m+1-j}}}$$
 for each $(i,j)\in Y_\lambda$.

This isomorphism $\theta$ clearly restricts to an isomorphism $\sgro\cong \co_{q^{-1}}(Y_\lambda(R))$. Then the noncommutative dehomogenisation isomorphism 
\begin{equation}\label{eq-psi}
\Psi: \sgf[\widebar{\gamma}^{\,-1}]\xrightarrow{\cong}\co_{q^{-1}}(Y_\lambda(\f))[y^{\pm 1};\sigma]
\end{equation}
from Section \ref{sec-corresp-h-primes-cauchon-diags} (see \eqref{the godfather iso}) restricts to an isomorphism
\begin{equation}\label{eq-psi-2}
\sgr[\widebar{\gamma}^{\,-1}]\xrightarrow{\cong}\co_{q^{-1}}(Y_\lambda(R))[y^{\pm 1};\sigma], 
\end{equation}
which we shall also call $\Psi$, by tracing through the maps above. We summarise our discussion in the following proposition. 
\begin{proposition}[Dehomogenisation over $R$]
Let $\gamma \in \Pi$, and set $\{a_1<\cdots<a_{n-m}\}:=\llb 1,n \rrb\bs\gamma$. Then there is an $R$-isomorphism
\begin{equation}\label{eq-psi-2}
\Psi: \sgr[\widebar{\gamma}^{\,-1}]\xrightarrow{\cong}\co_{q^{-1}}(Y_\lambda(R))[y^{\pm 1};\sigma]
\end{equation}
defined by $\Psi(\bar{\gamma})=y$ and $\Psi ( \wt{m_{m+1-i,a_{n-m+1-j}}} )=x_{i,j}$ for all $(i,j)\in Y_\lambda $.
\end{proposition}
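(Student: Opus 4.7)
The proposition is essentially a bookkeeping result: everything over the field $\f$ has already been established in Section~\ref{qgrass}, and the task is to verify that the field-level isomorphism $\Psi$ restricts faithfully to the $R$-level subalgebras. My plan is to split the argument into three steps that mirror the discussion preceding the statement.

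First, I would make precise the inclusion $\sgr\hookrightarrow\sgf$. The natural map $\phi:\oqgmnr\to\sgf$ obtained by composing the inclusion $\oqgmnr\hookrightarrow\oqgmnf$ with the quotient map has kernel exactly $\langle\Pi_\gamma\rangle_R$; this uses the fact that $\sgf$ is a QGASL with the (cosets of) standard monomials forming an $\f$-basis \cite[Corollary 1.2.6]{lr2}, together with $\langle\Pi_\gamma\rangle_\f\cap R=0$. This shows $\sgr$ embeds into $\sgf$ as an $\mn$-graded $R$-subalgebra, and $\langle\Pi_\gamma\rangle_R$ is completely prime.

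Second, I would handle the localisation. Since $\bar\gamma$ commutes up to a power of $q$ with every $\bar\alpha$ for $\alpha\geq\gamma$ (by \cite[Theorem 3.4.2]{lr2}), it is a regular normal element of both $\sgr$ and $\sgf$. Hence $\sgr[\bar\gamma^{-1}]\hookrightarrow\sgf[\bar\gamma^{-1}]$ as $\mz$-graded rings, and taking the degree-zero parts gives $\sgro\hookrightarrow\sgfo$. The key observation is that the generators $\widetilde{m_{i,a_j}}=\bar{m}_{i,a_j}\bar\gamma^{-1}$ for $(i,a_j)\in\cl_\gamma$ of $\sgfo$ described in Section~\ref{subsection-dehom} all lie in $\sgro$, since the quantum Pl\"ucker coordinates $m_{i,a_j}$ are defined over $R$; thus $\sgro$ coincides with the $R$-subalgebra generated by these elements.

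Third, I would restrict the isomorphisms. The isomorphism $\theta:\sgfo\xrightarrow{\cong}\partition$ from Theorem~\ref{So to partition} sends the $R$-generating set $\{\widetilde{m_{i,a_j}}\}$ bijectively onto the canonical $R$-generating set $\{x_{m+1-i,n-m+1-j}\}$ of $\co_{q^{-1}}(Y_\lambda(R))$. Therefore $\theta$ restricts to an $R$-algebra isomorphism $\sgro\xrightarrow{\cong}\co_{q^{-1}}(Y_\lambda(R))$. Similarly, the dehomogenisation isomorphism \eqref{eq-psi} maps $\bar\gamma$ to $y$ and identifies $\sgfo$ with the degree-zero part of $\co_{q^{-1}}(Y_\lambda(\f))[y^{\pm1};\sigma]$; since the skew polynomial structure on the target is defined over $R$ (the automorphism $\sigma$ multiplies each $x_{i,j}$ by $q\in R$) and $\bar\gamma$ is normal in $\sgr$, the isomorphism restricts to the $R$-algebra isomorphism of \eqref{eq-psi-2} sending $\widetilde{m_{m+1-i,a_{n-m+1-j}}}$ to $x_{i,j}$ and $\bar\gamma$ to $y$, as claimed.

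No step here is genuinely hard; the only subtlety worth attention is confirming the injectivity of $\sgr\to\sgf$, for which the QGASL/standard-monomial basis argument over $\f$ is essential because it prevents $R$-torsion from being created when passing to the quotient by $\langle\Pi_\gamma\rangle_R$. Once that is in hand, the rest is a direct restriction of maps to identifiable $R$-subalgebras on which all relations and structure constants (being polynomials in $q^{\pm1}\in R$) automatically descend.
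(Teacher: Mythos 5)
Your proposal is correct and follows essentially the same route as the paper, whose ``proof'' is precisely the discussion preceding the proposition: the kernel computation for $\phi:\oqgmnr\to\sgf$ via the standard-monomial/QGASL basis and $\langle\Pi_\gamma\rangle_\f\cap R=0$, the localisation at the regular normal element $\bar\gamma$ with passage to degree-zero parts $\sgro\subseteq\sgfo$, and the restriction of $\theta$ and of the dehomogenisation isomorphism \eqref{eq-psi} to the $R$-forms. Your ``thus $\sgro$ coincides with the $R$-subalgebra generated by the $\wt{m_{i,a_j}}$'' is stated at the same level of brevity as the paper's ``clearly restricts''; in both cases the reverse inclusion ultimately rests on the fact (Theorem \ref{where it goes}/Corollary \ref{cor-pluckers-match-minors}) that each $\bar\alpha\bar\gamma^{-1}$ is a $\pm q^{\bullet}$ multiple of a pseudo quantum minor, whose coefficients lie in $\Z[q^{\pm1}]\subseteq R$.
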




\subsection{Quantum Pl\"ucker coordinates generate $\ch$-primes: algebraically closed case}

In this section, we set $R:=\k[q^{\pm1}]$ with $q$ transcendental over a field $\k$, and $\f:=\k(q^{\pm1})$ denotes the field of fractions of $R$.\\

The aim in this section is to show that any $\ch$-prime ideal in $\oqgmnf$ is generated by the quantum Pl\"ucker coordinates that it contains, under the assumption that the corresponding list of (classical) Pl\"ucker coordinate generates a prime ideal of the homogeneous coordinate ring of the grassmannian $G_{mn}(\k)$.  

Let $P$ be a $\ch$-prime ideal in $\oqgmnf$. As the augmentation ideal is clearly generated by quantum Pl\"ucker coordinates, we assume that $P$ is distinct from the augmentation ideal. 

As usual, we denote by $\gamma=[\gamma_1<\cdots<\gamma_m]$ the unique quantum Pl\"ucker coordinate such that $\gamma\notin P$ but $\alpha\in P$ for all $\alpha\ngeq\gamma$. Let $\lambda$ be the partition associated with $\gamma$ and let $C$ be the Cauchon-Le diagram on $Y_\lambda$ corresponding to $P$. 

\begin{notation}
\begin{enumerate} 
\item We set $\Pcl:=\{\alpha\in \Pi\ |\ \alpha\in P\}$.
\item We define $\{a_1<\cdots<a_{n-m}\}:=\llb 1,n \rrb\bs\gamma$.
\item Let $\alpha\in \Pi$ be such that $\alpha > \gamma$. Then there exists $1\leq i_1<\cdots < i_t\leq m$, $1\leq j_1<\cdots <j_t\leq n-m$ such that $\alpha=[(\gamma\bs \{\gamma_{i_1},\ldots,\gamma_{i_t}\})\sqcup\{a_{j_1},\ldots,a_{j_t}\}]$ and $a_{j_l}>\gamma_{i_l}$ for all $l\in \llb 1,t\rrb$. 

We set $I_{\alpha} := \{\gamma_{i_1},\ldots,\gamma_{i_t}\}$ and $J_{\alpha}:=\{a_{j_1},\ldots,a_{j_t}\}$. 
\end{enumerate}
\end{notation}

With the above notation, it follows from Theorem \ref{theorem-main-pluckers-2} that 
$$\Pcl= \Pi_\gamma\sqcup  \{\alpha \in \Pi\ |\ \alpha>\gamma\tx{ and there is no vertex-disjoint }R_{(I_{\alpha},J_{\alpha})}\tx{-path system in }\rm{Post}'(C)\}.
$$

\begin{lemma}
\label{lemma-generation-up-to-localisation}
The ideal $(P /\langle\Pi_\gamma\rangle)[\widebar{\gamma}^{-1}]$
is completely prime in $(\oqgmnf/\langle\Pi_\gamma\rangle)[\gamma^{-1}]$ and is generated as a right ideal by $\Pcl$.
\end{lemma}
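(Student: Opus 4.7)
The plan is to prove both claims by transferring everything through the dehomogenisation isomorphism $\Psi$ of \eqref{eq-psi} and reducing to the generation result for partition subalgebras (Theorem \ref{gen by pseudos}). For the complete primeness statement, I would first observe that $P$ is completely prime by the general Goodearl-Letzter theory (all $\ch$-primes in our context are completely prime), so the image $P/\langle\Pi_\gamma\rangle$ is a completely prime ideal of the domain $\sgf$; since $\gamma\notin P$, the coset $\bar{\gamma}$ is regular, normal, and not in $P/\langle\Pi_\gamma\rangle$, so localising at $\bar{\gamma}$ preserves complete primeness.

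For the generation statement, I would work through the $\ch$-equivariant isomorphism
$$\Psi: \sgf[\bar{\gamma}^{\,-1}]\xrightarrow{\cong} \partition[y^{\pm 1};\sigma]$$
from \eqref{the godfather iso}. Under $\Psi$, the ideal $(P/\langle\Pi_\gamma\rangle)[\bar{\gamma}^{\,-1}]$ corresponds to $\bigoplus_{i\in\mathbb{Z}}\xi(P)y^i$, where $\xi(P)\in\ch\text{-}\Spec(\partition)$ is the $\ch$-prime of the partition subalgebra attached to $P$ by the bijection \eqref{godmother corresp}. By Theorem \ref{gen by pseudos}, $\xi(P)$ is generated as a right ideal of $\partition$ by the pseudo quantum minors it contains, and by Theorem \ref{which pseudo minors} these are exactly the $[I\mid J]$ for which no vertex-disjoint $R_{(I,J)}$-path system exists in $\postc$. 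Since $y$ commutes with each generator of $\partition$ up to a scalar and $\sigma(\xi(P))=\xi(P)$, the same set generates $\bigoplus_{i\in\mathbb{Z}}\xi(P)y^i$ as a right ideal of $\partition[y^{\pm 1};\sigma]$.

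It then remains to pull this generating set back via $\Psi^{-1}$. By Corollary \ref{cor-pluckers-match-minors}, every nonzero pseudo quantum minor in $\partition$ is, up to a nonzero scalar $(-q)^{\bullet}$, of the form $\Psi(\bar{\alpha}\bar{\gamma}^{\,-1})$ for a unique quantum Pl\"ucker coordinate $\alpha>\gamma$; and by Corollary \ref{Cor minor}, this pseudo quantum minor lies in $\xi(P)$ if and only if $\alpha\in P$. Thus $(P/\langle\Pi_\gamma\rangle)[\bar{\gamma}^{\,-1}]$ is generated as a right ideal by $\{\bar{\alpha}\bar{\gamma}^{\,-1}\mid \alpha>\gamma,\ \alpha\in P\}$. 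Multiplying each generator on the right by the unit $\bar{\gamma}$ does not change the right ideal, so it is also generated by $\{\bar{\alpha}\mid \alpha>\gamma,\ \alpha\in P\}$. Finally, since $\bar{\alpha}=0$ in $\sgf$ for every $\alpha\in\Pi_\gamma$, enlarging the generating set to the whole of $\Pcl=\Pi_\gamma\sqcup\{\alpha>\gamma:\alpha\in P\}$ adds only zero elements, yielding the required generation by $\Pcl$.

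The lemma itself is not the main obstacle: it is essentially a translation through $\Psi$ of the partition-subalgebra generation result, combined with the identification of pseudo quantum minors with (normalised) images of quantum Pl\"ucker coordinates. The hard part lies ahead of this lemma, namely passing from generation in the localisation $\sgf[\bar{\gamma}^{\,-1}]$ to genuine generation in $\oqgmnf$; as outlined in the introduction, this requires the transcendence hypothesis on $q$, the Goodearl--Lenagan technique of \cite{good-len-q-transc} to transfer information from $q=1$, and the Knutson--Lam--Speyer primeness results for positroid varieties.
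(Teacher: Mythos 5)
Your argument is correct and follows essentially the same route as the paper's proof: complete primeness is noted as immediate, and generation is obtained by transferring through the dehomogenisation isomorphism $\Psi$, invoking Theorems \ref{gen by pseudos} and \ref{which pseudo minors} for $\xi(P)$ in the partition subalgebra, and then identifying the generating pseudo quantum minors with (scalar multiples of) the elements $\bar{\alpha}\bar{\gamma}^{-1}$ via Theorem \ref{where it goes} and Corollaries \ref{cor-pluckers-match-minors} and \ref{Cor minor}. Your additional remarks (the $\sigma$-stability of $\xi(P)$, multiplying by the unit $\bar{\gamma}$, and the vanishing of $\Pi_\gamma$ in $S(\gamma)$) simply make explicit steps the paper leaves implicit.
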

\begin{proof}
Note that complete primeness is clear.  

We use the notation from Sections \ref{sec-corresp-h-primes-cauchon-diags} and \ref{section-9-6}. In particular,we still denote by $\xi$ the biincreasing bijection 
\[
\xi: \ts{\ch-\Spec_\gamma} \oqgmnf\xrightarrow{\cong}\ch-\Spec \partition
\] 
given in \eqref{godmother corresp}. 

Recall that $Q:=\xi (P)=  \Psi(\ol{P}[\ol{\gamma}^{-1}])\cap \partition $ and $P=\Phi\left(\bigoplus_{i\in \Z}Qy^i\right)\cap \left(\oqgmnf/\ang{\Pi_\gamma}\right)$.

As the Cauchon-Le diagram associated to $Q$ is $C$, it follows from Theorems \ref{which pseudo minors} and \ref{gen by pseudos} that $Q$ is generated as a right ideal by the pseudo quantum minors $[I\mid J]$ of $\partition$ such that there are no vertex-disjoint $R_{(I,J)}$-path system in $\rm{Post}(C)$. 

Hence we deduce from Theorem \ref{where it goes} that $\ol{P}[\ol{\gamma}^{-1}]=\Phi\left(\bigoplus_{i\in \Z}Qy^i\right)$ is generated by $\Pcl$ as a right ideal of $(\oqgmnf/\langle\Pi_\gamma\rangle)[\gamma^{-1}]$, as desired. 
\end{proof}

As extension and contraction are inverse bijections between the prime spectra of $\oqgmnf/\langle\Pi_\gamma\rangle$ and $(\oqgmnf/\langle\Pi_\gamma\rangle)[\gamma^{-1}]$, we deduce from the previous lemma the following remark. 

\begin{remark}
If $\Pcl$ generates a prime ideal in $\oqgmnf$, then $\ang{\Pcl}_{\f}=P$. 
\end{remark}
 So, we are aiming to show that the set $\Pcl$ generates a prime ideal in 
$\oqgmnf$. 
Let $\idealPclr$ denote the ideal of $\oqgmnr$ generated by $\Pcl$.

\begin{notation}
Set $\{\alpha_1,\dots,\alpha_s\}:=\Pcl$. To avoid confusion later, in this section, we use overbars to denote images modulo $q-1$ and we denote the 
image of $\gamma$ in $\sgr:=\oqgmnr/\langle\Pi_\gamma\rangle_R$ by 
$\mu$. Similarly, we write $\mu_i$ for the image 
of $\alpha_i$ in $\sgr$, for $i=1,\dots, s$.
\end{notation}

Before proving the main result of this section, we establish an analogue of Lemma \ref{lemma-generation-up-to-localisation} over $R$. 

\begin{lemma}
The ideal $(\idealPclr/\langle\Pi_\gamma\rangle_R)[\mu^{-1}]$
is completely prime 
in $(\oqgmnr/\langle\Pi_\gamma\rangle_R)[\mu^{-1}]$ and is generated as a right ideal by $\mu_1,\ldots,\mu_s$.
\begin{proof}
As in Theorem  \ref{theorem-ideal-is-prime} (with $q$ replaced by $q^{-1}$), $\pcl$ denotes the set of pseudo quantum minors $[I\mid J]$ in $\co_{q^{-1}}(Y_\lambda(R))$ for which there are no vertex disjoint families of $R_{(I,J)}$-paths in the Postnikov graph $\postc$ of $C$.  
 
Recall that, where $\sgr =\oqgmnr/\langle\Pi_\gamma\rangle_R$ and $\sgro\cong \co_{q^{-1}}(Y_\lambda(R))$, Corollaries \ref{cor-pluckers-match-minors} and \ref{Cor minor} show that the noncommutative dehomogenisation isomorphism 
\begin{equation}\label{noncomm-dehom-isom}
\Psi : \sgr[\mu^{-1}]\xrightarrow{\cong}\sgro[y^{\pm 1};\sigma], 
\end{equation}
established in \eqref{eq-psi-2}, sends the right ideal of $S_\gamma(R)[\mu^{-1}]$ generated by $\mu_1,\ldots,\mu_s$ (or equivalently by $\mu_1\mu^{-1},\ldots,\mu_s\mu^{-1}$) to the right ideal of $\co_{q^{-1}}(Y_\lambda(R))[y^{\pm 1};\sigma]$ generated by $\Pi_\lambda^C$, which is a two-sided prime ideal by Theorem \ref{theorem-ideal-is-prime}. The result follows immediately. 
\end{proof}
\end{lemma}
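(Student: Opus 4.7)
The plan is to transfer the statement through the noncommutative dehomogenisation isomorphism of \eqref{eq-psi-2} and then apply the already-established completely-prime generation result for pseudo quantum minors over $R$ in partition subalgebras (Theorem \ref{theorem-ideal-is-prime}). Since $\mu=\overline{\gamma}$ is a regular normal element, complete primeness of $(\idealPclr/\langle\Pi_\gamma\rangle_R)[\mu^{-1}]$ in $(\oqgmnr/\langle\Pi_\gamma\rangle_R)[\mu^{-1}]=\sgr[\mu^{-1}]$ will follow from its image under $\Psi$ being a completely prime ideal, so everything hinges on identifying that image explicitly.

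First, I would note that the quantum Plücker coordinates $\alpha\in\Pi_\gamma$ all vanish modulo $\langle\Pi_\gamma\rangle_R$, so $\mu_1,\dots,\mu_s$ are really the images of those $\alpha_i\in P_\lambda^C$ with $\alpha_i\geq\gamma$. For each such $\alpha_i$, Theorem \ref{where it goes} (whose proof over $\f$ uses only the generalised quantum Plücker relations and quantum Muir's law, both of which hold verbatim over $R$) gives
\[
\Psi(\mu_i\mu^{-1})=(-q)^{\bullet}[I_i\mid J_i]
\]
for an explicit pseudo quantum minor in $\co_{q^{-1}}(Y_\lambda(R))$, and Corollary \ref{cor-pluckers-match-minors} identifies the right-hand side as a unit multiple of the pseudo quantum minor whose vanishing at $P$ is controlled by the Postnikov graph $\mathrm{Post}(C)$. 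Theorem \ref{theorem-main-pluckers-2} then says that as $\alpha_i$ ranges over $P_\lambda^C\cap\{\alpha\geq\gamma\}$, the resulting pseudo quantum minors range over exactly the set $\Pi_\lambda^C$ of pseudo quantum minors in $\co_{q^{-1}}(Y_\lambda(R))$ for which no vertex-disjoint $R_{(I,J)}$-path system exists in $\mathrm{Post}(C)$. Thus $\Psi$ sends the right ideal of $\sgr[\mu^{-1}]$ generated by $\{\mu_1\mu^{-1},\dots,\mu_s\mu^{-1}\}$ (equivalently by $\mu_1,\dots,\mu_s$) onto the right ideal of $\co_{q^{-1}}(Y_\lambda(R))[y^{\pm1};\sigma]$ generated by $\Pi_\lambda^C$.

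Next, Theorem \ref{theorem-ideal-is-prime} (applied in the $q^{-1}$ setting) tells us that $\langle\Pi_\lambda^C\rangle_R$ is a completely prime two-sided ideal of the partition subalgebra $\co_{q^{-1}}(Y_\lambda(R))$ which is generated, both as a left and as a right ideal, by $\Pi_\lambda^C$. Extending scalars along the skew-Laurent extension $\co_{q^{-1}}(Y_\lambda(R))\hookrightarrow\co_{q^{-1}}(Y_\lambda(R))[y^{\pm1};\sigma]$ preserves complete primeness (the extension is a flat Ore localisation of a skew polynomial extension, and $\sigma$ simply scales each $x_{i,j}$ by a power of $q$, so it stabilises $\langle\Pi_\lambda^C\rangle_R$): more precisely, $\bigoplus_{i\in\Z}\langle\Pi_\lambda^C\rangle_R\, y^i$ is a two-sided ideal whose quotient is the analogous skew-Laurent extension of the integral domain $\co_{q^{-1}}(Y_\lambda(R))/\langle\Pi_\lambda^C\rangle_R$, hence is itself a domain. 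Since $\Pi_\lambda^C$ generates this ideal as a right ideal already over $\co_{q^{-1}}(Y_\lambda(R))$, it does so a fortiori over the extension.

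Pulling this back through the isomorphism $\Psi$ then yields both assertions: $(\idealPclr/\langle\Pi_\gamma\rangle_R)[\mu^{-1}]$ is a two-sided completely prime ideal of $\sgr[\mu^{-1}]$, and it is generated as a right ideal by $\mu_1,\dots,\mu_s$. The only mildly delicate point I expect is checking that the image of the right ideal generated by the $\mu_i$ in the localisation really does equal the right ideal generated by the $\Pi_\lambda^C$ on the nose (rather than up to an extra factor of $y$); this is taken care of by the explicit unit scalars appearing in Theorem \ref{where it goes} and Corollary \ref{cor-pluckers-match-minors}, combined with the fact that $y=\Psi(\mu)$ is a unit in the target.
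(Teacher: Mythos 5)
Your proof is correct and follows essentially the same route as the paper: transfer the problem through the dehomogenisation isomorphism $\Psi$ of \eqref{eq-psi-2}, identify the images of $\mu_1,\dots,\mu_s$ as unit multiples of the pseudo quantum minors in $\Pi_\lambda^C$ via Theorem \ref{where it goes} and Corollaries \ref{cor-pluckers-match-minors}, \ref{Cor minor}, and then invoke Theorem \ref{theorem-ideal-is-prime}. The only difference is that you explicitly verify the intermediate step that $\bigoplus_{i\in\Z}\langle\Pi_\lambda^C\rangle_R\,y^i$ is a completely prime two-sided ideal of the skew-Laurent extension (using that $\sigma$ stabilises $\langle\Pi_\lambda^C\rangle_R$ and that a skew-Laurent extension of a domain is a domain), a point the paper's proof treats as immediate.
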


We are now ready to prove the main result of this section. To state it, we need to introduce a few additional notation.

Set $B_R:= (\idealPclr/\langle\Pi_\gamma\rangle_R)[\mu^{-1}]
\cap\oqgmnr/\langle\Pi_\gamma\rangle_R$, and note that $B_R$ is completely 
prime. 

Set $A_R:= \idealPclr/\langle\Pi_\gamma\rangle_R$. Clearly, $A_R\subseteq B_R$. 

Similarly, define $A_{\f}$ and $B_{\f}$. \\

We wish to prove that $A_{\f}=B_{\f}$. If we can do this then $A_{\f}$ is completely prime (since $B_{\f}$ is the localisation at $R^\ast$ of the completely prime ideal $B_R$ and hence $B_{\f}$ is itself completely prime) and it follows that $\ang{P_\lambda^C}_{\f}$ is completely prime.\\

The idea is to use \cite[Proposition 2.1]{good-len-q-transc} whose right-module version is reproduced next for the reader's convenience. 

\begin{proposition}\label{proposition-qtransc}
Let  $k\subset K$ be  a field extension and $q\in K\setminus k$ transcendental over $k$ (so  that the $k$-subalgebra $R=k[q,q^{-1}] \subset K$ is a Laurent polynomial ring).  Let us denote
reduction modulo $q-1$ by overbars, that is, given any
right $R$-module homomorphism $\phi:A\rightarrow B$, we write $\widebar{\phi}:
\widebar{A}\rightarrow \widebar{B}$ for the induced map $A/(q-1)A \rightarrow
B/(q-1)B$. 

 Let $A \stackrel{\phi}\goesto B \stackrel{\psi}\goesto C$ be a complex of right $R$-modules, such that $C$ is torsionfree. Suppose that there are right $R$-module decompositions
$$
A= \bigoplus_{j\in J} A_j, \qquad\qquad B= \bigoplus_{j\in J} B_j,
\qquad\qquad C=
\bigoplus_{j\in J} C_j
$$
such that $B_j$ is finitely generated, $\phi(A_j)\subseteq B_j$, and
$\psi(B_j)\subseteq C_j$ for all $j\in J$.

If the reduced complex $\widebar{A} \stackrel{\widebar{\phi}}\goesto 
\widebar{B} \stackrel{\widebar{\psi}}\goesto 
\widebar{C}$ is exact, then so is 
$$\xymatrixcolsep{3pc}
\xymatrix{
 A \otimes_R K \ar[r]^{\id\otimes\phi} & B \otimes_R K \ar[r]^{\,\id\otimes\psi\,} & C \otimes_R K. }$$
\end{proposition}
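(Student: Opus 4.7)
The plan is to reduce the claim to a homological computation on the middle homology $H := \operatorname{ker}\psi / \phi(A)$ of the original complex. Since $R = k[q^{\pm 1}]$ is a PID and $K$ is a torsion-free $R$-module (being a field extension of $\operatorname{Frac}(R) = k(q)$), $K$ is flat over $R$; hence tensoring with $K$ is exact and the middle homology of the tensored complex is exactly $H \otimes_R K$. So the exactness sought is equivalent to $H \otimes_R K = 0$. The direct sum decompositions compatible with $\phi,\psi$ give $H = \bigoplus_{j \in J} H_j$ where $H_j := \operatorname{ker}(\psi|_{B_j})/\phi(A_j)$, and because $B_j$ is finitely generated over the Noetherian ring $R$, each $H_j$ is finitely generated. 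It suffices to show each $H_j$ is a torsion $R$-module.

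The second step, which I expect to be the \emph{main obstacle}, is to prove $H_j/(q-1)H_j = 0$ by exploiting the exactness of the reduced complex together with the torsion-freeness hypothesis. Writing $Z_j := \operatorname{ker}(\psi|_{B_j})$ and $I_j := \phi(A_j) \subseteq Z_j$, the target inclusion is $Z_j \subseteq I_j + (q-1)Z_j$. Given $z \in Z_j$, its image $\bar z \in \bar B_j$ satisfies $\bar\psi(\bar z) = \overline{\psi(z)} = 0$, and by exactness of $\bar A_j \to \bar B_j \to \bar C_j$ there exist $a \in A_j$ and $b \in B_j$ with $z = \phi(a) + (q-1)b$. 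The delicate point is to replace $b$ by an element of $Z_j$, and this is precisely where torsion-freeness of $C$ enters: applying $\psi$ yields $0 = \psi(z) = (q-1)\psi(b)$ in $C_j \subseteq C$, and since $C$ is torsion-free this forces $\psi(b) = 0$, i.e.\ $b \in Z_j$. Thus $z \in I_j + (q-1)Z_j$, as required.

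For the final step I would invoke the structure theorem for finitely generated modules over the PID $R$: write $H_j \cong T_j \oplus R^{n_j}$ with $T_j$ torsion and $n_j \geq 0$. Then $H_j/(q-1)H_j$ contains the summand $R^{n_j}/(q-1)R^{n_j} \cong k^{n_j}$, so the vanishing established in the previous paragraph forces $n_j = 0$. Hence $H_j = T_j$ is a torsion module; since every element of $H_j$ is killed by some nonzero element of $R$, which becomes invertible in $K$, we conclude $H_j \otimes_R K = 0$. Summing over $j$ gives $H \otimes_R K = 0$, and the exactness of the tensored complex at $B \otimes_R K$ follows.
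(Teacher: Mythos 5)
Your proof is correct. Note that the paper itself does not prove this proposition: it is quoted verbatim from Goodearl--Lenagan \cite[Proposition 2.1]{good-len-q-transc}, so there is no in-paper argument to compare against, but your reconstruction is a valid, self-contained proof and follows the same homological mechanism one expects: since $K\supseteq k(q)=\mathrm{Frac}(R)$ is torsion-free over the PID $R$, it is flat, so exactness of the complex after $\otimes_R K$ is equivalent to $H\otimes_R K=0$ for the middle homology $H$; the compatible decompositions give $H=\bigoplus_j H_j$ with each $H_j$ finitely generated (Noetherianity of $R$ plus $B_j$ f.g.); reduced exactness together with torsion-freeness of $C$ (your identification of where that hypothesis is used is exactly right) yields $H_j=(q-1)H_j$; and finite generation then forces $H_j$ to be torsion, hence killed by $-\otimes_R K$. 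Two small remarks. First, the passage from exactness of $\widebar{A}\to\widebar{B}\to\widebar{C}$ to exactness of each piece $\widebar{A_j}\to\widebar{B_j}\to\widebar{C_j}$ deserves one line (reduction mod $q-1$ and both maps respect the direct sum decompositions, so kernels and images decompose); alternatively one can simply project the global relation $z-\phi(a)\in(q-1)B$ onto the $j$-th component. Second, at the last step you can avoid the structure theorem: by the determinant trick, a finitely generated $R$-module $M$ with $M=(q-1)M$ is annihilated by some $r\in 1+(q-1)R$, and $r\neq 0$ since $q-1$ is not a unit of $k[q^{\pm1}]$, so $M$ is torsion and $M\otimes_R K=0$; your PID argument is equally valid and buys nothing less.
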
 ~\\

We would like to use this result with $k=\k$ and $A:=A_R, B:=B_R$ while $C:=0$. \\

If we assign degree one to each (image of a) quantum Pl\"ucker coordinate, 
then $A$ and $B$ both have $R$-module decompositions of the type required by the 
proposition, with $J=\mz$.\\

Let $\phi:A\goesto B$ be inclusion and $\psi:B\goesto C$ be the zero map. We aim to show that 
$$\widebar{A} \stackrel{\widebar{\phi}}\goesto \widebar{B} \stackrel{\widebar{\psi}}\goesto \widebar{C}$$
 is exact; that is, we aim to show that $\widebar{\phi}(\widebar{A})=\widebar{B}$, 
 and to do this it is enough to show that $B_R\subseteq A_R + (q-1)B_R$. \\
 
Let $b\in B_R$. Then there is an $i\geq 0$ with $b\mu^i\in A_R$. Reducing modulo $q-1$, we obtain 
$\widebar{b}\widebar{\mu}^i \in \ideal{\widebar{\mu}_1,\dots,\widebar{\mu}_t}$. \\

{\bf Assume that $\ideal{\widebar{\mu}_1,\dots,\widebar{\mu}_t}$ is a prime ideal and $\widebar{\mu}$ is not in this ideal}; then we deduce that 
$ \widebar{b}\in\ideal{\widebar{\mu}_1,\dots,\widebar{\mu}_t}$.\\

Hence, $b=\mu_1c_1+\dots+\mu_tc_t +(q-1)d$ for some $c_i, d\in\sgr$.\\

Now, $\mu_1c_1+\dots+\mu_tc_t\in A_R\subseteq B_R$; and so $(q-1)d\in B_R$. 
As $B_R$ is completely prime and $q-1\not\in B_R$, we obtain that $d\in B_R$ and so $B_R\subseteq A_R+(q-1)B_R$, as required. \\

Thus, the conditions of Proposition~\ref{proposition-qtransc} are satisfied, and we deduce 
that 
$$\xymatrixcolsep{3pc}
\xymatrix{
 A \otimes_R \f \ar[r]^{\id\otimes\phi} & B \otimes_R \f\ar[r]^{\,\id\otimes\psi\,} & C \otimes_R \f }$$
 is exact when $\f=\k(q^{\pm1})$. 
\\

As $C \otimes_R \f=0$, it follows that $ A \otimes_R \f = B \otimes_R \f$; that is, $A_{\f}=B_{\f}$ as right ideals.

The discussion above is summarised in the following proposition.
\begin{proposition} 
Let $\k$ be a field, and let $\f:=\k(q)$ be the field of fractions of the Laurent polynomial ring $\k[q^{\pm 1}]$. 
Assume that $\ideal{\widebar{\mu}_1,\dots,\widebar{\mu}_t}$ is a prime ideal of the homogeneous coordinate ring of the classical grassmannian $G_{mn}(\f)$ and $\widebar{\mu}$ is not in this ideal. 
Then $P=\ideal{\mu_1,\dots,\mu_t}$ is a (completely) prime ideal of $\oqgmnf$, and $P$ is generated by $\mu_1,\dots,\mu_t$ as a right (resp. left) ideal.
\end{proposition}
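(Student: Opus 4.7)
The plan is to apply Proposition~\ref{proposition-qtransc} (i.e., \cite[Proposition~2.1]{good-len-q-transc}) to the complex $A_R \xrightarrow{\phi} B_R \xrightarrow{\,0\,} 0$, where $\phi$ is the inclusion. The $\N$-grading on $\oqgmnr$ (assigning degree one to each quantum Pl\"ucker coordinate) descends to $\sgr$ and restricts to compatible graded decompositions of $A_R$ and $B_R$ with finitely generated homogeneous components, so the formal hypotheses on the decompositions are immediate. Since $B_R$ is completely prime by the preceding lemma, the conclusion of the cited proposition, namely $A_R\otimes_R\f = B_R\otimes_R\f$, would identify $A_\f$ with the (completely prime) localisation $B_\f$ at $R^\ast$ of $B_R$, and pulling back through $\oqgmnf \to \sgf$ would give that $P = \ideal{\mu_1,\dots,\mu_t}$ is a completely prime right ideal of $\oqgmnf$ generated by $\mu_1,\dots,\mu_t$.

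The key step is verifying the exactness hypothesis of Proposition~\ref{proposition-qtransc}, i.e., exactness of the reduced complex $\overline{A}_R \to \overline{B}_R \to 0$, which amounts to showing $B_R \subseteq A_R + (q-1)B_R$. Given $b \in B_R$, by definition of $B_R$ there is some $i \geq 0$ with $b\mu^i \in A_R$, so reducing modulo $q-1$ yields $\overline{b}\,\overline{\mu}^i \in \ideal{\overline{\mu}_1,\dots,\overline{\mu}_t}$ inside the classical homogeneous coordinate ring. This is precisely the point where the classical primality hypothesis intervenes: since $\ideal{\overline{\mu}_1,\dots,\overline{\mu}_t}$ is prime and does not contain $\overline{\mu}$, the powers of $\overline{\mu}$ can be cancelled and we get $\overline{b} \in \ideal{\overline{\mu}_1,\dots,\overline{\mu}_t}$. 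Lifting a witnessing relation to $\sgr$ gives $b = \mu_1c_1 + \cdots + \mu_tc_t + (q-1)d$ with $c_j,d \in \sgr$; the element $(q-1)d = b - \sum_j \mu_j c_j$ then lies in $B_R$, and complete primeness of $B_R$ together with $q-1 \notin B_R$ forces $d \in B_R$.

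The hard part of the argument is really not this transcendence trick itself but ensuring that $B_R$ is available as a completely prime ideal of $\sgr$ described by quantum Pl\"ucker coordinates after localisation at $\overline{\gamma}$; this is precisely what was established by combining the noncommutative dehomogenisation isomorphism \eqref{eq-psi-2} with Theorem~\ref{theorem-ideal-is-prime} in the $R$-setting (itself built by pushing the Gr\"obner-basis machinery of Section~\ref{Primes in oqmmnr from Cauchon-Le diagrams} through the deleting-derivations reduction of Section~\ref{Primes in ylr from Cauchon-Le diagrams}). Once this infrastructure is in place, the verification above is routine, and the statement for left ideals follows by a symmetric argument since Theorem~\ref{theorem-ideal-is-prime} and Corollary~\ref{corollary-cp-over-R} both treat left and right ideals on equal footing.
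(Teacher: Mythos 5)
Your proposal is correct and follows essentially the same route as the paper: the application of Proposition~\ref{proposition-qtransc} to the complex $A_R \to B_R \to 0$ with the grading giving the required decompositions, the reduction of exactness to $B_R \subseteq A_R + (q-1)B_R$, the use of primality of $\ideal{\widebar{\mu}_1,\dots,\widebar{\mu}_t}$ together with $\widebar{\mu}$ lying outside it to cancel powers of $\widebar{\mu}$, and the final appeal to complete primeness of $B_R$ with $q-1\notin B_R$ all match the paper's argument. Your framing of the localised/dehomogenised description of $B_R$ as the real prerequisite is also consistent with how the paper sets up the proof.
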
 

Unfortunately, it is not known in general whether $\ideal{\widebar{\mu}_1,\dots,\widebar{\mu}_t}$ is a prime ideal. It is only known under the assumption that the base field $\k$ is algebraically closed. We thank Thomas Lam for the explanation below. \\

Recall that a {\em positroid} is a matroid that can be represented by a $m\times n$ matrix with nonnegative maximal minors. In other words, positroids are the set of maximal minors that do not vanish on an $m\times n$ matrix with nonnegative maximal minors. These matroids were introduced and studied by Postnikov in \cite[Definition 3.2]{post} (see also \cite[Definition 3]{oh}). Crucially for us, Oh proved that each positroid is exactly the intersection of cyclically shifted Schubert matroids (we refer the reader to \cite[Theorem 6]{oh} for more details). \\

Let $\k$ be an algebraic closed field in any finite characteristic. It is shown in \cite[Corollary 4.3]{kls-proj} that projections of Richardson varieties are compatibly Frobenius split. Positroid
varieties defined in \cite[Section 5.2]{kls} are special cases of these.  By \cite[Lemma 4.1]{kls-proj}, components of intersections of Frobenius split subvarieties are automatically Frobenius split and therefore reduced.  By standard arguments, the ``reduced'' statement also holds in characteristic 0 (see for example \cite[Section 1.6]{bk}).\\

Since each positroid variety is the (reduced) intersection of cyclically rotated Schubert varieties by \cite{oh}, the above comment implies that it must be a scheme-theoretic intersection.  It is known for algebraically closed fields in any characteristic that the ideal of a Schubert variety is linearly generated by the Pl\"ucker variables vanishing on it, see for example \cite{ram}.\\

It follows that the ideal of a positroid variety is linearly generated by Pl\"ucker variables not in its positroid.  This ideal is prime because a positroid variety is irreducible. \\

Finally, recall that Postnikov parametrised positroids by Cauchon-Le diagrams in a Young diagram that fits in a rectangle of size $m \times n-m$. Moreover, \cite[Theorem 6.5]{post} (see also \cite[Proposition 13]{oh}) shows that the Pl\"ucker coordinate $\alpha \in \Pi$  belongs to the positroid associated to the Cauchon-Le diagram $C$  on the Young diagram $Y_{\lambda}$  if and only if there exists a vertex-disjoint $R_{(\{\gamma_{i_1},\ldots,\gamma_{i_t}\}, \{a_{j_1},\ldots,a_{j_t}\})}$-path system in the Postnikov graph $\rm{Post}'(C)$ of $C$, where $\{a_1<\cdots<a_{n-m}\}=\llb 1,n \rrb\bs\gamma$ and $1\leq i_1<\cdots < i_t\leq m$, $1\leq j_1<\cdots <j_t\leq n-m$ with $\alpha=[(\gamma\bs \{\gamma_{i_1},\ldots,\gamma_{i_t}\})\sqcup\{a_{j_1},\ldots,a_{j_t}\}]$ and $a_{j_l}>\gamma_{i_l}$ for all $l\in \llb 1,t\rrb$.\\

Comparing the above with Theorem \ref{theorem-main-pluckers-2}, we obtain that the set $\widebar{\mu}_1,\dots,\widebar{\mu}_t$ is exactly the set of Pl\"ucker coordinates that don't belong to the positroid associated to the Cauchon-Le diagram $C$. Hence we conclude that $\ideal{\widebar{\mu}_1,\dots,\widebar{\mu}_t}$ is prime and does not contain $\widebar{\mu}$. \\

We are now ready to state the main result of this section.

\begin{theorem} 
\label{theorem-generation} 

Let $\k$ be an algebraically closed field and let $\f:=\k(q)$ be the field of fractions of the Laurent polynomial ring $\k[q^{\pm 1}]$. Let $P$ be an $\ch$-prime ideal of $\oqgmnf$. 

Then $P$ is generated as a right ideal by the quantum Pl\"ucker coordinates contained in $P$.
\end{theorem}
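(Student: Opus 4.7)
The plan is to reduce the statement to the already-established Proposition above (the one just stated before Theorem \ref{theorem-generation}), which says that $P = \ang{\mu_1,\dots,\mu_t}$ provided that the reduction modulo $q-1$ of $\Pcl$ generates a prime ideal of the classical homogeneous coordinate ring of $G_{m,n}(\k)$ not containing $\widebar{\mu}$. So the real work is in verifying this classical primeness under the algebraically closed hypothesis. First I would handle the trivial case: if $P=\ang{\Pi}$, it is generated by $\Pi$ itself as a right ideal. Otherwise, by \cite[Theorem 5.1]{llr-grass} there is a unique $\gamma\in\Pi$ with $\gamma\notin P$ and $\Pi_\gamma\subseteq P$, and by \eqref{major corresp} there is an associated Cauchon-Le diagram $C$ on the Young diagram $Y_\lambda$.

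Next, I would recall that Theorem \ref{theorem-main-pluckers-2} identifies $\Pcl$ exactly: $\Pi_\gamma$ together with those $\alpha>\gamma$ for which no vertex-disjoint path system exists in $\mathrm{Post}'(C)$ between the prescribed sources and sinks. By Lemma \ref{lemma-generation-up-to-localisation}, after dehomogenising at $\bar\gamma$, the right ideal generated by $\Pcl$ coincides with $P$. So it remains to verify that the torsion introduced by inverting $\bar\gamma$ does not expand the ideal, i.e.\ that $A_\f = B_\f$ in the notation established just before Theorem \ref{theorem-generation}.

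The central step is the application of Proposition \ref{proposition-qtransc} to the complex $A_R \hookrightarrow B_R \to 0$, which is set up to have graded decompositions by degree compatible with all maps, with $B_R$ torsion-free as it is a right ideal of the domain $\sgr$. As explained before the theorem, exactness of the reduction modulo $q-1$ boils down to the statement that, in $\co_1(G_{m,n}(\k))$, the image $\ang{\widebar{\mu}_1,\dots,\widebar{\mu}_t}$ is prime and $\bar\mu\notin\ang{\widebar{\mu}_1,\dots,\widebar{\mu}_t}$. Here I would invoke the description of classical positroid varieties: comparing Theorem \ref{theorem-main-pluckers-2} with \cite[Theorem 6.5]{post} (or \cite[Proposition 13]{oh}), the set $\{\widebar{\mu}_1,\dots,\widebar{\mu}_t\}$ consists precisely of the Pl\"ucker coordinates vanishing on the positroid variety attached to $C$.

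The hard part is then the primeness of this ideal, and this is where the algebraically closed hypothesis enters. Using \cite{kls-proj, kls} together with Oh's theorem \cite{oh} that every positroid is an intersection of cyclically shifted Schubert matroids, positroid varieties are Frobenius split (hence reduced) intersections of cyclically shifted Schubert varieties, and by classical results (e.g.\ \cite{ram}) the vanishing ideal of each Schubert variety is linearly generated by Pl\"ucker coordinates. It follows that the intersection is scheme-theoretic, and irreducibility of the positroid variety then yields primeness of $\ang{\widebar{\mu}_1,\dots,\widebar{\mu}_t}$; non-containment of $\bar\mu$ follows because $\gamma\in \Pcl^c$ by construction. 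With this in hand, Proposition \ref{proposition-qtransc} yields $A_\f = B_\f$, and since $B_\f$ is the localisation of the completely prime ideal $B_R$, we conclude that $\ang{\Pcl}_\f = P$ is completely prime and generated as a right ideal by $\Pcl$, as required. The analogous left-ideal statement is obtained by the symmetric version of the same argument.
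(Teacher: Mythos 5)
Your proposal is correct and follows essentially the same route as the paper: reduce via Lemma \ref{lemma-generation-up-to-localisation} and the $q$-transcendence transfer (Proposition \ref{proposition-qtransc} applied to $A_R\subseteq B_R\to 0$) to the primeness of $\ideal{\widebar{\mu}_1,\dots,\widebar{\mu}_t}$ in the classical coordinate ring, and then establish that primeness by identifying this set with the Pl\"ucker coordinates vanishing on the positroid variety of $C$ (Postnikov/Oh) and invoking Knutson--Lam--Speyer, Oh and Ramanathan over the algebraically closed field. This matches the paper's argument step for step, including the treatment of the non-containment of $\widebar{\mu}$.
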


Note that if $P$ is the $\ch$-prime corresponding to the Cauchon-Le diagram $C$ 
on the Young diagram $Y_{\lambda}$ then the quantum Pl\"ucker coordinates 
that it contains are specified in Theorem~\ref{theorem-main-pluckers}. 
\\

In the next section, we extend this result to the quantum grassmannian over any field that contains a transcendental element $q$. 



\subsection{Quantum Pl\"ucker coordinates generate $\ch$-primes: general case}

In this section, $\f$ denotes an field and $q \in \f ^*$ is not a root of unity. 

In order to extend Theorem \ref{theorem-generation}  to arbitrary field, we will use an idea from \cite{gll2} which is based on the notion of strongly $\ch$-rational ideal. 
\begin{definition} {\rm 
Let $R$ be an integral domain that is an  $\f$-algebra and  supports a rational action of a torus $\ch$. Let $P$ be a completely prime $\ch$-prime ideal of $R$. Then $P$ is
{\em strongly $\ch$-rational} provided that the set of 
$\ch$-invariant elements in the centre of the 
division ring of fractions of $R/P$ is precisely the field $\f$. 
If all $\ch$-prime ideals of $R$ are completely prime and also strongly $\ch$-rational then we say that 
$R$ is a {\em strongly $\ch$-rational $\f$-algebra}.
}
\end{definition}

By 
\cite[Theorem II.6.4]{bg-book}, 
quantum nilpotent algebras are strongly $\ch$-rational algebras. \\

In what follows, $D(R)$ will denote the division ring of fractions of an integral domain $R$ that is a $\f$-algebra, and $ZD(R)$ will denote the centre of this division ring.  In all occurences, $D(R)$ will have an 
induced $\ch$-action from $R$, and the task will be to show that the set of 
$\ch$-invariant 
central elements $ZD(R)^{\ch}:=\{d\in ZD(R)\mid h\cdot d = d~{\rm for~all~}h\in H\}$ is precisely 
$\f$.

\begin{lemma} \label{lemma-shr} 
Let $R$ be a  strongly $\ch$-rational $\f$-algebra
that contains a nonzero normal $\ch$-eigenvector $u$. 
Set $T:=R[u^{-1}]$ and note that there is a natural induced action of $\ch$ on $T$. 
Let $Q$ be an $\ch$-prime ideal of $T$. Then 
$T/Q$ is an integral domain with an induced action of $\ch$, and 
$ZD(T/Q)^{\ch}=\f$. 
\end{lemma}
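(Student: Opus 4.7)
The plan is to reduce the statement to the strong $\ch$-rationality of an appropriate $\ch$-prime of $R$ via the standard localisation-at-a-normal-element correspondence.

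First I would set $P := Q \cap R$. Since $u$ is a normal $\ch$-eigenvector of $R$, the powers $\{u^n\}_{n\geq 0}$ form an $\ch$-invariant Ore set, and the usual contraction/extension bijection between prime ideals of $T = R[u^{-1}]$ and prime ideals of $R$ disjoint from this Ore set is $\ch$-equivariant. Consequently $P$ is an $\ch$-invariant ideal of $R$ with $u \notin P$, and since contraction preserves the property of being $\ch$-prime, $P$ is an $\ch$-prime ideal of $R$. By the strong $\ch$-rationality assumption on $R$, $P$ is completely prime, so $R/P$ is an integral domain.

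Next I would show that the natural map $R/P \hookrightarrow T/Q$ identifies $T/Q$ with the Ore localisation $(R/P)[\bar{u}^{-1}]$, where $\bar{u}$ is the image of $u$. Indeed $Q = P[u^{-1}]$ by the correspondence above, so $T/Q = R[u^{-1}]/P[u^{-1}] \cong (R/P)[\bar{u}^{-1}]$; in particular $T/Q$ is an integral domain, and the induced $\ch$-action on $T/Q$ restricts to the induced $\ch$-action on $R/P$. Because $\bar{u}$ becomes invertible in a localisation contained in the division ring of fractions of $R/P$, the domains $R/P$ and $T/Q$ share the same division ring of fractions:
\[
D(T/Q) = D(R/P).
\]

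Finally, since $R$ is strongly $\ch$-rational and $P$ is one of its $\ch$-primes, we have $ZD(R/P)^{\ch} = \f$ by definition. The equality $D(T/Q) = D(R/P)$ is compatible with the $\ch$-action (both coming from the $\ch$-action on $T \supseteq R$), so $ZD(T/Q)^{\ch} = ZD(R/P)^{\ch} = \f$, completing the argument.

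The only point requiring any care is the $\ch$-equivariance of the contraction-extension bijection and, in particular, that the contraction of an $\ch$-prime remains $\ch$-prime; this follows by a routine check using that $u$ is a normal $\ch$-eigenvector, so I do not expect it to be an obstacle. The heart of the proof is really just the observation that localising at an invariant normal element does not change the invariant centre of the division ring of fractions.
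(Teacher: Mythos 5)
Your proposal is correct and follows essentially the same route as the paper: contract $Q$ to the $\ch$-prime $P=Q\cap R$ (completely prime by the strong $\ch$-rationality hypothesis), identify $T/Q$ with $(R/P)[\widebar{u}^{\,-1}]$, observe that $D(T/Q)=D(R/P)$ since $\widebar{u}^{\,-1}$ already lies in $D(R/P)$, and conclude $ZD(T/Q)^{\ch}=ZD(R/P)^{\ch}=\f$. The paper's proof is exactly this argument, so no changes are needed.
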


\begin{proof} 

Note that $Q=(Q\cap R)T$ and that $Q\cap R$ is  a completely prime ideal of $R$ 
that does not contain $u$. It follows that the image $\widebar{u}$ of $u$ in 
$R/(Q\cap R)$ is a 
nonzero normal element of 
$R/(Q\cap R)$. Also, $R/(Q\cap R)\subseteq T/Q$, via the natural map  embedding that  
sends $r+Q\cap R$ to $r+Q$. This map induces an $\ch$-isomorphism 
$T/Q\cong\left(R/(Q\cap R)\right)[\widebar{u}^{-1}]$ which we will consider to be an 
identification. Then 
\[
D(T/Q)=D(\left(R/(Q\cap R)\right)[\widebar{u}^{-1}])=D(R/(Q\cap R)),
\]
where the final equality holds because $\widebar{u}^{-1}\in D(R/(Q\cap R))$. 
Hence $ZD(T/Q)^{\ch}=ZD(R/(Q\cap R)^{\ch} = \f$, as $R$ is a  strongly 
$\ch$-rational $\f$-algebra.

\end{proof} 

\begin{theorem}\label{theorem-shr}
The quantum grassmannian $\oqgmnf$ is a strongly $\ch$-rational algebra. 
\end{theorem}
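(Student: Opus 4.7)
The plan is to reduce to the (already-known) strong $\ch$-rationality of a quantum nilpotent algebra by exploiting the dehomogenisation isomorphism $\Phi_\gamma$ of \eqref{the godfather iso}, together with the $\hspec_\gamma$-stratification of $\hspec\oqgmnf$ recalled in Section \ref{section-known}. First I would dispose of the augmentation ideal: since $\oqgmnf/\ang{\Pi}\cong\f$, it is automatic that $\ang{\Pi}$ is completely prime and $ZD(\oqgmnf/\ang{\Pi})^\ch=\f$.

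For every other $\ch$-prime $P$, [llr-grass, Theorem 5.1] provides a unique $\gamma\in\Pi$ with $P\in\hspec_\gamma\oqgmnf$, so $\ang{\Pi_\gamma}_{\f}\subseteq P$ while $\gamma\notin P$. I would set $S(\gamma)=\oqgmnf/\ang{\Pi_\gamma}_{\f}$, $\bar P=P/\ang{\Pi_\gamma}_{\f}$, and then extend to $P':=\bar P[\bar\gamma^{-1}]$, an $\ch$-prime of $S(\gamma)[\bar\gamma^{-1}]$ not containing the normal $\ch$-eigenvector $\bar\gamma$. Under the $\ch$-equivariant isomorphism $\Phi_\gamma$ of \eqref{the godfather iso}, $P'$ corresponds to an $\ch$-prime $Q$ of the iterated Ore extension $\partition[y^{\pm1};\sigma]$. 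The crucial input (noted just before Proposition \ref{precondition}) is that this last algebra, equipped with the $\ch$-action transferred through $\Phi_\gamma$, satisfies \cite[Hypothesis 2.1]{llr-ufd}; that is, it is a QNA. By \cite[Theorem II.6.4]{bg-book}, $Q$ is therefore completely prime and $ZD(\partition[y^{\pm1};\sigma]/Q)^\ch=\f$.

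To transport these conclusions back to $\oqgmnf/P$, I would argue exactly as in the proof of Lemma \ref{lemma-shr}: because $\bar\gamma$ is a normal non-zero-divisor whose image lies outside $\bar P$, the standard identification $D(S(\gamma)/\bar P)=D(S(\gamma)[\bar\gamma^{-1}]/P')$ gives
\[
D(\oqgmnf/P)\;=\;D(S(\gamma)/\bar P)\;=\;D(S(\gamma)[\bar\gamma^{-1}]/P')\;\cong\; D(\partition[y^{\pm1};\sigma]/Q),
\]
where the final isomorphism is $\ch$-equivariant because $\Phi_\gamma$ is. This immediately yields $ZD(\oqgmnf/P)^\ch=\f$, and it also shows that $\oqgmnf/P$ embeds in a division ring, whence $P$ is completely prime.

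There is no real obstacle of a conceptual nature; the only points that need care are the $\ch$-equivariance of $\Phi_\gamma$ and the fact that the transferred $\ch$-action on $\partition[y^{\pm1};\sigma]$ still fits the QNA framework (so that \cite[Theorem II.6.4]{bg-book} applies). Both have been recorded earlier in the paper—the equivariance in the construction of $\Phi_\gamma$, and the QNA property in the remark preceding Proposition \ref{precondition} together with Lemma \ref{H pres}—so the proof is essentially an assembly of these ingredients.
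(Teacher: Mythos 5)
Your overall route is the same as the paper's: split off the irrelevant ideal, pass from $P$ to $\bar P$ in $S(\gamma)$, invert $\bar\gamma$, note $ZD(\oqgmnf/P)^{\ch}=ZD\bigl(S(\gamma)[\bar\gamma^{-1}]/P'\bigr)^{\ch}$, and transfer through the $\ch$-equivariant dehomogenisation isomorphism to the partition-subalgebra side. That part of the argument is fine.

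The gap is in the last step. You assert that $\partition[y^{\pm1};\sigma]$ ``satisfies \cite[Hypothesis 2.1]{llr-ufd}; that is, it is a QNA,'' and then apply \cite[Theorem II.6.4]{bg-book} directly to its $\ch$-prime quotients. These are not the same thing: Hypothesis 2.1 of \cite{llr-ufd} is the setting used for transferring $\ch$-primes across the skew-Laurent extension (it is what licenses results like \cite[Lemma 2.2, Theorem 2.3]{llr-ufd}), whereas a QNA in the sense of Definition \ref{defCGL} is an iterated Ore extension $\k[x_1][x_2;\sigma_2,\delta_2]\cdots[x_N;\sigma_N,\delta_N]$ over the base field; the Laurent ring $\partition[y^{\pm1};\sigma]$ is not of this form, so Theorem II.6.4 (which the paper invokes precisely for QNAs) does not apply to it verbatim. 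The repair is exactly the paper's proof: first check that $S^o(\gamma)[y;\sigma]$ (polynomial $y$, not Laurent) is a torsion-free QNA — this uses Theorem \ref{So to partition} together with Proposition \ref{prop-partition-cgl}, plus the easy verification that adjoining $y$ preserves the QNA conditions — so that it is strongly $\ch$-rational by \cite[Theorem II.6.4]{bg-book}; then apply Lemma \ref{lemma-shr} with $R=S^o(\gamma)[y;\sigma]$ and $u=y$ to conclude that $ZD\bigl(S^o(\gamma)[y^{\pm1};\sigma]/J\bigr)^{\ch}=\f$ for every $\ch$-prime $J$ of the localisation (this also yields the complete primeness you want, since $J=(J\cap R)T$ with $J\cap R$ completely prime). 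In short: you already quote Lemma \ref{lemma-shr} for the division-ring identification, but it is needed a second time, in place of the direct appeal to II.6.4 for the Laurent extension.
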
 

\begin{proof} 
Note that the augmentation ideal is clearly strongly $\ch$-rational. 

Let $P$ be an $\ch$-prime ideal of $\oqgmnf$ distinct from the augmentation ideal. Suppose that 
$\gamma$ is the quantum Pl\"ucker coordinate such that $\gamma\not\in P$ 
but $\alpha\in P$ for all $\alpha\ngeq\gamma$. Then 
$\Pi_\gamma\subseteq P$. Set 
$Q:=P/\ideal{\Pi_\gamma}\,\triangleleft\, S(\gamma)$, 
and note that $Q$ is an $\ch$-prime ideal of $S(\gamma)$.  
As $\oqgmnf/P\cong S(\gamma)/Q$, it is 
enough to show that $Q$ is strongly $\ch$-rational in $S(\gamma)$. \\

Set $u:=\widebar{\gamma}\in S(\gamma)$ and recall that $u$ is a nonzero 
normal element in $S(\gamma)$. It follows that $\widebar{u}:=u+Q$ is a nonzero 
normal element of $S(\gamma)/Q$. It also follows that $Q[u^{-1}]$ is an 
$\ch$-prime ideal in $S(\gamma)[u^{-1}]$ and that 
$S(\gamma)[u^{-1}]/Q[u^{-1}]\cong (S/Q)[\widebar{u}^{\,-1}]$. \\

As $\widebar{u}^{\,-1}\in D(S/Q)$, we see that 
\[
ZD(S/Q)^{\ch}=ZD((S/Q)[\widebar{u}^{\,-1}])^{\ch}=ZD(S(\gamma)[u^{-1}]/Q[u^{-1}])^{\ch}.
\]
In view of this, it is enough to show that $ZD(S(\gamma)[u^{-1}]/Q[u^{-1}])^{\ch}=\f$.\\

In order to prove this latter equality, we use the ($\ch$-equivariant) dehomogenisation isomorphism (more precisely, we use the $\ch$-equivariant isomorphism \eqref{the godfather iso})
\[
S^o(\gamma)[y^{\pm 1};\sigma]\cong  
S(\gamma)[u^{-1}]\,.
\]
to transfer the problem into the problem of showing that
$ZD(S^o(\gamma)[y^{\pm 1};\sigma]/J)^{\ch} =\f$,
for each $\ch$-prime ideal 
$J$ of $S^o(\gamma)[y^{\pm 1};\sigma]$.

Note that $S^o(\gamma)$ is a torsionfree quantum nilpotent algebra, by Theorem~\ref{So to partition} and Proposition~\ref{prop-partition-cgl}. It is then easy to check that $S^o(\gamma)[y;\sigma]$ is also a torsionfree quantum nilpotent algebra. 
Hence, $S^o(\gamma)[y;\sigma]$ is strongly $\ch$-rational. 
As $y$ is a nonzero normal $\ch$-eigenvector in  $S^o(\gamma)[y;\sigma]$, 
the equality we desire is obtained by using Lemma~\ref{lemma-shr}
\end{proof} 

In order to prove that $\ch$-primes of $\oqgmnf$ are generated by quantum Pl\"ucker coordinates when $q$ is transcendental, we need results from \cite{gll2} and \cite{gl-ijm}. For the reader's convenience, we have extracted the necessary results.

\begin{proposition}\label{proposition-H-bijection}
 Let $\k\subseteq K$ be  fields, let $A$ be a noetherian $\k$-algebra and let $\ch$ be a group acting on $A$ 
by $\k$-algebra automorphisms. 

Assume that all $\ch$-prime ideals of $A$ are strongly $\ch$-rational. Then the rule 
$P\mapsto P\otimes_{\k}K$ gives a bijection from 
$\ch{\rm -Spec}(A)
\longrightarrow \ch{\rm -Spec}(A\otimes_{\k} K)$.
\end{proposition}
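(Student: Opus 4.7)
The plan is to verify three claims: (i) the rule $P \mapsto PA_K$ (where $A_K := A \otimes_\k K$, and we identify $P \otimes_\k K$ with its image $PA_K$) sends $\ch$-primes of $A$ to $\ch$-primes of $A_K$; (ii) this map is injective; (iii) every $\ch$-prime of $A_K$ arises this way. The action of $\ch$ on $A_K$ is the $K$-linear extension of the action on $A$.

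Injectivity (ii) is the easy step. Since $K$ is a faithfully flat $\k$-module, the canonical map $A/P \to (A/P) \otimes_\k K$ is injective, and $A_K/PA_K \cong (A/P)\otimes_\k K$ by flatness. Hence $PA_K \cap A = P$, and one recovers $P$ from $PA_K$ by contraction.

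For (i), fix $P \in \ch\tx{-}\spec(A)$. By hypothesis $P$ is strongly $\ch$-rational, so $A/P$ is a domain and $Z(D)^\ch = \k$, where $D := D(A/P)$. The nonzero elements of $A/P$ remain regular in $D \otimes_\k K$ (they are left/right invertible already in $D$), so we have embeddings
\[
A_K / PA_K \,\cong\, (A/P)\otimes_\k K \,\hookrightarrow\, D \otimes_\k K.
\]
The crux is to show that the zero ideal of $D \otimes_\k K$ is $\ch$-prime. Granting this, any two $\ch$-stable ideals of $(A/P)\otimes_\k K$ with zero product extend to $\ch$-stable ideals of $D\otimes_\k K$ with zero product, so one must vanish in $D\otimes_\k K$, and hence in $(A/P)\otimes_\k K$. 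The key structural fact that $D\otimes_\k K$ is an $\ch$-prime ring follows by decomposing $\ch$-stable ideals into $\ch$-weight components (the action is rational, and this property passes to $D\otimes_\k K$), reducing the question to centrally invariant elements and using that $Z(D \otimes_\k K)^\ch$ is generated over $K$ by $Z(D)^\ch = \k$; the division-ring property of $D$ then forces every nonzero $\ch$-stable ideal of $D\otimes_\k K$ to contain a unit. This is the main obstacle and the place where the strongly $\ch$-rational hypothesis is essential.

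For surjectivity (iii), let $Q \in \ch\tx{-}\spec(A_K)$ and set $P := Q \cap A$. Then $P$ is an $\ch$-stable ideal of $A$, and it is $\ch$-prime: if $\ch$-stable ideals $I, J \triangleleft A$ satisfy $IJ \subseteq P$, then $(IA_K)(JA_K) \subseteq Q$, whence $IA_K \subseteq Q$ or $JA_K \subseteq Q$, and contracting gives $I \subseteq P$ or $J \subseteq P$. Clearly $PA_K \subseteq Q$. By step (i), $PA_K$ is itself an $\ch$-prime of $A_K$, so passing to $A_K/PA_K \cong (A/P)\otimes_\k K \hookrightarrow D\otimes_\k K$, the image $\ol{Q}$ of $Q$ is an $\ch$-stable ideal whose contraction to $A/P$ is zero. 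Applying once more the argument from step (i) (every nonzero $\ch$-stable ideal of $D\otimes_\k K$ is the whole ring, and intersects $A/P$ nontrivially along a nonzero element—a consequence of the weight decomposition together with $Z(D)^\ch=\k$), we conclude $\ol{Q} = 0$, i.e.\ $Q = PA_K$. This completes the bijection.
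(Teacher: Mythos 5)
Your skeleton -- extend and contract, and reduce everything to the structure of $D\otimes_\k K$, where $D$ is the division ring of fractions of $A/P$ -- is the right one; it is essentially the argument behind \cite[Proposition 3.3]{gl-ijm}, which is all the paper itself offers as a proof. Injectivity via faithful flatness and the verification that $Q\cap A$ is $\ch$-prime are fine. The genuine gap is exactly at the step you yourself call the main obstacle. Your justification that $D\otimes_\k K$ is $\ch$-prime (and, later, that every nonzero $\ch$-stable ideal of it is the whole ring) rests on decomposing $\ch$-stable ideals into $\ch$-weight components, ``the action being rational''. But rationality is not among the hypotheses -- $\ch$ is an arbitrary group acting by $\k$-algebra automorphisms -- and even when $\ch$ is a torus acting rationally on $A$, the induced action on $D={\rm Fract}(A/P)$, and hence on $D\otimes_\k K$, is not locally finite, so there is no eigenvector/weight decomposition of these ideals to appeal to. Thus the crucial lemma is asserted, not proved. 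The correct argument needs neither rationality nor weights: given a nonzero $\ch$-stable ideal $I$ of $D\otimes_\k K$, choose $0\neq u=\sum_{i=1}^n d_i\otimes c_i\in I$ with $n$ minimal and $c_1,\dots,c_n$ linearly independent over $\k$; after multiplying on the left by $d_1^{-1}\otimes 1$ assume $d_1=1$; then for every $d\in D$ and $h\in\ch$ the elements $(d\otimes 1)u-u(d\otimes 1)$ and $h\cdot u-u$ lie in $I$ and have length $<n$, hence vanish, so every $d_i$ lies in $Z(D)^{\ch}=\k$ and $u=1\otimes c$ with $c\neq 0$ is a unit; hence $I=D\otimes_\k K$. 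This is precisely where strong $\ch$-rationality enters, and it is the step your write-up leaves unproved.

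A second, smaller gap is the bridge between $(A/P)\otimes_\k K$ and $D\otimes_\k K$: you assert that stable ideals of $(A/P)\otimes_\k K$ with zero product ``extend to'' stable ideals of $D\otimes_\k K$ with zero product, and later that a nonzero stable ideal of $(A/P)\otimes_\k K$ meets $(A/P)\otimes 1$ nontrivially, but neither is automatic -- for the first, elements of $D\otimes_\k K$ intervene between the two ideals in the product, so the naive extension $(D\otimes_\k K)I(D\otimes_\k K)$ need not annihilate the extension of $J$. The clean repair is to use that every element of $D\otimes_\k K$ admits a common left denominator and a common right denominator from $S\otimes 1$, where $S$ is the set of nonzero elements of $A/P$: if $\ol I$ is a nonzero $\ch$-stable ideal of $(A/P)\otimes_\k K$, its extension to $D\otimes_\k K$ is everything by the lemma above, and writing $1=\sum_\alpha p_\alpha u_\alpha q_\alpha$ with $u_\alpha\in\ol I$ and clearing denominators on both sides gives $st\otimes 1\in\ol I$ for some $s,t\in S$. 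This single statement yields both the surjectivity step (if $\ol Q$ contracts to zero in $A/P$ then $\ol Q=0$, so $Q=PA_K$) and the $\ch$-primeness of $PA_K$ (if $\ol I\,\ol J=0$ with both nonzero, then $(st\otimes 1)\ol J=0$ with $st\otimes 1$ invertible in $D\otimes_\k K$, a contradiction). With these two repairs your proof is complete and is, in substance, the proof of the result the paper cites.
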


\begin{proof}
This is a special case of \cite[Proposition 3.3]{gl-ijm} where, in the statement of that 
proposition, we set $A_1:=A, A_2:=K, P_1=P, P_2=0, H_1:=\ch$ and $H_2$ to be the trivial group. 
\end{proof} 

\begin{lemma}\label{lemma-same-hspec}
{\rm \cite[Lemma 1.3]{gll2}}
Let $K_1\subseteq K_2$ be infinite fields and let $A$ be a noetherian $K_2$-algebra 
supporting a rational action of a torus $\ch_2:=(K_2^*)^r$ by $K_2$-algebra automorphisms.
Set $\ch_1:=(K_1^*)^r$, which acts on $A$ by restriction of the $\ch_2$-action. 
Then the $\ch_1$-prime ideals of $A$ coincide with the $\ch_2$-prime ideals of $A$. 
\end{lemma}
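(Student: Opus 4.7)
The inclusion $\ch_1\subseteq \ch_2$ makes every $\ch_2$-invariant ideal automatically $\ch_1$-invariant, so only the reverse inclusion (together with the comparison of the prime-type conditions) requires work. My plan is to argue that the two families of invariant ideals coincide at the level of $K_2$-subspaces, by showing that the weight-space decomposition of $A$ for the two tori is literally the same.

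First I would invoke rationality of the $\ch_2$-action: since $\ch_2=(K_2^*)^r$ is a split $K_2$-torus and the action is rational, \cite[Corollary II.2.7]{bg} (or the standard structure theorem for rational torus representations) gives a weight space decomposition
\[
A=\bigoplus_{\chi\in X(\ch_2)}A_\chi,\qquad A_\chi=\{a\in A\mid h\cdot a=\chi(h)a\text{ for all }h\in \ch_2\},
\]
where $X(\ch_2)\cong \mathbb{Z}^r$. A $K_2$-subspace $V\subseteq A$ is $\ch_2$-stable if and only if $V=\bigoplus_\chi (V\cap A_\chi)$. In particular, $\ch_2$-invariant ideals are exactly the ideals that split according to this weight grading.

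Next I would establish the key injectivity: the restriction map $X(\ch_2)\to \mathrm{Hom}(\ch_1,K_2^*)$, $\chi\mapsto \chi|_{\ch_1}$, is injective. If $\chi=(a_1,\dots,a_r)\neq 0$, say $a_i\neq 0$, then the homomorphism $K_1^*\to K_1^*$, $t\mapsto t^{a_i}$, has kernel contained in the $|a_i|$-th roots of unity, a finite set; since $K_1$ is infinite, $K_1^*$ is infinite and this map cannot be trivial, so some element of $\ch_1$ violates $\chi|_{\ch_1}\equiv 1$. This is exactly where infiniteness of $K_1$ is used, and it is the only genuine content of the lemma; I expect this to be the main (and really only) obstacle.

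With injectivity in hand, I would argue that each $\ch_2$-weight space $A_\chi$ is already an $\ch_1$-weight space for the restricted character $\chi|_{\ch_1}$, and these restricted characters are pairwise distinct. Hence every $\ch_1$-stable $K_2$-subspace of $A$ is a direct sum of the $A_\chi$ and therefore $\ch_2$-stable. In particular, the $\ch_1$-invariant ideals and the $\ch_2$-invariant ideals of $A$ coincide as sets. Since the definition of $\ch$-prime (a proper $\ch$-invariant ideal $J$ such that $I_1I_2\subseteq J$ with $I_1,I_2$ $\ch$-invariant forces $I_1\subseteq J$ or $I_2\subseteq J$) depends only on the collection of $\ch$-invariant ideals, the $\ch_1$-primes and $\ch_2$-primes of $A$ coincide, finishing the proof.
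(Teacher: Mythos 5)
Your proof is correct: the paper gives no argument of its own for this lemma (it simply cites \cite[Lemma 1.3]{gll2}), and your route — rationality yields the weight-space decomposition $A=\bigoplus_\chi A_\chi$, infiniteness of $K_1$ forces distinct $\ch_2$-characters to remain distinct on $\ch_1$, hence $\ch_1$-invariant and $\ch_2$-invariant ideals coincide and the two notions of $\ch$-prime agree — is exactly the standard argument behind that citation. One wording slip only: an $\ch_1$-stable $K_2$-subspace $V$ is the direct sum of its intersections $V\cap A_\chi$ (each of which is $\ch_2$-stable, since $\ch_2$ acts on $A_\chi$ by scalars), not literally a direct sum of whole weight spaces $A_\chi$.
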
 

\begin{proposition}\label{proposition-bijection} 
{\rm (c.f. \cite[Proposition 1.4]{gll2})}
Let $K_1\subseteq K_2$ be infinite fields, let $q\in K_1^*$ be a nonroot of unity and identify the algebra 
$\oqgmnktwo$ with $\oqgmnkone\otimes K_2$. Set $\ch_i:=(K_i^*)^n$ for $i=1,2$ and let $\ch_i$ act on $\oqgmnki$ by $K_i$-automorphisms in the standard way. Then the rule 
$P\mapsto P\otimes_{K_1}K_2$ gives a bijection from $\ch_1{\rm -Spec}(\oqgmnkone)
\longrightarrow \ch_2{\rm -Spec}(\oqgmnktwo)$. 
\end{proposition}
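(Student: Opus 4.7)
My plan is to combine the three ingredients just established: Theorem \ref{theorem-shr}, Proposition \ref{proposition-H-bijection}, and Lemma \ref{lemma-same-hspec}. Under the identification $\oqgmnktwo=\oqgmnkone\otimes_{K_1}K_2$, the $\ch_1$-action on $\oqgmnkone$ extends to $\oqgmnktwo$ via the first factor (and trivially on $K_2$), and this extended action is readily seen (by checking on quantum Pl\"ucker coordinates) to coincide with the restriction to $\ch_1\subseteq \ch_2$ of the standard $\ch_2$-action. So everything reduces to assembling these three results.

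First I would apply Theorem \ref{theorem-shr} with base field $K_1$ to obtain that $\oqgmnkone$ is a strongly $\ch_1$-rational $K_1$-algebra. Being also noetherian and supporting a rational action of $\ch_1$ by $K_1$-algebra automorphisms, $\oqgmnkone$ satisfies the hypotheses of Proposition \ref{proposition-H-bijection} with $\k:=K_1$, $K:=K_2$, $A:=\oqgmnkone$ and $\ch:=\ch_1$, and that proposition immediately yields a bijection
\[
\ch_1{\rm -Spec}(\oqgmnkone)\longrightarrow \ch_1{\rm -Spec}(\oqgmnktwo),\qquad P\longmapsto P\otimes_{K_1}K_2.
\]
Next I would invoke Lemma \ref{lemma-same-hspec} with $A:=\oqgmnktwo$ (the $\ch_2$-action is rational because the quantum Pl\"ucker coordinates are $\ch_2$-eigenvectors with rational eigenvalues), which gives $\ch_1{\rm -Spec}(\oqgmnktwo)=\ch_2{\rm -Spec}(\oqgmnktwo)$. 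Composing this identification with the bijection above produces the desired bijection $\ch_1{\rm -Spec}(\oqgmnkone)\to \ch_2{\rm -Spec}(\oqgmnktwo)$.

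I do not foresee any serious obstacle: the only point requiring a moment's care is the compatibility of the two $\ch_1$-actions on $\oqgmnktwo$ (one arising from the tensor product structure used in Proposition \ref{proposition-H-bijection}, the other from restricting the standard $\ch_2$-action along $\ch_1\hookrightarrow \ch_2$), which is immediate by comparing both actions on the quantum Pl\"ucker generators.
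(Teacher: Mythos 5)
Your proposal is correct and follows essentially the same route as the paper: both proofs combine Theorem \ref{theorem-shr} (strong $\ch_1$-rationality of $\oqgmnkone$), Proposition \ref{proposition-H-bijection} applied to the extension $K_1\subseteq K_2$, and Lemma \ref{lemma-same-hspec} to identify $\ch_1$-Spec with $\ch_2$-Spec of $\oqgmnktwo$. Your extra remark about checking that the $\ch_1$-action induced from the tensor product agrees with the restriction of the standard $\ch_2$-action is a point the paper passes over silently, but it is the same argument.
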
 

\begin{proof}
Set $A_i=\oqgmnki$ and recall that $\ch_i$ acts rationally on $A_i$ and that every 
$\ch_i$-prime ideal $P$ of $A_i$ is stongly $\ch_i$-rational. 

The action of $\ch_1$ on $A_1$ extends naturally to an action of $\ch_1$ on $A_2$ by $K_2$-automorphisms, and it follows from  
Proposition~\ref{proposition-H-bijection} 
that the rule 
$P\mapsto P\otimes_{K_1}K_2$ gives a bijection from $\ch_1{\rm -Spec}(A_1)
\longrightarrow \ch_1{\rm -Spec}(A_2)$.
In view of Lemma~\ref{lemma-same-hspec}, $\ch_1{\rm-Spec}(A_2)=
\ch_2{\rm-Spec}(A_2)$, and the proposition is proved. 
\end{proof} 

A version of the following theorem is obtained for quantum matrices  
in  \cite[Theorem 1.5]{gll2}, and the proof we give is similar to the proof of that theorem. 

\begin{theorem}
\label{theorem-full-generation}
Let $\f$ be a field that contains an element $q$ which is 
transcendental (over the prime field of $\f$). Then every $\ch$-prime ideal of $\oqgmnf$ 
is generated as a right (or left) ideal by the quantum Pl\"ucker coordinates 
that it contains. 
\end{theorem}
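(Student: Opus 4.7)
The plan is to reduce the general case to the algebraically closed case already handled by Theorem \ref{theorem-generation}, using Proposition \ref{proposition-bijection} and faithful flatness. Fix an $\ch$-prime $P$ of $\oqgmnf$ and let $J$ be the right ideal of $\oqgmnf$ generated by the quantum Pl\"ucker coordinates in $P$. Since $J\subseteq P$, the task is to show equality.

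First, I would construct a suitable field tower. Let $F_0$ be the prime field of $\f$ and let $L=\overline{\f}$ be an algebraic closure of $\f$. Inside $L$, let $\k_0$ be the algebraic closure of $F_0$; this is a bona fide algebraically closed field (being integrally closed in the algebraically closed $L$), and since $q$ is transcendental over $F_0$ it remains transcendental over $\k_0$. Set $K_1:=\k_0(q)\subseteq L$, which now satisfies the hypotheses of Theorem \ref{theorem-generation}, and set $K_2:=L$, so that $K_1\subseteq K_2$ and $\f\subseteq K_2$.

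Next, I would apply Proposition \ref{proposition-bijection} twice: the inclusion $K_1\subseteq K_2$ yields a bijection $\ch\text{-}\Spec(\oq(G_{mn}(K_1)))\leftrightarrow\ch\text{-}\Spec(\oq(G_{mn}(K_2)))$ via $Q\mapsto Q\otimes_{K_1}K_2$, and the inclusion $\f\subseteq K_2$ yields a similar bijection for $\oqgmnf$. Consequently, there is a unique $\ch$-prime $P'$ of $\oq(G_{mn}(K_1))$ such that $P'\otimes_{K_1}K_2=P\otimes_{\f}K_2$. By Theorem \ref{theorem-generation}, $P'$ is generated as a right ideal by $\Sigma:=P'\cap\Pi$, where $\Pi$ denotes the set of quantum Pl\"ucker coordinates (which, being defined over $F_0(q)\subseteq K_1\cap\f$, sit in every algebra in the tower and can be identified canonically).

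Finally, I would verify that $\Sigma=P\cap\Pi$ and descend to $\f$. By faithful flatness of $K_2$ over $\f$ (resp.\ $K_1$), a Pl\"ucker $\alpha$ lies in $P$ iff $\alpha\in P\otimes_{\f}K_2$ iff $\alpha\in P'\otimes_{K_1}K_2$ iff $\alpha\in P'$; hence $\Sigma=P\cap\Pi$. Extension of the right ideal $J$ to $K_2$ gives $J\otimes_\f K_2=\langle P\cap\Pi\rangle^{\mathrm r}_{K_2}=P'\otimes_{K_1}K_2=P\otimes_\f K_2$, and faithful flatness forces $J=P$. The argument on the left is symmetric. The only conceptual work is the field-theoretic setup above; the main potential pitfall is ensuring that $\k_0$ is genuinely algebraically closed (rather than merely algebraically closed in $\f$), which is why one passes through $L=\overline{\f}$ before extracting $\k_0$.
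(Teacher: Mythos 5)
Your proposal is correct and follows essentially the same route as the paper's proof: both reduce to Theorem~\ref{theorem-generation} via the change-of-field bijection of Proposition~\ref{proposition-bijection} and conclude by comparing extensions and contractions of the right ideal generated by the quantum Pl\"ucker coordinates (faithful flatness). The only difference is cosmetic: the paper works through the chain $\k(q)\subseteq\f$ (with $\k$ the prime field) and $\k(q)\subseteq\overline{\k}(q)$, whereas you extend both $\f$ and $\overline{F_0}(q)$ into the common overfield $\overline{\f}$ and match the primes there; the key ingredients are identical.
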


\begin{proof} 
First, let $\k$ be the prime subfield of $\f$, and consider the subfield $K_1:=\k(q)\subseteq\f$, and identify 
$\oqgmnf$ with $\oqgmnkone\otimes_{K_1}\f$. Set $\ch_1:=(K_1^*)^n$ 
with the standard action on $\oqgmnkone$. By 
Proposition ~\ref{proposition-bijection}, any $\ch$-prime ideal of $\oqgmnf$ 
is of the form $P\otimes_{K_1}\f$ for some $\ch_1$-prime ideal 
$P$ of $\oqgmnkone$. Hence, it suffices to prove that $P$ is generated, as 
a right (or left) ideal by the quantum Pl\"ucker coordinates that it contains. 

Now, suppose without loss of generality, that $q$ is transcendental over the algebraic closure $\overline{\k}$ of $\k$ and set $\ch_2:= (\overline{\k}^*)^n$ with the standard action on $\oqgmnktwo$, with 
$K_2=\overline{\k}(q)$. By Proposition~\ref{proposition-bijection}, 
$P\otimes_{K_1}K_2$ is an $\ch_2$-prime ideal of $\oqgmnktwo$, and thus, 
by Theorem~\ref{theorem-generation} $P\otimes_{K_1}K_2$ is generated as a right ideal by the set $X$ of quantum Pl\"ucker coordinates that it contains. Note that $X$ is also the set of quantum 
Pl\"ucker coordinates contained in $P$, and let $P'$ be the right ideal 
of $\oqgmnkone$ generated by $X$. Then 
$P'\otimes_{K_1}K_2=P\otimes_{K_1}K_2$, and, 
consequently, $P'=P$. Therefore, $P$ is generated as a right ideal by $X$, and similarly as a left ideal. 
\end{proof} 



\section{The link with the total nonnegative grassmannian and applications} 

\def\cellm{{\rm Cell}(\cm)}
\def\gmnk{G_{mn}(\k)}
\def\R{\mathbb{R}}
\def\gmntnn{G^{\geq 0}_{mn}(\R)}



The positroid cell stratification of the totally nonnegative
grassmannian has been intensively studied over the last dozen or so
years following Postnikov's groundbreaking paper \cite{post}, see, for
example, \cite{ama,arw,lam2,oh}. Besides its intrinsic beauty, there are also
applications in the study of partial differential equations \cite{kw}, scattering amplitudes \cite{arkani}, and juggling \cite{kls}. On seeing Postnikov's paper,
the first two authors observed that the Le diagrams that Postnikov
introduced to parameterise the  positroid cells were the same as the
diagrams introduced by Cauchon to study the $\ch$-prime spectrum of
quantum matrices. This lead to several papers investigating this
connection, culminating in the present work. In the first paper in the
series, \cite{llr-grass} it was shown that the positroid cells of the
totally nonnegative grassmannian were in natural bijection with the
$\ch$-prime spectrum of the quantum grassmannian via what we are now
calling Cauchon-Le diagrams.  We also conjectured at the time that
this bijection would be a homeomorphism between the partially ordered
sets provided by the positroid cells under closure and the $\ch$-prime
spectrum under inclusion. Further, we conjectured that the
$\ch$-primes would be generated by the quantum Pl\"ucker
coordinates that they contain, and that the containment of a quantum
Pl\"ucker coordinate in an $\ch$-prime ideal could be read off from
the Postnikov graph. All of these conjectures have now been answered
in the present work. However, at the time, we did not have the tools
in the quantum setting to verify these conjectures. One setting where
we were able to make progress was for quantum matrices, which occur as
the ``big cell'' in the quantum grassmannian.  The first author had
already shown that, in the case of a transcendental deformation
parameter $q$, each $\ch$-prime ideal was generated by the quantum
minors that it contained, \cite{launois-generation}, verifying a
conjecture of Goodearl and the second author,
\cite{gl-winding}. Yakimov also produced a proof of this result,
\cite{yak-invariant-primes}, and Casteels replaced the transcendental
restriction by the natural condition that $q$ be a not a root of
unity, \cite{cas2}. Casteels' use of (noncommutative) Gr\"obner basis
techniques was crucial to our present work.

In two papers with Goodearl, the first two authors were able to show
that the membership problem for quantum minors in $\ch$-prime ideals
exactly matches to the corresponding problem for minors belonging
to a positroid cell, \cite{gl1, gl2}. Our first task in this section
is to show that the same correspondence holds with respect to
(quantum) Pl\"ucker coordinates in the quantum and totally nonnegative
grassmannians.

We start by outlining the key properties concerning the positroid cell 
stratification of the totally nonnegative grassmannian.

\subsection{The positroid stratification of $\gmntnn$}

The {\em totally nonnegative $m\times n$ grassmannian}, denoted by
$\gmntnn$, consists of those points in the $m\times n$ real
grassmannian which can be represented by a $m\times n$ real matrix
whose Pl\"ucker coordinates are all $\geq 0$.

The study of the cell decomposition of the totally nonnegative grassmannian 
was initiated by Postnikov in  \cite{post}. 

Given a set $\cm$ of $m$-element subsets of $\{1,\dots,n\}$,
the cell in $\gmntnn$ determined by $\cm$, and denoted by $\cellm$,
consists of those points $p$ in $\gmntnn$ for which the Pl\"ucker
coordinates of $p$ that are in $\cm$ are zero, while those not in $\cm$
are greater than zero. For a given choice of $\cm$, the corresponding
cell may well be empty: for example, in the $2\times 4$ totally
nonnegative grassmannian the Pl\"ucker relation
$[12][34]-[13][24]+[14][23]=0$ shows that it is impossible to have a
point where $[12]=[23] = 0$ while the remaining four Pl\"ucker
coordinates are greater than zero. (As a precursor to what we are to
discuss below, notice that essentially the same reasoning, using the
quantum Pl\"ucker relation $[12][34]-q[13][24]+q^2[14][23]=0$, shows
that if a (completely) prime ideal $P$ in the $2\times 4$ quantum grassmannian
contains $[12]$ and $[23]$ then it must also contain at least one of
$[13]$ and $[24]$.)

Postnikov made the following definition.

\begin{definition} 
If $\cellm$ is nonempty then $\cellm$ is a {\em positroid cell} in $\gmntnn$. 
\end{definition} 

It is obvious that the positroid cells give a partition of the totally
nonnegative grassmannian, but much more is true: the positroid cell
decomposition is a stratification, in the sense that the closure of a
positroid cell is a union of positroid cells.

Postnikov showed that the positroid cells are parameterised by a
variety of combinatorial objects.  Two that are relevant to us in this
section are Le-diagrams and Grassmann necklaces. Le-diagrams are what we have
been calling Cauchon-Le diagrams and we will continue to use that terminology. 
Grassmann necklaces are defined later in the section.

A fundamental question is to describe the points in a positroid cell
when given the Cauchon-Le diagram corresponding to the cell.  Oh
obtains the following result and attributes it as a corollary of
\cite[Theorem 6.5]{post}

\begin{theorem}{\rm \cite[Proposition 13]{oh}}\label{oh-vd-paths} 
Let $C$ be a Cauchon-Le diagram defined on a partition $Y_\lambda$
which fits into a $m\times (n-m)$ rectangle, and let $P_C$ denote the
positroid cell in $\gmntnn$ corresponding to $C$.  Let $[I]$ be the Pl\"ucker
coordinate corresponding to the partition $\lambda$; so that the rows
of $Y_\lambda$ are indexed by $I$. Then a Pl\"ucker coordinate
$[J]$ vanishes on all points of $P_C$ if and only if there are no
vertex disjoint $R_{(I,J)}$-path systems in the Postnikov graph
$\mathrm{Post}'(C)$ of $C$.
\end{theorem}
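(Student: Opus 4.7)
The plan is to use Postnikov's boundary measurement map as the bridge between the combinatorics of the Cauchon-Le diagram and the geometry of the positroid cell, and then to apply a classical Lindström--Gessel--Viennot argument that is parallel to (but simpler than) our Theorem~\ref{computing q det with paths thm}. First I would attach a positive real parameter to each internal vertex of the Postnikov graph $\mathrm{Post}'(C)$, giving each edge a positive real weight in the manner of Definition~\ref{definition-postnikov-graph} (with $q$ specialised to~$1$ and the quantum torus generators replaced by these positive reals). Postnikov's construction then yields a surjective boundary measurement map from the parameter space $\R_{>0}^{|W_C|}$ onto the positroid cell $P_C$, sending each parameter tuple to the point of $\gmntnn$ whose Pl\"ucker coordinates are the maximal minors of the path matrix $M_C$ (the commutative analogue of the one defined in Section~\ref{subsection:pathmatrix}).

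Next I would show, by a Lindström--Gessel--Viennot argument, that for any $m$-subset $J$ of $\llb 1,n\rrb$ the Pl\"ucker coordinate $[J]$ of the image point equals
\[
[J] \;=\; \varepsilon_{(I,J)} \sum_{\mathcal{P}} w(\mathcal{P}),
\]
where $\mathcal{P}$ ranges over all vertex-disjoint $R_{(I,J)}$-path systems in $\mathrm{Post}'(C)$, $w(\mathcal{P})$ is the product of the (positive real) edge weights, and $\varepsilon_{(I,J)}$ is a fixed sign. This is the classical specialisation of Theorem~\ref{computing q det with paths thm}: the proof is the standard sign-reversing involution on pairs of paths that first share a vertex, which in the commutative setting causes non-vertex-disjoint contributions to cancel in pairs. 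The sign $\varepsilon_{(I,J)}$ comes out the same for every vertex-disjoint system, because Lemma~\ref{same perm lemma} (which is a purely planar statement and therefore still available) guarantees that every vertex-disjoint $R_{(I,J)}$-path system has the same underlying permutation $\sigma_{(I,J)}$.

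From this expression the desired equivalence drops out. If no vertex-disjoint $R_{(I,J)}$-path system exists in $\mathrm{Post}'(C)$, then the sum is empty and $[J]$ vanishes identically on the image of the parametrisation, hence on $P_C$. Conversely, if at least one such system $\mathcal{P}_0$ does exist, then $\sum_\mathcal{P} w(\mathcal{P})$ is a nonempty sum of strictly positive monomials in the parameters; the distinctness of these monomials (the commutative analogue of Theorem~\ref{main vanishing pseudo minors theorem}, proven via the $\Gamma$/$\Le$-turn uniqueness of Proposition~\ref{only need the turns proposition}) shows that no cancellation can occur, so $[J]$ is a strictly positive function on $P_C$ and in particular does not vanish.

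The main obstacle I anticipate is establishing rigorously that the boundary measurement map lands in, and surjects onto, the positroid cell $P_C$, together with verifying that Pl\"ucker coordinates really are computed by the path matrix $M_C$ in the commutative setting. Once this geometric input from Postnikov's theory is in hand, the combinatorial heart of the argument is a direct re-run of the reasoning developed in Sections~\ref{Cauchon-Le diagrams and Postnikov graphs} of the present paper, with the quantum parameter set to $1$ and the alternating cancellation replaced by an honest positivity argument.
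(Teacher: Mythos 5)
You should note at the outset that the paper does not prove this statement at all: it is imported verbatim as a citation of Oh's Proposition 13, which Oh in turn derives from Postnikov's Theorem 6.5, so there is no internal proof to compare against. Your argument — positive weights on $\mathrm{Post}'(C)$, Postnikov's boundary measurement map surjecting onto $P_C$, a Lindstr\"om--Gessel--Viennot cancellation for the non-vertex-disjoint families, the common permutation of Lemma~\ref{same perm lemma} to factor out the sign, and positivity of the surviving terms to rule out cancellation — is exactly the standard proof underlying the cited sources, and it is sound; the one genuine external input, which you correctly flag, is Postnikov's theorem that the boundary measurement map of the Le-diagram network parametrises (in particular surjects onto) the cell $P_C$, since the ``only if'' direction needs every point of $P_C$ to be reached by the parametrisation (or at least a dense subset, by continuity of $[J]$). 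Two small remarks: once all vertex-disjoint systems carry the same permutation, positivity of the weights alone already prevents cancellation, so the appeal to distinctness of the monomials (the commutative analogue of Theorem~\ref{main vanishing pseudo minors theorem}) is unnecessary; and in the usual totally nonnegative normalisation the sign $\varepsilon_{(I,J)}$ is $+1$, though this is immaterial for the vanishing statement.
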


\subsection{Links between the quantum and totally nonnegative grassmannians} 

Comparison of Theorem~\ref{oh-vd-paths} with our Theorem~\ref{theorem-main-pluckers} 
immediately gives the following theorem. 

\begin{theorem}\label{theorem-triple-correspondence}

Let $C$ be a Cauchon-Le diagram that fits into a $m\times (n-m)$ 
rectangle. Let $P_C$ be the positroid cell corresponding to $C$. 
Let $\k$ be a field, $q\in \k^*$ not a root of unity, and let $Q_C$ be the $\ch$-prime ideal of 
$\oqgmnk$ corresponding to $C$. Let $\cf$ be the set of 
 Pl\"ucker coordinates in $\ogmnr$ that vanish on the whole of 
$P_C$ and let  $\cf_q$ be the set of quantum Pl\"ucker coordinates 
that are contained in $Q_C$. 

Then $\cf$ and $\cf_q$ are essentially the same, in the sense that 
the Pl\"ucker coordinates in $\cf$ are obtained from the quantum Pl\"ucker 
coordinates in $\cf_q$ by setting $q=1$. 
\end{theorem}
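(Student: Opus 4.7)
The plan is to prove Theorem~\ref{theorem-triple-correspondence} by directly comparing the two combinatorial characterisations of membership already established: Theorem~\ref{theorem-main-pluckers-2} for $\cf_q$ on the quantum side, and Theorem~\ref{oh-vd-paths} for $\cf$ on the classical side. Both results reduce membership to the \emph{same} condition, namely the nonexistence of a vertex-disjoint path system in the Postnikov graph $\mathrm{Post}'(C)$, so once the parametrisations are aligned the matching of the two index sets will be immediate.

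First I would fix the distinguished quantum Pl\"ucker coordinate $\gamma=[\gamma_1<\cdots<\gamma_m]$ attached to $C$ via the partition $\lambda$ of Notation~\ref{associated partition}, and split both $\cf$ and $\cf_q$ according to whether $\alpha \geq \gamma$ or not. For $\alpha \not\geq \gamma$, membership in $\cf_q$ is automatic because $\Pi_\gamma \subseteq Q_C$, while on the classical side the containment of $P_C$ in the Schubert cell attached to $\gamma$ forces the Pl\"ucker coordinate indexed by $\alpha$ to vanish on $P_C$; so both sets contain the same $\Pi_\gamma$-part. For $\alpha>\gamma$, writing $\alpha = [(\gamma\setminus\{\gamma_{i_1},\ldots,\gamma_{i_t}\})\sqcup \{a_{j_1},\ldots,a_{j_t}\}]$ in the normalised form of Theorem~\ref{theorem-main-pluckers-2}, both theorems yield verbatim the same path-system criterion on $\mathrm{Post}'(C)$, so the $\alpha>\gamma$-parts also match. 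Finally, the defining formula $[\alpha] = \sum_\sigma (-q)^{\ell(\sigma)} x_{1,\alpha_{\sigma(1)}}\cdots x_{m,\alpha_{\sigma(m)}}$ specialises under $q\mapsto 1$ to the classical Pl\"ucker coordinate indexed by $\alpha$, which gives precisely the correspondence asserted in the theorem.

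The main obstacle I anticipate is not in the combinatorics but in verifying that $C$ parameterises $Q_C$ and $P_C$ through the \emph{same} reference Pl\"ucker coordinate $\gamma$ and Young diagram $Y_\lambda$. On the quantum side this is built into the bijection of \cite{llr-grass} recalled in Section~\ref{sec-corresp-h-primes-cauchon-diags}; on the classical side it is part of Postnikov's parametrisation, encoded in the fact that $\gamma$ is the lex-minimal element of the positroid attached to $C$. This compatibility is standard, but worth spelling out explicitly so that the cross-reference between Theorem~\ref{theorem-main-pluckers-2} and Theorem~\ref{oh-vd-paths} is unambiguous. Once that alignment is in place, the remainder of the proof is essentially a dictionary lookup between these two theorems.
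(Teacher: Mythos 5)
Your proposal is correct and follows essentially the same route as the paper: the paper's proof is exactly the observation that membership in $\cf$ and in $\cf_q$ is decided by the identical vertex-disjoint path criterion in $\mathrm{Post}'(C)$, via Theorem~\ref{theorem-main-pluckers-2} and Theorem~\ref{oh-vd-paths}. Your additional care in aligning the reference coordinate $\gamma$ and splitting off the $\alpha\not\geq\gamma$ case merely makes explicit what the paper leaves implicit.
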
 

\begin{proof}
Membership of $\cf$ and $\cf_q$ are both recognised by the same test: nonexistence of vertex disjoint 
families of paths in $\mathrm{Post}'(C)$, by Theorem~\ref{theorem-main-pluckers-2} and Theorem~\ref{oh-vd-paths}.
\end{proof} 

In view of this theorem, for the rest of this section, we will assume that $\k$ is a field and that $q\in \k^*$ is not a root of unity. We can then use the theorem to transfer many results concerning the positroid cells of $\ogmntnn$ to the $\ch$-prime spectrum of $\oq(G_{mn}(\k))$.



\subsection{Grassmann necklaces}

In \cite{post}, Postnikov associated various combinatorial objects with 
the nonempty positroid cells of the totally nonempty grassmannian.  
In view of the connection that we have established between positroid cells in the totally nonnegative grassmannian and the $\ch$-spectrum of the quantum grassmannian, we can potentially use these combinatorial objects to study the quantum grassmannian. 
Here, we use the notion of Grassmann necklace to study the Zariski topology 
of $\hspec(\oqgmnk)$; that is, to decide when $Q\subseteq P$ for $\ch$-prime ideals 
$P,Q$. \\

Recall that the {\em irrelevant ideal} of $\oqgmnk$ is the ideal generated by 
all of the quantum Pl\"ucker coordinates. This ideal is an $\ch$-prime ideal that contains all other $\ch$-prime ideals and so in what follows we will usually assume that we are dealing with $\ch$-prime ideals that are distinct from the irrelevant ideal.\\ 

A  {\em matroid of rank $m$} on the set $\setn$ is a
non-empty collection of $m$-element subsets $\cm$ of $\setn$ called {\em
bases} of $\cm$, that satisfy the {\em exchange axiom}: for any $I,J\in\cm$
and $i\in I$, there exists $j\in J$ such that
$I\backslash\{i\}\sqcup\{j\}\in\cm$, see, for example,  \cite[2.3]{post}. 

Postnikov showed \cite[2.3]{post} that the set of Pl\"ucker coordinates that do not vanish on a given positroid cell in $\gmntnn$ forms a matroid of rank $m$. In view of Theorem~\ref{theorem-triple-correspondence}, the same conclusion applies to the set of quantum Pl\"ucker coordinates that are not contained in a given $\ch$-prime ideal of $\oqgmnk$ distinct from the irrelevant ideal. For the convenience of the reader, we give a direct proof which is essentially the same as that for $\gmntnn$, but uses quantum Pl\"ucker relations rather than classical Pl\"ucker relations. 


\begin{proposition}
Let $q\in \k^*$ and assume that $q$ is not a root of unity.  Let $P$ be an $\ch$-prime of $\oqgmnk$ that is not the irrelevant ideal  and let $\cm$ be the set of quantum Pl\"ucker coordinates that are not in $P$. Then $\cm$ is a matroid of rank $m$. 
\end{proposition}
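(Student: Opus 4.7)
The plan is to verify the two defining properties of a matroid: non-emptiness and the exchange axiom. Non-emptiness is immediate: since $P$ is not the irrelevant ideal, it fails to contain at least one quantum Pl\"ucker coordinate, so $\cm \neq \emptyset$; by construction every element of $\cm$ corresponds to an $m$-element subset of $\llb 1,n \rrb$. The real content is the exchange axiom, which I would establish using a suitably specialised quantum Pl\"ucker relation combined with the fact that every $\ch$-prime of $\oqgmnk$ is completely prime (this follows from the dehomogenisation isomorphism of Section~\ref{qgrass} together with the general QNA fact that $\ch$-primes in a partition subalgebra are completely prime).

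Fix $I, J \in \cm$ and $i \in I$. If $i \in J$, take $j = i$, since then $(I \setminus \{i\}) \cup \{j\} = I \in \cm$. Otherwise assume $i \notin J$. I would apply Theorem~\ref{theorem-quantum-plucker-relations}(1) with $J_1 := I \setminus \{i\}$, $J_2 := \emptyset$, and $K := J \sqcup \{i\}$ (note $|K| = m+1 > m$ and $|J_1| + |J_2| + |K| = 2m$ as required). Only decompositions $K = K' \sqcup K''$ with $|K'| = 1$ contribute nonzero terms, so the relation reads
\[
\sum_{k \in K} (-q)^{\bullet} \bigl[(I \setminus \{i\}) \sqcup \{k\}\bigr]\bigl[K \setminus \{k\}\bigr] = 0.
\]
Separating the term $k = i$ (which yields $\pm q^{\bullet} [I][J]$ up to reordering) from the terms $k = j \in J$ gives
\[
(-q)^{\bullet}[I][J] \;=\; -\sum_{j \in J} (-q)^{\bullet}\bigl[(I \setminus \{i\}) \sqcup \{j\}\bigr]\bigl[(J \setminus \{j\}) \sqcup \{i\}\bigr].
\]

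Since $[I], [J] \notin P$ and $P$ is completely prime, $[I][J] \notin P$; hence at least one term on the right-hand side fails to lie in $P$, which forces both factors of that term to lie outside $P$. The resulting $j \in J$ therefore satisfies $(I \setminus \{i\}) \sqcup \{j\} \in \cm$. One technical wrinkle to handle along the way: if $j \in I \cap J$, the expression $(I \setminus \{i\}) \sqcup \{j\}$ has a repeated entry and the corresponding quantum Pl\"ucker coordinate is zero, so such terms drop out of the sum; consequently the $j$ produced by the argument automatically lies in $J \setminus I$, giving a genuine $m$-subset. This establishes the exchange axiom, and hence $\cm$ is a matroid of rank $m$.

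The main (mild) obstacle is bookkeeping: one must check that the specific generalised quantum Pl\"ucker relation chosen really isolates $[I][J]$ with a nonzero scalar and that the coefficient of this term is a unit in $\k$. This is a direct computation of the length functions $\ell(\cdot;\cdot)$ appearing in Theorem~\ref{theorem-quantum-plucker-relations}(1), and poses no conceptual difficulty once the indices $J_1, J_2, K$ above are fixed.
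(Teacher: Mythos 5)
Your proposal is correct and follows essentially the same route as the paper's own proof: the paper also applies the generalised quantum Pl\"ucker relations with $J_1=I\setminus\{i\}$, $J_2=\emptyset$, $K=J\sqcup\{i\}$, isolates the term $[I][J]$, and uses complete primeness of the $\ch$-prime $P$ to conclude that some $[(I\setminus\{i\})\sqcup\{j\}]$ lies outside $P$. Your explicit remark that terms with $j\in I\cap J$ vanish (so the produced $j$ lies in $J\setminus I$) is a detail the paper leaves implicit, but it is the same argument.
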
 


\begin{proof}\begin{full}   Let $[I]$ and $[J]$ be two quantum Pl\"ucker coordinates that are not in $P$. We need to verify
the exchange axiom for $I$ and $J$. Fix $i\in I$. If $i\in J$ then the
exchange axiom is trivially satisfied by choosing $j=i$. Thus, assume that
$i\not\in J$.

The generalised quantum Pl\"ucker
relations of \cite[Theorem 2.1]{klr} (see also Theorem \ref{theorem-quantum-plucker-relations}), applied with $J_1=\inoti$,
$J_2=\emptyset$ and
$K=\jwithi$ give a relation 
\[
\sum_{K'\sqcup K''=K} (-q)^{\bullet}[J_1\sqcup K'][K'']=0 
\] 
in
$\oqgmnk$. (Here, by a symbol $(-q)^{\bullet}$, 
we mean some power
of $-q$ with exponent in $\mz$.) 

Consider the various terms in the above expression. When $K'= \{i\}$ then
$J_1\sqcup K' = I$ and $K'' = J$. Otherwise, $K' = \{j\}$ for some $j\in J$
and each of the other terms is of the form
$[\inotiwithj][J\backslash\{j\}\sqcup \{i\}]$. If every term $[\inotiwithj]$
is in $P$ then we obtain $[I][J]\in P$. However, $P$ is completely prime; so
either $[I]\in P$ or $[J]\in P$, a contradiction. Thus there exists $j\in J$ with
$[\inotiwithj]\in\cm$ and the exchange axiom is verified. 
\end{full}\end{proof}


We know that the quantum grassmannian is a graded quantum algebra with a straightening law (abbreviated to QGASL)
 on the usual poset $\Pi$, see \cite{lr2} and Section~\ref{section-known} earlier in this paper.
However, we can put other partial orderings on the quantum Pl\"ucker coordinates and again  get a QGASL structure. In particular, for each $i$ with $1\leq
i\leq n$ we can define the {\em $i$-ordering}, denoted by $\lessi$. In this
ordering, we have 
\[
i\lessi i+1 \lessi i+2\lessi \dots \lessi n\lessi 1\lessi \dots \lessi i-1.
\]

There is then an induced partial ordering on the quantum Pl\"ucker coordinates given by 
$$[a_1 \lessi \dots \lessi a_m] \leqqi [b_1\lessi \dots \lessi b_m]$$ if and only if $a_j\leqqi b_j$
for each $1\leq j\leq m$.  

We will refer to the set of quantum Pl\"ucker coordinates with this partial ordering as
$\Pi_i$. Note that $\lessone$ is just the usual ordering on the natural
numbers and that $\Pi_1$ is the usual $\Pi$. 

As an example, we show the four partial orders on $\gqtwofour$ in Figure~\ref{figure-2x4-orderings}.


\begin{figure}[ht]
\ignore{
{\hskip 10ex}
 \parbox{25ex}{
$$\xymatrixrowsep{2.4pc}\xymatrixcolsep{3.2pc}\def\objectstyle{\scriptstyle}
\xymatrix@!0{
 & [34] \edge[d]&\\
 &[24] \edge[dl] \edge[dr]\\
[14]\edge[dr]&&[23]\edge[dl]\\
&[13]\edge[d]&\\
&[12]&
}$$
}{\hskip 10ex}
 \parbox{25ex}{
$$\xymatrixrowsep{2.4pc}\xymatrixcolsep{3.2pc}\def\objectstyle{\scriptstyle}
\xymatrix@!0{
 & [14] \edge[d]&\\
 &[13] \edge[dl] \edge[dr]\\
[12]\edge[dr]&&[34]\edge[dl]\\
&[24]\edge[d]&\\
&[23]&
}$$
}

{\hskip 10ex} 
 \parbox{25ex}{
$$\xymatrixrowsep{2.4pc}\xymatrixcolsep{3.2pc}\def\objectstyle{\scriptstyle}
\xymatrix@!0{
 & [12] \edge[d]&\\
 &[24] \edge[dl] \edge[dr]\\
[23]\edge[dr]&&[14]\edge[dl]\\
&[13]\edge[d]&\\
&[34]&
}$$
}
{\hskip 10ex} \parbox{25ex}{
$$\xymatrixrowsep{2.4pc}\xymatrixcolsep{3.2pc}\def\objectstyle{\scriptstyle}
\xymatrix@!0{
 & [23] \edge[d]&\\
 &[13] \edge[dl] \edge[dr]\\
[34]\edge[dr]&&[12]\edge[dl]\\
&[24]\edge[d]&\\
&[14]&
}$$
}
\caption{The four  $i$-orderings on $\gqtwofour$.}
\label{figure-2x4-orderings}
}
\end{figure}



\begin{theorem} \label{theorem-qasl} 
The quantum grassmannian $\oqgmnk$ is a QGASL on the poset $\Pi_i$ for each $i$
with $1\leq i\leq n$. 
\end{theorem}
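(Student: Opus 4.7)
The case $i=1$ is already the content of \cite[Theorem 3.4.4]{lr2}, so my plan is to reduce the general case to it by constructing a cyclic-shift automorphism of $\oqgmnk$ that converts the $i$-ordering into the standard one. Let $\tau_i:\{1,\dots,n\}\to\{1,\dots,n\}$ be the cyclic shift $\tau_i(j)\equiv j-(i-1)\ (\mathrm{mod}\ n)$. By construction, $\tau_i$ is an order isomorphism from $(\{1,\dots,n\},<_i)$ to $(\{1,\dots,n\},<_1)$, so it induces an order isomorphism of the index posets $(\Pi,\leq_i)\to(\Pi,\leq_1)$ given by $[\gamma_1<_i\cdots<_i\gamma_m]\mapsto[\tau_i(\gamma_1)<\cdots<\tau_i(\gamma_m)]$.

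The substantive step is to upgrade this combinatorial bijection to a $\k$-algebra automorphism of $\oqgmnk$. I would define
\[
\phi_i([\gamma_1<_i\cdots<_i\gamma_m]) := (-q)^{c(\gamma)}\,[\tau_i(\gamma_1)<\cdots<\tau_i(\gamma_m)],
\]
with a correction exponent $c(\gamma)\in\mathbb{Z}$ chosen to compensate for the wrap-around reorderings produced by $\tau_i$ (i.e., for those indices $\gamma_k\in\{1,\dots,i-1\}$ which become large after the shift). The natural way to do this is to factor $\tau_i$ as a product of $(i-1)$ elementary shifts $\tau: j\mapsto j-1\pmod n$ and handle each elementary shift separately. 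For the elementary shift $\tau$ one exhibits an automorphism $\phi$ on $\oqgmnk$ sending $[1,\gamma_2,\dots,\gamma_m]\mapsto[\gamma_2-1,\dots,\gamma_m-1,n]$ (up to a $(-q)^{m-1}$ factor coming from rotating the last column past the first $m-1$ columns) and $[\gamma_1<\cdots<\gamma_m]\mapsto[\gamma_1-1<\cdots<\gamma_m-1]$ when $1\notin\{\gamma_1,\dots,\gamma_m\}$. Once $\phi$ is verified on generators, $\phi_i=\phi^{i-1}$ automatically has the required form.

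Verifying that $\phi$ really extends to an algebra automorphism is the main obstacle. Concretely, one must show that the defining relations of $\oqgmnk$, namely the generalised quantum Pl\"ucker relations of Theorem \ref{theorem-quantum-plucker-relations}(1) and the commutation relations between quantum Pl\"ucker coordinates, are preserved by $\phi$. This reduces to the identity
\[
\sum_{K'\sqcup K''=K}(-q)^{\ell(J_1;K')+\ell(K';K'')+\ell(K'';J_2)}[J_1\sqcup K'][K''\sqcup J_2]=0,
\]
applied to the shifted index sets; the combinatorial lengths $\ell(\cdot;\cdot)$ transform in a controlled way under $\tau$, and the wrap-around contributions are precisely absorbed into the exponents $c(\cdot)$. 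A similar bookkeeping handles the commutation relations between incomparable pairs.

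Once $\phi_i$ is established as a $\k$-algebra automorphism carrying $(\Pi,\leq_i)$ isomorphically (up to nonzero scalars on generators) onto $(\Pi,\leq_1)$, conditions (1)--(5) of Definition \ref{recall-q-gr-asl} transfer from $\Pi_1$ to $\Pi_i$ immediately: the homogeneity and generation are obvious, linear independence of standard monomials on $\Pi_i$ follows by applying $\phi_i$ to the standard basis given by \cite[Corollary 1.2.6]{lr2}, and the straightening and commutation laws for $\Pi_i$ are the $\phi_i^{-1}$-images of the corresponding laws for $\Pi_1$.
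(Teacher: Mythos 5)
There is a genuine gap, and it sits exactly at the step you call "the substantive step": the cyclic shift does \emph{not} lift to a $\k$-algebra automorphism of $\oqgmnk$, not even after correcting each quantum Pl\"ucker coordinate by a power of $(-q)$ (or by any nonzero scalars). Rescaling generators can never change the exponent in a quasi-commutation relation, and these exponents are not cyclically invariant. Concretely, in $\gqtwofour$ one has $[14][23]=[23][14]$ while $[12][34]=q^{2}[34][12]$ (both relations follow, for instance, from dehomogenisation at $[12]$, under which $[14]$ and $[23]$ become the commuting antidiagonal generators $x_{21},x_{12}$ of quantum $2\times 2$ matrices, while $[34]$ becomes the quantum determinant). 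Your elementary shift $\tau:j\mapsto j-1 \pmod 4$ would have to send $[14]\mapsto q^{\bullet}[34]$ and $[23]\mapsto q^{\bullet}[12]$; applying such a $\phi$ to $[14][23]=[23][14]$ forces $[34][12]=[12][34]$, a contradiction since $q$ is not a root of unity. So no automorphism of the kind you describe exists, and the bookkeeping with exponents $c(\gamma)$ cannot be made to work. This failure of cyclic symmetry is a known quantum phenomenon: the cyclic shift only realises an isomorphism between $\oqgmnk$ and a $2$-cocycle twist of itself (Launois--Lenagan, \emph{Twisting the quantum grassmannian}), not an automorphism.

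By contrast, the paper does not attempt any transfer argument at all: it simply invokes \cite[Theorem 3.5]{lruss}, where the QGASL axioms of Definition \ref{recall-q-gr-asl} are verified directly for each cyclic order $\leqqi$, using the standard monomial basis and the (generalised) quantum Pl\"ucker relations to produce the straightening and commutation laws with respect to $\Pi_i$. If you want to salvage the spirit of your reduction, the correct route is via cocycle twisting rather than an automorphism: since every quantum Pl\"ucker coordinate is homogeneous for the $\mathbb{Z}^{n}$-grading, twisting the multiplication by a $2$-cocycle only rescales products of homogeneous elements, so conditions (3)--(5) of Definition \ref{recall-q-gr-asl} are stable under such twists, and the twist isomorphism implementing the shift carries $(\Pi,\leqqone)$ to $(\Pi,\leqqi)$. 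But that is a different (and later) argument than the one you propose, and as written your proof does not go through.
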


\begin{proof}\begin{full}  
See \cite[Theorem 3.5]{lruss} 
\end{full}\end{proof}

\begin{theorem} \label{theorem-qasl-for-i-order}
Consider the QGASL structure on $\oqgmnk$ determined by the poset $\Pi_i$. Let
$P$ be an $\ch$-prime ideal of $\oqgmnk$. Then there is a unique quantum Pl\"ucker coordinate
 $[I_i]$ in $\Pi_i$ with the property that $[I_i]\not\in P$ but $[J]\in P$ for all
$J\not\geqqi I_i$.   
\end{theorem}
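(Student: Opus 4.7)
The plan is to mimic the argument used in \cite[Theorem 5.1]{llr-grass} for the case $i=1$, leveraging that $\oqgmnk$ is a QGASL on the poset $\Pi_i$ by Theorem \ref{theorem-qasl}. First note that if $P$ is the irrelevant ideal $\ang{\Pi}$ then no $[I_i]$ exists, so the statement must be read under the standing assumption $P\neq \ang{\Pi}$ (as in Section \ref{section-known} for the case $i=1$). Under this assumption, the set
\[
S:=\{[J]\in \Pi_i\mid [J]\notin P\}
\]
is nonempty and finite, so it possesses at least one element $[I_i]$ that is minimal with respect to $\leq_i$.

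The main step is to promote minimality to uniqueness, i.e.\ to show that every $[J]\in S$ satisfies $J\geq_i I_i$. Let $[J]\in S$. By minimality of $[I_i]$, we cannot have $J<_i I_i$, so either $J\geq_i I_i$ or $J$ and $I_i$ are incomparable in $\leq_i$. Suppose we are in the incomparable case. Then by the QGASL structure on $(\Pi_i,\leq_i)$ (Theorem \ref{theorem-qasl}, clause (5) of Definition \ref{recall-q-gr-asl}), there exist scalars such that
\[
[I_i][J]\ =\ \sum_k c_k [\lambda_k]\ +\ \sum_\ell c'_\ell [\lambda'_\ell][\mu'_\ell],
\]
with each $\lambda_k, \lambda'_\ell<_i I_i, J$. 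By the minimality of $[I_i]$ in $S$, every quantum Pl\"ucker coordinate strictly less than $[I_i]$ in $\leq_i$ belongs to $P$, so every term on the right-hand side lies in $P$. Thus $[I_i][J]\in P$; since $P$ is completely prime (all $\ch$-primes of $\oqgmnk$ are completely prime), we conclude that $[I_i]\in P$ or $[J]\in P$, contradicting the fact that both lie in $S$. Hence the incomparable case cannot occur and $J\geq_i I_i$.

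This establishes that $[I_i]$ is the unique $\leq_i$-minimum of $S$; uniqueness of the element satisfying the stated property follows immediately, since any $[J]\in \Pi_i$ with $[J]\notin P$ must satisfy $J\geq_i I_i$, and any $[J]\not\geq_i I_i$ therefore lies in $P$. The main conceptual ingredient is Theorem \ref{theorem-qasl}, while the only obstacle worth highlighting is the correct handling of the incomparability case, where one must invoke the exact shape of the straightening relations in $\Pi_i$ rather than in the standard poset $\Pi$; once this is in place, the argument is a direct transcription of the $i=1$ case from \cite{llr-grass}.
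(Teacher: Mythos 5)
Your proof is correct and follows essentially the same route as the paper: the paper's proof simply cites \cite[Theorem 5.1]{llr-grass} and observes that the argument works for any QGASL, and your writeup is precisely that argument (take a $\leq_i$-minimal Pl\"ucker coordinate outside $P$, use the straightening relation for incomparable pairs plus complete primeness of $\ch$-primes to rule out incomparability, conclude it is the $\leq_i$-minimum), applied to $(\Pi_i,\leq_i)$ via Theorem \ref{theorem-qasl}. One trivial slip: the relation you display for an incomparable pair is clause (4), not clause (5), of Definition \ref{recall-q-gr-asl}.
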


\begin{proof}\begin{full}  
This is proved in \cite[Theorem 5.1]{llr-grass} for the standard order $\leq_1$. Inspection 
of the proof reveals that it works for any quantum graded algebra with a straightening law; so the result follows from the previous theorem.
\end{full}\end{proof} 



We adapt Postnikov's definition of a Grassmann necklace, see \cite[Definition 16.1]{post}, from the totally nonnegative case to the quantum case in the following definition.

\begin{definition}{\rm 
A {\em Grassmann necklace in $\oqgmnk$} is a sequence $\ci = ([I_1], \dots,
[I_n])$ of quantum Pl\"ucker coordinates such that for each $1\leq i\leq n$, if $i\in I_i$
then $I_{i+1}= (I_i\backslash\{i\})\sqcup \{j\}$ for some $1\leq j\leq n$,
and, if $i\not\in I_i$ then $I_{i+1}= I_i$. (Here, indices are taken modulo
$n$, so that $I_{n+1}=I_1$.)
}\end{definition} 

In \cite{post}, Postnikov shows that each positroid cell in $\gmntnn$ determines a Grassmann necklace. In view of Theorem~\ref{theorem-triple-correspondence}, 
the same result is available for $\ch$-prime ideals in the quantum grassmannian $\oqgmnk$. Nonetheless, we give a direct proof in the quantum setting for the convenience of the reader.


\begin{theorem} 
Let $q\in \k^*$ and assume that $q$ is not a root of unity. Let $P$ be an $\ch$-prime ideal of $\oqgmnk$ and let $\ci = ([I_1], \dots,
[I_n])$, where $[I_i]$ is the quantum Pl\"ucker coordinate of $\oqgmnk$ determined by Theorem~\ref{theorem-qasl-for-i-order}. 
Then $\ci$ is a Grassmann necklace. 
\end{theorem}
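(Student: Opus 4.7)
The strategy is to combine the matroid property of $\cm := \{[J] \in \Pi : [J] \notin P\}$ (established in the preceding proposition) with the characterisation of each $I_j$ as the componentwise $\leq_j$-minimum of $\cm$ furnished by Theorem~\ref{theorem-qasl-for-i-order}. The two cases in the definition of a Grassmann necklace are then verified separately.

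The case $i \notin I_i$ is quick. Writing $I_i = \{a_1 <_i \cdots <_i a_m\}$, one has $a_1 \neq i$ and hence $a_1 >_i i$, since $i$ is $\leq_i$-minimal in $\{1, \ldots, n\}$. If some $[J] \in \cm$ contained $i$, then in $\leq_i$-order $i$ would be the first entry of $J$, so the componentwise inequality $J \geq_i I_i$ at position $1$ would force $i \geq_i a_1 >_i i$, a contradiction. Hence every element of $\cm$ is an $m$-subset of $\{1, \ldots, n\} \setminus \{i\}$; the orderings $\leq_i$ and $\leq_{i+1}$ agree on this subset, so the two componentwise minima coincide and $I_{i+1} = I_i$.

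For the case $i \in I_i$, combining the inequalities $I_{i+1} \geq_i I_i$ and $I_i \geq_{i+1} I_{i+1}$ position by position, and using that $\leq_i$ and $\leq_{i+1}$ agree off $\{i\}$, first yields $I_{i+1} = I_i$ directly when $i \in I_{i+1}$ (the Grassmann-necklace condition with $j = i$). Otherwise $i \notin I_{i+1}$ and the same comparisons sandwich each entry $b_r$ of $I_{i+1} = \{b_1 <_{i+1} \cdots <_{i+1} b_m\}$ in the $\leq_i$-interval $[a_r, a_{r+1}]$ for $r < m$ (with the convention $a_1 := i$), with $b_m \geq_i a_m$. Taking $r_0$ to be the smallest $r$ with $b_{r_0} \neq a_{r_0+1}$ and applying Brualdi's symmetric exchange axiom to the pair $(I_{i+1}, I_i)$ at the element $b_{r_0} \in I_{i+1} \setminus I_i$ would produce some $e \in I_i \setminus I_{i+1}$ with both $(I_i \setminus \{e\}) \sqcup \{b_{r_0}\} \in \cm$ and $(I_{i+1} \setminus \{b_{r_0}\}) \sqcup \{e\} \in \cm$; a positional analysis of these two bases, using the minimality of $I_i$ in $\leq_i$ and of $I_{i+1}$ in $\leq_{i+1}$, then forces $e = i$ and $b_s = a_s$ for all $s > r_0$, yielding $I_{i+1} = (I_i \setminus \{i\}) \sqcup \{b_{r_0}\}$.

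The technically delicate step is the positional analysis ruling out $e = a_s$ for certain ranges of $s$ in the Brualdi exchange, since in some configurations the exchanged base is not directly comparable to $I_i$ in $\leq_i$ and a secondary exchange is needed. A robust alternative, which I would keep as a backup, is to bypass the combinatorics of the $i \in I_i$ case entirely by invoking Postnikov's \cite[Lemma 16.3]{post}: for any matroid of rank $m$ on $\{1, \ldots, n\}$, the sequence of componentwise $\leq_j$-minima in the cyclically shifted orders $\leq_1, \ldots, \leq_n$ is automatically a Grassmann necklace. Applied to the matroid $\cm$ built from $P$, this resolves all remaining cases at once.
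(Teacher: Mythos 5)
Your proposal is correct, but it takes a genuinely different route from the paper. You concentrate all the quantum input into two facts already available — that $\cm=\{[J]\in\Pi : [J]\notin P\}$ is a matroid, and that Theorem~\ref{theorem-qasl-for-i-order} exhibits $I_i$ as the componentwise $\leq_i$-minimum of $\cm$ — and then finish purely combinatorially, either by a symmetric (Brualdi) exchange argument or by quoting the lemma you cite from \cite{post}, which indeed holds for arbitrary matroids and so disposes of all cases at once. The paper instead stays inside $\oqgmnk$: in the hard case ($i\in I_i$, $i\notin I_{i+1}$) it applies the generalised quantum Pl\"ucker relations with $J_1=I_i\setminus\{i\}$ and $K=I_{i+1}\sqcup\{i\}$, and uses complete primeness of $P$ to produce an index $j_s$ such that both $[(I_i\setminus\{i\})\sqcup\{j_s\}]$ and $[(I_{i+1}\setminus\{j_s\})\sqcup\{i\}]$ lie outside $P$ — in effect a symmetric exchange at the element $i$, realised explicitly by a quantum minor identity — followed by the same kind of positional analysis you describe. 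Your approach buys modularity and brevity (the combinatorics is standard matroid theory), at the price of invoking Brualdi's symmetric exchange, which is stronger than the basic exchange property the paper verifies for $\cm$; the paper's proof is self-contained and displays the exchanged quantum Pl\"ucker coordinates explicitly, which is in keeping with its later use of such relations.

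Two remarks on your primary route. The step you flag as delicate is in fact unproblematic: every basis of $\cm$ is automatically comparable to $I_i$ in $\leq_i$ and to $I_{i+1}$ in $\leq_{i+1}$ by Theorem~\ref{theorem-qasl-for-i-order}, so no secondary exchange is needed. Concretely, if the exchanged element were $e=a_k\neq i$, then $(I_i\setminus\{a_k\})\sqcup\{b_{r_0}\}$ has $b_{r_0}$ in position $r_0+1$ of its $\leq_i$-ordering, so it fails $\geq_i I_i$, a contradiction; and once $e=i$, comparing $(I_i\setminus\{i\})\sqcup\{b_{r_0}\}$ with $I_{i+1}$ in $\leq_{i+1}$ at positions greater than $r_0$, together with the sandwich $b_s\geq_i a_s$, forces $b_s=a_s$ there, which is exactly the necklace condition. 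You should also record the degenerate case in which $b_r=a_{r+1}$ for every $r<m$ (so no $r_0$ exists); there $I_{i+1}=(I_i\setminus\{i\})\sqcup\{b_m\}$ directly.
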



\begin{proof}\begin{full}  For each $i=1,\dots,n$ we need to show the correct relationship between $I_i$ and $I_{i+1}$. The proof is the same in all cases, but we will present the 
proof in the case that $i=1$ to simplify the notation.

Note that if $a\leqqone b$ and $a\neq 1$ then $a\leqqtwo b$. As a consequence, if
$[I]$ and $[J]$ are quantum Pl\"ucker coordinates with $I\leqqone J$ and $1\not\in I$ then $I\leqqtwo J$. 

Let $[I_1] =[i_1\lessone i_2\lessone\dots\lessone i_m]$ and 
$[I_2] =[j_1\lessone j_2\lessone\dots\lessone j_m]$. 

It follows from the previous theorem that $I_1\leqqone I_2$, as $[I_2]\not\in
P$. As a consequence, $i_s\leqqone j_s$ for each $1\leq s\leq m$. Also, the
observation above shows that $i_s\leqqtwo j_s$ for each $2\leq s\leq m$ and
that $i_1\leqqtwo j_1$ whenever $i_1\neq 1$. 

In a similar way,  $I_2\leqqtwo I_1$, as $[I_1]\not\in
P$. \\


We consider three mutually exclusive and exhaustive cases. Case 1: $i_1\neq
1$. Case 2: $i_1=j_1 =1$. Case 3: $i_1=1$ and $j_1 \neq 1$.\\

\noindent{\bf Case 1} Assume that $i_1\neq 1$, and note that this implies that $1\not\in I_1$, so we need to prove that $I_2=I_1$. Now, $[I_1] =[i_1\lesstwo i_2\lesstwo\dots\lesstwo i_m]$ 
and $[I_2] =[j_1\lesstwo j_2\lesstwo\dots\lesstwo j_m]$. 
Also, it follows from the previous theorem that $I_2\leqqtwo I_1$, as
$[I_1]\not\in P$. Thus, $j_s\leqqtwo i_s$ for each $1\leq s\leq m$. 
However, we know from above that 
$i_s\leqqtwo j_s$ for each $1\leq s\leq m$. Thus, 
$i_s= j_s$ for each $1\leq s\leq m$; and so $I_2=I_1$, as required. \\

\noindent{\bf Case 2} Assume that $i_1=j_1 =1$. We need to show that $I_2=(I_1\backslash\{i_1\})\sqcup \{j_k\}$ for some $j_k\in I_2$.  Now, $[I_1] =[i_2\lesstwo
i_3\lesstwo\dots\lesstwo i_m\lesstwo i_1]$ and $[I_2] =[j_2\lesstwo
j_3\lesstwo\dots\lesstwo j_m\lesstwo j_1]$. Thus, $j_s\leqqtwo i_s$ for each
$1\leq s\leq m$, because $I_2\leqqtwo I_1$. As we already know from above that $i_s\leqqtwo j_s$ for each $2\leq s\leq m$ this shows that $i_s=j_s$
for each $2\leq s\leq m$. As we are in Case 2, we know that 
$i_1=j_1 =1$; so $I_2=I_1 =(I_1\backslash\{i_1\})\sqcup \{j_1\}$, as required. \\


\noindent{\bf Case 3} Assume that $i_1=1$ and $j_1 \neq 1$. In particular,
$1\not\in I_2$. We need to show that $I_2=(I_1\backslash\{i_1\})\sqcup \{j_k\}$ for some $j_k\in I_2$.\\

Recall that $i_1 =1$ is the largest element with respect 
to $\lesstwo$ so displaying $I_1$ in the order
$\lesstwo$, we have 
\[
[I_1] = [i_2\lesstwo\dots\lesstwo i_m\lesstwo i_1].
\]

Also, $j_1\neq 1$; so 
 displaying $I_2$ in the order
$\lesstwo$, we have 
\[
[I_2] = [j_1\lesstwo j_2\lesstwo\dots\lesstwo j_m].
\]
Thus, $j_t\leqqtwo i_{t+1}$ for each $t=1, \dots, m-1$, 
because $I_2\leqqtwo I_1$.\\

 We apply the generalised quantum Pl\"ucker
relations of \cite[Theorem 2.1]{klr} (see also Theorem \ref{theorem-quantum-plucker-relations}) with the following
assignations. Set $J_1:=I_1\backslash\{1\}$ and $J_2:=\emptyset$, 
while $K:=I_2\sqcup\{1\}$. There is a relation 
\[
\sum_{K'\sqcup K''=K} (-q)^{\bullet}[J_1\sqcup K'][K'']=0 
\] 
in
$\oqgmnk$. (As usual, by a symbol $(-q)^{\bullet}$, 
we mean some power of $-q$ with exponent in $\mz$.) 

Consider the various terms in the above expression. When $K'= \{1\}$ then
$J_1\sqcup K' = I_1$ and $K'' = I_2$. Thus, $[I_1][I_2]$ is one of the terms in
the above relation. Each of the other terms is of the form
$[(I_1\backslash\{1\})\sqcup\{j_s\}][(I_2\backslash\{j_s\})\sqcup\{1\}]$ for
some $1\leq s\leq m$. 

Now, $[I_1][I_2]\not\in P$; so there must be a term 
$[(I_1\backslash\{1\})\sqcup\{j_s\}][(I_2\backslash\{j_s\})\sqcup\{1\}]
\not\in P$. 
For such a term, set
$[X]:= [(I_1\backslash\{1\})\sqcup\{j_s\}]$ 
and 
$[Y]:= [(I_2\backslash\{j_s\})\sqcup\{1\}]$. 
Note that $[X],[Y]\not\in P$, as $P$ is a completely prime ideal. Hence,
$I_1\leqqone X,Y$ and $I_2\leqqtwo X,Y$.\\

We consider three subcases: (i) $s=1$; (ii) $1<s<m$; and (iii) $s=m$. \\


\noindent{\bf Subcase (i)} Assume that  $s=1$. Thus, 
$[X]:= [(I_1\backslash\{1\})\sqcup\{j_1\}]$ 
and 
$[Y]:= [(I_2\backslash\{j_1\})\sqcup\{1\}]$.

We know 
$j_1\leqqtwo i_2$ from above. In fact, $j_1\lesstwo
i_2$, since $j_1\not\in I_1$. As a result, displaying $X$ in $\lesstwo$, we
have
\[
[X]=[j_1\lesstwo i_2\lesstwo\dots\lesstwo i_m].
\]
Hence, $j_t\leqqtwo i_t$ for each $2\leq t\leq m$, because $I_2\leqqtwo X$. 
From the beginning of the proof we
know that $i_t\leqqtwo j_t$ for each $2\leq t\leq m$; so $j_t= i_t$ for each
$2\leq t\leq m$. Thus, 
\[
I_2 =\{j_1, j_2, \dots, j_m\} = \{j_1, i_2, \dots, i_m\}= 
(I_1\backslash\{i_1\})\sqcup\{j_1\},
\]
as required. \\




\noindent{\bf Subcase (ii)} Assume that  $1<s<m$. 
 We wish to display $X$ with respect
to $\lesstwo$ and to do this we need to know where to place $j_s$. We have
already observed that $j_s\leqqtwo i_{s+1}$, and, in fact $j_s\lesstwo
i_{s+1}$, since $j_s\not\in I_1$. From the beginning of the proof, we know
that $i_s\leqqtwo j_s$. In fact, $i_s<_2 j_s$, since $j_s\not\in I_1$. Thus,
$i_s\lesstwo j_s \lesstwo i_{s+1}$. Hence, displaying $X$ in the order
$\lesstwo$, we have 

\[
[X]= [i_2\lesstwo i_3\lesstwo\dots\lesstwo i_s\lesstwo 
j_s\lesstwo i_{s+1}\lesstwo\dots\lesstwo i_m].
\]
Thus, we see that 
\begin{eqnarray}\label{j<i}
j_1\leqqtwo i_2, \quad\dots, \quad j_{s-1}\leqqtwo i_s, 
\quad j_{s+1}\leqqtwo i_{s+1}, \quad\dots, \quad j_m\leqqtwo i_m,
\end{eqnarray}
because $I_2\leqqtwo X$. 

As $[Y]\not\in P$, we know that $I_1\leqqone Y$. Now displaying $[Y]:= [(I_2\backslash\{j_s\})\sqcup\{1\}]$ in the order
$\lessone$, we have 
\[
[Y] = [ i_1\lessone j_1\lessone\dots\lessone  
j_{s-1}\lessone j_{s+1} \lessone\dots\lessone j_m]
\]
(recalling that $i_1=1$). 
Hence, the fact that $I_1\leqqone Y$ shows that 
\begin{eqnarray*} 
i_2\leqqone j_1, \quad \dots,\quad i_s\leqqone j_{s-1}, \quad
i_{s+1}\leqqone j_{s+1}, \quad \dots, \quad  i_m\leqqone j_m.
\end{eqnarray*}

As $i_2> 1$ we get the same inequalities with respect to $\lesstwo$; that
is, 
\begin{eqnarray} \label{i<j}
i_2\leqqtwo j_1, \quad \dots,\quad i_s\leqqtwo j_{s-1}, \quad
i_{s+1}\leqqtwo j_{s+1}, \quad \dots, \quad  i_m\leqqtwo j_m.
\end{eqnarray} 

From~(\ref{j<i}) and~(\ref{i<j}), we see that 
\[
j_1=i_2,\quad\dots,\quad j_{s-1}= i_s, \quad 
j_{s+1} = i_{s+1},\quad\dots,\quad j_m=i_m.
\]

Therefore,
\begin{eqnarray*}
I_2 &=&\{j_1,\dots ,j_{s-1},j_s,j_{s+1},\dots, j_m\}\\
     &=&\{i_2,\dots ,i_s,j_s, i_{s+1},\dots,i_m\}\\
     &=& (I_1\backslash\{i_1\})\sqcup\{j_s\},
\end{eqnarray*}     
as required.\\




\noindent{\bf Subcase (iii)} Assume that  $s=m$. 
This case is similar to the previous
case; so we will be brief. Recall that  $i_m\leqqtwo j_m$, and, note that, in fact, $i_m<_2 j_m$ as $j_m\not\in I_1$. Consequently, 
\[
[X]= [(I_1\backslash\{i_1\})\sqcup\{j_m\}]= [i_2\lesstwo i_3\lesstwo\dots\lesstwo i_m\lesstwo j_m];
\]
and so 
\begin{eqnarray}\label{j<i-case-s=m}
j_1\leqqtwo i_2, \quad\dots, \quad j_{m-1}\leqqtwo i_m
\end{eqnarray}
because $I_2\leqqtwo X$. \\

Now $[Y]= [ i_1\lessone j_1\lessone\dots\lessone  
j_{m-1}]$ and so, as $I_1\leq_1 Y_1$, we see that 
\begin{eqnarray*} 
i_2\leqqone j_1, \quad \dots,\quad i_m\leqqone j_{m-1}
\end{eqnarray*} 
and deduce that 
\begin{eqnarray} \label{i<j-case-s=m}
i_2\leqqtwo j_1, \quad \dots,\quad i_m\leqqtwo j_{m-1},
\end{eqnarray} 
as $i_2\neq 1$.

From~(\ref{j<i-case-s=m}) and~(\ref{i<j-case-s=m}), we see that 
\[
j_1=i_2,\quad\dots,\quad j_{m-1}= i_m.
\]

Therefore,
\begin{eqnarray*}
I_2 &=&\{j_1,\dots ,j_{m-1},j_m\}\\
     &=&\{i_2,\dots ,i_m,j_m\}\\
     &=& (I_1\backslash\{i_1\})\sqcup\{j_m\},
\end{eqnarray*}     
as required.


\end{full}\end{proof} 




Let $P$ be an $\ch$-prime ideal in $\oqgmnk$ 
with associated Grassmann necklace $\ci = ([I_1],\dots, [I_n])$.
Then Theorem~\ref{theorem-qasl-for-i-order} shows that 
$P$ contains the quantum Pl\"ucker coordinates  in the set 
\[ 
\bigcup_{i=1}^n \,\{[J] \mid J\not\geqqi I_i\}.
\]
The first two authors and Rigal had conjectured earlier that these are precisely the quantum Pl\"ucker coordinates in $P$. We can now prove this conjecture by appealing to the connection with the totally nonnegative grassmannian. 

\begin{theorem}\label{theorem-union} 
Assume that $q \in \k^*$ is not a root of unity. Let $P$ be an $\ch$-prime ideal in $\oqgmnk$ 
with associated Grassmann necklace $\ci = ([I_1],\dots, [I_n])$.
Then the set of quantum Pl\"ucker coordinates contained in $P$ is equal to the 
set 
\[ 
\bigcup_{i=1}^n \,\{[J] \mid J\not\geqqi I_i\}.
\]
\end{theorem}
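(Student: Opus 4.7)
The plan is to prove the inclusion $\supseteq$ as an immediate consequence of Theorem~\ref{theorem-qasl-for-i-order}, and to establish the reverse inclusion $\subseteq$ by transferring the question to the totally nonnegative grassmannian via Theorem~\ref{theorem-triple-correspondence} and then invoking Postnikov's characterisation of positroids by Grassmann necklaces. The irrelevant ideal case is excluded implicitly, since the definition of $[I_i]$ via Theorem~\ref{theorem-qasl-for-i-order} requires $[I_i] \notin P$; so we may assume throughout that $P$ is not the irrelevant ideal.

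The easy inclusion is direct: for each $i\in\{1,\dots,n\}$, Theorem~\ref{theorem-qasl-for-i-order} states that $[I_i]\notin P$ and that every $[J]$ with $J\not\geq_i I_i$ lies in $P$. Taking the union over $i$ gives $\bigcup_{i=1}^n\{[J]\mid J\not\geq_i I_i\}\subseteq P\cap\Pi$. For the reverse inclusion, let $C$ be the Cauchon-Le diagram of $P$ and let $P_C\subseteq \gmntnn$ be the associated positroid cell. By Theorem~\ref{theorem-triple-correspondence}, the set of quantum Pl\"ucker coordinates lying in $P$ coincides (as an index set) with the set of classical Pl\"ucker coordinates vanishing on $P_C$. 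Setting $\cm:=\{J\in\binom{[n]}{m}\mid [J]\notin P\}$, the complement $\binom{[n]}{m}\setminus\cm$ indexes exactly the quantum Pl\"ucker coordinates in $P$, and by the proposition preceding Theorem~\ref{theorem-union} (and its totally nonnegative counterpart), $\cm$ is a matroid of rank $m$ which is in fact the positroid associated to $P_C$.

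The next step is to identify the quantum Grassmann necklace $([I_1],\dots,[I_n])$ with Postnikov's Grassmann necklace of the positroid $\cm$. For each $i$ we have $I_i\in\cm$ (because $[I_i]\notin P$), and any $J\in\cm$ satisfies $J\geq_i I_i$ (because $[J]\notin P$, so by Theorem~\ref{theorem-qasl-for-i-order} we cannot have $J\not\geq_i I_i$). Hence $I_i$ is the $\leq_i$-minimum of $\cm$, which is precisely Postnikov's definition of the $i$-th entry of the Grassmann necklace of a matroid (the $\leq_i$-minimum of the set of bases). Now Postnikov's theorem (\cite[Theorem~17.1]{post}), which is specific to positroids, asserts that a subset $J\in\binom{[n]}{m}$ lies in the positroid $\cm$ if and only if $J\geq_i I_i$ for every $i$. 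Taking complements yields
\[
\binom{[n]}{m}\setminus\cm=\bigcup_{i=1}^n\{J\mid J\not\geq_i I_i\},
\]
which is exactly the statement we want.

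The main obstacle in this plan is the identification of the matroid-theoretic structures on the two sides of the correspondence of Theorem~\ref{theorem-triple-correspondence}: one must be careful that the Grassmann necklace defined here via the quantum algebraic condition (Theorem~\ref{theorem-qasl-for-i-order}) actually agrees with Postnikov's classical combinatorial Grassmann necklace of the associated positroid. Once this is verified, Postnikov's characterisation of positroids by their Grassmann necklaces does the remaining work; note that this characterisation is a genuinely non-trivial positroid-specific statement and fails for general matroids, so the use of Theorem~\ref{theorem-triple-correspondence} (which guarantees we are dealing with a positroid and not just an arbitrary matroid) is essential.
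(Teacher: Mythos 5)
Your proposal is correct and follows essentially the same route as the paper: both pass through Theorem~\ref{theorem-triple-correspondence} to the positroid cell and then apply the Grassmann-necklace characterisation of positroids, which the paper quotes as \cite[Proposition 16]{oh}; your extra step identifying the quantum necklace with the classical one via $\leq_i$-minimality is exactly what the paper leaves implicit ("the Grassmann necklace associated with $S$ is also given by $\ci$"), and it does hold, since Oh's characterisation makes the classical necklace entry the $\leq_i$-minimum of the positroid, forcing it to coincide with your $[I_i]$. The only blemish is the attribution: the statement that $J$ lies in the positroid if and only if $J\geq_i I_i$ for all $i$ is Oh's theorem (conjectured by Postnikov), not \cite[Theorem 17.1]{post}, and Postnikov's necklace entries are defined by shifted lexicographic rather than Gale minimality, so you should cite Oh at that point.
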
 

\begin{proof} 
Let $\cf_q$ be the set of quantum Pl\"ucker coordinates contained in $P$. 
Then the corresponding set of Pl\"ucker coordinates $\cf$ determines a nonempty 
cell $S$ in the totally nonnegative grassmannian. The Grassmann necklace associated 
with $S$ is also given by $\ci = ([I_1],\dots, [I_n])$ (where we are now interpreting the 
$[I_j]$ as classical Pl\"ucker coordinates). 
By \cite[Proposition 16]{oh}, 
the members of $\cf$ can be described as in the claim of the theorem; and so 
the same result holds for $\cf_q$.
\end{proof} 

If  $P$ is the $\ch$-prime ideal $P$ of $\oqgmnf$ with 
Grassmann necklace $([I_1],\dots,[I_n])$, we set 
$\neck(P):=([I_1],\dots,[I_n])$.

Define a partial order on the set of Grassmann necklaces in the following way. Let  $Q$ be another 
$\ch$-prime ideal with $\neck(Q)=([J_1],\dots,[J_n])$. Then set 
$\neck(Q)\leq\neck(P)$ if and only if $J_i\leq_i I_i$ for each $i=1,\dots,n$. 

\begin{theorem}\label{proposition-necklace-order} 
Assume that $q \in \k^*$ is transcendental over the prime field of $\k$ and suppose that 
 $P$ and $Q$ are $\ch$-prime ideals of $\oqgmnk$.  Then 
$Q\subseteq P$ if and only if $\neck(Q)\leq\neck(P)$. 
\end{theorem}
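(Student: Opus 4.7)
The plan is to combine Theorem \ref{theorem-full-generation}, which (under the transcendence hypothesis) reduces containment $Q\subseteq P$ to a statement about quantum Pl\"ucker coordinates, with Theorem \ref{theorem-union}, which describes exactly which quantum Pl\"ucker coordinates lie in a given $\ch$-prime in terms of its Grassmann necklace. Let $\neck(P)=([I_1],\ldots,[I_n])$ and $\neck(Q)=([J_1],\ldots,[J_n])$ (we may assume neither ideal is the irrelevant one, which is the unique maximum of both orderings).

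For the easy direction, suppose $\neck(Q)\leq\neck(P)$, i.e.\ $J_i\leq_i I_i$ for every $i$. By Theorem \ref{theorem-full-generation}, it suffices to show that every quantum Pl\"ucker coordinate contained in $Q$ is also contained in $P$. Let $[K]\in Q$. By Theorem \ref{theorem-union} applied to $Q$, there exists $i$ with $K\not\geq_i J_i$. Since $\geq_i$ is transitive, the relation $J_i\leq_i I_i$ forces $K\not\geq_i I_i$ as well (otherwise $K\geq_i I_i\geq_i J_i$ would give $K\geq_i J_i$). Applying Theorem \ref{theorem-union} to $P$ then yields $[K]\in P$, and hence $Q\subseteq P$.

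For the converse, suppose $Q\subseteq P$. Fix $i\in\{1,\ldots,n\}$. By the defining property of the Grassmann necklace (Theorem \ref{theorem-qasl-for-i-order}), $[I_i]\notin P$, so a fortiori $[I_i]\notin Q$. Applying Theorem \ref{theorem-union} to $Q$ in contrapositive form, $[I_i]\notin Q$ means that $I_i\geq_{i'} J_{i'}$ for every $i'$; in particular $I_i\geq_i J_i$, that is, $J_i\leq_i I_i$. Since $i$ was arbitrary, $\neck(Q)\leq\neck(P)$.

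Both directions are short once Theorems \ref{theorem-full-generation} and \ref{theorem-union} are in hand; the only substantive ingredients are those two results together with the straightforward transitivity argument in the forward direction. The main technical obstacle has already been overcome in establishing the generation result under the transcendence hypothesis on $q$: without it, one could not conclude that $Q\subseteq P$ from the mere containment of quantum Pl\"ucker coordinates, and the whole bridge between the Grassmann-necklace order and the inclusion order on $\ch\text{-}\spec\oqgmnk$ would collapse.
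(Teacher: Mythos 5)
Your proof is correct, and both directions are sound: the reduction of $Q\subseteq P$ to membership of quantum Pl\"ucker coordinates via Theorem \ref{theorem-full-generation} is exactly where the transcendence hypothesis enters, and the transitivity argument with the $i$-orders is valid since each $\leq_i$ is a partial order on index sets. (The only cosmetic caveat is that $\neck$ is undefined for the irrelevant ideal, so the statement implicitly excludes it, as you note.)

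Your route differs from the paper's in how the combinatorial membership criterion is invoked. You deduce everything inside the quantum grassmannian from Theorem \ref{theorem-union}: for the direction $\neck(Q)\leq\neck(P)\Rightarrow Q\subseteq P$ you use the full strength of Theorem \ref{theorem-union} for $Q$ (every $[K]\in Q$ fails $\geq_i J_i$ for some $i$) together with its easy half for $P$, then conclude with Theorem \ref{theorem-full-generation}; for the other direction you use Theorem \ref{theorem-union} for $Q$ again. The paper instead proves the direction $Q\subseteq P\Rightarrow\neck(Q)\leq\neck(P)$ with only Theorem \ref{theorem-qasl-for-i-order} (if $J_i\not\leq_i I_i$ then $[I_i]\in Q\subseteq P$, contradicting $[I_i]\notin P$), making clear that this implication needs neither transcendence nor the positroid input; and it proves the converse by contrapositive, passing through Theorem \ref{theorem-triple-correspondence} to the totally nonnegative grassmannian and applying Oh's description of vanishing Pl\"ucker coordinates on positroid cells twice, rather than quoting Theorem \ref{theorem-union}. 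The two arguments are logically equivalent, since Theorem \ref{theorem-union} is itself proved from Oh's result via Theorem \ref{theorem-triple-correspondence}; yours is the tidier packaging once Theorem \ref{theorem-union} is available, while the paper's version isolates more precisely which direction depends on the positroid/transcendence machinery and which is purely a QGASL statement.
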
 

\begin{proof}\begin{full} Set $\neck(P)=  ([I_1],\dots,[I_n])$ and $\neck(Q)=([J_1],\dots,[J_n])$. 

($\Rightarrow$) 
Suppose that $Q\subseteq P$ but that there is an $i$ 
with $J_i\not\leq_i I_i$. Then $[I_i]\in Q\subseteq P$, a contradiction. Hence, 
$J_i\leq_i I_i$ for each $i=1,\dots,n$; that is, $\neck(Q)\leq\neck(P)$. 

($\Leftarrow$) Suppose that $Q\not\subseteq P$. We need both quantum 
Pl\"ucker coordinates and classical Pl\"ucker coordinates in this argument; so, if 
$[I]$ is a classical  Pl\"ucker coordinate then the corresponding quantum 
 Pl\"ucker coordinate is denoted by $[I]_q$. 
 
 Suppose that $P$ is associated with 
 the Cauchon-Le diagram $C$ and denote the corresponding positroid cell 
 by $S_C$. Similarly, suppose that $Q$ is associated with 
 the Cauchon-Le diagram $D$ with corresponding positroid cell 
 $S_D$. Then $[I]_q \in P$ if and only if $[I](s)=0$ for all $s\in S_C$, and 
 $[I]_q \in Q$ if and only if $[I](s)=0$ for all $s\in S_D$. 

As $Q\not\subseteq P$ there is a quantum Pl\"ucker coordinate 
$[I]_q \in Q\backslash P$ by Theorem \ref{theorem-full-generation}.  Hence, $[I](s)=0$ for all $s\in S_D$. Thus, by 
\cite[Proposition 16]{oh}, there exists a $j$ with $I\not\geq_j J_j$. 

Now, $[I]_q\not\in P$ and so there exists $s\in S_C$ with $[I](s)\neq 0$. 
It follows from \cite[Proposition 16]{oh} that $I\geq_i I_i$ for all $i$. In particular, $I \geq_j I_j$. 
If $I_j\geq_j J_j$ then $I\geq_j I_j\geq_jJ_j$, which is a contradiction. 
Thus, $I_j\not\geq_j J_j$ and so $\neck(P)\not\geq\neck(Q)$. 
\end{full}\end{proof} 

The previous proposition shows that we can decide containment between two 
$\ch$-primes $P$ and $Q$ of $\oqgmnk$ by comparing their Grassmann necklaces.
Let $C$  be the Cauchon-Le diagram associated with $P$. In \cite{oh} an algorithm is given to determine $\neck(P)$ 
from $C$ and so this gives an algorithm for the containment problem 
for $\hspec(\oqgmnk)$, as it is easy to compare quantum Pl\"ucker coordinates 
in each $i$-order. Another description of an algorithm to compute the Grassmann 
necklace from a Cauchon-Le diagram is given in \cite{cf}, at least in the case of 
$\oqmmnk$. This latter algorithm is perhaps easier to understand than Oh's algorithm, and can easily be adapted to work in the general $\oqgmnk$ case. An algorithm to 
perform the reverse procedure; that is, to reconstruct a Cauchon-Le diagram from a 
Grassmann necklace, is presented in \cite{af}.




\subsection{On Yakimov's conjecture} 

In  \cite{yak-qflag}, Yakimov parameterises the $\ch$-prime ideals in 
quantum partial flag varieties by pairs of elements in the associated Weyl group, and makes a conjecture about containments between such primes in terms of the parameterisation. 
He states that the conjecture was proved in the full flag case by Gorelik, 
\cite{gorelik}, but otherwise  remains open, even in the grassmannian 
case.  In this subsection, we give a characterisation of the poset of $\ch$-primes in the quantum grassmannian. \\

We begin by describing briefly Yakimov's parameterisation and his conjecture. Full details are available in \cite{yak-qflag}. \\

Let $G$  be a split, simply connected, semisimple algebraic group over a field $K$ 
 of characteristic zero and let $P_I$ be the standard parabolic subgroup associated 
 with a set of simple roots $I$. If $W$ is the Weyl group of $G$ 
then $W^I$ denotes the minimal length representatives of cosets in $W/W_I$ where $W_I$ is the parabolic 
subgroup of $W$ corresponding to $P_I$.

\begin{theorem}
\label{theorem-yakimov} 
{\rm (Yakimov, \cite[Theorem 1.1]{yak-qflag})} 
The $\ch$-invariant primes of the quantum partial flag variety $R_q[G/P_I]$ are parameterised by 
\[
S_{W,I}:= \{ (w,v)\in W^I\times W \mid v\leq w\}, 
\]
the partial ordering being the Bruhat order. \\
\end{theorem}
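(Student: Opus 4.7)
The plan is to parameterise the $\ch$-invariant primes of $R_q[G/P_I]$ via two ingredients: the Goodearl--Letzter stratification (in the form used throughout this paper) and a combinatorial identification of the resulting index set with $S_{W,I}$. Note that the authors are quoting this as Yakimov's Theorem~1.1 of \cite{yak-qflag}; so the proposal below sketches how one can rederive it for the grassmannian case using the machinery developed in the paper, and then explain the adjustments needed for the full partial flag setting.

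First I would pick a nice presentation of $R_q[G/P_I]$ as a noncommutative dehomogenisation of a quantum Schubert cell algebra. For each $w \in W^I$, one has a quantum Schubert variety $S(w) \subseteq R_q[G/P_I]$ and a normal regular element $\mu_w$ (playing the role of the quantum Pl\"ucker coordinate $\gamma$ in Section 9); following the partition $\ch\text{-}\spec R_q[G/P_I] = \bigsqcup_{w \in W^I} \ch\text{-}\spec_w R_q[G/P_I] \sqcup \langle\Pi\rangle$ of equation (\ref{splitting into cells}), each nonirrelevant $\ch$-prime lies in a unique stratum indexed by $w \in W^I$. Noncommutative dehomogenisation (as in Section 9.2) yields an $\ch$-equivariant isomorphism $S(w)[\mu_w^{-1}] \cong S^o(w)[y^{\pm 1}; \sigma]$, reducing the problem to understanding $\ch\text{-}\spec(S^o(w))$, where $S^o(w)$ is a quantum nilpotent algebra (the quantum Schubert cell of De Concini--Kac--Procesi). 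This gives the first coordinate $w$.

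Next I would describe $\ch\text{-}\spec(S^o(w))$ combinatorially. By Cauchon's deleting-derivations algorithm (Section 3), these $\ch$-primes correspond bijectively to Cauchon diagrams in the quantum affine space attached to $S^o(w)$. The key step is to identify Cauchon diagrams for $S^o(w)$ with $\{v \in W : v \leq w\}$: in the grassmannian case, this is precisely the content of \cite[Theorem~3.5]{llr-grass} combined with the combinatorial bijection between Cauchon-Le diagrams on Young diagrams fitting inside $Y_\lambda$ (where $\lambda$ is associated to $w$ as in Notation~\ref{associated partition}) and Bruhat intervals $[e,w]$ in $W = S_n$; see the identifications used throughout Sections 9--10. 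Combining the two steps sends an $\ch$-prime $P$ to a pair $(w, v) \in S_{W,I}$, and the stratification gives a disjoint union decomposition, so the map is a bijection.

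The hard part will be extending the ``deleting derivations implies Cauchon-Le parameterisation'' argument (Section~\ref{section: Cauchon-Le diagrams in partition subalgebras}) from the grassmannian/type~$A$ setting to general $G/P_I$. In type~$A$ one exploits that quantum Schubert cells are partition subalgebras of quantum matrices, so Cauchon's algorithm reduces cleanly to a combinatorial colouring problem on a Young diagram whose diagrams-to-intervals bijection is classical. For general $G$ one must work with the quantum Schubert cell $U_q^-[w]$ as a quantum nilpotent algebra directly (using M\'eriaux--Cauchon and Yakimov's results identifying Cauchon diagrams with reduced subexpressions of $w$), and verify that the resulting bijection with $[e,w] \subseteq W$ agrees with the Bruhat-order containment. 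Once this is in hand, the partial order on $S_{W,I}$ arising from inclusion of $\ch$-primes is controlled by Theorem~\ref{proposition-necklace-order}-type arguments, and matches Yakimov's description.
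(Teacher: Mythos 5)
First, a point of comparison: the paper does not prove this statement at all --- it is imported verbatim as Yakimov's \cite[Theorem 1.1]{yak-qflag} and only used as an input in the final section, so there is no in-paper argument to measure your proposal against. That said, your outline does mirror Yakimov's own strategy: stratify $\hspec$ by $w\in W^I$ via quantum Schubert varieties, then classify the $\ch$-primes of each quantum Schubert cell and show they are indexed by the Bruhat interval $\{v\in W \mid v\le w\}$. In the grassmannian case the first half is genuinely available from the present paper: the decomposition \eqref{splitting into cells} together with the dehomogenisation isomorphism of Section 9 reduces $\hspec_\gamma$ to the $\ch$-spectrum of a partition subalgebra, and \cite[Theorem 3.5]{llr-grass} parameterises the latter by Cauchon-Le diagrams on $Y_\lambda$ (on $Y_\lambda$ itself, not ``on Young diagrams fitting inside $Y_\lambda$'' as you wrote --- it is the union over all $\gamma$ that ranges over all shapes in the $m\times(n-m)$ box).

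The gap is in the second half, which is exactly where the content of the theorem lies. You assert that Cauchon-Le diagrams on $Y_\lambda$ biject with the Bruhat interval $[e,w_\gamma]\subseteq W$ and call this ``classical''; it is neither proved in this paper nor an immediate consequence of \cite{llr-grass}. Supplying it requires identifying the quantum Schubert cell with an algebra of type $U_q^-[w]$, matching Cauchon diagrams with subwords of a fixed reduced expression of $w$, and then invoking the subword characterisation of Bruhat order (the M\'eriaux--Cauchon description, as used by Yakimov in \cite{yak-invariant-primes}); alternatively one can route through Postnikov's decorated permutations and the Knutson--Lam--Speyer indexing, but either way this step is the heart of the theorem and cannot be waved through as folklore. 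Likewise, for general $G/P_I$ the stratification of $\hspec(R_q[G/P_I])$ by $W^I$ --- the existence and uniqueness of a ``leading'' Schubert stratum, i.e.\ the analogue of \cite[Theorem 5.1]{llr-grass} --- is itself a nontrivial part of Yakimov's proof and does not follow from the type-$A$ QGASL argument without further work. Finally, tidy up the bookkeeping of the augmentation ideal: your map is only defined on nonirrelevant $\ch$-primes, so you must say explicitly how (or whether) $\ang{\Pi}$ is accounted for in $S_{W,I}$. As it stands the proposal is a correct road map of Yakimov's argument rather than a proof.
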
 

\begin{conjecture}\label{conjecture-yakimov} 
{\rm 
(Yakimov, \cite[Conjecture 1.2]{yak-qflag}) For $(w,v), (w',v') \in S_{W,I}$ the containment 
$P_{w,v}\subseteq P_{w',v'}$ holds if and only if there exists $z\in W_I$ such that 
\[
w\geq w'z\qquad{\rm and}\qquad v\leq v'z.
\]
}
\end{conjecture}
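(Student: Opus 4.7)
The plan is to prove Yakimov's conjecture in the grassmannian case by translating it into the language of Grassmann necklaces and then invoking the characterisation of containment between $\ch$-primes given in Theorem \ref{proposition-necklace-order}. The first step is to set up the dictionary between Yakimov's parameterisation and the necklace parameterisation. For $G=SL_n$ and $P_I$ the maximal parabolic corresponding to the grassmannian $G(m,n)$, we have $W=S_n$ and $W_I=S_m\times S_{n-m}$, and the set $W^I$ of minimal length coset representatives biject with $m$-element subsets of $\llb 1,n\rrb$ (equivalently, quantum Pl\"ucker coordinates). Under this identification, I would match $w\in W^I$ with the ``top'' Pl\"ucker coordinate $[I_w]$ of the corresponding $\ch$-prime and argue that the pair $(w,v)$ with $v\leq w$ encodes exactly the data of a Grassmann necklace. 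The bijection between $S_{W,I}$ and Grassmann necklaces in $\gmn$ is essentially due to Knutson-Lam-Speyer via the identification of positroid varieties with open Richardson projections, combined with Theorem \ref{theorem-triple-correspondence}.

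Second, I would translate the Bruhat-theoretic $z$-condition into the combinatorial necklace order $\leq$ of Theorem \ref{proposition-necklace-order}. Concretely, given $(w,v),(w',v')\in S_{W,I}$, the existence of $z\in W_I$ with $w\geq w'z$ and $v\leq v'z$ amounts to saying that the pair $(w,v)$ can be ``dominated'' coordinatewise by $(w',v')$ after adjusting within the parabolic factor. In grassmannian terms this should correspond precisely to the componentwise inequalities $J_i\leq_i I_i$ (for all $i=1,\dots,n$) defining the partial order on Grassmann necklaces, since the $i$-th entry of the necklace is controlled by the $i$-th cyclically rotated Bruhat order on $W^I$.

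Third, once the dictionary and the equivalence of the two orderings are established, Theorem \ref{proposition-necklace-order} gives $P_{w,v}\subseteq P_{w',v'}$ iff $\neck(P_{w,v})\leq \neck(P_{w',v'})$, which by the dictionary is equivalent to Yakimov's $z$-condition. This reduces the conjecture, in the grassmannian case, to a purely combinatorial statement comparing the two partial orders.

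The hard part will be verifying rigorously the dictionary between Yakimov's $P_{w,v}$ and the $\ch$-primes we have parameterised by Cauchon-Le diagrams/necklaces, since Yakimov's construction proceeds via quantum Schubert/Richardson subquotients at the level of the quantum flag variety $R_q[G/P_I]$, whereas ours is built from the generic quantum matrix inside $\oqmmnf$. Matching the two labelings requires identifying the quantum Schubert cell $S(\gamma)$ (and its further quantum Richardson quotient by $v$) with Yakimov's $R_q[G/P_I]/P_{w,v}$, which will likely involve invoking the Joseph-Hodges-Levasseur-Stafford picture in the grassmannian case and tracking the Demazure modules. Once this identification is in place, the combinatorial translation of the $z$-condition should follow by a direct (though slightly intricate) comparison of the coordinatewise Bruhat order on pairs with the cyclically shifted orders $\leq_i$ on Pl\"ucker coordinates, using the well-known correspondence between $W^I$ and $i$-ordered subsets.
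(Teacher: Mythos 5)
There is a genuine gap, and it is exactly the one the paper itself flags: this statement is left as a \emph{conjecture} in the paper, not proved. What the paper establishes (Theorem \ref{theorem-containment}) is only an abstract poset isomorphism between $\ch$-$\spec\oqgmnk$, the poset of torus orbits of symplectic leaves, and $S_{W,I}$ with the $z$-ordering, obtained by routing through the totally nonnegative grassmannian: Theorem \ref{theorem-triple-correspondence} and Theorem \ref{proposition-necklace-order} identify containments of $\ch$-primes with closures of positroid cells, and the closure relations on that side are taken from Rietsch and Goodearl--Yakimov, matched via He--Lam. The paper then states explicitly that to get the full conjecture one must still compare Yakimov's labelling of $\ch$-primes by pairs $(w,v)$ with the labelling by Cauchon-Le diagrams/necklaces, and this comparison is not carried out. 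Your step 1 is precisely that missing comparison, and you only sketch it (``I would match\dots'', ``should correspond\dots'', ``will likely involve\dots''); identifying Yakimov's $P_{w,v}$, built from quantum Schubert/Richardson data in $R_q[G/P_I]$, with the specific $\ch$-prime attached to a given necklace is a nontrivial piece of work that your proposal does not supply. Without it, Theorem \ref{proposition-necklace-order} only tells you about containments of the primes \emph{as parameterised in this paper}, not about the ideals $P_{w,v}$ appearing in the conjecture.

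Your step 2 has a similar problem: the claim that the existence of $z\in W_I$ with $w\geq w'z$, $v\leq v'z$ is equivalent to the componentwise necklace inequalities $J_i\leq_i I_i$ is asserted, not proved, and it is itself a nontrivial combinatorial statement. The paper never proves it directly either; it gets the equivalence of the two orders indirectly, by showing both describe closure of positroid cells (Oh's criterion on one side, Rietsch's closure relations on the other). So even granting your dictionary, you would either have to prove this order comparison combinatorially or cite the same chain of results the paper uses --- at which point you have reproduced Theorem \ref{theorem-containment}, which is strictly weaker than the conjecture because it does not pin down which pair $(w,v)$ corresponds to which ideal in Yakimov's construction. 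In short: the proposal is a plausible strategy outline, but both of its key steps are exactly the open points, and as written it does not constitute a proof of the statement.
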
 

The inspiration for Yakimov's conjecture is the following. 
The algebra $R_q[G/P_I]$ is a quantisation of the projective Poisson variety
$(G/P_I, \pi_I )$, and there is a bijection between the $\ch$-prime ideals in 
$R_q[G/P_I]$ and the so-called torus orbits of symplectic leaves  $T^I_{w,v}$ of 
$(G/P_I, \pi_I )$, as these torus orbits are also parameterised by $S_{W,I}$.\\

The Zariski closures of  $T^I_{w,v}$ were explicitly determined in 
\cite{gy} and \cite{rietsch}:
 \[
\widebar{T^I_{w,v}} = \{T^I_{w',v'} \mid {\rm ~there~exists~} z\in W_I {\rm ~such~that~} 
w\geq w'z, v\leq v'z\}.
\]
The orbit method then suggests that the containments between $\ch$-prime 
ideals in $R_q[G/P_I]$ should mirror the description of the Zariski closures 
for the torus orbits  of symplectic leaves and this leads to Yakimov's conjecture. 
\\

In \cite[Theorem 1.8]{gy}, Goodearl and Yakimov give the result for membership 
of the closures in the Zariski topology of the torus orbits of symplectic leaves. They 
state that Rietsch has the same result in \cite[Proposition 7.2]{rietsch}, and also that she obtains the same result for closures of nonnegative cells in the corresponding nonnegative setting. Note that, at first sight, the parameterisations given by Rietsch and Goodearl-Yakimov 
are not the same. However, the  fact that they are indeed the same is covered in 
an appendix in a paper by He and Lam, \cite{hl}. \\

We now consider the special case of grassmannians; that is we assume that $G=SL_{n+1}(K)$ and $I=\{\alpha_1, \dots , \alpha_n\}\setminus \{\alpha_m\}$, so that $R_q[G/P_I] = \oqgmnk$. 
As we now know that containments between $\ch$-prime ideals in the quantum 
grassmannian correspond to membership of the closure of positroid 
cells in the totally nonnegative grassmannian by Theorem \ref{theorem-triple-correspondence}, we deduce from the above description the following result. \\

\begin{theorem} \label{theorem-containment} 
Assume that $q \in \k$ is transcendental over the prime field of $\k$. The following posets are isomorphic:
\begin{enumerate}
\item $\ch$-$\spec \oqgmnk$ (endowed with inclusion);
\item the set of torus orbits of symplectic leaves in the grassmannian  $SL_{n+1}(\mathbb{C}) / P_I$ (ordered by closure);
\item $S_{W,I}$, where $W$ is the symmetric group $W=S_{n+1}$ and $W_I$ is the subgroup generated by $s_1, \dots, s_{m-1},s_{m+1}, \dots , s_m$, and where the order $(w,v)< (w',v')$ in $S_{W,I}$ is defined by  
$$(w,v)< (w',v') \mbox{ if and only if there exists $z\in W_I$ such that }
w\geq w'z \mbox{ and } v\leq v'z.$$
\end{enumerate}
\end{theorem}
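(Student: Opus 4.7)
The plan is to chain together three posets via the totally nonnegative grassmannian, which is the common intermediary already accessible through our Theorem \ref{theorem-triple-correspondence}. The overall strategy is: (1) matches the poset of positroid cells under closure via the bijection established in Theorem \ref{theorem-triple-correspondence}; the positroid cell poset in turn matches (2) via Rietsch's identification of positroid cells as nonnegative parts of torus orbits of symplectic leaves; and (2) matches (3) via the closure formula of Goodearl-Yakimov/Rietsch combined with Yakimov's parametrization (Theorem \ref{theorem-yakimov}). No single step requires a new algebraic argument; the work is to check that each identification preserves not only the underlying sets but the orders.

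First I would verify that $\ch\text{-}\spec\oqgmnk$ is order-isomorphic to the poset of positroid cells of $\gmntnn$ under closure. The set bijection is Theorem \ref{theorem-triple-correspondence} (assignments: $P \leftrightarrow C \leftrightarrow S_C$ where $C$ is the Cauchon-Le diagram). For the order, Theorem \ref{proposition-necklace-order} says $Q\subseteq P$ if and only if $\neck(Q)\leq \neck(P)$, where the necklace is extracted from $P$ via Theorem \ref{theorem-qasl-for-i-order}. The totally nonnegative analog of this statement — namely that $S_D\subseteq\overline{S_C}$ if and only if $\neck(S_D)\leq\neck(S_C)$ — is proved by Postnikov and Rietsch (see also \cite{oh}). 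Since Theorem \ref{theorem-triple-correspondence} shows that the quantum Pl\"ucker coordinates containing a given $\ch$-prime $P$ are exactly the classical Pl\"ucker coordinates vanishing on the corresponding cell $S_C$, the two necklaces coincide. Hence inclusion of $\ch$-primes matches closure containment of positroid cells, giving the isomorphism (1)$\cong$(positroid cells).

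Next I would connect the positroid cell poset with (2) and (3). The identification with (2) is essentially the point of the positroid stratification: by Rietsch \cite{rietsch}, each totally nonnegative cell is the totally nonnegative part of a torus orbit of symplectic leaves $T^I_{w,v}$ in $(G/P_I,\pi_I)$, the bijection is with $S_{W,I}$, and the closure of the nonnegative cell matches the closure of $T^I_{w,v}$ restricted to the totally nonnegative part (the relevant assertion is \cite[Proposition 7.2]{rietsch}, and as discussed just before the theorem in He-Lam \cite{hl}, Rietsch's and Goodearl-Yakimov's parametrizations agree). The identification of (2) with (3) is then \cite[Theorem 1.8]{gy} (equivalently \cite[Proposition 7.2]{rietsch}), which states exactly that $\overline{T^I_{w,v}}=\bigsqcup T^I_{w',v'}$ where $(w',v')$ runs over those pairs with $w\geq w'z$ and $v\leq v'z$ for some $z\in W_I$.

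The main obstacle will be bookkeeping rather than mathematics: one must verify that the three parametrizations of the common indexing set (Cauchon-Le diagrams, Grassmann necklaces, and pairs $(w,v)\in S_{W,I}$) are compatible, so that the order defined combinatorially in (3) truly corresponds under each of the bijections above to the order on $\ch$-primes and to the closure order on positroid cells. Concretely, one has to match a Cauchon-Le diagram $C$ on $Y_\lambda$ to the pair $(w_C,v_C)\in S_{W,I}$ associated to its positroid cell (this matching is standard through bounded affine permutations and $k$-Bruhat order; see \cite{kls} and the appendix of \cite{hl}) and then verify that the necklace order of Theorem \ref{proposition-necklace-order} agrees under this matching with the order on $S_{W,I}$ given in (3). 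Once this compatibility is in place, the three isomorphisms combine to give the theorem, which simultaneously proves Yakimov's Conjecture \ref{conjecture-yakimov} in the grassmannian case.
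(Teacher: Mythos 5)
Your proposal follows essentially the same route as the paper: it combines the quantum--totally-nonnegative correspondence (Theorem \ref{theorem-triple-correspondence} together with the generation/necklace results, Theorems \ref{theorem-full-generation} and \ref{proposition-necklace-order}) with Rietsch's closure description of nonnegative cells, the Goodearl--Yakimov/Rietsch closure formula for torus orbits of symplectic leaves, and the He--Lam reconciliation of parametrizations, which is exactly the chain of identifications the paper uses to deduce the theorem. The only caveat is your closing claim that this ``simultaneously proves'' Conjecture \ref{conjecture-yakimov}: the paper is explicit that the theorem only partly answers it, since the comparison between Yakimov's parametrization of $\ch$-primes by $S_{W,I}$ and the parametrization by Cauchon-Le diagrams is additional work not carried out here, whereas the abstract poset isomorphism of the theorem does not require that dictionary.
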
 

Previously, only a bijection between $\ch$-$\spec \oqgmnk$ and $S_{W,I}$ had been established by Yakimov \cite{yak-qflag}.

The above result can be seen as a first step towards proving that the primitive spectrum of $\oqgmnk$ and the the space of symplective leaves in the corresponding grassmannian $SL_{n+1}(\mathbb{C}) / P_I$ are homeomorphic.

We note that, in order to prove the full Yakimov's conjecture in the quantum grassmannian case, additional work is needed: one still need to compare his parametrisation of $\ch$-primes by the set $S_{W,I}$ with our parametrisation of $\ch$-primes by Cauchon-Le diagrams. 





\subsection{Separating Ore sets} 

In \cite{bg}, Brown and Goodearl consider the problem of determining 
the Zariski topology of the prime spectrum of algebras $A$ equipped with 
a suitable rational action of a torus $\ch$. 
More precisely, they assume that they are dealing with a noetherian 
algebra over a field $\k$ such that $A$ satisfies the noncommutative nullstellensatz, that $\ch$ is a $\k$-torus acting rationally on $A$ by 
$\k$-algebra automorphisms and that $A$ has only finitely many 
$\ch$-prime ideals. The basic question is to decide, given two prime ideals 
$P,Q$, whether or not $Q\subseteq P$. If the stratification theorem 
applies and $P$ and $Q$ are in the same stratum then the problem is 
relatively easy to solve. The difficult part is when the two primes belong 
to different strata. In order to have a chance of progress one needs to be 
able to first decide this problem for $\ch$-prime ideals; that is, given 
$\ch$-prime ideals $P$ and $Q$, recognise when $Q\subseteq P$. In the case of quantum grassmannians, we have answered this question thanks to the notion of Grassmann necklace. 

In order to deal with the general situation, Brown and Goodearl introduce a notion that is subsequently named a {\em separating Ore set} in works by Casteels and Fryer, \cite{cf}, and Fryer and Yakimov, \cite{fy}. Explicit separating Ore sets have been constructed by Casteels and Fryer for quantum matrices \cite{cf}, and by Fryer and Yakimov for quantised coordinate rings of algebraic groups and quantum Schubert cells \cite{fy}. In this subsection, we show that Grassmann necklaces generate  separating Ore sets in the quantum grassmannian. 

As mentioned above, Casteels and Fryer, \cite{cf}  addressed this problem in the case of quantum matrices and our belief that Grassmann necklaces would generate separating Ore sets is a result of reading their paper. Our proof of the Ore condition for powers of a quantum Pl\"ucker coordinate  is via noncommutative dehomogenisation  from a result due to Skoda \cite{skoda} that any quantum minor in the algebra of quantum matrices generates an Ore set. \\

\begin{definition}\label{definition-separting-ore-set} 
{\rm Let $A$ be a $\k$-algebra that supports a rational action of  a $\k$-torus 
$\ch$ and let $P$ be an $\ch$-prime. A {\em separating Ore set} for $P$ in $\ch$-Spec$(A)$ is an Ore set $S_P$ with the following two 
properties: (i) $P\cap S_P=\emptyset$, (ii) if $Q\in\ch{\rm -Spec}(A)$ and $Q\not\subseteq P$ 
then $Q\cap S_P\neq\emptyset$. 
}\end{definition}

We begin by recalling some well-known facts about Ore sets. In order to 
avoid some technical details concerning denominator sets, we will assume throughout the discussion that we are dealing with rings $R$ that are 
(noncommutative) integral domains. 

Recall that a multiplicatively closed subset $S$ of nonzero elements
in a domain $R$ is said to satisfy the {\em right Ore condition} if
for each $a\in R$ and $s\in S$ there exist $t\in S$ and $r\in R$ such
that $at=sr$ (that is, $aS\cap sR\neq\emptyset$). If this equation
holds for a pair $(a,s)$ with $s \in S$, we say that the pair $(a,s)$ {\em satisfies
  the right Ore condition with respect to the set $S$} and that $(a,s)$ is a {\em right Ore pair (with respect to $S$)}. 
 If $(a,s)$  is a right Ore pair for each $a \in R$ then 
  we say that $s$ is a {\em right Ore element with respect to $S$}. 
  The left Ore condition is defined in a
  similar manner, and, if $S$ satisfies both the right and left Ore
  conditions then we say that $S$ is an {\em  Ore set} in $R$.\\

Our first aim in this section is to show that the set of powers of an
arbitrary quantum minor is an Ore set in the algebra of quantum
matrices. This result was first obtained by Skoda \cite{skoda}, but we
present a complete proof which may be more transparent than that
given in \cite{skoda}.

We begin with some preparatory results, some well-known and some taken from or inspired
by Skoda's work. As we will only
be dealing with domains, we assume throughout that $R$ is an integral
domain and that $S$ is a multiplicatively closed subset of nonzero
elements of $R$.

\begin{lemma} \label{lemma-products}
Suppose that $S$ is a multiplicatively closed set and that $S_0$ is a subset of 
$S$ such that each element of $S$ is a product of elements of $S_0$ (that is, $S$ is generated as a multiplicative set by $S_0$). 
Suppose that each  $s\in S_0$  is a right Ore element with respect to $S$. 
Then $S$ is a right Ore set. 
\end{lemma}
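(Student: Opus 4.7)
The plan is to prove this by induction on the length of a factorisation of $s \in S$ as a product of elements of $S_0$. Since $S$ is multiplicatively generated by $S_0$, every $s \in S$ admits a decomposition $s = s_1 s_2 \cdots s_n$ with each $s_i \in S_0$, and we induct on $n$.

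The base case $n=1$ is immediate: if $s \in S_0$, then by hypothesis $s$ is a right Ore element with respect to $S$, so for every $a \in R$ there exist $t \in S$ and $r \in R$ with $at = sr$.

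For the inductive step, suppose the claim holds for every element of $S$ expressible as a product of fewer than $n$ elements of $S_0$, and let $s = s_1 s'$, where $s_1 \in S_0$ and $s' = s_2 \cdots s_n$. Given $a \in R$, first use the hypothesis on $s_1 \in S_0$ to produce $t_1 \in S$ and $r_1 \in R$ with $at_1 = s_1 r_1$. Next, apply the inductive hypothesis to the element $s'$ (a product of $n-1$ elements of $S_0$) with the element $r_1 \in R$: there exist $t_2 \in S$ and $r_2 \in R$ such that $r_1 t_2 = s' r_2$. Setting $t := t_1 t_2$ (which lies in $S$ since $S$ is multiplicatively closed) and $r := r_2$, we compute
\[
at = a t_1 t_2 = s_1 r_1 t_2 = s_1 s' r_2 = s r,
\]
which establishes the right Ore condition for the pair $(a, s)$.

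There is no real obstacle here — the argument is a routine two-step induction, with the only point requiring care being the need to feed the intermediate element $r_1$ (rather than the original $a$) into the inductive hypothesis for $s'$. Since $a$ was arbitrary, this shows that every $s \in S$ is a right Ore element with respect to $S$, i.e.\ $S$ satisfies the right Ore condition.
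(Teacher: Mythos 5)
Your proof is correct and follows essentially the same route as the paper: the key computation $at_1t_2 = s_1r_1t_2 = s_1s'r_2$, feeding the intermediate element $r_1$ into the Ore property of the remaining factor, is exactly the paper's argument, which it phrases as "a product of two right Ore elements is a right Ore element" followed by induction on factorisation length.
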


\begin{proof} 
Suppose  that $s_1,s_2$ are right Ore elements with respect to $S$. 
We will show  that $s_1s_2$ is a right Ore element with respect to $S$. 

Let $a\in R$. As $(a,s_1)$ is a right Ore pair, there exist $t_1\in S$ and 
$r_1\in R$ such that $at_1=s_1r_1$. As $(r_1, s_2)$ is a right Ore pair 
there exist $t_2\in S$ and $r_2\in R$ with $r_1t_2=s_2r_2$. Thus, 
\[
a(t_1t_2) = s_1r_1t_2=s_1s_2r_2, 
\]
which shows that 
$(a,s_1s_2)$ is a right Ore pair, because $t_1t_2\in S$; and so 
$s_1s_2$ is a right Ore element, as required. 

The claim of the lemma now follows by induction on the length of any element 
of $s\in S$ when represented as a product of elements of $S_0$. 
\end{proof} 

\begin{corollary} \label{corollary-oresets}
Let $S_1,\dots,S_n$ be right Ore sets in $R$ and let $S$ be the
multiplicatively closed set of all products of elements from
$S_1\cup\dots\cup S_n$. Then $S$ is a right Ore set.
\end{corollary}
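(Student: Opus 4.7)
The plan is to derive this as an essentially immediate application of the preceding Lemma \ref{lemma-products}. Set $S_0 := S_1 \cup \dots \cup S_n$. By the definition of $S$ in the statement, $S_0$ generates $S$ as a multiplicative set, so the generation hypothesis of Lemma \ref{lemma-products} is automatically satisfied. The only remaining task is to verify that each element of $S_0$ is a right Ore element with respect to $S$ (not merely with respect to the original $S_i$ it came from).

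To verify this, I would fix $s \in S_0$, say $s \in S_i$, and an arbitrary $a \in R$. Since $S_i$ is a right Ore set in $R$, there exist $t \in S_i$ and $r \in R$ such that $at = sr$. Because $S_i \subseteq S_0 \subseteq S$, we have $t \in S$, and this is exactly the witness required to show that $(a,s)$ is a right Ore pair with respect to $S$. Since $a$ was arbitrary, $s$ is a right Ore element with respect to $S$. Applying Lemma \ref{lemma-products} then gives the conclusion that $S$ is a right Ore set in $R$.

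There is essentially no obstacle here; the content of the corollary is entirely captured by Lemma \ref{lemma-products}, which already did the work of showing that products of right Ore elements are right Ore. The only (trivial) point to notice is that the Ore condition is preserved when one enlarges the multiplicative set in which the ``denominator'' $t$ is allowed to lie, which is why an element $s \in S_i$ remains right Ore when tested against the larger set $S$.
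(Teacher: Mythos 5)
Your proof is correct and follows exactly the same route as the paper, which simply cites Lemma \ref{lemma-products}; your verification that each $s\in S_i$ remains a right Ore element with respect to the larger set $S$ (since the witness $t\in S_i\subseteq S$) is precisely the trivial point the paper leaves implicit.
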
 

\begin{proof}
This follows immediately from the previous lemma.
\end{proof}

\begin{definition} For a multiplicatively closed set $S$ of the integral 
domain $R$, 
define $E(S)$ to be the subset of elements $e$ of $R$ 
such that $(e,s)$ is a right Ore pair with respect to $S$ for each $s\in S$; 
that is,  for each $s\in S$ there exists an element $s_1\in
S$ and $r_1\in R$ such that $es_1=sr_1$
\end{definition} 

Note that if we can prove that $E(S)=R$ then we have proved that $S$ is a right Ore set in 
$R$.

Lemma~\ref{lemma-products} above occurs as \cite[Lemma 3.1(iii)]{skoda}. 
As we need more from Skoda's lemma, we restate it in full here.

\begin{lemma} \label{lemma-check-ore} Suppose that $S$ is a multiplicatively 
closed subset of the integral domain $R$ and that 
$S\subseteq E(S)$. Then \\
(i) $E(S)$ is a subalgebra of $R$.\\
(ii) Suppose further that an element $f\in R$ satisfies the following property: 
for each $s\in S$ there exists $s_1\in S$ and $r_1\in R$ such that 
$fs_1-sr_1\in E(S)$. Then $f\in E(S)$. \\
(iii) Suppose that $S_0$ is  a subset of $S$ that generates $S$ as a multiplicative set
and that for each $s\in S_0$ and $r\in R$, there exists $s_1\in S$ and $r_1\in R$ such that 
$sr_1=rs_1$. Then $S$ is a right Ore set in $R$. 
\end{lemma}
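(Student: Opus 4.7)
The plan is to prove each of the three parts directly from the definition of $E(S)$, with the hypothesis $S \subseteq E(S)$ serving as the key tool to "align denominators" when combining two Ore conditions with the same $s$.

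For part (i), I would first note that $1 \in E(S)$ since $1 \cdot s = s \cdot 1$ for every $s \in S$. For closure under addition, given $e, e' \in E(S)$ and $s \in S$, the definition supplies $s_1, s_2 \in S$ and $r_1, r_2 \in R$ with $e s_1 = s r_1$ and $e' s_2 = s r_2$. The key move is to invoke $s_2 \in S \subseteq E(S)$ applied to $s_1 \in S$, producing $t \in S$ and $u \in R$ with $s_2 t = s_1 u$; then $(e+e')(s_2 t) = e(s_1 u) + e'(s_2 t) = s(r_1 u + r_2 t)$, with $s_2 t \in S$. For closure under multiplication, starting from $e s_1 = s r_1$ and then applying $e' \in E(S)$ to $s_1 \in S$ to get $e' s_2 = s_1 r_2$, one obtains $(ee') s_2 = e(s_1 r_2) = (e s_1) r_2 = s(r_1 r_2)$. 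Closure under scalars from $\k$ (or $\mathbb{Z}$) is immediate.

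For part (ii), given $s \in S$ the hypothesis provides $s_1 \in S$ and $r_1 \in R$ with $f s_1 - s r_1 = e \in E(S)$. Then $e \in E(S)$ applied to the same $s$ yields $s_2 \in S$ and $r_2 \in R$ with $e s_2 = s r_2$. Right-multiplying the relation $f s_1 = s r_1 + e$ by $s_2$ gives $f(s_1 s_2) = s r_1 s_2 + e s_2 = s(r_1 s_2 + r_2)$, and $s_1 s_2 \in S$, so $f \in E(S)$.

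For part (iii), the hypothesis is exactly the statement that each $s \in S_0$ is a right Ore element with respect to $S$ in the sense of Lemma \ref{lemma-products}; since $S_0$ generates $S$ as a multiplicative set, that lemma applies immediately and shows $S$ is a right Ore set. (Equivalently, one can argue directly by induction on the length of a factorisation $s = s_{i_1} \cdots s_{i_n}$ with $s_{i_j} \in S_0$, successively using the Ore condition for each $s_{i_j}$ to migrate a tail of elements of $S$ across, which is essentially the proof of Lemma \ref{lemma-products}.) None of the three parts presents a real obstacle; the only care needed is to keep track of the direction of the Ore condition and of the asymmetry in the definition of $E(S)$, which forces one to feed elements of $S$ rather than of $R$ into the second Ore pair.
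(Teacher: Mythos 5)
Your proof is correct. The paper itself gives no argument for this lemma (it simply defers to the proof of Skoda's Lemma~3.1), so your write-up supplies exactly the details that are being cited: parts (i) and (ii) are the standard manipulations, and you use the hypothesis $S\subseteq E(S)$ precisely where it is needed, namely to align the two denominators $s_1,s_2$ in the additive case (and, in (ii), to absorb the error term $e$ against the same $s$). Your reduction of (iii) to Lemma~\ref{lemma-products} is also legitimate, since that lemma is stated and proved earlier in the section and its hypothesis ("each element of $S_0$ is a right Ore element with respect to $S$") is exactly the condition assumed in (iii); indeed the paper itself remarks that Lemma~\ref{lemma-products} is Skoda's Lemma~3.1(iii), so nothing circular is going on.
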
 

\begin{proof}See the proof of \cite[Lemma 3.1]{skoda}.
\end{proof}

Note that Part (i) of the previous lemma shows that we only need to
consider elements in a generating subset of $R$ when checking the Ore
condition for $S$.

\begin{lemma} \label{lemma-xij-ore}
Let $x_{kl}$ be a generator in $\oqmmnk$ and set
  $S:=\{x_{kl}^d \mid d\in \mn\}$. Then $S$ is an Ore set in $\oqmmnk$. 
\end{lemma}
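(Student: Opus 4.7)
The plan is to show that $R := \oqmmnk$ coincides with $E(S)$, which is equivalent to $S$ being a right Ore set. The powers of $x_{kl}$ trivially commute, so $S \subseteq E(S)$, and Lemma \ref{lemma-check-ore}(i) then gives that $E(S)$ is a subalgebra of $R$. It therefore suffices to prove that every generator $x_{ij}$ of $\oqmmnk$ belongs to $E(S)$, i.e., that for each $n \geq 1$ there exist $m \geq 0$ and $r \in R$ with $x_{ij} x_{kl}^m = x_{kl}^n r$.

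For the first class of generators, where $(i,j)$ lies in the same row as $(k,l)$, the same column, or strictly NE or strictly SW of $(k,l)$, the defining relations of $\oqmmnk$ yield $x_{ij} x_{kl} = c\, x_{kl} x_{ij}$ with $c \in \{1, q, q^{-1}\}$, so that $x_{ij} x_{kl}^n = c^n x_{kl}^n x_{ij}$ and $x_{ij} \in E(S)$ is immediate.

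The remaining case is the ``nasty'' one where $(i,j)$ is strictly NW of $(k,l)$, i.e., $i<k$ and $j<l$ (the SE case $i>k$, $j>l$ will be entirely analogous, with $q$ replaced by $q^{-1}$ throughout). The nasty relation gives
\[
x_{ij} x_{kl} = x_{kl} x_{ij} + (q - q^{-1})\, x_{il} x_{kj},
\]
and the auxiliary entries $x_{il}$ and $x_{kj}$ each $q$-commute with $x_{kl}$ (being in the same column and same row, respectively), while $x_{il}$ and $x_{kj}$ commute with each other (NE--SW position). A direct calculation using these relations yields $x_{il} x_{kj} x_{kl} = q^2\, x_{kl} x_{il} x_{kj}$, and a short induction on $n$ then produces the clean formula
\[
x_{ij}\, x_{kl}^{n+1} \;=\; x_{kl}^{n+1}\, x_{ij} \;+\; (q - q^{-1})\, [n+1]_{q^2}\, x_{kl}^n\, x_{il} x_{kj},
\]
where $[n+1]_{q^2} := 1 + q^2 + \cdots + q^{2n}$. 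Both summands on the right lie in $x_{kl}^n R$, so $x_{ij} \in E(S)$.

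Combining the cases gives $E(S) = R$, and hence $S$ is a right Ore set. The left Ore condition will follow by the symmetric argument, or equivalently by applying the anti-isomorphism of $\oqmmnk$ that interchanges rows and columns (and replaces $q$ with $q^{-1}$). The only real technical issue I anticipate is the $q$-commutation bookkeeping in the NW case and its SE analogue; everything else is a formal application of the lemmas already in hand.
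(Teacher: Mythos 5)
Your proof is correct and follows essentially the same route as the paper: reduce via $E(S)$ and Lemma~\ref{lemma-check-ore}(i) to checking pairs $(x_{ij},x_{kl}^d)$, dispose of the weakly NE/SW positions by $q$-commutation, and settle the strictly NW (and SE) case by an explicit induction, your identity $x_{ij}x_{kl}^{n+1}=x_{kl}^{n+1}x_{ij}+(q-q^{-1})[n+1]_{q^2}\,x_{kl}^{n}x_{il}x_{kj}$ being exactly the paper's formula since $(q-q^{-1})[n+1]_{q^2}=q^{2n+1}-q^{-1}$. The only nitpick is that your transpose map is an anti-isomorphism onto $\co_{q^{-1}}(M_{n,m}(\k))$ rather than of $\oqmmnk$ itself, but your alternative ``symmetric argument'' for the left Ore condition is precisely what the paper does.
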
  

\begin{proof} 
It is obvious that $S\subseteq E(S)$; so Lemma~\ref{lemma-check-ore} is available for use. 

By Lemma~\ref{lemma-check-ore}(i), it is enough to check
  the right Ore condition for the pair $(a,s)=(x_{ij},x_{kl}^d)$ with $d \in \mn$.\\
  
If  $x_{ij}$ is weakly NE or weakly SW of $x_{kl}$ then the pair of variables
  commute up to a power of $q$ and so the Ore condition holds. Thus,
  we may assume that $x_{ij}$ is either strictly NW or strictly SE of
  $x_{kl}$. Assume that $x_{ij}$ is strictly NW of $x_{kl}$. Then, an easy induction left to the reader shows that 
\[
x_{ij}x_{kl}^{d+1} = x_{kl}^d \left( x_{ij}x_{kl} + (q^{2d+1}-q) x_{il}x_{kj} \right)
\]
for all $d \geq 1 $. This demonstrates the right Ore condition for the pair  $(x_{ij},x_{kl}^d)$, as required.

The case where  $x_{ij}$ is strictly SE of $x_{kl}$ follows by a similar calculation. 

Finally, the left Ore condition is checked in a similar way. 
\end{proof} 

\begin{remark} \label{rmk-xij-ore}
We note for later use that the proof of Lemma \ref{lemma-xij-ore} shows that for all $d\geq 1$, there exist $e \geq d$ and $r \in \oqmmnk$ such that 
\[
x_{ij}x_{kl}^{e} = x_{kl}^d r.
\]
\end{remark}

Recall that the quantum determinant $D_q$ of $\oqmnk$ is central in the noetherian domain $\oqmnk$. Hence we can form the localisation $\oqglnk :=\oqmnk [D_q^{-1}] $ of $\oqmnk$ at the multiplicative set generated by $D_q$. The resulting algebra $\oqglnk$ is the so-called {\em quantum general linear group}. This is a quantum group in the sense that it supports a Hopf algebra structure (see for instance \cite{Kassel}). In the following proof, we will make use of its antipode $S$ which we will apply to quantum minors of $\oqglnk$. Recall for later use that the 
formula for the antipode applied to a quantum minor is 
$S([I|J])=(-q)^{I-J}[\widetilde{J}|\widetilde{I}]D_q^{-1}$, where 
the exponent $I-J$ is the sum of the row indices in $I$ minus the sum of the column
indices in $J$ and $D_q$ is the quantum determinant, 
while $\widetilde{I}$ 
denotes the complement of $I$ in $\{1,\dots,n\}$,  see, for example, 
\cite[Lemma 4.1]{klr}.

\begin{theorem}\label{theorem-quantum-minor-ore} 
[Skoda, \cite{skoda}] 
Let $[K|L]$ be a quantum minor in $\oqmmnk$ and set $S:=\{[K|L]^t\mid t\in\mn\}$.
Then $S$ is an Ore set.
\end{theorem}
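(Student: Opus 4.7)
The strategy is to invoke the machinery of Lemmas \ref{lemma-check-ore} and \ref{lemma-xij-ore}, and reduce to the explicit commutation calculations already carried out for a single generator. Since the multiplicative set $S$ is generated by the single element $[K|L]$, Lemma \ref{lemma-check-ore}(iii) reduces the verification of the right Ore condition to exhibiting, for each generator $x_{ij}$ of $\oqmmnk$, elements $s_1 \in S$ and $r_1 \in \oqmmnk$ with $x_{ij}s_1=[K|L]r_1$. Combined with Lemma \ref{lemma-check-ore}(i) (the fact that $E(S)$ is a subalgebra of $\oqmmnk$) and the tautological observation that $[K|L] \in E(S)$, this means it is enough to show that every generator $x_{ij}$ belongs to $E(S)$.

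The core of the argument will be to establish an explicit ``$q$-commutation with remainder'' identity of the form
\[
x_{ij}[K|L] = q^{\alpha(i,j;K,L)}[K|L]x_{ij} + \sum_{\nu} c_\nu\, y_\nu[K_\nu|L_\nu]z_\nu,
\]
valid in $\oqmmnk$, where each $c_\nu \in \k$, each $[K_\nu|L_\nu]$ is a quantum minor of the same size $|K|$ (with index sets only mildly perturbed from $(K,L)$), and each $y_\nu, z_\nu$ is either $1$ or a single generator. This identity will be derived from a direct computation using the defining relations of $\oqmmnk$ and the quantum Laplace expansions of Lemma \ref{row Laplace} and Corollary \ref{column Laplace}; the crucial structural feature to extract is that every error term contains a quantum minor of size $|K|$ as an inner factor, so that all remainder terms can be absorbed into the right ideal generated by suitable powers of $[K|L]$. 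Iterating this identity and using the subalgebra property of $E(S)$ once more — exactly as in the $d \mapsto d+1$ step of Remark \ref{rmk-xij-ore} — one produces, for each $d \geq 1$, an equation $x_{ij}[K|L]^e = [K|L]^d \cdot r$ for some $e \geq d$ and $r \in \oqmmnk$, which is the required right Ore statement for $(x_{ij},[K|L]^d)$.

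The main obstacle I foresee is the combinatorial case analysis required to pin down the remainder explicitly, split according to the relative position of $(i,j)$ and the index sets $(K,L)$: whether $i \in K$ and $j \in L$ (in which case $x_{ij}$ appears as an entry of the minor and one argues via Laplace expansion along the row $i$ or column $j$), whether exactly one of $i \in K$, $j \in L$ holds, or whether neither holds (the most delicate case, where the ``nasty relation'' forces the appearance of a strictly richer correction involving $x_{i,l}x_{k,j}$-type products for $k\in K$, $l\in L$). In each case the base case $|K|=1$ is Lemma \ref{lemma-xij-ore} together with Remark \ref{rmk-xij-ore}, and an induction on $|K|$ is available because the Laplace expansion reduces $[K|L]$ to smaller minors.

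Finally, the left Ore condition will follow by the mirror argument, using the column-Laplace expansions (Corollary \ref{column Laplace}) in place of the row-Laplace expansions; alternatively, one can invoke the transpose-type anti-isomorphism $\oqmmnk \to \co_q(M_{n,m}(\k))^{\rm op}$ sending $x_{ij}\mapsto x_{ji}$ (which swaps row and column index sets of quantum minors and exchanges left with right Ore conditions) to deduce the left Ore condition for $[K|L]$ in $\oqmmnk$ from the already-proved right Ore condition for $[L|K]$ in $\co_q(M_{n,m}(\k))$.
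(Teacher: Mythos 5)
Your reduction to showing that each generator $x_{ij}$ lies in $E(S)$ (via Lemma~\ref{lemma-check-ore}) is fine and matches the paper's opening step, but the core of your argument has a genuine gap: the absorption/iteration step. Commutation identities of the shape $x_{ij}[K|L]=q^{\bullet}[K|L]x_{ij}+\sum_\nu c_\nu\, y_\nu[K_\nu|L_\nu]z_\nu$ do hold (the remainder arises in the case $i\notin K$, $j\notin L$), but the minors $[K_\nu|L_\nu]$ occurring in the remainder are \emph{different} minors, with an exchanged row or column index, and there is no reason that $y_\nu[K_\nu|L_\nu]z_\nu$ lies in, or can be pushed into, the right ideal $[K|L]^d\,\oqmmnk$. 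The analogous absorption in Lemma~\ref{lemma-xij-ore} and Remark~\ref{rmk-xij-ore} works only because the single error term $x_{il}x_{kj}$ shares a row or a column with $x_{kl}$ and therefore genuinely $q$-commutes with it, so it slides past powers of $x_{kl}$. For a minor of size at least two the error minors $[K_\nu|L_\nu]$ do not $q$-commute with $[K|L]$: commuting them past $[K|L]$ creates yet further minors, and your induction ``exactly as in the $d\mapsto d+1$ step'' becomes circular --- to absorb the error times $[K|L]^{e-1}$ into $[K|L]^d\,\oqmmnk$ you would need precisely the Ore property of powers of $[K|L]$ against those error terms, which is the statement being proved. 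No ordering, filtration or termination argument is offered to break this circle, and none is obvious; this is exactly the difficulty that makes the theorem nontrivial.

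For contrast, the paper sidesteps the computation entirely: after placing $x_{ij}$ and $[K|L]$ inside a square quantum matrix algebra $\oqmnk$ and inverting $D_q$, it applies the antipode of $\oqglnk$, which (up to powers of $-q$ and of $D_q$) interchanges a single generator $x_{dc}$ with the complementary $(n-1)\times(n-1)$ minor; the right Ore condition for the pair $(x_{ij},[K|L]^t)$ is then deduced from the already established single-generator case, and the spurious powers of $D_q$ are cancelled using the complete primeness of $\langle D_q\rangle$. If you want a direct computational proof you would need precise versions of the commutation identities in all positional cases together with a genuine termination argument controlling the new minors produced at each step; as written, your proposal does not establish the theorem. (Your suggestion to obtain the left Ore condition from the right one via the transpose anti-isomorphism is fine, but it only helps once the right Ore condition has actually been proved.)
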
 

\begin{proof} 
We may assume that $[K|L]$ is of size at least two, as the case of size one 
is dealt with in the previous lemma. 

As $S\subseteq E(S)$, Lemma~\ref{lemma-check-ore} is available for use. 

By Lemma~\ref{lemma-check-ore}(i), it is enough to check  the right Ore condition for the pair $(a,s)=(x_{ij},[K|L]^t)$ with $t \geq 1$.

There is a submatrix of generators of $\oqmmnk$ that contains these two elements and has at most one more row and column than $[K|L]$. We can perform all calculations within this subalgebra, and so, without loss of generality, we may assume that $[K|L]$ is an $(n-1)\times (n-1)$ quantum minor in $\oqmnk$. By inverting the quantum determinant we may work in $\oqglnk$ -- this makes available the antipode which is an anti-automorphism. Note that $n\geq 3$. 
  Our plan is to look at the images of the two elements under the antipode, establish the left Ore condition for two related elements and then apply the antipode once more to obtain the right Ore condition that we are seeking. 
 
 Let $c,d$ be such that $\{1,\dots,n\} = K\sqcup \{c\} =L\sqcup \{d\}$
and let $U,V$ be such that $\{1,\dots,n\} = \{i\}\sqcup U=\{j\}\sqcup V$. 
Note that $S(x_{dc})= (-q)^{d-c}[K|L]D_q^{-1}$ and that 
$S([V|U])=(-q)^{V-U}x_{ij}D_q^{-1}$.

By the previous result, the left Ore condition holds in $\oqmnk$ 
for the pair $([V|U], x_{dc}^t)$. 
Hence, there exist $s\in\mn$ and an element $r\in\oqmnk$ such that 
\[
rx_{dc}^t = x_{dc}^s[V|U].
\]
Observe that $s \geq t$ by Remark \ref{rmk-xij-ore}.

Note that the equation above is  a homogeneous equation, and so  $r$ will be a sum of products of 
$s-t+n-1$ variables multiplied by various scalars. Thus, $S(r)$ will be equal to 
$r_1D_q^{-(s-t+n-1)}$ for a suitable $r_1\in\oqmnk$. 
By applying the antipode to the above equation, we obtain
\[
\{(-q)^{d-c}[K|L]D_q^{-1}\}^t \cdot r_1D_q^{-(s-t+n-1)} = 
(-q)^{V-U}x_{ij}D_q^{-1}\{(-q)^{d-c}[K|L]D_q^{-1}\}^s
\]
so that 
\[
[K|L]^t.\{(-q)^\bullet r_1\} = x_{ij}[K|L]^sD_q^{n-2} .
\]
where $(-q)^\bullet $ is some power of $(-q)$ that we need not know explicitly. 
Note that $n-2> 0$; so all terms in this equation are in $\oqmnk$. 
Now $\langle D_q\rangle$ is  a completely prime ideal 
in $\oqmnk$ and $[K|L]$ is not in
this ideal. Hence, $r_1 = r_2D_q$ for some $r_2\in\oqmnk$. Thus, we
obtain
\[
[K|L]^t \cdot \{(-q)^\bullet r_2\} = x_{ij}[K|L]^sD_q^{n-3} .
\]
Continuing in this way, eventually we find an $r'\in\oqmnk$ such that 
\[
[K|L]^t \cdot \{(-q)^\bullet r'\} = x_{ij}[K|L]^s,
\]
and this confirms the Ore condition for the pair $(x_{ij},[K|L]^t)$. 
\end{proof} 

\begin{remark} \label{rmk-quantum-minor-ore} 
We note for later use that the proof of Theorem \ref{theorem-quantum-minor-ore}  shows that for all $t\geq 1$, there exist $s \geq t$ and $r \in \oqmmnk$ such that 
\[
[K|L]^t \cdot r = x_{ij}[K|L]^s.
\]
\end{remark}

We are now ready to prove that quantum Pl\"ucker coordinates generate Ore sets in $\oqgmnk$ by using the dehomogenisation isomorphism $\Psi$ from Theorem \ref{where it goes}. Indeed, $\Psi$ will allow us to transfer Skoda's result to quantum grassmannians. 

\begin{theorem} \label{theorem-plucker-ore}
Let $q$ be a nonzero element of a field $\k$. Let $[C]$ be a quantum Pl\"ucker coordinate in $\oqgmnk$ and let $S$ be the 
set of powers of $[C]$. Then $S$ is an Ore set in $\oqgmnk$. 
\end{theorem}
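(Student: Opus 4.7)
The proof strategy is to transfer Skoda's Ore-set theorem for quantum minors in $\oqmmnk$ (Theorem \ref{theorem-quantum-minor-ore}) to the quantum grassmannian via the noncommutative dehomogenisation isomorphism. My plan is to proceed in three steps.

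First I would observe that, by the very definition of the quantum grassmannian, $\oqgmnk$ is a subalgebra of $\oqmmnk$ and the quantum Pl\"ucker coordinate $\gamma=[C]$ is a maximal quantum minor of $\oqmmnk$. Skoda's theorem (Theorem \ref{theorem-quantum-minor-ore}) then guarantees that the set $S=\{\gamma^d:d\in\mn\}$ is an Ore set in $\oqmmnk$. In explicit form, Remark \ref{rmk-quantum-minor-ore} tells us that for every $a\in\oqmmnk$ and every $t\geq 1$ there exist $s\geq t$ and $r\in\oqmmnk$ with $\gamma^t r = a\gamma^s$ (and symmetrically on the other side).

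Second, I would bring in the noncommutative dehomogenisation isomorphism constructed in Section~\ref{section-dehomogenisation},
\[
\Psi:\sgr[\bar{\gamma}^{-1}]\xrightarrow{\cong}\co_{q^{-1}}(Y_\lambda(\k))[y^{\pm 1};\sigma],
\]
with $\Psi(\bar\gamma)=y$. Since $y$ is manifestly normal on the right-hand side, $\bar{\gamma}$ is a regular normal element of the quantum Schubert variety $S(\gamma)=\oqgmnk/\langle \Pi_\gamma\rangle$. Consequently, powers of $\bar\gamma$ automatically form an Ore set in $S(\gamma)$. Lifting the Ore equation from $S(\gamma)$ back to $\oqgmnk$ yields, for every $a\in\oqgmnk$ and every $t\geq 1$, an integer $s'\geq t$ and an element $r'\in\oqgmnk$ such that $a\gamma^{s'}-\gamma^{t}r'\in\langle\Pi_\gamma\rangle$.

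The third and crucial step is to reconcile these two Ore equations in order to produce an honest Ore replacement inside $\oqgmnk$. Given $a\in\oqgmnk$, one combines the equation $\gamma^t r=a\gamma^{s}$ in $\oqmmnk$ coming from Skoda with the congruence $a\gamma^{s'}\equiv\gamma^{t}r'\pmod{\langle\Pi_\gamma\rangle}$ coming from dehomogenisation. After aligning exponents (by multiplying on the right by a further power of $\gamma$ and invoking Skoda once more), one finds that $\gamma^{t}(r-r')\in\langle\Pi_\gamma\rangle$ inside $\oqmmnk$. The error term lives in the ideal $\langle\Pi_\gamma\rangle$, which is generated by Pl\"ucker coordinates strictly smaller than $\gamma$ in the poset $(\Pi,\leq)$; in particular, it lies in $\oqgmnk$. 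Using that $\gamma$ is regular in $\oqmmnk$ to cancel the $\gamma^t$ factor, one deduces $r-r'\in\oqgmnk$, hence $r\in\oqgmnk$, and the Ore equation $a\gamma^{s}=\gamma^{t}r$ holds in $\oqgmnk$ as required. The left Ore condition is proved symmetrically.

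The main obstacle is precisely this third step: the Ore replacement element supplied by Skoda lives a priori only in the ambient algebra $\oqmmnk$, and one must use the dehomogenisation picture together with the internal structure of the defining ideal $\langle\Pi_\gamma\rangle$ to force it back inside the subalgebra $\oqgmnk$. The hardest technical issue is controlling the alignment of exponents and carrying out the cancellation of $\gamma^t$ cleanly, which is likely to require either an induction on the poset $(\Pi,\leq)$ or a careful bookkeeping argument using the explicit form of the Ore equation furnished by Remark~\ref{rmk-quantum-minor-ore}.
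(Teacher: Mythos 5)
There is a genuine gap at your third step, and it is exactly the point where the whole difficulty of the theorem sits. From Skoda you have $a\gamma^{s}=\gamma^{t}r$ with $r\in\oqmmnk$, and from normality of $\bar{\gamma}$ in $S(\gamma)$ you have $a\gamma^{s'}-\gamma^{t}r'\in\langle\Pi_\gamma\rangle$ with $r'\in\oqgmnk$; after aligning exponents you arrive at $\gamma^{t}(r-r')\in\langle\Pi_\gamma\rangle\subseteq\oqgmnk$. But the conclusion ``$\gamma$ is regular in $\oqmmnk$, so we may cancel $\gamma^{t}$ and deduce $r-r'\in\oqgmnk$'' is a non sequitur. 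Regularity lets you cancel in an equality $\gamma^{t}x=\gamma^{t}y$; here you would need the much stronger saturation statement that if $x\in\oqmmnk$ and $\gamma^{t}x\in\oqgmnk$ then $x\in\oqgmnk$, i.e.\ that the subalgebra $\oqgmnk$ is closed under division by $\gamma$ inside the ambient quantum matrix algebra. You neither prove nor cite this, and it does not follow from $\oqmmnk$ being a domain; note also that $\langle\Pi_\gamma\rangle$ is an ideal of $\oqgmnk$ only, not of $\oqmmnk$, so no ideal-theoretic manipulation in the ambient algebra is available to produce a factorisation $\gamma^{t}p'$ of the error term. (A correct proof of such a saturation property would itself be a substantial argument, e.g.\ via a coinvariant description of $\oqgmnk$, and is nowhere in your outline.) As written, the Ore replacement you produce is only known to lie in $\oqmmnk$, which is not what the theorem asserts.

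For comparison, the paper avoids this issue entirely by dehomogenising at the \emph{minimal} Pl\"ucker coordinate $u:=[1,\dots,m]$ rather than at $[C]$: since $\Pi_u=\emptyset$, the isomorphism $\Psi$ identifies $\oqgmnk[u^{-1}]$ with $\qimf[y^{\pm1};\sigma]$, and both $[A]u^{-1}$ and $[C]u^{-1}$ become genuine quantum minors of the quantum matrix algebra. Skoda's theorem is applied there, and the resulting relation is pulled back through $\Psi^{-1}$; a degree count shows the right-hand factor is a linear combination of products of Pl\"ucker coordinates times powers of $u$, and the unwanted powers of $u$ are cleared using that $u$ is normal, that $\langle u\rangle$ is completely prime in $\oqgmnk$, and that $[C]\notin\langle u\rangle$ — a cancellation carried out entirely inside $\oqgmnk$, where it is legitimate. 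If you want to salvage your approach, you would have to supply a proof of the saturation property $\gamma^{-t}\oqgmnk\cap\oqmmnk\subseteq\oqgmnk$; otherwise the argument should be rerouted along the lines above.
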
 

\begin{proof}
If $[C]= [1,\dots,m]$ then $[C]$ is a normal element and so the result is 
well-known. Thus, we may assume that $[C]\neq [1,\dots,m]$. 
Let $[A]$ be any quantum Pl\"ucker coordinate. By Lemma~\ref{lemma-check-ore}
it is enough to show that $([A],[C]^t)$ is a right Ore pair with respect to $S$ for all $t \geq 1$; that is, 
we need to show that there exists $s\in\mn$ and $r\in\oqgmnk$ such that 
$[A][C]^s=[C]^tr$. 

We consider noncommutative dehomogenisation at $u:=[1,\dots,m]$, see Section \ref{subsection-dehom}. More precisely, we will make use of Theorem \ref{where it goes} in the case where $\gamma = u$. By Theorem \ref{where it goes} the elements $[A]u^{-1}$ and $[C]u^{-1}$ are sent to  quantum minors in $\oqmmnminusmk$ by the isomorphism $\Psi$, say $\Psi([A]u^{-1})=[I\mid J]$
is a $c\times c$ quantum minor and $\Psi([C]u^{-1})=[K\mid L]$ is a $d\times d$ quantum minor. Denote the generators of $\oqmmnminusmk$ by $x_{ij}$ and, in the next part of the argument, we consider the degree of elements in $\oqmmnminusmk$ with each  $x_{ij}$ having degree equal to one. By Skoda's work, Theorem~\ref{theorem-quantum-minor-ore} and Remark \ref{rmk-quantum-minor-ore}, there exists $s\in \mn$ with $s\geq t$ and an element $w\in \oqmmnminusmk$ such that 
\begin{eqnarray}\label{ore-for-minors} 
[I\mid J] \cdot [K\mid L]^s &=& [K\mid L]^t \cdot w
\end{eqnarray}  The degree of the term on the left hand side of this equation is 
$c +sd$ and $[K\mid L]$ has degree $d$; so $w$ is a linear combination of terms of $\oqmmnminusmk$ each 
having degree $e:=c+(s-t)d$. Consider one such term $w_1\dots w_e$, where 
each $w_i$ is some $x_{ij}$. 

By using the dehomogenisation isomorphism $\Psi$, 
we may write $w_i=\Psi([B_i]u^{-1})$ for some quantum Pl\"ucker coordinate $[B_i]$. 
Hence, $w_1\dots w_e = \psi(([B_1]u^{-1})\dots ([B_e]u^{-1}))$. Each $[B_i]$ 
$q$-commutes with $u$, so we may write 
$w_1\dots w_e =\Psi( \qdot[B_1]\dots[B_e]u^{-e})$. We may also rewrite the left hand side of Equation~\eqref{ore-for-minors} as $[I\mid J]. [K\mid L]^s = \Psi([A]u^{-1}) \cdot \Psi([C]u^{-1})^s = \qdot \Psi([A][C]^su^{-(s+1)})$. Hence, applying $\Psi^{-1}$ to Equation~\eqref{ore-for-minors}, we see that

\[
[A][C]^s u^{-(s+1)} = [C]^tz,
\]
where $z$ is a linear combination of terms of the form $[B_1]\dots[B_e]u^{-(e+t)}$.
Multiplying through by $u^{e+t}$, we obtain 
\[
[A][C]^s u^{e+t-s-1} = [C]^tz',
\]
where $z'$ is a linear combination of terms of the form $[B_1]\dots[B_e]$. It follows that $z'\in\oqgmnk$. Also, note that $e+t-s-1=c+(s-t)(d-1)-1\geq 0$ since $s-t \geq 0$. If $e+t-s-1=0$ then
this gives the required Ore condition. Otherwise, note that $u$ generates 
a prime ideal of $\oqgmnk$, and $[C]$ is not in this ideal; so we may rewrite 
$z'= u^{e+t-s-1}z''$ for some $z''\in\oqgmnk$. We then cancel  $u^{e+t-s-1}$
in this equation to obtain 
\[
[A][C]^s  = [C]^tz'',
\]
which is the required Ore condition. 
\end{proof} 

We are now in position to construct separating Ore sets for $\ch$-prime ideals of $\oqgmnk$.

\begin{theorem} 
Let $\k$ be a field and assume that $q \in \k^*$ is transcendental over the prime field of $\k$. 
Let $P$ be an $\ch$-prime ideal of $\oqgmnk$ with 
$\neck(P)=  ([I_1],\dots,[I_n])$. Let $N$ be the multiplicatively closed set 
generated by $[I_1],\dots,[I_n]$. Then $N$ is a separating Ore set for 
$P$. 
\end{theorem}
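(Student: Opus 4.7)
The plan is to verify the three required properties in turn, with most of the work done by invoking results established earlier in the paper.

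First, I will check that $N$ is genuinely an Ore set in $\oqgmnk$. Theorem \ref{theorem-plucker-ore} tells us that for each $i \in \{1,\dots,n\}$, the set $S_i := \{[I_i]^t \mid t \in \mn\}$ of powers of the quantum Pl\"ucker coordinate $[I_i]$ is an Ore set in $\oqgmnk$. Since $N$ is the multiplicatively closed set generated by the union $S_1 \cup \cdots \cup S_n$, Corollary \ref{corollary-oresets} (applied on the right, and then by the symmetric argument on the left) yields that $N$ is a two-sided Ore set.

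Next, I will show that $N \cap P = \emptyset$. By construction of $\neck(P)$ via Theorem \ref{theorem-qasl-for-i-order}, each quantum Pl\"ucker coordinate $[I_i]$ satisfies $[I_i] \notin P$. Since $P$ is completely prime (as is every $\ch$-prime of $\oqgmnk$ by the Goodearl--Letzter stratification theory), any product of elements of the form $[I_1],\dots,[I_n]$ also lies outside $P$; hence no element of $N$ belongs to $P$.

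Finally, and this is the step where the transcendence hypothesis on $q$ is essential, I will establish the separating property. Let $Q$ be an $\ch$-prime of $\oqgmnk$ with $Q \not\subseteq P$, and write $\neck(Q) = ([J_1],\dots,[J_n])$. By Theorem \ref{proposition-necklace-order} (which requires $q$ transcendental), the failure of $Q \subseteq P$ is equivalent to $\neck(Q) \not\leq \neck(P)$ in the partial order on Grassmann necklaces. Hence there exists an index $i \in \{1,\dots,n\}$ such that $J_i \not\leq_i I_i$. Equivalently, $I_i \not\geq_i J_i$, which by Theorem \ref{theorem-union} means that the quantum Pl\"ucker coordinate $[I_i]$ belongs to $Q$. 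Thus $[I_i] \in Q \cap N$, proving the separating property.

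Putting these three steps together yields the theorem. None of the individual steps presents a real obstacle, since everything has been prepared by the preceding sections; if anything, the main subtlety is simply making sure that the transcendence assumption on $q$ is carried along (it is needed only for the appeal to Theorem \ref{proposition-necklace-order}), and that the necklace comparison $J_i \not\leq_i I_i$ is translated correctly into the statement $I_i \not\geq_i J_i$ so that Theorem \ref{theorem-union} produces membership of $[I_i]$ in $Q$ rather than in its complement.
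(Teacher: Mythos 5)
Your proof is correct and follows essentially the same route as the paper: the Ore property via Theorem~\ref{theorem-plucker-ore} and Corollary~\ref{corollary-oresets}, and the separating property via Theorem~\ref{proposition-necklace-order} together with the fact that $J_i \not\leq_i I_i$ forces $[I_i]\in Q$. The only differences are cosmetic: you make the disjointness $N\cap P=\emptyset$ explicit (the paper leaves it implicit, it being immediate from complete primeness and the defining property of the necklace), and you cite Theorem~\ref{theorem-union} where the defining property of $[J_i]$ from Theorem~\ref{theorem-qasl-for-i-order} already suffices.
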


\begin{proof}\begin{full}
Suppose that $N$ is the multiplicatively closed set generated by 
$[I_1],\dots,[I_n]$. Then $N$ is an Ore set by 
Theorem~\ref{theorem-plucker-ore} and Corollary~\ref{corollary-oresets}.
Suppose that $Q$ is an $\ch$-prime ideal of $\oqgmnk$ with 
$Q\not\subseteq P$ and suppose that 
$\neck(Q)=([J_1],\dots,[J_n])$. 
Then $\neck(Q)\not\leq\neck(P)$ by Proposition~\ref{proposition-necklace-order} 
and so there exists $i\in\{1,\dots,n\}$ with $J_i\not\leq_i I_i$. This forces 
 $[I_i]\in Q$. However, $[I_i]\in N$, by definition. Thus, $Q\cap N  \neq\emptyset$. 
\end{full}\end{proof}

 \begin{corollary} 
 Let $\k$ be a field and assume that $q \in \k^*$ is transcendental over the prime field of $\k$. 
 Let $P$ be an $\ch$-prime ideal in $\oqgmnk$ with Grassmann necklace $N$. 
 Then $\left(\oqgmnk/P\right)[\widebar{N}^{-1}]$ is an $\ch$-simple ring, where $\widebar{N}$ is the image of $N$ in $\oqgmnk/P$.
 \end{corollary}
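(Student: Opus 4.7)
The plan is to use the separating Ore set property of $N$ established in the preceding theorem to show that $(0)$ is the unique $\ch$-prime of the localization, and then invoke the standard Goodearl-Letzter argument to upgrade this to $\ch$-simplicity.

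Writing $A := \oqgmnk$ and $B := (A/P)[\widebar{N}^{-1}]$, the first observation is that $N \cap P = \emptyset$ by the definition of a separating Ore set, so $\widebar{N}$ is a well-defined Ore set of nonzero elements of $A/P$ and $B$ exists; the $\ch$-action on $A$ descends to a rational $\ch$-action on $B$ through the quotient and localization maps (and the Ore set $N$ is indeed $\ch$-invariant since its generators are quantum Pl\"ucker coordinates, which are $\ch$-eigenvectors).

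The second step exploits the standard $\ch$-equivariant bijection between primes of $B$ and primes of $A$ that both contain $P$ and are disjoint from $N$. Restricted to $\ch$-primes, this yields a bijection between $\ch$-primes of $B$ and $\ch$-primes $Q$ of $A$ with $P \subseteq Q$ and $Q \cap N = \emptyset$. The contrapositive of the separating property says that $Q \cap N = \emptyset$ forces $Q \subseteq P$, so combined with $P \subseteq Q$ we get $Q = P$. Hence $(0)$ is the unique $\ch$-prime of $B$.

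For the final step I would argue by contradiction: suppose $J$ is a non-zero proper $\ch$-invariant ideal of $B$. Since $B$ is noetherian, $J$ is contained in some maximal ideal $M$. The ideal $(M:\ch) := \bigcap_{h\in \ch} h\cdot M$ is an $\ch$-prime by \cite[Chapter II.2]{bg}, hence equals $(0)$. But $\ch$-invariance of $J$ gives $J = h\cdot J \subseteq h\cdot M$ for every $h \in \ch$, so $J \subseteq (M:\ch) = (0)$, contradicting $J \neq (0)$. This proves that $B$ is $\ch$-simple. No step presents a real obstacle; the result is essentially a packaging of the separating Ore set theorem together with the abstract Goodearl-Letzter framework.
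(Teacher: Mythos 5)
Your proof is correct and follows essentially the same route as the paper: contract an $\ch$-prime of the localisation back to an $\ch$-prime $Q\supseteq P$ of $\oqgmnk$ disjoint from $N$, and use the separating property of $N$ to force $Q=P$. The only real difference is organisational: the paper argues by directly choosing a nonzero proper $\ch$-prime in the localisation, while you first show $(0)$ is the unique $\ch$-prime and then reduce arbitrary $\ch$-invariant ideals to $\ch$-primes via $(M:\ch)$, a routine step the paper leaves implicit.
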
 
\begin{proof}\begin{full}
 Set $R:=\oqgmnk/P$. Then $\widebar{N}$ generates an Ore set in $R$ and the localisation
 $R[\widebar{N}^{-1}]$ exists. As $P$ is an $\ch$-ideal, the action of $\ch$ on $\oqgmnk$ induces a natural action of 
 $\ch$ on $R$,  and as the elements in $N$ are $\ch$-eigenvectors, there is 
 an induced action of $\ch$ on  $R[\widebar{N}^{-1}]$. Suppose that 
  $R[\widebar{N}^{-1}]$ is not $\ch$-simple and choose a proper, nonzero $\ch$-prime ideal $I$ in  $R[\widebar{N}^{-1}]$. Then $I=(I\cap R)[\widebar{N}^{-1}]$, 
  and it follows that $0\neq I\cap R$. Let $Q$ be the $\ch$-prime ideal of 
  $\oqgmnk$ such that $P\subsetneq Q$ and $I\cap R= Q/P$. Then, 
  $Q\cap N\neq\emptyset$, by the previous Theorem. It follows that 
  $I=R[\widebar{N}^{-1}]$, which is a contradiction. Hence, 
  $R[\widebar{N}^{-1}]$ is $\ch$-simple, as required. 
  \end{full}\end{proof} 
  
  In order to pursue the Brown-Goodearl strategy (see \cite{bg}), the next step is to fully describe the centre of the $\ch$-simple ring  $\left(\oqgmnk/P\right)[\widebar{N}^{-1}]$. 
  




\subsection{Polynormality for $\ch$-prime ideals of $\oqgmnk$}

In this subsection, $\k$ is a field and $q \in \k^*$ is transcendental over the prime field of $\k$. 
 
Recall that a sequence of elements $u_1,\dots,u_n$ 
in a ring $R$ is a {\em polynormal sequence} if $u_1$ is a normal element of $R$ (that is, 
$u_1R=Ru_1$), and, for each $i=2,\dots,n$, the image of $u_i$ is a normal 
element of the factor ring $R/\ideal{u_1,\dots,u_{i-1}}$. 

In \cite{gl-winding}, Goodearl and the second author conjectured that the $\ch$-prime ideals in quantum matrix algebras  would have polynormal sequences of generators, and verified the conjecture for the $3\times 3$ case. In \cite{yak-polynormality}, Yakimov verified the conjecture in the general case. (In fact, Yakimov proved a polynormality result for the much wider class of quantum nilpotent algebras of type $U_{-}^w({\frak g})$, of which quantum matrix algebras are a special case.) Here, we establish the corresponding result for the quantum grassmannian.

\begin{theorem}
 Let $\k$ be a field and assume that $q \in \k^*$ is transcendental over the prime field of $\k$. Then each  
 $\ch$-prime ideal in $\oqgmnk$ is generated by a polynormal sequence of quantum Pl\"ucker coordinates. 
\end{theorem}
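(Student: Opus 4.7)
The plan is to split the problem along the unique quantum Pl\"ucker coordinate $\gamma$ with $\gamma \notin P$ and $\Pi_\gamma \subseteq P$ furnished by \cite[Theorem 5.1]{llr-grass}: first produce a polynormal sequence of Pl\"ucker coordinates generating $\ang{\Pi_\gamma}$ in $\oqgmnk$, then one generating $P/\ang{\Pi_\gamma}$ in the quantum Schubert variety $S(\gamma) = \oqgmnk/\ang{\Pi_\gamma}$, and concatenate. For the first half, enumerate $\Pi_\gamma$ as $\beta_1 < \cdots < \beta_r$ using any linear extension of the standard partial order $\leq$ on $\Pi$. Axiom (5) of Definition \ref{recall-q-gr-asl} writes $\beta_i \mu - c_{\beta_i,\mu}\mu\beta_i$, for any Pl\"ucker coordinate $\mu$, as a linear combination of standard monomials $\lambda$ and $\lambda\mu'$ with $\lambda < \beta_i$. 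Since $\Pi_\gamma = \{\alpha \in \Pi : \alpha \not\geq \gamma\}$ is a down-set of $(\Pi,\leq)$---if $\lambda \leq \beta_i$ and $\gamma \leq \lambda$ then $\gamma \leq \beta_i$, contradicting $\beta_i \in \Pi_\gamma$---each such $\lambda$ equals some $\beta_j$ with $j<i$, so the right-hand side lies in $\ang{\beta_1,\ldots,\beta_{i-1}}$. A routine induction on degree extends this from generators to arbitrary elements of $\oqgmnk$, giving the polynormal sequence $\beta_1,\ldots,\beta_r$ for $\ang{\Pi_\gamma}$.

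For the second half, transfer to the partition subalgebra $\partitionk$ via the dehomogenisation isomorphism $\Phi\colon \partitionk[y^{\pm 1};\sigma] \xrightarrow{\cong} S(\gamma)[\bar{\gamma}^{-1}]$, $y \longleftrightarrow \bar{\gamma}$, of \eqref{the godfather iso} (with $\lambda$ the partition attached to $\gamma$ as in Notation \ref{associated partition}). Under the bijection $\xi$ of \eqref{godmother corresp}, $P$ corresponds to an $\ch$-prime $J = \xi(P)$ of the torsion-free QNA $\partitionk$ (Proposition \ref{prop-partition-cgl}). Yakimov's polynormality theorem \cite{yak-polynormality} for CGL extensions produces a polynormal generating sequence for $J$; combined with Theorem \ref{gen by pseudos} (right-ideal generation by pseudo quantum minors), either by tracing Yakimov's canonical construction through the deleting-derivations algorithm on $\partitionk$ or by an inductive replacement argument, this can be refined to a polynormal sequence $\delta_1,\ldots,\delta_s$ of pseudo quantum minors generating $J$. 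By Corollary \ref{cor-pluckers-match-minors}, each $\delta_j$ corresponds under $\Phi$ to $\mu_j \bar{\alpha_j}\bar{\gamma}^{-1}$ for some $\mu_j \in \k^*$ and some Pl\"ucker coordinate $\alpha_j > \gamma$ lying in $P$.

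It remains to verify that $\bar{\alpha_1},\ldots,\bar{\alpha_s}$ is a polynormal sequence in $S(\gamma)$ generating $P/\ang{\Pi_\gamma}$. Generation is a graded computation exploiting the $\N$-grading on $S(\gamma)$ in which Pl\"ucker coordinates have degree $1$: decompose $b \in P/\ang{\Pi_\gamma}$ into homogeneous parts $b_d$ (no degree-$0$ contribution, since $P/\ang{\Pi_\gamma}$ is a proper ideal), write each $b_d\bar{\gamma}^{-d} \in J$ as a bilinear combination $\sum_\ell c_\ell \delta_{j_\ell} e_\ell$ of total $\partitionk$-degree $d$, and push the factors of $\bar{\gamma}^{\pm 1}$ past generators using $\bar{\gamma}^{-1}x = \sigma^{-1}(x)\bar{\gamma}^{-1}$; because $\delta_{j_\ell}$ has $\partitionk$-degree at least one, the residual power of $\bar{\gamma}$ is nonnegative and each summand lies in $S(\gamma)\cdot \bar{\alpha_{j_\ell}}\cdot S(\gamma)$. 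Polynormality of $\bar{\alpha_j}$ in $S(\gamma)$ modulo $\ang{\bar{\alpha_1},\ldots,\bar{\alpha_{j-1}}}$ transfers by a parallel computation, using that $\sigma$ scales each $\delta_j$ by a power of $q$ so that the normal-commutation endomorphism descending from $\partitionk$ keeps the remainder graded-compatible with $\bar{\gamma}$. The main obstacle is arranging Yakimov's polynormal generators of $J$ to be pseudo quantum minors, since \cite{yak-polynormality} only produces canonical prime generators abstractly; reconciling this with Theorem \ref{gen by pseudos} for $\partitionk$ requires either a direct verification that the deleting-derivations elements at each step of the Cauchon-Le diagram of $J$ are pseudo quantum minors up to scalars, or an inductive replacement argument exploiting right-ideal generation by pseudo quantum minors to swap each Yakimov generator for a pseudo quantum minor in the same coset modulo the preceding terms without disturbing normality.
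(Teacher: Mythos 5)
Your first half is sound, but it is exactly \cite[Proposition 1.2.2]{lr2} applied to the standard order (since $\Pi_\gamma$ is a poset ideal of $(\Pi,\leq)$), and the second half rests on a step that you yourself flag as unresolved and that is genuinely missing. Nothing in this paper, and nothing in \cite{yak-polynormality}, produces a \emph{polynormal} sequence of pseudo quantum minors generating the $\ch$-prime $J=\xi(P)$ of $\partitionk$: Theorem~\ref{gen by pseudos} gives only right-ideal generation, Yakimov's theorem applies to quantum Schubert cell algebras and yields abstract canonical generators, and your proposed ``replacement'' repair does not work, because an element normal modulo the preceding ideal is merely congruent to a \emph{right linear combination} of pseudo quantum minors, not to a single one, and normality of such a combination confers no normality on any individual minor (in particular the first term of a polynormal sequence must be normal in all of $\partitionk$, which no pseudo quantum minor in $J$ need be). Moreover, even granting such a sequence in $\partitionk$, the transfer back is not the ``parallel computation'' you sketch: polynormality of $\bar{\alpha}_j$ requires $\bar{\alpha}_j x-c\,x\bar{\alpha}_j\in\ang{\bar{\alpha}_1,\dots,\bar{\alpha}_{j-1}}$ inside $S(\gamma)$, whereas dehomogenisation only gives this congruence in $S(\gamma)[\bar{\gamma}^{-1}]$; since the partial ideals $\ang{\bar{\alpha}_1,\dots,\bar{\alpha}_{j-1}}$ are neither prime nor known to be $\bar{\gamma}$-saturated, the stray powers of $\bar{\gamma}$ cannot be cancelled. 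This is precisely the torsion problem that forced the machinery of Section 10 even for the full prime ideal, so it cannot be waved away for the intermediate ideals.

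The paper's proof avoids the dehomogenisation route entirely and is much shorter. By Theorem~\ref{theorem-full-generation}, $P$ is generated by the quantum Pl\"ucker coordinates it contains; by Theorem~\ref{theorem-union} this set equals $\bigcup_{i=1}^n\Pi_i$ with $\Pi_i=\{[J]\mid J\not\geqqi I_i\}$, where $([I_1],\dots,[I_n])$ is the Grassmann necklace of $P$. Each $\Pi_i$ is a poset ideal for the cyclic order $<_i$, and $\oqgmnk$ is a QGASL on $(\Pi_i,<_i)$ for every $i$ (Theorem~\ref{theorem-qasl}), so \cite[Proposition 1.2.2]{lr2} turns each $\Pi_i$ into a polynormal sequence -- your first-half argument run once in each of the $n$ cyclic orders -- and the concatenation $L_1,\dots,L_n$ is a polynormal generating sequence for $P$. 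The idea you are missing is that the $n$ cyclic QGASL structures already cover all Pl\"ucker coordinates of $P$ by down-sets, so no passage through $\partitionk$ (and hence no analogue of Yakimov's theorem for partition subalgebras) is needed.
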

\begin{proof}
Let $P$ be an $\ch$-prime ideal in  $\oqgmnk$ and let $([I_1],\dots,[I_n])$ be the Grassmann necklace of $P$. By Theorem~\ref{theorem-union}, the set of quantum Pl\"ucker coordinates that are contained in $P$ is given by 
$
\Pi_P:=\bigcup_{i=1}^n \,\Pi_i, 
$
where $\Pi_i=\{[J] \mid J\not\geqqi I_i\}$.
As we are assuming that $q$ is transcendental, Theorem~\ref{theorem-full-generation} shows that $P$ is generated as a right (or left) ideal by $\Pi_P$. Therefore, it suffices to show that the elements in $\Pi_P$ can be listed in such a way that they form a polynormal sequence.

Now, $\oqgmnk$ is a quantum graded algebra with a straightening law with respect to $(\Pi_i,<_i)$, for each $i$, 
by Theorem~\ref{theorem-qasl-for-i-order}. In view of this fact, \cite[Proposition 1.2.2]{lr2} shows that the elements of $\Pi_i$ may be listed so that they form a polynormal sequence of generators of the ideal that they generate. Let $L_i$ be such a list for each $i$. Then 
the concatenated list $L_P:=(L_1,L_2,\dots,L_n)$ forms a polynormal sequence of generators for $P$. 
\end{proof}




\begin{minipage}{\textwidth}
\noindent S Launois\\
School of Mathematics, Statistics and Actuarial Science,\\
University of Kent\\
Canterbury, Kent, CT2 7FS,\\ UK\\[0.5ex]
email: S.Launois@kent.ac.uk \\

\noindent T H Lenagan\\
Maxwell Institute for Mathematical Sciences,\\
School of Mathematics, University of Edinburgh,\\
James Clerk Maxwell Building\\
The King's Buildings\\
Peter Guthrie Tait Road\\
Edinburgh EH9 3FD \\
           UK\\[0.5ex]
email: tom@maths.ed.ac.uk\\


\noindent B M Nolan\\
School of Mathematics, Statistics and Actuarial Science,\\
University of Kent\\
Canterbury, Kent, CT2 7FS,\\ UK\\[0.5ex]
email: bn62@kentforlife.net

\end{minipage}


\begin{thebibliography}{99}

\bibitem{af} S Agarwala and S Fryer, {\em An algorithm to construct the Le diagram associated to a Grassmann necklace}, arXiv:1803.01726v1.

\bibitem{ama} S Agarwala and E Marin-Amat, {\em Wilson loop diagrams and positroids}, Comm. Math. Phys. 350 (2017), no. 2, 569--601.

\bibitem{arw} F Ardila, F Rinc\'on and L Williams, {\em Positroids and non-crossing partitions}, Trans. Amer. Math. Soc. 368 (2016), no. 1, 337--363.

\bibitem{arkani} N Arkani-Hamed, J Bourjaily, F Cachazo, A Goncharov, Alexander; Postnikov, Alexander; Trnka, Jaroslav. Grassmannian geometry of scattering amplitudes. Cambridge University Press, Cambridge, 2016. ix+194 pp. ISBN: 978-1-107-08658-6.

\bibitem{bl} J A Bell and S Launois, {\em On the dimension of $H$-strata in quantum algebras}, Algebra Number Theory, 4 (2010), no. 2, 175--200.

\bibitem{bln} 
J  A Bell, S Launois, and N Nguyen, {\em Dimension and enumeration of primitive ideals in quantum algebras}, J. Algebraic Combin. 29 (2009), no. 3, 269--294.

\bibitem{bk} M Brion and S Kumar. Frobenius splitting methods in geometry and representation theory. Progress in Mathematics, 231. Birkh\"auser Boston, Inc., Boston, MA, 2005. 

\bibitem{bg-book} K A Brown and K R Goodearl. Lectures on algebraic quantum groups, Advanced Courses in Mathematics CRM Barcelona, Birkh\"auser Verlag, Basel, 2002.

\bibitem{bg} K A Brown and K R Goodearl, {\em Zariski topologies on stratified spectra of quantum algebras}. Commutative algebra and noncommutative algebraic geometry. Vol. II, 63--91, Math. Sci. Res. Inst. Publ., 68, Cambridge Univ. Press, New York, 2015. 

\bibitem{bgy} K A Brown,  K R Goodearl and M Yakimov, {\em Poisson structures of affine spaces and flag varieties. I}, Matrix affine Poisson space, Adv. Math. 206 (2006), 567--629.

\bibitem{bgv} J Bueso, J G\'omez-Torrecillas and A Verschoren. Algorithmic methods in non-commutative algebra. Applications to quantum groups. Mathematical Modelling: Theory and Applications, 17. Kluwer Academic Publishers, Dordrecht, 2003. xii+298 pp.

\bibitem{cas1} K Casteels, {\em A graph theoretic method for determining generating sets of prime ideals in quantum matrices}, J. Algebra 330 (2011), 188--205.

\bibitem{cas2} K Casteels, {\em Quantum matrices by paths}, Algebra Number Theory 8 (2014), 1857--1912.

\bibitem{cf} K Casteels and S Fryer, {\em From Grassmann necklaces to restricted permutations and back again}, Algebr. Represent. Theory 20 (2017), no. 4, 895--921.

\bibitem{c1} G Cauchon, {\em Effacement des d\'erivations et spectres premiers des alg\`ebres quantiques}, J. Algebra 260 (2003), 476--518.

\bibitem{cauchon2} G Cauchon, {\em Spectre premier de $O_q(M_n(k))$: image canonique et séparation normale}, J. Algebra 260 (2003), no. 2, 519--569.

\bibitem{fy} S Fryer and M Yakimov, {\em Separating Ore sets for prime ideals of quantum algebras}, Bull. Lond. Math. Soc. 49 (2017), no. 2, 202--215.

\bibitem{geiger-yakimov} J Geiger and M Yakimov, {\em Quantum Schubert cells via representation theory and ring theory}, Michigan Math. J. 63 (2014), no. 1, 125--157.

 \bibitem{gll1} K R Goodearl, S Launois and   T H Lenagan, {\em Totally nonnegative cells and matrix Poisson varieties}, Adv. Math. 226 (2011), 779--826.

\bibitem{gll2} K R Goodearl, S Launois and   T H Lenagan, {\em Torus-invariant prime ideals in quantum matrices, totally nonnegative cells and symplectic leaves},  Math. Z. 269 (2011), no. 1, 29--45.

\bibitem{gl-ijm} K R Goodearl and T H Lenagan, {\em Prime ideals invariant under winding automorphisms in quantum matrices}, Int. J. Math. 13(5) (2002), 497--532. 

\bibitem{gl-winding} K R Goodearl and T H Lenagan, {\em Winding-invariant prime ideals in quantum $3\times3$ matrices}, J. Algebra 260 (2003), no. 2, 657--687.

\bibitem{good-len-q-transc} K R Goodearl and T H Lenagan, {\em Quantized coinvariants at transcendental $q$}, Hopf algebras in noncommutative geometry and physics, 155--165, Lecture Notes in Pure and Appl. Math., 239, Dekker, New York, 2005.

 \bibitem{gl1} K R Goodearl and E S Letzter, {\em Prime ideals in skew and q-skew polynomial rings}, Mem. Amer. Math. Soc. 109 (1994), no. 521, vi+106 pp.

\bibitem{gl2} K R Goodearl and E S Letzter, {\em The Dixmier-Moeglin equivalence in quantum matrices and quantum Weyl algebras}, Trans. Amer. Math. Soc. 352 (2000), no. 3, 1381--1403.

\bibitem{gy} K R Goodearl and M Yakimov, {\em Poisson structures of affine spaces and
flag varieties. II}, Trans Amer Math Soc  361 (2009), no. 11, 5753 -- 5780.

\bibitem{gy16} K R Goodearl and M T Yakimov, {\em From quantum Ore extensions to quantum tori via non- commutative UFDs}, Adv. Math. 300 (2016), 672--716.

\bibitem{gorelik} M Gorelik, {\em The prime and the primitive spectra of a quantum Bruhat cell translate}, J. Algebra 227 (2000), no.1, 211--253.

\bibitem{hl} X He and T Lam, {\em Projected Richardson varieties and affine Schubert varieties},  Ann. Inst. Fourier (Grenoble) 65 (2015), no. 6, 2385--2412.

\bibitem{Kassel} C Kassel, {\em Quantum Groups}. Graduate Texts in Mathematics, 155. Springer-Verlag, New York, 1995. xii+531 pp. 

\bibitem{klr} A Kelly, T H Lenagan and L Rigal, {\em Ring theoretic properties of quantum Grassmannians}, J Algebra Appl 3 (2004), no. 1, 9--30.
 
 \bibitem{kw} Y Kodama and L Williams, {\em KP solitons and total positivity for the Grassmannian}, Invent. Math. 198 (2014), no. 3, 637--699.
 
\bibitem{kls-proj} A Knutson, T Lam and D E Speyer, {\em Projections of Richardson Varieties},   J. Reine Angew. Math. 687 (2014), 133--157.
 
\bibitem{kls} A Knutson, T Lam and D E Speyer, {\em Positroid varieties: juggling and geometry}, Compos. Math. 149 (2013), no. 10, 1710--1752.

\bibitem{lam} T Lam, {\em Dimers, webs, and positroids}, J. Lond. Math. Soc. (2) 92 (2015), no. 3, 633--656. 

\bibitem{lam2} T Lam, {\em Totally nonnegative Grassmannian and Grassmann polytopes}. Current developments in mathematics (2014), 51--152, Int. Press, Somerville, MA, 2016. 

\bibitem{launois-thesis} S Launois, Id\'eaux premiers $\ch$-invariants de l'alg\`ebre des matrices quantiques. PhD Thesis, Universit\'e de Reims Champagne-Ardenne, 2003. 

\bibitem{launois-generation} S Launois, {\em Les id\'eaux premiers invariants de $\co_q(M_{m,p}(\mc))$}, J. Algebra 272 (2004), no. 1, 191--246.

\bibitem{launois-cominatorics} S Launois, {\em Combinatorics of $\ch$-primes in quantum matrices}, J. Algebra 309 (2007), no. 1, 139--167.

\bibitem{llr-ufd}  S Launois,  T H Lenagan and L Rigal, {\em Quantum unique factorisation domains}, 
  J. London Math. Soc. (2) 74 (2006), no. 2, 321--340.


\bibitem{llr-grass} S Launois,  T H Lenagan and L Rigal, {\em  Prime ideals in the quantum Grassmannian}, Selecta Math. (N.S.) 13 (2008), no. 4, 697--725. 

\bibitem{lr2} T H Lenagan and L Rigal, {\em Quantum graded algebras with a
straightening law and the AS-Cohen-Macaulay property for quantum determinantal
rings and quantum Grassmannians}, J Algebra 301 (2006), no. 2, 670--702.

\bibitem{lr3} T H Lenagan and L Rigal, {\em Quantum analogues of Schubert varieties in the Grassmannian}, Glasg. Math. J. 50 (2008), no. 1, 55--70.

\bibitem{lruss} T H Lenagan and E J Russell, {\em Cyclic orders on the quantum Grassmannian},  
Arabian Journal for Science and Engineering (Section C: theme issue -- Interactions of algebraic and coalgebraic structures), 33 (2008), 337--350. 

\bibitem{ms} M Movshev and A Schwarz  {\em Quantum deformation of planar amplitudes}, J. High Energy Phys. 2018, no. 4, 121, front matter+19 pp. 

\bibitem{nolan} B M Nolan,  A strong Dixmier-Moeglin equivalence for quantum Schubert cells and an open problem for quantum Pl\"ucker coordinates. PhD thesis, University of Kent, 2017. 

\bibitem{oh} S Oh {\em Positroids and Schubert matroids}, J. Combin. Theory Ser. A 118 (2011), no. 8, 2426--2435.

\bibitem{post} A Postnikov, {\em Total positivity, Grassmannians, and networks}, arxiv:math.CO/0609764.

\bibitem{ram} A Ramanathan, {\em Equations defining Schubert varieties and Frobenius splitting of diagonals}, Inst. Hautes Études Sci. Publ. Math. No. 65 (1987), 61--90. 

\bibitem{rietsch} K Rietsch, {\em Closure relations for totally nonnegative cells in $G/P$},  Math. Res. Lett. 13 (2006), no. 5-6, 775 -- 786.

\bibitem{skoda} Z Skoda, {\em Every quantum minor generates an Ore set}, Int. Math. Res. Not. IMRN 2008, no. 16, Art. ID rnn063, 8 pp.

\bibitem{tal} K Talaska, {\em Combinatorial formulas for $\Gamma$-coordinates in a totally nonnegative Grassmannian}, J. Combin. Theory Ser. A 118 (2011), no. 1, 58--66.

\bibitem{yak-qflag} M Yakimov, {\em A classification of H-primes of quantum partial flag varieties},  Proceedings of the American Mathematical Society Vol. 138 (2010), no. 4, 1249--1261.

\bibitem{yak-invariant-primes} M Yakimov, {\em Invariant prime ideals in quantizations of nilpotent Lie algebras}, Proc. London Math. Soc. (3) 101 (2010), no. 2, 454--476.

\bibitem{yak-polynormality} M Yakimov, {\em A Proof of the Goodearl-Lenagan Polynormality Conjecture}, International Mathematics Research Notices 9 (2013), 2097--2132.


\end{thebibliography}
\end{document}